\DeclareMathOperator{\esssup}{esssup}
\numberwithin{equation}{section}
\newtheorem{definition}{Definition}[section]
\newtheorem{theorem}[definition]{Theorem}
\newtheorem{lemma}[definition]{Lemma}
\newtheorem{remark}[definition]{Remark}
\newtheorem{example}[definition]{Example}
  \renewcommand\appendix{\par
    \setcounter{section}{0}
    \setcounter{subsection}{0}
    \gdef\thesection{ Appendix \Alph{section}}}
\newtheorem{assumption}{Assumption}
\title{Optimal Control  of Unbounded Functional Stochastic Evolution Systems in Hilbert Spaces: Second-Order Path-dependent  HJB Equation}
\author{Shanjian Tang\footnote{Institute of Mathematical Finance and Department of Finance and Control Sciences,
	School of Mathematical Sciences, Fudan University,
	Shanghai 200433, P. R. China, sjtang@fudan.edu.cn. This author is partially supported by National Science Foundation of China (Grant
	No. 11631004) and National Key R\&D Program of China (Grant No. 2018YFA0703903)} \quad Jianjun Zhou\footnote{College of Science,
	Northwest A\&F University, Yangling 712100, Shaanxi, P. R.
	China, zhoujianjun@nwsuaf.edu.cn. This author is partially supported by  the National Natural Science Foundation of China  (Grant No. 11401474), Shaanxi Natural Science Foundation
	(Grant No. 2021JM-083)}  }
\date{}
\begin{document}

\maketitle

\pagestyle{plain}

\begin{abstract}
	Optimal control and the associated second-order path-dependent Hamilton-Jacobi-Bellman (PHJB) equation are studied for unbounded functional stochastic evolution systems in Hilbert spaces.  The notion of viscosity solution without $B$-continuity is introduced in the sense of Crandall and Lions, and is shown to coincide  with the classical solutions and to satisfy a stability property. The value functional  is proved to be the unique continuous viscosity solution to the associated PHJB equation, without assuming any $B$-continuity  on the coefficients. In particular, in the Markovian case,  our result provides a new theory of viscosity solutions to the Hamilton-Jacobi-Bellman equation for optimal  control of stochastic evolutionary equations---driven by a linear unbounded operator---in a Hilbert space, and removes  the  $B$-continuity  assumption on the coefficients, which was initially introduced for first-order equations by Crandall and  Lions (see \emph{J. Func. Anal.}  \textbf{90} (1990), 237-283; \textbf{97} (1991), 417-465), and was subsequently used by \'Swiech (\emph{Comm. Partial Differential Equations} \textbf{19} (1994), 1999-2036) and  Fabbri, Gozzi, and \'Swiech (\emph{Probability Theory and Stochastic Modelling} \textbf{82}, 2017, Springer, Berlin).

\medskip

{\bf Key Words:} path-dependent unbounded HJB equations, viscosity solutions without B-continuity, optimal control, functional stochastic evolution equations.

\end{abstract}
{\bf 2020 AMS Subject Classification:} 49L20, 49L25, 93C23, 93C25, 93E20

%
%
%
%
\newpage
\section{Introduction}

\par
Let $\{W(t),t\geq0\}$ be a cylindrical Wiener process in
Hilbert space $\Xi$ on a  complete probability  space  $(\Omega,{\mathcal {F}}, P)$, with $\{{\mathcal {F}}_{s}\}_{0\leq s\leq T}$ being the  natural filtration, augmented with the totality ${\mathcal{N}}$ of all $P$-null sets in $\mathcal{F}$. The process  $u(\cdot)=\{u(s), s\in [t,T]\}$ is  ${\mathcal {F}}_{s}$-progressively measurable,  taking values in a Polish space $(U,d)$; and the totality of such processes is denoted by  $ {\mathcal{U}}[t,T]$.
Consider a strongly continuous semi-group   $\{e^{tA}, t\geq0\}$  of bounded linear operators  in Hilbert space
$H$ with $A$ being the generator.

Denote by $\Lambda_t(H)$ (or $\Lambda_t$  for simplicity)  the totality of all
continuous $H$-valued functions defined on $[0,t]$,  and set ${\Lambda}:={\Lambda}(H):=\bigcup_{t\in[0,T]}{\Lambda}_{t}(H)$. For a path  $\gamma_t\in \Lambda_t$,  $\gamma_t(s)$ is the value of $\gamma_t$  at
time $s\in [0,t]$, and  the path $\gamma_{t, r}^A\in \Lambda_r$ with $r> t$ is defined as follows:
$$
\gamma_{t,r}^A(\sigma):=e^{(\sigma-t)^+A}\gamma_t(\sigma\wedge t),  \quad \sigma\in [0,r]. 
$$
For $\gamma_t,\eta_s\in {\Lambda}$, 
\begin{eqnarray*}
	||\gamma_t||_0:=\sup_{0\leq r\leq t}|\gamma_t(r)|;
\end{eqnarray*}
\begin{eqnarray*}
	d_{\infty}(\gamma_t,\eta_s)
	:=|s-t|+||\gamma_{t,t\vee s}^A-\eta_{s,s\vee t}^A||_0.
\end{eqnarray*}
Then, $||\cdot ||_0$ is a   norm on ${\Lambda}_t$  and $d_\infty(\cdot,\cdot)$ is a metric on ${\Lambda}$.

Assume that the pair of functionals $(F, G):\Lambda\times U\rightarrow H\times  L_2(\Xi, H)$   are jointly continuous with $(F,G)(\cdot, u)$ being uniformly  Lipschitz continuous  in  the path space {$(\Lambda, d_{\infty})$} for each $u\in U$.

For a fixed finite time $T>0$, consider the  following controlled functional stochastic evolution
equation (FSEE) in Hilbert space $H$:
\begin{eqnarray}\label{state1}
\begin{cases}
dX^{\gamma_t,u}(s)= AX^{\gamma_t,u}(s)ds+
F\left(X_s^{\gamma_t,u},u(s)\right)ds\\
\  \  \ \ \ \ \ \ \ \ \ \ \ \  \ \ \ \ +G\left(X_s^{\gamma_t,u},u(s)\right)dW(s),  \quad s\in (t,T];\\
\ \ \ \ \ X_t^{\gamma_t,u}=\gamma_t\in {\Lambda}_t,
\end{cases}
\end{eqnarray}
with $X^{\gamma_t,u}$ being  the $H$-valued unknown process. Here, $X^{\gamma_t,u}(s)$ is the value of $X^{\gamma_t,u}$  at
time $s$, and $X^{\gamma_t,u}_s$ is the whole history of the path  $X^{\gamma_t,u}$ from time 0 to $s$.

For given  $(t,\gamma_t)\in [0,T]\times {\Lambda}$,   we aim  at  maximizing the following utility functional
\begin{eqnarray}\label{cost1}
J(\gamma_t,u(\cdot)):=Y^{\gamma_t,u}(t),\quad u\in {{\mathcal 	{U}}}[t,T]
\end{eqnarray}
where the process $Y^{\gamma_t,u}$ is defined via solution of the following backward stochastic differential equation (BSDE):
\begin{eqnarray}\label{fbsde1}
\begin{aligned}
Y^{\gamma_t,u}(s)=&\phi(X_T^{\gamma_t,u})+\int^{T}_{s}q(X_\sigma^{\gamma_t,u},Y^{\gamma_t,u}(\sigma),Z^{\gamma_t,u}(\sigma),u(\sigma))d\sigma\\
&-\int^{T}_{s}Z^{\gamma_t,u}(\sigma)dW(\sigma),\quad \ a.s.\ \ \mbox{for all}\ \ s\in [t,T].
\end{aligned}
\end{eqnarray}
Here  $q: {\Lambda}\times \mathbb{R}\times \Xi\times U\to \mathbb{R}$ and $\phi: {\Lambda}_T\to \mathbb{R}$ are given real functionals and are  uniformly  Lipschitz continuous on  the path space $(\Lambda, ||\cdot||_0)$. The optimal stochastic control problem is to look for $\bar u\in \mathcal{U}[0,T]$ such that
$$J(\gamma_0,\bar u)=\sup_{u\in \mathcal{U}[0,T]} J(\gamma_0, u). $$
The value functional of the  optimal control problem is given by
\begin{eqnarray}\label{value1}
V(\gamma_t):=\mathop{\esssup}\limits_{u(\cdot)\in{\mathcal{U}}[t,T]} Y^{\gamma_t,u}(t),\ \  (t,\gamma_t)\in [0,T]\times {\Lambda}.
\end{eqnarray}

It is well-known that a powerful basic tool to  the optimal stochastic control problem is the so-called dynamic programming method---initially due to R. Bellman---which, in particular, indicates that
the value functional  $V$ should be ``the solution" of  the following second-order path-dependent Hamilton-Jacobi-Bellman (PHJB) equation:
\begin{eqnarray}\label{hjb1}
\quad&\begin{cases}
\partial_tV(\gamma_t)+\langle A^*\partial_xV(\gamma_t), \gamma_t(t)\rangle _H
+{\mathbf{H}}(\gamma_t,V(\gamma_t),\partial_xV(\gamma_t),\partial_{xx}V(\gamma_t))\\[2mm]
\qquad\qquad = 0,\quad   (t,\gamma_t)\in
[0,T)\times {\Lambda};\\[3mm]
V(\gamma_T)=\phi(\gamma_T),\quad \gamma_T\in {\Lambda}_T,
\end{cases}
\end{eqnarray}
with
\begin{eqnarray*}
	&&{\mathbf{H}}(\gamma_t,r,p,l)\\
	&:=&\sup_{u\in{
			{U}}}\left\{
	\langle p, F(\gamma_t,u)\rangle_H+\frac{1}{2}\mbox{Tr}[ l G(\gamma_t,u)G^*(\gamma_t,u)] +q(\gamma_t,r,pG(\gamma_t,u),u)\right\}, \\
	&&\qquad\qquad\qquad \ \  (t,\gamma_t,r,p,l)\in [0,T]\times{\Lambda}\times \mathbb{R}\times H\times {\mathcal{S}}(H).
\end{eqnarray*}
Here,   $A^*$ and $G^*$ are the adjoint operators of $A$ and $G$,  respectively, ${\mathcal{S}}(H)$ the space of bounded, self-adjoint operators on $H$ equipped with the operator norm,  ${\langle\cdot,\cdot\rangle_{H}}$ the scalar product of $H$,  and
$\partial_t,\partial_x$ and $\partial_{xx}$  the so-called pathwise  (or functional or Dupire; see \cite{dupire1, cotn0, cotn1}) derivatives, where $ \partial_t$  is  the horizontal derivative, and $\partial_x$ and $\partial_{xx}$  are the first- and  second-order vertical derivatives, respectively.

\par
A notion of viscosity solutions for second-order Hamilton-Jacobi-Bellman (HJB) equations in Hilbert spaces has been introduced  by Lions~\cite{lio1, lio3} for a bounded operator $A$, and by \'{S}wi\c{e}ch~\cite{swi} for an unbounded operator $A$. In the latter case, the notion of the so-called $B$-continuous viscosity solutions plays a crucial role, which was initially introduced for first-order equations by
Crandall and  Lions~\cite{cra6, cra7}. In  the earlier paper,  Lions~\cite{lio2}  considered
a specific second-order HJB equation for optimal control of the Zakai
equation using  some ideas of the $B$-continuous viscosity solutions. Based on the {Crandall-Ishii lemma} of Lions~\cite{lio3}, the first comparison theorem for $B$-continuous viscosity sub- and super-solutions was proved by \'{S}wi\c{e}ch~\cite{swi}. The reader is referred to   the monograph   of  Fabbri et al.~\cite{fab1} for a detailed account
of  the theory of viscosity solutions in a separable Hilbert space.   Their  structural assumption
that the state space is a separable Hilbert space,   excludes  our metric space $\Lambda$. The preceding PHJB equation~\eqref{hjb1}  involves the path-wise derivatives (rather than Fr\'{e}chet derivatives) in $\Lambda$, and the above existing results in separable Hilbert spaces do not apply  to $\Lambda$.

\par
The viscosity solution of fully nonlinear  first-order path-dependent  Hamilton-Jacobi equations is fairly complete now, and the reader is referred  to  Lukoyanov  \cite{luk} and  Zhou~\cite{zhou}  for path variables  in an Euclidean space, and  to Zhou~\cite{zhou6}  for path variables in a separable  Hilbert space. 
The  Crandall-Lions viscosity solution of  the second-order PHJB equation, even  in  finite dimensional spaces,  turns out to be quite elusive, and had been open for a long time.   Early  attempts are referred to Peng~\cite{peng2.5}, Tang and Zhang~\cite{tang1},   Zhou~\cite{zhou3}, and in particular, to the striking  series  of intensive studies initiated by by Ekren et al. \cite{ekren1}  and subsequently  continued by Ekren et al.~\cite{ekren3, ekren4},  Ren \cite{ren}, Ekren \cite{ekren0},  Ren et al.~\cite{ren1},  Ren and Rosestolato~\cite{ren2}, and Cosso et al.~\cite{cosso},   whose notion of viscosity solutions  is featured by appealing to a nonlinear $g$-expectation (relevant to an appropriate class of equivalent probability measures), and  therefore lacks the salient path-wise feature in  Crandall-Lions' notion of viscosity solution  whose  tangency condition is  typically point-wise.

Recently, using the Borwein-Preiss  variational principle  and constructing suitable smooth gauge-type functionals  in the path space,  the second author~\cite{zhou5} resolves the elusive uniqueness of  viscosity solutions of PHJB equations in the finite-dimensional space in  the sense of Crandall-Lions.  Note that Cosso and  Russo~\cite{cosso1} also gave
a  comparison theorem (and thus  the uniqueness ) of viscosity solutions to the path-dependent heat equation with the Borwein-Preiss variational principle in a quite different but  interesting manner.  None of the above results  directly applies to our current situation, as a Crandall-Ishii lemma is involved  in the infinite-dimensional context.



\par
{In this monograph},  we shall
develop a notion  of  Crandall-Lions viscosity solutions  to the PHJB equation~\eqref{hjb1} on the space of  $H$-valued continuous paths,  
and show that the value functional
$V$  defined in  (\ref{value1}) is the  unique viscosity solution to the PHJB equation (\ref{hjb1}).

\par
As mentioned earlier, in the Markovian case with unbounded linear term, the core of  Crandall-Lions viscosity solutions is $B$-continuity. Specifically, the $B$-continuity of the coefficients (which ensures the $B$-continuity of the value functional  and   the comparison theorem)  is assumed  to overcome the difficulty caused by the unbounded operator $A$. {In this monograph}, we  remove the $B$-continuity assumption on the coefficients,  used in the existing  theory of Crandall-Lions viscosity solutions, and in the meantime we  extend it to the infinite-dimensional path-dependent context.
\par

Our core results are as follows.   We define the functional $\Upsilon:\Lambda\rightarrow \mathbb{R}$ as follows:
$$
\Upsilon(\gamma_t):=S(\gamma_t)+3|\gamma_t(t)|^6, \quad \gamma_t\in \Lambda
$$
with
\begin{eqnarray*}
	S(\gamma_t):=\begin{cases}
		{||\gamma_{t}||^{-12}_{0}}(||\gamma_{t}||_{0}^6-|\gamma_{t}(t)|^6)^3,  &||\gamma_{t}||_{0}\neq0; \\
		0,  &||\gamma_{t}||_{0}=0.
	\end{cases}
\end{eqnarray*}
It is equivalent to $||\cdot||_0^6$ and is differentiable in the sense of horizontal/vertical derivatives. It
plays a crucial  role in the proof of our stability and uniqueness results. The uniqueness property is derived from the comparison theorem. Here are three key  ingredients of  the proof of  our comparison theorem.
\par
(a)
For every fixed $(t,\gamma_t)\in [0,T)\times\Lambda_t$,   the functional   $f(\eta_s):=\Upsilon(\eta_s-\gamma_{t,s}^A)$, $(s,\eta_s)\in[t,T]\times \Lambda$ turns out to be very powerful in the construction of  an effective  test function in our definition of viscosity solutions as we show that $f$ satisfies  a functional It\^o inequality. This is important as  functional $\Upsilon$ is equivalent to $||\cdot||_0^6$. Then we  define an auxiliary function $\Psi$ which includes the  functional $f$.  By this, we only need the continuity  under $||\cdot||_0$ rather than  the $B$-continuity of the value functional. In particular, the comparison theorem is  established
when the coefficients is uniformly  Lipschitz continuous  with respect to the norm $||\cdot||_0$.
We emphasize that, in our viscosity solution theory in infinite dimensional spaces, the
$B$-continuity assumption on the coefficients is  removed in our framework. As a  crucial  consequence,  also as a salient feature,  our notion of viscosity solutions is still powerful  even in the
Markovian  case. More precisely, in the
Markovian  case, the functional $f$ above  reduces to    $f(s,y):=|y-e^{(s-t)A}x|_H^2$, $(s,y)\in[t,T]\times H$,  for every fixed $(t,x)\in [0,T)\times H$.   With help of this test functional, the $B$-continuity assumption on the coefficients, conventionally used in the literature (see Fabbri et al.~\cite[Chapter 3, p. 171-365 ]{fab1}),  is dispensed with in this monograph.
\par
(b)
We  define in terms of $\Upsilon$ a  smooth  gauge-type function $\overline{\Upsilon}:\Lambda\times \Lambda\rightarrow \mathbb{R}$ by
$$
\overline{\Upsilon}(\gamma_t,\eta_s)
:=\Upsilon(\eta_{s,s\vee t}^A-\gamma_{t,t\vee s}^A)+|s-t|^2, \quad
(\gamma_t,\eta_s)\in \hat{\Lambda}^2.
$$
Then we apply
a modification of Borwein-Preiss variational principle (see   Borwein \& Zhu~\cite[Theorem 2.5.2]{bor1}) to ensure the existence of  a maximum point of a perturbed auxiliary function $\Psi$.
\par
(c)   Unfortunately, the second-order vertical derivative $\partial_{xx}\Upsilon$ does not vanish. 
To apply the Crandall-Ishii lemma (see Theorem \ref{theorem0513}
), a stronger convergence property of auxiliary functional is required.
With help of a delicate calculation, we fortunately obtain the required convergence property of the auxiliary functional  and succeed at proving the comparison theorem.

\par
Regarding existence, we  show that the value functional  $V$  defined in  (\ref{value1}) is  a viscosity solution to the PHJB
equation given in  (\ref{hjb1}) by functional It\^o  formula, functional It\^o inequality  and dynamic programming principle.
Such a formula was firstly provided  in
Dupire  \cite{dupire1}
(see Cont and  Fourni\'e  \cite{cotn0}, \cite{cotn1} for a more general and systematic research). {In this book} we extend the functional It\^o
formula to  infinite dimensional spaces. We also provide a functional It\^o inequality for $f$ defined in (a) from functional It\^o
formula.
\par
Finally, following Dupire  \cite{dupire1}, we study PHJB equation (\ref{hjb1}) in the metric space $(\Lambda,d_\infty)$ {in the book}, while some literatures (for example,  \cite{cosso,cosso1}) study PHJB equations in a complete pseudometric space. The reason why we do so is the convenience of defining $\gamma_{t,\cdot}^A$ for every $(t,\gamma_t)\in [0,T)\times \Lambda$ in our framework, which is useful in defining metric $d_\infty$ and test functional $f$.
\par
The contents of {the monograph} are  outlined as follows. In Chapter 2,  we give preliminaries on notations and  BSDEs, and a  functional It\^o formula which  will be used to prove the existence of
viscosity solutions to PHJB equation (\ref{hjb1}). We also present a modification of
Borwein-Preiss variational principle.
In Chapter 3, we  construct  the smooth  gauge-type functionals $\Upsilon^{m,M}$ which are the key to proving the stability and uniqueness results of viscosity solutions.
Chapter 4 is devoted to studying  FSEE (\ref{state1}).  A functional It\^o inequality for $f$ defined in (a) is also provided by  functional It\^o
formula.
In Chapter 5, we introduce  preliminary results on path-dependent stochastic  optimal control problems, and  the dynamic programming principle,  which will be used in the subsequent chapters.
In Chapter 6, we define classical and viscosity solutions to
PHJB equations (\ref{hjb1}) and  prove  that the value functional $V$ defined by (\ref{value1}) is a viscosity solution.   We also show
the consistency with the notion of classical solutions and the stability result.  A Crandall-Ishii lemma  for functionals defined on the space of  $H$-valued continuous paths 
is given in Chapter 7.
The uniqueness of viscosity solutions to  (\ref{hjb1}) is proved in Chapter 8, and is specialized at the Markovian case in Chapter 9.   Chapter 10 is devoted to PHJB equations with a quadratic growth in the gradient.
\newpage
\section{Preliminaries
}  \label{RDS}

\subsection{Spaces of paths in Hilbert spaces}
Let $\Xi$, $K$ and $H$ denote real separable Hilbert spaces of  inner products
$\langle\cdot,\cdot\rangle_\Xi$, $\langle\cdot,\cdot\rangle_K$ and $\langle\cdot,\cdot\rangle_H$, respectively.  The notation $|\cdot|$ stands for the norm in
various spaces, and a subscript is attached to indicate the underlying Hilbert space whenever  necessary.
$L(\Xi,H)$ is the space of all
bounded linear operators from $\Xi$ into $H$, and  the subspace of
all Hilbert-Schmidt operators, equipped with the Hilbert-Schmidt norm, is
denoted by $L_2(\Xi,H)$. Write $L(H):=L(H,H)$. Denote by ${\mathcal{S}}(H)$ the Banach space of all bounded and self-adjoint
operators in Hilbert space $H$,  endowed with the operator norm. The operator $A$,  with the domain being denoted by ${\mathcal {D}}(A)$, generates a strongly continuous semi-group   $\{e^{tA}, t\geq0\}$ of bounded linear operators in Hilbert space $H$. $A^*$ is  the adjoint operator of $A$, whose domain is denoted by ${\mathcal {D}}(A^*)$.
Let
$T>0$ be a fixed number.  For each  $t\in[0,T]$,
define
$\hat{\Lambda}_t:=D([0,t];H)$ as  the set of c$\grave{a}$dl$\grave{a}$g  $H$-valued
functions on $[0,t]$.
Write  $\hat{\Lambda}^t:=\bigcup_{s\in[t,T]}\hat{\Lambda}_{s}$  and  $\hat{\Lambda}:=\hat{\Lambda}^0$.
\par
As in Dupire \cite{dupire1}, an element of $\hat{\Lambda}$ is denoted by a lower
case letter with a subscript indicating the final time of the time domain whenever necessary, e.g. $\gamma_t\in \hat{\Lambda}_t\subset \hat{\Lambda}$. Note that, for any $\gamma\in  \hat{\Lambda}$, there exists only one $t$ such that $\gamma\in  \hat{\Lambda}_t$. For any $0\leq s\leq t$, the value of
$\gamma_t$ at time $s$ will be denoted by $\gamma_t(s)$. Moreover, if
a path $\gamma_t$ is fixed, the path $\gamma_t|_{[0,s]}$, for $0\leq s\leq t$, will denote the restriction of the path  $\gamma_t$ to the interval
$[0,s]$.
\par
Following Dupire \cite{dupire1},  we define for $( x,\bar{t}, \gamma_t)\in H\times [t,T]\times  \hat{\Lambda}_t$ with $t\in [0, T] $, the following three paths  $\gamma^x_{t}\in\hat{\Lambda}_t$ and $\gamma_{t,\bar{t}}, \gamma_{t,\bar{t}}^A\in \hat{\Lambda}_{\bar{t}}$ as follows:
\begin{eqnarray*}
	\gamma^x_{t}(s)&:=&\gamma_t(s) 1_{[0,t)}(s)+[\gamma_t(t)+x] 1_{\{t\}}(s), \quad s\in [0,t], \\
	\gamma_{t,\bar{t}}(s)&:=& \gamma_t(s\wedge t),  \quad s\in [0,\bar{t}], 
                              \quad \hbox{\rm and }\\
	\gamma_{t,\bar{t}}^A(s)&:=&e^{(s-t)^+A}\gamma_t(s\wedge t), 
                             \quad s\in [0,\bar{t}].
\end{eqnarray*}
We define a norm on $\hat{\Lambda}_t$  and a metric on $\hat{\Lambda}$ as follows: for any 
$\gamma_t,\eta_s\in \hat{\Lambda}$,
\begin{eqnarray}\label{2.1}
||\gamma_t||_0:=\sup_{0\leq s\leq t}|\gamma_t(s)|,\  d_{\infty}(\gamma_t,\eta_s):=|t-s|+||\gamma_{t,t\vee s}^A-\eta_{s,s\vee t}^A||_0.
\end{eqnarray}
Then $(\hat{\Lambda}_t, ||\cdot||_0)$ is a Banach space, and $(\hat{\Lambda}^t, d_{\infty})$ is a complete metric space by Lemma 5.1 in \cite{zhou6}.

{ In the book,}  whenever convenient, we also use the notation $f|_a$ or $f(\cdot)|_a$ or $f(x)|_{x=a}$ to denote the image of a map $f$ at the point $a$, and use the notation  $f|^b_a$, $f(\cdot)|^b_a$ or $f(x)|^{x=b}_{x=a}$ to denote the difference $f(b)-f(a)$ of the images of a map $f$ at two points $b$ and $a$.

\par Now we define the path  derivatives of Dupire \cite{dupire1}.
\begin{definition}\label{definitionc0} (Path derivatives)
	Let $t\in [0,T)$ and  $f:\hat{\Lambda}^t\rightarrow \mathbb{R}$.
	\begin{description}
		\item[(i)]  Given $(s,\gamma_s)\in [t,T)\times \hat{\Lambda}^t$, the horizontal derivative of $f$ at $\gamma_s$ (if the corresponding limit exists and is finite) is defined as
		\begin{eqnarray}\label{2.3}
		\partial_tf(\gamma_s):=\lim_{h\to 0^+}\frac{1}{h}\left[f(\gamma_{s,s+h})-f(\gamma_s)\right].
		\end{eqnarray}
		For the final time $T$, the horizontal derivative of $f$ at $\gamma_T\in\hat{\Lambda}^t$ (if the corresponding limit exists and is finite) is defined as
		$$
		\partial_tf(\gamma_T):=\lim_{s\uparrow T^-}\partial_tf(\gamma_T|_{[0,s]}).
		$$
		If the above limit exists and is finite for every $(s,\gamma_s)\in [t,T]\times \hat{\Lambda}^t$, the functional $\partial_tf:\hat{\Lambda}^t\rightarrow \mathbb{R}$ is called the horizontal derivative of $f$ with domain $\hat{\Lambda}^t$.
		\par
		\item[(ii)]  Given $(s,\gamma_s)\in [t,T]\times \hat{\Lambda}^t$,
		if there is $B\in H$ such that
		$$
		\lim_{|h|\rightarrow0}\frac{1}{|h|}{\left|f(\gamma_s^{h})-f(\gamma_s)-\langle B, h\rangle_H\right|}=0,
		$$
		we say $\partial_{x}f(\gamma_s):=B$ to be the first-order vertical derivative  of $f$ at $\gamma_s$. If $\partial_xf$ exists  for every $(s,\gamma_s)\in [t,T]\times \hat{\Lambda}^t$, the map $\partial_xf:\hat{\Lambda}^t\rightarrow H$ is called the first-order vertical  derivative of $f$ with domain $\hat{\Lambda}^t$.
		\par
		If   there is $B_1\in {\mathcal{S}}(H)$ such that
		$$
		\lim_{|h|\rightarrow0}\frac{1}{|h|}{\left|\partial_{x}f(\gamma_s^{h})-\partial_{x}f(\gamma_s)-B_1h\right|}=0,
		$$
		we say $\partial_{xx}f(\gamma_s):=B_1$ to be the second-order vertical derivative  of $f$ at $\gamma_s$.
		If $\partial_{xx}f$ exists  for every $(s,\gamma_s)\in [t,T]\times \hat{\Lambda}^t$, the map $\partial_{xx}f:\hat{\Lambda}^t\rightarrow {\mathcal{S}}(H)$ is called the second-order vertical  derivative of $f$ with domain $\hat{\Lambda}^t$.
	\end{description}
\end{definition}

\begin{definition}\label{definitionc0409}
	Let $f:\hat{\Lambda}^t\rightarrow K$ be given for  some $t\in[0,T)$.
	\begin{description}
		\item[(i)]
		We say $f\in C^0(\hat{\Lambda}^t,K)$ if $f$ is continuous  on the metric space $(\hat{\Lambda}^t, d_{\infty})$.
		\item[(ii)]
		We say $f\in C_p^0(\hat{\Lambda}^t,K)$ if $f\in C^0(\hat{\Lambda}^t,K)$ and $f$ grows  in a polynomial way, i.e.,  there exist constants $L>0$ and $q\geq 0$ such that, for all $\gamma_s\in \hat{\Lambda}^t$,
		$$
		|f(\gamma_s)|\leq L(1+||\gamma_s||_0^q).
		$$
	\end{description}
\end{definition}
For simplicity, we write $C^0(\hat{\Lambda}^t):=C^0(\hat{\Lambda}^t,\mathbb{R})$ and $C^0_p(\hat{\Lambda}^t):=C_p^0(\hat{\Lambda}^t,\mathbb{R})$.
\begin{definition}\label{definitionc04091}
	Let $t\in[0,T)$ and $f:\hat{\Lambda}^t\rightarrow \mathbb{R}$ be given.
	\begin{description}
		\item[(i)]  We say $f\in C^{1,2}(\hat{\Lambda}^t)\subset C^0(\hat{\Lambda}^t)$ if   $\partial_tf$, $\partial_{x}f$ and $\partial_{xx}f$ exist and are continuous in $\gamma_s$  on the metric space $(\hat{\Lambda}^t, d_{\infty})$.
		\par
		\item[(ii)] We say
		$f\in C^{1,2}_p(\hat{\Lambda}^t)$ if $f\in C^{1,2}(\hat{\Lambda}^t)$ and $f$ together with  all its derivatives grow  in a polynomial way.
	\end{description}
\end{definition}
\par
Recall that $\Lambda_t:= C([0,t],H)$ is the set of all continuous $H$-valued functions defined over $[0,t]$. Set ${\Lambda}^t:=\bigcup_{s\in[t,T]}{\Lambda}_{s}$  and ${\Lambda}:={\Lambda}^0$.
Clearly, $\Lambda=\bigcup_{t\in[0,T]}{\Lambda}_{t}\subset\hat{\Lambda}$, and each $\gamma\in \Lambda$ is also an element of $\hat{\Lambda}$. $(\Lambda_t, ||\cdot||_0)$ is a
Banach space, and $(\Lambda^t,d_{\infty})$ is a complete metric space (see~\cite[Lemma 5.1]{zhou6}).

\begin{definition}\label{definitionc2}
	Let  $t\in [0,T)$ and  $f:{\Lambda}^t\rightarrow \mathbb{R}$  be given. We say
	\begin{description}
		\item[(i)]
		$f\in C^0({\Lambda}^t)$ if $f$ is continuous on the metric space $(\Lambda^t, d_{\infty})$,
		\item[(ii)]
		$f\in C_p^0({\Lambda}^t)$ if $f\in  C^0({\Lambda}^t)$ and grows  in a polynomial way,  and
		\item[(iii)] $f\in C^{1,2}_p({\Lambda}^t)$ if
		it has an extension $\hat{f}\in C^{1,2}_p(\hat{{\Lambda}}^t)$.
	\end{description}
\end{definition}
\par

By a cylindrical Wiener process defined on a  complete probability  space
$(\Omega,{\mathcal {F}},P)$, and with values in a Hilbert space $\Xi$,
we mean a family $\{W(t),t\geq0\}$ of linear mappings $\Xi\rightarrow
L^2(\Omega)$ such that for every $\xi, \eta \in \Xi$,
$\{W(t)\xi,t\geq0\}$ is a real Wiener process and
${\mathbb{E}}[W(t)\xi\cdot W(t)\eta]=t \langle\xi,\eta\rangle_\Xi $.  The filtration $\{{\mathcal {F}}_{t}\}_{0\leq t\leq T}$  is the natural one of $W$, augmented with the totality  ${\mathcal{N}}$ of all the $P$-null sets of $\mathcal{F}$:
$$
{\mathcal{F}}_{t}=\sigma(W(s):s\in[0,t])\vee \mathcal
{N}.
$$
The filtration
$\{{\mathcal {F}}_{t}\}_{0\leq t\leq T}$  satisfies the usual conditions.
For every $[t_1,t_2]\subset[0,T]$, we also use the
notation:
$$
{\mathcal{F}}_{t_1}^{t_2}=\sigma(W(s)-W(t_1):s\in[t_1,t_2])\vee \mathcal
{N}.
$$
We also write ${\mathcal{F}}^t$ for $\{{\mathcal{F}}^s_t, t\leq s\leq T\}$.
\par
By $\mathcal{P}$,   we denote the predictable $\sigma$-algebra generated by predictable processes,  and by $\mathcal{B}(\Theta)$,  we denote the Borel $\sigma$-algebra of a topological space $\Theta$.

Next, we introduce spaces of random variables or stochastic processes, taking values in a Hilbert space $K$.

For  $p\in[1,\infty)$ and $t\in [0,T]$, $L^p(\Omega,{\mathcal{F}}_{t},P;K)$ is the space of $K$-valued ${\mathcal{F}}_{t}$-measurable random variables $\xi$, equipped with the norm
$$|\xi|=\mathbb{E}[|\xi|^p]^{1/p}<\infty;$$
$L^{p}_{\mathcal{P}}(\Omega\times [0,T];K)$ is  the space of all predictable  processes $y\in L^p(\Omega\times [0,T];K)$, equipped  with the norm
$$|{y}|=\mathbb{E}\left[\int_{0}^{T}|{y}(t)|^{p}dt\right]^{1/p}<\infty;$$
$L^{p}_{\mathcal{P}}(\Omega;L^{2}([0,T];K))$, is  the space of all predictable  processes $\{y(t),t\in [0,T]\}$, equipped with  the  norm
$$|{y}|=\mathbb{E}\left[\left(\int_{0}^{T}|{y}(t)|^{2}dt\right)^{p/2}\right]^{1/p}<\infty;$$
for  $t\in (0,T]$, {$ L^{p}_{\mathcal{P}}(\Omega,\Lambda_t(K))$} is  the space of all $K$-valued continuous predictable processes
$\{y(s),s\in[0,t]\}$, equipped with the norm
$$|{y}|=\mathbb{E}\left[\|y_t\|_0^{p}\right]^{1/p}<\infty.$$
Elements of $L^{p}_{\mathcal{P}}(\Omega,\Lambda_t(K))$ are identified if they are  indistinguishable.

\subsection{Functional  stochastic calculus}
Assume that $\vartheta\in L^{p}_{\mathcal{P}}(\Omega\times [0,T];H)$, $\varpi\in L^{p}_{\mathcal{P}}(\Omega\times [0,T];L_2(\Xi,H))$ for some $p>2$, and $(\theta,\xi_{\theta})\in [0,T)\times L_{\mathcal{P}}^p(\Omega,\Lambda_\theta(H))$, 
then the following process
\begin{eqnarray}\label{formular1}
\begin{aligned}
X(s)=&e^{(s-\theta)A}\xi_{\theta}(\theta)+\int^{s}_{\theta}e^{(s-\sigma)A}\vartheta(\sigma)d\sigma\\
&+\int^{s}_{\theta}e^{(s-\sigma)A}\varpi(\sigma)dW(\sigma),\quad s\in [\theta,T],
\end{aligned}
\end{eqnarray}
and  $X(s)=\xi_\theta(s)$ for $s\in [0,\theta)$,
is well defined and $\mathbb{E}[\|X\|_0^{p}]<\infty$ (see~\cite[Proposition  7.3]{da}).

\begin{lemma}\label{theoremito}
Suppose  that  $f\in C_p^{1,2}(\hat{\Lambda}^{{t}})$ with $A^*\partial_xf\in C_p^0(\hat{\Lambda}^{{t}},H)$
	for some $t\in[\theta,T)$. Then, under the above conditions, $P$-a.s.,  for all $s\in [t,T]$:
	\begin{eqnarray}\label{statesop0}
	\begin{aligned}
	f(X_s)=&f(X_{t})+\int_{t}^{s}\biggl[\partial_tf(X_\sigma)+\langle A^*\partial_xf(X_\sigma), \, X(\sigma)\rangle_H\\
	&
	+\langle \partial_xf(X_\sigma), \, \vartheta(\sigma)\rangle_H+\frac{1}{2}\mbox{\rm Tr}(\partial_{xx}f(X_\sigma)\varpi(\sigma)\varpi^*(\sigma))\biggr]d\sigma\\
	&+\int^{s}_{t}\langle \partial_xf(X_\sigma), \, \varpi(\sigma)dW(\sigma)\rangle_H.
	\end{aligned}
	\end{eqnarray}
	Here and in the following, for $s\in [0,T]$, $X(s)$ is  the value of $X$  at
	time $s$, $X_s$ is the restriction of the process $X$ to the time interval $[0, s]$, and $D^*$ is the adjoint of $D\in L_2(\Xi,H)$.
\end{lemma}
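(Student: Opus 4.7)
The plan is to reduce the mild-solution setting to a strong-solution one by Yosida approximation, apply a Dupire-type functional It\^o formula in the regularised setting, and pass to the limit. First I introduce the resolvents $J_n := n(nI-A)^{-1}\in L(H)$ and Yosida approximants $A_n := AJ_n = n(J_n - I)\in L(H)$, both bounded with $\|J_n\|\le 1$, $J_n x\to x$ for every $x\in H$, and $A_n x\to Ax$ for every $x\in\mathcal{D}(A)$. Set $X^n(s):=J_n X(s)$; by applying $J_n$ to \eqref{formular1}, using $J_n e^{\sigma A}=e^{\sigma A}J_n$ and a stochastic Fubini argument, $X^n\in L^p_{\mathcal P}(\Omega,\Lambda_T(H))$ and satisfies the \emph{strong} equation
\begin{equation*}
X^n(s) = J_n\xi_\theta(\theta) + \int_\theta^s \bigl[A_n X^n(\sigma) + J_n\vartheta(\sigma)\bigr]\,d\sigma + \int_\theta^s J_n\varpi(\sigma)\,dW(\sigma),\quad s\in[\theta,T].
\end{equation*}

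Next I would apply a Dupire-type functional It\^o formula to $f(X^n_s)$. Since $A_n$ is bounded and $J_n\varpi$ is Hilbert--Schmidt, the standard time-partition argument of Dupire and Cont--Fourni\'e extends: decompose each increment $f(X^n_{t_{k+1}})-f(X^n_{t_k})$ into a horizontal piece on the flat extension (producing $\partial_t f$) and a vertical endpoint increment (producing $\partial_x f,\partial_{xx}f$ via the finite-dimensional-style It\^o formula). This yields
\begin{equation*}
\begin{aligned}
f(X^n_s) ={}& f(X^n_t)+\int_t^s\!\Bigl[\partial_tf(X^n_\sigma)+\langle\partial_xf(X^n_\sigma),A_nX^n(\sigma)+J_n\vartheta(\sigma)\rangle_H\\
&+\tfrac{1}{2}\mathrm{Tr}\bigl(\partial_{xx}f(X^n_\sigma)J_n\varpi(\sigma)\varpi^*(\sigma)J_n^*\bigr)\Bigr]d\sigma\\
&+\int_t^s\langle\partial_xf(X^n_\sigma),J_n\varpi(\sigma)dW(\sigma)\rangle_H.
\end{aligned}
\end{equation*}
The generator term is rewritten, using $A_n^*=J_n^*A^*$ and that $\partial_xf(X^n_\sigma)\in\mathcal{D}(A^*)$ (implicit in $A^*\partial_xf\in C_p^0(\hat\Lambda^t,H)$), as
\begin{equation*}
\langle\partial_xf(X^n_\sigma),A_nX^n(\sigma)\rangle_H=\langle J_n^*A^*\partial_xf(X^n_\sigma),\,X^n(\sigma)\rangle_H.
\end{equation*}

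Finally, I would let $n\to\infty$. Along a subsequence $X^n\to X$ uniformly in $s$ on $[0,T]$, $P$-a.s., so $d_\infty(X^n_\sigma,X_\sigma)\to 0$ uniformly in $\sigma$; combined with the $d_\infty$-continuity and polynomial growth of $f,\partial_tf,\partial_xf,\partial_{xx}f$ and $A^*\partial_xf$, the uniform bound $\|X^n\|_0\le\|X\|_0\in L^p(\Omega)$, and $\mathbb{E}[\|X\|_0^p]<\infty$, dominated convergence transports the $d\sigma$-integrals to their claimed limits, while the stochastic integral converges in $L^2(\Omega)$ by the It\^o isometry applied to $\partial_xf(X^n_\sigma)J_n\varpi(\sigma)-\partial_xf(X_\sigma)\varpi(\sigma)$.

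The main obstacle is the drift term carrying the unbounded generator, namely showing $\langle J_n^*A^*\partial_xf(X^n_\sigma),X^n(\sigma)\rangle_H\to\langle A^*\partial_xf(X_\sigma),X(\sigma)\rangle_H$. Here only \emph{strong} convergence of $J_n^*$ is available, so one must combine: (i) the nonlinear continuity of $A^*\partial_xf$ on $(\hat\Lambda^t,d_\infty)$ to convert $X^n\to X$ into $A^*\partial_xf(X^n_\sigma)\to A^*\partial_xf(X_\sigma)$; (ii) the strong convergence $J_n^*y\to y$ at the limit point $y=A^*\partial_xf(X_\sigma)$ together with a ``double approximation'' $J_n^*A^*\partial_xf(X^n_\sigma)=J_n^*[A^*\partial_xf(X^n_\sigma)-A^*\partial_xf(X_\sigma)]+J_n^*A^*\partial_xf(X_\sigma)$; and (iii) the polynomial growth plus the $L^p$ bound on $\|X\|_0$ to secure uniform integrability in $(\sigma,\omega)$ for dominated convergence. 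A secondary subtlety is that Step 2 demands $f\in C^{1,2}_p(\hat\Lambda^t)$ rather than only on $\Lambda^t$, because the partition produces càdlàg flat-extension paths; this is precisely why the hypothesis is stated on $\hat\Lambda^t$.
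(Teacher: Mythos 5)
Your proposal takes a genuinely different route from the paper. The paper does \emph{not} Yosida-approximate the generator. Instead it (i) localises with a stopping time $\tau_C$ to make $X$ bounded, (ii) carries out the Dupire time-partition decomposition directly on $[t,s]$ for the càdlàg approximants $X^n$, splitting each increment into a horizontal piece (yielding $\partial_t f$ via an ODE argument) and a vertical piece $f(X^n_{t_i})-f(X^n_{t_i-})=\pi(X(t_{i+1}))-\pi(X(t_i))$ with $\pi(x):=f(X^n_{t_i-}+(x-X(t_i))\mathbf{1}_{\{t_i\}})$, and (iii) applies the \emph{state-space} It\^o formula for mild solutions with unbounded $A$ from Fabbri--Gozzi--\'Swi\c{e}ch (their Proposition 1.164) to $\pi$ — the hypothesis $A^*\partial_x f\in C^0_p$ is exactly what makes $A^*\nabla_x\pi$ continuous and admissible there. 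Your route instead regularises $A$ and defers the hard unbounded term to a limit.

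However, as written your proposal has a genuine gap: the step ``apply a Dupire-type functional It\^o formula to $f(X^n_s)$'' is an appeal to precisely the result being proved, just with a bounded generator. Cont--Fourni\'e's argument is finite-dimensional; its extension to path functionals on $\hat\Lambda(H)$ with a Hilbert--Schmidt diffusion is not in the literature cited here and is the actual content of this lemma. You would still have to run the time-partition argument in the bounded-$A_n$ case, at which point you may as well run it once for unbounded $A$ as the paper does and skip the extra $n\to\infty$ passage entirely. Two secondary issues: the strong equation for $X^n=J_nX$ should have drift $AX^n(\sigma)=A_nX(\sigma)$ (not $A_nX^n(\sigma)$), since $AJ_n=A_n$ applied to $X$, not to $J_nX$; and your dominated-convergence argument leans on polynomial growth of $f$ and its derivatives together with $\mathbb E[\|X\|_0^p]<\infty$, but the polynomial degree $q$ in $C^{1,2}_p$ is unrelated to the given integrability exponent $p>2$, so $q>p$ can defeat domination — the paper's stopping-time localisation to bounded $X$ sidesteps this cleanly and you should adopt it.
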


\begin{proof} The proof is  similar to   that of Cont \& Fournie \cite[Theorem 4.1]{cotn1}.  See also Dupire \cite{dupire1}.

We assume that the process $X(s),\ s\in[0,T]$ is
bounded. This can be shown by localization. Namely
for arbitrary constant $C> 0$ define a stopping time
$\tau_C$:
$$
{\tau_C=\inf\{s\in[0,T]: |X(s)|\geq C\}}
$$
with the convention that $\tau_C=T$
if this set is empty. 
Define
$$
\vartheta_C(s):={\mathbf{1}}_{[0,\tau_C]}\vartheta(s),\quad
\varpi_C(s):={\mathbf{1}}_{[0,\tau_C]}\varpi(s),\quad
s\in[0,T],
$$
and $X^{C}(s):={e^{(s-\tau_C)^+A}X (s\wedge \tau_C)}$, i.e.
\begin{eqnarray*}
	\begin{cases}
		X^{C}(s)=X(s),\ \ \ \ \ \ \ \ \ \ \ \ \ \ \ \ \ \ \ \ \mbox{if}\ s\leq \tau_C;\\
		X^{C}(s)=e^{(s-\tau_C)A}X(\tau_C), \ \ \ \ \ \ \  \mbox{if}\ s>\tau_C.
	\end{cases}
\end{eqnarray*}
Therefore,
\begin{eqnarray*}
	X^{C}(s)&=&e^{(s-\theta)A}{\xi_{\theta}(\theta\wedge\tau_C)}
+\int_{\theta}^{s}{e^{(s-\sigma)A}}\vartheta_C(\sigma)d\sigma\\
	&&+\int_{\theta}^{s}{e^{(s-\sigma)A}}\varpi_C(\sigma)dW(\sigma), \ \
	s\in [\theta,T],
\end{eqnarray*}
and $X^{C}(s)={e^{(s-\tau_C)^+A}\xi (s\wedge \tau_C)},
  \ s\in [0,\theta)$.
If the formula (\ref{statesop0}) is true for $\vartheta_C$, $\varpi_C$ and
$X^{C}$ for arbitrary $C>0$, then, 
it is true in the general
case.
\par
For  $s\in[t,T]$, define $$X^n(\sigma):=X{\mathbf{1}}_{[0,t)}(\sigma)+\sum^{2^n-1}_{i=0}X(t_{i+1}){\mathbf{1}}_{[t_i,t_{i+1})}(\sigma)+X(s){\mathbf{1}}_{\{s\}}(\sigma),  \quad \sigma\in [0,s]. $$
Here $t_i:=t+\frac{i(s-t)}{2^n}$. For $(\sigma,\gamma_\sigma)\in [0,T]\times\hat{\Lambda}$, define $\gamma_{\sigma-}$  by
$$
\gamma_{\sigma-}(l):=\gamma_{\sigma}(l),\quad l\in [0,\sigma);\quad  \mbox{and } \gamma_{\sigma-}(\sigma):=\lim_{l\uparrow \sigma}\gamma_{\sigma}(l).
$$ We start with the decomposition
\begin{eqnarray}\label{decom}
&&f({X^n_{t_{i+1}-}})-f({X^n_{t_{i}-}})=f({X^n_{t_{i+1}-}})-f({X^n_{t_{i}}})
+f({X^n_{t_{i}}})-f({X^n_{t_{i}-}}),\ i\geq0.
\end{eqnarray}
Let $\psi(\sigma):=f({X^n_{t_{i},t_i+\sigma}}), \sigma\in [0,h]$, we have
$$f({X^n_{t_{i+1}-}})-f({X^n_{t_{i}}})=\psi(h)-\psi(0),$$
with $h:=\frac{s-t}{2^n}$.
Denoting by $\psi_{t^+}$ the right derivative of $\psi$, we have
\begin{eqnarray*}
	&&\psi_{t^+}(l)=\lim_{\delta\to 0^+}\frac{1}{\delta}{[\psi(l+\delta)-\psi(l)]}\\
	&=&\lim_{\delta\rightarrow0^+}\frac{1}{\delta}{[f({X^n_{t_{i},t_i+l+\delta}})-f({X^n_{t_{i},t_i+l}})]}
	=\partial_tf(X_{t_i,t_i+l}^n),\quad  l\in[0,h).
\end{eqnarray*}
Since
\begin{eqnarray*}
	&&d_\infty({X^n_{t_{i},t_i+l_1}},{X^n_{t_{i},t_i+l_2}})\\
	&\leq&|l_1-l_2|+
                    \sup_{0\leq\sigma\leq |l_1-l_2|}|X^n(t_{i+1})-e^{\sigma A}X^n(t_{i+1})|, \ \ \ l_1, l_2\in [0,h],
\end{eqnarray*}
and  $f\in C^{1,2}_p(\hat{\Lambda}^{t})$, we have $\psi$ is continuous on $[0,h]$ and $\psi_{t^+}$ is continuous on $[0,h)$.  
Then, it follows  from~\cite[Corollary 1.2, Chapter 2]{pazy} that $\psi\in C^{1}([0,h);\mathbb{R})$. Therefore,
$$
f({X^n_{t_{i+1}}}_{-})-f({X^n_{t_{i}}})=\psi(h)-\psi(0)=\int^{h}_{0}\psi_{t^+}(l)dl=\int^{t_{i+1}}_{t_i}\partial_tf(X_{t_i,l}^n)dl, \ i\geq 0.
$$
The term $f({X^n_{t_{i}}})-f({X^n_{t_{i}-}})$ in (\ref{decom}) can be written as $\pi(X(t_{i+1}))-\pi(X(t_i))$, where
$\pi(x):=f({X^n_{t_{i}-}}+(x-X(t_i)){\mathbf{1}}_{\{t_i\}})$. Since $f\in C^{1,2}_p(\hat{\Lambda}^{t})$ and $A^*\partial_xf\in C_p^0(\hat{\Lambda}^{{t}},H)$, $\pi, \nabla_x\pi, \nabla_x^2\pi$ and $A^*\nabla_x\pi$
are continuous and grow in a polynomial way on $H$, and
$$\nabla_x\pi(x)=\partial_xf({X^n_{t_{i}-}}+(x-X(t_i)){\mathbf{1}}_{\{t_i\}}),\ \ \ \nabla_x^2\pi(x)=\partial_{xx}f({X^n_{t_{i}-}}+(x-X(t_i)){\mathbf{1}}_{\{t_i\}}).$$
Here  and in the rest of this paper,
$\nabla_x$ and $\nabla_{x}^2$ denote respectively   the   first- and second-order Fr\'echet  derivatives
with respect to $x\in H$.
Applying the It\^o formula (see \cite[Proposition 1.164]{fab1}) to $\pi$  and the continuous process  $(X(t_{i}+l))_{l\geq0}$, we have
\begin{eqnarray}\label{ito}
\qquad&\begin{aligned}
&\quad f({X^n_{t_{i}}})-f({X^n_{t_{i}-}})=\pi({X(t_{i+1})})-\pi(X(t_i))\\
&=\int^{t_{i+1}}_{t_i}\biggl[\langle A^*\partial_xf({X^n_{t_{i}}}_{-}+(X(\sigma)-X(t_i)){\mathbf{1}}_{\{t_{i}\}}),X(\sigma)\rangle_H\\
&\quad +
\langle \partial_xf({X^n_{t_{i}}}_{-}+(X(\sigma)-X(t_i)){\mathbf{1}}_{\{t_{i}\}}),\vartheta(\sigma)\rangle_H\\
&\quad
+\frac{1}{2}\mbox{Tr}[\partial_{xx}f({X^n_{t_{i}-}}+(X(\sigma)-X(t_i)){\mathbf{1}}_{\{t_i\}})(\varpi\varpi^*)(\sigma)]\biggr]d\sigma\\
&\quad
+ \int^{t_{i+1}}_{t_i}\langle \partial_xf({X^n_{t_{i}-}}+(X(\sigma)-X(t_i)){\mathbf{1}}_{\{t_i\}}), \varpi(\sigma)dW(\sigma)\rangle _H, \quad  i\geq 0.
\end{aligned}
\end{eqnarray}
Summing over $i\geq 0$ and denoting by $i(\sigma)$ the index such that $\sigma\in [t_{i(\sigma)},t_{i(\sigma)+1})$, we have
\begin{eqnarray}\label{app}
\qquad&\begin{aligned}
& f(X^n_s)-f(X_{t})=f(X^n_s)-f(X^n_{t-})\\
=&\int^{s}_{t}\partial_tf(X_{t_{i(\sigma)},\sigma}^n)d\sigma\\
&+\int^{s}_{t}[\langle A^*\partial_xf({X^n_{t_{i(\sigma)}}}_{-}+(X(\sigma)-X(t_{i(\sigma)})){\mathbf{1}}_{\{t_{i(\sigma)}\}}), X(\sigma)\rangle_H \\
&+\langle\partial_xf({X^n_{t_{i(\sigma)}}}_{-}+(X(\sigma)-X(t_{i(\sigma)})){\mathbf{1}}_{\{t_{i(\sigma)}\}}),\vartheta(\sigma)\rangle_H \\
&
+\frac{1}{2}\mbox{Tr}[\partial_{xx}f({X^n_{t_{i(\sigma)}-}}+(X(\sigma)-X(t_{i(\sigma)})){\mathbf{1}}_{\{t_{i(\sigma)}\}})(\varpi\varpi^*)(\sigma)]]d\sigma\\
&
+ \int^{s}_{t}\langle \partial_xf({X^n_{t_{i(\sigma)}-}}+(X(\sigma)-X(t_{i(\sigma)})){\mathbf{1}}_{\{t_{i(\sigma)}\}}),\varpi(\sigma)dW(\sigma)\rangle_H.
\end{aligned}
\end{eqnarray}
 Since, for all $\sigma\in [t_{i(\sigma)},t_{i(\sigma)+1})$,
$$
d_\infty(X_{t_{i(\sigma)},\sigma}^n,X_\sigma)= ||X_{t_{i(\sigma)}, \sigma}^n-X_\sigma||_0\leq ||X_{s}^n-X_s||_0, 
$$
\begin{eqnarray*}
	&&d_\infty(X_\sigma, {X^n_{t_{i(\sigma)}}}_{-}+(X(\sigma)-X(t_{i(\sigma)})){\mathbf{1}}_{\{t_{i(\sigma)}\}})\\
	&\leq& h+ ||X_{s}^n-X_s||_0+\sup_{l\in [0,\sigma-t_{i(\sigma)}]}|e^{lA}X(\sigma)-X(t_{i(\sigma)+1})|\\
	&\leq& h+ ||X_{s}^n-X_s||_0+\sup_{l_1,l_2\in [0,s], |l_1-l_2|\leq h}|X(l_1)-X(l_2)|\\
	&&  +\sup_{l\in [0,h]}|e^{lA}X(\sigma)-X(\sigma)|,
\end{eqnarray*}
and
$$\lim_{n\to \infty}||X^n_s-X_s||_0=0,$$
both facts  $f\in C^{1,2}_p(\hat{\Lambda}^{t})$ and $A^*\partial_xf\in C_p^0(\hat{\Lambda}^{{t}},H)$
yield the following   three limits
$$\lim_{n\to \infty}  f(X^n_s)=f(X_s),  \quad \lim_{n\to \infty} \partial_tf(X_{t_{i(\sigma)},\sigma}^n)=\partial_t f(X_\sigma),$$
and
\begin{eqnarray*}
	&&\lim_{n\to \infty} (A^*\partial_xf, \partial_xf, \partial_{xx}f)({X^n_{t_{i(\sigma)}}}_{-}+(X(\sigma)-X(t_{i(\sigma)})){\mathbf{1}}_{\{t_{i(\sigma)}\}}) \\
	&=&(A^*\partial_xf, \partial_xf, \partial_{xx}f)(X_\sigma).
\end{eqnarray*}
Since  $f\in C^{1,2}_p(\hat{\Lambda}^{t})$, $A^*\partial_xf\in C_p^0(\hat{\Lambda}^{{t}},H)$ and $X$ is bounded, the integrands of all the  integrals of equality~\eqref{app} are bounded.  The dominated convergence and Burkholder-Davis-Gundy inequalities  for  the stochastic integrals 
then ensure that the Lebesgue integrals converge almost surely,
and the stochastic integral in probability, to the terms appearing in (\ref{statesop0}) as $n\rightarrow\infty$.
\end{proof}

From  the last Lemma, we have the following important results.
\begin{lemma}\label{0815lemma}
	If $f\in C_p^{1,2}(\Lambda^t)$ is the restriction of  $\hat{f}\in C_p^{1,2}(\hat{\Lambda}^t)$, then the following derivatives
	$$
	\partial_tf:=\partial_t\hat{f}, \ \ \ \partial_xf:=\partial_x\hat{f}, \ \ \ \partial_{xx}f:=\partial_{xx}\hat{f} \ \ \mbox{on} \ \Lambda^t
	$$
	do not depend on the choice of $\hat{f}$. That is, if $f$ has another extension $\hat{f}'\in C_p^{1,2}(\hat{\Lambda}^t)$, both the derivatives of $\hat{f}'$ and $\hat{f}$ have the identical restriction to $\Lambda^t$.
\end{lemma}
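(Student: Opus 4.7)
The plan hinges on setting $g := \hat{f} - \hat{f}'$, which lies in $C_p^{1,2}(\hat{\Lambda}^t)$ and vanishes on $\Lambda^t$ because both $\hat{f}$ and $\hat{f}'$ agree with $f$ on continuous paths. The claim thus reduces to showing that $\partial_t g$, $\partial_x g$, and $\partial_{xx} g$ all vanish on $\Lambda^t$.

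For the horizontal derivative, I would note that for $\gamma_s \in \Lambda^t$ with $s<T$ the constantly-extended path $\gamma_{s,s+h}$ is still continuous, so $\gamma_{s,s+h} \in \Lambda_{s+h} \subset \Lambda^t$; hence $g(\gamma_{s,s+h}) = g(\gamma_s) = 0$ and the defining limit gives $\partial_t g(\gamma_s) = 0$. The case $s = T$ follows from the limit definition of $\partial_t$ at the terminal time.

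For the vertical derivatives, the idea is to apply the functional It\^o formula (Lemma~\ref{theoremito}) to $g$ along a continuous mild solution $X$ of the form~\eqref{formular1} starting from $\gamma_s$, with $\vartheta \equiv 0$ and $\varpi \equiv \sigma_0$ deterministic in $L_2(\Xi,H)$. Since $X$ is continuous, $X_r \in \Lambda^t$ and $g(X_r) \equiv 0$ on $[s, T]$, so the right-hand side of the It\^o identity is identically zero. Uniqueness of the semimartingale decomposition forces the martingale and bounded-variation parts to vanish separately. The martingale part gives $\sigma_0^* \partial_x g(X_u) = 0$ almost everywhere in $u$; sending $u \to s^+$ by continuity of $\partial_x g$ and varying $\sigma_0$ over $L_2(\Xi, H)$ produces $\partial_x g(\gamma_s) = 0$. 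Feeding this back---together with $\partial_t g(\gamma_s) = 0$ and the consequent $A^* \partial_x g(\gamma_s) = 0$---into the bounded-variation integrand leaves $\mathrm{Tr}\bigl(\partial_{xx} g(\gamma_s)\sigma_0\sigma_0^*\bigr) = 0$ for every $\sigma_0$, which forces $\partial_{xx} g(\gamma_s) = 0$.

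The chief obstacle is the hypothesis $A^* \partial_x g \in C_p^0(\hat{\Lambda}^t, H)$ demanded by Lemma~\ref{theoremito}, which is not automatic from $g \in C_p^{1,2}(\hat{\Lambda}^t)$ alone. I would bypass this by first establishing $\partial_x g = 0$ on the dense subset of paths $\gamma_s$ with $\gamma_s(s) \in \mathcal{D}(A)$, where a strong-solution variant of It\^o's formula is available and the awkward term $\langle A^*\partial_x g, X\rangle$ can be replaced by $\langle \partial_x g, AX\rangle$ with $AX \in H$ directly; continuity of $\partial_x g$ on $\hat{\Lambda}^t$ then extends the identity to all of $\Lambda^t$. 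Once $\partial_x g \equiv 0$ on $\Lambda^t$, the quantity $A^* \partial_x g$ is trivially zero there, removing the obstacle, and the remaining step for $\partial_{xx} g$ proceeds as outlined above.
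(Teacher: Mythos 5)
Your horizontal-derivative step is the same as the paper's: both observe that the constant time-extension of a continuous path is still continuous, so the limit defining $\partial_t$ only probes values of $\hat f$ and $\hat f'$ on $\Lambda^t$, where they coincide.

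For the vertical derivatives, however, the paper dissolves the obstacle you flag with a one-line device that your proposal misses: when applying Lemma~\ref{theoremito} it specializes the operator appearing in the auxiliary dynamics~\eqref{formular1} to $A=\mathbf{0}$. Since the hypothesis $A^*\partial_x f\in C_p^0(\hat\Lambda^t,H)$ in Lemma~\ref{theoremito} refers to that very operator, taking $A=\mathbf{0}$ there renders it vacuous, and the auxiliary trajectory remains continuous, hence in $\Lambda^t$ where $\hat f$ and $\hat f'$ agree. Concretely, the paper first takes $(\vartheta,\varpi)=(h,\mathbf{0})$, a deterministic drifting path, to conclude $\int_l^s\langle \partial_x\hat f(X_\sigma),h\rangle_H\,d\sigma=\int_l^s\langle\partial_x\hat f'(X_\sigma),h\rangle_H\,d\sigma$ and deduces $\partial_x\hat f(\gamma_l)=\partial_x\hat f'(\gamma_l)$ by continuity and arbitrariness of $h$; it then takes $(\vartheta,\varpi)=(\mathbf{0},a)$ for the second-order derivative. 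Your version extracts both from a single driftless diffusion via the semimartingale decomposition, which also works in principle, but the first-order step is cleaner when done deterministically.

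The workaround you sketch to handle $A^*\partial_x g$ has real gaps. A ``strong-solution variant'' of the \emph{functional} It\^o formula replacing $\langle A^*\partial_x g, X\rangle$ by $\langle\partial_x g, AX\rangle$ is not established in the paper, and proving it in the path-dependent setting is not routine; you cannot simply invoke it. Furthermore, even with $\gamma_s(s)\in\mathcal{D}(A)$, the stochastic convolution $\int_s^\sigma e^{(\sigma-r)A}\sigma_0\,dW(r)$ need not take values in $\mathcal{D}(A)$ unless you additionally restrict $\sigma_0$ (for example, require $A\sigma_0\in L_2(\Xi,H)$), which then forces a second density argument in the coefficient. All of this becomes unnecessary once one notices that the operator in the auxiliary dynamics~\eqref{formular1} may be chosen freely and, in particular, set to $\mathbf{0}$.
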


\begin{proof}
By the definition of the horizontal derivative, it is clear that $\partial_t\hat{f}(\gamma_l)=\partial_t\hat{f}'(\gamma_l)$ for every $(l,\gamma_l)\in [t,T]\times\Lambda^t$.
Next,  setting $A=\mathbf{0}$,  $\varpi=\mathbf{0}$,  $\theta=l$, $\xi_{l}=\gamma_l\in \Lambda_l$
and $\vartheta(\cdot)\equiv h\in H$ in (\ref{formular1}),  in view of Lemma \ref{theoremito}, we have
$$
\int^{s}_{l}\langle \partial_x\hat{f}(X_\sigma), h\rangle_H d\sigma=\int^{s}_{l}\langle \partial_x\hat{f}'(X_\sigma), h\rangle_H d\sigma, \quad s\in [l,T],
$$
where $X_\sigma(r):=\gamma_l(r\wedge l)+(r-l)h{\mathbf{1}}_{(l,\sigma]}(r)$ for $r\in [0,\sigma]$.
Here and
in the sequel, for notational simplicity, we use $\mathbf{0}$ to denote elements, operators, or paths  which are
identically equal to zero.
By the continuity of $\partial_x\hat{f}$ and $\partial_x\hat{f}'$  and the arbitrariness of $h\in H$, we  have  $\partial_x\hat{f}(\gamma_l)=\partial_x\hat{f}'(\gamma_l)$ for every $(l,\gamma_l)\in [t,T]\times\Lambda^t$.
Finally,  setting $A=\mathbf{0}$,  $\vartheta=\mathbf{0}$,  $\theta=l$, $\xi_{l}=\gamma_l\in \Lambda_l$ and $\varpi(\cdot)\equiv a\in L_2(\Xi, H)$ in (\ref{formular1}),   in view of  Lemma \ref{theoremito}, we have
$$
\int^{s}_{l}\mbox{Tr}(\partial_{xx}\hat{f}(X_\sigma)aa^*)d\sigma =\int^{s}_{l}\mbox{Tr}(\partial_{xx}\hat{f}'(X_\sigma)aa^*)d\sigma, \ \ s\in [l,T],
$$
where $X_\sigma(r):=\gamma_l(r\wedge l)+a(W(r)-W(l)){\mathbf{1}}_{(l,\sigma]}(r)$ for $r\in [0,\sigma]$.
Since $\partial_{xx}\hat{f}\in {\mathcal{S}}(H)$ and $\partial_{xx}\hat{f}'\in {\mathcal{S}}(H)$ are continuous and $a\in L_2(\Xi, H)$ is arbitrary, we also have  $\partial_{xx}\hat{f}(\gamma_l)=\partial_{xx}\hat{f}'(\gamma_l)$ for every $(l,\gamma_l)\in [t,T]\times\Lambda^t$.
\end{proof}

\subsection{Borwein-Preiss variational principle}

In this section,  we introduce  {a modification of Borwein-Preiss variational principle (see Borwein \& Zhu  \cite[Theorem 2.5.2]{bor1}), which is} crucial to proving  the uniqueness and  stability  of viscosity solutions. We
firstly recall the definition of gauge-type function for compete  metric  space {$(E,d)$}.
\begin{definition}\label{gaupe}
	Let $(E,d)$ be a compete metric space.
	We say that a  continuous  functional $\rho:H\times H\rightarrow [0,\infty)$ is a {gauge-type function} on the compete metric space $(E,d)$
	provided that:
	\begin{description}
		\item[(i)] $\rho(x,x)=0$ for all $x\in E$,
		\item[(ii)] for any $\varepsilon>0$, there exists $\delta>0$ such that, for all $x,y\in E$, we have $\rho(x,y)\leq \delta$ implies that
		$d(x,y)<\varepsilon$.
	\end{description}
\end{definition}

\begin{lemma}\label{theoremleft}
	Let $t\in [0,T]$ be fixed and let $f:\Lambda^t\rightarrow \mathbb{R}$ be an upper semicontinuous functional  bounded from above. Suppose that $\rho$ is a gauge-type function
	and $\{\delta_i\}_{i\geq0}$ is a sequence of positive number, and suppose that $\varepsilon>0$ and $(t_0,\gamma^0_{t_0})\in [t,T]\times \Lambda^t$ satisfy
	$$
	f\left(\gamma^0_{t_0}\right)\geq \sup_{(s,\gamma_s)\in [t,T]\times \Lambda^t}f(\gamma_s)-\varepsilon.
	$$
	Then there exist $(\hat{t},\hat{\gamma}_{\hat{t}})\in [t,T]\times \Lambda^t$ and a sequence $\{(t_i,\gamma^i_{t_i})\}_{i\geq1}\subset [t,T]\times \Lambda^t$ such that
	\begin{description}
		\item[(i)] $\rho(\gamma^0_{t_0},\hat{\gamma}_{\hat{t}})\leq \frac{\varepsilon}{\delta_0}$,  $\rho(\gamma^i_{t_i},\hat{\gamma}_{\hat{t}})\leq \frac{\varepsilon}{2^i\delta_0}$ and $t_i\uparrow \hat{t}$ as $i\rightarrow\infty$,
		\item[(ii)]  $f(\hat{\gamma}_{\hat{t}})-\sum_{i=0}^{\infty}\delta_i\rho(\gamma^i_{t_i},\hat{\gamma}_{\hat{t}})\geq f(\gamma^0_{t_0})$, and
		\item[(iii)]  $f(\gamma_s)-\sum_{i=0}^{\infty}\delta_i\rho(\gamma^i_{t_i},\gamma_s)
		<f(\hat{\gamma}_{\hat{t}})-\sum_{i=0}^{\infty}\delta_i\rho(\gamma^i_{t_i},\hat{\gamma}_{\hat{t}})$ for all $(s,\gamma_s)\in [\hat{t},T]\times \Lambda^{\hat{t}}\setminus \{(\hat{t},\hat{\gamma}_{\hat{t}})\}$.
		
	\end{description}
\end{lemma}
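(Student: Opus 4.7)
The plan is to adapt to the path space $(\Lambda^{t_0}, d_\infty)$ the iterative construction underlying the classical Borwein--Preiss variational principle \cite[Theorem~2.5.2]{bor1}. The structural inputs are the completeness of $(\Lambda^{t_0}, d_\infty)$ (Lemma~5.1 of \cite{zhou6}), the upper semicontinuity of $f$ together with its boundedness from above, and the gauge-type property of $\rho$, which not only promotes $\rho$-closeness to $d_\infty$-closeness but, being combined with $\rho(\gamma_t,\gamma_t)=0$, forces $\rho(\gamma_t,\eta_s)=0$ if and only if $(t,\gamma_t)=(s,\eta_s)$.

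The iteration goes as follows. Set $f_i(\gamma_s):=f(\gamma_s)-\sum_{j=0}^{i}\delta_j\rho(\gamma^j_{t_j},\gamma_s)$, with the convention $f_{-1}(\gamma^0_{t_0}):=f(\gamma^0_{t_0})$, and inductively build a nested sequence of closed nonempty sets
\[
A_i:=\bigl\{(s,\gamma_s)\in [t_i,T]\times\Lambda^{t_i}:f_i(\gamma_s)\geq f_{i-1}(\gamma^i_{t_i})\bigr\},\qquad i\geq 0,
\]
picking at each step $(t_{i+1},\gamma^{i+1}_{t_{i+1}})\in A_i$ satisfying
\[
f_i(\gamma^{i+1}_{t_{i+1}})\geq \sup_{A_i}f_i-\frac{\varepsilon\,\delta_{i+1}}{2^{i+1}\delta_0}.
\]
Unpacking the defining inequality of $A_{i+1}$ together with $A_{i+1}\subset A_i$ and the approximate-supremum choice produces the sharp bound $\rho(\gamma^{i+1}_{t_{i+1}},\gamma_s)\leq\varepsilon/(2^{i+1}\delta_0)$ on $A_{i+1}$, and analogously $\rho(\gamma^0_{t_0},\gamma_s)\leq\varepsilon/\delta_0$ on $A_0$.

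The gauge-type property then promotes these $\rho$-bounds to $d_\infty$-shrinking of $A_i$, so $\{\gamma^i_{t_i}\}$ is $d_\infty$-Cauchy in the complete space $(\Lambda^{t_0},d_\infty)$; call its limit $\hat{\gamma}_{\hat{t}}$, with $\hat{t}=\lim_i t_i\in[t_0,T]$ obtained from the monotonicity of $\{t_i\}$. Closedness of the $A_i$ combined with $\{\gamma^j_{t_j}\}_{j\geq i}\subset A_i$ places $\hat{\gamma}_{\hat{t}}\in\bigcap_i A_i$, and the diameter estimate forces $\bigcap_i A_i=\{(\hat{t},\hat{\gamma}_{\hat{t}})\}$. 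Specializing the $\rho$-bound to $\gamma_s=\hat{\gamma}_{\hat{t}}$ yields (i), and telescoping $f(\gamma^0_{t_0})\leq f_0(\hat{\gamma}_{\hat{t}})$ and $f_{i-1}(\gamma^i_{t_i})\leq f_i(\hat{\gamma}_{\hat{t}})$ as $i\to\infty$ gives (ii), the relevant series converging thanks to (i).

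The main obstacle will be the global strict inequality (iii). For $(s,\gamma_s)\neq(\hat{t},\hat{\gamma}_{\hat{t}})$ in $[\hat{t},T]\times\Lambda^{\hat{t}}$, singletonness of $\bigcap_i A_i$ provides a smallest $N$ with $(s,\gamma_s)\notin A_N$; by nesting, $(s,\gamma_s)\notin A_i$ for every $i\geq N$, which unpacks to the finite-level strict bound
\[
\sum_{j=0}^{i}\delta_j\bigl[\rho(\gamma^j_{t_j},\gamma_s)-\rho(\gamma^j_{t_j},\hat{\gamma}_{\hat{t}})\bigr]>f(\gamma_s)-f(\hat{\gamma}_{\hat{t}}),\qquad i\geq N.
\]
The delicate point is to pass to the full series while keeping the inequality strict, since $\rho$ need not satisfy a triangle inequality. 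The idea is to exploit the continuity of $\rho$ and the $d_\infty$-convergence $\gamma^j_{t_j}\to\hat{\gamma}_{\hat{t}}$: this gives $\rho(\gamma^j_{t_j},\gamma_s)\to\rho(\hat{\gamma}_{\hat{t}},\gamma_s)>0$ (the positivity coming from $\gamma_s\neq\hat{\gamma}_{\hat{t}}$ and the gauge property) and $\rho(\gamma^j_{t_j},\hat{\gamma}_{\hat{t}})\to 0$, so $\rho(\gamma^j_{t_j},\gamma_s)-\rho(\gamma^j_{t_j},\hat{\gamma}_{\hat{t}})>0$ for all sufficiently large $j$. Hence once $i$ is taken large enough, the tail $\sum_{j>i}\delta_j[\rho(\gamma^j_{t_j},\gamma_s)-\rho(\gamma^j_{t_j},\hat{\gamma}_{\hat{t}})]$ is nonnegative term-by-term, and adding it to the strict finite-level bound preserves strictness, yielding the full strict inequality in (iii).
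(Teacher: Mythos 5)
Your construction reproduces the paper's iterative scheme --- the nested closed sets (the paper's $B_i$, your $A_i$), the $\rho$-radius bounds on them, Cantor's intersection theorem to extract the singleton $\bigcap_i B_i=\{(\hat t,\hat\gamma_{\hat t})\}$, and (i) --- so the architecture is the same. Two points differ. On (ii), attributing the convergence of $\sum_i\delta_i\rho(\gamma^i_{t_i},\hat\gamma_{\hat t})$ to (i) alone is a misstep for a general positive sequence $\{\delta_i\}$: (i) only gives $\rho(\gamma^i_{t_i},\hat\gamma_{\hat t})\le\varepsilon/(2^i\delta_0)$, which does not control $\delta_i\rho(\gamma^i_{t_i},\hat\gamma_{\hat t})$ if $\delta_i$ grows; the convergence is instead a by-product of the telescoped chain $f(\gamma^0_{t_0})\le f(\hat\gamma_{\hat t})-\sum_{k=0}^{q}\delta_k\rho(\gamma^k_{t_k},\hat\gamma_{\hat t})$, which bounds the increasing partial sums. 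On (iii), you compare the two series tail-by-tail, invoking joint continuity of $\rho$ to get $\rho(\gamma^j_{t_j},\gamma_s)-\rho(\gamma^j_{t_j},\hat\gamma_{\hat t})>0$ eventually and then appending the nonnegative tail. This is correct but heavier than needed. The paper instead notes that $\rho\ge0$ makes
\begin{equation*}
f(\gamma_s)-\sum_{k=0}^{\infty}\delta_k\rho(\gamma^k_{t_k},\gamma_s)\;\le\; f(\gamma_s)-\sum_{k=0}^{j}\delta_k\rho(\gamma^k_{t_k},\gamma_s),
\end{equation*}
so a single $j$ with $(s,\gamma_s)\notin B_j$ gives a strict upper bound by $f(\gamma^j_{t_j})-\sum_{k=0}^{j-1}\delta_k\rho(\gamma^j_{t_j},\gamma^k_{t_k})$, which the chain used for (ii) then pushes below $f(\hat\gamma_{\hat t})-\sum_{k=0}^{\infty}\delta_k\rho(\gamma^k_{t_k},\hat\gamma_{\hat t})$. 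Your route uses continuity of $\rho$ (admissible, it is part of the gauge-type hypothesis) and must choose a large index depending on the excluded point; the paper's route uses only nonnegativity, any one violating index, and the already-established (ii) chain, and is therefore shorter.
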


\begin{proof} The proof is similar to that of Borwein \& Zhu  \cite[Theorem 2.5.2 ]{bor1}, and is given below for  convenience of the reader.

Define sequences $\{(t_i,\gamma^i_{t_i})\}_{i\geq1}$ and $\{B_i\}_{i\geq1}$ inductively as follows. Set
\begin{eqnarray}\label{left1}
B_0:=\{(s,\gamma_s)\in [t_0,T]\times \Lambda^{t_0}| \ f(\gamma_s)-\delta_0\rho(\gamma_s,\gamma^0_{t_0})\geq f(\gamma^0_{t_0})\}.
\end{eqnarray}
Since $(t_0,\gamma^0_{t_0})\in B_0$, $B_0$ is nonempty. Moreover it is closed because both $f$ and $-\rho(\cdot,\gamma^0_{t_0})$ are upper semicontinuous functions. We also have that, for
all $(s,\gamma_s)\in B_0$,
\begin{eqnarray}\label{left2}
\delta_0\rho(\gamma_s,\gamma^0_{t_0})\leq f(\gamma_s)-f(\gamma^0_{t_0})\leq \sup_{(l,\eta_l)\in [t,T]\times \Lambda^t}f(\eta_l)-f(\gamma^0_{t_0})\leq \varepsilon.
\end{eqnarray}
Take $(t_1,\gamma^1_{t_1})\in B_0$ such that
\begin{eqnarray}\label{left21111}
f(\gamma^1_{t_1})-\delta_0\rho(\gamma^1_{t_1},\gamma^0_{t_0})\geq \sup_{(s,\gamma_s)\in B_0}[f(\gamma_s)-\delta_0\rho(\gamma_s,\gamma^0_{t_0})]-\frac{\delta_1\varepsilon}{2\delta_0},
\end{eqnarray}
and define in a similar way
\begin{eqnarray}\label{left3}
B_1:=\left\{\begin{matrix} (s,\gamma_s)\\
\in B_0\cap [t_1,T]\times \Lambda^{t_1}\end{matrix} \,\, \bigg{|} \,\,  \begin{matrix} f(\gamma_s)-\sum_{k=0}^{1}\delta_k\rho(\gamma_s,\gamma^k_{t_k})\\[3mm]
\quad \geq f(\gamma^1_{t_1})-\delta_0\rho(\gamma^1_{t_1},\gamma^0_{t_0})\end{matrix}\right\}.
\end{eqnarray}
In general, suppose that we have defined $(t_j,\gamma^j_{t_j})$ and $B_j$ for $j=1,2,\ldots, i-1$ such that
\begin{eqnarray}\label{left4}
\begin{aligned}
&\quad f(\gamma^j_{t_j})-\sum_{k=0}^{j-1}\delta_k\rho(\gamma^j_{t_j},\gamma^k_{t_k})\\
&\geq \sup_{(s,\gamma_s)\in B_{j-1}}\bigg{[}f(\gamma_s)-\sum_{k=0}^{j-1}\delta_k\rho(\gamma_{s},\gamma^k_{t_k})\bigg{]}-\frac{\delta_j\varepsilon}{2^j\delta_0},
\end{aligned}
\end{eqnarray}
and
\begin{eqnarray}\label{left5}
\quad&B_j:=\left\{ \begin{matrix} (s,\gamma_s)\\
\in B_{j-1}\cap [t_j,T]\times \Lambda^{t_j}\end{matrix} \,\, \bigg{|}
\begin{matrix} f(\gamma_s)-\sum_{k=0}^{j}\delta_k\rho(\gamma_s,\gamma^k_{t_k})\\[3mm]
\quad \geq f(\gamma^j_{t_j})-\sum_{k=0}^{j-1}\delta_k\rho(\gamma^j_{t_j},\gamma^k_{t_k})\end{matrix}\right\}.
\end{eqnarray}
We choose $(t_i,\gamma^i_{t_i})\in B_{i-1}$ such that
\begin{eqnarray}\label{left6}
\begin{aligned}
&\quad f(\gamma^i_{t_i})-\sum_{k=0}^{i-1}\delta_k\rho(\gamma^i_{t_i},\gamma^k_{t_k})\\
&\geq \sup_{(s,\gamma_s)\in B_{i-1}}\bigg{[}f(\gamma_s)-\sum_{k=0}^{i-1}\delta_k\rho(\gamma_{s},\gamma^k_{t_k})\bigg{]}-\frac{\delta_i\varepsilon}{2^i\delta_0},
\end{aligned}
\end{eqnarray}
and we define
\begin{eqnarray}\label{left7}
\quad &B_i:=\left\{\begin{matrix} (s,\gamma_s)\\
\in B_{i-1}\cap [t_i,T]\times \Lambda^{t_i}\end{matrix}\,  \bigg{|}  \begin{matrix} f(\gamma_s)-\sum_{k=0}^{i}\delta_k\rho(\gamma_s,\gamma^k_{t_k})\\
\quad \geq f(\gamma^i_{t_i})-\sum_{k=0}^{i-1}\delta_k\rho(\gamma^i_{t_i},\gamma^k_{t_k})\end {matrix}\right\}.
\end{eqnarray}
We can see that  for every $i=1,2,\ldots$,  $B_i$ is a closed and nonempty set. It follows from (\ref{left6}) and (\ref{left7}) that, for all $(s,\gamma_s)\in B_i$,
\begin{eqnarray*}
	\delta_i\rho(\gamma_s,\gamma^i_{t_i})&\leq& \bigg{[}f(\gamma_s)-\sum_{k=0}^{i-1}\delta_k\rho(\gamma_s,\gamma^k_{t_k})\bigg{]}-\bigg{[}f(\gamma^i_{t_i})-\sum_{k=0}^{i-1}\delta_k\rho(\gamma^i_{t_i},\gamma^k_{t_k})\bigg{]}\\
	&\leq&\sup_{(s,\gamma_s)\in B_{i-1}}\bigg{[}f(\gamma_s)-\sum_{k=0}^{i-1}\delta_k\rho(\gamma_s,\gamma^k_{t_k})\bigg{]}-\bigg{[}f(\gamma^i_{t_i})-\sum_{k=0}^{i-1}\delta_k\rho(\gamma^i_{t_i},\gamma^k_{t_k})\bigg{]}\\
	&\leq& \frac{\delta_i\varepsilon}{2^i\delta_0},
\end{eqnarray*}
which implies that
\begin{eqnarray}\label{left8}
\rho(\gamma_s,\gamma^i_{t_i})\leq \frac{\varepsilon}{2^i\delta_0},\ \ \mbox{for all}\  (s,\gamma_s)\in B_i.
\end{eqnarray}
Since $\rho$ is a gauge-type function, inequality (\ref{left8}) implies that $\sup_{(s,\gamma_s)\in B_i}d_\infty(\gamma_s, \gamma^i_{t_i})$ $\rightarrow0$ as $i\rightarrow \infty$, and therefore,
$\sup_{(s,\gamma_s),(l,\eta_l)\in B_i}d_\infty(\gamma_s, \eta_l)\rightarrow0$ as $i\rightarrow \infty$. Since $\Lambda^t$ is complete, by Cantor's intersection theorem there exists a unique
$(\hat{t},\hat{\gamma}_{\hat{t}})\in \bigcap_{i=0}^{\infty}B_i$. Obviously, we have $d_\infty(\gamma^i_{t_i}, \hat{\gamma}_{\hat{t}})\rightarrow0$ and $t_i\uparrow \hat{t}$  as $i\rightarrow \infty$. Then $(\hat{t},\hat{\gamma}_{\hat{t}})$
satisfies (i) by (\ref{left2}) and  (\ref{left8}). For any $(s,\gamma_s)\in [\hat{t},T]\times \Lambda^{\hat{t}}$ and $(s,\gamma_s)\neq (\hat{t},\hat{\gamma}_{\hat{t}})$, we have $(s,\gamma_s)\notin\bigcap_{i=0}^{\infty}B_i$, and therefore, for some $j$,
\begin{eqnarray}\label{left9}
\begin{aligned}
f(\gamma_s)-\sum_{k=0}^{\infty}\delta_k\rho(\gamma_s,\gamma^k_{t_k})&\leq f(\gamma_s)-\sum_{k=0}^{j}\delta_k\rho(\gamma_s,\gamma^k_{t_k})\\
&<
f(\gamma^j_{t_j})-\sum_{k=0}^{j-1}\delta_k\rho(\gamma^j_{t_j},\gamma^k_{t_k}).
\end{aligned}
\end{eqnarray}
On the other hand, it follows from (\ref{left1}), (\ref{left7}) and $(\hat{t},\hat{\gamma}_{\hat{t}})\in \bigcap_{i=0}^{\infty}B_i$ that, for any $q\geq j$,
\begin{eqnarray}\label{left10}
\begin{aligned}
f(\gamma^0_{t_0})&\leq f(\gamma^j_{t_j})-\sum_{k=0}^{j-1}\delta_k\rho(\gamma^j_{t_j},\gamma^k_{t_k})\leq f(\gamma^q_{t_q})-\sum_{k=0}^{q-1}\delta_k\rho(\gamma^q_{t_q},\gamma^k_{t_k})\\
&\leq f(\hat{\gamma}_{\hat{t}})-\sum_{k=0}^{q}\delta_k\rho(\hat{\gamma}_{\hat{t}},\gamma^k_{t_k}).
\end{aligned}
\end{eqnarray}
Letting $q\rightarrow\infty$ in (\ref{left10}), we obtain
\begin{eqnarray}\label{left11}
f(\gamma^0_{t_0})\leq  f(\gamma^j_{t_j})-\sum_{k=0}^{j-1}\delta_k\rho(\gamma^j_{t_j},\gamma^k_{t_k})\leq f(\hat{\gamma}_{\hat{t}})-\sum_{k=0}^{\infty}\delta_k\rho(\hat{\gamma}_{\hat{t}},\gamma^k_{t_k}),
\end{eqnarray}
which verifies (ii). Combining  (\ref{left9}) and (\ref{left11}) yields (iii).
\end{proof}

For every $t\in [0,T]$, define $\Lambda^t\otimes\Lambda^t:=\{(\gamma_s,\eta_s)|\gamma_s,\eta_s\in \Lambda^t\}$.
It is clear that $(\Lambda^t\otimes\Lambda^t,d_{1,\infty})$ is a compete metric space,
where $d_{1,\infty}((\gamma_t^1,\gamma_t^2),(\eta_s^1,\eta_s^2))=d_\infty(\gamma_t^1,\eta_s^1)+d_\infty(\gamma_t^2,\eta_s^2)$
for all $(t,(\gamma_t^1,\gamma_t^2))$, $(s,(\eta_s^1,\eta_s^2))\in [0,T]\times(\Lambda\otimes\Lambda)$. Similar to Lemma \ref{theoremleft}, we have the following.

\begin{lemma}\label{theoremleft1} 
	Let $t\in [0,T]$ be fixed and let $f:\Lambda^t\otimes\Lambda^t\rightarrow \mathbb{R}$ be an upper semicontinuous functional  bounded from above. Suppose that $\rho$ is a gauge-type function on $(\Lambda^t\otimes\Lambda^t, d_{1,\infty})$
	and $\{\delta_i\}_{i\geq0}$ is a sequence of positive number, and suppose that $\varepsilon>0$ and $(t_0,(\gamma^0_{t_0},\eta^0_{t_0}))\in [t,T]\times (\Lambda^t\otimes\Lambda^t)$ satisfy
	$$
	f(\gamma^0_{t_0},\eta^0_{t_0})\geq \sup_{\scriptsize\begin{matrix}(s,(\gamma_s,\eta_s))\\
		\in [t,T]\times (\Lambda^t\otimes\Lambda^t)\end{matrix}}\!\!\!\!\! f(\gamma_s,\eta_s)-\varepsilon.
	$$
	Then there exist $(\hat{t},(\hat{\gamma}_{\hat{t}},\hat{\eta}_{\hat{t}}))\in [t,T]\times (\Lambda^t\otimes\Lambda^t)$ and a sequence $\{(t_i,(\gamma^i_{t_i},\eta^i_{t_i}))\}_{i\geq1}\subset [t_0,T]\times (\Lambda^t\otimes\Lambda^t)$ such that
	\begin{description}
		\item[(i)] $\rho((\gamma^0_{t_0},\eta^0_{t_0}),(\hat{\gamma}_{\hat{t}},\hat{\eta}_{\hat{t}}))\leq \frac{\varepsilon}{\delta_0}$,
		$\rho((\gamma^i_{t_i},\eta^i_{t_i}),(\hat{\gamma}_{\hat{t}},\hat{\eta}_{\hat{t}}))\leq \frac{\varepsilon}{2^i\delta_0}$ and $t_i\uparrow \hat{t}$ as $i\rightarrow\infty$,
		\item[(ii)]  $\displaystyle f(\hat{\gamma}_{\hat{t}},\hat{\eta}_{\hat{t}})-\sum_{i=0}^{\infty}\delta_i\rho((\gamma^i_{t_i},\eta^i_{t_i}),(\hat{\gamma}_{\hat{t}},\hat{\eta}_{\hat{t}}))\geq f(\gamma^0_{t_0},\eta^0_{t_0})$, and
		\item[(iii)]  $\displaystyle f(\gamma_s,\eta_s)-\sum_{i=0}^{\infty}\delta_i\rho((\gamma^i_{t_i},\eta^i_{t_i}),(\gamma_s,\eta_s))
		<f(\hat{\gamma}_{\hat{t}},\hat{\eta}_{\hat{t}})-\sum_{i=0}^{\infty}\delta_i\rho((\gamma^i_{t_i},\eta^i_{t_i}),$ $(\hat{\gamma}_{\hat{t}},\hat{\eta}_{\hat{t}}))$ for all $(s,(\gamma_s,\eta_s)) $ $\in [\hat{t},T]\times (\Lambda^{\hat{t}}\otimes\Lambda^{\hat{t}})\setminus \{(\hat{t},(\hat{\gamma}_{\hat{t}},\hat{\eta}_{\hat{t}}))\}$.
		\end{description}
\end{lemma}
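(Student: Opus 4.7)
\textbf{Proof plan for Lemma \ref{theoremleft1}.} The plan is to mimic the construction in the proof of Lemma \ref{theoremleft} verbatim, using the product space $(\Lambda^t\otimes\Lambda^t, d_{1,\infty})$ in place of $(\Lambda^t, d_\infty)$. The text already records that $(\Lambda^t\otimes\Lambda^t, d_{1,\infty})$ is a complete metric space, so the only structural ingredients I really need, beyond completeness, are upper semicontinuity of $f$, the gauge-type axioms for $\rho$, and the time-monotonicity property that lets me restrict each successive stage to $[t_j,T]\times(\Lambda^{t_j}\otimes\Lambda^{t_j})$ while staying inside the ambient product space. All of these transfer directly.

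First I would define inductively the nonempty closed sets
\[
B_0:=\bigl\{(s,(\gamma_s,\eta_s))\in [t_0,T]\times (\Lambda^{t_0}\otimes\Lambda^{t_0}): f(\gamma_s,\eta_s)-\delta_0\rho((\gamma_s,\eta_s),(\gamma^0_{t_0},\eta^0_{t_0}))\geq f(\gamma^0_{t_0},\eta^0_{t_0})\bigr\},
\]
and, given $(t_j,(\gamma^j_{t_j},\eta^j_{t_j}))$ and $B_{j-1}$, pick $(t_j,(\gamma^j_{t_j},\eta^j_{t_j}))\in B_{j-1}$ which is $\frac{\delta_j\varepsilon}{2^j\delta_0}$-optimal for the perturbed functional
\[
(s,(\gamma_s,\eta_s))\mapsto f(\gamma_s,\eta_s)-\sum_{k=0}^{j-1}\delta_k\rho\bigl((\gamma^k_{t_k},\eta^k_{t_k}),(\gamma_s,\eta_s)\bigr)
\]
on $B_{j-1}$, and then set $B_j:=\{(s,(\gamma_s,\eta_s))\in B_{j-1}\cap [t_j,T]\times(\Lambda^{t_j}\otimes\Lambda^{t_j}): f(\gamma_s,\eta_s)-\sum_{k=0}^{j}\delta_k\rho((\gamma^k_{t_k},\eta^k_{t_k}),(\gamma_s,\eta_s))\geq f(\gamma^j_{t_j},\eta^j_{t_j})-\sum_{k=0}^{j-1}\delta_k\rho((\gamma^k_{t_k},\eta^k_{t_k}),(\gamma^j_{t_j},\eta^j_{t_j}))\}$. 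Upper semicontinuity of $f$ and continuity of $\rho$ make each $B_j$ closed, and the near-optimal choice makes $B_j$ nonempty.

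Next, subtracting consecutive defining inequalities gives exactly as before the estimate
\[
\rho\bigl((\gamma^i_{t_i},\eta^i_{t_i}),(\gamma_s,\eta_s)\bigr)\leq \frac{\varepsilon}{2^i\delta_0}\quad\text{for all }(s,(\gamma_s,\eta_s))\in B_i,
\]
together with $\rho((\gamma^0_{t_0},\eta^0_{t_0}),(\gamma_s,\eta_s))\leq \varepsilon/\delta_0$ on $B_0$. By the gauge-type property (Definition \ref{gaupe}(ii)) applied on $(\Lambda^t\otimes\Lambda^t, d_{1,\infty})$, the $d_{1,\infty}$-diameter of $B_i$ tends to $0$. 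Since $t_i$ is nondecreasing and bounded by $T$, it converges upward to some $\hat t$; completeness of $(\Lambda^t\otimes\Lambda^t, d_{1,\infty})$ together with Cantor's intersection theorem then furnishes a unique point $(\hat t,(\hat\gamma_{\hat t},\hat\eta_{\hat t}))\in\bigcap_{i\geq 0}B_i$. Items (i) and (ii) follow by letting $q\to\infty$ in the chain of inequalities
\[
f(\gamma^0_{t_0},\eta^0_{t_0})\leq f(\gamma^j_{t_j},\eta^j_{t_j})-\sum_{k=0}^{j-1}\delta_k\rho((\gamma^k_{t_k},\eta^k_{t_k}),(\gamma^j_{t_j},\eta^j_{t_j}))\leq f(\hat\gamma_{\hat t},\hat\eta_{\hat t})-\sum_{k=0}^{q}\delta_k\rho((\gamma^k_{t_k},\eta^k_{t_k}),(\hat\gamma_{\hat t},\hat\eta_{\hat t})),
\]
produced exactly as in Lemma \ref{theoremleft}. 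Item (iii) follows because any $(s,(\gamma_s,\eta_s))\in [\hat t,T]\times(\Lambda^{\hat t}\otimes\Lambda^{\hat t})$ distinct from $(\hat t,(\hat\gamma_{\hat t},\hat\eta_{\hat t}))$ must fail to lie in some $B_j$, yielding a strict inequality at level $j$, which then passes to the full series by nonnegativity of the tail.

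I do not expect any genuine obstacle: there is no step where the scalar structure of $\Lambda^t$ played a role beyond completeness, upper semicontinuity, continuity of $\rho$, and the ability to restrict to $[t_j,T]\times(\Lambda^{t_j}\otimes\Lambda^{t_j})$, all of which carry over to the product setting. The only minor care point is to verify at each induction step that the restricted set $B_{j-1}\cap [t_j,T]\times(\Lambda^{t_j}\otimes\Lambda^{t_j})$ is nonempty---which is automatic because $(t_j,(\gamma^j_{t_j},\eta^j_{t_j}))$ itself belongs to it---and that the near-optimizer in the next step can be chosen, which is ensured by the boundedness from above of $f$ plus the positivity of the perturbation terms. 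With these observations the proof is a line-by-line transcription of Lemma \ref{theoremleft}.
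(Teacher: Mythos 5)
Your proof is correct and follows exactly the approach the paper intends: the paper itself only says ``Similar to Lemma \ref{theoremleft}, we have the following'' and omits the proof, treating it as a line-by-line transcription of Lemma \ref{theoremleft} to the product space $(\Lambda^t\otimes\Lambda^t,d_{1,\infty})$, which is precisely what you carry out. Your observation that only completeness, upper semicontinuity, the gauge-type axioms, and the ability to restrict to the time-shifted domains $[t_j,T]\times(\Lambda^{t_j}\otimes\Lambda^{t_j})$ are needed is exactly right.
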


\subsection{Backward stochastic differential equations}

We consider the backward stochastic differential
equations (BSDEs) in a Hilbert space $K$:
\begin{eqnarray}\label{bsde}
\qquad Y(t)=\eta+\int^{T}_{t}f(\sigma,Y(\sigma),Z(\sigma))d\sigma-\int^{T}_{t}Z(\sigma) dW_\sigma, \quad  0\leq t\leq T.
\end{eqnarray}
Assume that the mapping $f: \Omega \times [0,T]\times
K\times L_2(\Xi,K)\rightarrow K$ is  measurable with
respect to ${\mathcal {P}}\times {\mathcal {B}} (K\times L_2(\Xi,K))$, and
$\eta: \Omega \rightarrow K$ is ${\mathcal {F}}_T$-measurable.
The pair $(f,\eta)$ is called the parameters of BSDE~\eqref{bsde}.
\par
We have  the existence and uniqueness result on  BSDEs.

\begin{lemma}\label{lemma2.5} (\cite[Proposition 4.3]{fuh0}) Assume that (i) there
	exists $L>0$ such that
	\begin{eqnarray*}
		|f(\sigma,y_1,z_1)-f(\sigma,y_2,z_2)|\leq
		L(|y_1-y_2|+|z_1-z_2|),
	\end{eqnarray*}
	$P$-a.s. for every $\sigma \in [0,T], y_1,y_2\in K, z_1,z_2\in
	L_2(\Xi,K)$;
	\par
	(ii) there exists $p\in[2,\infty)$ such that
	\begin{eqnarray*}
		\mathbb{E}\left[\left(\int^{T}_{0}|f(\sigma,0,0)|^2d\sigma\right)^{\frac{p}{2}}\right]<\infty,\quad \mathbb{E}|\eta|^p<\infty.
	\end{eqnarray*}
	Then there exists a unique pair of processes
	$$(Y,Z)\in  L^p_{\mathcal {P}}(\Omega, \Lambda_T(K))\times
	L^p_{\mathcal {P}} (\Omega;L^2([0,T];L_2(\Xi,K)))$$
	such that  (\ref{bsde})
	holds for $t\in[0,T]$ and
	\begin{eqnarray}\label{lemma2.510}
	&& \mathbb{E}\left[\|Y_T\|_{0}^p\right]+\mathbb{E}\left[\left(\int^{T}_{0}|Z(\sigma)|^2d\sigma\right)^{\frac{p}{2}}\right]
	\nonumber \\
	&\leq& C_p\, \mathbb{E}\left[|\eta|^p\right]+
	C_p\, \mathbb{E}\left[\left(\int^{T}_{0}|f(\sigma,0,0)|^2d\sigma\right)^{\frac{p}{2}}\right],
	\end{eqnarray}
	for some constant $C_p>0$ depending only on $p$, $L$, $T$.
	Moreover, let two BSDEs of  parameters    $(\eta^1,f^1)$ and $(\eta^2,f^2)$ satisfy Assumptions (i) and (ii).
	Then the difference of both solutions
	$(Y^1,Z^1)$ and $(Y^2,Z^2)$ of BSDE (\ref{bsde}) with
	the data $(\eta^1,f^1)$ and
	$(\eta^2,f^2)$, respectively, has the
	following  estimate
	\begin{eqnarray}\label{lemma2.51}
	\qquad&\begin{aligned} &\mathbb{E}\left[\|Y^1_T-Y^2_T\|_{0}^p\right]+\mathbb{E}\left[\bigg{(}\int^{T}_{0}|Z^1(\sigma)-Z^2(\sigma)|^2d\sigma\bigg{)}^{\frac{p}{2}}\right]\\
	\leq&\,  C_p\, \mathbb{E}\left[\left|\eta^1-\eta^2\right|^p\right]+
	C_p\, \mathbb{E}\left[\bigg{(}\int^{T}_{0}\left|(f^1-f^2)(\sigma,Y^2(\sigma),Z^2(\sigma))\right|^2d\sigma\bigg{)}^{\frac{p}{2}}\right].
	\end{aligned}
	\end{eqnarray}
\end{lemma}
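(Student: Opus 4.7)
The plan is to prove this by the classical Pardoux--Peng strategy adapted to the Hilbert-space valued setting. First I would establish existence and uniqueness by a contraction argument. Equip the product space $\mathcal{B}_p := L^p_{\mathcal{P}}(\Omega,\Lambda_T(K)) \times L^p_{\mathcal{P}}(\Omega; L^2([0,T]; L_2(\Xi,K)))$ with a $\beta$-weighted norm whose $Y$-part controls $\mathbb{E}\sup_{t\le T} e^{p\beta t/2}|Y(t)|^p$ and whose $Z$-part controls $\mathbb{E}\bigl(\int_0^T e^{\beta\sigma}|Z(\sigma)|^2 d\sigma\bigr)^{p/2}$. Given $(y,z)\in\mathcal{B}_p$, define $\Phi(y,z):=(Y,Z)$ by first representing the $K$-valued square-integrable martingale $M(t):=\mathbb{E}\bigl[\eta+\int_0^T f(\sigma,y(\sigma),z(\sigma))d\sigma \,\bigr|\,\mathcal{F}_t\bigr]$ as $M(0)+\int_0^t Z\,dW$ via the Hilbert-space martingale representation theorem, and then setting $Y(t):=\eta+\int_t^T f(\sigma,y,z)d\sigma-\int_t^T Z\,dW$. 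Standard Burkholder--Davis--Gundy estimates together with the Lipschitz assumption show $\Phi:\mathcal{B}_p\to\mathcal{B}_p$, and the a priori bound below will ensure $\Phi$ is a contraction once $\beta$ is taken large enough; its unique fixed point is the desired solution.

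For the a priori estimate \eqref{lemma2.510}, I would apply the Hilbert-space It\^o formula to $e^{\beta\sigma}|Y(\sigma)|^p$ (using that $y\mapsto|y|^p$ is $C^2$ for $p\geq 2$). The resulting identity produces the dissipative term $\tfrac{p(p-1)}{2}\int_t^T e^{\beta\sigma}|Y|^{p-2}|Z|^2 d\sigma$ and a drift pairing $p\int_t^T e^{\beta\sigma}|Y|^{p-2}\langle Y,f(\sigma,Y,Z)\rangle d\sigma$. Using the Lipschitz bound $|f(\sigma,Y,Z)|\leq|f(\sigma,0,0)|+L(|Y|+|Z|)$ and Young's inequality, the $|Y|^{p-1}|Z|$ cross-term is absorbed into the dissipative term provided $\beta$ is sufficiently large. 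Gronwall's lemma applied to the deterministic envelope $\mathbb{E}|Y(t)|^p$ together with BDG applied to the stochastic integral $p\int_t^T e^{\beta\sigma}|Y|^{p-2}\langle Y, Z\,dW\rangle$ then delivers \eqref{lemma2.510}.

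For the stability estimate \eqref{lemma2.51}, set $\bar Y:=Y^1-Y^2$, $\bar Z:=Z^1-Z^2$, $\bar\eta:=\eta^1-\eta^2$ and decompose
\begin{equation*}
f^1(\sigma,Y^1,Z^1)-f^2(\sigma,Y^2,Z^2) = \bigl[f^1(\sigma,Y^1,Z^1)-f^1(\sigma,Y^2,Z^2)\bigr] + (f^1-f^2)(\sigma,Y^2,Z^2).
\end{equation*}
Treating the second term as an inhomogeneous source and using the Lipschitz continuity of $f^1$ for the first, the pair $(\bar Y,\bar Z)$ solves a BSDE whose terminal datum is $\bar\eta$ and whose driver is Lipschitz in $(\bar Y,\bar Z)$ with source term $(f^1-f^2)(\sigma,Y^2,Z^2)$; applying \eqref{lemma2.510} to this BSDE yields \eqref{lemma2.51} at once.

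The main obstacle is technical rather than conceptual. The Hilbert-space It\^o formula applied to $|Y|^p$ requires some care when $p$ is not an even integer: the standard workaround is to regularize by $y\mapsto(\varepsilon+|y|^2)^{p/2}$ and pass to the limit. One must also verify that the martingale part is a true martingale (and not merely local) under the $L^p$ hypotheses, which is handled by a stopping-time localization combined with BDG and the sup-norm control on $Y$. Once these two technicalities are dispatched, the argument mirrors the finite-dimensional Pardoux--Peng proof line by line.
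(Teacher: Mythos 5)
The paper does not prove this lemma itself but cites \cite[Proposition 4.3]{fuh0} for it, and your proposal faithfully reproduces the standard Pardoux--Peng strategy (contraction in a $\beta$-weighted norm via martingale representation, It\^o formula applied to $e^{\beta\sigma}|Y(\sigma)|^p$ with BDG and Young to absorb cross-terms, then stability by rewriting $(\bar Y,\bar Z)$ as a BSDE whose driver at the origin is $(f^1-f^2)(\sigma,Y^2,Z^2)$) — which is precisely how the cited reference proceeds. One small remark: since the hypothesis already restricts to $p\geq 2$, the map $y\mapsto |y|_K^p$ is globally $C^2$ on $K$ (the term $|y|^{p-4}\,y\otimes y$ in the Hessian is $O(|y|^{p-2})$ and extends continuously through the origin), so the $\varepsilon$-regularization you flag is not actually needed in this range of $p$; the localization to convert the local martingale into a true one is, however, exactly the right technical point to handle.
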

We also have  the following comparison theorem on BSDEs in infinite dimensional spaces.
\begin{lemma}\label{lemma2.70904}
	(\cite[Theorem 2.7]{zhou1})   Let two BSDEs of  parameters    $(\eta^1,f^1)$ and $(\eta^2,f^2)$ satisfy all the assumptions of Lemma \ref{lemma2.5} with $K=\mathbb{R}$.
	Denote by $(Y^1,Z^1)$ and  $(Y^2,Z^2)$  their respective adapted solutions.
	%
	%
	If
	$$
	\eta^1\geq \eta^2, P\mbox{-a.s. and}
	\ (\ f^1- f^2)(t,Y^2(t),Z^2(t))\geq 0,\ \
	dP\otimes dt\mbox{-a.s.}
	$$
	Then we have that $Y^1(t)\geq Y^2(t)$, a.s., for
	all $t\in[0,T]$.
	\par
	Moreover, the comparison is strict: that is
	$$
	Y^1(0)=Y^2(0)\quad \Leftrightarrow \quad \eta^1=\eta^2,\quad  (f^1- f^2)(t,Y^2(t),Z^2(t))=0,\ \
	dP\otimes dt \mbox{-a.s.}
	$$
\end{lemma}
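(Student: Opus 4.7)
The plan is to apply the standard linearization/adjoint-process argument for BSDEs, adapted to the infinite-dimensional noise $W$. Set $\Delta Y := Y^1 - Y^2$, $\Delta Z := Z^1 - Z^2$, $\Delta \eta := \eta^1 - \eta^2 \geq 0$, and
$$
\Delta f(s) := (f^1 - f^2)(s, Y^2(s), Z^2(s)) \geq 0.
$$
The first step is to linearize the driver difference. Because $f^1$ is Lipschitz in $(y,z)$ with constant $L$ and $K = \mathbb{R}$, one can construct, in a measurable and $\mathcal{P}$-adapted way, a scalar process $\alpha$ and an $L_2(\Xi,\mathbb{R})$-valued process $\beta$, both bounded by $L$, such that
$$
f^1(s, Y^1, Z^1) - f^1(s, Y^2, Z^2) \;=\; \alpha(s)\,\Delta Y(s) + \langle \beta(s), \Delta Z(s)\rangle_{L_2(\Xi,\mathbb{R})}
$$
(for $\alpha$, use the standard difference quotient in $y$ on the set $\{\Delta Y \neq 0\}$ and $0$ elsewhere; for $\beta$, interpolate component by component, the key point being that boundedness of $\beta$ follows directly from the Lipschitz estimate). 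Then $\Delta Y$ solves the linear BSDE
$$
\Delta Y(t) \;=\; \Delta\eta + \int_t^T\!\bigl[\alpha(s)\,\Delta Y(s) + \langle \beta(s), \Delta Z(s)\rangle + \Delta f(s)\bigr]ds - \int_t^T \Delta Z(s)\,dW(s).
$$

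The second step is to introduce the adjoint (exponential) process $\Gamma$ defined as the unique positive solution of the linear SDE
$$
d\Gamma(s) \;=\; \Gamma(s)\,\alpha(s)\,ds + \Gamma(s)\,\langle \beta(s), dW(s)\rangle,\quad \Gamma(0) = 1.
$$
Because $\alpha$ and $\beta$ are bounded and $W$ is cylindrical on $\Xi$, this SDE admits a unique strong solution which is an exponential (Doléans-Dade) martingale times a deterministic exponential; in particular $\Gamma(s) > 0$ a.s.\ and $\Gamma \in L^r(\Omega; C([0,T];\mathbb{R}))$ for every $r \geq 1$. Then I apply the Itô product formula to $\Gamma \cdot \Delta Y$: the drift $\Gamma(-\alpha\,\Delta Y - \langle \beta,\Delta Z\rangle - \Delta f)\,ds$ from $\Delta Y$, the drift $\Delta Y\, \Gamma\alpha\,ds$ from $\Gamma$, and the covariation $\Gamma\langle \beta, \Delta Z\rangle\,ds$ cancel except for the term $-\Gamma(s)\,\Delta f(s)\,ds$, leaving
$$
d\bigl(\Gamma(s)\,\Delta Y(s)\bigr) \;=\; -\Gamma(s)\,\Delta f(s)\,ds + dM(s),
$$
with $M$ a local martingale whose integrability (via the bounds on $\Gamma, \Delta Z, \beta, \Delta Y$ coming from Lemma~\ref{lemma2.5} and Burkholder-Davis-Gundy) makes it a true martingale after a standard localization.

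Integrating from $t$ to $T$ and taking $\mathbb{E}[\cdot \mid \mathcal{F}_t]$ yields the representation
$$
\Gamma(t)\,\Delta Y(t) \;=\; \mathbb{E}\!\left[\Gamma(T)\,\Delta\eta + \int_t^T \Gamma(s)\,\Delta f(s)\,ds \,\bigg|\, \mathcal{F}_t\right].
$$
Since $\Gamma > 0$, $\Delta\eta \geq 0$ and $\Delta f \geq 0$, we conclude $\Delta Y(t) \geq 0$ a.s., proving the comparison. For the strict part, if $Y^1(0) = Y^2(0)$ the right-hand side at $t = 0$ vanishes, and positivity of $\Gamma$ together with $\Delta\eta, \Delta f \geq 0$ forces $\Delta\eta = 0$ a.s.\ and $\Delta f(s) = 0$ for $dP\otimes ds$-a.e.\ $(s,\omega)$; the converse direction is immediate from uniqueness (Lemma~\ref{lemma2.5}).

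The main obstacle is purely the infinite-dimensional bookkeeping: one must (i) produce the linearizing process $\beta$ as a genuinely $L_2(\Xi,\mathbb{R})$-valued predictable process with the same Lipschitz bound, which requires a careful measurable-selection/quotient construction in the Hilbert space $L_2(\Xi,\mathbb{R})$; and (ii) justify that the Doléans-Dade exponential driven by the cylindrical $W$ is a true martingale of all orders, so that the stochastic integrals arising in the Itô computation are genuine martingales, not merely local ones. Both are standard once written out (boundedness of $\beta$ provides a Novikov-type condition; the $L^p$ estimate \eqref{lemma2.510} controls $\Delta Y, \Delta Z$), but they are where the argument has to be checked more carefully than in the finite-dimensional setting.
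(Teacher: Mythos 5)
The paper does not prove this lemma; it is cited as \cite[Theorem~2.7]{zhou1} and used as a black box. Your reconstruction via the standard linearization--adjoint-process argument is the right route and is essentially correct: linearize the driver increment, solve the scalar linear SDE for the exponential $\Gamma$, apply the It\^o product rule, pass through localization (harmless since $\alpha,\beta$ are bounded and $\Delta Y,\Delta Z$ have the integrability of Lemma~\ref{lemma2.5}), and read off the conditional-expectation representation. The strict-comparison argument at $t=0$ and the converse via uniqueness are both fine.

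The one place that does not go through as literally written is the construction of the linearizing process $\beta$. ``Interpolating component by component'' is a finite-dimensional device: in $L_2(\Xi,\mathbb{R})$ (which after Riesz identification is $\Xi$ itself), telescoping through an orthonormal basis produces an infinite sum of directional difference quotients, and neither measurability/predictability nor the uniform bound $|\beta|\le L$ is immediate from that description. The clean replacement is the single-step ``secant'' formula
$$
\beta(s) \;:=\;
\begin{cases}
\displaystyle\frac{f^1(s,Y^2(s),Z^1(s)) - f^1(s,Y^2(s),Z^2(s))}{\,|\Delta Z(s)|^2\,}\,\Delta Z(s), & \text{on } \{\Delta Z(s)\neq 0\},\\[2mm]
0, & \text{elsewhere,}
\end{cases}
$$
which by construction gives $\langle\beta(s),\Delta Z(s)\rangle = f^1(s,Y^2(s),Z^1(s)) - f^1(s,Y^2(s),Z^2(s))$, is evidently predictable (as a measurable function of predictable processes), and satisfies $|\beta(s)|\le L$ directly from the Lipschitz hypothesis of Lemma~\ref{lemma2.5}. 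With this repair, the boundedness of $\beta$ yields the Novikov condition, $\Gamma$ is an $L^r$-exponential martingale for every $r\ge 1$, and the rest of your argument closes without further change.
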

\newpage
\section{Construction of Smooth  gauge-type functionals in path spaces}
In this chapter, we construct smooth  functionals $\Upsilon^{m,M}$, which are crucial  to proving  the uniqueness and  stability  of viscosity solutions.
\par
For every $m\in \mathbb{N}^+$, define  the path functional $S_m:\hat{\Lambda}\rightarrow \mathbb{R}$ by
\begin{eqnarray*}
	S_m(\gamma_t)=\begin{cases}
		\frac{(||\gamma_{t}||_{0}^{2m}-|\gamma_{t}(t)|^{2m})^3}{||\gamma_{t}||^{4m}_{0}}, \ \
		~~~~ ||\gamma_{t}||_{0}\neq0; \\
		0, \qquad \qquad \qquad \qquad  ||\gamma_{t}||_{0}=0;
	\end{cases}\ (t,\gamma_t)\in [0,T]\times{\hat{\Lambda}}.
\end{eqnarray*}
\par
Define, for every $m\in \mathbb{N}^+$ and $M\in \mathbb{R}$,
\begin{eqnarray*}
	\begin{aligned}
		\Upsilon^{m,M}(\gamma_t,\eta_s):= 
		S_m(\eta_{s,s\vee t}^A-\gamma_{t,t\vee s}^A)&+M|e^{{(t-s)^+A}}\eta_s(s)-e^{(s-t)^+A}\gamma_t(t)|^{2m}, \\
		& (t,\gamma_t; s,\eta_s)\in ([0,T]\times\hat{\Lambda})^2
	\end{aligned}
\end{eqnarray*}
and
$$
\overline{\Upsilon}^{m,M}(\gamma_t,\eta_s):=
\Upsilon^{m,M}(\gamma_t,\eta_s)+|s-t|^2, \quad (t,\gamma_t; s,\eta_s)\in ([0,T]\times\hat{\Lambda})^2.
$$
For simplicity of notation,  we write  $\Upsilon^{m,M}(\gamma_t):=\Upsilon^{m,M}(\gamma_t,\eta_t)$ when $\eta_t(r)={\mathbf{0}}$ for all $r\in [0,t]$. It is clear that $\Upsilon^{m,M}(\gamma_t,\eta_t)=\Upsilon^{m,M}(\gamma_t-\eta_t)$ for all $\gamma_t,\eta_t\in \hat{\Lambda}$. We write $S:=S_3, \Upsilon:=\Upsilon^{3,3}$, and $\overline{\Upsilon}:=\overline{\Upsilon}^{3,3}$.

\par
Now we study the   regularity  of $\Upsilon^{m,M}$ in the sense of  horizontal/vertical derivatives.
\begin{lemma}\label{theoremS}
	For  $m\in \mathbb{N}^+$ and $M\in \mathbb{R}$,  the derivatives $\partial_{t}\Upsilon^{m,M}(\cdot),\,  \partial_{x}\Upsilon^{m,M}(\cdot),$ and $\partial_{xx}\Upsilon^{m,M}(\cdot)$ exist,  and for all $(t,\gamma_t)\in [0, T]\times{\hat{\Lambda}}$,
	\begin{eqnarray}\label{220817a0}
	\partial_{t}\Upsilon^{m,M}(\gamma_t)=0,
	\end{eqnarray}
	\begin{eqnarray}\label{220817a}
	|\partial_{x}\Upsilon^{m,M}(\gamma_t)|\leq 2m(3+|M-3|){|\gamma_t(t)|^{2m-1}},
	\end{eqnarray}
	and
	\begin{eqnarray}\label{220817a1}
	\qquad\quad  |\partial_{xx}\Upsilon^{m,M}(\gamma_t)|\leq 2m[3(6m-1)+(2m-1)|M-3|]{|\gamma_t(t)|^{2m-2}}.
	\end{eqnarray}
	For $m\geq2$, we have $\Upsilon^{m,M}(\cdot)\in C^{1,2}_p(\hat{\Lambda})$.
\end{lemma}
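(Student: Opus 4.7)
The plan is to reduce the problem to the diagonal: by the definition with $\eta_t\equiv\mathbf{0}$, one has $\Upsilon^{m,M}(\gamma_t)=S_m(\gamma_t)+M|\gamma_t(t)|^{2m}$, since on the diagonal $\gamma_{t,t\vee t}^A=\gamma_t$ and $e^{(t-t)^+A}=I$, and $S_m(-\gamma_t)=S_m(\gamma_t)$. Each derivative therefore splits into a contribution from $S_m$ and one from the pure endpoint term. For the horizontal derivative I would apply the definition directly: the extension $\gamma_{t,t+h}(s)=\gamma_t(s\wedge t)$ is constant on $[t,t+h]$, so $\|\gamma_{t,t+h}\|_0=\|\gamma_t\|_0$ and $\gamma_{t,t+h}(t+h)=\gamma_t(t)$. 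Substituting into the two summands gives $\Upsilon^{m,M}(\gamma_{t,t+h})=\Upsilon^{m,M}(\gamma_t)$, hence $\partial_t\Upsilon^{m,M}(\gamma_t)=0$, which is \eqref{220817a0}.

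For the vertical derivatives I would fix $\gamma_t$ and set $A:=\sup_{s\in[0,t)}|\gamma_t(s)|^{2m}$; a vertical perturbation $\gamma_t^h$ only replaces $\gamma_t(t)$ by $a:=\gamma_t(t)+h$ and leaves $A$ unchanged. With $u:=|a|^{2m}$, one has $\|\gamma_t^h\|_0^{2m}=\max(A,u)$ and
\[
\Upsilon^{m,M}(\gamma_t^h)=\begin{cases} Mu, & u\geq A,\\ (A-u)^3/A^2+Mu, & u\leq A.\end{cases}
\]
Setting $g(u):=(A-u)^3/A^2$ in the low branch, the identities $g'(A)=g''(A)=0$ ensure that the two expressions share the same Fréchet derivatives of orders one and two at the interface $u=A$, so $\partial_x\Upsilon^{m,M}(\gamma_t)$ and $\partial_{xx}\Upsilon^{m,M}(\gamma_t)$ are well defined everywhere. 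The degenerate case $\|\gamma_t\|_0=0$ (where $A=0$ and one is automatically in the high branch) is handled by direct inspection using $\Upsilon^{m,M}(\gamma_t^h)=M|h|^{2m}$. Composition through $u=|a|^{2m}$ then gives the closed forms
\[
\partial_x\Upsilon^{m,M}(\gamma_t)=2m\kappa|a|^{2m-2}a,
\]
\[
\partial_{xx}\Upsilon^{m,M}(\gamma_t)=\kappa\,\nabla_a^2|a|^{2m}+\lambda\,\nabla_a|a|^{2m}\otimes\nabla_a|a|^{2m},
\]
with $\kappa:=M-3(A-u)^2A^{-2}\mathbf{1}_{\{u\leq A\}}$ and $\lambda:=6(A-u)A^{-2}\mathbf{1}_{\{u\leq A\}}$, all evaluated at $a=\gamma_t(t)$.

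The bounds should then follow from elementary scalar estimates. Writing $\kappa=(M-3)+3\bigl[1-(1-u/A)^2\bigr]\mathbf{1}_{\{u\leq A\}}$ and noting that the second summand lies in $[0,3]$ gives $|\kappa|\leq|M-3|+3$; since $2m\kappa|a|^{2m-2}a$ has magnitude $|\kappa|\cdot 2m|a|^{2m-1}$, this yields \eqref{220817a}. For the Hessian I would use the standard fact that $\nabla_a^2|a|^{2m}$ has operator norm at most $2m(2m-1)|a|^{2m-2}$, while in the low branch the rank-one correction has operator norm $\lambda(2m|a|^{2m-1})^2\leq 6A^{-1}\cdot 4m^2|a|^{4m-2}\leq 24m^2|a|^{2m-2}$, one factor of $A$ being absorbed by $u\leq A$. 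The algebraic identity
\[
2m(2m-1)(|M-3|+3)+24m^2\;=\;2m\bigl[3(6m-1)+(2m-1)|M-3|\bigr]
\]
then produces \eqref{220817a1}; polynomial growth of $\Upsilon^{m,M}$ together with all its derivatives is immediate from these closed-form bounds.

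The anticipated main obstacle is the continuity of $\partial_{xx}\Upsilon^{m,M}$ at the degenerate point $\gamma_t=\mathbf{0}$, where $A=0$ and the coefficient $\lambda=6(A-u)A^{-2}$ is a priori singular. The uniform Hessian estimate scales like $|\gamma_t(t)|^{2m-2}$, which vanishes as $\gamma_t\to\mathbf{0}$ in $d_\infty$ only when $2m-2>0$, i.e.\ when $m\geq 2$; this is precisely where the hypothesis $m\geq 2$ is used to conclude $\Upsilon^{m,M}\in C_p^{1,2}(\hat{\Lambda})$. Away from $\mathbf{0}$, continuity of all derivatives in $d_\infty$ follows from the explicit formulas together with continuous dependence of $A$ and $u$ on $\gamma_t$ in the $\|\cdot\|_0$ topology, and the vanishing of $g'$ and $g''$ at $u=A$ rules out any jump across the transition $|a|^{2m}=A$.
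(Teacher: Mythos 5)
Your computation of the vertical derivatives is a genuinely cleaner organization than the paper's. You collapse the paper's four-way case split ($|\gamma_t(t)|$ less than, greater than, equal-and-nonzero, equal-and-zero to $\|\gamma_t\|_{0^-}$) into the single observation that $\Upsilon^{m,M}(\gamma_t^h)=\Phi(|a|^{2m})$ for a scalar $\Phi$ whose two branches share $g'(A)=g''(A)=0$, so $\Phi\in C^2$ and the chain rule through the polynomial $a\mapsto\langle a,a\rangle^m$ delivers both Fr\'echet derivatives at once. The paper instead establishes matching at the interface by a direct $o(|h|)$ estimate (its display \eqref{s3} and \eqref{s6666}). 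Your arithmetic checks: $\kappa\in[M-3,M]$ so $|\kappa|\leq 3+|M-3|$; $\|\nabla_a^2|a|^{2m}\|=2m(2m-1)|a|^{2m-2}$; absorbing $u\leq A$ gives $\lambda\cdot(2m|a|^{2m-1})^2\leq 24m^2|a|^{2m-2}$; and the identity $2m(2m-1)(|M-3|+3)+24m^2=2m[3(6m-1)+(2m-1)|M-3|]$ holds, so \eqref{220817a} and \eqref{220817a1} are recovered exactly. The horizontal-derivative argument is the same as the paper's.

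The gap is the membership $\Upsilon^{m,M}\in C_p^{1,2}(\hat\Lambda)$, which requires continuity of $\Upsilon^{m,M}$, $\partial_x\Upsilon^{m,M}$ and $\partial_{xx}\Upsilon^{m,M}$ in the metric $d_\infty$, not merely in $\|\cdot\|_0$. These topologies agree only on a fixed $\hat\Lambda_t$; $d_\infty$ also compares paths with different time horizons, and the first quarter of the paper's proof is precisely a set of semigroup estimates (one for $s\downarrow t$, one for $s\uparrow t$ using the c\`adl\`ag left limit $\gamma_t(t-)$) showing that $\|\gamma_t\|_0$ and $\gamma_t(t)$ vary continuously under $d_\infty$. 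You assert this "follows from... continuous dependence of $A$ and $u$ on $\gamma_t$ in the $\|\cdot\|_0$ topology," which does not by itself address cross-horizon limits. Moreover, your chosen auxiliary $A=\sup_{s\in[0,t)}|\gamma_t(s)|^{2m}$ is \emph{not} $d_\infty$-continuous at all (constantly extend a path with $|\gamma_t(t)|>\|\gamma_t\|_{0^-}$: $A$ jumps up to $\|\gamma_t\|_0^{2m}$), so to make the continuity argument run you would first need to rewrite $\kappa$ and $\lambda$ uniformly in terms of $B:=\|\gamma_t\|_0^{2m}$ and $u$ (for which $\kappa=M-3(B-u)^2B^{-2}$, $\lambda=6(B-u)B^{-2}$ hold on both branches since $B-u=0$ when $u>A$), and then supply the paper's semigroup estimates proving that $B$ and $u$ are $d_\infty$-continuous at each point. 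As written, that step is missing.
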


\begin{proof}
First, we prove $\Upsilon^{m,M}(\cdot)\in C^0(\hat{\Lambda})$. 
For any $(t,\gamma_t; s, \eta_s)\in ([0,T]\times \hat{\Lambda})^2$ such that $s\geq t$, we have
\begin{eqnarray*}
	|\gamma_t(t)-\eta_s(s)|&\leq& \left|\gamma_t(t)-e^{(s-t)A}\gamma_t(t)\right|+\left|e^{(s-t)A}\gamma_t(t)-\eta_s(s)\right|\\
	&\leq& \left|\gamma_t(t)-e^{(s-t)A}\gamma_t(t)\right|+d_\infty(\gamma_t,\eta_s)
\end{eqnarray*}
and
\begin{eqnarray*}
	\left|\, ||\gamma_t||_0-||\eta_s||_0\,\right|&\leq& ||\gamma_{t,s}^A||_0-||\gamma_t||_0+||\gamma_{t,s}^A-\eta_s||_0\\
	&\leq& \sup_{t\leq l\leq s}\left|(e^{(l-t)A}-I)\gamma_t(t)\right|+d_\infty(\gamma_t,\eta_s).
\end{eqnarray*}
Thus $\Upsilon^{m,M}(\eta_s)\rightarrow \Upsilon^{m,M}(\gamma_t)$ as $\eta_s\rightarrow \gamma_t$ in $(\hat{\Lambda},d_\infty)$ and $s\downarrow t$ (equivalently, as $\eta_s\rightarrow \gamma_t$ in $(\hat{\Lambda}^t,d_\infty)$).
For any $(t,\gamma_t), (s, \eta_s)\in [0,T]\times \hat{\Lambda}$ and $s<t$, with
$$\gamma_t(t-):=\lim_{l\uparrow t}\gamma_t(l) \quad \text{and}\quad M_1=:\sup_{s\in [0,T]}|e^{sA}|,$$
 we have
\begin{eqnarray*}
	&&|\gamma_t(t)-\eta_s(s)|\\&
	\leq& \left|\gamma_t(t)-e^{(t-s)A}\eta_s(s)\right|+\left|e^{(t-s)A}(\eta_s(s)-\gamma_t(s))\right|+\left|e^{(t-s)A}(\gamma_t(s)-\gamma_t(t-))\right|\\
	&&+\left|(e^{(t-s)A}-I)\gamma_t(t-)\right|+|\gamma_t(t-)-\gamma_t(s)|+|\gamma_t(s)-\eta_s(s)|\\
	&\leq& (M_1+2)d_\infty(\gamma_t,\eta_s)+(M_1+1)|\gamma_t(t-)-\gamma_t(s)|+|(e^{(t-s)A}-I)\gamma_t(t-)|
\end{eqnarray*}
and
\begin{eqnarray*}
	&&\left|\, ||\gamma_t||_0-||\eta_s||_0\, \right|\leq ||\eta_{s,t}^A||_0-||\eta_s||_0+||\eta_{s,t}^A-\gamma_t||_0\\
	&\leq& \sup_{s\leq l\leq t}\left|(e^{(l-s)A}-I)\gamma_t(t)\right|+\sup_{s\leq l\leq t}\left|(e^{(l-s)A}-I)\right||\gamma_t(t)-\eta_s(s)|+d_\infty(\eta_s,\gamma_t).
\end{eqnarray*}
Thus $\Upsilon^{m,M}(\eta_s)\rightarrow \Upsilon^{m,M}(\gamma_t)$ as $\eta_s\rightarrow \gamma_t$ in $(\hat{\Lambda},d_\infty)$  and $s\uparrow t$. Therefore,   $\Upsilon^{m,M}(\cdot)\in C^0(\hat{\Lambda})$.

Second, by the definition of $\Upsilon^{m,M}(\cdot)$, we have
$$\partial_t\Upsilon^{m,M}(\gamma_{t})=0, \quad \forall (t,\gamma_t)\in [0,T]\times \hat{\Lambda}.$$

Third, we calculate $ \partial_{x}\Upsilon^{m,M}(\cdot)$. Define $g:{\hat{\Lambda}}\rightarrow \mathbb{R}$ by
$$g(\gamma_t):=|\gamma_t(t)|^{2m},\quad (t,\gamma_t)\in [0, T]\times{\hat{\Lambda}}.$$
We have
$$
\Upsilon^{m,M}(\gamma_t)=S_m(\gamma_t)+Mg(\gamma_t),\quad (t,\gamma_t)\in [0, T]\times{\hat{\Lambda}}.
$$
Clearly,
\begin{eqnarray}\label{s10902jia}
\partial_xS_m(\gamma_{0})=0,\quad \gamma_0\in \hat{\Lambda}_0.
\end{eqnarray}
For every $(t,\gamma_t)\in (0,T]\times \hat{\Lambda}$, define
$$||\gamma_t||_{0^-}:=\sup_{0\leq s<t}|\gamma_t(s)|.$$
Since
\begin{eqnarray}\label{04050}
\qquad\quad  |\gamma_t(t)+h|^{2m}=|\gamma_t(t)|^{2m}+2m|\gamma_t(t)|^{2m-2}\langle\gamma_t(t), h\rangle_H+o(|h|),  \quad h\in H,
\end{eqnarray}
we have
\begin{eqnarray*}
	&&\left(||\gamma_t||^{2m}_{0}-|\gamma_t(t)+{h}|^{2m}\right)^3
	-\left(||\gamma_t||^{2m}_{0}-|\gamma_t(t)|^{2m}\right)^3\\
	&=&-6m(||\gamma_t||^{2m}_{0}-|\gamma_t(t)|^{2m})^2|\gamma_t(t)|^{2m-2}\langle\gamma_t(t), h\rangle_H+o(|h|), \ \ h\in H.
\end{eqnarray*}
Then,
if $|\gamma_t(t)|<||\gamma_t||_{0^-}$,
\begin{eqnarray*}
	&&\lim_{|h|\rightarrow0}\frac{1}{|h|}\bigg{|}S_m(\gamma_t^{h})-S_m(\gamma_{t})\\
	&&  \ \ \ \ \ \ \ \ \ \ \ \ \ \ +\frac{6m} {||\gamma_t||^{4m}_{0}}(||\gamma_t||^{2m}_{0}-|\gamma_t(t)|^{2m})^2|\gamma_t(t)|^{2m-2}\langle\gamma_t(t), h\rangle_H
	\bigg{|}\\
	&=&\lim_{|h|\rightarrow0}\frac{1}{|h|\times||\gamma_t||^{4m}_{0}}\bigg{|}{\left(||\gamma_t||^{2m}_{0}-|\gamma_t(t)+{h}|^{2m}\right)^3
		-{\left(||\gamma_t||^{2m}_{0}-|\gamma_t(t)|^{2m}\right)^3}}\\
	&&\quad\quad\quad\quad\quad\quad+{6m\left(||\gamma_t||^{2m}_{0}-|\gamma_t(t)|^{2m}\right)^2|\gamma_t(t)|^{2m-2}\langle \gamma_t(t), h\rangle_H }\bigg{|} =0.
\end{eqnarray*}
Thus,
\begin{eqnarray}\label{s1}
\partial_xS_m(\gamma_{t})=-\frac{6m}{||\gamma_t||^{4m}_{0}}{\left(||\gamma_t||^{2m}_{0}-|\gamma_t(t)|^{2m}\right)^2
	|\gamma_t(t)|^{2m-2}\gamma_t(t)}.
\end{eqnarray}
If  $|\gamma_t(t)|>||\gamma_t||_{0^-}$,
\begin{eqnarray}\label{s2}
\partial_{x}S_m(\gamma_{t})=0;
\end{eqnarray}
if  $|\gamma_t(t)|=||\gamma_t||_{0^-}\neq0$,
since
\begin{eqnarray}\label{jiaxis}
\begin{aligned}
&\quad ||\gamma^{h}_t||_{0}^{2m}-|\gamma_t(t)+{h}|^{2m}\\
&=
\begin{cases}
0,\ \ \ \ \ \ \ \ \ \  \ \ \ \ \ \ \ \ \ \ \ \ \ \ \  \ \ \ \ \ \ \
|\gamma_t(t)+h|\geq |\gamma_t(t)|;\\
|\gamma_t(t)|^{2m}-|\gamma_t(t)+{h}|^{2m}, \ \  |\gamma_t(t)+h|<|\gamma_t(t)|,\end{cases}
\end{aligned}
\end{eqnarray}
we have from~(\ref{04050})
\begin{eqnarray}\label{s3}
\begin{aligned}
0&\leq\lim_{|h|\rightarrow0}\frac{1}{|h|}{\left|S_m(\gamma_t^{h})-S_m(\gamma_t)\right|}\\
&\leq\lim_{|h|\rightarrow0}\frac{1}{|h|}||\gamma^{h}_t||_{0}^{-4m}{
	\left| |\gamma_t(t)|^{2m}-|\gamma_t(t)+{h}|^{2m}\right|^3}=0;
\end{aligned}
\end{eqnarray}
if $|\gamma_t(t)|=||\gamma_t||_{0^-}=0$,
\begin{eqnarray}\label{ss4}
\partial_{x}S_m(\gamma_{t})=0.
\end{eqnarray}
Notice that
\begin{eqnarray}\label{2208171}
\partial_xg(\gamma_t)=2m|\gamma_t(t)|^{2m-2}\gamma_t(t),\ \ (t,\gamma_t)\in [0, T]\times{\hat{\Lambda}}.
\end{eqnarray}
In view of~(\ref{s10902jia}), (\ref{s1}), (\ref{s2}), (\ref{s3}), (\ref{ss4}) and (\ref{2208171}),  we obtain that, for all $(t,\gamma_t)\in [0, T]\times{\hat{\Lambda}}$,
\begin{eqnarray}\label{03108}
\partial_{x}\Upsilon^{m,M}(\gamma_t)=\begin{cases}6m\frac{||\gamma_t||^{4m}_{0}-\left(||\gamma_t||^{2m}_{0}-|\gamma_t(t)|^{2m}\right)^2}{||\gamma_t||^{4m}_{0}}{|\gamma_t(t)|^{2m-2}\gamma_t(t)}\\
+2m(M-3)|\gamma_t(t)|^{2m-2}\gamma_t(t),  \ \ \ \ \ \ ||\gamma_t||_{0}\neq0;\\
0, \ \ \ \ \ \ \ \ \ \ \ \ \ \ \ \ \ \ \ \ \ \ \ \ \ \ \ \ \ \ \ \ \ \ \ \ \ \ \ \ \ \ ||\gamma_t||_{0}=0.
\end{cases}
\end{eqnarray}
From (\ref{03108}) it follows  that
\begin{eqnarray*}
	|\partial_{x}\Upsilon^{m,M}(\gamma_t)|&\leq& 6m|\gamma_t(t)|^{2m-1}+2m|M-3||\gamma_t(t)|^{2m-1}\\
	&=&2m(3+|M-3|)|\gamma_t(t)|^{2m-1}.
\end{eqnarray*}
That is (\ref{220817a}).

We now consider $\partial_{xx}\Upsilon^{m,M}(\cdot)$.   Clearly,
\begin{eqnarray}\label{s10902jia1}
\partial_{xx}S_m(\gamma_{0})=0,\  \ \gamma_0\in \Lambda_0.
\end{eqnarray}
For every $(t,\gamma_t)\in (0,T]\times \hat{\Lambda}$, since
\begin{eqnarray*}
	&&\left(||\gamma_t||^{2m}_{0}-|\gamma_t(t)+h|^{2m}\right)^2|\gamma_t(t)\\
	&&+h|^{2m-2}(\gamma_t(t)+h)-\left(||\gamma_t||^{2m}_{0}-|\gamma_t(t)|^{2m}\right)^2|\gamma_t(t)|^{2m-2}\gamma_t(t)\\
	&=&\left(||\gamma_t||^{2m}_{0}-|\gamma_t(t)+h|^{2m}\right)^2|\gamma_t(t)+h|^{2m-2}h\\
	&&+[2(m-1)\left(||\gamma_t||^{2m}_{0}-|\gamma_t(t)+h|^{2m}\right)^2|\gamma_t(t)|^{2m-4}\langle\gamma_t(t), h\rangle_H{\mathbf{1}_{\{m>1\}}}\\
	&&~~~-4m\left(||\gamma_t||^{2m}_{0}-|\gamma_t(t)|^{2m}\right)|\gamma_t(t)|^{4m-4}\langle\gamma_t(t), h\rangle_H+o(h)]\gamma_t(t),
\end{eqnarray*}
we have
if $|\gamma_t(t)|<||\gamma_t||_{0^-}$,
\begin{eqnarray}\label{s5}
\begin{aligned}
&\quad\partial_{xx}S_m(\gamma_t)\\
&=\frac{24m^2}{{||\gamma_t||_{0}^{4m}}}\left(||\gamma_t||^{2m}_{0}-|\gamma_t(t)|^{2m}\right)|\gamma_t(t)|^{4m-4}\langle\gamma_t(t), \cdot\rangle_H\gamma_t(t)\\
&\quad-\frac{12m(m-1)}{{||\gamma_t||_{0}^{4m}}}\left(||\gamma_t||^{2m}_{0}-|\gamma_t(t)|^{2m}\right)^2|\gamma_t(t)|^{2m-4}\langle\gamma_t(t), \cdot\rangle_H\gamma_t(t){\mathbf{1}_{\{m>1\}}}\\
&\quad-\frac{6m}{{||\gamma_t||_{0}^{4m}}}\left(||\gamma_t||^{2m}_{0}-|\gamma_t(t)|^{2m}\right)^2|\gamma_t(t)|^{2m-2}I;
\end{aligned}
\end{eqnarray}
if $|\gamma_t(t)|>||\gamma_t||_{0^-}$,
\begin{eqnarray}\label{s4}
\partial_{xx}S_m(\gamma_t)=0;
\end{eqnarray}
if $|\gamma_t(t)|=||\gamma_t||_{0^-}\neq0$, by (\ref{04050}) and (\ref{jiaxis}),
we have
\begin{eqnarray}\label{s6666}
\begin{aligned}
0&\leq\lim_{|h|\rightarrow0}\frac{1}{|h|}\left|\partial_{x}S_m(\gamma_t^{h})-\partial_{x}S_m(\gamma_t)\right|\\
&\leq\lim_{|h|\rightarrow0}\frac{6m} {|h|}||\gamma^{h}_t||_{0}^{-4m} \bigg{|}\left(|\gamma_t(t)|^{2m}
-|\gamma_t(t)+h|^{2m}\right)^2\\
&\quad \ \ \ \ \ \ \ \ \times|\gamma_t(t)+h|^{2m-2}(\gamma_t(t)+h)\bigg{|} =0;
\end{aligned}
\end{eqnarray}
if $|\gamma_t(t)|=||\gamma_t||_{0^-}=0$,
\begin{eqnarray}\label{ss42}
\partial_{xx}S_m(\gamma_t)=0.
\end{eqnarray}
Notice that, for $(t,\gamma_t)\in [0, T]\times{\hat{\Lambda}}$,
\begin{eqnarray}\label{2208172}
\begin{aligned}
&\quad\partial_{xx}g(\gamma_t)\\
&=4m(m-1)|\gamma_t(t)|^{2m-4}\langle\gamma_t(t), \cdot\rangle_H\gamma_t(t)\mathbf{1}_{\{m>1\}}+2m|\gamma_t(t)|^{2m-2}I.
\end{aligned}
\end{eqnarray}
From  (\ref{s10902jia1})-(\ref{ss42}) and (\ref{2208172}),  we have, for all $(t,\gamma_t)\in [0, T]\times{\hat{\Lambda}}$,
\begin{eqnarray}\label{03109}
\begin{aligned}
\ \ \ \ \ &\quad
\partial_{xx}\Upsilon^{m,M}(\gamma_t)\\
&=\begin{cases}{24m^2}{{||\gamma_t||_{0}^{-4m}}}\left(||\gamma_t||^{2m}_{0}-|\gamma_t(t)|^{2m}\right)|\gamma_t(t)|^{4m-4}\langle\gamma_t(t), \cdot\rangle_H\gamma_t(t)\\
+{12m(m-1)}{{||\gamma_t||_{0}^{-4m}}}\left(||\gamma_t||_{0}^{4m}-\left(||\gamma_t||^{2m}_{0}-|\gamma_t(t)|^{2m}\right)^2\right)\\
\times|\gamma_t(t)|^{2m-4}\langle\gamma_t(t), \cdot\rangle_H\gamma_t(t)\mathbf{1}_{\{m>1\}}\\
+{6m}{{||\gamma_t||_{0}^{-4m}}}\left(||\gamma_t||_{0}^{4m}-\left(||\gamma_t||^{2m}_{0}-|\gamma_t(t)|^{2m}\right)^2\right)|\gamma_t(t)|^{2m-2}I\\
+4m(m-1)(M-3)|\gamma_t(t)|^{2m-4}\langle\gamma_t(t), \cdot\rangle_H\gamma_t(t)\mathbf{1}_{\{m>1\}}\\
+2m(M-3)|\gamma_t(t)|^{2m-2}I,  \ \ \ \  \ \ \ \ \ \ \ \ \ \ \ \ \ \ \ \ \ \ \ \  \ \ \  \ \ \ \ \ ||\gamma_t||_{0}\neq0;\\
0, \ \ \ \  \ \ \ \ \ \ \ \ \ \ \ \ \ \ \ \ \ \ \ \  \ \ \ \ \ \ \ \  \ \ \ \ \ \ \ \ \ \ \  \ \ \  \ \  \ \ \ \ \ \ \  \ \ \ \ \ \ \ \ \ ||\gamma_t||_{0}=0.
\end{cases}
\end{aligned}
\end{eqnarray}
From (\ref{03109}), we have
\begin{eqnarray*}
	|\partial_{xx}\Upsilon^{m,M}(\gamma_t)|&\leq&  (24m^2+12m(m-1)+6m)|\gamma_t(t)|^{2m-2}\\
	&&+2m|M-3|(1+2(m-1))|\gamma_t(t)|^{2m-2}\\
	&=&2m[3(6m-1)+(2m-1)|M-3|]|\gamma_t(t)|^{2m-2}.
\end{eqnarray*}
That is (\ref{220817a1}).

By  (\ref{03108}), we see that  $\partial_{x}\Upsilon^{m,M}\in C^0(\hat{\Lambda})$.
By  (\ref{03109}), we see that $\partial_{xx}\Upsilon^{m,M}\in C^0(\hat{\Lambda})$ for all  $m\geq 2$.
Notice that
\begin{eqnarray}\label{220817b}
|\Upsilon^{m,M}(\gamma_t)|\leq (|M|+1)||\gamma_t||_0^{2m},\ \ (t,\gamma_t)\in [0, T]\times{\hat{\Lambda}}.
\end{eqnarray}
Then, from (\ref{220817a0}), (\ref{220817a}) and (\ref{220817a1}) we have  $\Upsilon^{m,M}(\cdot)\in C^{1,2}_{p}(\hat{\Lambda})$ for $m\geq 2$.  The proof is complete.
\end{proof}

For every $m\in \mathbb{N}^+$ and $M\in \mathbb{R}$, define $\overline{\Upsilon}^{m,M,2}:(\Lambda\otimes \Lambda)^2\rightarrow \mathbb{R}$ as follows: for  every $(\gamma_t,\gamma_t'), (\eta_s,\eta_s')  \in \Lambda\otimes \Lambda$,
$$
\overline{\Upsilon}^{m,M,2}((\gamma_t,\gamma_t'), (\eta_s,\eta_s')) :=   {\Upsilon}^{m,M}(\gamma_t,\eta_s) +{\Upsilon}^{m,M}(\gamma_t',\eta_s')+|t-s|^2.
$$
We now state below  some properties of $\Upsilon^{m,M}$, $\overline{\Upsilon}^{m,M}$ and $\overline{\Upsilon}^{m,M,2}$.
\begin{lemma}\label{theoremS0719}
	For every $m\in \mathbb{N}^+$ and  $M\in \mathbb{R}$,
	\begin{eqnarray}\label{s0}
	\begin{aligned}
	&\quad||\gamma_t||_{0}^{2m}+(M-3)|\gamma_t(t)|^{2m}\\
	&\leq   \Upsilon^{m,M}(\gamma_t)
	\leq 3||\gamma_t||_{0}^{2m}+(M-3)|\gamma_t(t)|^{2m}, \quad (t,\gamma_t)\in [0, T]\times{\hat{\Lambda}}.
	\end{aligned}
	\end{eqnarray}
	For $M\geq 3$, the functional  $\overline{\Upsilon}^{m,M}$ (resp., $\overline{\Upsilon}^{m,M,2}$) is  a gauge-type one on compete metric space $(\Lambda^t,d_{\infty})$ (resp., $(\Lambda^t\otimes \Lambda^t,d_{1,\infty})$) for each $t\in [0,T]$.
\end{lemma}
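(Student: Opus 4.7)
The plan is to establish the sandwich inequality first by a one-variable calculation, then exploit it to verify the conditions of a gauge-type function via the observation that $\Upsilon^{m,M}(\gamma_t,\eta_s)$ reduces to $\Upsilon^{m,M}$ applied to a single path, and finally extend coordinate-wise to the product case.

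For the sandwich bound, I would parametrise $a:=\|\gamma_t\|_0^{2m}$ and $b:=|\gamma_t(t)|^{2m}$ (so $0\le b\le a$) and observe directly from the definitions that
$$\Upsilon^{m,M}(\gamma_t)=S_m(\gamma_t)+M\,b=\tfrac{(a-b)^3}{a^2}+Mb\quad \text{when } a>0,$$
and $\Upsilon^{m,M}(\gamma_t)=0$ when $a=0$. After cancelling the $Mb$ term, the two inequalities reduce to the elementary algebraic estimate $a-3b\le \frac{(a-b)^3}{a^2}\le 3(a-b)$ on $0\le b\le a$, $a>0$. The upper bound follows from $(a-b)^2\le a^2$, while the lower bound is obtained from the identity $(a-b)^3-a^2(a-3b)=b^2(3a-b)\ge 0$. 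The degenerate case $a=0$ forces $b=0$ and is trivial.

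For the gauge-type property of $\overline{\Upsilon}^{m,M}$ on $(\Lambda^t,d_\infty)$ with $M\ge 3$, I would verify the two conditions of Definition \ref{gaupe} and continuity. The reflexivity $\overline{\Upsilon}^{m,M}(\gamma_t,\gamma_t)=0$ is immediate. Continuity is inherited from the joint continuity of $(\gamma_t,\eta_s)\mapsto \alpha:=\eta_{s,s\vee t}^A-\gamma_{t,t\vee s}^A$ combined with $\Upsilon^{m,M}\in C^0(\hat{\Lambda})$ proved in Lemma \ref{theoremS}. The quantitative separation condition would then be established from the identity
$$\Upsilon^{m,M}(\gamma_t,\eta_s)=\Upsilon^{m,M}(\alpha),$$
which I obtain by a direct check that $\alpha(s\vee t)=e^{(t-s)^+A}\eta_s(s)-e^{(s-t)^+A}\gamma_t(t)$ via splitting into the cases $s\ge t$ and $s<t$ from the definition of $\gamma_{t,\bar{t}}^A$. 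Applying the sandwich lower bound to $\alpha$ and using $M\ge 3$ gives $\|\alpha\|_0^{2m}\le \Upsilon^{m,M}(\alpha)=\Upsilon^{m,M}(\gamma_t,\eta_s)$, and combining with $|s-t|^2\le \overline{\Upsilon}^{m,M}(\gamma_t,\eta_s)$ and $d_\infty(\gamma_t,\eta_s)=|s-t|+\|\alpha\|_0$ yields $d_\infty(\gamma_t,\eta_s)\le \delta^{1/(2m)}+\delta^{1/2}$ whenever $\overline{\Upsilon}^{m,M}(\gamma_t,\eta_s)\le \delta$; this can be made arbitrarily small.

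For $\overline{\Upsilon}^{m,M,2}$ on $(\Lambda^t\otimes\Lambda^t,d_{1,\infty})$, the same strategy runs coordinate-wise: introducing $\alpha$ and $\alpha'$ for the two coordinates produces
$$\overline{\Upsilon}^{m,M,2}\bigl((\gamma_t,\gamma'_t),(\eta_s,\eta'_s)\bigr)\ge \|\alpha\|_0^{2m}+\|\alpha'\|_0^{2m}+|s-t|^2,$$
which controls $d_{1,\infty}=d_\infty+d_\infty$ by the same root-extraction argument. The main obstacle I anticipate is the boundary verification of the identity for $\alpha(s\vee t)$ and of joint continuity at $s=t$, where the piecewise $(s-t)^+$ convention for the semigroup must be glued continuously; however, this is exactly the computation already performed in the proof of $\Upsilon^{m,M}\in C^0(\hat{\Lambda})$ in Lemma \ref{theoremS}, so it transfers essentially verbatim.
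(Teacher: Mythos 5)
Your proof is correct and follows essentially the same plan as the paper's: reduce the sandwich to the case $M=3$ by absorbing $(M-3)|\gamma_t(t)|^{2m}$, prove the two-sided bound for $\Upsilon^{m,3}$, and then deduce the gauge-type property from the lower bound $\Upsilon^{m,M}(\alpha)\ge\|\alpha\|_0^{2m}$ with $\alpha=\eta_{s,s\vee t}^A-\gamma_{t,t\vee s}^A$ together with $|s-t|^2\le\overline{\Upsilon}^{m,M}$. The only micro-level difference is in the sandwich: you verify the lower bound via the explicit factorisation $(a-b)^3-a^2(a-3b)=b^2(3a-b)\ge 0$ and the upper bound via $(a-b)^2\le a^2$, while the paper views $\Upsilon^{m,3}(\gamma_t)$ as a function $f$ of $\alpha=|\gamma_t(t)|^{2m}$, shows $f'(\alpha)\ge 0$ on $[0,\|\gamma_t\|_0^{2m}]$, and evaluates $f$ at the two endpoints; both are one-step checks and either is acceptable.
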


\begin{proof}  If $||\gamma_t||_0=0$, it is clear that (\ref{s0}) holds. Then we may assume that
$||\gamma_t||_0\neq0$.
Letting $\alpha:=|\gamma_t(t)|^{2m}$ and $M=3$, we have
\begin{eqnarray*}
	 \Upsilon^{m,3}(\gamma_t)=\frac{(||\gamma_t||_0^{2m}-|\gamma_t(t)|^{2m})^3}{||\gamma_t||_0^{4m}}+3|\gamma_t(t)|^{2m}:=f(\alpha)=\frac{(||\gamma_t||_0^{2m}-\alpha)^3}{||\gamma_t||_0^{4m}}+3\alpha.
\end{eqnarray*}
By
\begin{eqnarray}\label{03130}
f'(\alpha)=-3\frac{(||\gamma_t||_0^{2m}-\alpha)^2}{||\gamma_t||_0^{4m}}+3\geq0, \ \ \mbox{for all}\   0\leq \alpha\leq ||\gamma_t||_0^{2m},
\end{eqnarray}
we get that
$$
||\gamma_t||_0^{2m} =f(0)\leq  \Upsilon^{m,3}(\gamma_t)=f(\alpha)\leq f(||\gamma_t||_0^{2m})=3||\gamma_t||_0^{2m},\ \ (t,\gamma_t)\in [0,T]\times\hat{\Lambda}.
$$
Since
$${\Upsilon}^{m,M}(\gamma_t)={\Upsilon}^{m,3}(\gamma_t)+(M-3)|\gamma_t(t)|^{2m},$$
we get (\ref{s0}).
\par
Let $M\geq3$. It follows from (\ref{s0}) that, for every    $(t,(\gamma_t,\gamma_t')), (s,(\eta_s,\eta_s'))\in  [0,T]\times (\Lambda\otimes \Lambda)$,
\begin{eqnarray}\label{0612d}
\begin{aligned}
\overline{\Upsilon}^{m,M}(\gamma_t,\eta_s)&={\Upsilon}^{m,M}(\gamma_{t,t\vee s}^A-\eta_{s, t\vee s}^A)+|s-t|^2\\
&\geq ||\gamma_{t,t\vee s}^A-\eta_{s, t\vee s}^A||_0^{2m}+|s-t|^2,
\end{aligned}
\end{eqnarray}
\begin{eqnarray}\label{0612d0719}
\begin{aligned}
&\quad\overline{\Upsilon}^{m,M,2}((\gamma_t,\gamma_t'),(\eta_s,\eta_s'))\\
&={\Upsilon}^{m,M}(\gamma_t,\eta_s) +{\Upsilon}^{m,M}(\gamma_t',\eta_s')+|t-s|^2\\
&\geq ||\gamma_{t,t\vee s}^A-\eta_{s, t\vee s}^A||_0^{2m}+||{\gamma'}_{t,t\vee s}^A-{\eta'}_{s, t\vee s}^A||_0^{2m}+|s-t|^2.
\end{aligned}
\end{eqnarray}
Recalling that $d_{\infty}(\gamma_t,\eta_s)=|t-s|
+||\gamma_{t,t\vee s}^A-\eta_{s, t\vee s}^A||_{0}$ and $d_{1,\infty}((\gamma_t,\gamma_t'),(\eta_s,\eta_s'))=d_{\infty}(\gamma_t,\eta_s)+d_{\infty}(\gamma_t',\eta_s')$,
we see that  $\overline{\Upsilon}^{m,M}$ (resp., $\overline{\Upsilon}^{m,M,2}$) is  a gauge-type function on compete metric space $(\Lambda^t,d_{\infty})$ (resp., $(\Lambda^t\otimes \Lambda^t,d_{1,\infty})$).
\end{proof}

To prove the uniqueness of viscosity solutions with  Crandall-Ishii lemma (see Theorem ~\ref{theorem0513}),
we also need the following lemma.

\begin{lemma}\label{theoremS00044} For $m\in \mathbb{N}^+$ and $M\geq3$, we have, for all $(t,\gamma_t,\eta_t)\in [0,T]\times \hat{\Lambda}\times \hat{\Lambda}$,
	\begin{eqnarray}\label{jias5}
	\left(\Upsilon^{m,M}(\gamma_t+\eta_t)\right)^{\frac{1}{2m}}\leq \left(\Upsilon^{m,M}(\gamma_t)\right)^{\frac{1}{2m}}+ \left(\Upsilon^{m,M}(\eta_t)\right)^{\frac{1}{2m}}.
	\end{eqnarray}
\end{lemma}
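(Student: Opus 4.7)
My plan is to reduce the inequality to a one-dimensional convexity statement and verify it by a polynomial computation. The argument proceeds in three stages.

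First, I would reduce to the case $M=3$. Since by definition $\Upsilon^{m,M}(\gamma_t) = \Upsilon^{m,3}(\gamma_t) + (M-3)|\gamma_t(t)|^{2m}$, assuming~\eqref{jias5} for $M=3$, I apply Minkowski's inequality in $\ell^{2m}(\mathbb{R}^2)$ to the pairs $\bigl(\Upsilon^{m,3}(\gamma_t)^{1/(2m)},\,(M-3)^{1/(2m)}|\gamma_t(t)|\bigr)$ and $\bigl(\Upsilon^{m,3}(\eta_t)^{1/(2m)},\,(M-3)^{1/(2m)}|\eta_t(t)|\bigr)$, together with the scalar inequality $|(\gamma_t+\eta_t)(t)|\leq |\gamma_t(t)|+|\eta_t(t)|$, to recover~\eqref{jias5} for every $M\geq 3$.

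For the case $M=3$, observe that $\Upsilon^{m,3}(\gamma_t) = F(\|\gamma_t\|_0,|\gamma_t(t)|)$, where
\begin{equation*}
F(X,Y) := \frac{(X^{2m}-Y^{2m})^3}{X^{4m}} + 3Y^{2m}
\end{equation*}
for $0 \leq Y \leq X$, $X > 0$, and $F(0,0) := 0$. A direct differentiation shows $F$ is non-decreasing in each argument on the cone $C := \{0\leq Y \leq X\}$ (for $\partial_Y F\geq 0$ use $(X^{2m}-Y^{2m})^2 \leq X^{4m}$). Combining this monotonicity with the triangle inequalities for $\|\cdot\|_0$ and $|\cdot|$ yields $\Upsilon^{m,3}(\gamma_t+\eta_t) \leq F(\|\gamma_t\|_0+\|\eta_t\|_0,\,|\gamma_t(t)|+|\eta_t(t)|)$, reducing the lemma to the scalar subadditivity
\begin{equation*}
F(X_1+X_2,\,Y_1+Y_2)^{1/(2m)} \leq F(X_1,Y_1)^{1/(2m)} + F(X_2,Y_2)^{1/(2m)}, \qquad (X_i,Y_i)\in C.
\end{equation*}

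Since $F$ is positively homogeneous of degree $2m$, the function $G:=F^{1/(2m)}$ is positively $1$-homogeneous on $C$, and on a convex cone such subadditivity is equivalent to convexity. Writing $G$ in perspective form $G(X,Y) = X\,k(Y/X)$ with
\begin{equation*}
k(q) := \bigl[(1-q^{2m})^3 + 3q^{2m}\bigr]^{1/(2m)}, \qquad q \in [0,1],
\end{equation*}
a standard argument via Jensen's inequality (the perspective of a convex function is convex) reduces the problem to showing that $k$ is convex on $[0,1]$. This one-dimensional convexity is the main obstacle. Setting $P(q) := (1-q^{2m})^3 + 3q^{2m}$, so $k = P^{1/(2m)}$, differentiating $\log k = \frac{1}{2m}\log P$ twice shows that $k''(q)\geq 0$ on $(0,1)$ is equivalent to $2m\,P(q)P''(q) \geq (2m-1)(P'(q))^2$. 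Substituting $r := q^{2m} \in [0,1]$, expanding, and factoring out the common root at $r=1$ (both sides coincide there, reflecting a tangency of the unit level set of $G$ to the diagonal $Y=X$), this reduces to verifying that a certain explicit cubic in $r$ has the appropriate sign on $[0,1]$; this final polynomial check is handled by evaluating at the endpoints and observing monotonicity via the sign of its derivative on $[0,1]$, completing the proof.
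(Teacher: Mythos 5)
Your proof is correct and, at bottom, follows the same route as the paper's: express $\Upsilon^{m,M}$ as a two-variable function of $(\|\cdot\|_0,|\cdot(t)|)$, use its monotonicity on the cone $\{0\le Y\le X\}$ together with the triangle inequalities, invoke positive $1$-homogeneity of the $2m$-th root to reduce subadditivity to one-variable convexity (the perspective argument), and close with a polynomial sign check (the paper's Lemma~\ref{theoremS0404}). The one genuine organizational difference is your first step: you reduce $M\ge 3$ to $M=3$ via Minkowski's inequality in $\ell^{2m}(\mathbb{R}^2)$, whereas the paper carries general $M$ through and handles the $(M-3)$ part as a second, separately nonnegative polynomial piece in $g''$. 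Your Minkowski reduction is a tidy simplification of the bookkeeping, at the price of an extra lemma; the paper's version does everything at once but the final polynomial check has two pieces instead of one. Your criterion $2mPP''\ge(2m-1)(P')^2$ is algebraically identical to the paper's direct computation of $g''$, and your substitution $r=q^{2m}$ does indeed collapse the check (after factoring out $12m^2q^{4m-2}$) to the quartic $Q(r)=2r^4-(2m+7)r^3+6r^2-(6m-1)r+(8m-2)\ge 0$ on $[0,1]$; dividing by $(1-r)$ leaves a cubic $S(r)=-2r^3+(2m+5)r^2+(2m-1)r+(8m-2)$ with $S(0)=8m-2>0$, $S(1)=12m>0$, and $S'>0$ on $[0,1]$, exactly the endpoint-plus-monotonicity verification you propose. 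The paper uses a slightly different factorization of $Q$ at this final step, but both succeed.
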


\begin{proof} The proof is  quite similar to the finite dimensional case (see \cite[Lemma 3.3]{zhou5}), and is given below for the reader's convenience.

If one of $||\gamma_t||_{0}$, $||\eta_t||_{0}$ and $||\gamma_t+\eta_t||_{0}$ is equal to $0$, it is clear that (\ref{jias5}) holds. Then we may assume that
all of $||\gamma_t||_{0}$, $||\eta_t||_{0}$ and $||\gamma_t+\eta_t||_{0}$ are not equal to $0$.
By the definition of $\Upsilon^{m,M}$, we get, for every $(t, \gamma_t, \eta_t)\in [0,T]\times {\hat{\Lambda}}\times {\hat{\Lambda}}$,
\begin{eqnarray*}
	&&\Upsilon^{m,M}(\gamma_t+\eta_t)\\
	&=&\frac{(||\gamma_t+\eta_t||_{0}^{2m}-|\gamma_t(t)+\eta_t(t)|^{2m})^3}
	{||\gamma_t+\eta_t||_{0}^{4m}}+M|\gamma_t(t)+\eta_t(t)|^{2m}\\
	 &=&||\gamma_t+\eta_t||_{0}^{2m}-\frac{|\gamma_t(t)+\eta_t(t)|^{6m}}{||\gamma_t+\eta_t||_{0}^{4m}}+3\frac{|\gamma_t(t)+\eta_t(t)|^{4m}}{||\gamma_t+\eta_t||_{0}^{2m}}\\
	&&+(M-3)|\gamma_t(t)+\eta_t(t)|^{2m}.
\end{eqnarray*}
Define
\begin{eqnarray*}
	f(x,y):=x-\frac{y^3}{x^2}+3\frac{y^2}{x}+(M-3)y, \ \ 0\leq y\leq x, \ x>0.
\end{eqnarray*}
Then, we have
\begin{eqnarray*}
	\Upsilon^{m,M}(\gamma_t+\eta_t)=f(||\gamma_t+\eta_t||_{0}^{2m},|\gamma_t(t)+\eta_t(t)|^{2m}).
\end{eqnarray*}
Since
$$
f_x(x,y)=1+2\bigg{(}\frac{y}{x}\bigg{)}^3-3\bigg{(}\frac{y}{x}\bigg{)}^2=\bigg{(}\frac{2y}{x}+1\bigg{)}\bigg{(}\frac{y}{x}-1\bigg{)}^2\geq0
$$
and
$$
f_y(x,y)=-3\frac{y^2}{x^2}+6\frac{y}{x}+(M-3)\geq0
$$
for $0\leq y\leq x$ and $x>0$,  together with
$$
||\gamma_t+\eta_t||_{0}\leq ||\gamma_t||_{0}+||\eta_t||_{0}
$$
and $$|\gamma_t(t)+\eta_t(t)|\leq |\gamma_t(t)|+|\eta_t(t)|,
$$
we have
\begin{eqnarray*}
	&&\Upsilon^{m,M}(\gamma_t+\eta_t)
\leq f\left((||\gamma_t||_0+||\eta_t||_{0})^{2m},\, (|\gamma_t(t)|+|\eta_t(t)|)^{2m}\right)\\
	&=&(||\gamma_t||_{0}+||\eta_t||_{0})^{2m}-\frac{(|\gamma_t(t)|+|\eta_t(t)|)^{6m}}{(||\gamma_t||_{0}+||\eta_t||_{0})^{4m}}
	+3\frac{(|\gamma_t(t)|+|\eta(t)|)^{4m}}{(||\gamma_t||_{0}+||\eta_t||_{0})^{2m}}\\
	&&+(M-3)(|\gamma_t(t)|+|\eta_t(t)|)^{2m}.
\end{eqnarray*}
Setting $a=\frac{|\gamma_t(t)|}{||\gamma_t||_{0}}$, $b=\frac{|\eta_t(t)|}{||\eta_t||_{0}}$, $\alpha=||\gamma_t||_{0}$ and $\beta=||\eta_t||_{0}$,
we get that
\begin{eqnarray*}
	&&\Upsilon^{m,M}(\gamma_t+\eta_t)\\
	&\leq& (\alpha+\beta)^{2m}-\frac{(a\alpha+b\beta)^{6m}}{(\alpha+\beta)^{4m}}+3\frac{(a\alpha+b\beta)^{4m}}{(\alpha+\beta)^{2m}}+(M-3)(a\alpha+b\beta)^{2m}\\
	&=& (\alpha+\beta)^{2m}g^{2m}\left(\frac{a\alpha+b\beta}{\alpha+\beta}\right),    \\
	\Upsilon^{m,M}(\gamma_t)&=& \alpha^{2m}-{\alpha^{2m}}a^{6m}+3{\alpha^{2m}}a^{4m}+(M-3)a^{2m}\alpha^{2m}
	=\alpha^{2m}g^{2m}(a),\\
	\Upsilon^{m,M}(\eta_t)&=& \beta^{2m}-{\beta^{2m}}b^{6m}+3{\beta^{2m}}b^{4m}+(M-3)b^{2m}\beta^{2m}
	=\beta^{2m}g^{2m}(b),
\end{eqnarray*}
where
\begin{eqnarray}\label{0404}
g(x)=\left(1-{x^{6m}}+3{x^{4m}}+(M-3)x^{2m}
\right)^{\frac{1}{2m}},\ \  x\in [0,1].
\end{eqnarray}
From Lemma \ref{theoremS0404} below, we see that the function $g$ is  convex  on $[0,1]$ and thus
\begin{eqnarray*}
	&& \left(\Upsilon^{m,M}(\gamma_t+\eta_t)\right)^{\frac{1}{2m}}-\left(\Upsilon^{m,M}(\gamma_t)\right)^{\frac{1}{2m}}-\left(\Upsilon^{m,M}(\eta_t)\right)^{\frac{1}{2m}} \\
	&\leq&(\alpha+\beta)g\left(\frac{a\alpha+b\beta}{\alpha+\beta}\right)-\alpha g(a)-\beta g(b)\\
	& =&(\alpha+\beta)\left(g\left(\frac{\alpha}{\alpha+\beta}a+\frac{\beta}{\alpha+\beta}b\right)-\frac{\alpha}{\alpha+\beta} g(a)-\frac{\beta}{\alpha+\beta} g(b)\right)\leq0.
\end{eqnarray*}
Then,  we have the inequality~(\ref{jias5}). The proof is  complete.
\end{proof}

To complete the previous proof, it remains to state and prove the following lemma.
\begin{lemma}\label{theoremS0404} For $m\in {\mathbb{N}}^+$ and $M\geq3$, the function $g$ defined by (\ref{0404}) is  convex  on $[0,1]$.
\end{lemma}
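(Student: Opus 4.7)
My plan is to reduce the convexity of $g$ on $[0,1]$ to a polynomial inequality via the substitution $y = x^{2m}$ and then exhibit an explicit non-negative factorization. Set $\phi(x) := g(x)^{2m} = 1 + (M-3)y + 3y^{2} - y^{3}$ with $y = x^{2m}$. For $y \in [0,1]$ and $M \geq 3$, the derivative $-3y^{2} + 6y + (M-3)$ of this polynomial in $y$ is non-negative, so $\phi$ is non-decreasing in $y$ with $\phi(0) = 1$ and $\phi(1) = M$. In particular $\phi \geq 1 > 0$ on $[0,1]$, and $g = \phi^{1/(2m)}$ is smooth on $(0,1]$ and continuous on $[0,1]$.

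Next, a direct differentiation of $g = \phi^{1/(2m)}$ gives
\[
 g''(x) = \frac{1}{(2m)^{2}}\,\phi(x)^{1/(2m) - 2}\bigl[\,2m\,\phi(x)\phi''(x) - (2m-1)(\phi'(x))^{2}\,\bigr],
\]
so that $g'' \geq 0$ on $(0,1]$ is equivalent to $2m\,\phi\phi'' \geq (2m-1)(\phi')^{2}$. Writing $\alpha(y) := (M-3) + 6y - 3y^{2}$, I compute
\[
 \phi'(x) = 2m\,x^{2m-1}\,\alpha(y), \qquad \phi''(x) = 2m(2m-1)\,x^{2m-2}\,\alpha(y) + 4m^{2}\,x^{4m-2}(6-6y).
\]
Substituting these in, expanding, and dividing the resulting quantity by $4m^{2} x^{2m-2}$ (valid for $x > 0$), the inequality $2m\phi\phi'' \geq (2m-1)(\phi')^{2}$ reduces to the estimate
\[
 (2m-1)\,\alpha(y)\,\bigl[\phi - y\,\alpha(y)\bigr] + 2m\,y\,\phi\,(6 - 6y) \;\geq\; 0, \qquad y \in [0,1].
\]

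The decisive observation is the factorization
\[
 \phi - y\,\alpha(y) = 1 - 3y^{2} + 2y^{3} = (1-y)^{2}(1+2y),
\]
which is non-negative on $[0,1]$. Combined with $\alpha(y) \geq 0$ (using $M \geq 3$), $\phi > 0$, and $6 - 6y \geq 0$ on $[0,1]$, this shows that both summands on the left-hand side above are non-negative, so $g'' \geq 0$ on $(0,1]$. Convexity of $g$ on the whole interval $[0,1]$ then follows from continuity of $g$ at $0$ (equivalently, by monotonicity of $g'$ extended by $g'(0^+) = 0$). The only real obstacle is spotting the factorization $\phi - y\,\alpha = (1-y)^{2}(1+2y)$; once this identity is in hand, the entire expression becomes a manifest sum of products of non-negative factors on $[0,1]$ under the hypothesis $M \geq 3$, and the remaining algebra is routine.
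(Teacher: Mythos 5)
Your proof is correct, and while it follows the same overall strategy as the paper (differentiate twice, show $g''\geq 0$ pointwise), the algebra is organized differently and comes out noticeably cleaner. The paper computes $g''$ directly as an expression in $x$, splits it into an $M$-independent part plus $(M-3)$ times another polynomial, and then verifies non-negativity of each bracket through a chain of ad hoc lower bounds such as
$2x^{8m}-(2m+7)x^{6m}+6x^{4m}-(6m-1)x^{2m}+8m-2 \geq x^{4m}(1-x^{2m})(2m+5-2x^{2m})+(6m-1)(1-x^{2m})$
and
$-(8m+2)x^{6m}+(6m+3)x^{4m}+2m-1 \geq (8m+2)x^{4m}(1-x^{2m})$.
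You instead pass to $y=x^{2m}$ and $\phi=g^{2m}$, reduce convexity to $2m\,\phi\phi''\geq (2m-1)(\phi')^2$, and then observe the exact factorization $\phi-y\alpha=1-3y^2+2y^3=(1-y)^2(1+2y)$, which makes the needed inequality
$(2m-1)\,\alpha(y)(1-y)^2(1+2y)+12m\,y(1-y)\phi(y)\geq 0$
a product of manifestly non-negative factors on $[0,1]$ under $M\geq 3$. The substitution $y=x^{2m}$ eliminates the $m$-dependence of the polynomial coefficients, so your version avoids the somewhat opaque estimates the paper uses, and I verified the two intermediate identities $\phi'=2mx^{2m-1}\alpha(y)$ and $\phi''=2m(2m-1)x^{2m-2}\alpha(y)+4m^2x^{4m-2}(6-6y)$ as well as the factorization; all check out. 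Your handling of $x=0$ by continuity is also fine, and is in fact a point the paper glosses over (it, too, divides out a power of $x$). In short: same route, sharper algebra.
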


\begin{proof}
By the definition of $g$, for all $x\in [0,1]$,
\begin{eqnarray*}
	g'(x)=\frac{1}{2m}g^{1-2m}(x)(-6mx^{6m-1}+12mx^{4m-1}+2m(M-3)x^{2m-1}
	),
\end{eqnarray*}
and
\begin{eqnarray*}
	&& g''(x)\\&=&\frac{1}{2m}g^{1-2m}(x)(-6m(6m-1)x^{6m-2}+12m(4m-1)x^{4m-2}\\
	&&+2m(2m-1)(M-3)x^{2m-2}
	)\\
	&&-\frac{1}{2m}(1-\frac{1}{2m})g^{1-4m}(x)(-6mx^{6m-1}+12mx^{4m-1}+2m(M-3)x^{2m-1}
	)^2\\
	&=& 3g^{1-4m}(x)x^{4m-2}(2x^{8m}-(2m+7)x^{6m}+6x^{4m}-(6m-1)x^{2m}+8m-2)\\
	&&+g^{1-4m}(x)x^{2m-2}(M-3)(-(8m+2)x^{6m}+(6m+3)x^{4m}+2m-1).
\end{eqnarray*}
Since
\begin{eqnarray*}
	&&2x^{8m}-(2m+7)x^{6m}+6x^{4m}-(6m-1)x^{2m}+8m-2\\
	&\geq& 2x^{8m}-(2m+7)x^{6m}+6x^{4m}-(6m-1)x^{2m}+(2m-1)x^{4m}+6m-1\\
	&=&x^{4m}(1-x^{2m})(2m+5-2x^{2m})+(6m-1)(1-x^{2m})\geq0, \quad  \ x\in [0,1]
\end{eqnarray*}
and
\begin{eqnarray*}
	&&-(8m+2)x^{6m}+(6m+3)x^{4m}+2m-1\\
	&\geq& -(8m+2)x^{6m}+(6m+3)x^{4m}+(2m-1)x^{4m}\\
	&=&(8m+2)x^{4m}(1-x^{2m})\geq0, \quad  \ x\in [0,1],
\end{eqnarray*}
we get that   $g''(x)\geq0$ for all $x\in [0,1]$ and  the function $g$  is a convex function on $[0,1]$.
The proof is now complete.
\end{proof}
\par
Note that $\Upsilon^{1,M}(\cdot)\notin C^{1,2}_p(\hat{\Lambda})$ since $\partial_{xx}\Upsilon^{1,M}(\cdot)$ is not continuous at $\gamma_t$ when $||\gamma_t||_0=0$.
Therefore, we  define, for every $\varepsilon>0$,
$$
\Upsilon^{\varepsilon}(\gamma_t)=\frac{(||\gamma_t||_0^2-|\gamma_t(t)|^2)^3}{\varepsilon^2+||\gamma_t||_0^4}+3|\gamma_t(t)|^2, \ \ \ (t,\gamma_t)\in[0,T]\times {\hat{\Lambda}}.
$$
The following lemma  will be used to study the PHJB equations under quadratic growth assumptions.
\begin{lemma}\label{theoremS1}  For every $\varepsilon>0$, $\Upsilon^{\varepsilon}$ satisfies the following inequality.
	\begin{eqnarray}\label{s088}
	\left(\left( ||\gamma_t||_0^2-\frac{\varepsilon}{2}\right)\vee 0\right)\leq \Upsilon^{\varepsilon}(\gamma_t)\leq 3||\gamma_t||_0^2, \ \ (t,\gamma_t)\in[0,T]\times {\hat{\Lambda}}.
	\end{eqnarray}
	Moreover,
	$\Upsilon^{\varepsilon}\in C^{1,2}_{p}(\hat{\Lambda})$,  and for $(t,\gamma_t)\in [0, T]\times{\hat{\Lambda}}$,
	\begin{eqnarray}\label{220817a01028}
	\partial_{t}\Upsilon^{\varepsilon}(\gamma_t)=0,
	\end{eqnarray}
	\begin{eqnarray}\label{220817a1028}
	|\partial_{x}\Upsilon^{\varepsilon}(\gamma_t)|\leq 6|\gamma_t(t)|,
	\end{eqnarray}
	and
	\begin{eqnarray}\label{220817a11028}
	|\partial_{xx}\Upsilon^{\varepsilon}(\gamma_t)|\leq 30.
	\end{eqnarray}
\end{lemma}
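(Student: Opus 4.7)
My plan is to follow closely the blueprint already executed for Lemma~\ref{theoremS}, noting that the key new feature is the denominator $\varepsilon^{2}+||\gamma_t||_{0}^{4}$, which is uniformly bounded away from $0$. This is exactly what will rescue continuity of $\partial_{xx}$ at $\gamma_t\equiv\mathbf{0}$, the pathology that caused $\Upsilon^{1,M}$ to fall outside $C^{1,2}_p(\hat{\Lambda})$.

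For the two-sided bound \eqref{s088}, I would fix $\gamma_t$, set $N=||\gamma_t||_{0}^{2}$ and $\alpha=|\gamma_t(t)|^{2}\in[0,N]$, and study the scalar function $f(\alpha)=(N-\alpha)^{3}/(\varepsilon^{2}+N^{2})+3\alpha$. Its derivative $f'(\alpha)=3[\varepsilon^{2}+N^{2}-(N-\alpha)^{2}]/(\varepsilon^{2}+N^{2})$ is nonnegative on $[0,N]$, so $f(0)\leq\Upsilon^\varepsilon(\gamma_t)\leq f(N)$. The upper bound is exactly $3N$. For the lower bound, the case $N\leq\varepsilon/2$ is trivial, and for $N\geq\varepsilon/2$ the inequality $N^{3}/(\varepsilon^{2}+N^{2})\geq N-\varepsilon/2$ is equivalent (after clearing denominators) to $(N-\varepsilon)^{2}\geq 0$. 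The horizontal derivative is immediate: $||\gamma_{t,t+h}||_{0}=||\gamma_t||_{0}$ and $\gamma_{t,t+h}(t+h)=\gamma_t(t)$, so $\Upsilon^\varepsilon(\gamma_{t,t+h})=\Upsilon^\varepsilon(\gamma_t)$, giving \eqref{220817a01028}.

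For the vertical derivatives I imitate the four-case split in the proof of Lemma~\ref{theoremS}: $|\gamma_t(t)|$ strictly less than, strictly greater than, or equal to $||\gamma_t||_{0^{-}}$ (the equality case subdivided according to whether $||\gamma_t||_{0^{-}}\neq 0$ or $=0$). When $|\gamma_t(t)|<||\gamma_t||_{0^{-}}$ the supremum norm is locally independent of the vertical perturbation $h$, and direct differentiation via the chain rule (with intermediate variable $u=|\gamma_t(t)+h|^{2}$) yields explicit formulas
$$
\partial_x\Upsilon^\varepsilon(\gamma_t)=6\gamma_t(t)\,\frac{\varepsilon^{2}+N^{2}-(N-\alpha)^{2}}{\varepsilon^{2}+N^{2}},\qquad \partial_{xx}\Upsilon^\varepsilon(\gamma_t)=6I\,\frac{\varepsilon^{2}+N^{2}-(N-\alpha)^{2}}{\varepsilon^{2}+N^{2}}+\frac{24(N-\alpha)}{\varepsilon^{2}+N^{2}}\,\gamma_t(t)\otimes\gamma_t(t).
$$
When $|\gamma_t(t)|>||\gamma_t||_{0^{-}}$ (in particular when $||\gamma_t||_{0}=0$, where $\gamma_t\equiv\mathbf{0}$ and hence perturbed paths satisfy $||\gamma_t^{h}||_{0}^{2}=|h|^{2}=|\gamma_t^{h}(t)|^{2}$), the cubic term vanishes on a neighborhood and $\Upsilon^\varepsilon$ coincides locally with $3|\gamma_t(t)|^{2}$, whose derivatives are $6\gamma_t(t)$ and $6I$ and match the previous formula at the boundary. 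In the remaining tangential case $|\gamma_t(t)|=||\gamma_t||_{0^{-}}\neq 0$, a squeezing estimate analogous to \eqref{s3} and \eqref{s6666} shows that the same formula survives.

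To finish, I read off \eqref{220817a1028} and \eqref{220817a11028} from the above explicit formulas using the crude estimates $(N-\alpha)^{2}\leq \varepsilon^{2}+N^{2}$ and $(N-\alpha)\alpha\leq \varepsilon^{2}+N^{2}$; polynomial growth of $\Upsilon^\varepsilon$ follows from \eqref{s088}, and continuity of $\partial_x\Upsilon^\varepsilon$ and $\partial_{xx}\Upsilon^\varepsilon$ on $(\hat{\Lambda},d_\infty)$ follows from continuity of the two scalar functionals $\gamma_t\mapsto||\gamma_t||_{0}$ and $\gamma_t\mapsto\gamma_t(t)$ once the three regimes are matched. The main obstacle I anticipate is exactly this last matching: reconciling the two candidate expressions for $\partial_{xx}\Upsilon^\varepsilon$ across the critical set $\{|\gamma_t(t)|=||\gamma_t||_{0^{-}}\}$ and, most importantly, confirming continuity at $\gamma_t\equiv\mathbf{0}$. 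This is where the regularization $\varepsilon^{2}$ in the denominator does its essential work, since it makes the coefficient of $\gamma_t(t)\otimes\gamma_t(t)$ remain bounded (in fact, vanishing as $N\to 0$), whereas in the unregularized case $\Upsilon^{1,M}$ the denominator $||\gamma_t||_{0}^{4}$ breaks down at the origin.
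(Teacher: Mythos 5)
Your proposal is correct and follows essentially the same route as the paper's proof, which explicitly defers to Lemma~\ref{theoremS} ("Similar to the proof of Lemma~\ref{theoremS}, we show that...") and records the two closed-form derivatives~\eqref{2210281}--\eqref{2210282}. Your formulas for $\partial_x\Upsilon^\varepsilon$ and $\partial_{xx}\Upsilon^\varepsilon$ agree with~\eqref{2210281} and~\eqref{2210282} after rewriting, and the scalar function $f(\alpha)$ and the algebraic verification of the lower bound via $(N-\varepsilon)^2\geq 0$ match the paper's computation; you have merely spelled out the four-case split that the paper leaves implicit.
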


\begin{proof}
First, we prove (\ref{s088}).  Letting $\alpha:=|\gamma_t(t)|^{2}$, we have
\begin{eqnarray*}
	 \Upsilon^{\varepsilon}(\gamma_t)=\frac{(||\gamma_t||_0^{2}-|\gamma_t(t)|^{2})^3}{\varepsilon^2+||\gamma_t||_0^{4}}+3|\gamma_t(t)|^{2}:=f(\alpha)=\frac{(||\gamma_t||_0^{2}-\alpha)^3}{\varepsilon^2+||\gamma_t||_0^{4}}+3\alpha.
\end{eqnarray*}
Since
\begin{eqnarray*}
	f'(\alpha)=-3\frac{(||\gamma_t||_0^{2}-\alpha)^2}{\varepsilon^2+||\gamma_t||_0^{4}}+3\geq0, \quad  0\leq \alpha\leq ||\gamma_t||_0^{2},
\end{eqnarray*}
we have for all $(t,\gamma_t)\in [0,T]\times\hat{{\Lambda}}$,
$$
||\gamma_t||_0^{2}- \frac{\varepsilon}{2} \leq\frac{||\gamma_t||_0^{6}}{\varepsilon^2+||\gamma_t||_0^{4}} =f(0)\leq  \Upsilon^{\varepsilon}(\gamma_t)=f(\alpha)\leq f(||\gamma_t||_0^{2})=3||\gamma_t||_0^{2}.
$$
Noting $ \Upsilon^{\varepsilon}(\gamma_t)\geq0$ for every $(t,\gamma_t)\in [0,T]\times\hat{\Lambda}$, we have (\ref{s088}) holds true.
Second, it is clear that $\Upsilon^\varepsilon(\cdot)\in C^0(\hat{\Lambda})$ and $\partial_t\Upsilon^\varepsilon(\gamma_t)=0$ for all $(t,\gamma_t)\in[0,T]\times {\hat{\Lambda}}$. Similar to the proof of Lemma \ref{theoremS}, we show that for all $(t,\gamma_t)\in [0,T]\times\hat{{\Lambda}}$,
\begin{eqnarray}\label{2210281}
\partial_{x}\Upsilon^{\varepsilon}(\gamma_t)=\frac{-6(||\gamma_t||_0^2-|\gamma_t(t)|^2)^2\gamma_t(t)}{\varepsilon^2+||\gamma_t||_0^4}+6\gamma_t(t)
\end{eqnarray}
and
\begin{eqnarray}\label{2210282}
\begin{aligned}
&\quad
\partial_{xx}\Upsilon^\varepsilon(\gamma_t)\\
&=\frac{24(||\gamma_t||_0^2-|\gamma_t(t)|^2)\langle\gamma_t(t), \cdot\rangle_H\gamma_t(t)-6(||\gamma_t||_0^2-|\gamma_t(t)|^2)^2I}{\varepsilon^2+||\gamma_t||_0^4}+6I.
\end{aligned}
\end{eqnarray}
By  (\ref{2210281}) and (\ref{2210282}), it is clear that (\ref{220817a1028}) and (\ref{220817a11028}) hold, and   $\partial_{x}\Upsilon^{\varepsilon}(\cdot), \partial_{xx}\Upsilon^{\varepsilon}(\cdot)\in C^0(\hat{\Lambda})$.
Then, from (\ref{s088}), (\ref{220817a01028}), (\ref{220817a1028}) and (\ref{220817a11028}) we have  $\Upsilon^{\varepsilon}(\cdot)\in C^{1,2}_{p}(\hat{\Lambda})$.
The proof is  complete.\end{proof}

\newpage
\section{ Unbounded stochastic evolution equations
	 in path spaces}
\par
In this chapter, we consider the controlled state
equation (\ref{state1}).
Let us introduce the admissible control. Let $t$ and $s$ be two deterministic times such that $0\leq t\leq s\leq T$.
\begin{definition}
	An admissible control process $u(\cdot)=\{u(r),  r\in [t,s]\}$ on $[t,s]$  is an $\{{\mathcal{F}}_r\}_{t\leq r\leq s}$-progressively measurable process taking values in a Polish  space $(U,d)$. The set of all admissible controls on $[t,s]$ is denoted by ${\mathcal{U}}[t,s]$. We identify two processes $u(\cdot)$ and $\tilde{u}(\cdot)$ in ${\mathcal{U}}[t,s]$
	and write $u(\cdot)\equiv\tilde{u}(\cdot)$ on $[t,s]$, if $P(u(\cdot)=\tilde{u}(\cdot) \ a.e. \ \mbox{in}\ [t,s])=1$.
\end{definition}
The precise notion of solution to equation (\ref{state1}) will be given next. We make the following assumption.
\begin{assumption}\label{hypstate}
	\begin{description}
		\item[(i)]
		The operator $A$ is the generator of
		a $C_0$  {semi-group}   of bounded linear operator $\{e^{tA}, t\geq0\}$ in Hilbert space
		$H$.
		\item[(i')]
		The operator $A$ is the generator of a $C_0$ contraction
		{semi-group}  of bounded linear operators  $\{e^{tA}, t\geq0\}$ in
		Hilbert space $H$.
		\par
		\item[(ii)] $F:{\Lambda}\times U\rightarrow H$ and $G:{\Lambda}\times U\rightarrow L_2(\Xi,H)$ are continuous, {and
        $F,G$ are continuous in $\gamma_t\in \Lambda$, uniformly in $u\in U$.} Moreover,
		there exists a constant $L>0$ such that we have  for all $(t, \gamma_t, \eta_t, u) \in [0,T]\times {\Lambda}\times {\Lambda}\times U$,
		\begin{eqnarray}\label{assume1111}
		\begin{aligned}
		&|F(\gamma_t,u)|^2\vee|G(\gamma_t,u)|^2_{L_2(\Xi,H)}\leq
		L^2(1+||\gamma_t||^2_0),\\
		&|F(\gamma_t,u)-F(\eta_t,u)|\vee|G(\gamma_t,u)-G(\eta_t,u)|_{L_2(\Xi,H)}\leq
		L||\gamma_t-\eta_t||_0.
		\end{aligned}
		\end{eqnarray}
	\end{description}
\end{assumption}
\par
We say that $X$ is a mild solution of equation $(\ref{state1})$ with initial data $\xi_t\in L_{\mathcal{P}}^p(\Omega, \Lambda_t(H))$ for some $p>2$ if it is a continuous, $\{{\mathcal{F}}_s\}_{s\geq0}$-predictable $H$-valued process such that  $P$-a.s.,
\begin{eqnarray*}
	X(s)&=&e^{(s-t)A}\xi_t(t)+\int_{t}^{s}{e^{(s-\sigma)A}}F(X_\sigma,u(\sigma))d\sigma\\
	&&+\int_{t}^{s}{e^{(s-\sigma)A}}G(X_\sigma,u(\sigma))dW(\sigma),  \ s\in [t,T],
\end{eqnarray*}
and  $X(s)=\xi_t(s)$ for $s\in[0,t)$. To emphasize the dependence on initial data and control, we denote the solution by $X^{\xi_t,u}(\cdot)$.
\par
The following lemma is standard, and is available, for example,  in \cite[Theorem 3.6]{ro}.

\begin{lemma}\label{lemmaexist0409}
Assume that Assumption \ref{hypstate} (i) and (ii)  hold. Then for every $p>2$, $u(\cdot)\in {\mathcal{U}}[0,T]$,
	$\xi_t\in L^{p}_{\mathcal{P}}(\Omega, \Lambda_t(H))$, (\ref{state1}) admits a
	unique mild solution $X^{\xi_t,u}\in L_{\mathcal{P}}^p(\Omega, \Lambda_T(H))$ and   
	the following estimate hold:
	\begin{eqnarray}\label{state1est}
	\mathbb{E}||X_T^{\xi_t,u}||^p_0\leq C_p(1+\mathbb{E}||\xi_t||^p_0).
	\end{eqnarray}
	The constant $C_p$ depending only on  $p$, $T$, $L$ and
	$M_1=:\sup_{s\in [0,T]}|e^{sA}|$.
	\par
	Moreover,  let $A_\mu:=\mu A(\mu I-A)^{-1}$ be the Yosida approximation of $A$ and
	let $X^\mu$ be the solution of the following:
	\begin{eqnarray}\label{07162}
	\begin{aligned}
	X^\mu(s)&=e^{(s-t)A_\mu}\xi_t(t)+\int^{s}_{t}e^{(s-\sigma)A_\mu}F(X^{\mu}_\sigma,u(\sigma))d\sigma\\
	&\quad+\int^{s}_{t}e^{(s-\sigma)A_\mu}G(X^{\mu}_\sigma,u(\sigma))dW(\sigma),\quad  s\in [t,T];
	\end{aligned}
	\end{eqnarray}
	and $  X^\mu(s)=\xi_t(s), \ s\in[0,t)$.
	Then
	\begin{eqnarray}\label{0717}
	\lim_{\mu\rightarrow\infty}\mathbb{E}\left[\|X^{\xi_t,u}_T-X^\mu_T\|_0^p\right]=0.
	\end{eqnarray}
\end{lemma}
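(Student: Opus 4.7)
My plan is to prove the three assertions of the lemma (existence/uniqueness, the $L^p$ estimate, and convergence of the Yosida approximations) by a standard Picard iteration in the Banach space $L^p_{\mathcal P}(\Omega,\Lambda_T(H))$, combined with the factorization method for stochastic convolutions to control the sup norm, and then Gronwall.

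First, fix $p>2$, $u(\cdot)\in\mathcal U[0,T]$, and $\xi_t\in L^p_{\mathcal P}(\Omega,\Lambda_t(H))$. For a process $X\in L^p_{\mathcal P}(\Omega,\Lambda_T(H))$ with $X(s)=\xi_t(s)$ on $[0,t)$, define
\begin{eqnarray*}
(\Phi X)(s)&:=&e^{(s-t)A}\xi_t(t)+\int_t^s e^{(s-\sigma)A}F(X_\sigma,u(\sigma))\,d\sigma\\
&&+\int_t^s e^{(s-\sigma)A}G(X_\sigma,u(\sigma))\,dW(\sigma),\quad s\in[t,T],
\end{eqnarray*}
and $(\Phi X)(s):=\xi_t(s)$ on $[0,t)$. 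The first two terms are easy to bound using $|e^{rA}|\le M_1$ and the linear growth in Assumption \ref{hypstate}(ii). The stochastic convolution is the delicate term, since it is not a martingale, so I would apply the factorization formula of Da Prato--Kwapien--Zabczyk (\cite[Proposition 7.3]{da}) to obtain, for some $\alpha\in(1/p,1/2)$,
\[\sup_{s\in[t,T]}\Bigl|\int_t^s e^{(s-\sigma)A}G(X_\sigma,u(\sigma))\,dW(\sigma)\Bigr|^p\le C_{p,T}\int_t^T|G(X_\sigma,u(\sigma))|_{L_2(\Xi,H)}^p\,d\sigma,\]
after taking expectation. This bound, together with Burkholder--Davis--Gundy at the intermediate level, is what makes Picard iteration close in the sup norm.

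For existence and uniqueness, I would iterate: the same factorization bound applied to $\Phi X^1-\Phi X^2$ together with the Lipschitz estimate in Assumption \ref{hypstate}(ii) gives
\[\mathbb E\|\Phi X^1_T-\Phi X^2_T\|_0^p\le C\int_0^T\mathbb E\|X^1_\sigma-X^2_\sigma\|_0^p\,d\sigma,\]
so by iteration the $n$-fold composition $\Phi^n$ is a strict contraction on $L^p_{\mathcal P}(\Omega,\Lambda_T(H))$, yielding a unique fixed point $X^{\xi_t,u}$. The a priori bound (\ref{state1est}) then follows by applying the same estimates to $X^{\xi_t,u}$ itself and invoking Gronwall's inequality, giving $\mathbb E\|X^{\xi_t,u}_T\|_0^p\le C_p(1+\mathbb E\|\xi_t\|_0^p)$ with $C_p$ depending only on $p,T,L,M_1$.

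For the Yosida approximation, note that $|e^{rA_\mu}|\le M_1'$ uniformly in $\mu$ large (since $A_\mu$ generates a $C_0$-semigroup whose norm is locally bounded uniformly in $\mu$), and $e^{rA_\mu}x\to e^{rA}x$ strongly, uniformly for $r$ in compact sets, for every $x\in H$. Subtracting (\ref{07162}) from the equation for $X^{\xi_t,u}$ and applying the same factorization/Lipschitz machinery, I would split the difference into two parts: a term of the form $\|(e^{(\cdot-\sigma)A_\mu}-e^{(\cdot-\sigma)A})\Psi(\sigma)\|$ (for the drift, diffusion, and initial condition terms using the already-constructed $X^{\xi_t,u}$) which vanishes as $\mu\to\infty$ by strong convergence of the semigroups plus dominated convergence (the uniform bound (\ref{state1est}) provides the integrable majorant), and a Lipschitz term involving $\|X^\mu_\sigma-X^{\xi_t,u}_\sigma\|_0^p$ multiplied by a small factor. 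Gronwall closes the argument and yields (\ref{0717}).

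The main technical obstacle is the factorization-method bound on the stochastic convolution in the sup norm: this is what forces $p>2$ (so that $\alpha\in(1/p,1/2)$ exists), and it is essential both for closing the Picard iteration in $L^p(\Omega,\Lambda_T(H))$ and for handling the Yosida limit uniformly on $[0,T]$. Once that inequality is in place, all three conclusions reduce to routine Gronwall arguments.
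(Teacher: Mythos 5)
Your proposal is correct and follows essentially the same route as the paper: factorization method for the stochastic convolution to control the $\Lambda_T$-norm, a fixed-point argument for existence/uniqueness, Gronwall for the a priori estimate, and strong semigroup convergence plus dominated convergence plus Gronwall for the Yosida limit. The one notable difference is the fixed-point mechanism: the paper introduces the equivalent weighted norm $\|X\|=(\mathbb{E}\sup_{s}e^{-\beta sp}|X(s)|^p)^{1/p}$ and shows $\Phi$ itself is a contraction for $\beta$ large, whereas you show that an iterate $\Phi^n$ is a strict contraction in the unweighted norm via the integral bound $\mathbb{E}\|\Phi X^1_s-\Phi X^2_s\|_0^p\le C\int_0^s\mathbb{E}\|X^1_\sigma-X^2_\sigma\|_0^p\,d\sigma$ and the $C^ns^n/n!$ estimate. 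Both are textbook devices and buy the same conclusion; the weighted norm avoids tracking iterates but requires choosing $\beta$, while the Picard iteration is self-contained. One small note: the inequality you quote after factorization (passing from the $(\int|\cdot|^2)^{p/2}$ BDG bound to $\int|\cdot|^p$) is legitimate for all $p>2$ when the outer variable is integrated in $l$ and Young's convolution inequality is used, but it would fail if one tried to establish it pointwise in $l$ via Hölder alone (which would require $p>4$); since you are integrating over $l$ this is fine, but it is worth being aware that the pointwise-in-$l$ version is not available.
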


The proof  is  given below for a completeness and subsequent references.

\begin{proof}  Define  $\Phi$ in the Banach space $L^p_{\mathcal{P}}(\Omega, \Lambda_T(H))$  the transformation by the formula
\begin{eqnarray*}
	\Phi(X)(s)&=&e^{(s-t)A}\xi_t(t)+\int^{s}_{t}e^{(s-\sigma)A}F(X_\sigma,u(\sigma))d\sigma\\
	&&+\int^{s}_{t}e^{(s-\sigma)A}G(X_\sigma,u(\sigma))dW(\sigma),\quad  s\in [t,T];\\
	\Phi(X)(s)&=&\xi_t(s),\ \ s\in [0,t).
\end{eqnarray*}
We show that it is a contraction, under an equivalent norm
$$||X||=\bigg{(}\mathbb{E}\sup_{s\in[0,T]} e^{-\beta sp}|X(s)|^p\bigg{)}^{\frac{1}{p}},$$
with $\beta>0$ being waiting to be determined.

We shall use the so called factorization method (see~\cite[Theorem 5.2.5 ]{da}). Let us take $\alpha\in (0,1)$ such that
$$
\frac{1}{p}<\alpha<\frac{1}{2}\ \ \mbox{and let}\ \ c_{\alpha}^{-1}=\int^{s}_{\sigma}(s-l)^{\alpha-1}(l-\sigma)^{-\alpha}dl.
$$
Then by the Fubini theorem and stochastic Fubini theorem, for $s\in [t,T]$,
\begin{eqnarray*}
	&&\Phi(X)(s)\\
	&=&e^{(s-t)A}\xi_t(t)+c_{\alpha}\int^{s}_{t}\int^{s}_{\sigma}(s-l)^{\alpha-1}(l-\sigma)^{-\alpha}e^{(s-l)A}e^{(l-\sigma)A}dlF(X_\sigma,u(\sigma))d\sigma\\
	&&+c_{\alpha}\int^{s}_{t}\int^{s}_{\sigma}(s-l)^{\alpha-1}(l-\sigma)^{-\alpha}e^{(s-l)A}e^{(l-\sigma)A}dlG(X_\sigma,u(\sigma))dW(\sigma)\\
	&=&e^{(s-t)A}\xi_t(t)+c_{\alpha}\int^{s}_{t}(s-l)^{\alpha-1}e^{(s-l)A}Y(l)dl,
\end{eqnarray*}
where
$$
Y(l)=\int^{l}_{t}(l-\sigma)^{-\alpha}e^{(l-\sigma)A} F(X_\sigma,u(\sigma))d\sigma+\int^{l}_{t}(l-\sigma)^{-\alpha}e^{(l-\sigma)A} G(X_\sigma,u(\sigma))dW(\sigma).
$$
By the H\"{o}lder inequality, setting 
$q=\frac{p}{p-1}$,
\begin{eqnarray*}
	&&e^{-\beta s}\bigg{|}\int^{s}_{t}(s-l)^{\alpha-1}e^{(s-l)A}Y(l)dl\bigg{|}\\
	&\leq& \bigg{(}\int^{s}_{t}e^{-q\beta (s-l)}(s-l)^{(\alpha-1)q}dl\bigg{)}^{\frac{1}{q}}
	\bigg{(}\int^{s}_{t}e^{-p\beta l}\left|e^{(s-l)A}Y(l)\right|^pdl\bigg{)}^{\frac{1}{p}}\\
	&\leq& M_1\bigg{(}\int^{T}_{0}e^{-q\beta l}l^{(\alpha-1)q}dl\bigg{)}^{\frac{1}{q}}
	\bigg{(}\int^{T}_{t}e^{-p\beta l}|Y(l)|^pdl\bigg{)}^{\frac{1}{p}}.
\end{eqnarray*}
Then we get
\begin{eqnarray*}
	||\Phi(X)||\leq M_1(\mathbb{E}||\xi_t||^p_0)^{\frac{1}{p}}+M_1c_{\alpha}\bigg{(}\int^{T}_{0}e^{-q\beta l}l^{(\alpha-1)q}dl\bigg{)}^{\frac{1}{q}}
	\bigg{(}\mathbb{E}\int^{T}_{t}e^{-p\beta l}|Y(l)|^pdl\bigg{)}^{\frac{1}{p}}.
\end{eqnarray*}
By the Burkholder-Davis-Gundy
inequality and  Assumption \ref{hypstate} (i) and (ii), there is a constant $c_p$ depending only on $p$  (it may vary from line to line)  such that
\begin{eqnarray*}
	\mathbb{E}|Y(l)|^p
	&\leq& c_p\mathbb{E}\bigg{(}\int^{l}_{t}(l-\sigma)^{-\alpha}\left|e^{(l-\sigma)A} F(X_\sigma,u(\sigma))\right|d\sigma\bigg{)}^p\\
	&&+c_p\mathbb{E}\bigg{(}\int^{l}_{t}(l-\sigma)^{-2\alpha}\left|e^{(l-\sigma)A} G(X_\sigma,u(\sigma))\right|^2_{L_2(\Xi,H)}d\sigma\bigg{)}^{\frac{p}{2}}\\
	&\leq&c_pM_1^pL^p\mathbb{E}\bigg{(}\int^{l}_{t}(l-\sigma)^{-\alpha}(1+||X_\sigma||_0)d\sigma\bigg{)}^p\\
	&&+c_pM_1^pL^p\mathbb{E}\bigg{(}\int^{l}_{t}(l-\sigma)^{-2\alpha}(1+||X_\sigma||^2_0)d\sigma\bigg{)}^{\frac{p}{2}}\\
	&\leq&c_pM_1^pL^p\mathbb{E}\sup_{\sigma\in [0,l]}[(1+||X_\sigma||_0)^pe^{-p\beta\sigma}]\\
	&&\times\bigg{[}\bigg{(}\int^{l}_{t}(l-\sigma)^{-\alpha}e^{\beta\sigma}d\sigma\bigg{)}^p
	+\bigg{(}\int^{l}_{t}(l-\sigma)^{-2\alpha}e^{2\beta\sigma}d\sigma\bigg{)}^{\frac{p}{2}}\bigg{]},
\end{eqnarray*}
which implies
\begin{eqnarray*}
	&&e^{-p\beta l}\mathbb{E}|Y(l)|^p\\
	&\leq& c_pM_1^pL^p(1+||X||^p)\bigg{[}\bigg{(}\int^{T}_{0}\sigma^{-\alpha}e^{-\beta\sigma}d\sigma\bigg{)}^p
	+\bigg{(}\int^{T}_{0}\sigma^{-2\alpha}e^{-2\beta\sigma}d\sigma\bigg{)}^{\frac{p}{2}}\bigg{]}.
\end{eqnarray*}
We conclude that
\begin{eqnarray*}
	||\Phi(X)||&\leq& M_1(\mathbb{E}||\xi_t||^p_0)^{\frac{1}{p}}+M_1^2Lc_{\alpha}(Tc_p(1+||X||^p))^{\frac{1}{p}}\bigg{(}\int^{T}_{0}e^{-q\beta l}l^{(\alpha-1)q}dl\bigg{)}^{\frac{1}{q}}\\
	&&\times
	\bigg{[}\bigg{(}\int^{T}_{0}\sigma^{-\alpha}e^{-\beta\sigma}d\sigma\bigg{)}^p
	+\bigg{(}\int^{T}_{0}\sigma^{-2\alpha}e^{-2\beta\sigma}d\sigma\bigg{)}^{\frac{p}{2}}\bigg{]}^{\frac{1}{p}}.
\end{eqnarray*}
Hence,  $\Phi$ is a well defined transform on $L^p_{\mathcal{P}}(\Omega, \Lambda_T(H))$. For $X,X'\in L^p_{\mathcal{P}}(\Omega,  \Lambda_T(H)) $,  we can prove in a similar manner the following estimate
\begin{eqnarray*}
	||\Phi(X)-\Phi(X')||&\leq& M_1^2Lc_{\alpha}(Tc_p)^{\frac{1}{p}}||X-X'||\bigg{(}\int^{T}_{0}e^{-q\beta l}l^{(\alpha-1)q}dl\bigg{)}^{\frac{1}{q}}\\
	&&\times
	\bigg{[}\bigg{(}\int^{T}_{0}\sigma^{-\alpha}e^{-\beta\sigma}d\sigma\bigg{)}^p
	+\bigg{(}\int^{T}_{0}\sigma^{-2\alpha}e^{-2\beta\sigma}d\sigma\bigg{)}^{\frac{p}{2}}\bigg{]}^{\frac{1}{p}}.
\end{eqnarray*}
Therefore,  $\Phi$  is a contraction for sufficiently large $\beta$. In particular, we obtain  the
estimate (\ref{state1est}).
\par
Finally, for some positive constant $c_p$ depending only on $p$ that might vary from line to line,
\begin{eqnarray}\label{0716}
\begin{aligned}
 &\quad\mathbb{E}\sup_{r\in[0,s]}\left|X^{\xi_t,u}(r)-X^\mu(r)\right|^p\\
&\leq c_p\bigg{(}\mathbb{E}\int^{s}_{t}\sup_{\theta\in[0,T]}\left|(e^{\theta A}-e^{\theta A_\mu})F(X^{\xi_t,u}_\sigma,u(\sigma))\right|^pd\sigma\\
&\quad+L^pM^p_1\mathbb{E}\int^{s}_{t}\sup_{\theta\in[0,\sigma]}\left|X^{\xi_t,u}(\theta)-X^\mu(\theta)\right|^pd\sigma\\
&\quad+\mathbb{E}\sup_{r\in[t,s]}\bigg{|}\int^{r}_{t}(e^{(r-\sigma)A}G(X^{\xi_t,u}_\sigma,u(\sigma))-e^{(r-\sigma)A_\mu}G(X^\mu_\sigma,u(\sigma)))dW(\sigma)\bigg{|}^p\\
&\quad+\mathbb{E}\sup_{r\in[0,s]}\left|(e^{rA}-e^{rA_\mu})\xi_t(t)\right|^p\bigg{)}.
\end{aligned}
\end{eqnarray}
On the other hand, by the stochastic Fubini
theorem,
\begin{eqnarray*}
	&&\int^{r}_{t}\left(e^{(r-\sigma)A}G(X^{\xi_t,u}_\sigma,u(\sigma))-e^{(r-\sigma)A_\mu}G(X^\mu_\sigma,u(\sigma))\right)dW(\sigma)\\
	&=&c_\alpha\int^{r}_{t}(r-\theta)^{\alpha-1}\left(e^{(r-\theta)A}\hat{Y}(\theta)-e^{(r-\theta)A_\mu}Y^\mu(\theta)\right)d\theta,
\end{eqnarray*}
with
$$
\hat{Y}(\theta)=\int^{\theta}_{t}(\theta-\sigma)^{-\alpha}e^{(\theta-\sigma)A}G(X^{\xi_t,u}_\sigma,u(\sigma))dW(\sigma)
$$
and
$$
Y^\mu(\theta)=\int^{\theta}_{t}(\theta-\sigma)^{-\alpha}e^{(\theta-\sigma)A_\mu}G(X^\mu_\sigma,u(\sigma))dW(\sigma).
$$
From the H\"{o}lder inequality and the Burkholder-Davis-Gundy
inequality, we have
\begin{eqnarray}\label{07161}
\qquad&\begin{aligned}
 &\quad
\mathbb{E}\sup_{r\in[t,s]}\bigg{|}\int^{r}_{t}\left(e^{(r-\sigma)A}G(X^{\xi_t,u}_\sigma,u(\sigma))-e^{(r-\sigma)A_\mu}
G(X^\mu_\sigma,u(\sigma))\right)dW(\sigma)\bigg{|}^p\\
&\leq c_pc^p_\alpha\bigg{(}\int^{T}_{0}r^{q(\alpha-1)}dr\bigg{)}^{\frac{p}{q}}\\
&\quad
\times \bigg{(}\mathbb{E}\int^{s}_{t}\!\! \sup_{r\in[0,T]}\left|\left(e^{rA}-e^{rA_\mu}\right)\hat{Y}(\theta)\right|^p\! d\theta+M^p_1\mathbb{E}\int^{s}_{t}|\hat{Y}(\theta)-Y^\mu(\theta)|^pd\theta\bigg{)}\\
&\leq c_pc^p_\alpha\bigg{(}\int^{T}_{0}r^{q(\alpha-1)}dr\bigg{)}^{\frac{p}{q}}\bigg{(}\mathbb{E}\int^{s}_{t}\sup_{r\in[0,T]}\left|\left(e^{rA}-e^{rA_\mu}\right)\hat{Y}(\theta)\right|^pd\theta
\\
&\quad+
M_1^{2p}L^p\bigg{(}\int^{T}_{0}r^{-2\alpha}dr\bigg{)}^{\frac{p}{2}}\mathbb{E}\int^{s}_{t}\sup_{\sigma\in[0,\theta]}|X^{\xi_t,u}(\sigma)-X^\mu(\sigma)|^pd\theta\\
&\quad+\mathbb{E}\int^{s}_{t}\!\!\! \bigg{(}\int^{\theta}_{t}(\theta-\sigma)^{-2\alpha}\sup_{r\in[0,T]}\left|\left(e^{rA}-e^{rA_\mu}\right)G(X^{\xi_t,u}_\sigma,u(\sigma))\right|^2
d\sigma\bigg{)}^{\frac{p}{2}}d\theta\bigg{)}.
\end{aligned}
\end{eqnarray}
From (\ref{0716}) and (\ref{07161}),  applying  Gronwall's inequality  and the dominated
convergence theorem, we have
$$\lim_{ \mu\to \infty}\mathbb{E}\|X^{\xi_t,u}_T-X^\mu_T\|_0^p=0. $$
The proof is complete.
\end{proof}

\par
The next lemma concerns the local boundedness and the continuity of the trajectory $X^{\xi_t,u}$.

\begin{lemma}\label{lemmaexist111}
	Let Assumption \ref{hypstate}  (i) and (ii)  be satisfied. Then, for any $p>2$, there is a  positive constant $C_{1,p}$,  depending only on  $p$, $T$, $L$ and $M_1$, such that for $0\leq t\leq \bar{t}\leq T$, $\xi_t, \xi'_t\in L_{\mathcal{P}}^p(\Omega, \Lambda_t(H))$ and $u(\cdot), v(\cdot)\in {\mathcal{U}}[0,T]$,
	\begin{eqnarray}\label{2.6}
	\begin{aligned}
	&\quad \sup_{u(\cdot)\in {\mathcal{U}}[0,T]}\mathbb{E}\left[\left|X^{\xi_t,u}(s)-e^{(s-t)A}\xi_t(t)\right|^p\right]\\
	&\leq
	C_{1,p}\left(1+\mathbb{E}\left[||\xi_t||_0^p\right]\right)|s-t|^{\frac{p}{2}}, \quad s\in[t,T]
	\end{aligned}
	\end{eqnarray}
	and
	\begin{eqnarray}\label{0604}
&\quad	\begin{aligned}
	&\qquad\mathbb{E}\left[\left\|X^{\xi'_t,u}_T-X^{\xi_{t,\bar{t}}^A,v}_T\right\|^p_0\right]
\leq\, C_{1,p}\left(1+\mathbb{E}\left[||\xi'_t||^p_0\right]\right)\, (\bar{t}-t)^{\frac{p}{2}}\\
&\qquad+C_{1,p}\, \mathbb{E}\left[||\xi'_t-\xi_t||^p_0\right]
	+C_{1,p}\int^{T}_{\bar{t}}\!\! \mathbb{E}\left[\left|(F, G)(X^{\xi_{t,\bar{t}}^A,v}_r,\cdot)\bigm|^{u(r)}_{v(r)}\right|^p_{H\times L_2(\Xi,H)}\right]dr.
	\end{aligned}
	\end{eqnarray}
	
\end{lemma}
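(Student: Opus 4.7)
The plan is to prove (i) directly from the mild formulation of the SDE, and to obtain (ii) by splitting the time interval $[0,T]$ into the three pieces $[0,t]$, $[t,\bar{t}]$, and $[\bar{t},T]$, treating each with a different tool and then combining them.

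For (i), write
$X^{\xi_t,u}(s) - e^{(s-t)A}\xi_t(t) = \int_t^s e^{(s-\sigma)A}F(X^{\xi_t,u}_\sigma,u(\sigma))\, d\sigma + \int_t^s e^{(s-\sigma)A}G(X^{\xi_t,u}_\sigma,u(\sigma))\, dW(\sigma).$
For the drift, apply H\"older's inequality and the linear growth bound \eqref{assume1111} together with the a priori estimate \eqref{state1est} from Lemma \ref{lemmaexist0409}; this yields a term of order $|s-t|^p \le T^{p/2}|s-t|^{p/2}$. For the stochastic convolution, one cannot apply Burkholder--Davis--Gundy directly because the propagator $e^{(s-\sigma)A}$ depends on the upper limit $s$; instead I would invoke the factorization method already deployed in the proof of Lemma \ref{lemmaexist0409}, which converts the stochastic convolution into a pathwise integral of the kernel $(s-\sigma)^{\alpha-1}$ against an auxiliary process $Y(\cdot)$ that is amenable to BDG. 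Combined with \eqref{state1est} this produces the bound $C_{1,p}(1+\mathbb{E}\|\xi_t\|_0^p)|s-t|^{p/2}$.

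For (ii), set $Z(s) := X^{\xi'_t,u}(s) - X^{\xi_{t,\bar{t}}^A,v}(s)$. On $[0,t)$ we have $Z(s) = \xi'_t(s) - \xi_t(s)$, giving $\|Z\|$ trivially. On $[t,\bar{t}]$, decompose
$Z(s) = \bigl[X^{\xi'_t,u}(s) - e^{(s-t)A}\xi'_t(t)\bigr] + e^{(s-t)A}(\xi'_t(t) - \xi_t(t));$
part (i) applied on $[t,\bar{t}]$ handles the first bracket and produces the $(\bar{t}-t)^{p/2}$ term, while the second is bounded by $M_1\|\xi'_t-\xi_t\|_0$. On $[\bar{t},T]$ both processes are mild solutions, so
$Z(s) = e^{(s-\bar{t})A}Z(\bar{t}) + \int_{\bar{t}}^s e^{(s-\sigma)A}\Delta_F(\sigma)\, d\sigma + \int_{\bar{t}}^s e^{(s-\sigma)A}\Delta_G(\sigma)\, dW(\sigma),$
where $\Delta_F(\sigma) := F(X^{\xi'_t,u}_\sigma,u(\sigma)) - F(X^{\xi_{t,\bar{t}}^A,v}_\sigma,v(\sigma))$ and similarly for $\Delta_G$. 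Adding and subtracting $(F,G)(X^{\xi_{t,\bar{t}}^A,v}_\sigma,u(\sigma))$ splits each $\Delta$ into a Lipschitz-in-path part, bounded by $L\|Z_\sigma\|_0$ via \eqref{assume1111}, and a control-difference part equal to $(F,G)(X^{\xi_{t,\bar{t}}^A,v}_\sigma,\cdot)\bigm|^{u(\sigma)}_{v(\sigma)}$, which is exactly the forcing term on the right of \eqref{0604}. Applying the factorization method again to estimate $\mathbb{E}\sup_{r\in[\bar{t},s]}|Z(r)|^p$ and closing with Gronwall's inequality gives the desired bound in terms of $\mathbb{E}|Z(\bar{t})|^p$ plus the control-difference integral, and substituting the $[t,\bar{t}]$ estimate for $\mathbb{E}|Z(\bar{t})|^p$ finishes the argument.

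The main obstacle is the stochastic convolution: the $s$-dependent propagator blocks a direct Burkholder--Davis--Gundy supremum estimate, so the factorization method (as developed in Lemma \ref{lemmaexist0409}) is essential at both stages to pass from pointwise-in-time bounds to the pathwise $\|\cdot\|_0$-control demanded by the statement. Once this mechanism is in place, the remaining work is a careful but routine bookkeeping exercise in splitting differences of coefficients to isolate the Lipschitz piece from the control-difference piece so that Gronwall applies.
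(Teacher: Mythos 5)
Your overall architecture -- mild formulation, splitting coefficient differences into a Lipschitz-in-path part and a control-difference part, factorization + Gronwall to close on $[\bar{t},T]$ -- matches the paper's proof closely. There are, however, two places where the proposal misstates what tool is needed, and one of them is a genuine gap.

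First, for (\ref{2.6}) you assert that BDG cannot be applied directly because the propagator $e^{(s-\sigma)A}$ depends on $s$, so factorization is "essential." That is not so: estimate (\ref{2.6}) is pointwise in $s$ (the sup is over controls $u$, not over time). For each fixed $s$ the integrand $\sigma\mapsto e^{(s-\sigma)A}G(X^{\xi_t,u}_\sigma,u(\sigma))$ is predictable, so BDG applied at the single endpoint $s$ gives $\mathbb{E}\bigl|\int_t^s e^{(s-\sigma)A}G\,dW\bigr|^p\le c_p M_1^p\mathbb{E}\bigl(\int_t^s|G|^2_{L_2}\,d\sigma\bigr)^{p/2}$ directly, and the same H\"older + linear-growth + a priori estimate \eqref{state1est} handles the drift. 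The paper uses exactly this elementary route for (\ref{2.6}); factorization first enters only at (\ref{0604}), where a genuine $\sup$ over time appears. Your remark is therefore not wrong as a method, but it conflates the pointwise and pathwise cases, and that confusion reappears below.

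Second, and more seriously, your treatment of the piece on $[t,\bar{t}]$ in (\ref{0604}) does not close. You claim that applying part (i) on $[t,\bar{t}]$ controls $X^{\xi'_t,u}(s)-e^{(s-t)A}\xi'_t(t)$, but (\ref{2.6}) only bounds $\mathbb{E}\bigl|X^{\xi'_t,u}(s)-e^{(s-t)A}\xi'_t(t)\bigr|^p$ for each fixed $s$, not $\mathbb{E}\sup_{t\le s\le\bar{t}}\bigl|\cdots\bigr|^p$. Since the target quantity in (\ref{0604}) is $\mathbb{E}\|\cdot_T\|_0^p=\mathbb{E}\sup_{0\le s\le T}|\cdot|^p$, you cannot interchange sup and expectation, and (\ref{2.6}) alone does not give the sup-in-time bound you need on $[t,\bar{t}]$. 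The paper avoids this by never invoking (i) in the proof of (\ref{0604}); instead, it keeps the short-window integrals $\int_t^{\bar{t}}e^{(s-r)A}F\,dr$ and $\int_t^{\bar{t}}e^{(s-r)A}G\,dW(r)$ in the decomposition, notes that their $\sup_s$ over $s\ge\bar{t}$ costs only an extra $M_1^p$ (since the $\sigma$-integration limits are both fixed), and bounds them directly by $(\bar{t}-t)^{p/2}$ before invoking factorization and Gronwall on $[\bar{t},s]$. To repair your argument you would need to upgrade (\ref{2.6}) to a sup-in-time estimate on $[t,\bar{t}]$ (via factorization or the above rewriting of the sup), rather than citing (i) as stated.
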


\begin{proof}
For any  $\xi_t\in L_{\mathcal{P}}^p(\Omega, \Lambda_t(H))$, by (\ref{assume1111}) and (\ref{state1est}), we have
\begin{eqnarray*}
	&&\mathbb{E}\left[\left|X^{\xi_t,u}(s)-e^{(s-t)A}\xi_t(t)\right|^p\right]\\
	&\leq&  c_p\, L^pM^p_1\left[1+C_p\left(1+\mathbb{E}\left[||\xi_t||^p_0\right]]\right)\right]|s-t|^{\frac{p}{2}}\left(1+|s-t|^{\frac{p}{2}}\right).
\end{eqnarray*}
Here and in the rest of this proof, $c_p$ is a positive constant, which depends only on $p$ and may vary from line to line.
Taking the supremum in ${\mathcal{U}}[0,T]$, we obtain (\ref{2.6}).
For any $0\leq t\leq \bar{t}\leq T$, $\xi_t, \xi'_t\in L_{\mathcal{P}}^p(\Omega, \Lambda_t(H))$ and $u(\cdot),v(\cdot)\in {\mathcal{U}}[0,T]$, by (\ref{assume1111}) and (\ref{state1est}), we have
\begin{eqnarray*}
	&&\mathbb{E}\sup_{\bar{t}\leq s\leq \sigma}\left|X^{\xi'_t,u}(s)-X^{\xi_{t,\bar{t}}^A,v}(s)\right|^p\\
	&\leq& c_pM^p_1\mathbb{E}|\xi'_t(t)-\xi_t(t)|^p+
	c_p\mathbb{E}\bigg{(}\int^{\bar{t}}_{t}\left|e^{(s-r)A}F(X^{\xi'_t,u}_r,u(r))\right|dr\bigg{)}^p\\
	&&+c_p\mathbb{E}\bigg{|}\int^{\bar{t}}_{t}e^{(s-r)A}G(X^{\xi'_t,u}_r,u(r))dW(r)\bigg{|}^p\\
	&&
	+c_p\mathbb{E}\bigg{(}\sup_{\bar{t}\leq s\leq \sigma}\int^{s}_{\bar{t}}\left|e^{(s-r)A}\left(F(X^{\xi'_t,u}_r,u(r))-F(X^{\xi_{t,\bar{t}}^A,v}_r,v(r))\right)\right|dr\bigg{)}^p\\
	&&
	+c_p\mathbb{E}\sup_{\bar{t}\leq s\leq \sigma}\bigg{|}\int^{s}_{\bar{t}}e^{(s-r)A}\left(G(X^{\xi'_t,u}_r,u(r))-G(X^{\xi_{t,\bar{t}}^A,v}_r,v(r))\right)dW(r)\bigg{|}^p\\
	 &\leq&c_pM^p_1\mathbb{E}|\xi'_t(t)-\xi_t(t)|^p+c_pL^pM^p_1[1+C_p(1+\mathbb{E}||\xi'_t||^p_0)](\bar{t}-t)^{\frac{p}{2}}\left(1+(\bar{t}-t)^{\frac{p}{2}}\right)\\
	&&
	+c_pL^pM^p_1\int^{\sigma}_{\bar{t}}\!\!\!\!~\! \mathbb{E}\!\!\left[\Bigm\|X^{\xi'_t,u}_r-X^{\xi_{t,\bar{t}}^A,v}_r\Bigm\|^p_0\right]\! dr
	+c_pM^p_1\int^{T}_{\bar{t}}\!\!\!\!\mathbb{E}\!\!\left[\left|F(X^{\xi_{t,\bar{t}}^A,v}_r,\cdot)\Bigm|^{u(r)}_{v(r)}\right|^p\right]\! dr\\
	&&+c_p\mathbb{E}\sup_{\bar{t}\leq s\leq \sigma}\bigg{|}\int^{s}_{\bar{t}}e^{(s-r)A}\left(G(X^{\xi'_t,u}_r,u(r))-G(X^{\xi_{t,\bar{t}}^A,v}_r,v(r))\right)dW(r)\bigg{|}^p.
\end{eqnarray*}
Proceeding as in the proof of Lemma~\ref{lemmaexist0409},  we get  for $\frac{1}{p}<\alpha<\frac{1}{2}$ and for a suitable constant $c_p$
\begin{eqnarray*}
	&&\mathbb{E}\sup_{\bar{t}\leq s\leq \sigma}\bigg{|}\int^{s}_{\bar{t}}e^{(s-r)A}\left(G(X^{\xi'_t,u}_r,u(r))-G(X^{\xi_{t,\bar{t}}^A,v}_r,v(r))\right)dW(r)\bigg{|}^p \\
	&\leq&c_pc^p_\alpha M_1^{2p}L^p\bigg{(}\int^{T}_{0}r^{q(\alpha-1)}dr\bigg{)}^{\frac{p}{q}}
	 \bigg{(}\int^{T}_{0}r^{2\alpha}dr\bigg{)}^{\frac{p}{2}}\int^{\sigma}_{\bar{t}}\mathbb{E}\left[\Bigm\|X^{\xi'_t,u}_r-X^{\xi_{t,\bar{t}}^A,u}_r\Bigm\|^p_0\right]dr\\
	&&\!\!\!\!\!\!+c_pc^p_\alpha M_1^{2p}\!\! \left(\!\int^{T}_{0}r^{q(\alpha-1)}dr\right)^{\frac{p}{q}} \left(\int^{T}_{0}\!\! r^{2\alpha}dr\right)^{\frac{p}{2}}
	\int^{T}_{\bar{t}}\!\!\! \mathbb{E}\left[
	\left|G(X^{\xi_{t,\bar{t}}^A,v}_r,\cdot)\Bigm|^{u(r)}_{v(r)}\right|^p\right] dr.
\end{eqnarray*}
Thus,
\begin{eqnarray*}
	&&\mathbb{E}\left[\Bigm\|X^{\xi'_t,u}_\sigma-X^{\xi_{t,\bar{t}}^A,u}_\sigma\Bigm\|^p_0\right]\\
	&\leq& c_p\, M_1^p\, \mathbb{E}\left[||\xi'_t-\xi_t||^p_0\right]+c_p\, L^pM^p_1\, (1+T^{\frac{p}{2}})\left[1+C_p(1+\mathbb{E}\left[||\xi'_t||^p_0\right])\right](\bar{t}-t)^{\frac{p}{2}}\\
	&&+c_p\, L^pM_1^p\, (1+c^p_\alpha M_1^p)\int^{\sigma}_{\bar{t}}\mathbb{E}\left[\Bigm\|X^{\xi'_t,u}_r-X^{\xi_{t,\bar{t}}^A,u}_r\Bigm\|^p_0\right]dr\\
	&&+c_pM_1^p(1+c^p_\alpha M_1^p)\int^{T}_{\bar{t}}\!\!\! \mathbb{E}\left[\left|F(X^{\xi_{t,\bar{t}}^A,v}_r,\cdot)\Bigm|^{u(r)}_{v(r)}\right|^p+\left|G(X^{\xi_{t,\bar{t}}^A,v}_r,\cdot)\Bigm|^{u(r)}_{v(r)}\right|^p\right]dr.
\end{eqnarray*}
Then, by Gronwall's inequality, we obtain (\ref{0604}). The proof is complete.
\end{proof}

For the particular case of a deterministic $\xi_t$, i.e. $\xi_t=\gamma_t\in \Lambda$, denote by $X^{\gamma_t,u}(\cdot)$ the solution of equation (\ref{state1}) corresponding to $(\gamma_t,u(\cdot))\in  \Lambda\times {\mathcal{U}}[t,T]$.
From  Lemmas \ref{theoremito}, we have the following
lemma which is used to prove the existence  of viscosity solutions.

\begin{lemma}\label{theoremito3}
	Let $X^{\gamma_t,u}$ be a solution of equation  (\ref{state1}) with initial data $\gamma_t\in \Lambda_t$ and $f\in  C_p^{1,2}({\Lambda}^{{\hat{t}}})$  such that  $A^*\partial_xf\in C_p^0(\hat{\Lambda}^{{\hat{t}}},H)$
	for some $\hat{t}\in[t,T)$. Then, $P$-a.s., for any $s\in [\hat{t},T]$,
	\begin{eqnarray}\label{statesop03}
	\begin{aligned}
	&f\left(X^{\gamma_t,u}_s\right)=f\left(X^{\gamma_t,u}_{\hat{t}}\right)+\int_{\hat{t}}^{s}\partial_tf\left(X^{\gamma_t,u}_\sigma\right)d\sigma\\
	&\qquad\quad+\int^{s}_{\hat{t}}\biggl[\left\langle A^*\partial_xf\left(X^{\gamma_t,u}_\sigma\right), \,  X^{\gamma_t,u}\left(\sigma\right)\right\rangle_H+\left\langle \partial_xf\left(X^{\gamma_t,u}_\sigma\right),\, F\left(X^{\gamma_t,u}_\sigma,u(\sigma)\right)\right\rangle_H\\
	&\qquad\qquad\quad+\frac{1}{2}\mbox{\rm Tr}\left(\partial_{xx}f\left(X^{\gamma_t,u}_\sigma\right)
	\, {(GG^*)}\left(X^{\gamma_t,u}_\sigma,u\left(\sigma\right)\right)\right)\biggr]d\sigma\\
	 &\quad\quad\quad+\int^{s}_{\hat{t}}\left\langle\partial_xf\left(X^{\gamma_t,u}_\sigma\right),\, G\left(X^{\gamma_t,u}_\sigma,u(\sigma)\right)dW(\sigma)\right\rangle_H.
	\end{aligned}
	\end{eqnarray}
\end{lemma}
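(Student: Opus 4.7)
The plan is to reduce the statement to the functional It\^o formula already established in Lemma \ref{theoremito}, after choosing a suitable extension of $f$ and verifying that the mild solution $X^{\gamma_t,u}$ fits into the framework of Lemma \ref{theoremito}.

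First, I would invoke Definition \ref{definitionc2}(iii) to pick an extension $\hat{f}\in C_p^{1,2}(\hat{\Lambda}^{\hat{t}})$ of $f$, so that the hypothesis $A^*\partial_x f\in C_p^0(\hat{\Lambda}^{\hat{t}},H)$ is meaningful as $A^*\partial_x \hat{f}\in C_p^0(\hat{\Lambda}^{\hat{t}},H)$, and by Lemma \ref{0815lemma} the restrictions of $\partial_t\hat{f},\partial_x\hat{f},\partial_{xx}\hat{f}$ to $\Lambda^{\hat{t}}$ coincide with $\partial_t f,\partial_x f,\partial_{xx}f$ independently of the extension chosen.

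Second, I would verify the integrability hypotheses of Lemma \ref{theoremito}. Since $\gamma_t\in \Lambda_t$ is deterministic, $\gamma_t\in L^p_{\mathcal{P}}(\Omega,\Lambda_t(H))$ for every $p>2$; Lemma \ref{lemmaexist0409} then provides a unique mild solution $X^{\gamma_t,u}\in L^p_{\mathcal{P}}(\Omega,\Lambda_T(H))$ with $\mathbb{E}\|X^{\gamma_t,u}_T\|_0^p<\infty$. Setting $\theta=t$, $\xi_\theta=\gamma_t$, $\vartheta(\sigma):=F(X^{\gamma_t,u}_\sigma,u(\sigma))$ and $\varpi(\sigma):=G(X^{\gamma_t,u}_\sigma,u(\sigma))$, the linear growth bound in Assumption \ref{hypstate}(ii) (inequality (\ref{assume1111})) together with estimate (\ref{state1est}) yields
\begin{equation*}
\mathbb{E}\int_0^T|\vartheta(\sigma)|^p\,d\sigma+\mathbb{E}\int_0^T|\varpi(\sigma)|_{L_2(\Xi,H)}^p\,d\sigma\ \leq\ C\bigl(1+\mathbb{E}\|X^{\gamma_t,u}_T\|_0^p\bigr)\ <\ \infty,
\end{equation*}
so $\vartheta\in L^p_{\mathcal{P}}(\Omega\times[0,T];H)$ and $\varpi\in L^p_{\mathcal{P}}(\Omega\times[0,T];L_2(\Xi,H))$ for some $p>2$. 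Moreover, by the very definition of mild solution, $X^{\gamma_t,u}$ is exactly the process~$X$ built in (\ref{formular1}) from these data.

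Third, I would apply Lemma \ref{theoremito} to $\hat{f}$ with this choice of $(\theta,\xi_\theta,\vartheta,\varpi)$ on the interval $[\hat{t},T]$; matching the notation, we take the ``$t$'' of Lemma \ref{theoremito} to be our $\hat{t}\in[t,T)$. This gives, $P$-a.s.\ for all $s\in[\hat{t},T]$, the identity (\ref{statesop0}) with $\hat{f}$ in place of $f$. Since $\gamma_t$ is continuous and the process $X^{\gamma_t,u}$ has continuous paths on $[0,T]$ by Lemma \ref{lemmaexist0409}, every path $X^{\gamma_t,u}_\sigma$ belongs to $\Lambda_\sigma\subset\Lambda^{\hat{t}}$ for $\sigma\in[\hat{t},T]$. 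Consequently $\hat{f}(X^{\gamma_t,u}_\sigma)=f(X^{\gamma_t,u}_\sigma)$, and by Lemma \ref{0815lemma} the same equality holds for all three path derivatives appearing in the formula. Substituting these identifications yields (\ref{statesop03}).

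I do not anticipate serious obstacles here: the statement is essentially a packaging of Lemma \ref{theoremito} for controlled mild solutions, and the only minor technical point is ensuring the uniform-in-extension meaning of $\partial_t f,\partial_x f,\partial_{xx}f$ on $\Lambda^{\hat{t}}$, which is precisely what Lemma \ref{0815lemma} delivers.
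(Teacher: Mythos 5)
Your proof is correct and takes the same route as the paper, which simply states Lemma \ref{theoremito3} as an immediate consequence of Lemma \ref{theoremito} without spelling out the verifications. Your careful checks---that the mild solution is precisely the process (\ref{formular1}) with $\theta=t$, $\xi_\theta=\gamma_t$, $\vartheta=F(X^{\gamma_t,u}_\cdot,u(\cdot))$, $\varpi=G(X^{\gamma_t,u}_\cdot,u(\cdot))$, that the integrability hypotheses hold via (\ref{assume1111}) and (\ref{state1est}), and that Lemma \ref{0815lemma} makes the restriction of derivatives to $\Lambda^{\hat t}$ well-defined---are exactly the details the paper leaves implicit.
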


The following    lemma is also needed to prove the existence and uniqueness of viscosity solutions.

\begin{lemma}\label{theoremito2}
	Let Assumption \ref{hypstate} be satisfied. For every $(t,\gamma_t, \eta_t)\in [0,T)\times \Lambda_t\times \Lambda_t$, $m\geq 2$, $M\geq3$  and $\varepsilon>0$,  $P$-a.s.,  for all $s\in [t,T]$:
	\begin{eqnarray}\label{jias510815jia}
	\begin{aligned}
	\ \ \ \ &\quad
	\Upsilon^{m,M}(X^{\gamma_t,u}_s-\eta_{t,s}^A)\\
	&\leq \Upsilon^{m,M}(X^{\gamma_t,u}_t-\eta_t)+\int^{s}_{{t}}\biggl[\langle \partial_x\Upsilon^{m,M}(X^{\gamma_t,u}_\sigma-\eta_{t,\sigma}^A),\, F(X^{\gamma_t,u}_\sigma,u(\sigma))\rangle_H\\
	&\quad+\frac{1}{2}\mbox{\rm Tr}\left(\partial_{xx}\Upsilon^{m,M}(X^{\gamma_t,u}_\sigma-\eta_{t,\sigma}^A)\, {(GG^*)}(X^{\gamma_t,u}_\sigma,u(\sigma))\right)\biggr]d\sigma\\
	&\quad
	+\int^{s}_{t}\langle \partial_x\Upsilon^{m,M}(X^{\gamma_t,u}_\sigma-\eta_{t,\sigma}^A),\,  G(X^{\gamma_t,u}_\sigma,u(\sigma))dW(\sigma)\rangle_H;
	\end{aligned}
	\end{eqnarray}
	\begin{eqnarray}\label{jias510815jia1102}
	\begin{aligned}
	\ \ \ \ &\quad
	\Upsilon^{\varepsilon}(X^{\gamma_t,u}_s-\eta_{t,s}^A)\\
	&\leq  \Upsilon^{\varepsilon}(X^{\gamma_t,u}_t-\eta_t)+\int^{s}_{{t}}\biggl[\left\langle \partial_x \Upsilon^{\varepsilon}(X^{\gamma_t,u}_\sigma-\eta_{t,\sigma}^A),\,  F(X^{\gamma_t,u}_\sigma,u(\sigma))\right\rangle_H\\
	&\quad+\frac{1}{2}\mbox{\rm Tr}\left(\partial_{xx} \Upsilon^{\varepsilon}(X^{\gamma_t,u}_\sigma-\eta_{t,\sigma}^A)\, {(GG^*)}(X^{\gamma_t,u}_\sigma,u(\sigma))\right)\biggr]d\sigma\\
	&\quad
	+\int^{s}_{t}\left\langle \partial_x \Upsilon^{\varepsilon}(X^{\gamma_t,u}_\sigma-\eta_{t,\sigma}^A),\,  G(X^{\gamma_t,u}_\sigma,u(\sigma))dW(\sigma)\right\rangle_H;
	\end{aligned}
	\end{eqnarray}
	and, for every   $(t,\gamma_t, \eta_t)\in [0,T)\times \Lambda_t\times \Lambda_t$ and $m\in \mathbb{N}^+$,  $P$-a.s.,  for all $s\in [t,T]$:
	\begin{eqnarray}\label{jias510815jia11}
	\begin{aligned}
	\ \ \ \ &\quad
	|X^{\gamma_t,u}(s)-e^{(s-t)A}\eta_{t}(t)|^{2m}=|y(s)|^{2m}\\
	&\leq |y(t)|^{2m}+2m\int^{s}_{{t}}\biggl[|y(s)|^{2m-2}\left\langle y(s),\,  F(X^{\gamma_t,u}_\sigma,u(\sigma))\right\rangle_H\\
	&\quad+\frac{1}{2}\mbox{\rm Tr}\biggl((|y(s)|^{2m-2}I+2(m-1)|y(s)|^{2m-4}y(s)\langle y(s), \cdot\rangle_H{\mathbf{1}_{\{m>1\}}})\\
	&\quad\quad\quad\times (GG^*)(X^{\gamma_t,u}_\sigma,u(\sigma))\biggr)\biggr]d\sigma\\
	&\quad
	+2m\int^{s}_{t}|y(s)|^{2m-2}\langle y(s), \, G(X^{\gamma_t,u}_\sigma,u(\sigma))dW(\sigma)\rangle_H,
	\end{aligned}
	\end{eqnarray}
	where
	$y(s):=X^{\gamma_t,u}(s)-e^{A(s-t)}\eta_t(t)$ as $s\in [t, T]$,  and $y(s):=\gamma_t(s)-\eta_t(s)$ as $s\in [0, t)$.
\end{lemma}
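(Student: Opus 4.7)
The plan is to establish all three inequalities by a common Yosida-regularization argument: apply the functional It\^o formula of Lemma~\ref{theoremito} to the system driven by the bounded generator $A_\mu:=\mu A(\mu I-A)^{-1}$, obtain an equality that contains an extra $A_\mu^*$-term, drop this term using dissipativity, and then pass to the limit $\mu\to\infty$. Let $X^\mu$ be the solution of the regularized equation~\eqref{07162} and set
\[
\bar{X}^\mu(s):=X^\mu(s)-e^{(s-t)^+A_\mu}\eta_t(t),\qquad s\in[0,T],
\]
extended by $\bar{X}^\mu(s):=\gamma_t(s)-\eta_t(s)$ on $[0,t)$. For $s\ge t$ the process $\bar{X}^\mu$ is a mild solution of a linear SDE driven by $A_\mu$ with time-dependent drift $F(X^\mu_\sigma,u(\sigma))$ and diffusion $G(X^\mu_\sigma,u(\sigma))$. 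Since $A_\mu$ is bounded, Lemma~\ref{theoremito} applies to $f=\Upsilon^{m,M}$: indeed $\Upsilon^{m,M}\in C^{1,2}_p(\hat{\Lambda})$ for $m\ge 2$ by Lemma~\ref{theoremS}, and the requirement $A_\mu^*\partial_x\Upsilon^{m,M}\in C^0_p(\hat{\Lambda},H)$ is automatic.

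The structural observation that turns the resulting equality into the desired inequality is that, by the explicit formula~\eqref{03108}, one has $\partial_x\Upsilon^{m,M}(\gamma_t)=c(\gamma_t)\,\gamma_t(t)$ with
\[
c(\gamma_t):=\left\{6m\,\frac{\|\gamma_t\|_0^{4m}-(\|\gamma_t\|_0^{2m}-|\gamma_t(t)|^{2m})^2}{\|\gamma_t\|_0^{4m}}+2m(M-3)\right\}|\gamma_t(t)|^{2m-2}\ge 0,
\]
where both summands in the braces are non-negative whenever $M\ge 3$. Because $A$ generates a $C_0$ contraction semi-group by Assumption~\ref{hypstate}(i$'$), so does the Yosida approximation $A_\mu$, and hence $\langle A_\mu x,x\rangle_H\le 0$ for every $x\in H$; thus the $A_\mu^*$-integrand $c(\bar{X}^\mu_\sigma)\langle A_\mu\bar{X}^\mu(\sigma),\bar{X}^\mu(\sigma)\rangle_H$ is pointwise non-positive. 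Combined with $\partial_t\Upsilon^{m,M}\equiv 0$ from~\eqref{220817a0}, discarding this term gives~\eqref{jias510815jia} at the level of $\bar{X}^\mu$. I would then pass to the limit $\mu\to\infty$: Lemma~\ref{lemmaexist0409}, in particular~\eqref{0717}, supplies $X^\mu\to X^{\gamma_t,u}$ in $L^p_{\mathcal{P}}(\Omega,\Lambda_T(H))$, while standard Yosida estimates give $\sup_{s\in[0,T]}|e^{(s-t)^+A_\mu}\eta_t(t)-e^{(s-t)^+A}\eta_t(t)|\to 0$. The polynomial bounds~\eqref{220817a}--\eqref{220817a1}, the linear growth of $(F,G)$ from~\eqref{assume1111}, dominated convergence for the Lebesgue integrals, and the Burkholder-Davis-Gundy inequality for the stochastic integral then transfer the inequality across the limit, yielding~\eqref{jias510815jia}.

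Inequality~\eqref{jias510815jia1102} follows by the identical scheme with $\Upsilon^\varepsilon$ in place of $\Upsilon^{m,M}$: Lemma~\ref{theoremS1} gives $\Upsilon^\varepsilon\in C^{1,2}_p(\hat{\Lambda})$, and formula~\eqref{2210281} shows that $\partial_x\Upsilon^\varepsilon(\gamma_t)$ is again a non-negative scalar multiple of $\gamma_t(t)$, since $(\|\gamma_t\|_0^2-|\gamma_t(t)|^2)^2\le\|\gamma_t\|_0^4\le\varepsilon^2+\|\gamma_t\|_0^4$. For~\eqref{jias510815jia11}, the map $\phi(y):=|y|^{2m}$ depends only on the endpoint and is globally smooth on $H$ with $\nabla\phi(y)=2m|y|^{2m-2}y$; the classical It\^o formula applied to $y^\mu(s):=X^\mu(s)-e^{(s-t)A_\mu}\eta_t(t)$ again produces a non-positive $A_\mu$-contribution $2m|y^\mu(\sigma)|^{2m-2}\langle y^\mu(\sigma),A_\mu y^\mu(\sigma)\rangle_H$, and the same limit procedure closes the argument. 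I expect the main technical subtlety to lie in the limit passage in the trace term, which does not vanish (cf.~item~(c) of the introduction); controlling it will require both the continuity of $\partial_{xx}\Upsilon^{m,M}$ on $(\hat{\Lambda},d_\infty)$ supplied by Lemma~\ref{theoremS} and uniform-in-$\mu$ $L^p$-moment bounds on $\|X^\mu_T\|_0$ coming from the Yosida analogue of~\eqref{state1est}.
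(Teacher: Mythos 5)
Your proposal matches the paper's proof in both strategy and substance: Yosida regularization, application of the functional It\^o formula (Lemma~\ref{theoremito}) to $\Upsilon^{m,M}$ along the regularized path $y^\mu = \bar{X}^\mu$, the observation from~\eqref{03108} that $\partial_x\Upsilon^{m,M}(\gamma_t)$ is a non-negative scalar multiple of $\gamma_t(t)$ when $M\geq 3$, dropping the $A_\mu$-term via dissipativity, and passing to the limit using~\eqref{0717}. The treatment of $\Upsilon^\varepsilon$ and of the pointwise functional $|y|^{2m}$ also follows the paper's pattern, so this is essentially the same argument.
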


\begin{proof} Let $X^\mu$ be the solution of equation (\ref{07162}) with initial data $\gamma_t\in \Lambda_t$ and
define $y^\mu$ by $y^\mu(s):=X^\mu(s)-e^{(s-t)A_\mu}\eta_t(t)$ for $s\in [t,T]$ and $y^\mu(s):=\gamma_t(s)-\eta_t(s)$ for $s\in [0,t)$.  Then, $y^\mu$ solves the following FSEE
\begin{eqnarray*}
	y^\mu(s)&=&e^{(s-t)A_\mu}y^\mu_t(t)+\int^{s}_{t}e^{(s-\sigma)A_\mu}F(X^{\mu}_\sigma, u(\sigma))d\sigma\\
	&&+\int^{s}_{t}e^{(s-\sigma)A_\mu}G(X^{\mu}_\sigma,u(\sigma))dW(\sigma),\ s\in [t,T],
\end{eqnarray*}
and $y^\mu(s)=\gamma_t(s)-\eta_t(s), \ s\in[0,t)$.
From Lemmas  \ref{theoremito} and \ref{theoremS}, we have, $P$-a.s.,  for all $s\in [t,T]$,
\begin{eqnarray*}
	\Upsilon^{m,M}(y^\mu_s)&=&\Upsilon^{m,M}(y^\mu_{{t}})+\int^{s}_{{t}}\biggl[\langle \partial_x\Upsilon^{m,M}(y^\mu_\sigma), \, A_\mu y^\mu(\sigma)+F(X^{\mu}_\sigma,u(\sigma))\rangle_H\\
	&&+\frac{1}{2}\mbox{Tr}\left(\partial_{xx}\Upsilon^{m,M}(y^\mu_\sigma)(GG^*)(X^{\mu}_\sigma,u(\sigma))\right)\biggr]d\sigma\\
	&&+\int^{s}_{t}\langle \partial_x\Upsilon^{m,M}(y^\mu_\sigma),\,  G(X^{\mu}_\sigma,u(\sigma))dW(\sigma)\rangle_H.
\end{eqnarray*}
Since $A$ is the infinitesimal generator of a $C_0$ contraction {semi-group}, we have
$$\langle x, A_\mu x\rangle_H\leq0,\quad x\in H.$$
Then, for $M\geq 3$, if $||y^\mu_\sigma||^2_{0}\neq0$,
\begin{eqnarray*}
	&&\langle \partial_x\Upsilon^{m,M}(y^\mu_\sigma), A_\mu y^\mu(\sigma)\rangle_H\\
	&=&2mM|y^\mu(\sigma)|^{2m-2}\left\langle y^\mu(\sigma), \, A_\mu y^\mu(\sigma)\right\rangle_H\\
	&& \quad -\frac{6m}{||y^\mu_\sigma||^{4m}_{0}}\left(||y^\mu_\sigma||^{2m}_{0}-|y^\mu(\sigma)|^{2m}\right)^2|y^\mu(\sigma)|^{2m-2} \left\langle y^\mu(\sigma),\,  A_\mu y^\mu(\sigma)\right\rangle_H\\
	&=&2m|y^\mu(\sigma)|^{2m-2}\bigg{(}M-\frac{3}{||y^\mu_\sigma||^{4m}_{0}}\left(||y^\mu_\sigma||^{2m}_{0}-|y^\mu(\sigma)|^{2m}\right)^2\bigg{)}\langle y^\mu(\sigma), A_\mu y^\mu(\sigma)\rangle_H\\
	&\leq& 0.
\end{eqnarray*}
Thus, $P$-a.s.,  for all $s\in [t,T]$:
\begin{eqnarray*}
	\Upsilon^{m,M}(y^\mu_s)&\leq& \Upsilon^{m,M}(y^\mu_{{t}})+\int^{s}_{{t}}[\langle \partial_x\Upsilon^{m,M}(y^\mu_\sigma), F(X^{\mu}_\sigma,u(\sigma))\rangle_H \\
	&&+\frac{1}{2}\mbox{Tr}(\partial_{xx}\Upsilon^{m,M}(y^\mu_\sigma)(GG^*)(X^{\mu}_\sigma,u(\sigma)))]d\sigma\\
	&&
	+\int^{s}_{t}\langle \partial_x\Upsilon^{m,M}(y^\mu_\sigma), G(X^{\mu}_\sigma,u(\sigma))dW(\sigma)\rangle_H.
\end{eqnarray*}
Letting $\mu\rightarrow\infty$, we have  from (\ref{0717}), $P$-a.s.,  for all $s\in [t,T]$,
\begin{eqnarray*}
	\Upsilon^{m,M}(y_s)&\leq & \Upsilon^{m,M}(y_{t})+\int^{s}_{{t}}[\langle \partial_x\Upsilon^{m,M}(y_\sigma), F(X^{\gamma_t,u}_\sigma,u(\sigma))\rangle_Hd\sigma\\
	&&+\frac{1}{2}\mbox{Tr}(\partial_{xx}\Upsilon^{m,M}(y_\sigma)(GG^*)(X^{\gamma_t,u}_\sigma,u(\sigma)))]d\sigma\\
	&&
	+\int^{s}_{t}\langle \partial_x\Upsilon^{m,M}(y_\sigma), G(X^{\gamma_t,u}_\sigma,u(\sigma))dW(\sigma)\rangle_H,
\end{eqnarray*}
where
$y(s)=X^{\gamma_t,u}(s)-e^{A(s-t)}\eta_t(t),\  t\leq s\leq T$ and $y(s)=\gamma_t(s)-\eta_t(s),\  0\leq s<t$.
That is (\ref{jias510815jia}).
\par
Applying  Lemma  \ref{theoremito}  to $\Upsilon^{\varepsilon}$ and $ y^\mu$, by Lemma \ref{theoremS1} and the proof procedure above, we have (\ref{jias510815jia1102}).
In a similar (even easier) way, we have (\ref{jias510815jia11}).
The proof is now complete.
\end{proof}

\begin{remark}\label{remarks}
	\begin{description}
		\item[(i)]      Since in general the path functional $||\cdot||_{0}^6 \notin C^{1,2}_p({\Lambda})$, then, for every $a_{\hat{t}}\in \Lambda$, $||\gamma_t-a_{\hat{t},t}^A||_0^6$ fails to be used directly  to construct  any auxiliary functional to prove the
		uniqueness and stability of viscosity solutions. However, by the preceding lemma,
		the  equivalent functional  $\Upsilon(\gamma_t-a_{\hat{t},t}^A)$ behaves very much like $||\gamma_t-a_{\hat{t},t}^A||_{0}^6$, and even gets rid of the effect of the unbounded operator $A$.  It shall be used to prove the uniqueness  of viscosity solutions when  the coefficients are merely strongly  continuous  under $||\cdot||_0$.
		\item[(ii)] {Lemma \ref{theoremS0719}} states
   that $\overline{\Upsilon}(\cdot,\cdot)$ is a gauge-type function.  We can apply it to Lemma \ref{theoremleft} to
		ensure the existence of  a maximizing point of a perturbed auxiliary functional in the proof of uniqueness.
	\end{description}
\end{remark}

The following lemma will be used to prove Lemma \ref{lemmaexist1022}  in the next chapter.
\begin{lemma}\label{lemmaexist10220}
	Assume that Assumption \ref{hypstate}    holds. Then, for any $m\in \mathbb{N}^+$, there is a positive constant $C_{2,m}$,  depending only on  $m$, $T$ and
 {$L$}, such that for any $(t,\gamma_t,\eta_t)\in [0,T)\times\Lambda_t\times\Lambda_t$, we have
	\begin{eqnarray}\label{2.61022}
	\sup_{u(\cdot)\in {\mathcal{U}}[0,T]}\mathbb{E}[||X^{\gamma_t,u}_T||_0^{2m}|{\mathcal{F}}_t]\leq
	C_{2,m}(1+||\gamma_t||_0^{2m})
	\end{eqnarray}
	and
	\begin{eqnarray}\label{06041022}
	\sup_{u(\cdot)\in {\mathcal{U}}[0,T]}\mathbb{E}[||X^{\gamma_t,u}_T-X^{\eta_t,u}_T||^{2m}_0|{\mathcal{F}}_t]
	\leq C_{2,m}||\gamma_t-\eta_t||^{2m}_0.
	\end{eqnarray}
	
\end{lemma}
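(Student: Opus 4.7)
The plan is to derive both estimates from the functional It\^o inequalities furnished by Lemma~\ref{theoremito2}, exploiting the sandwich bounds and derivative estimates for $\Upsilon^{m,3}$ proved in Lemmas \ref{theoremS0719} and \ref{theoremS}. The contraction semigroup assumption (i') is crucial: it is precisely what removes $M_1$ from the final constant, since the unbounded operator contributes a non-positive term (as in the proof of Lemma~\ref{theoremito2}) and is never amplified.

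For estimate (\ref{2.61022}) with $m\geq 2$, I would start from (\ref{jias510815jia}) applied with $M=3$, $\eta_t=\mathbf{0}$, and $X=X^{\gamma_t,u}$, giving
\begin{eqnarray*}
\Upsilon^{m,3}(X_s)&\leq& \Upsilon^{m,3}(\gamma_t)+\int_t^s\!\!\Bigl[\langle\partial_x\Upsilon^{m,3}(X_\sigma),F(X_\sigma,u(\sigma))\rangle_H\\
&&\qquad\qquad\qquad+\tfrac12\mathrm{Tr}(\partial_{xx}\Upsilon^{m,3}(X_\sigma)(GG^*)(X_\sigma,u(\sigma)))\Bigr]d\sigma+M(s),
\end{eqnarray*}
where $M(\cdot)$ is the stochastic integral. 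Lemma~\ref{theoremS} bounds the derivatives by $6m\|X_\sigma\|_0^{2m-1}$ and $6m(6m-1)\|X_\sigma\|_0^{2m-2}$, and Assumption~\ref{hypstate}(ii) gives $|F|\vee|G|_{L_2}\leq L(1+\|X_\sigma\|_0)$. Thus the drift integrand is dominated by a constant multiple (depending only on $m,L$) of $1+\|X_\sigma\|_0^{2m}$, and by Lemma~\ref{theoremS0719} we have $\|X_s\|_0^{2m}\leq \Upsilon^{m,3}(X_s)\leq 3\|X_s\|_0^{2m}$. Then I would introduce the stopping times $\tau_n:=\inf\{s\geq t:\|X_s\|_0\geq n\}\wedge T$, evaluate at $s\wedge\tau_n$ to make the martingale part a true square-integrable martingale, take $\mathbb{E}[\cdot\mid\mathcal{F}_t]$, apply Gr\"onwall's inequality to $\mathbb{E}[\|X_{s\wedge\tau_n}\|_0^{2m}\mid\mathcal{F}_t]$, and let $n\to\infty$ with Fatou.

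For estimate (\ref{06041022}) I would set $Z(s):=X^{\gamma_t,u}(s)-X^{\eta_t,u}(s)$ on $[t,T]$, so that $Z$ is the mild solution of an unbounded FSEE with Lipschitz coefficients $F(X^{\gamma_t,u}_\sigma,u(\sigma))-F(X^{\eta_t,u}_\sigma,u(\sigma))$ and the analogous difference for $G$, and with initial datum $\gamma_t-\eta_t$. I would repeat the Yosida-approximation argument of Lemma~\ref{theoremito2}: apply the standard It\^o formula to $\Upsilon^{m,3}(Z^\mu_s)$ for the approximating difference $Z^\mu=X^{\mu,1}-X^{\mu,2}$; the contraction property $\langle z,A_\mu z\rangle_H\leq 0$ combined with the explicit form (\ref{03108}) of $\partial_x\Upsilon^{m,3}$ kills the unbounded-term contribution exactly as in Lemma~\ref{theoremito2}; then (\ref{0717}) lets me pass to the limit $\mu\to\infty$. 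Using the Lipschitz bound $|F(X^{\gamma_t,u}_\sigma,\cdot)-F(X^{\eta_t,u}_\sigma,\cdot)|\vee|\cdots|_{L_2}\leq L\|Z_\sigma\|_0$ together with the derivative estimates of Lemma~\ref{theoremS}, the drift integrand is now dominated by $C_m\|Z_\sigma\|_0^{2m}$ (no inhomogeneous term), and the same localization and conditional Gr\"onwall procedure yields $\mathbb{E}[\|Z_T\|_0^{2m}\mid\mathcal{F}_t]\leq C_{2,m}\|\gamma_t-\eta_t\|_0^{2m}$.

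The case $m=1$ is not immediately covered because $\Upsilon^{1,3}\notin C^{1,2}_p(\hat{\Lambda})$, so (\ref{jias510815jia}) is not available. I would handle it in one of two ways: either deduce it from the $m=2$ case by conditional Jensen's inequality (using $\sqrt{a+b}\leq \sqrt{a}+\sqrt{b}$ to preserve the affine-in-$\|\gamma_t\|_0^2$ form), or run the same argument with the regularized functional $\Upsilon^{\varepsilon}$ via (\ref{jias510815jia1102}) and Lemma~\ref{theoremS1}, then let $\varepsilon\downarrow 0$ using (\ref{s088}). The main obstacle I anticipate is the careful handling of the conditional expectation of the stochastic integral in the functional It\^o inequality; it is only an inequality, not an equality, so one must truncate via the stopping times $\tau_n$ (where the integrands are bounded and the martingale part has zero $\mathcal{F}_t$-conditional mean), then recover the full bound by monotone/Fatou passage. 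Once this bookkeeping is in place, the constants depend only on $m$, $T$ and $L$ because the contraction semigroup hypothesis removes every occurrence of $M_1$.
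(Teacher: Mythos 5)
Your proposal is correct and follows essentially the same route as the paper: apply the functional It\^o inequality~(\ref{jias510815jia}) (with $M=3$, $\eta_t=\mathbf{0}$ for the first bound, and to the difference $X^{\gamma_t,u}-X^{\eta_t,u}$ for the second, via the Yosida approximation and contraction property), use the equivalence $\|\cdot\|_0^{2m}\leq\Upsilon^{m,3}\leq 3\|\cdot\|_0^{2m}$ from Lemma~\ref{theoremS0719} together with the derivative bounds of Lemma~\ref{theoremS}, take $\mathbb{E}[\cdot\mid\mathcal{F}_t]$, and apply Gr\"onwall. The only (harmless) differences are that you offer conditional Jensen as an alternative to the paper's $\Upsilon^\varepsilon$-regularization for $m=1$, and that you insert a stopping-time localization before taking conditional expectation, whereas the paper implicitly relies on the $L^p$ moment estimate of Lemma~\ref{lemmaexist0409} to guarantee the stochastic integral is already a true martingale.
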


\begin{proof} Similar to the proof of Lemma \ref{theoremito2}, we have, for $m\geq2$ and every $(t,\gamma_t, \eta_t)\in [0,T)\times\Lambda_t\times\Lambda_t$ and $u(\cdot)\in {\mathcal{U}}[0,T]$,  $P$-a.s.,  for all $s\in [t,T]$:
\begin{eqnarray}\label{jias510815jia00l}
&\begin{aligned}
&\qquad\qquad \Upsilon^{m,3}(X^{\gamma_t,u}_s-X^{\eta_t,u}_s)
\leq \Upsilon^{m,3}(\gamma_t-\eta_t)\\
&\qquad+\int^{s}_{{t}}\biggl\{\langle \partial_x\Upsilon^{m,3}(X^{\gamma_t,u}_\sigma-X^{\eta_t,u}_\sigma), \,
 F(X^{\gamma_t,u}_\sigma,u(\sigma))-F(X^{\eta_t,u}_\sigma,u(\sigma))\rangle_H\\[8mm]
&\!\!\!\!\!\!\!\!\!\! +\frac{1}{2}\mbox{Tr}\Bigl(\partial_{xx}\Upsilon^{m,3}(X^{\gamma_t,u}_\sigma-X^{\eta_t,u}_\sigma)\left[G(\cdot,u(\sigma))\Bigm|^{X^{\gamma_t,u}_\sigma}_{X^{\eta_t,u}_\sigma}\right]\left[G^*(\cdot,u(\sigma))\Bigm|^{X^{\gamma_t,u}_\sigma}_{X^{\eta_t,u}_\sigma}\right]\Bigr)\biggr\}d\sigma\\
&
+\int^{s}_{t}\langle \partial_x\Upsilon^{m,3}(X^{\gamma_t,u}_\sigma-X^{\eta_t,u}_\sigma), \,  [G(X^{\gamma_t,u}_\sigma,u(\sigma))-G(X^{\eta_t,u}_\sigma,u(\sigma))]dW(\sigma)\rangle_H.
\end{aligned}
\end{eqnarray}
Taking  expectation conditioned  on ${\mathcal{F}}_t$ on both sides of the last inequality, we have from  Lemma~\ref{theoremS}
\begin{eqnarray*}
	 &&\mathbb{E}[||X^{\gamma_t,u}_s-X^{\eta_t,u}_s||^{2m}_0|{\mathcal{F}}_t]\leq\mathbb{E}[\Upsilon^{m,3}(X^{\gamma_t,u}_s-X^{\eta_t,u}_s)|{\mathcal{F}}_t]\nonumber\\
	&\leq& 3||\gamma_t-\eta_t||^{2m}_0+
	3mL(6mL-L+2)\int^{s}_{{t}}\mathbb{E}[||X^{\gamma_t,u}_\sigma-X^{\eta_t,u}_\sigma||^{2m}_0|{\mathcal{F}}_t]d\sigma.
\end{eqnarray*}
Then, using  Gronwall's inequality, we have (\ref{06041022}) for $m\geq2$.  For every $\varepsilon>0$, note that (\ref{jias510815jia00l}) also holds true for $\Upsilon^\varepsilon$, by the similar proof procedure above, we have
\begin{eqnarray*}
	 &&\mathbb{E}[||X^{\gamma_t,u}_s-X^{\eta_t,u}_s||^{2}_0|{\mathcal{F}}_t]\leq\mathbb{E}[\Upsilon^{\varepsilon}(X^{\gamma_t,u}_s-X^{\eta_t,u}_s)|{\mathcal{F}}_t]+\frac{\varepsilon}{2}\\
	&\leq& 3||\gamma_t-\eta_t||^{2}_0+\frac{\varepsilon}{2}
	+3L(5L+2)\int^{s}_{{t}}\mathbb{E}[||X^{\gamma_t,u}_\sigma-X^{\eta_t,u}_\sigma||^{2}_0|{\mathcal{F}}_t]d\sigma.
\end{eqnarray*}
Then, using  Gronwall's inequality and letting $\varepsilon\rightarrow0$, we have (\ref{06041022}) for $m=1$.
In a similar way, we have (\ref{2.61022}). The proof is complete.
\end{proof}
\newpage
\section{Dynamic programming of optimal control in path spaces}
\par
In this chapter, we consider optimal control problem  (\ref{state1}), (\ref{fbsde1}) and (\ref{value1}). We make the following assumption.

\begin{assumption}\label{hypcost} The two functionals
	$
	q: {\Lambda}\times \mathbb{R}\times \Xi\times U\rightarrow \mathbb{R}$ and $\phi: {\Lambda}_T\rightarrow \mathbb{R}$ are continuous, {and
        $q$ is continuous in $\xi_t\in \Lambda$, uniformly in $u\in U$. Moreover,}     there is a  constant $L>0$
	such that, for all $(t, \gamma_t, \eta_T, \gamma'_t, \eta'_T, y, z, y',z', u)
	\in [0,T]\times(\Lambda\times {\Lambda}_T)^2\times (\mathbb{R}\times \Xi)^2\times U$,
	\begin{eqnarray*}
		&&|q(\gamma_t,y,z,u)|\leq L(1+||\gamma_t||_0+|y|+|z|),
		\\
		&&|q(\gamma_t,y,z,u)-q(\gamma'_{t},y',z',u)|\leq L(||\gamma_t-\gamma'_{t}||_0+|y-y'|+|z-z'|),
		\\
		&&|\phi(\eta_T)-\phi(\eta'_T)|\leq L||\eta_T-\eta'_{T}||_0.
	\end{eqnarray*}
\end{assumption}
Combining Lemmas~\ref{lemma2.5} and \ref{lemmaexist111}, we have
\begin{lemma}\label{lemmaexist}
	Assume that Assumption \ref{hypstate} (i) and (ii)  and Assumption \ref{hypcost}  hold. Then for every
	$(t,\xi_t,u(\cdot))\in [0,T)\times L_{\mathcal{P}}^p(\Omega, \Lambda_t(H))\times {\mathcal{U}}[0,T]$ with $p>2$,  BSDE (\ref{fbsde1}) has unique
	pair of solutions $(Y^{\xi_t,u}, Z^{\xi_t,u})$.  Furthermore, let  $X^{\xi'_t,v}$ and $(Y^{\xi'_t,v}, Z^{\xi'_t,v})$ be the solutions of FSEE (\ref{state1}) and BSDE (\ref{fbsde1})
	corresponding to $(t,\xi'_t,v(\cdot))\in [0,T]\times L_{\mathcal{P}}^p(\Omega, \Lambda_t(H))\times {\mathcal{U}}[0,T]$. Then, we have  the following estimates
	\begin{eqnarray}\label{fbjia4}
	&\displaystyle \qquad\mathbb{E}\left[\sup_{t\leq s\leq T}\left|Y^{\xi_t,u}(s)-Y^{\xi'_t,v}(s)\right|^p\right]+ {{\mathbb{E}}\bigg{(}\int^{T}_{t}\left|Z^{\xi_t,u}(s)-Z^{\xi'_t,v}(s)\right|^2ds\bigg{)}^{\frac{p}{2}}}\nonumber\\[3mm]
	&\leq C_p\, \mathbb{E}||\xi_t-\xi'_t||_0^p\\[3mm]
	&\displaystyle\!\!\!+C_p\!\! \int^{T}_{t}\!\!\! \mathbb{E}\!\left[\left|(F, G)(X^{\xi'_{t},v}_r,\cdot)\Bigm|^{u(r)}_{v(r)}\right|^p_{H\times L_2(\Xi, H)}
	\!\!\!\!\!\!+\left|q(X^{\xi'_{t},v}_r,Y^{\xi'_t,v}(r),Z^{\xi'_t,v}(r),\cdot)\Bigm|^{u(r)}_{v(r)}\right|^p\right]\! dr \nonumber
	\end{eqnarray}
	and
	\begin{eqnarray}\label{fbjia5}
\qquad\qquad	\mathbb{E}\bigg{[}\sup_{t\leq s\leq T}\left|Y^{\xi_t,u}(s)\right|^p\bigg{]}
	+ {\mathbb{E}}\bigg{(}\int^{T}_{t}\left|Z^{\xi_t,u}(s)\right|^2ds\bigg{)}^{\frac{p}{2}}\leq C_p(1+\mathbb{E}||\xi_t||_0^p)
	\end{eqnarray}
	for a positive constant $C_p$ depending only on  $(p, T, L, M_1)$.
\end{lemma}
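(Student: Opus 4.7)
The plan is to reduce everything to the infinite-dimensional BSDE toolkit (Lemma \ref{lemma2.5}) after substituting the state process $X^{\xi_t,u}$ into the coefficients, and then to combine the resulting estimates with the FSEE stability result (Lemma \ref{lemmaexist111}). First I would frame BSDE~(\ref{fbsde1}) as a BSDE on $[t,T]$ with parameters $\eta:=\phi(X_T^{\xi_t,u})$ and $f(s,y,z):=q(X_s^{\xi_t,u},y,z,u(s))$, extended to $[0,t]$ in a trivial way so the formulation of Lemma \ref{lemma2.5} applies. The Lipschitz hypothesis (i) of Lemma \ref{lemma2.5} is immediate from the $(y,z)$-Lipschitz property of $q$ in Assumption \ref{hypcost} with constant $L$, uniformly in $u$ and in the path argument.

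Next I would verify the integrability hypothesis (ii). From the linear growth bounds on $\phi$ and $q$ (obtained from Assumption \ref{hypcost} by Lipschitz continuity), we have
\begin{eqnarray*}
|\phi(X_T^{\xi_t,u})| &\leq& |\phi(\mathbf{0})|+L\|X_T^{\xi_t,u}\|_0,\\
|q(X_\sigma^{\xi_t,u},0,0,u(\sigma))| &\leq& L(1+\|X_\sigma^{\xi_t,u}\|_0).
\end{eqnarray*}
Combined with the moment estimate (\ref{state1est}) of Lemma \ref{lemmaexist0409}, both $\mathbb{E}|\eta|^p$ and $\mathbb{E}\bigl(\int_t^T|f(\sigma,0,0)|^2d\sigma\bigr)^{p/2}$ are finite. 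Lemma \ref{lemma2.5} then delivers the unique pair $(Y^{\xi_t,u},Z^{\xi_t,u})$ in the stated spaces, and inserting the two preceding bounds into (\ref{lemma2.510}) yields (\ref{fbjia5}) with a constant that depends only on $(p,T,L,M_1)$.

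For the stability estimate (\ref{fbjia4}), I would apply the BSDE difference bound (\ref{lemma2.51}) to the parameter pairs $(\phi(X_T^{\xi_t,u}),q(X^{\xi_t,u},\cdot,\cdot,u))$ and $(\phi(X_T^{\xi'_t,v}),q(X^{\xi'_t,v},\cdot,\cdot,v))$. The terminal term is handled by the Lipschitz property of $\phi$ combined with (\ref{0604}):
\begin{eqnarray*}
\mathbb{E}|\phi(X_T^{\xi_t,u})-\phi(X_T^{\xi'_t,v})|^p
&\leq& L^p\,\mathbb{E}\|X_T^{\xi_t,u}-X_T^{\xi'_t,v}\|_0^p,
\end{eqnarray*}
and the right-hand side is controlled by (\ref{0604}) (applied with $\bar t=t$) by $C_p\mathbb{E}\|\xi_t-\xi'_t\|_0^p$ plus the $(F,G)$-control-difference integral. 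For the driver term I would use the triangle decomposition
\begin{eqnarray*}
&&q(X_r^{\xi_t,u},Y^{\xi'_t,v}(r),Z^{\xi'_t,v}(r),u(r))-q(X_r^{\xi'_t,v},Y^{\xi'_t,v}(r),Z^{\xi'_t,v}(r),v(r))\\
&=&\bigl[q(X_r^{\xi_t,u},\cdot)-q(X_r^{\xi'_t,v},\cdot)\bigr]\bigm|_{(Y^{\xi'_t,v},Z^{\xi'_t,v},u(r))}
+q(X_r^{\xi'_t,v},Y^{\xi'_t,v},Z^{\xi'_t,v},\cdot)\bigm|_{v(r)}^{u(r)},
\end{eqnarray*}
bounding the first piece by $L\|X_r^{\xi_t,u}-X_r^{\xi'_t,v}\|_0$ (again absorbed using (\ref{0604})) and keeping the second piece as the explicit control-variation term appearing in (\ref{fbjia4}).

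The work is essentially bookkeeping, and the only delicate point is making sure every constant arising from (\ref{lemma2.510}), (\ref{lemma2.51}), (\ref{state1est}) and (\ref{0604}) can be absorbed into a single $C_p$ depending only on $(p,T,L,M_1)$; this follows since each of those constants is itself of exactly that form. I do not foresee any genuine obstacle, as no new analytic input beyond Lemmas \ref{lemma2.5} and \ref{lemmaexist111} is needed; the only subtlety is the uniform Lipschitz behaviour of $q$ in the path variable, which by Assumption \ref{hypcost} holds uniformly in $(y,z,u)$ and therefore commutes with plugging in the random process $(Y^{\xi'_t,v},Z^{\xi'_t,v})$ before taking the $L^p$-norm.
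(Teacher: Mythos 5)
Your proposal is correct and follows exactly the route the paper takes: existence and uniqueness from Lemma \ref{lemma2.5} once its Lipschitz and $p$-integrability hypotheses are verified via Assumption \ref{hypcost} and \eqref{state1est}, estimate \eqref{fbjia5} by plugging the linear-growth bounds on $\phi$ and $q$ into \eqref{lemma2.510} together with \eqref{state1est}, and estimate \eqref{fbjia4} from \eqref{lemma2.51} combined with \eqref{0604} (applied with $\bar t=t$) after the triangle decomposition you wrote for the driver. The paper's own proof is only the three-sentence summary of this chain, and your write-out fills in precisely those omitted details without diverging from it.
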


\begin{proof} We have the
existence and uniqueness of the solution of the backward equation (\ref{fbsde1}) from  Lemma ~\ref{lemma2.5}.
Using  inequalities (\ref{lemma2.51}) and (\ref{0604}),  we  get inequality  (\ref{fbjia4}).  Combining inequalities (\ref{lemma2.510}) and (\ref{state1est}), we obtain inequality (\ref{fbjia5}).
\end{proof}

By Lemma \ref{lemmaexist10220}, we also have the following lemma which is used to prove dynamic programming principle.
\begin{lemma}\label{lemmaexist1022}
	Assume that Assumptions \ref{hypstate}   and \ref{hypcost}  hold. Then for every
	$(t,\gamma_t, \eta_t,$ $u(\cdot))\in [0,T]\times \Lambda_t\times \Lambda_t\times {\mathcal{U}}[0,T]$,  we have
	\begin{eqnarray}\label{fbjia41022}
	\begin{aligned}
	&\quad
	\mathbb{E}[|Y^{\gamma_t,u}(s)-Y^{\eta_t,u}(s)|^2|{\mathcal{F}}_t]\\
	 &\quad+\int^{T}_{s}\!\!\!\mathbb{E}[|Y^{\gamma_t,u}(\sigma)-Y^{\eta_t,u}(\sigma)|^2+|Z^{\gamma_t,u}(\sigma)-Z^{\eta_t,u}(\sigma)|^2|{\mathcal{F}}_t]d\sigma\\
	&\leq C_{3,1}||\gamma_t-\eta_t||_0^2, \ {s\in[t,T]}
	\end{aligned}
	\end{eqnarray}
	and
	\begin{eqnarray}\label{fbjia51022}
	\quad& \mathbb{E}[|Y^{\gamma_t,u}(s)|^2|{\mathcal{F}}_t]
	+\int^{T}_{t}\!\mathbb{E}[|Z^{\gamma_t,u}(s)|^2|{\mathcal{F}}_t]ds\leq C_{3,1}(1+||\gamma_t||_0^2), \ {s\in[t,T]}
	\end{eqnarray}
	for a positive constant $C_{3,1}$ depending only on   $T$, $L$  and $M_1$.
\end{lemma}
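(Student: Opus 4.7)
The plan is to adapt the standard $L^2$-estimate for BSDEs from Lemma \ref{lemma2.5} into a conditional-expectation form, then invoke the conditional forward estimates from Lemma \ref{lemmaexist10220}. Set $\Delta Y := Y^{\gamma_t,u} - Y^{\eta_t,u}$, $\Delta Z := Z^{\gamma_t,u} - Z^{\eta_t,u}$, and $\Delta X_\sigma := X_\sigma^{\gamma_t,u} - X_\sigma^{\eta_t,u}$. Then $(\Delta Y, \Delta Z)$ solves a BSDE on $[t,T]$ with terminal value $\phi(X_T^{\gamma_t,u}) - \phi(X_T^{\eta_t,u})$ and driver
$$
\Delta q(\sigma) := q(X_\sigma^{\gamma_t,u}, Y^{\gamma_t,u}(\sigma), Z^{\gamma_t,u}(\sigma), u(\sigma)) - q(X_\sigma^{\eta_t,u}, Y^{\eta_t,u}(\sigma), Z^{\eta_t,u}(\sigma), u(\sigma)).
$$

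First I would apply It\^o's formula to $|\Delta Y(s)|^2$ on $[s, T]$, use the Lipschitz bounds of $\phi$ and $q$ from Assumption \ref{hypcost} to get $|\Delta Y(T)|^2 \leq L^2 \|\Delta X_T\|_0^2$ and $|\Delta q(\sigma)| \leq L(\|\Delta X_\sigma\|_0 + |\Delta Y(\sigma)| + |\Delta Z(\sigma)|)$, apply Young's inequality to absorb $|\Delta Z|^2$ into the left-hand side, and then take $\mathbb{E}[\,\cdot\,|\mathcal{F}_t]$. Using $\|\Delta X_\sigma\|_0 \leq \|\Delta X_T\|_0$ together with the conditional forward estimate (\ref{06041022}) with $m=1$ from Lemma \ref{lemmaexist10220} yields, for some constant $C$,
$$
\mathbb{E}[|\Delta Y(s)|^2|\mathcal{F}_t] + \tfrac{1}{2}\mathbb{E}\left[\int_s^T |\Delta Z(\sigma)|^2 d\sigma \,\bigg|\, \mathcal{F}_t\right] \leq C \|\gamma_t - \eta_t\|_0^2 + C\int_s^T \mathbb{E}[|\Delta Y(\sigma)|^2|\mathcal{F}_t]\,d\sigma,
$$
and Gronwall's inequality then delivers (\ref{fbjia41022}).

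For (\ref{fbjia51022}), the identical It\^o-Young-Gronwall scheme applied to $|Y^{\gamma_t,u}(s)|^2$ --- now using the linear growth bound $|q(X_\sigma^{\gamma_t,u}, y, z, u)| \leq L(1 + \|X_\sigma^{\gamma_t,u}\|_0 + |y| + |z|)$ and $|\phi(X_T^{\gamma_t,u})| \leq |\phi(\mathbf{0}_T)| + L\|X_T^{\gamma_t,u}\|_0$ --- reduces the problem to controlling $\mathbb{E}[\|X_T^{\gamma_t,u}\|_0^2 | \mathcal{F}_t]$, which is precisely (\ref{2.61022}) with $m=1$; Gronwall then closes the argument and yields the constant $C_{3,1}$ depending only on $T, L, M_1$.

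The main technical obstacle is the rigorous justification that the conditional expectation of the stochastic integral $\int_s^T \Delta Y(\sigma) \langle \Delta Z(\sigma), dW(\sigma)\rangle_\Xi$ vanishes. This is handled via stopping times $\tau_n := \inf\{r \geq s : \int_s^r |\Delta Y(\theta)|^2 |\Delta Z(\theta)|^2 d\theta \geq n\} \wedge T$, using the martingale property of the stopped integral under $\mathbb{E}[\,\cdot\,|\mathcal{F}_t]$, and passing $n \to \infty$ by dominated convergence, which is legitimate because Lemma \ref{lemmaexist} provides $\Delta Y \in L^2_{\mathcal{P}}(\Omega, \Lambda_T(\mathbb{R}))$ and $\Delta Z \in L^2_{\mathcal{P}}(\Omega; L^2([0,T]; L_2(\Xi,\mathbb{R})))$. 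Everything else is the standard $L^2$-estimate for BSDEs carried out inside the conditional expectation.
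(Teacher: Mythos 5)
Your proposal is correct and follows essentially the same path as the paper: apply It\^o's formula to the squared difference of the $Y$-processes on $[s,T]$, take conditional expectation with respect to ${\mathcal{F}}_t$, bound the driver difference via the Lipschitz hypotheses and Young's inequality, invoke the conditional forward estimate \eqref{06041022} with $m=1$ from Lemma~\ref{lemmaexist10220}, and close the estimate. The only procedural difference is that the paper works with the exponentially weighted quantity $e^{\beta s}|\hat Y(s)|^2$ and then chooses $\beta = 2L^2+3L+1$ so that the $|\hat Y|^2$ terms produced by Young's inequality are absorbed directly into the left-hand side, whereas you run the unweighted It\^o identity and finish with a backward Gronwall step; these are interchangeable implementations of the same $L^2$ energy estimate and yield the same constant $C_{3,1}$ depending only on $T$, $L$, $M_1$.
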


\begin{proof}
We only prove (\ref{fbjia41022}), and (\ref{fbjia51022}) is proved in a similar way.  For simplicity, we write
\begin{eqnarray*}
	\begin{aligned}
		\hat{X}_s:=&X^{\gamma_t,u}_s-X^{\eta_t,u}_s,\ \hat{Y}(s):=Y^{\gamma_t,u}(s)-Y^{\eta_t,u}(s),\\
		\hat{Z}(s):=&Z^{\gamma_t,u}(s)-Z^{\eta_t,u}(s),\\
		\hat{q}(s):=&\, q(\Xi,u(s))\Bigm|^{\Xi=(X^{\gamma_t,u}_s,Y^{\gamma_t,u}(s),Z^{\gamma_t,u}(s))}_{\Xi=(X^{\eta_t,u}_s,Y^{\eta_t,u}(s),Z^{\eta_t,u}(s))}\, ,\ \quad s\in [t,T].
	\end{aligned}
\end{eqnarray*}
Applying  It\^o formula  to $e^{\beta s}|\hat{Y}(s)|^2$ on $[t,T]$, we obtain
\begin{eqnarray}\label{jias510815jia1022}
\begin{aligned}
&\quad e^{\beta s}|\hat{Y}(s)|^2+\int^{T}_{s}e^{\beta \sigma}[\beta|\hat{Y}(\sigma)|^2+|\hat{Z}(\sigma)|^2]d\sigma\\
&= e^{\beta T}|\phi(X_T^{\gamma_t,u})-\phi(X_T^{\eta_t,u})|^2+2\int^{T}_{{s}}e^{\beta \sigma}\langle\hat{Y}(\sigma),
\hat{q}(\sigma)\rangle_Hd\sigma\\
&\quad-2\int^{s}_{t}e^{\beta \sigma}\langle\hat{Y}(\sigma), \hat{Z}(\sigma)dW(\sigma)\rangle_H.
\end{aligned}
\end{eqnarray}
Taking conditional expectation with respect to ${\mathcal{F}}_t$, 
we obtain,  {for $\beta>0$},
\begin{eqnarray*}
	\begin{aligned}
		&e^{\beta s}\mathbb{E}[|\hat{Y}(s)|^2|{\mathcal{F}}_t]+\int^{T}_{s}e^{\beta \sigma}\mathbb{E}[\beta|\hat{Y}(\sigma)|^2+|\hat{Z}(\sigma)|^2|{\mathcal{F}}_t]d\sigma\\
		\leq& e^{\beta T}L^2\mathbb{E}[||\hat{X}_T||_0^2|{\mathcal{F}}_t]+\int^{T}_{{s}}e^{\beta \sigma}\mathbb{E}[(2L^2+3L)|\hat{Y}(\sigma)|^2+L||\hat{X}_\sigma||_0^2+\frac{1}{2}|\hat{Z}(\sigma)|^2
		|{\mathcal{F}}_t]d\sigma.
	\end{aligned}
\end{eqnarray*}
Let $\beta=2L^2+3L+1$, by (\ref{06041022}), we obtain (\ref{fbjia41022}).
\end{proof}

The functionals $J(\xi_t,u(\cdot)):=J(\gamma_t,u(\cdot))|_{\gamma_t=\xi_t}$ and $Y^{\xi_t,u}(t)$, $(t,\xi_t)\in [0,T]\times L_{\mathcal{P}}^p(\Omega, \Lambda_t(H))$, are related by the following
theorem.
\begin{theorem}\label{theoremj=y}
	\ \
	Under  Assumption \ref{hypstate} (i) and (ii)  and Assumption \ref{hypcost},    for every $p>2$, $(t, u(\cdot))\in [0,T]\times {\mathcal{U}}[t,T]$ and $\xi_t, \xi'_t\in L_{\mathcal{P}}^p(\Omega, \Lambda_t(H))$, we have
	\begin{eqnarray}\label{j=y}
	J(\xi_t,u(\cdot))=Y^{\xi_t,u}(t)
	\end{eqnarray}
	and
	\begin{eqnarray}\label{0903jia1022}
	|Y^{\xi_t,u}(t)-Y^{\xi'_t,u}(t)|\leq C_{3,1}^{\frac{1}{2}}||\xi_t-\xi'_t||_0.
	\end{eqnarray}
\end{theorem}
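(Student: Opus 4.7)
The plan is to prove both assertions by the classical reduction to deterministic initial data via simple-function approximation. First, I would establish the claim when $\xi_t = \sum_{i=1}^{N} \gamma_t^{(i)} \mathbf{1}_{A_i}$ with $\{A_i\}_{i=1}^N$ a finite $\mathcal{F}_t$-measurable partition of $\Omega$ and each $\gamma_t^{(i)} \in \Lambda_t$ deterministic. By the uniqueness part of Lemma \ref{lemmaexist0409}, it suffices to check that the process $\widetilde{X}(s) := \sum_{i=1}^N X^{\gamma_t^{(i)}, u}(s)\mathbf{1}_{A_i}$, $s \in [t,T]$ (extended by $\xi_t$ on $[0,t)$), is a mild solution of FSEE (\ref{state1}) with initial data $\xi_t$. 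Since each $\mathbf{1}_{A_i}$ is $\mathcal{F}_t$-measurable, it commutes with both the Bochner integral and the It\^o integral on $[t,T]$, and on $A_i$ the pathwise coefficients reduce to $F(X^{\gamma_t^{(i)}, u}_\sigma, u(\sigma))$ and $G(X^{\gamma_t^{(i)}, u}_\sigma, u(\sigma))$, so the equation is verified termwise. Hence $X^{\xi_t, u} = \sum_i X^{\gamma_t^{(i)}, u}\mathbf{1}_{A_i}$ on $[t,T]$. An analogous argument using the uniqueness of the BSDE in Lemma \ref{lemma2.5} yields the decomposition $(Y^{\xi_t, u}, Z^{\xi_t, u}) = \sum_i (Y^{\gamma_t^{(i)}, u}, Z^{\gamma_t^{(i)}, u})\mathbf{1}_{A_i}$ on $[t,T]$. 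Evaluating at $s=t$ and applying the definition $J(\gamma_t^{(i)}, u) := Y^{\gamma_t^{(i)}, u}(t)$ gives (\ref{j=y}) for simple $\xi_t$.

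For (\ref{0903jia1022}) in the simple case, I would take $\xi_t = \sum_i \gamma_t^{(i)}\mathbf{1}_{A_i}$ and $\xi'_t = \sum_i \eta_t^{(i)}\mathbf{1}_{A_i}$ on a common partition. Specializing Lemma \ref{lemmaexist1022} to each deterministic pair $(\gamma_t^{(i)}, \eta_t^{(i)})$ at $s=t$ yields
\[
\mathbb{E}\bigl[|Y^{\gamma_t^{(i)}, u}(t) - Y^{\eta_t^{(i)}, u}(t)|^2 \mid \mathcal{F}_t\bigr] \leq C_{3,1}\, \|\gamma_t^{(i)} - \eta_t^{(i)}\|_0^2;
\]
since the integrand is $\mathcal{F}_t$-measurable the conditional expectation is trivial, giving $|Y^{\gamma_t^{(i)}, u}(t) - Y^{\eta_t^{(i)}, u}(t)| \leq C_{3,1}^{1/2}\|\gamma_t^{(i)} - \eta_t^{(i)}\|_0$ a.s. Restricting to $A_i$ and using the decomposition above produces $|Y^{\xi_t, u}(t) - Y^{\xi'_t, u}(t)| \leq C_{3,1}^{1/2}\|\xi_t - \xi'_t\|_0$ pointwise on $\Omega$. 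For general $\xi_t, \xi'_t \in L^p_{\mathcal{P}}(\Omega, \Lambda_t(H))$, I would approximate them by simple step random variables $\xi_t^n$ and $\zeta_t^n$ (on a common refining sequence of partitions) with $\xi_t^n \to \xi_t$ and $\zeta_t^n \to \xi'_t$ in $L^p(\Omega; \Lambda_t(H))$. The stability estimate (\ref{fbjia4}) applied with $F, G, q, \phi$ unchanged (so the coefficient-difference term vanishes) delivers $L^p$-convergence of the corresponding $Y(t)$'s; extracting an a.s.-convergent subsequence, the Lipschitz bound for the simple case passes to the limit, proving (\ref{0903jia1022}) in full generality. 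The identity (\ref{j=y}) then extends: the simple-case identity $J(\xi_t^n, u) = Y^{\xi_t^n, u}(t)$ passes to the limit using (\ref{fbjia4}) and the newly-established Lipschitz continuity of the map $\gamma_t \mapsto J(\gamma_t, u)$.

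The main obstacle is the first step: justifying the decomposition $X^{\xi_t, u} = \sum_i X^{\gamma_t^{(i)}, u}\mathbf{1}_{A_i}$ (and the analogous one for the BSDE) with random initial data $\xi_t$. The bookkeeping required---pulling the $\mathcal{F}_t$-measurable indicators through the Bochner and It\^o integrals and through the composition with the pathwise coefficients---uses only the measurability of the partition and the functional nature of $F, G, q, \phi$, but it must be spelled out carefully. Once this flow decomposition is in hand, everything else reduces to routine approximation arguments via Lemmas \ref{lemmaexist}, \ref{lemma2.5}, and \ref{lemmaexist1022}.
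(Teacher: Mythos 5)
Your proposal is correct and follows essentially the same route as the paper: reduce to deterministic initial data by an $\mathcal{F}_t$-measurable indicator decomposition, verify the termwise identity via the strong uniqueness of the FSEE/BSDE, and then pass to general $\xi_t$ by approximation using the stability estimates (\ref{fbjia4}) and (\ref{fbjia41022}). The only minor deviation is the approximation scheme: you use finite-range simple functions with $L^p$-convergence (forcing a subsequence extraction to recover a.s. limits), whereas the paper builds a countable-range approximation $f^k_t = \sum_{n\geq 1} h^n_t\mathbf{1}_{A_{n,k}}$ from a dense subset of $\Lambda_t$ and radius-$1/k$ balls, which gives the uniform bound $\|f^k_t-\xi_t\|_0\leq 1/k$ pointwise in $\omega$ and avoids the extraction step; the countable decomposition requires justifying termwise operations on an infinite sum, which your finite version sidesteps. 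Both are sound; the trade-offs are cosmetic.
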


\begin{proof}
Let $\{h^n_t\}$, $n\in \mathbb{N}$,
be a dense subset of $\Lambda_t$, $B(h^n_t,\frac{1}{k})$
be the open sphere in $\Lambda_t$ with the radius $\frac{1}{k}$ and the center $h^n_t$.  Set $B_{n,k}:=B(h^n_t,\frac{1}{k})\setminus
\bigcup_{m<n}B(h^m_t,\frac{1}{k})$  and $A_{n,k}:=\{\omega\in \Omega|\xi_t(\omega)\in
B_{n,k}\}$. Then $\cup_{n=1}^{\infty}A_{n,k}=\Omega$ and the
sequence $f^k_t(\omega):= \Sigma^{\infty}_{n=1}h^n_t1_{
	A_{n,k}}(\omega)$ is ${\mathcal
	{F}}_{t}$-measurable and
converges to $\xi_t$ strongly and
uniformly.
\par
For every  $n$ and $u(\cdot)\in {\mathcal {U}}[t,T]$, we put $(X^{n}(s),Y^{n}(s),Z^{n}(s))=(X^{h^n_t,u}(s),$ $Y^{h^n_t,u}(s),Z^{h^n_t,u}(s))$.
Then $X^{n}(s)$ is the solution of the FSEE
\begin{eqnarray}\label{stateint333}
\begin{aligned}
X^n(s)=&e^{(s-t)A}h^n_t(t)+\int_{t}^{s}e^{(s-\sigma)A}F(X^n_\sigma,u(\sigma))d\sigma\\
&+\int_{t}^{s}e^{(s-\sigma)A}G(X^n_\sigma,u(\sigma))dW(\sigma), \ \
s\in [t,T],
\end{aligned}
\end{eqnarray}
where $X^n_t=h^n_t$; and $(Y^{n}(s),Z^{n}(s))$ is the solution of the associated  BSDE
\begin{eqnarray}\label{fbsde333}
Y^{n}(s)=\phi(X_T^{n})+\int^{T}_{s}q(X_\sigma^{n},Y^{n}(\sigma),Z^{n}(\sigma),u(\sigma))d\sigma-\int^{T}_{s}Z^{n}(\sigma)dW(\sigma),\  \ s\in [t,T].\nonumber
\end{eqnarray}
The above two equations are multiplied by $1_{A_{n,k}}$  and summed up with respect to $n$. Thus, taking into account that $\sum_{n=1}^{\infty}\varphi(h^n_t)1_{A_{n,k}}=\varphi(\sum_{n=1}^{\infty}h^n_t1_{A_{n,k}})$, we obtain
\begin{eqnarray*}
	&&\sum_{n=1}^{\infty}1_{A_{n,k}}X^n(s)\\
	&=&\sum_{n=1}^{\infty}1_{A_{n,k}}e^{(s-t)A}h^n_t(t)+\int_{t}^{s}e^{(s-\sigma)A}F\left(\sum_{n=1}^{\infty}1_{A_{n,k}}X^n_\sigma, u(\sigma)\right)d\sigma\nonumber\\
	&& \ \
	+\int_{t}^{s}e^{(s-\sigma)A}G\left(\sum_{n=1}^{\infty}1_{A_{n,k}}X^n_\sigma,u(\sigma)\right)dW(\sigma), \nonumber
\end{eqnarray*}
and
\begin{eqnarray}\label{fbsde333}
\begin{aligned}
&\quad\sum_{n=1}^{\infty}1_{A_{n,k}}Y^{n}(s)\\
&=\phi\left(\sum_{n=1}^{\infty}1_{A_{n,k}}X_T^{n}\right)-\int^{T}_{s}\sum_{n=1}^{\infty}1_{A_{n,k}}Z^{n}(\sigma)dW(\sigma)\\
&\quad+\int^{T}_{s}q\left(\sum_{n=1}^{\infty}1_{A_{n,k}}X_\sigma^{n},
\sum_{n=1}^{\infty}1_{A_{n,k}}Y^{n}(\sigma),\sum_{n=1}^{\infty}1_{A_{n,k}}Z^{n}(\sigma),u(\sigma)\right)d\sigma.
\end{aligned}
\end{eqnarray}
Then,  the strong uniqueness property of the solution to the FSEE and the BSDE yields
\begin{eqnarray}\label{stateint333444}
X^{f^k_t,u}(s)&=&\sum_{n=1}^{\infty}1_{A_{n,k}}X^n(s),\nonumber\\
 \ \ \ \ \ \ \ \left(Y^{f^k_t,u}(s),Z^{f^k_t,u}(s)\right)
&=&\left(\sum_{n=1}^{\infty}1_{A_{n,k}}Y^{n}(s),\sum_{n=1}^{\infty}1_{A_{n,k}}Z^{n}(s)\right),\ \ s\in [t,T].
\end{eqnarray}
Finally, from $J(h^n_t,u(\cdot))=Y^n(t)$ for $n\geq 1$, we deduce that
\begin{eqnarray}\label{stateint33344455}
\begin{aligned}
Y^{f^k_t,u}(t)
&=\sum_{n=1}^{\infty}1_{A_{n,k}}Y^{n}(t)=\sum_{n=1}^{\infty}1_{A_{n,k}}J(h^n_t,u(\cdot))\\
&=J\left(\sum_{n=1}^{\infty}1_{A_{n,k}}h^n_t,u(\cdot)\right)=J(f^k_t,u(\cdot)).
\end{aligned}
\end{eqnarray}
Consequently, from the estimates (\ref{fbjia4}) and (\ref{fbjia41022}),  we get
\begin{eqnarray*}
	&&\mathbb{E}|Y^{\xi_t,u}(t)-J(\xi_t,u(\cdot))|^p\\
	&\leq&  2^{p-1}\mathbb{E}|Y^{\xi_t,u}(t)-Y^{f^{k}_t,u}(t)|^p+2^{p-1} \mathbb{E}|J(f^k_t,u(\cdot))-J(\xi_t,u(\cdot))|^p\\
	&\leq& 2^{p-1} (C_p+C_{3,1}^{\frac{p}{2}})\mathbb{E}||\xi_t-f^k_t||_0^p\rightarrow0\ \mbox{as}\ k\rightarrow \infty.
\end{eqnarray*}
Now let us prove $(\ref{0903jia1022})$. By (\ref{fbjia41022}) and (\ref{j=y}),
\begin{eqnarray*}\label{0903jia}
	&&|Y^{\xi_t,u}(t)-Y^{\xi'_t,u}(t)|\\
	&=&|J(\xi_t,u(\cdot))-J(\xi'_t,u(\cdot))|=\big{|}Y^{\gamma_t,u}(t)|_{\gamma_t=\xi_t}-Y^{\eta_t,u}(t)|_{\eta_t=\xi'_t}\big{|}\\
	&=&\big{|}(Y^{\gamma_t,u}(t)-Y^{\eta_t,u}(t))|_{\gamma_t=\xi_t,\eta_t=\xi'_t}\big{|}\leq C_{3,1}^{\frac{1}{2}}||\xi_t-\xi'_t||_0.
\end{eqnarray*}
The proof is complete.
\end{proof}

From the uniqueness of the solution of (\ref{fbsde1}), it
follows that
$$
Y^{\gamma_t,u}(t+\delta)=Y^{X^{\gamma_t,u}_{t+\delta},u}(t+\delta)=J\left(X^{\gamma_t,u}_{t+\delta},u(\cdot)\right),\ \ P\mbox{-a.s.}
$$
Formally,  under the assumptions  Assumption \ref{hypstate} (i), (ii)  and Assumption \ref{hypcost}, the  value functional $V(\gamma_t)$
defined by (\ref{value1})
is  ${\mathcal{F}}_t$-measurable.
However, we have
\begin{theorem}\label{valuedet}  Let  Assumption \ref{hypstate} (i) and (ii) and Assumption \ref{hypcost} hold true.
	Then,  $V$ is a deterministic functional.
\end{theorem}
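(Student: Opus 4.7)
The plan is to reduce the essential supremum defining $V(\gamma_t)$ to a supremum over a subclass of controls whose associated payoffs are deterministic constants, and then to argue that the two quantities coincide $P$-a.s.

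First, I would introduce the subclass $\mathcal{U}^t[t,T] \subset \mathcal{U}[t,T]$ of controls that are progressively measurable with respect to the shifted filtration $\{\mathcal{F}_t^s\}_{t\leq s\leq T}$, which is independent of $\mathcal{F}_t$. For any $u\in\mathcal{U}^t[t,T]$ and any deterministic $\gamma_t\in\Lambda_t$, the FSEE~\eqref{state1} driven by $W(\cdot)-W(t)$ with a non-random initial history admits, by the strong uniqueness in Lemma~\ref{lemmaexist0409}, a solution $X^{\gamma_t,u}$ that is $\mathcal{F}^t$-adapted on $[t,T]$. Feeding this into BSDE~\eqref{fbsde1} and using Lemma~\ref{lemma2.5}, the pair $(Y^{\gamma_t,u},Z^{\gamma_t,u})$ is likewise $\mathcal{F}^t$-adapted, so $Y^{\gamma_t,u}(t)$ is $\mathcal{F}_t^t$-measurable and hence deterministic (up to $P$-null sets).

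Second, I would prove the identity
\begin{equation*}
V(\gamma_t) = \sup_{u\in\mathcal{U}^t[t,T]} Y^{\gamma_t,u}(t)\quad P\text{-a.s.}
\end{equation*}
The inequality $\geq$ is immediate because $\mathcal{U}^t[t,T]\subset\mathcal{U}[t,T]$ and the right-hand side is a deterministic constant, which is automatically dominated by the essential supremum. For the converse, fix $u\in\mathcal{U}[t,T]$ and decompose the probability space via a regular conditional probability of $P$ given $\mathcal{F}_t$, or equivalently write $(\Omega,\mathcal{F},P)\simeq(\Omega_1\times\Omega_2,\mathcal{F}_t\otimes\mathcal{F}_t^T,P_1\otimes P_2)$. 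For each $\omega_1\in\Omega_1$, set $u^{\omega_1}(s,\omega_2):=u(s,\omega_1,\omega_2)$; then $u^{\omega_1}\in\mathcal{U}^t[t,T]$ for $P_1$-a.e.\ $\omega_1$. By Fubini together with the strong uniqueness of \eqref{state1}--\eqref{fbsde1}, the solutions satisfy $X^{\gamma_t,u^{\omega_1}}(s,\omega_2)=X^{\gamma_t,u}(s,\omega_1,\omega_2)$ and consequently $Y^{\gamma_t,u^{\omega_1}}(t)=Y^{\gamma_t,u}(t)(\omega_1,\cdot)$ for $P_1$-a.e.\ $\omega_1$. Taking the supremum over $\omega_1$ and then the essential supremum over $u\in\mathcal{U}[t,T]$ yields the inequality $\leq$.

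Combining the two inequalities, $V(\gamma_t)$ equals a deterministic constant $P$-a.s., which is the claim. The main obstacle is a rigorous implementation of the disintegration step: one needs joint measurability of the map $(\omega_1,u)\mapsto Y^{\gamma_t,u}(t)(\omega_1,\cdot)$ and the verification that the fibered controls $u^{\omega_1}$ inherit the required progressive measurability with respect to $\{\mathcal{F}_t^s\}$. If a direct appeal to regular conditional probabilities on the canonical Wiener space is considered too heavy, one can instead approximate a generic $u\in\mathcal{U}[t,T]$ by step controls whose values on $[t,T]$ are $\sigma(\xi, W(s_1)-W(t),\ldots,W(s_n)-W(t))$-measurable with $\xi$ an $\mathcal{F}_t$-measurable selector, integrate out the $\mathcal{F}_t$-dependence explicitly, and pass to the limit using the Lipschitz estimate~\eqref{fbjia4} of Lemma~\ref{lemmaexist}.
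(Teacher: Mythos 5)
Your proposal is correct in spirit and shares the paper's pivotal observation—that for controls progressively measurable with respect to the shifted filtration $\{\mathcal{F}_t^s\}_{s\geq t}$, independent of $\mathcal{F}_t$, the flow-property and strong uniqueness make $Y^{\gamma_t,u}(t)$ deterministic—but your primary route to reducing the essential supremum to this subclass is genuinely different. You propose a disintegration $(\Omega,P)\cong(\Omega_1\times\Omega_2,P_1\otimes P_2)$ with $\mathcal{F}_t\otimes\mathcal{F}_t^T$, slice each control $u\in\mathcal{U}[t,T]$ fiberwise into $u^{\omega_1}\in\mathcal{U}^t[t,T]$, and use Fubini to transfer the deterministic bound. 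The paper instead first establishes that $\{Y^{\gamma_t,u}(t):u\in\mathcal{U}[t,T]\}$ is directed upward (via the switching construction $u = u_1\mathbf{1}_{A_1}+u_2\mathbf{1}_{A_2}$), extracts a monotone maximizing sequence by \cite[Thm.~A.3]{kar}, and then invokes a density lemma (\cite[Lemma~4.12]{yong11}) to replace each $u_i$ by a finite mixture $\sum_j\mathbf{1}_{A_{ij}}u_{ij}$ with $A_{ij}\in\mathcal{F}_t$ and $u_{ij}$ $\mathcal{F}^t$-adapted; picking the largest deterministic $J(\gamma_t,u_{ij})$ for each $i$ then gives a monotone deterministic sequence converging to $V(\gamma_t)$. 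Your alternative fallback (approximating generic controls by such step mixtures and passing to the limit via the Lipschitz estimate \eqref{fbjia4}) is in effect the paper's route. The disintegration route is conceptually cleaner in that it avoids both the directed-family argument and the explicit density lemma, but—as you correctly flag—the technical burden moves to verifying that the sliced controls $u^{\omega_1}$ remain $\{\mathcal{F}_t^s\}$-progressively measurable for $P_1$-a.e.~$\omega_1$ and that solutions of the FSEE/BSDE fiber accordingly (a Fubini-for-stochastic-integrals statement); for general predictable $u$ this is not entirely automatic and is precisely the measurability issue that the paper's appeal to the density of step controls is designed to sidestep. Both approaches also need, and have, the a priori bound from \eqref{fbjia51022} to ensure $\sup_{v\in\mathcal{U}^t[t,T]}Y^{\gamma_t,v}(t)<\infty$; and the inequality ``$\geq$'' requires the small extra step of taking a countable maximizing sequence of deterministic constants before passing to the sup, since the family is uncountable.
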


\begin{proof}
First, we show that there exists a sequence $\{u_i(\cdot)\}_{i\geq1}\subset {\mathcal{U}}[t,T]$ such that $\{Y^{\gamma_t,u_i}(t)\}_{n\geq1}$ is a nondecreasing sequence and
\begin{eqnarray}\label{jiale}
V(\gamma_t)=\mathop{\esssup}\limits_{u(\cdot)\in {\mathcal{U}}[t,T]}Y^{\gamma_t,u}(t)=\lim_{i\rightarrow\infty} Y^{\gamma_t,u_i}(t), \ \  P\mbox{-a.s.}.
\end{eqnarray}
From \cite[Theorem A.3]{kar}, it is sufficient to prove that, for any $u_1(\cdot),u_2(\cdot)\in {\mathcal{U}}[t,T]$, we have
\begin{eqnarray}\label{jiale1}
Y^{\gamma_t,u_1}(t)\vee Y^{\gamma_t,u_2}(t)=Y^{\gamma_t,u}(t),\ \ P\mbox{-a.e.,         }
\end{eqnarray}
with $u(\cdot)\in {\mathcal{U}}[t,T]$ satisfying $u(s):=u_1(s){\mathbf{1}}_{A_1}+u_2(s){\mathbf{1}}_{A_2}$, $s\in [t,T]$, where $A_1:=\{Y^{\gamma_t,u_1}(t)\geq Y^{\gamma_t,u_2}(t)\}$ and
$A_2:=\{Y^{\gamma_t,u_1}(t)<Y^{\gamma_t,u_2}(t)\}$.
Taking into account that $\sum_{i=1}^{2}\varphi(x_i)1_{A_i}=\varphi(\sum_{i=1}^{2}x_i1_{A_{i}})$, we obtain
\begin{eqnarray*}
	 \sum_{i=1}^{2}1_{A_{i}}X^{\gamma_t,u_i}(s)&=&\gamma_t(t)+\int_{t}^{s}e^{(s-\sigma)A}F\left(\sum_{i=1}^{2}1_{A_{i}}X^{\gamma_t,u_i}_\sigma,\sum_{i=1}^{2}1_{A_{i}}u_i(\sigma)\right)d\sigma\\
	&&+\int_{t}^{s}e^{(s-\sigma)A}G\left(\sum_{i=1}^{2}1_{A_{i}}X^{\gamma_t,u_i}_\sigma,\sum_{i=1}^{2}1_{A_{i}}u_i(\sigma)\right)dW(\sigma), \nonumber
\end{eqnarray*}
and
\begin{eqnarray*}
	&&\sum_{i=1}^{2}1_{A_{i}}Y^{\gamma_t,u_i}(s)\\
	&=&\phi\left(\sum_{i=1}^{2}1_{A_{i}}X_T^{\gamma_t,u_i}\right)-\int^{T}_{s}\sum_{i=1}^{2}1_{A_i}Z^{\gamma_t,u_i}(\sigma)dW(\sigma)\nonumber\\
	&&+\int^{T}_{s}q\left(\sum_{i=1}^{2}1_{A_i}X_\sigma^{\gamma_t,u_i},
	\sum_{i=1}^{2}1_{A_{i}}Y^{\gamma_t,u_i}(\sigma),\sum_{i=1}^{2}1_{A_i}Z^{\gamma_t,u_i}(\sigma),\sum_{i=1}^{2}1_{A_{i}}u_i(\sigma)\right)d\sigma.\nonumber
\end{eqnarray*}
Then the strong uniqueness property of the solution to the FSEE and the BSDE yields
\begin{eqnarray}\label{10081118}
Y^{\gamma_t,u}(t)=\sum_{i=1}^{2}1_{A_i}Y^{\gamma_t,u_i}(t)=Y^{\gamma_t,u_1}(t)\vee Y^{\gamma_t,u_2}(t),\ \ P\mbox{-a.e.}.
\end{eqnarray}

Suppose that $\{u_i(\cdot)\}_{i\geq1}\subset {\mathcal{U}}[t,T]$ satisfy (\ref{jiale}). By (\ref{fbjia4}) {and Lemma 4.12 in \cite{yong11}}, we can suppose without lost of generality that $u_i(\cdot)$ takes the following form:
$$
u_i(s)=\sum^{n}_{j=1}\mathbf{1}_{A_{ij}}u_{ij}(s),\ \ s\in [t,T].
$$
Here $u_{ij}(\cdot)$ is ${\mathcal{F}}^t$-progressively measurable and $\{A_{ij}\}_{j=1}^{n}$ is a partition of $(\Omega,{\mathcal{F}}_t)$. {Like (\ref{10081118})}, we show that
$$
J(\gamma_t, u_i(\cdot))=\sum^{n}_{j=1}\mathbf{1}_{A_{ij}}J(\gamma_t, u_{ij}(\cdot)).
$$
It is clear that $X^{\gamma_t,u_{ij}}(s)$ is ${\mathcal{F}}_t^s$-measurable for all $s\in[t,T]$, then $Y^{\gamma_t,u_{ij}}(s)$ is ${\mathcal{F}}_t^s$-measurable for all $s\in[t,T]$.
In particular, $J(\gamma_t, u_{ij}(\cdot))=Y^{\gamma_t,u_{ij}}(t)$ is deterministic. Without lost of generality, we may assume
$$
J(\gamma_t, u_{ij}(\cdot))\leq J(\gamma_t, u_{i1}(\cdot)), \ \ j\geq 1.
$$
Then we have $J(\gamma_t, u_i(\cdot))\leq J(\gamma_t, u_{i1}(\cdot))$. Combining (\ref{jiale}), we get
$$
\lim_{i\rightarrow\infty}J(\gamma_t, u_{i1}(\cdot))=V(\gamma_t).
$$
Therefore, $V(\gamma_t)$ is deterministic.
The proof is complete.
\end{proof}

The following  property of the value functional $V$ which we present is an immediate consequence of {Lemma \ref{lemmaexist1022}} and Theorem \ref{valuedet}.
\begin{lemma}\label{lemmavaluev}
	Let  Assumption \ref{hypstate} (i), (ii)  and Assumption \ref{hypcost} be satisfied. Then,  
	for all 
	$0\leq t\leq T$, $\gamma_t, \eta_t\in \Lambda_t$,
	\begin{eqnarray}\label{valuelip}
	|V(\gamma_t)-V(\eta_t)|\leq {C_{3,1}^{\frac{1}{2}}}||\gamma_t-\eta_t||_0, \quad
	|V(\gamma_t)|\leq {C_{3,1}^{\frac{1}{2}}}(1+||\gamma_t||_0).
	\end{eqnarray}
\end{lemma}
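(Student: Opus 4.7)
The plan is to derive both estimates by combining the a priori bound from Lemma \ref{lemmaexist1022} at the initial time $s=t$ with the fact, established in Theorem \ref{valuedet}, that $V$ is a deterministic functional. The argument is short and the main (minor) obstacle is handling the essential supremum correctly on an a.s.\ pointwise inequality.

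First I would fix $u(\cdot)\in \mathcal{U}[t,T]$ and deterministic $\gamma_t,\eta_t\in\Lambda_t$. Evaluating (\ref{fbjia41022}) at $s=t$ gives
$$
\mathbb{E}\bigl[|Y^{\gamma_t,u}(t)-Y^{\eta_t,u}(t)|^{2}\bigm|\mathcal{F}_t\bigr]\leq C_{3,1}\,\|\gamma_t-\eta_t\|_{0}^{2}.
$$
Since $Y^{\gamma_t,u}(t)$ and $Y^{\eta_t,u}(t)$ are $\mathcal{F}_t$-measurable, the conditional expectation on the left-hand side coincides with $|Y^{\gamma_t,u}(t)-Y^{\eta_t,u}(t)|^{2}$. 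Taking square roots and using that $\|\gamma_t-\eta_t\|_0$ is deterministic, one obtains the $P$-a.s.\ pointwise bound
$$
|Y^{\gamma_t,u}(t)-Y^{\eta_t,u}(t)|\leq C_{3,1}^{1/2}\,\|\gamma_t-\eta_t\|_{0}.
$$
Analogously, evaluating (\ref{fbjia51022}) at $s=t$ and using $(1+x^{2})^{1/2}\leq 1+x$, we get $|Y^{\gamma_t,u}(t)|\leq C_{3,1}^{1/2}(1+\|\gamma_t\|_0)$ a.s.

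The next step is to pass from these a.s.\ bounds to bounds on $V$. Rearranging,
$$
Y^{\gamma_t,u}(t)\leq Y^{\eta_t,u}(t)+C_{3,1}^{1/2}\,\|\gamma_t-\eta_t\|_{0}\leq \operatorname*{esssup}_{v\in\mathcal{U}[t,T]} Y^{\eta_t,v}(t)+C_{3,1}^{1/2}\,\|\gamma_t-\eta_t\|_{0} = V(\eta_t)+C_{3,1}^{1/2}\,\|\gamma_t-\eta_t\|_{0}\quad P\text{-a.s.},
$$
where in the last equality I invoke Theorem \ref{valuedet} to treat $V(\eta_t)$ as a deterministic constant. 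Taking the essential supremum over $u(\cdot)\in\mathcal{U}[t,T]$ on the left gives $V(\gamma_t)\leq V(\eta_t)+C_{3,1}^{1/2}\,\|\gamma_t-\eta_t\|_{0}$; by symmetry, the Lipschitz estimate in (\ref{valuelip}) follows. The growth bound $|V(\gamma_t)|\leq C_{3,1}^{1/2}(1+\|\gamma_t\|_0)$ follows in exactly the same way from the a.s.\ bound on $|Y^{\gamma_t,u}(t)|$ obtained above.

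The only genuinely delicate point is the swap between the almost-sure inequality and the essential supremum: this is standard but uses crucially that $V(\eta_t)$ is deterministic (via Theorem \ref{valuedet}) so that the inequality $Y^{\gamma_t,u}(t)\leq V(\eta_t)+C_{3,1}^{1/2}\|\gamma_t-\eta_t\|_0$ has a deterministic right-hand side, allowing the essential supremum on the left to be taken without measurability complications. Everything else reduces to reading off (\ref{fbjia41022}) and (\ref{fbjia51022}) at time $s=t$.
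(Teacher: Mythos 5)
Your proposal is correct and fills in exactly the details the paper leaves implicit when it declares the lemma ``an immediate consequence'' of Lemma~\ref{lemmaexist1022} and Theorem~\ref{valuedet}: read (\ref{fbjia41022}) and (\ref{fbjia51022}) at $s=t$, use $\mathcal{F}_t$-measurability of $Y^{\cdot,u}(t)$ to drop the conditional expectation, then pass the $P$-a.s.\ constant bound through the essential supremum using that $V$ is deterministic. One small point you pass over quickly: for the two-sided bound $|V(\gamma_t)|\leq C_{3,1}^{1/2}(1+\|\gamma_t\|_0)$, the upper bound comes from the esssup argument while the lower bound instead uses $V(\gamma_t)\geq Y^{\gamma_t,u}(t)\geq -C_{3,1}^{1/2}(1+\|\gamma_t\|_0)$ for any fixed $u$, but this is routine.
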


\par
We also have
\begin{lemma}\label{lemma3.6}
	For all $t\in[0,T]$, {$\xi_t\in L_{\mathcal{P}}^p(\Omega, \Lambda_t(H))$} for some $p>2$, and $u(\cdot)\in{\mathcal
		{U}}[t,T]$ we have
	\begin{eqnarray}\label{3.14}
	V(\xi_t)\geq Y^{\xi_t,u}(t),\
	\ P\mbox{-a.s.},
	\end{eqnarray}
	and for any $\varepsilon>0$ there exists an
	admissible control $u(\cdot)\in{\mathcal
		{U}}[t,T]$ such that
	\begin{eqnarray}\label{3.15}
	V(\xi_t)\leq Y^{\xi_t,u}(t)+\varepsilon\
	\ P\mbox{-a.s.}.
	\end{eqnarray}
\end{lemma}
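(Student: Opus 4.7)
The plan is to reduce to deterministic initial paths via the simple-path approximation used in the proof of Theorem~\ref{theoremj=y}. With $\{h^n_t\}_n$ dense in $\Lambda_t$ and the $\mathcal{F}_t$-measurable partition $A_{n,k}:=\{\xi_t\in B(h^n_t,1/k)\}\setminus\bigcup_{m<n}\{\xi_t\in B(h^m_t,1/k)\}$, I set $f^k_t:=\sum_{n}h^n_t\mathbf{1}_{A_{n,k}}$, which satisfies $\|\xi_t-f^k_t\|_0\leq 1/k$ pointwise in $\omega$. Two core tools are the a.s.\ Lipschitz estimate $|Y^{\xi_t,u}(t)-Y^{\xi'_t,u}(t)|\leq C_{3,1}^{1/2}\|\xi_t-\xi'_t\|_0$ from (\ref{0903jia1022}) and the deterministic Lipschitz bound $|V(\gamma_t)-V(\eta_t)|\leq C_{3,1}^{1/2}\|\gamma_t-\eta_t\|_0$ of Lemma~\ref{lemmavaluev}, both uniform in $u$.

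For (\ref{3.14}), fix $u\in\mathcal{U}[t,T]$. Since $h^n_t$ is deterministic, $V(h^n_t)$ is a constant by Theorem~\ref{valuedet}, and the essential-supremum definition yields $V(h^n_t)\geq Y^{h^n_t,u}(t)$ a.s. The strong-uniqueness gluing in the proof of Theorem~\ref{theoremj=y} gives $V(f^k_t)=\sum_n V(h^n_t)\mathbf{1}_{A_{n,k}}$ and $Y^{f^k_t,u}(t)=\sum_n Y^{h^n_t,u}(t)\mathbf{1}_{A_{n,k}}$, hence $V(f^k_t)\geq Y^{f^k_t,u}(t)$ a.s. Chaining with the two Lipschitz estimates and letting $k\to\infty$ yields (\ref{3.14}).

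For (\ref{3.15}), fix $\varepsilon>0$. For each $n$, the last paragraph of the proof of Theorem~\ref{valuedet} exhibits a sequence of $\mathcal{F}^t$-progressively measurable controls $\{\tilde u^n_i\}_i\subset\mathcal{U}[t,T]$ such that each $Y^{h^n_t,\tilde u^n_i}(t)$ is a scalar and $Y^{h^n_t,\tilde u^n_i}(t)\to V(h^n_t)$ as $i\to\infty$; I pick $u^n:=\tilde u^n_{i(n)}$ so that $V(h^n_t)\leq Y^{h^n_t,u^n}(t)+\varepsilon/2$, and fix $k\geq 4C_{3,1}^{1/2}/\varepsilon$. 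The glued control $u(s):=\sum_n u^n(s)\mathbf{1}_{A_{n,k}}$ is $\mathcal{F}_s$-progressively measurable because $A_{n,k}\in\mathcal{F}_t\subset\mathcal{F}_s$. By strong uniqueness of the FSEE--BSDE system applied pathwise on each $A_{n,k}$ (where both initial datum and control are constant there), $Y^{f^k_t,u}(t)=\sum_n Y^{h^n_t,u^n}(t)\mathbf{1}_{A_{n,k}}$, hence $V(f^k_t)\leq Y^{f^k_t,u}(t)+\varepsilon/2$ a.s. The Lipschitz estimates then upgrade this to $V(\xi_t)\leq Y^{\xi_t,u}(t)+\varepsilon$ a.s., as required.

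The main obstacle lies in part (ii): the $\varepsilon/2$-optimal control $u^n$ at each $h^n_t$ must produce a \emph{scalar} value $Y^{h^n_t,u^n}(t)$, not merely an $\mathcal{F}_t$-random one, so that the inequality $V(h^n_t)\leq Y^{h^n_t,u^n}(t)+\varepsilon/2$ survives the countable gluing across $\{A_{n,k}\}$ as a clean a.s.\ inequality. This forces a restriction to $\mathcal{F}^t$-progressively measurable controls and relies on the approximation scheme embedded in the proof of Theorem~\ref{valuedet}. Once this is secured, the strong-uniqueness gluing and the passage from $f^k_t$ back to $\xi_t$ through the two Lipschitz estimates are routine.
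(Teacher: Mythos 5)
Your proof is correct and follows essentially the same route as the paper: for (\ref{3.15}) the paper uses exactly the simple-path decomposition $f^k_t$, near-optimal controls at each $h^n_t$ (referenced to the proof of Theorem~\ref{valuedet}), the gluing $u(\cdot)=\sum_n v^n(\cdot)\mathbf{1}_{A_{n,k}}$, and the two Lipschitz bounds to pass back to $\xi_t$. For (\ref{3.14}) the paper states the inequality more tersely as a substitution $\eta_t\mapsto\xi_t$ via Theorem~\ref{theoremj=y}, which is really the same approximation argument you spell out explicitly; your version merely makes the unionization of the null sets over the countable partition visible.
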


\begin{proof}  From Theorem \ref{theoremj=y} and  the definition of $V(\gamma_t)$,  we have, for any $u(\cdot)\in{\mathcal
	{U}}[t,T]$,
$$
V(\xi_t)=V(\eta_t)|_{\eta_t=\xi_t}=\left[\mathop{\esssup}\limits_{v(\cdot)\in{\mathcal
		{U}}[t,T]}J(\eta_t,v(\cdot))\right]\bigg{|}_{\eta_t=\xi_t}\geq
J(\eta_t,u(\cdot))|_{\eta_t=\xi_t}=Y^{\xi_t,u}(t).
$$
We now prove (\ref{3.15}). Let $\{h^n_t\}$, $n\in N$,
be a dense subset of $\Lambda_t$, $B(h^n_t,\frac{1}{k})$
be the open sphere in $\Lambda_t$ with radius
$\frac{1}{k}$ and center $h^n_t$.  Set $B_{n,k}:=B(h^n_t,\frac{1}{k})\setminus
\bigcup_{m<n}B(h_m,\frac{1}{k})$. Then the
sequence $f^k_t(\omega):= \Sigma^{\infty}_{n=1}h^n_t1_{\{\xi_t\in
	B_{n,k}\}}(\omega), k\ge 1,$ is ${\mathcal
	{F}}_{t}$-measurable and
strongly converges to $\xi_t$,
uniformly. For {$k\varepsilon>3({1+C_{3,1}^{\frac{1}{2}}})$},  we have
$$
||f^k_t-\xi_t||_0\leq \frac{\varepsilon}{3(1+C_{3,1}^{\frac{1}{2}})}\leq \frac{\varepsilon}{3}.
$$
Therefore, for every $u(\cdot)\in {\mathcal {U}}[t,T]$, by (\ref{0903jia1022}) and (\ref{valuelip}),
$$
\left|Y^{f^k_t,u}(t)-Y^{\xi_t,u}(t)\right|\leq
\frac{\varepsilon}{3},
\ \
\left|V(f^k_t)-V(\xi_t)\right|\leq
\frac{\varepsilon}{3},\ P\mbox{-a.s.}.
$$
Moreover, for every $h^n_t\in \Lambda_t$,  as in the proof  of Theorem \ref{valuedet},
we  choose an admissible control
$v^n(\cdot)$ such that
$$
V(h^n_t)\leq
Y^{h^n_t, v^n}(t)+\frac{\varepsilon}{3}, \quad  P\mbox{-a.s.}.
$$
Set
$u(\cdot):=\sum^{\infty}_{n=1}v^n(\cdot)1_{\{\xi_t\in
	B_{n,k}\}}$.  Then,
\begin{eqnarray*}
	Y^{\xi_t,u}(t)&\geq&
	-|Y^{f^k_t,u}(t)-Y^{\xi_t,u}(t)|+Y^{f^k_t,u}(t)\geq-\frac{\varepsilon}{3}+\sum^{\infty}_{n=1}Y^{h^n_t,v^{n}}(t)1_{\{\xi_t\in
		B_{n,k}\}}\\
	&\geq&-\frac{2\varepsilon}{3}+\sum^{\infty}_{n=1}V(h^n_t)1_{\{\xi_t\in
		B_{n,k}\}}=-\frac{2\varepsilon}{3}+V(f^k_t)
	\geq-\varepsilon+V(\xi_t),  \ \  P\mbox{-a.s.}.
\end{eqnarray*}
Thus, we have (\ref{3.15}).
\end{proof}

\par
We now discuss the dynamic programming principle  (DPP) for the optimal control problem (\ref{state1}), (\ref{fbsde1}) and (\ref{value1}).
For this purpose, we define the family of backward semigroups associated with BSDE (\ref{fbsde1}), following the
idea of Peng \cite{peng11}.
\par
Given the initial path $(t,\gamma_t)\in [0,T)\times{\Lambda}$, a positive number $\delta\leq T-t$, an admissible control $u(\cdot)\in {\mathcal{U}}[t,t+\delta]$ and
a real-valued random variable $\zeta\in L^2(\Omega,{\mathcal{F}}_{t+\delta},P;\mathbb{R})$, we define
\begin{eqnarray}\label{gdpp}
G^{\gamma_t,u}_{s,t+\delta}[\zeta]:=\tilde{Y}^{\gamma_t,u}(s),\ \
\ \ \ \ s\in[t,t+\delta],
\end{eqnarray}
where $(\tilde{Y}^{\gamma_t,u}(s),\tilde{Z}^{\gamma_t,u}(s))_{t\leq s\leq
	t+\delta}$ is the solution of the following
BSDE:
\begin{eqnarray}\label{bsdegdpp}
\begin{cases}
d\tilde{Y}^{\gamma_t,u}(s) =-q(X^{\gamma_t,u}_s,\tilde{Y}^{\gamma_t,u}(s),\tilde{Z}^{\gamma_t,u}(s),u(s))ds\\
\ \ \ \ \ \ \ \ \ \  \ \ \ \ \ \ \ \ \ \ +\tilde{Z}^{\gamma_t,u}(s)dW(s), \quad s\in [t, t+\delta]; \\
\ \tilde{Y}^{\gamma_t,u}(t+\delta)=\zeta
\end{cases}
\end{eqnarray}
with $X^{\gamma_t,u}(\cdot)$ being the solution of FSEE (\ref{state1}).
%
%
%

\begin{theorem}\label{theoremddp} 
	Let  Assumption \ref{hypstate} (i), (ii) and Assumption \ref{hypcost} be satisfied. Then, the value functional
	$V$ satisfies the following: for
	any $\gamma_t\in {\Lambda_t}$ and $0\leq t<t+\delta\leq T$,
	\begin{eqnarray}\label{ddpG}
	V(\gamma_t)=\mathop{\esssup}\limits_{u(\cdot)\in{\mathcal
			{U}}[t,t+\delta]}G^{\gamma_t,u}_{t,t+\delta}[V(X^{\gamma_t,u}_{t+\delta})].
	\end{eqnarray}
\end{theorem}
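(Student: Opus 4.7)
The plan is to establish the dynamic programming identity by proving the two opposing inequalities,
following the template of Peng's backward-semigroup approach and relying crucially on
Theorem \ref{theoremj=y}, the comparison result Lemma \ref{lemma2.70904},
Lemma \ref{lemma3.6} together with its $\varepsilon$-optimality statement,
and the Lipschitz bound in Lemma \ref{lemmavaluev}.
First I would record the flow (concatenation) property: for any $u\in\mathcal{U}[t,T]$ and $s\in[t,t+\delta]$, the uniqueness part of Lemma \ref{lemmaexist} applied to the BSDE on the sub-interval $[s,t+\delta]$ with terminal datum $Y^{\gamma_t,u}(t+\delta)$ gives
\[
Y^{\gamma_t,u}(s)=G^{\gamma_t,u}_{s,t+\delta}\bigl[Y^{\gamma_t,u}(t+\delta)\bigr],
\]
and since the strong solution of FSEE \eqref{state1} started at $\gamma_t$ coincides on $[t+\delta,T]$ with the solution restarted at $X^{\gamma_t,u}_{t+\delta}$, we also have $Y^{\gamma_t,u}(t+\delta)=Y^{X^{\gamma_t,u}_{t+\delta},u}(t+\delta)=J(X^{\gamma_t,u}_{t+\delta},u(\cdot))$ by Theorem \ref{theoremj=y}.

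For the inequality $V(\gamma_t)\le \esssup_u G^{\gamma_t,u}_{t,t+\delta}[V(X^{\gamma_t,u}_{t+\delta})]$, I would pick any $u\in\mathcal{U}[t,T]$ and denote by $u_1$ its restriction to $[t,t+\delta]$. The identity above and Lemma \ref{lemma3.6} yield $J(X^{\gamma_t,u_1}_{t+\delta},u(\cdot))\le V(X^{\gamma_t,u_1}_{t+\delta})$ $P$-a.s., and the comparison theorem for BSDEs (Lemma \ref{lemma2.70904}) applied to \eqref{bsdegdpp} on $[t,t+\delta]$ upgrades this to
\[
Y^{\gamma_t,u}(t)=G^{\gamma_t,u_1}_{t,t+\delta}\bigl[J(X^{\gamma_t,u_1}_{t+\delta},u(\cdot))\bigr]\le G^{\gamma_t,u_1}_{t,t+\delta}\bigl[V(X^{\gamma_t,u_1}_{t+\delta})\bigr].
\]
Taking the essential supremum over $u\in\mathcal{U}[t,T]$ on the left and noting that the right-hand side only depends on $u_1\in\mathcal{U}[t,t+\delta]$ gives the first inequality.

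For the reverse inequality, fix $\varepsilon>0$ and any $u_1\in\mathcal{U}[t,t+\delta]$. Because $X^{\gamma_t,u_1}_{t+\delta}\in L^p_{\mathcal P}(\Omega,\Lambda_{t+\delta}(H))$ by Lemma \ref{lemmaexist0409}, Lemma \ref{lemma3.6} provides $\tilde u\in\mathcal{U}[t+\delta,T]$ with
$V(X^{\gamma_t,u_1}_{t+\delta})\le Y^{X^{\gamma_t,u_1}_{t+\delta},\tilde u}(t+\delta)+\varepsilon$ $P$-a.s. Concatenating $u:=u_1\mathbf 1_{[t,t+\delta]}+\tilde u\mathbf 1_{(t+\delta,T]}\in\mathcal{U}[t,T]$, the comparison theorem together with the BSDE stability estimate \eqref{fbjia4} (applied with the driver $q$ being $L$-Lipschitz in $y$) gives, for a constant $C$ depending only on $L$ and $T$,
\[
G^{\gamma_t,u_1}_{t,t+\delta}\bigl[V(X^{\gamma_t,u_1}_{t+\delta})\bigr]
\le G^{\gamma_t,u}_{t,t+\delta}\bigl[Y^{X^{\gamma_t,u_1}_{t+\delta},\tilde u}(t+\delta)\bigr]+C\varepsilon
= Y^{\gamma_t,u}(t)+C\varepsilon\le V(\gamma_t)+C\varepsilon,
\]
where the equality uses the flow identity and the last inequality uses \eqref{3.14}. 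Taking $\esssup$ over $u_1$ and letting $\varepsilon\downarrow 0$ completes the proof.

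The main technical obstacle is the construction of $\tilde u$ in the second step: the initial datum $X^{\gamma_t,u_1}_{t+\delta}$ is a random element of $\Lambda_{t+\delta}$, so the $\varepsilon$-optimal control must be chosen in an $\mathcal{F}_{t+\delta}$-measurable fashion uniformly over $\omega$. I would handle this exactly as in the proof of Theorem \ref{valuedet} and Lemma \ref{lemma3.6}: cover $\Lambda_{t+\delta}$ by small balls $B(h^n_{t+\delta},\frac1k)$, choose deterministic $\varepsilon/3$-optimal controls $v^n\in\mathcal{U}[t+\delta,T]$ for each $h^n_{t+\delta}$, paste them via indicator functions to obtain an admissible $\tilde u$, and then control the error by $\frac{\varepsilon}{3(1+C_{3,1}^{1/2})}$ using the Lipschitz estimate \eqref{0903jia1022} and \eqref{valuelip}. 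The sup-approximating sequence built in Theorem \ref{valuedet} (via a lattice argument in \eqref{10081118}) guarantees that the family $\{G^{\gamma_t,u}_{t,t+\delta}[V(X^{\gamma_t,u}_{t+\delta})]\}_{u\in\mathcal{U}[t,t+\delta]}$ is upward-directed, which legitimizes the $\esssup$ manipulations above.
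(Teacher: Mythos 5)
Your proposal follows essentially the same route as the paper's own argument: write $V$ via the flow/concatenation identity, use \eqref{3.14} plus the BSDE comparison theorem (Lemma \ref{lemma2.70904}) for the easy inequality, and use \eqref{3.15} to build an $\varepsilon$-optimal control on $[t+\delta,T]$, concatenate, and invoke BSDE stability to close the reverse inequality; you also correctly identify that the measurable-selection issue for $\tilde u$ is exactly what Lemma \ref{lemma3.6} already handles. The only minor difference is that the paper derives the stability bound $|G^{\gamma_t,u}_{t,t+\delta}[Y^{X^{\gamma_t,u}_{t+\delta},\tilde u}(t+\delta)]-G^{\gamma_t,u}_{t,t+\delta}[V(X^{\gamma_t,u}_{t+\delta})]|\le C\varepsilon$ via a conditional It\^o/Gronwall argument as in Lemma \ref{lemmaexist1022}, whereas you cite the $L^p$-estimate \eqref{fbjia4}; since the bound you want is pointwise $P$-a.s., you would in fact need the conditional version, but this is a cosmetic substitution rather than a gap.
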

\par

\begin{proof}
By the definition of $V(\gamma_t)$ we have
$$
V(\gamma_t)=\mathop{\esssup}\limits_{u(\cdot)\in{\mathcal
		{U}}[t,T]}G^{\gamma_t,u}_{t,T}[\phi(X^{\gamma_t,u}_T)]=\mathop{\esssup}\limits_{u(\cdot)\in{\mathcal
		{U}}[t,T]}G^{\gamma_t,u}_{t,t+\delta}\left[Y^{X^{\gamma_t,u}_{t+\delta},u}(t+\delta)\right].
$$
From (\ref{3.14}) and the comparison theorem (see Lemma \ref{lemma2.70904}), we have
$$
V(\gamma_t)\leq \mathop{\esssup}\limits_{u(\cdot)\in{\mathcal
		{U}}[t,T]}G^{\gamma_t,u}_{t,t+\delta}[V(X^{\gamma_t,u}_{t+\delta})].
$$
On the other hand, from (\ref{3.15}), for any
$\varepsilon>0$ and $u(\cdot)\in {\mathcal {U}}[t,T]$,  there
is an admissible control $\overline{u}(\cdot)\in{\mathcal
	{U}}[t+\delta,T]$ such that
$$
V(X^{\gamma_t,u}_{t+\delta})\leq Y^{X^{
		\gamma_t,u}_{t+\delta},\overline{u}}(t+\delta)+\varepsilon, \ \  P\mbox{-a.s.}
$$
For any $u(\cdot)\in {\mathcal {U}}[t,T]$ with $\overline{u}(\cdot)\in{\mathcal {U}}[t+\delta,T]$ from above, define
$$
v(s)= \begin{cases}u(s),\ \ \ \ \ \ s\in[t,t+\delta]; \\
\overline{u}(s),\ \ \ \ \ \
s\in(t+\delta,T];\end{cases}\in {\mathcal
	{U}}[t,T].
$$
Then, in a similar way to the proof of Lemma \ref{lemmaexist1022},  we have
\begin{eqnarray*}
	&&\left|G^{\gamma_t,u}_{t,t+\delta}\left[Y^{X^{\gamma_t,u}_{t+\delta},\overline{u}}(t+\delta)\right]
	-G^{\gamma_t,u}_{t,t+\delta}[V(X^{\gamma_t,u}_{t+\delta})]\right|\leq C\varepsilon,
\end{eqnarray*}
where the constant $C$ is independent of
admissible control processes.
Therefore,
\begin{eqnarray*}
	V(\gamma_t)&\geq&
	 G^{\gamma_t,v}_{t,t+\delta}\left[Y^{X^{\gamma_t,v}_{t+\delta},v}(t+\delta)\right]=G^{\gamma_t,u}_{t,t+\delta}\left[Y^{X^{\gamma_t,u}_{t+\delta},\overline{u}}(t+\delta)\right]\\
	&\geq&G^{\gamma_t,u}_{t,t+\delta}[V(X^{\gamma_t,u}_{t+\delta})]-C\varepsilon.
\end{eqnarray*}%
For the arbitrariness of $\varepsilon$, we have (\ref{ddpG}).
\end{proof}

From  Theorem \ref{theoremddp}, we  have

\begin{theorem}\label{theorem3.9}
	Under  Assumption \ref{hypstate} (i) and (ii) and Assumption \ref{hypcost}, the value functional $V\in C^0(\Lambda)$ and
	there is a constant $C>0$ such that for every  $0\leq t\leq s\leq T, \gamma_t,\eta_t\in{\Lambda_t}$,
	\begin{eqnarray}\label{hold}
	\ \ \ \ \      |V(\gamma_t)-V(\eta_{t,s}^A)|\leq
	C(1+||\gamma_t||_0+||\eta_t||_0)(s-t)^{\frac{1}{2}}+C||\gamma_t-\eta_t||_0.
	\end{eqnarray}
\end{theorem}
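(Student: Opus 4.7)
The plan is to first derive the H\"older-type bound \eqref{hold} and then deduce continuity of $V$ on $(\Lambda,d_\infty)$ from it together with Lemma \ref{lemmavaluev}.

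\textbf{Step 1: Reduction via the triangle inequality.} I will split
\[
|V(\gamma_t)-V(\eta_{t,s}^A)|\leq|V(\gamma_t)-V(\eta_t)|+|V(\eta_t)-V(\eta_{t,s}^A)|,
\]
and handle the first term by Lemma \ref{lemmavaluev}, which gives
$|V(\gamma_t)-V(\eta_t)|\leq C_{3,1}^{1/2}\|\gamma_t-\eta_t\|_0$. It thus remains to prove the ``pure time'' bound
$|V(\eta_t)-V(\eta_{t,s}^A)|\leq C(1+\|\eta_t\|_0)(s-t)^{1/2}$.

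\textbf{Step 2: DPP reformulation.} By Theorem \ref{theoremddp},
\[
V(\eta_t)=\mathop{\esssup}\limits_{u\in\mathcal{U}[t,s]}G^{\eta_t,u}_{t,s}[V(X^{\eta_t,u}_s)],
\]
while $V(\eta_{t,s}^A)$ is a deterministic constant. By Lemma \ref{lemma3.6}, to control the difference of an essential supremum and a constant it suffices to estimate, uniformly in $u(\cdot)\in\mathcal{U}[t,s]$,
\[
\Delta(u):=G^{\eta_t,u}_{t,s}[V(X^{\eta_t,u}_s)]-V(\eta_{t,s}^A).
\]

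\textbf{Step 3: BSDE stability estimate for $\Delta(u)$.} Let $(\tilde Y,\tilde Z)$ denote the solution of the BSDE \eqref{bsdegdpp} with terminal condition $\zeta=V(X^{\eta_t,u}_s)$, so that $\Delta(u)=\tilde Y(t)-V(\eta_{t,s}^A)$. Viewing $V(\eta_{t,s}^A)$ as the solution at time $t$ of the trivial BSDE with terminal value $V(\eta_{t,s}^A)$ and zero driver, applying the a-priori estimate \eqref{lemma2.51} from Lemma \ref{lemma2.5}, then taking expectations, I obtain a bound of the form
\[
|\Delta(u)|^2\leq C\,\mathbb{E}\bigl[|V(X^{\eta_t,u}_s)-V(\eta_{t,s}^A)|^2\bigr]+C\,\mathbb{E}\!\int_t^s |q(X^{\eta_t,u}_\sigma,V(\eta_{t,s}^A),0,u(\sigma))|^2 d\sigma.
\]

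\textbf{Step 4: Bounding the two right-hand terms.} For the first term, Lemma \ref{lemmavaluev} gives
$|V(X^{\eta_t,u}_s)-V(\eta_{t,s}^A)|\leq C_{3,1}^{1/2}\|X^{\eta_t,u}_s-\eta_{t,s}^A\|_0$, and since $X^{\eta_t,u}$ and $\eta_{t,s}^A$ coincide on $[0,t]$, this sup reduces to $\sup_{\sigma\in[t,s]}|X^{\eta_t,u}(\sigma)-e^{(\sigma-t)A}\eta_t(t)|$, which by the path version of estimate \eqref{2.6} in Lemma \ref{lemmaexist111} (applied with $p>2$ and then H\"older) is $O\bigl((1+\|\eta_t\|_0)(s-t)^{1/2}\bigr)$ in $L^2$. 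For the second term, the growth bound in Assumption \ref{hypcost} and the a-priori estimate \eqref{state1est} give
$\mathbb{E}|q|^2\leq C(1+\|\eta_t\|_0^2+|V(\eta_{t,s}^A)|^2)$, and using Lemma \ref{lemmavaluev} to bound $|V(\eta_{t,s}^A)|\leq C(1+\|\eta_t\|_0)$, this integral is $O\bigl((1+\|\eta_t\|_0^2)(s-t)\bigr)$. Combining gives $|\Delta(u)|\leq C(1+\|\eta_t\|_0)(s-t)^{1/2}$ uniformly in $u$, which together with Lemma \ref{lemma3.6} yields the desired bound on $|V(\eta_t)-V(\eta_{t,s}^A)|$.

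\textbf{Step 5: Continuity in $(\Lambda,d_\infty)$.} Given $\eta_s\to\gamma_t$ in $d_\infty$, say with $s\downarrow t$, I write
\[
|V(\gamma_t)-V(\eta_s)|\leq|V(\gamma_t)-V(\gamma_{t,s}^A)|+|V(\gamma_{t,s}^A)-V(\eta_s)|.
\]
The first term is controlled by \eqref{hold} (applied with $\eta_t=\gamma_t$) and tends to $0$, while the second is $\leq C_{3,1}^{1/2}\|\gamma_{t,s}^A-\eta_s\|_0\to0$ by Lemma \ref{lemmavaluev}. The case $s\uparrow t$ is analogous. Hence $V\in C^0(\Lambda)$.

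The main obstacle is Step 3--4: one must carefully interpret $V(\eta_{t,s}^A)$ as the value of a constant-terminal BSDE so that the BSDE stability estimate \eqref{lemma2.51} can be invoked to turn the DPP identity into a quantitative bound, and one needs the $L^2$-in-$\sup$ version of the state estimate \eqref{2.6} (rather than the pointwise-in-$s$ estimate) in order to bound $\mathbb{E}\|X^{\eta_t,u}_s-\eta_{t,s}^A\|_0^2$ by $(1+\|\eta_t\|_0^2)(s-t)$; this is standard and follows from the factorization method used in the proof of Lemma \ref{lemmaexist0409}.
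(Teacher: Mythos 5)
Your proof is correct and rests on the same ingredients as the paper's: DPP (Theorem \ref{theoremddp}), the BSDE a-priori estimate of Lemma \ref{lemma2.5}, the sup-norm state-process estimate of Lemma \ref{lemmaexist111}, and the Lipschitz bound for $V$ from Lemma \ref{lemmavaluev}. The organization differs in a small but genuine way: you first peel off the spatial increment $|V(\gamma_t)-V(\eta_t)|$ by the triangle inequality (handled by Lemma \ref{lemmavaluev}), reducing to the pure time-increment $|V(\eta_t)-V(\eta_{t,s}^A)|$, which you then control with a single application of the stability estimate \eqref{lemma2.51} against the trivial constant-terminal BSDE. The paper instead applies DPP starting from $\gamma_t$, picks an $\varepsilon$-optimal control, and splits the difference $G^{\gamma_t,u^\varepsilon}_{t,s}[V(X^{\gamma_t,u^\varepsilon}_s)]-V(\eta_{t,s}^A)$ into two pieces $I_1$ (terminal-condition stability of the BSDE via Lemmas \ref{lemma2.5} and \ref{lemmaexist111}) and $I_2$ (the explicit driver integral, bounded via Cauchy--Schwarz), thereby treating both spatial and temporal increments at once inside $I_1$. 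Your variant is slightly cleaner because the spatial and temporal contributions are fully decoupled before any BSDE analysis is invoked, and your single use of \eqref{lemma2.51} absorbs the work of the paper's $I_1$ and $I_2$ simultaneously, at the cost of producing the time-term growth in $\|\eta_t\|_0$ only rather than $\|\gamma_t\|_0+\|\eta_t\|_0$ (which is fine, since your bound is actually stronger). One bookkeeping correction: the ``$L^2$-in-sup'' state estimate you invoke for $\mathbb{E}\|X^{\eta_t,u}_s-\eta_{t,s}^A\|_0^2$ is not a new path-version of \eqref{2.6} needing a fresh factorization argument, but is already available as estimate \eqref{0604} of Lemma \ref{lemmaexist111} specialized to $\xi'_t=\xi_t=\eta_t$ and $v(\cdot)=u(\cdot)$ (which annihilates the control-difference integral), restricted to $[0,s]$ and passed from $L^p$ ($p>2$) to $L^2$ by Jensen.
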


\begin{proof}
Let $(t,\gamma_t,\eta_t)\in[0,T)\times {\Lambda}\times {\Lambda}$ and $s\in[t,T]$.
From Theorem \ref{theoremddp} it follows that for any
$\varepsilon>0$ there exists an admissible control
$u^\varepsilon(\cdot)\in{\mathcal {U}}[t,s]$ such that
\begin{eqnarray}\label{3.19}
G^{\gamma_t,u^\varepsilon}_{t,s}[V(X^{\gamma_t,u^\varepsilon}_{s})]
+\varepsilon\geq V(\gamma_t)\geq G^{\gamma_t,u^\varepsilon}_{t,s}[V(X^{\gamma_t,u^\varepsilon}_{s})].
\end{eqnarray}
Therefore,
\begin{eqnarray}\label{3.20}
|V(\gamma_t)-V(\eta_{t,s}^A)|\leq
|I_1|+|I_2|+\varepsilon,
\end{eqnarray}
where
$$
I_1=\mathbb{E}G^{\gamma_t,u^\varepsilon}_{t,s}[V(X^{\gamma_t,u^\varepsilon}_{s})]-
\mathbb{E}G^{\gamma_t,u^\varepsilon}_{t,s}[V(\eta_{t,s}^A)], \quad
I_2=\mathbb{E}G^{\gamma_t,u^\varepsilon}_{t,s}[V(\eta_{t,s}^A)]-
V(\eta_{t,s}^A).
$$
We have from Lemmas \ref{lemma2.5}, \ref{lemmaexist111} and \ref{lemmavaluev}  that, for some suitable constant
$C$ that is independent of the control
$u^\varepsilon(\cdot)$ and may change from line to line,
\begin{eqnarray*}
	|I_1|&\leq& C\mathbb{E}\left|V\left(X^{\gamma_t,u^\varepsilon}_{s}\right)-V(\eta_{t,s}^A)\right|\\
	&\leq&C\mathbb{E}\left|\left|X^{\gamma_t,u^\varepsilon}_{s}-\eta_{t,s}^A\right|\right|_0\leq C(1+||\gamma_t||_0)(s-t)^{\frac{1}{2}}+C||\gamma_t-\eta_t||_0.
\end{eqnarray*}
From the definition of
$G^{\gamma_t,u^\varepsilon}_{t,s}[\cdot]$, the
second term $I_2$ can be written as
\begin{eqnarray*}
	I_2&=& \mathbb{E}\bigg{[}V(\eta_{t,s}^A)+\int^{s}_{t}
	q\left(X_\sigma^{\gamma_t,u^{\varepsilon}},Y^{\gamma_t,u^{\varepsilon}}(\sigma),Z^{\gamma_t,u^{\varepsilon}}(\sigma),u^{\varepsilon}(\sigma)\right)d\sigma\\
	&&-
	\int^{s}_{t}Z^{\gamma_t,u^{\varepsilon}}(\sigma)dW(\sigma)\bigg{]}-V(\eta_{t,s}^A)\\
	&=&\mathbb{E}\int^{s}_{t}
	q\left(X_\sigma^{\gamma_t,u^{\varepsilon}},Y^{\gamma_t,u^{\varepsilon}}(\sigma),Z^{\gamma_t,u^{\varepsilon}}(\sigma),u^{\varepsilon}(\sigma)\right)d\sigma,
\end{eqnarray*}
where $(Y^{\gamma_t,u^{\varepsilon}}(\sigma),Z^{\gamma_t,u^{\varepsilon}}(\sigma))_{t\leq \sigma\leq
	s}$ is the solution of
(\ref{bsdegdpp}) with the terminal
condition $\eta = V(\eta_{t,s}^A)$
and the control $u^\varepsilon(\cdot)$.
Using Schwartz inequality, we have
\begin{eqnarray*}
	|I_2|&\leq& (s-t)^{\frac{1}{2}}\bigg{[}\int^{s}_{t}
	 \mathbb{E}\left|q\left(X_\sigma^{\gamma_t,u^{\varepsilon}},Y^{\gamma_t,u^{\varepsilon}}(\sigma),Z^{\gamma_t,u^{\varepsilon}}(\sigma),u^{\varepsilon}(\sigma)\right)\right|^2d\sigma
	\bigg{]}^{\frac{1}{2}}\\
	&\leq&C(s-t)^{\frac{1}{2}}\bigg{[}\int^{s}_{t}
	 \mathbb{E}\left(1+\left|\left|X_\sigma^{\gamma_t,u^{\varepsilon}}\right|\right|_0^{2}+\left|Y^{\gamma_t,u^{\varepsilon}}(\sigma)\right|^2+\left|Z^{\gamma_t,u^{\varepsilon}}(\sigma)\right|^2\right)d\sigma
	\bigg{]}^{\frac{1}{2}}\\
	&\leq&C(1+||\gamma_t||_0+||\eta_t||_0)(s-t)^{\frac{1}{2}}.
\end{eqnarray*}
Hence, from (\ref{3.20}),
$$
|V(\gamma_t)-V(\eta_{t,s}^A)|\leq
C(1+||\gamma_t||_0+||\eta_t||_0)(s-t)^{\frac{1}{2}}+C||\gamma_t-\eta_t||_0
+\varepsilon,
$$
and letting $\varepsilon\downarrow
0$,  we have   (\ref{hold}).
\par
Finally, let $(t,\gamma_t), (s,\gamma'_s)\in[0,T]\times {\Lambda}$ and $s\in[t,T]$, by (\ref{valuelip}) and (\ref{hold})
\begin{eqnarray*}
	&&|V(\gamma_t)-V(\gamma'_s)|\leq  |V(\gamma_t)-V(\gamma_{t,s}^A)|+ |V(\gamma_{t,s}^A)-V(\gamma'_s)|\\
	&\leq&  C(1+||\gamma_t||_0)(s-t)^{\frac{1}{2}}+Cd_\infty(\gamma_{t},\gamma'_s).
\end{eqnarray*}
Thus, $V\in C^0(\Lambda)$.
The proof is complete.
\end{proof}

\newpage
\section{Viscosity solutions to  PHJB equations: existence}

\par
In this chapter, we consider the  second  order path-dependent  Hamilton-Jacobi-Bellman
(PHJB) equation (\ref{hjb1}). As usual, we begin with classical solutions.
\par
\begin{definition}\label{definitionccc}     (Classical solution)
	A functional $v\in C_p^{1,2}({\Lambda})$  with  $A^*\partial_xv\in C_p^0(\hat{\Lambda},H)$    is called a classical solution to  PHJB equation (\ref{hjb1}) if it satisfies
	PHJB equation (\ref{hjb1})  {point-wisely}.
\end{definition}
\par
We shall prove that the value functional $V$ defined by (\ref{value1}) is a viscosity solution of PHJB equation (\ref{hjb1}).
{ First, we give the following definition.
\begin{definition}\label{definitionc202402061}
	Let $t\in[0,T)$ and $g:\hat{\Lambda}^t\rightarrow \mathbb{R}$ be given, and $A$ satisfy Assumption \ref{hypstate} (i').
		  We say $g\in C^{1,2}_{p,A-}(\hat{\Lambda}^t)\subset C^0_p(\hat{\Lambda}^t)$ if   there exist $\partial_{x}g\in  C^0_p(\hat{\Lambda}^t,H)$ and $\partial_{xx}g\in C^0_p(\hat{\Lambda}^t,\mathcal{S}(H))$  such that, for the solution $X$ of (\ref{formular1}) with initial condition $(t,\gamma_{t})\in [0,T)\times \Lambda_{{t}}$,
\begin{eqnarray}\label{statesop020240206}
\qquad&	\begin{aligned}
	g(X_s)\leq&g(X_{t})+\int_{t}^{s}\biggl[\langle \partial_xg(X_\sigma), \, \vartheta(\sigma)\rangle_H+\frac{1}{2}\mbox{\rm Tr}(\partial_{xx}g(X_\sigma)(\varpi\varpi^*)(\sigma))\biggr]d\sigma\\
	&+\int^{s}_{t}\langle \partial_xg(X_\sigma), \, \varpi(\sigma)dW(\sigma)\rangle_H.
	\end{aligned}
	\end{eqnarray}
\end{definition}}
{
Define, for all $t\in[0,T)$,
$$
\Phi_t:=\left\{\varphi\in C_p^{1,2}({\hat{\Lambda}^t}): A^*\partial_x\varphi\in C_p^0(\hat{\Lambda}^t,H)\right\}
$$
and
\begin{eqnarray*}
	\begin{aligned}
		{\mathcal{G}}_t:=&\bigg{\{}g\in C^{0}_p({\hat\Lambda^t}):  \exists \  (h_i, g_i)\in C^1([0,T];\mathbb{R})\times C^{1,2}_{p,A-}(\hat{\Lambda}^t),\, i=1,\ldots, \mathbf{N}  \\
      & \mbox{such that for all} \ \gamma_s\in \Lambda^t, \,  g(\gamma_s)=\sum^{\mathbf{N}}_{i=1}h_i(s)g_i(\gamma_s)
		\bigg{\}}.
	\end{aligned}
\end{eqnarray*}}

{ For $g\in {\mathcal{G}}_t$,
we write
\begin{eqnarray*}
	\partial_t^og(\gamma_s)&:=&\sum^{\mathbf{N}}_{i=1}h_i'(s)g_i(\gamma_s).
\end{eqnarray*}}
{
\begin{remark}\label{remarkv0129120240206}
	By Lemma \ref{theoremito2}, for   $A$ satisfying Assumption \ref{hypstate} (i') and every $(t,\gamma_{t})\in [0,T)\times \Lambda_{{t}}$, the three path functionals $\Upsilon^{m,M}({\eta}_s-\gamma_{t,s}^A)\in C^{1,2}_{p,A-}(\hat{\Lambda}^t)$ for all $m\geq 2$ and $M\geq3$, $\Upsilon^{\varepsilon}({\eta}_s-\gamma_{t,s}^A)\in C^{1,2}_{p,A-}(\hat{\Lambda}^t)$ for $\varepsilon>0$, and $|\eta(s)-e^{(s-t)A}\gamma_{t}(t)|^{2m}\in C^{1,2}_{p,A-}(\hat{\Lambda}^t)$ for all $m\geq 1$. Therefore, for $h\in C^1([0,T];\mathbb{R})$, $\delta, \delta_i,\delta'_i\geq0, N>0$, and $(\gamma_{t},\gamma^i_{t_i})\in \Lambda\times \Lambda,
		i=0,1,2,\ldots,$ such that $ t_i\leq t,   h \geq0$ and $\sum_{i=0}^{\infty}(\delta_i+\delta'_i)\leq N$, the path  functional :
$$h(s)\Upsilon(\eta_s)+\delta|\eta_s(s)-e^{(s-t)A}\gamma_{t}(t)|^{2}
+\sum_{i=0}^{\infty}[\delta_i{\Upsilon}(\eta_s-(\gamma^i)_{t_i,s}^A)
		+\delta^{'}_i|s-t_i|^2
		], \quad \eta_s\in \hat\Lambda^t, $$   which arises in the proof of the comparison theorem,  belongs to ${\mathcal{G}}_t$.
\end{remark}}
Now,  we define our notion of  viscosity solutions.

\begin{definition}\label{definition4.1} A path functional
	$w\in C^0({\Lambda})$ is called a
	viscosity sub-solution (resp., super-solution)
	to PHJB equation (\ref{hjb1}) if the terminal condition,  $w(\gamma_T)\leq \phi(\gamma_T)$(resp., $w(\gamma_T)\geq \phi(\gamma_T)$),
	$\gamma_T\in {\Lambda}_T$ is satisfied, and for any $(\varphi, g)\in \Phi_t\times {\mathcal{G}}_t$ with $t\in [0,T)$, whenever the function $w-\varphi-g$  (resp.,  $w+\varphi+g$) satisfies for $\gamma_t\in \Lambda$,
	$$
	0=({w}-\varphi-g)(\gamma_t)=\sup_{(s,\eta_s)\in [t,T]\times\Lambda^t}
	({w}- \varphi-g)(\eta_s)
	$$
	$$
	\left(\mbox{resp.,}\ \
	0=({w}+\varphi+g)(\gamma_t)=\inf_{(s,\eta_s)\in [t,T]\times\Lambda^t}
	({w}+\varphi+g)(\eta_s)\right),
	$$
	we have
	\begin{eqnarray*}
		&&\partial_t\varphi(\gamma_t)+{ \partial_t^o}g(\gamma_t)+\langle A^*\partial_x\varphi(\gamma_t), \, \gamma_t(t)\rangle_H\\[3mm]
		&&
		 +\, {\mathbf{H}}(\gamma_t,\, (\varphi+g)(\gamma_t),\, \partial_x(\varphi+g)(\gamma_t),\, \partial_{xx}(\varphi+g)(\gamma_t))\geq0
	\end{eqnarray*}
	\begin{eqnarray*}\begin{aligned}
			\biggl(\mbox{resp.,} &\, -\partial_t\varphi(\gamma_t)-{ \partial_t^o}g(\gamma_t)-\langle A^*\partial_x\varphi(\gamma_t),\,  \gamma_t(t)\rangle_H\\[3mm]
			 &+\,{\mathbf{H}}(\gamma_t,\,-(\varphi+g)(\gamma_t),\,-\partial_x(\varphi+g)(\gamma_t),\,-\partial_{xx}(\varphi+g)(\gamma_t))
			\leq0\biggr).
		\end{aligned}
	\end{eqnarray*}
	A path functional   $w\in C^0({\Lambda})$ is said to be a
	viscosity solution to PHJB equation (\ref{hjb1}) if it is
	both a viscosity sub- and
	super-solution.
\end{definition}

\begin{remark}\label{remarkv}
	Assume that all the coefficients have the following state-dependent structure: there are suitable smooth functions $\overline{F}, \overline{G},  \overline{q}$ and $\overline{\phi}$ such that $F(\gamma_t,u)=\overline{F}(t,\gamma_t(t),u), G(\gamma_t,u)=\overline{G}(t,\gamma_t(t),u)$,
	$q(\gamma_t,y,$ $z,u)=\overline{q}(t,\gamma_t(t),y,z,u)$,  and $
	\phi(\eta_T)=\overline{\phi}(\eta_T(T))$ for all
	$(t,\gamma_t,y,z,u) \in [0,T]\times{\Lambda}\times \mathbb{R}\times \Xi\times U$ and $\eta_T\in \Lambda_T$. Then
	there exists a function $\overline{V}:[0,T]\times H\rightarrow \mathbb{R}$ such that $V(\gamma_t)=\overline{V}(t,\gamma_t(t))$ for all $(t,\gamma_t)\in [0,T]\times \Lambda$, and PHJB equation (\ref{hjb1}) reduces to the following HJB equation:
	\begin{eqnarray}\label{hjb3}
	\qquad
	\begin{cases}
	\overline{V}_{t^+}(t,x)+\langle A^*\nabla_x\overline{V}(t,x), \, x\rangle_H\\
	\qquad+\overline{{\mathbf{H}}}(t, x, \overline{V}(t,x), \nabla_x\overline{V}(t,x), \nabla^2_x\overline{V}(t,x))= 0, \  (t, x)\in
	[0,T)\times H;\\[2mm]
	\overline V(T,x)=\overline{\phi}(x), \ \ \ x\in H,
	\end{cases}
	\end{eqnarray}
	where
	\begin{eqnarray*}
		&& \overline{{\mathbf{H}}}(t,x,r,p,l)\\&:=&\sup_{u\in{
				{U}}}\left\{\langle p, \overline{F}(t,x,u)\rangle_{H} +\frac{1}{2}\mbox{Tr}[ l \overline{G}(t,x,u)\overline{G}^*(t,x,u)]+\overline{q}(t,x,r,p\overline{G}(t,x,u),u)\right\}, \\[3mm]
		&&\quad \quad  (t,x,r,p,l)\in [0,T]\times H\times \mathbb{R}\times H\times {\mathcal{S}}(H).
	\end{eqnarray*}
Here, $\overline{V}_{t^+}$ is the right time-derivative of $\overline{V}$.
\end{remark}
\begin{remark}\label{remarkv12241}
Assuming the $B$-continuity  on the coefficients, Fabbri et al. \cite{fab1} studied  viscosity solution of  HJB equation (\ref{hjb3}). The additive  term $|\cdot|_H$ in a test function  will help   to ensure inequality (1.111) in~\cite[page 84]{fab1}, for  $\langle Ax, x\rangle_H\leq 0$ for all $x\in {\mathcal{D}}(A)$ when $A$ is the generator of a $C_0$ contraction semi-group.  While the inequality $\langle Ax, x-y\rangle_H\leq 0$ for all $x\in {\mathcal{D}}(A)$ fails to be true anymore for $y\ne0$,  and thus the introduction of the term $|\cdot-y|_H$ in a test function for fixed $y\in H$ will incur a trouble to ensure  inequality (1.111) in~\cite[page 84]{fab1}. This explains why the coefficients are assumed to satisfy  the $B$-continuity condition in~\cite[(3.21) and (3.22), page 184]{fab1}) to ensure
 the $B$-continuity of the value functional  and  the comparison theorem.
\end{remark}

\begin{remark}\label{remarkv12242}
	As noted in Remark \ref{remarks},  the term $\Upsilon^{m,M}(\gamma_t-a_{\hat{t},t}^A)$ with $a_{\hat{t}}\in \Lambda$ can enter into the test functions for our viscosity solutions,  for it satisfies functional It\^o inequality (\ref{jias510815jia}).
	This property  is pivotal  together with the equivalence of   functional $\Upsilon^{m,M}$  to $||\cdot||_0^{2m}$ for $M\geq3$ (see Lemma \ref{theoremS}).
	With help of these properties, the conventional strong path-continuity  of the value functional with respect to  the norm $||\cdot||_0$ is sufficient here  and the  much stronger $B$-continuity of the value functional is not appealed to  anymore. This explains why we can remove the  $B$-continuity assumption on the coefficients, introduced by Lions and having been existing up to now in the literature. In particular, the comparison theorem is  established without assuming the $B$-continuity  on the coefficients.
\end{remark}

\begin{theorem}\label{theoremvexist}
	Let Assumptions \ref{hypstate} and  \ref{hypcost}  be satisfied. Then,  the value
	functional $V$ defined by (\ref{value1}) is a
	viscosity solution to equation (\ref{hjb1}).
\end{theorem}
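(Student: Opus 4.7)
The plan is to verify each clause of Definition~\ref{definition4.1}; continuity $V\in C^0(\Lambda)$ was already established in Theorem~\ref{theorem3.9}, and when $t=T$ the BSDE (\ref{fbsde1}) degenerates to $Y^{\gamma_T,u}(T)=\phi(\gamma_T)$, so $V(\gamma_T)=\phi(\gamma_T)$, giving the terminal condition.

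To establish the sub-solution property, I fix $(\varphi,g)\in\Phi_t\times\mathcal{G}_t$ and $\gamma_t\in\Lambda_t$ at which $V-\varphi-g$ attains its global maximum $0$ on $[t,T]\times\Lambda^t$, and argue by contradiction: suppose there exists $c>0$ with
\[
\partial_t\varphi(\gamma_t)+\partial_t^o g(\gamma_t)+\langle A^*\partial_x\varphi(\gamma_t),\gamma_t(t)\rangle_H+{\mathbf{H}}(\gamma_t,(\varphi+g)(\gamma_t),\partial_x(\varphi+g)(\gamma_t),\partial_{xx}(\varphi+g)(\gamma_t))\le -3c.
\]
Since ${\mathbf{H}}$ is a supremum over $u\in U$, the same inequality (with ${\mathbf{H}}$ replaced by the $u$-integrand) holds for every $u\in U$. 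The joint continuity of $F,G,q$ in $(\gamma_t,u)$, continuity of the vertical and horizontal derivatives of $\varphi$ and $g$, and the path-regularity estimate from Lemma~\ref{lemmaexist111} together allow me to choose $\delta\in(0,T-t)$ so small that for every $u(\cdot)\in\mathcal{U}[t,t+\delta]$ the analogous pointwise inequality persists along $X^{\gamma_t,u}$ on $[t,t+\delta]$, with right-hand side $-c$.

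Next, I apply the functional It\^o formula (Lemma~\ref{theoremito3}) to $\varphi$ along $X^{\gamma_t,u}$, together with the one-sided functional It\^o inequality (\ref{statesop020240206}) applied term by term to the product-form representation of $g\in\mathcal{G}_t$, combined by the smooth product rule in time. Summing yields
\begin{eqnarray*}
(\varphi+g)(X^{\gamma_t,u}_{t+\delta})&\le&(\varphi+g)(\gamma_t)+\int_t^{t+\delta}\bigl[-q(X^{\gamma_t,u}_\sigma,(\varphi+g)(X^{\gamma_t,u}_\sigma),\widehat Z(\sigma),u(\sigma))-c\bigr]d\sigma\\
&&+\int_t^{t+\delta}\langle \partial_x(\varphi+g)(X^{\gamma_t,u}_\sigma),G(X^{\gamma_t,u}_\sigma,u(\sigma))dW(\sigma)\rangle_H,
\end{eqnarray*}
with $\widehat Z(\sigma):=\partial_x(\varphi+g)(X^{\gamma_t,u}_\sigma)G(X^{\gamma_t,u}_\sigma,u(\sigma))$. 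Now the DPP (Theorem~\ref{theoremddp}) gives $V(\gamma_t)=\esssup_{u(\cdot)\in\mathcal{U}[t,t+\delta]}G^{\gamma_t,u}_{t,t+\delta}[V(X^{\gamma_t,u}_{t+\delta})]$, and since $V\le\varphi+g$ pathwise, the BSDE comparison theorem (Lemma~\ref{lemma2.70904}) yields $G^{\gamma_t,u}_{t,t+\delta}[V(X^{\gamma_t,u}_{t+\delta})]\le G^{\gamma_t,u}_{t,t+\delta}[(\varphi+g)(X^{\gamma_t,u}_{t+\delta})]=:\tilde Y^{\gamma_t,u}(t)$. A second application of Lemma~\ref{lemma2.70904}, comparing the BSDE defining $\tilde Y^{\gamma_t,u}$ with the super-solution-in-the-inequality-sense $s\mapsto(\varphi+g)(X^{\gamma_t,u}_s)$ (whose driver carries the extra term $-c$), then yields
\[
\tilde Y^{\gamma_t,u}(t)\le(\varphi+g)(\gamma_t)-c_1\delta=V(\gamma_t)-c_1\delta
\]
for some $c_1>0$ depending only on $c,L,T$, uniformly in $u(\cdot)\in\mathcal{U}[t,t+\delta]$. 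Taking $\esssup$ contradicts the DPP, so the sub-solution inequality holds.

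The super-solution half is dual: from $0=(V+\varphi+g)(\gamma_t)=\inf(V+\varphi+g)$, failure of the PHJB super-inequality produces some $u^0\in U$ and $c>0$ for which the $u^0$-integrand strictly exceeds the linear part by $c$; freezing the control at $u(\sigma)\equiv u^0$, using $V\ge -(\varphi+g)$, applying the functional It\^o formula and the one-sided It\^o inequality in the opposite direction, and combining the $\varepsilon$-optimal selection from Lemma~\ref{lemma3.6} with the DPP yields the symmetric contradiction. The principal obstacle throughout is the asymmetric, one-sided nature of (\ref{statesop020240206}) enjoyed by elements of $\mathcal{G}_t$: the sign convention in Definition~\ref{definition4.1}, in which both $\varphi$ and $g$ are flipped simultaneously in the super-solution case, is precisely what guarantees that the direction of this It\^o inequality aligns with the direction of the BSDE comparison needed in each case. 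Once this bookkeeping is respected, the remaining work is a standard Peng-type BSDE-comparison argument, and the uniform selection of $\delta$ in the final contradiction is handled by Assumption~\ref{hypcost} and Lemma~\ref{lemmaexist111}.
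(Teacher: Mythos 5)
Your plan shares with the paper the use of the DPP, the functional It\^o formula for $\varphi$, the one-sided It\^o inequality (\ref{statesop020240206}) for $g$, and BSDE comparison. However, the sub-solution argument has a genuine gap in the step where you pass from the strict inequality at the point $\gamma_t$ to a strict inequality \emph{along the controlled trajectory}. You assert that continuity of the coefficients, continuity of the derivatives of $\varphi,g$, and Lemma~\ref{lemmaexist111} allow one to choose $\delta>0$ so small that ``for every $u(\cdot)\in\mathcal{U}[t,t+\delta]$ the analogous pointwise inequality persists along $X^{\gamma_t,u}$ on $[t,t+\delta]$.'' This is not true: Lemma~\ref{lemmaexist111} gives only $L^p$ estimates on the distance of $X^{\gamma_t,u}_\sigma$ from $\gamma_t$. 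For any $\delta>0$, the stochastic integral contribution to $X^{\gamma_t,u}$ is unbounded, so the path can leave any fixed $d_\infty$-neighborhood of $\gamma_t$ with positive probability; there is no pathwise bound uniform in $\omega$. Consequently the display you write for $(\varphi+g)(X^{\gamma_t,u}_{t+\delta})$, which requires the driver to carry a pointwise $-c$ over all of $[t,t+\delta]$ almost surely, is not justified, and the subsequent BSDE comparison step built on it collapses. The same issue recurs in your super-solution half.

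The paper's proof avoids this entirely: it does not argue by contradiction and never needs a pathwise drift bound. Instead, after applying the It\^o formula/inequality it keeps the full trajectory-dependent operator $\mathcal{L}(\varphi+g)(X_\sigma,u(\sigma))$, introduces the linearization weight $\Gamma^{\hat t,\delta}$, and decomposes the resulting quantity into five terms $I_1,\dots,I_5$. The inequality $V\le\varphi+g$ forces $I_1\le 0$; the sup over controls gives $I_2\le$ the Hamiltonian expression evaluated at $\hat\gamma_{\hat t}$; and $I_3,I_4,I_5$ are shown to vanish \emph{in expectation} as $\delta\to 0$, using dominated convergence together with the $L^p$ bound of Lemma~\ref{lemmaexist111} and the polynomial growth of $\mathcal{L}(\varphi+g)$. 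This is precisely the device that substitutes for the pathwise bound you invoke. Your contradiction argument could in principle be repaired by a stopping-time localization --- stop $X^{\gamma_t,u}$ upon exit from a small $d_\infty$-ball around $\gamma_t$ and control the exceptional event using $V\le\varphi+g$ --- but as written the argument has a hole exactly where a direct estimate-in-expectation is needed.
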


\begin{proof}
First, let  $(\varphi, g)\in \Phi_{\hat{t}}\times  {\mathcal{G}}_{\hat{t}}$ with $\hat{t}\in [0,T)$
such that for  some $\hat{\gamma}_{\hat{t}}\in \Lambda$,
\begin{eqnarray}\label{0714}
0=(V-\varphi-g)(\hat{\gamma}_{\hat{t}})=\sup_{(s,\eta_s)\in [\hat{t},T]\times\Lambda^{\hat{t}}}
(V- \varphi-g)(\eta_s).
\end{eqnarray}
Thus,  by the DPP of Theorem \ref{theoremddp}, we obtain that, for all ${\hat{t}}\leq {\hat{t}}+\delta\leq T$,
\begin{eqnarray}\label{4.9}
&& 0=V(\hat{\gamma}_{\hat{t}})-({{\varphi}}+g) (\hat{\gamma}_{\hat{t}})
=\mathop{\esssup}\limits_{u(\cdot)\in {\mathcal{U}}[\hat{t},\hat{t}+\delta]} G^{\hat{\gamma}_{\hat{t}},u}_{{\hat{t}},\hat{t}+\delta}\left[V\left(X^{\hat{\gamma}_{\hat{t}},u}_{{\hat{t}}+\delta}\right)\right]
-({{\varphi}}+g) (\hat{\gamma}_{\hat{t}}).
\end{eqnarray}
Then, for any $\varepsilon>0$ and $0<\delta\leq T-\hat{t}$,  we can  find a control  $
u^{{\varepsilon},\delta}(\cdot)\in {\mathcal{U}}[\hat{t},\hat{t}+\delta]$ such
that
\begin{eqnarray}\label{4.10}
-{\varepsilon}\delta
\leq G^{\hat{\gamma}_{\hat{t}},u^{{\varepsilon},\delta}}_{{\hat{t}},\hat{t}+\delta}\left[V\left(X^{\hat{\gamma}_{\hat{t}},u^{{\varepsilon},\delta}}_{{\hat{t}}+\delta}\right)\right]-({{\varphi}}+g) (\hat{\gamma}_{\hat{t}}).
\end{eqnarray}
We note that the process
$\left\{G^{\hat{\gamma}_{\hat{t}},u^{{\varepsilon},\delta}}_{s,\hat{t}+\delta}\left[V\left(X^{\hat{\gamma}_{\hat{t}},u^{{\varepsilon},\delta}}_{{\hat{t}}+\delta}\right)\right],  \, s\in[\hat{t},\hat{t}+\delta]\right\}$
is the first component of  the solution $\left(Y^{\hat{\gamma}_{\hat{t}},u^{{\varepsilon},\delta}}, \,  Z^{\hat{\gamma}_{\hat{t}},u^{{\varepsilon},\delta}}\right)$ of the following
BSDE
\begin{eqnarray}\label{bsde4.10}
&\begin{cases}
dY(s) =&
-q\left(X^{\hat{\gamma}_{\hat{t}},u^{{\varepsilon},\delta}}_s,\, Y(s),\,  Z(s), \, u^{{\varepsilon},\delta}(s)\right)\, ds\\[1mm]
&\quad +Z(s)\, dW\left(s\right), \quad   s\in[\hat{t},\hat{t}+\delta]; \\[2mm]
Y(\hat{t}+\delta)=&V\left(X^{\hat{\gamma}_{\hat{t}},
	u^{{\varepsilon},\delta}}_{{\hat{t}}+\delta}\right).
\end{cases}
\end{eqnarray}
Applying functional It\^{o} formula (\ref{statesop03}) to ${\varphi}\left(X^{\hat{\gamma}_{\hat{t}},u^{{\varepsilon},\delta}}_s\right)$ and
 {inequality (\ref{statesop020240206})} 
to
$g\left(X^{\hat{\gamma}_{\hat{t}},u^{{\varepsilon},\delta}}_s\right)$,   we get that
\begin{eqnarray}\label{bsde4.21}
\begin{aligned}
&\qquad \qquad \left({\varphi}+g\right)\left(X^{\hat{\gamma}_{\hat{t}},u^{{\varepsilon},\delta}}_s\right)\leq Y^{1,\delta}\left(s\right)\\
&\qquad=:\qquad  \left({\varphi}+g\right)\left(\hat{\gamma}_{\hat{t}}\right)+\int^{s}_{{\hat{t}}} \left({\mathcal{L}}\left({\varphi}+g\right)\right)\left(X^{\hat{\gamma}_{\hat{t}},u^{{\varepsilon},\delta}}_\sigma,
u^{{\varepsilon},\delta}\left(\sigma\right)\right)d\sigma \\
&\qquad\qquad-\int^{s}_{{\hat{t}}}q\left(\gamma,\left({\varphi}+g\right)\left(\gamma\right), \partial_x\left({\varphi}+g\right)\left(\gamma\right)
G\left(\gamma,v\right), v\right)\Biggm|_{\scriptsize \begin{matrix}
v=u^{{\varepsilon},\delta}\left(\sigma\right)\\
\gamma=X^{\hat{\gamma}_{\hat{t}},u^{{\varepsilon},\delta}}_\sigma
\end{matrix}} d\sigma\\
&\qquad\qquad
+\int^{s}_{{\hat{t}}}\left\langle \partial_x\left({\varphi}+g\right)\left(X^{\hat{\gamma}_{\hat{t}},u^{{\varepsilon},\delta}}_\sigma\right),
G\left(X^{\hat{\gamma}_{\hat{t}},u^{{\varepsilon},\delta}}_\sigma,u^{{\varepsilon},\delta}\left(\sigma\right)\right)dW\left(\sigma\right)\right\rangle_H,
\end{aligned}
\end{eqnarray}
where for  $\left(t, \gamma_t,u\right)\in [0,T]\times {\Lambda}\times U$,
\begin{eqnarray*}
	&&\left({\mathcal{L}}\left({\varphi}+g\right)\right)\left(\gamma_t,u\right)\\
	&:=&{\partial_t{\varphi}\left(\gamma_t\right)+{ \partial_t^o}g\left(\gamma_t\right)}+\left\langle A^*\partial_x {\varphi}\left(\gamma_t\right),\gamma_t\left(t\right)\right\rangle_H
	+\left\langle\partial_x \left({\varphi}+g\right)\left(\gamma_t\right),F\left(\gamma_t,u\right)\right\rangle_{H}\\
	&&+\frac{1}{2}\mbox{Tr}[\partial_{xx}\left({\varphi}+g\right)\left(\gamma_t\right)G\left(\gamma_t,u\right)G^*\left(\gamma_t,u\right)]\\
	&&+
	 q\left(\gamma_t,\left({\varphi}+g\right)\left(\gamma_t\right),{\partial_x\left({\varphi}+g\right)\left(\gamma_t\right)}G\left(\gamma_t,u\right),u\right).
\end{eqnarray*}
It is clear that
\begin{eqnarray}\label{y_1}
Y^{1,\delta}(\hat{t}\,)=({\varphi}+g)(\hat{\gamma}_{\hat{t}}).
\end{eqnarray}
Set for $s\in[\hat{t},\hat{t}+\delta]$,
\begin{eqnarray*}
	Y^{2,{\hat{\gamma}_{\hat{t}},u^{{\varepsilon},\delta}}}(s)&:=&
	Y^{1,\delta}\left(s\right)-Y^{\hat{\gamma}_{\hat{t}},u^{{\varepsilon},\delta}}\left(s\right)\geq
	\left({\varphi}+g\right)\left(X_s^{\hat{\gamma}_{\hat{t}},u^{{\varepsilon},\delta}}\right)-Y^{\hat{\gamma}_{\hat{t}},u^{{\varepsilon},\delta}}(s),  \\
	Y^{3,\delta}(s)&:=&
	Y^{1,\delta}(s)-
	\left({\varphi}+g\right)\left(X_s^{\hat{\gamma}_{\hat{t}},u^{{\varepsilon},\delta}}\right)\geq0,  \quad \hbox{\rm and}\\
	Z^{2,{\hat{\gamma}_{\hat{t}},u^{{\varepsilon},\delta}}}(s)  &:=&\partial_x\left({\varphi}+g\right)\left(X^{\hat{\gamma}_{\hat{t}},u^{{\varepsilon},\delta}}_s\right)
	G\left(X^{\hat{\gamma}_{\hat{t}},u^{{\varepsilon},\delta}}_s,u^{{\varepsilon},\delta}(s)\right)-Z^{\hat{\gamma}_{\hat{t}},u^{{\varepsilon},\delta}}(s).
\end{eqnarray*}
Comparing (\ref{bsde4.10}) and (\ref{bsde4.21}), we have, $P$-a.s.,
\begin{eqnarray*}
	&&dY^{2,{\hat{\gamma}_{\hat{t}},u^{{\varepsilon},\delta}}}\left(s\right)\\
	 &=&\bigg{[}[\left({\mathcal{L}}\left({\varphi}+g\right)\right)\left(\gamma,v\right)
	 -q\left(\gamma,\left({\varphi}+g\right)\left(\gamma\right),
	\partial_x\left({\varphi}+g\right)\left(\gamma\right)
	G\left(\gamma,v\right),v\right)]\Biggm|_{\scriptsize\begin{matrix}\gamma=X^{\hat{\gamma}_{\hat{t}},u^{{\varepsilon},\delta}}_s\\
		v=u^{{\varepsilon},\delta}\left(s\right)\end{matrix}}\\
	&&+q\left(X^{\hat{\gamma}_{\hat{t}},u^{{\varepsilon},\delta}}_s,Y^{\hat{\gamma}_{\hat{t}},u^{{\varepsilon},\delta}}\left(s\right),
	Z^{\hat{\gamma}_{\hat{t}},u^{{\varepsilon},\delta}}\left(s\right),u^{{\varepsilon},\delta}\left(s\right)\right)\bigg{]}ds
	+Z^{2,{\hat{\gamma}_{\hat{t}},u^{{\varepsilon},\delta}}}\left(s\right)dW\left(s\right)\\
	 &=&\bigg{[}\left({\mathcal{L}}\left({\varphi}+g\right)\right)\left(X^{\hat{\gamma}_{\hat{t}},u^{{\varepsilon},\delta}}_s,u^{{\varepsilon},\delta}\left(s\right)\right)
	+A_{1,\delta}\left(s\right)Y^{3,\delta}\left(s\right)
	-A_{1,\delta}\left(s\right)Y^{2,{\hat{\gamma}_{\hat{t}},u^{{\varepsilon},\delta}}}\left(s\right)\\
	&&-\left\langle A_{2,\delta}\left(s\right), Z^{2,{\hat{\gamma}_{\hat{t}},u^{{\varepsilon},\delta}}}\left(s\right)\right\rangle_{\Xi}\bigg{]}ds+Z^{2,{\hat{\gamma}_{\hat{t}},u^{{\varepsilon},\delta}}}\left(s\right)dW\left(s\right),
\end{eqnarray*}
where $|A_{1,\delta}|\vee|A_{2,\delta}|\leq L$.
Applying It\^o  formula  (see~\cite[Proposition 2.2]{el}), we have
\begin{eqnarray}\label{4.14}
\begin{aligned}
&\qquad\qquad\qquad Y^{2,{\hat{\gamma}_{\hat{t}},u^{{\varepsilon},\delta}}}(\hat{t}\,)=\mathbb{E}\bigg{[}Y^{2,{\hat{\gamma}_{\hat{t}},u^{{\varepsilon},\delta}}}(t+\delta)\Gamma^{\hat{t},\delta}(\hat{t}+\delta)\\
&\qquad\qquad-
\int^{\hat{t}+\delta}_{{\hat{t}}}\!\!\!\!\! \Gamma^{\hat{t},\delta}\left(\sigma\right)
\left[\left({\mathcal{L}}\left({\varphi}+g\right)\right)
\left(X^{\hat{\gamma}_{\hat{t}},u^{{\varepsilon},\delta}}_\sigma,u^{{\varepsilon},\delta}\left(\sigma\right)\right)+A_{1,\delta}\left(\sigma\right)Y^{3,\delta}\left(\sigma\right)\right]d\sigma\bigg{|}
{\mathcal{F}}_{\hat{t}}\bigg{]},
\end{aligned}
\end{eqnarray}
where $\Gamma^{\hat{t},\delta}(\cdot)$ solves the linear SDE
$$
d\Gamma^{\hat{t},\delta}(s)=\Gamma^{\hat{t},\delta}(s)(A_{1,\delta}(s)ds+A_{2,\delta}(s)dW(s)),\ s\in [{\hat{t}},{\hat{t}}+\delta];\ \ \ \Gamma^{\hat{t},\delta}({\hat{t}})=1.
$$
Obviously, $\Gamma^{\hat{t},\delta}\geq 0$. Combining (\ref{4.10}), (\ref{y_1}) and (\ref{4.14}), we have
\begin{eqnarray}\label{4.15}
\begin{aligned}
-\varepsilon&\leq \frac{1}{\delta}(Y^{\hat{\gamma}_{\hat{t}},u^{{\varepsilon},\delta}}(\hat{t}\,)-({\varphi}+g)(\hat{\gamma}_{\hat{t}}))\\
&=\frac{1}{\delta}(Y^{\hat{\gamma}_{\hat{t}},u^{{\varepsilon},\delta}}(\hat{t}\,)-Y^{1,\delta}({\hat{t}}))
=-\frac{1}{\delta}Y^{2,{\hat{\gamma}_{\hat{t}},u^{{\varepsilon},\delta}}}\left(\hat{t}\right)\\
&= \frac{1}{\delta}\mathbb{E}\bigg{[}-Y^{2,{\hat{\gamma}_{\hat{t}},u^{{\varepsilon},\delta}}}(\hat{t}+\delta)\Gamma^{\hat{t},\delta}(\hat{t}+\delta)\\
&\quad
+\int^{\hat{t}+\delta}_{{\hat{t}}}\!\!\!\!\! \Gamma^{\hat{t},\delta}(\sigma)\left[({\mathcal{L}}({\varphi}+g))\left(X^{\hat{\gamma}_{\hat{t}},u^{{\varepsilon},\delta}}_\sigma,u^{{\varepsilon},\delta}(\sigma)\right)
+A_{1,\delta}(\sigma)Y^{3,\delta}(\sigma)\right]d\sigma\bigg{]}\\[3mm]
&:=I_1+I_2+I_3+I_4+I_5
\end{aligned}
\end{eqnarray}
with the five terms
\begin{eqnarray*}
	 I_1&:=&-\frac{1}{\delta}\mathbb{E}\bigg{[}Y^{2,{\hat{\gamma}_{\hat{t}},u^{{\varepsilon},\delta}}}(\hat{t}+\delta)\Gamma^{\hat{t},\delta}(\hat{t}+\delta)\bigg{]}, \\
	 I_2&:=&\frac{1}{\delta}\mathbb{E}\bigg{[}\int^{\hat{t}+\delta}_{{\hat{t}}}\!\!\!({\mathcal{L}}({\varphi}+g))(\hat{\gamma}_{{\hat{t}}},u^{{\varepsilon},\delta}(\sigma))\, d\sigma\bigg{]},\nonumber\\
	 I_3&:=&\frac{1}{\delta}\mathbb{E}\bigg{[}\int^{\hat{t}+\delta}_{{\hat{t}}}\!\!\left[({\mathcal{L}}({\varphi}+g))\left(X^{\hat{\gamma}_{\hat{t}},u^{{\varepsilon},\delta}}_\sigma,u^{{\varepsilon},\delta}(\sigma)\right)-
	({\mathcal{L}}({\varphi}+g))\left(\hat{\gamma}_{{\hat{t}}},u^{{\varepsilon},\delta}(\sigma)\right)\right]d\sigma\bigg{]},\nonumber\\
	 I_4&:=&\frac{1}{\delta}\mathbb{E}\bigg{[}\int^{\hat{t}+\delta}_{{\hat{t}}}\!\!\!\!\!(\Gamma^{\hat{t},\delta}(\sigma)-1)({\mathcal{L}}({\varphi}+g))\left(X^{\hat{\gamma}_{\hat{t}},u^{{\varepsilon},\delta}}_\sigma,
	u^{{\varepsilon},\delta}(\sigma)\right)d\sigma\bigg{]}, \quad \hbox{\rm and }\nonumber\\
	 I_5&:=&\frac{1}{\delta}\mathbb{E}\bigg{[}\int^{\hat{t}+\delta}_{{\hat{t}}}\!\!\!\Gamma^{\hat{t},\delta}(\sigma)A_{1,\delta}(\sigma)Y^{3,\delta}(\sigma)\, d\sigma\bigg{]}. \nonumber
\end{eqnarray*}
Since the coefficients of the operator ${\mathcal{L}}$  have a  linear growth,
combining the regularity of $(\varphi, g)\in \Phi_{\hat{t}}\times {\mathcal{G}}_{\hat{t}}$, there exist a integer
$\bar{p}\geq1$ and a constant $C>0$ independent of $u\in U$ such that, for all $(t,\gamma_t,u)\in [0,T]\times\Lambda\times U$,
\begin{eqnarray}\label{4.4444}|({\varphi}+g)(\gamma_{t})|\vee  |
({\mathcal{L}}({\varphi}+g))(\gamma_{t},u)|
\leq  C(1+||\gamma_{t}||_0)^{\bar{p}}.
\end{eqnarray}
In view of Lemma \ref{lemmaexist111}, we also have
\begin{eqnarray*}
	\sup_{\sigma\in[\hat{t},\hat{t}+\delta]}\mathbb{E}|\Gamma^{\hat{t},\delta}(\sigma)-1|^2\leq C\delta.
\end{eqnarray*}
Thus, in view of (\ref{0714}) and (\ref{bsde4.21}), we have
\begin{eqnarray}\label{4.1611}
I_1&=& -\frac{1}{\delta}\mathbb{E}\left[\left(Y^{1,\delta}(\hat{t}+\delta)-Y^{\hat{\gamma}_{\hat{t}},u^{{\varepsilon},\delta}}(\hat{t}+\delta)\right)\Gamma^{\hat{t},\delta}(\hat{t}+\delta)\right]\nonumber\\
&\leq&-\frac{1}{\delta}\mathbb{E}\left[\left(({\varphi}+g)\left(X_{\hat{t}+\delta}^{\hat{\gamma}_{\hat{t}},u^{{\varepsilon},\delta}}\right)-Y^{\hat{\gamma}_{\hat{t}},u^{{\varepsilon},\delta}}(\hat{t}+\delta)\right)
\Gamma^{\hat{t},\delta}(\hat{t}+\delta)\right]\nonumber\\
&=&\frac{1}{\delta}\mathbb{E}\left[\left(V\left(X^{\hat{\gamma}_{\hat{t}},
	 u^{{\varepsilon},\delta}}_{{\hat{t}}+\delta}\right)-({\varphi}+g)\left(X_{\hat{t}+\delta}^{\hat{\gamma}_{\hat{t}},u^{{\varepsilon},\delta}}\right)\right)\Gamma^{\hat{t},\delta}(\hat{t}+\delta)\right]\leq 0
\end{eqnarray}
and
\begin{eqnarray}\label{4.16}
I_2&\leq&\frac{1}{\delta}\bigg{[}\int^{\hat{t}+\delta}_{{\hat{t}}}\!\!\!\sup_{u\in U}({\mathcal{L}}({\varphi}+g))({\hat{\gamma}_{\hat{t}},{u}})\, d\sigma\bigg{]}
= \partial_t{\varphi}(\hat{\gamma}_{\hat{t}})+{ \partial_t^o}g(\hat{\gamma}_{\hat{t}})+\left\langle A^*\partial_x{\varphi}(\hat{\gamma}_{\hat{t}}),\hat{\gamma}_{\hat{t}}(\hat{t}\,)\right\rangle_H\nonumber\\[3mm]
&&\quad +\, {\mathbf{H}}(\hat{\gamma}_{\hat{t}},\, ({\varphi}+g)(\hat{\gamma}_{\hat{t}}),\, \partial_x({\varphi}+g)(\hat{\gamma}_{\hat{t}}),\,
\partial_{xx}({\varphi}+g)(\hat{\gamma}_{\hat{t}})).
\end{eqnarray}
\par
Now we prove that other three terms  $I_3$, $I_4$ and $I_5$ are of higher order.  In view of  (\ref{0604}),
$$
\lim_{\delta\rightarrow0}\mathbb{E}\, d_\infty^{\bar{p}}(X^{\hat{\gamma}_{\hat{t}},u^{{\varepsilon},\delta}}_{\hat{t}+\delta},\hat{\gamma}_{\hat{t}})=0.
$$
Then,  in view of (\ref{4.4444}), using the dominated convergence theorem, we have
$$
\lim_{\delta\rightarrow0}\, \mathbb{E}\!\!\sup_{\hat{t}\leq \sigma\leq\hat{t}+ \delta}\left|({\mathcal{L}}({\varphi}+g))\left(X^{{\hat{\gamma}_{\hat{t}},u^{{\varepsilon},\delta}}}_\sigma,u^{{\varepsilon},\delta}(\sigma)\right)-
({\mathcal{L}}({\varphi}+g))({\hat{\gamma}_{\hat{t}},u^{{\varepsilon},\delta}}(\sigma))\right|=0
$$
and
\begin{eqnarray*}
	&&\lim_{\delta\rightarrow0} \sup_{\hat{t}\leq \sigma\leq \hat{t}+\delta}\mathbb{E}\left|\Gamma^{\hat{t},\delta}(\sigma)A_{1,\delta}(\sigma)Y^{3,\delta}(\sigma)\right|\\
	&\leq& L\lim_{\delta\rightarrow0} \sup_{\hat{t}\leq \sigma\leq \hat{t}+\delta} \mathbb{E}\left[\left|\Gamma^{\hat{t},\delta}(\sigma)\right|\left(|Y^{1,\delta}(\sigma)-(\varphi+g)(\hat{\gamma}_{\hat{t}})|
	+\left|\varphi+g\right|(\gamma)\biggm|^{\gamma=X^{\hat{\gamma}_{\hat{t}},u^{{\varepsilon},\delta}}_\sigma}_{\gamma=\hat{\gamma}_{\hat{t}}}\right)\right] \\
	&=&\, 0.
\end{eqnarray*}
Furthermore,  we have
\begin{eqnarray}\label{4.18}
\qquad\qquad \lim_{\delta\rightarrow0}|I_3|
\leq \lim_{\delta\rightarrow0}\sup_{\hat{t}\leq \sigma\leq \hat{t}+\delta}\mathbb{E}\bigg{|}&{\mathcal{L}}({\varphi}+g)(\gamma,v)\biggm|^{(\gamma,v)=\left(X^{{\hat{\gamma}_{\hat{t}},u^{{\varepsilon},\delta}}}_\sigma,\,\,  u^{{\varepsilon},\delta}(\sigma)\right)}_{(\gamma,v)=({\hat{\gamma}_{\hat{t}},\,\, u^{{\varepsilon},\delta}}(\sigma))}\bigg{|}=0
\end{eqnarray}
and
\begin{eqnarray}\label{4.180714}
\lim_{\delta\rightarrow0}|I_5| \leq\lim_{\delta\rightarrow0}\sup_{\hat{t}\leq \sigma\leq \hat{t}+\delta}\mathbb{E}\left|\Gamma^{\hat{t},\delta}(\sigma)A_{1,\delta}(\sigma)Y^{3,\delta}(\sigma)\right|=0.
\end{eqnarray}
Moreover, for some finite constant $C>0$,
\begin{eqnarray}\label{4.19}
\begin{aligned}
&\qquad |I_4|
\leq\frac{1}{\delta}\int^{\hat{t}+\delta}_{{\hat{t}}}\mathbb{E}|\Gamma^{\hat{t},\delta}(\sigma)-1|\left|({\mathcal{L}}({\varphi}+g))(X^{\hat{\gamma}_{\hat{t}},u^{{\varepsilon},\delta}}_\sigma,
u^{{\varepsilon},\delta}(\sigma))\right|
d\sigma\\
&\qquad \leq\frac{1}{\delta}\!\!\int^{\hat{t}+\delta}_{{\hat{t}}}\!\!\! \left(\mathbb{E}\left(\Gamma^{\hat{t},\delta}\left(\sigma\right)-1\right)^2\right)^{\frac{1}{2}}\!\!\! \left(\mathbb{E}\left[\left({\mathcal{L}}\left({\varphi}+g\right)\right)\left(X^{\hat{\gamma}_{\hat{t}},
	u^{{\varepsilon},\delta}}_\sigma,
u^{{\varepsilon},\delta}\left(\sigma\right)\right)\right]^2\right)^{\frac{1}{2}}\!\! d\sigma
\\
&\qquad\leq C(1+||\hat{\gamma}_{\hat{t}}||_0)^{\bar{p}}\delta^\frac{1}{2}.
\end{aligned}
\end{eqnarray}
Substituting  (\ref{4.1611}), (\ref{4.16}) and (\ref{4.19}) into (\ref{4.15}), we have
\begin{eqnarray}\label{4.2000000}
\begin{aligned}
-\varepsilon&\leq {\partial_t{\varphi}(\hat{\gamma}_{\hat{t}})+{ \partial_t^o}g(\hat{\gamma}_{\hat{t}})} +\left\langle A^*\partial_x{\varphi}(\hat{\gamma}_{\hat{t}}),\,  \hat{\gamma}_{\hat{t}}(\hat{t}\,)\right\rangle_H\\
&\quad+{\mathbf{H}}(\hat{\gamma}_{\hat{t}},\, ({\varphi}+g)(\hat{\gamma}_{\hat{t}}),\, \partial_x({\varphi}+g)(\hat{\gamma}_{\hat{t}}),
\, \partial_{xx}({\varphi}+g)(\hat{\gamma}_{\hat{t}}))\\
&\quad+|I_3|+|I_5|
+C(1+||\hat{\gamma}_{\hat{t}}||_0)^{\bar{p}}\,\delta^\frac{1}{2}.
\end{aligned}
\end{eqnarray}
Sending $\delta$ to $0$,  in view of  (\ref{4.18}) and (\ref{4.180714}),  we have
\begin{eqnarray*}
	\begin{aligned}
		-\varepsilon
		&\leq{\partial_t{\varphi}(\hat{\gamma}_{\hat{t}})+{ \partial_t^o}g(\hat{\gamma}_{\hat{t}})}+\left\langle A^*\partial_x{\varphi}\left(\hat{\gamma}_{\hat{t}}\right), \hat{\gamma}_{\hat{t}}\left(\hat{t}\right)\right\rangle_H\\
		 &\quad+{\mathbf{H}}\left(\hat{\gamma}_{\hat{t}},\, \left({\varphi}+g\right)\left(\hat{\gamma}_{\hat{t}}\right),\, \partial_x\left({\varphi}+g\right)\left(\hat{\gamma}_{\hat{t}}\right),\,
		\partial_{xx}\left({\varphi}+g\right)\left(\hat{\gamma}_{\hat{t}}\right)\right).
	\end{aligned}
\end{eqnarray*}
Since $\varepsilon$ is arbitrary, the value functional
$V$ is  a viscosity sub-solution to (\ref{hjb1}).
\par
In a symmetric (even easier) way, we show that  $V$ is also a viscosity super-solution to equation (\ref{hjb1}).
The proof is complete.
\end{proof}

Now, let us give   the result of  classical  solutions, which show the consistency of viscosity solutions.

\begin{theorem}\label{theorem1223}
	Let Assumptions \ref{hypstate} and  \ref{hypcost} hold true, $v\in C_p^{1,2}({\Lambda})$ and $A^*\partial_xv\in C_p^0(\hat{\Lambda},H)$. Then
	$v$ is a classical solution of  equation (\ref{hjb1}) if and only if it is a viscosity solution.
\end{theorem}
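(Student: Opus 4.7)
Since $v\in C_p^{1,2}(\Lambda)$ with $A^*\partial_xv\in C_p^0(\hat\Lambda,H)$, both $v$ and $-v$ lie in $\Phi_t$ for every $t\in[0,T)$. For fixed $(t,\gamma_t)\in[0,T)\times\Lambda_t$, I take the test pair $(\varphi,g)=(v,0)\in\Phi_t\times\mathcal{G}_t$: then $v-\varphi-g\equiv 0$ trivially attains its supremum $0$ at $(t,\gamma_t)$, and the sub-solution inequality of Definition~\ref{definition4.1} yields $\partial_tv(\gamma_t)+\langle A^*\partial_xv(\gamma_t),\gamma_t(t)\rangle_H+\mathbf{H}(\gamma_t,v,\partial_xv,\partial_{xx}v)\geq 0$. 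Using $(\varphi,g)=(-v,0)$ in the super-solution inequality yields the reverse. Combining gives the PHJB equation pointwise at $(t,\gamma_t)$; the terminal condition $v(\gamma_T)=\phi(\gamma_T)$ follows by pinching from both viscosity inequalities at $T$.

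\textbf{Main direction (classical $\Rightarrow$ viscosity).} I verify the sub-solution property; the super-solution part is symmetric. Given $(\varphi,g)\in\Phi_t\times\mathcal{G}_t$ with $v-\varphi-g$ attaining maximum $0$ at $(t,\gamma_t)$, the standard vertical-max analysis yields $\partial_xv(\gamma_t)=\partial_x(\varphi+g)(\gamma_t)$ and $\partial_{xx}v(\gamma_t)\leq\partial_{xx}(\varphi+g)(\gamma_t)$. For the time and $A$-drift terms, I fix an arbitrary constant control $u\in U$, let $X=X^{\gamma_t,u}$ be the state trajectory from $(t,\gamma_t)$, and exploit the pathwise inequality $(v-\varphi-g)(X_s)\leq 0$ coming from the max property. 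Apply the functional It\^o formula (Lemma~\ref{theoremito3}) to $v(X_s)$ and $\varphi(X_s)$ (both equalities) and the It\^o-type upper bound from the $\mathcal{G}_t$-structure $g=\sum_ih_i(\cdot)g_i(\cdot)$ with $g_i\in C^{1,2}_{p,A-}$ to the $g$-term, and then take expectations. Dividing by $s-t$ and letting $s\to t^+$, continuity of the integrands yields, for every $u\in U$,
\begin{eqnarray*}
&&\partial_tv+\langle A^*\partial_xv,\gamma_t(t)\rangle_H+\langle\partial_xv,F(\gamma_t,u)\rangle_H+\tfrac{1}{2}\mathrm{Tr}(\partial_{xx}v\,GG^*(\gamma_t,u))\\
&&\quad\leq\partial_t\varphi+\partial_t^og+\langle A^*\partial_x\varphi,\gamma_t(t)\rangle_H+\langle\partial_x(\varphi+g),F(\gamma_t,u)\rangle_H\\
&&\quad+\tfrac{1}{2}\mathrm{Tr}(\partial_{xx}(\varphi+g)\,GG^*(\gamma_t,u)),
\end{eqnarray*}
all evaluated at $\gamma_t$. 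Adding the common value $q(\gamma_t,v(\gamma_t),\partial_xv(\gamma_t)G(\gamma_t,u),u)$ to both sides (the arguments agree since $v=\varphi+g$ and $\partial_xv=\partial_x(\varphi+g)$ at $\gamma_t$) and taking the supremum over $u\in U$ converts the bracketed quantities into $\mathbf{H}(\gamma_t,v,\partial_xv,\partial_{xx}v)$ on the left and $\mathbf{H}(\gamma_t,\varphi+g,\partial_x(\varphi+g),\partial_{xx}(\varphi+g))$ on the right. The classical PHJB equation for $v$ forces the left-hand side to vanish, delivering the viscosity sub-solution inequality.

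\textbf{Main obstacle.} The delicate technical step is justifying, along the stochastic trajectory $X$, the expected-value upper bound
\begin{equation*}
\mathbb{E}[g(X_s)-g(\gamma_t)]\leq\mathbb{E}\!\int_t^s\!\!\bigl[\partial_t^og(X_\sigma)+\langle\partial_xg(X_\sigma),F(X_\sigma,u)\rangle_H+\tfrac{1}{2}\mathrm{Tr}(\partial_{xx}g(X_\sigma)GG^*(X_\sigma,u))\bigr]d\sigma
\end{equation*}
for $g=\sum_ih_ig_i\in\mathcal{G}_t$, given that the constituents $g_i\in C^{1,2}_{p,A-}$ provide only the one-sided It\^o inequality of Definition~\ref{definitionc202402061}. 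The smooth weights $h_i(\sigma)$ must be tracked carefully through integration by parts in time: for the test functionals that actually arise in the uniqueness proof (Remark~\ref{remarkv0129120240206}, where all $h_i\geq 0$) the combined bound is immediate, whereas in full generality one decomposes each $h_i$ into positive and negative parts and recombines. This asymmetry is also why the super-solution half of the argument must be redone by reversing inequalities throughout rather than appealing to a formal duality.
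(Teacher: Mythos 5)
Your easy direction agrees with the paper's argument: since $\pm v$ are admissible test functionals in $\Phi_t$, the viscosity sub- and super-solution inequalities pinch to the classical equation, and the terminal condition is immediate.

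For the main direction, however, there is a genuine gap. You assert that \emph{``the standard vertical-max analysis yields $\partial_xv(\gamma_t)=\partial_x(\varphi+g)(\gamma_t)$ and $\partial_{xx}v(\gamma_t)\leq\partial_{xx}(\varphi+g)(\gamma_t)$.''} That analysis does not apply here, for two reasons. First, the maximum of $v-\varphi-g$ is taken over $[t,T]\times\Lambda^t$, i.e.\ over \emph{continuous} paths, while the vertical perturbation $\gamma_t^h$ used to probe $\partial_x$ and $\partial_{xx}$ is c\`adl\`ag and lies in $\hat\Lambda_t\setminus\Lambda_t$: it is not an admissible competitor in the maximization. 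Second, for $g\in\mathcal{G}_t$ built from $g_i\in C^{1,2}_{p,A-}$, the objects $\partial_xg$ and $\partial_{xx}g$ are only the auxiliary functionals entering the one-sided It\^o inequality of Definition~\ref{definitionc202402061}; they are not asserted to be actual vertical derivatives, so even if one could pass to the c\`adl\`ag envelope the Taylor-type comparison at the max point would not be justified. The first-order equality $\partial_x(\varphi+g)(\gamma_t)=\partial_xv(\gamma_t)$ is indispensable---you use it explicitly to identify the $z$-arguments of $q$ on both sides---so this gap invalidates the chain.

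The paper closes this gap differently, and this is the key place where your route diverges. Rather than fixing a control $u$ and using the state trajectory $X^{\gamma_t,u}$ (whose drift $F(\gamma_t,u)$ and diffusion $G(\gamma_t,u)$ range only over a possibly small set as $u$ varies), the paper takes the process \eqref{formular1} with \emph{arbitrary deterministic} drift $\vartheta\equiv\alpha\in H$ and diffusion $\varpi\equiv\beta\in L_2(\Xi,H)$ started at $\gamma_t$. After applying the functional It\^o formula to $\varphi-v$ and the It\^o inequality to $g$, dividing by $\delta$ and sending $\delta\downarrow0$, one arrives at the pointwise inequality $\widetilde{\mathcal{H}}(\gamma_t)\geq0$ valid for all $(\alpha,\beta)$. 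Setting $\beta=\mathbf{0}$ and using the arbitrariness of $\alpha\in H$ forces the coefficient of $\alpha$ to vanish, which is precisely $\partial_x(\varphi+g)(\gamma_t)=\partial_xv(\gamma_t)$ together with the time/$A$-drift inequality; one then re-specializes $\beta=G(\gamma_t,u)$ to obtain the trace comparison. With your fixed-control process you cannot run the $\beta=\mathbf{0}$, arbitrary-$\alpha$ step, so the first-order identity is simply unavailable. Your ``main obstacle'' paragraph about the signs of the time weights $h_i$ is a legitimate secondary concern, but the missing first-order derivative identity is the decisive defect.
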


\begin{proof}  Assume that $v$ is a viscosity solution. It is clear that $v(\gamma_T)=\phi(\gamma_T)$ for all $\gamma_T\in \Lambda_T$. For any $(t,\gamma_t)\in [0,T)\times \Lambda$, since $v\in C_p^{1,2}({\Lambda})$ and $A^*\partial_xv\in C_p^0(\hat{\Lambda},H)$, by definition of viscosity solutions,  we have $$
\partial_t v(\gamma_t)+\langle A^*\partial_xv(\gamma_t), \gamma_t(t)\rangle_H + {\mathbf{H}}(\gamma_t,v(\gamma_t),\partial_x v(\gamma_t),\partial_{xx} v(\gamma_t))=0.
$$
On the other hand, assume $v$ is a  classical solution. Let  $(\varphi, g)\in \Phi_t\times {\mathcal{G}}_t$ with $t\in [0,T)$
such that for some  $\gamma_t\in \Lambda$,
$$
0=(v-\varphi-g)(\gamma_t)=\sup_{(s,\eta_s)\in [t,T]\times\Lambda^t}
(v- \varphi-g)(\eta_s),
$$ For every $\alpha\in H$ and $\beta\in L_2(\Xi,H)$, let $(\theta,\xi_\theta)=(t,\gamma_t)$, $\vartheta(\cdot)\equiv \alpha$ and $\varpi(\cdot)\equiv \beta$ in (\ref{formular1}), 
applying functional It\^o formula (\ref{statesop0}) to $\varphi$ and {inequality (\ref{statesop020240206})}
 to $g$ and noticing that $(v-\varphi-g)(\gamma_t)=0$, we have, for every $0<\delta\leq T-t$,
\begin{eqnarray}\label{1224}
\begin{aligned}
0&\leq\mathbb{E}(\varphi+g-v)(X_{t+\delta})\\
&\leq\mathbb{E}\int^{t+\delta}_{t}[\partial_t(\varphi-v)(X_\sigma)+{ \partial_t^o}g(X_\sigma)
+\langle A^*\partial_x(\varphi-v)(X_\sigma), X(\sigma)\rangle_H\\
&\qquad+\langle\partial_x(\varphi+g-v)(X_\sigma), \alpha\rangle_{H}]\,d\sigma\\
&\qquad+\frac{1}{2}\mathbb{E}\int^{t+\delta}_{t}\mbox{Tr}((\partial_{xx}(\varphi+g-v)(X_\sigma))\beta\beta^*)\, d\sigma\\
&=
\mathbb{E}\int^{t+\delta}_{t}\widetilde{{\mathcal{H}}}(X_\sigma)\, d\sigma,
\end{aligned}
\end{eqnarray}
where, for  $(s,\eta_s)\in [0,T]\times \Lambda$,
\begin{eqnarray*}
	\widetilde{{\mathcal{H}}}(\eta_s)&:=&\partial_t(\varphi-v)(\eta_s)+{ \partial_t^o}g(\eta_s)
	+\langle A^*\partial_x(\varphi-v)(\eta_s), \eta_s(s)\rangle_H\\
	&&
	+\langle\partial_x(\varphi+g-v)(\eta_s), \alpha\rangle_H+\frac{1}{2}\mbox{Tr}((\partial_{xx}(\varphi+g-v)(\eta_s))\beta\beta^*).
\end{eqnarray*}
Letting $\delta\rightarrow0$ in (\ref{1224}),
\begin{eqnarray}\label{040912}
\widetilde{{\mathcal{H}}}(\gamma_t)\geq0.
\end{eqnarray}
Let $\beta=\mathbf{0}$, by the arbitrariness of  $\alpha$,
$$
\partial_t\varphi(\gamma_t)+{ \partial_t^o}g(\gamma_t)+\langle A^*\partial_x(\varphi-v)(\gamma_t), \gamma_t(t)\rangle_H\geq\partial_tv(\gamma_t), \ \ \partial_x\varphi(\gamma_t)+\partial_xg(\gamma_t)=\partial_xv(\gamma_t).
$$
Then, for every $u\in U$, let $\beta=G(\gamma_t,u)$ in (\ref{040912}).
Noting that $\varphi(\gamma_t)+g(\gamma_t)=v(\gamma_t)$,
we have
\begin{eqnarray*}
	&&\partial_t\varphi(\gamma_t)+{ \partial_t^o}g(\gamma_t)+\langle \partial_x(\varphi+g)(\gamma_t), F(\gamma_t,u)\rangle_{H}+\langle A^*\partial_x\varphi(\gamma_t), \gamma_t(t)\rangle_H\\
	&&
	+\frac{1}{2}\mbox{Tr}((\partial_{xx}(\varphi+g)(\gamma_t))G(\gamma_t,u)G^*(\gamma_t,u))\\
	&&+q(\gamma_t,({\varphi}+g)(\gamma_t),({\partial_x({\varphi}+g)(\gamma_t)})G(\gamma_t,u),u)\\
	&\geq& \partial_tv(\gamma_t)+\langle \partial_xv(\gamma_t), F(\gamma_t,u)\rangle_{H}+\langle A^*\partial_xv(\gamma_t),  \gamma_t(t)\rangle_H
	\\&&+\frac{1}{2}\mbox{Tr}(\partial_{xx}v(\gamma_t)G(\gamma_t,u)G^*(\gamma_t,u))+q(\gamma_t,v(\gamma_t),{\partial_xv(\gamma_t)}G(\gamma_t,u),u),
\end{eqnarray*}
Since
$$\partial_tv(\gamma_t)+\langle A^*\partial_xv(\gamma_t), \gamma_t(t)\rangle_H
+{\mathbf{H}}(\gamma_t,v(\gamma_t),\partial_xv(\gamma_t),\partial_{xx}v(\gamma_t))=0, $$   taking the supremum over $u\in U$, we see that
\begin{eqnarray*}
	&&\partial_t\varphi(\gamma_t)+{\partial_t^o}g(\gamma_t)+\langle A^*\partial_x\varphi(\gamma_t), \gamma_t(t)\rangle_H\\
	&&+{\mathbf{H}}(\gamma_t,(\varphi+g)(\gamma_t),\partial_x(\varphi+g)(\gamma_t),\partial_{xx}(\varphi+g)(\gamma_t))\\
	&\geq& \partial_tv(\gamma_t)+\langle A^*\partial_xv(\gamma_t), \gamma_t(t)\rangle_H+{\mathbf{H}}(\gamma_t,v(\gamma_t),\partial_xv(\gamma_t),\partial_{xx}v(\gamma_t))=0.
\end{eqnarray*}
Thus,  $v$ is a viscosity  {sub-solution} of equation (\ref{hjb1}). In a symmetric way, we show that $v$ is also a viscosity  {super-solution} to equation (\ref{hjb1}).
\end{proof}

We have the following stability of viscosity solutions.

\begin{theorem}\label{theoremstability}
	Let $F,G,q,\phi$ satisfy Assumptions \ref{hypstate} and  \ref{hypcost}, and $v\in C^0(\Lambda)$. Assume that
	\item[(i)]      for any $\varepsilon>0$, there exist $F^\varepsilon, G^\varepsilon, q^\varepsilon, \phi^\varepsilon$ and $v^\varepsilon\in C^0(\Lambda)$ such that  $F^\varepsilon, G^\varepsilon, q^\varepsilon, \phi^\varepsilon$ satisfy  Assumptions \ref{hypstate} and  \ref{hypcost} and $v^\varepsilon$ is a viscosity  {sub-solution} (resp.,  {super-solution}) of PHJB equation (\ref{hjb1}) with generators $F^\varepsilon, G^\varepsilon, q^\varepsilon, \phi^\varepsilon$;
	\item[(ii)] as $\varepsilon\rightarrow0$, $(F^\varepsilon, G^\varepsilon, q^\varepsilon, \phi^\varepsilon,v^\varepsilon)$ converge to
	$(F, G, q, \phi, v)$  uniformly in the following sense: 
	\begin{eqnarray}\label{sss}
&&	\lim_{\varepsilon\rightarrow 0}\sup_{\scriptsize\begin{matrix} (t,\gamma_t,x,y,u)\\
		\in [0,T]\times \Lambda \times \mathbb{R}\times H\times U\end{matrix}}\sup_{\eta_T\in  \Lambda_T}[(|F^\varepsilon-F|+|G^\varepsilon-G|)({\gamma}_t,u)
	\nonumber\\
	&&~~~~~~~~~~~~+|q^\varepsilon-q|({\gamma}_t,{x},yG({\gamma}_{{t}},u),u)+|\phi^\varepsilon-\phi|(\eta_T)+|v^\varepsilon-v|({\gamma}_t)]=0.
	\end{eqnarray}
	Then,  $v$ is a viscosity  {sub-solution} (resp., super-solution) of PHJB equation (\ref{hjb1}) with generators $(F,G,q,\phi)$.
\end{theorem}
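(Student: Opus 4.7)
The plan is to follow the classical viscosity-solution stability argument, localized on the path space via the Borwein-Preiss variational principle (Lemma~\ref{theoremleft}). I treat the sub-solution case only, since the super-solution case is entirely symmetric. Let $(\varphi,g)\in\Phi_{\hat{t}}\times\mathcal{G}_{\hat{t}}$ with $\hat{t}\in[0,T)$ and $\hat{\gamma}_{\hat{t}}\in\Lambda_{\hat{t}}$ be such that $v-\varphi-g$ attains the value $0$ as its supremum over $[\hat{t},T]\times\Lambda^{\hat{t}}$ at $\hat{\gamma}_{\hat{t}}$. The terminal condition $v(\gamma_T)\le\phi(\gamma_T)$ follows directly from the uniform convergence in~\eqref{sss}, so only the interior inequality needs attention.

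First I would strictify the maximum by replacing $g$ with $\tilde g:=g+\overline{\Upsilon}^{2,3}(\hat{\gamma}_{\hat{t}},\cdot)$. By Remark~\ref{remarkv0129120240206}, $\tilde g\in\mathcal{G}_{\hat{t}}$. From Lemma~\ref{theoremS} and~\eqref{220817a0}, $\overline{\Upsilon}^{2,3}(\hat{\gamma}_{\hat{t}},\cdot)$ vanishes together with its horizontal derivative $\partial_t^{o}$ and its first and second vertical derivatives at $(\hat{t},\hat{\gamma}_{\hat{t}})$, so the derivatives of $\tilde g$ at $\hat{\gamma}_{\hat{t}}$ coincide with those of $g$. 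By Lemma~\ref{theoremS0719}, $\overline{\Upsilon}^{2,3}(\hat{\gamma}_{\hat{t}},\cdot)$ is strictly positive off $(\hat{t},\hat{\gamma}_{\hat{t}})$ and grows like $\|\cdot-\hat{\gamma}_{\hat{t},\cdot}^{A}\|_{0}^{4}+|s-\hat{t}|^{2}$; hence $v-\varphi-\tilde g$ attains $0$ as a strict global maximum at $(\hat{t},\hat{\gamma}_{\hat{t}})$.

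Next, set $\Psi^{\varepsilon}:=v^{\varepsilon}-\varphi-\tilde g$; by hypothesis (ii), $\alpha_{\varepsilon}:=\sup\Psi^{\varepsilon}-\Psi^{\varepsilon}(\hat{\gamma}_{\hat{t}})+\varepsilon\to 0$ as $\varepsilon\to 0^{+}$. Applying Lemma~\ref{theoremleft} with the gauge $\rho:=\overline{\Upsilon}^{3,3}$, weights $\delta_i:=2^{-i}$ and initial point $(\hat{t},\hat{\gamma}_{\hat{t}})$, I obtain $(\hat{t}_{\varepsilon},\hat{\gamma}^{\varepsilon})\in[\hat{t},T]\times\Lambda^{\hat{t}}$ and a sequence $\{(t_{i}^{\varepsilon},\gamma^{i,\varepsilon}_{t_{i}^{\varepsilon}})\}_{i\ge 1}$ such that $\overline{\Upsilon}^{3,3}(\hat{\gamma}_{\hat{t}},\hat{\gamma}^{\varepsilon})\le\alpha_{\varepsilon}$, $\overline{\Upsilon}^{3,3}(\gamma^{i,\varepsilon}_{t_{i}^{\varepsilon}},\hat{\gamma}^{\varepsilon})\le\alpha_{\varepsilon}/2^{i}$, and the functional
\[
v^{\varepsilon}-\varphi-g^{\varepsilon},\qquad g^{\varepsilon}:=\tilde g+\sum_{i\ge 0}\delta_i\,\overline{\Upsilon}^{3,3}(\gamma^{i,\varepsilon}_{t_{i}^{\varepsilon}},\cdot),
\]
attains its strict maximum on $[\hat{t}_{\varepsilon},T]\times\Lambda^{\hat{t}_{\varepsilon}}$ at $(\hat{t}_{\varepsilon},\hat{\gamma}^{\varepsilon})$. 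By Remark~\ref{remarkv0129120240206} we have $g^{\varepsilon}\in\mathcal{G}_{\hat{t}_{\varepsilon}}$, so $(\varphi,g^{\varepsilon})$ is admissible in Definition~\ref{definition4.1} for $v^{\varepsilon}$ at $(\hat{t}_{\varepsilon},\hat{\gamma}^{\varepsilon})$, and the sub-solution property of $v^{\varepsilon}$ with respect to the coefficients $(F^{\varepsilon},G^{\varepsilon},q^{\varepsilon})$ furnishes
\begin{eqnarray*}
&&\partial_{t}\varphi(\hat{\gamma}^{\varepsilon})+\partial_{t}^{o}g^{\varepsilon}(\hat{\gamma}^{\varepsilon})+\langle A^{*}\partial_{x}\varphi(\hat{\gamma}^{\varepsilon}),\hat{\gamma}^{\varepsilon}(\hat{t}_{\varepsilon})\rangle_{H}\\
&&\quad+\,\mathbf{H}^{\varepsilon}\!\left(\hat{\gamma}^{\varepsilon},(\varphi+g^{\varepsilon})(\hat{\gamma}^{\varepsilon}),\partial_{x}(\varphi+g^{\varepsilon})(\hat{\gamma}^{\varepsilon}),\partial_{xx}(\varphi+g^{\varepsilon})(\hat{\gamma}^{\varepsilon})\right)\ge 0
\end{eqnarray*}
for all sufficiently small $\varepsilon$ (so that $\hat{t}_{\varepsilon}<T$), with $\mathbf{H}^{\varepsilon}$ denoting the Hamiltonian built from $(F^{\varepsilon},G^{\varepsilon},q^{\varepsilon})$.

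Finally, passing to the limit requires controlling the Borwein-Preiss penalty terms, and this is the main obstacle. Since $\overline{\Upsilon}^{3,3}$ is gauge-type, $d_{\infty}(\hat{\gamma}^{\varepsilon},\hat{\gamma}_{\hat{t}})\to 0$ and $\hat{t}_{\varepsilon}\to\hat{t}$. Using Lemma~\ref{theoremS} with $m=3,M=3$ together with the lower bound $\Upsilon^{3,3}(\cdot)\ge\|\cdot\|_{0}^{6}\ge|\cdot(\text{terminal})|^{6}$ from Lemma~\ref{theoremS0719}, the derivatives of each penalty piece satisfy
\[
|\partial_{x}\overline{\Upsilon}^{3,3}(\gamma^{i,\varepsilon}_{t_{i}^{\varepsilon}},\hat{\gamma}^{\varepsilon})|\lesssim(\alpha_{\varepsilon}/2^{i})^{5/6},\qquad|\partial_{xx}\overline{\Upsilon}^{3,3}(\gamma^{i,\varepsilon}_{t_{i}^{\varepsilon}},\hat{\gamma}^{\varepsilon})|\lesssim(\alpha_{\varepsilon}/2^{i})^{2/3},
\]
and the horizontal contribution $2|\hat{t}_{\varepsilon}-t_{i}^{\varepsilon}|$ from the $|s-t_{i}^{\varepsilon}|^{2}$ term is bounded by $2(\alpha_{\varepsilon}/2^{i})^{1/2}$. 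Weighted by $\delta_i=2^{-i}$ these yield geometrically summable series that tend to zero; hence the penalty contribution to $(\partial_{t}^{o}g^{\varepsilon},\partial_{x}g^{\varepsilon},\partial_{xx}g^{\varepsilon})(\hat{\gamma}^{\varepsilon})$ vanishes in the limit, while $(\partial_{t}^{o}\tilde g,\partial_{x}\tilde g,\partial_{xx}\tilde g)(\hat{\gamma}^{\varepsilon})\to(\partial_{t}^{o}g,\partial_{x}g,\partial_{xx}g)(\hat{\gamma}_{\hat{t}})$ and $(\varphi+g^{\varepsilon})(\hat{\gamma}^{\varepsilon})\to(\varphi+g)(\hat{\gamma}_{\hat{t}})$ by continuity. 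Using the uniform convergence $(F^{\varepsilon},G^{\varepsilon},q^{\varepsilon})\to(F,G,q)$ from (\ref{sss}) and the joint continuity of the Hamiltonian $\mathbf{H}$, the displayed inequality passes to the limit and delivers the sub-solution inequality for $v$ at $(\hat{t},\hat{\gamma}_{\hat{t}})$, completing the argument.
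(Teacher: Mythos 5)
Your proposal is correct and follows essentially the same route as the paper: add a localizing term of the form $\overline{\Upsilon}^{m,3}(\hat\gamma_{\hat t},\cdot)$ to the test function, apply the Borwein--Preiss principle (Lemma~\ref{theoremleft}) with gauge $\overline{\Upsilon}^{3,3}$ and weights $\delta_i=2^{-i}$ to locate a strict maximizer $\hat\gamma^{\varepsilon}$ of the penalized functional, invoke the viscosity sub-solution property of $v^{\varepsilon}$ there, and pass to the limit using the derivative bounds $|\partial_x\Upsilon|\lesssim|\cdot|^{5}$, $|\partial_{xx}\Upsilon|\lesssim|\cdot|^{4}$ from (\ref{220817a})--(\ref{220817a1}). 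Two cosmetic deviations from the paper are worth noting: you use $\overline{\Upsilon}^{2,3}$ for the localizing term where the paper uses $\overline{\Upsilon}^{3,3}$ (both lie in $\mathcal{G}_{\hat t}$ by Remark~\ref{remarkv0129120240206}, so this is immaterial), and you anchor the Borwein--Preiss iteration at $\hat\gamma_{\hat t}$ itself with error $\alpha_{\varepsilon}\to 0$, which yields $\overline{\Upsilon}^{3,3}(\hat\gamma_{\hat t},\hat\gamma^{\varepsilon})\le\alpha_{\varepsilon}$ and hence $d_{\infty}(\hat\gamma^{\varepsilon},\hat\gamma_{\hat t})\to 0$ directly, whereas the paper starts from an arbitrary near-optimizer and establishes the same convergence by a short contradiction argument using the localizing term. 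Your version is marginally tighter in that step; otherwise the two proofs are identical.
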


\begin{proof}  Without loss of generality, we shall only prove the viscosity {sub-solution} property.
First,  from $v^{\varepsilon}$ is a viscosity {sub-solution} of  equation (\ref{hjb1}) with generators $F^{\varepsilon}, G^{\varepsilon},  q^{\varepsilon}, \phi^{\varepsilon}$, it follows that
$$
v^{\varepsilon}(\gamma_T)\leq \phi^{\varepsilon}(\gamma_T),\ \ \gamma_T\in \Lambda_T.
$$
Letting $\varepsilon\rightarrow0$, we  have
$$
v(\gamma_T)\leq \phi(\gamma_T),\ \ \gamma_T\in \Lambda_T.
$$
Next,  let  $\varphi\in \Phi_{\hat{t}}$ and $g\in {\mathcal{G}}_{\hat{t}}$ with $\hat{t}\in [0,T)$
such that
$$
0=(v-\varphi-g)(\hat{\gamma}_{\hat{t}})=\sup_{(s,\eta_s)\in[\hat{t},T]\times \Lambda^{\hat{t}}}
(v- \varphi-g)(\eta_s),
$$
where $\hat{\gamma}_{\hat{t}}\in \Lambda$.
By (\ref{sss}), there exists a constant $\delta>0$ such that for all $\varepsilon\in (0,\delta)$,
$$\sup_{(t,\gamma_t)\in [\hat{t},T]\times\Lambda^{\hat{t}}}(v^\varepsilon(\gamma_t)-\varphi(\gamma_t)-g(\gamma_t))\leq 1.$$
Denote $g_{1}(\gamma_t):=g(\gamma_t)+\overline{\Upsilon}(\gamma_t,\hat{\gamma}_{{\hat{t}}})$
for all
$(t,\gamma_t)\in [\hat{t},T]\times\Lambda$. 
Then we have  $g_{1}\in {\mathcal{G}}_{\hat{t}}$. For every $\varepsilon\in (0,\delta)$,  it is clear that $v^{\varepsilon}-{{\varphi}}-g_1$ is an  upper semicontinuous functional and bounded from above on $\Lambda^{\hat{t}}$.  Define a sequence of positive numbers $\{\delta_i\}_{i\geq0}$  by 
$\delta_i=\frac{1}{2^i}$ for all $i\geq0$. 
Since 
$\overline{\Upsilon}(\cdot,\cdot)$ is a gauge-type function, from Lemma \ref{theoremleft} it follows that,
for every  $(t_0,\gamma^0_{t_0})\in [\hat{t},T]\times \Lambda^{\hat{t}}$ {satisfying}
$$
(v^{\varepsilon}-\varphi-g_1)(\gamma^0_{t_0})\geq \sup_{(s,\gamma_s)\in [\hat{t},T]\times \Lambda^{\hat{t}}}(v^{\varepsilon}-\varphi-g_1)(\gamma_s)-\varepsilon,
$$
and
$$ (v^{\varepsilon}-\varphi-g_1)(\gamma^0_{t_0})\geq (v^{\varepsilon}-\varphi-g_1)(\hat{\gamma}_{\hat{t}}),
$$
there exist $(t_{\varepsilon},{\gamma}^{\varepsilon}_{t_{\varepsilon}})\in [\hat{t},T]\times \Lambda^{\hat{t}}$ and a sequence $\{(t_i,\gamma^i_{t_i})\}_{i\geq1}\subset [\hat{t},T]\times \Lambda^{\hat{t}}$ such that
\begin{description}
	\item[(i)] $\displaystyle \overline{\Upsilon}(\gamma^0_{t_0},{\gamma}^{\varepsilon}_{t_{\varepsilon}})\leq {\varepsilon}$,  $\overline{\Upsilon}(\gamma^i_{t_i},{\gamma}^{\varepsilon}_{t_{\varepsilon}})\leq \frac{\varepsilon}{2^i}$ and $t_i\uparrow t_{\varepsilon}$ as $i\rightarrow\infty$,
	\vskip1mm
	\item[(ii)]  $\displaystyle (v^{\varepsilon}-\varphi-g_1)({\gamma}^{\varepsilon}_{t_{\varepsilon}})-\sum_{i=0}^{\infty}\frac{1}{2^i}\overline{\Upsilon}(\gamma^i_{t_i},{\gamma}^{\varepsilon}_{t_{\varepsilon}})\geq (v^{\varepsilon}-\varphi-g_1)(\gamma^0_{t_0})$, \quad and
		\vskip1mm
	\item[(iii)]  $\displaystyle (v^{\varepsilon}-\varphi-g_1)(\gamma_s)-\sum_{i=0}^{\infty}\frac{1}{2^i}\overline{\Upsilon}(\gamma^i_{t_i},\gamma_s)
	<(v^{\varepsilon}-\varphi-g_1)({\gamma}^{\varepsilon}_{t_{\varepsilon}})-\sum_{i=0}^{\infty}\frac{1}{2^i}\overline{\Upsilon}(\gamma^i_{t_i}, {\gamma}^{\varepsilon}_{t_{\varepsilon}})$ for all $(s,\gamma_s)\in [t_{\varepsilon},T]\times \Lambda^{t_{\varepsilon}}\setminus \{(t_{\varepsilon},{\gamma}^{\varepsilon}_{t_{\varepsilon}})\}$.
	
\end{description}
We claim that
\begin{eqnarray}\label{gamma}
d_\infty({\gamma}^{\varepsilon}_{{t}_{\varepsilon}},\hat{\gamma}_{\hat{t}})\rightarrow0  \ \ \mbox{as} \ \ \varepsilon\rightarrow0.
\end{eqnarray}
Otherwise,  by (\ref{s0}),  there is a constant  $\nu_0>0$
such that
$$
\overline{\Upsilon}({\gamma}^{\varepsilon}_{{t}_{\varepsilon}},\hat{\gamma}_{{\hat{t}}})
\geq\nu_0.
$$
Thus, {by the property (ii) of $(t_{\varepsilon},{\gamma}^{\varepsilon}_{t_{\varepsilon}})$,}  we obtain that
\begin{eqnarray*}
	0&=&(v- {{\varphi}}-g)(\hat{\gamma}_{\hat{t}})= \lim_{\varepsilon\rightarrow0}(v^\varepsilon-\varphi-g_1)(\hat{\gamma}_{\hat{t}})\\
	&\leq& \limsup_{\varepsilon\rightarrow0}\bigg{[}(v^{\varepsilon}-\varphi-g_1)({\gamma}^{\varepsilon}_{t_{\varepsilon}})-\sum_{i=0}^{\infty}\frac{1}{2^i}\overline{\Upsilon}(\gamma^i_{t_i},{\gamma}^{\varepsilon}_{t_{\varepsilon}})\bigg{]}\\
	 &\leq&\limsup_{\varepsilon\rightarrow0}{[}(v-{{\varphi}}-g)({\gamma}^{\varepsilon}_{{t}_{\varepsilon}})+(v^\varepsilon-v)({\gamma}^{\varepsilon}_{{t}_{\varepsilon}})
	{]}-\nu_0\leq (v- {{\varphi}}-g)(\hat{\gamma}_{\hat{t}})-\nu_0=-\nu_0,
\end{eqnarray*}
contradicting $\nu_0>0$.  We note that, {by  (\ref{03108}) and (\ref{03109}) together with the definition of $\Upsilon$ and the property (i) of $(t_{\varepsilon},{\gamma}^{\varepsilon}_{t_{\varepsilon}})$,
	there is a generic constant $C>0$ such that}
\begin{eqnarray*}
	2\sum_{i=0}^{\infty}\frac{1}{2^i}({t_{\varepsilon}}-{t}_{i})
	&\leq& 2\sum_{i=0}^{\infty}\frac{1}{2^i}\bigg{(}\frac{\varepsilon}{2^i}\bigg{)}^{\frac{1}{2}}\leq C\varepsilon^{\frac{1}{2}},\\[3mm]
	|{\partial_x{\Upsilon}}({\gamma}^{\varepsilon}_{{t}_{\varepsilon}}-\hat{\gamma}_{{\hat{t}},{t}_{\varepsilon}}^A)|
	&\leq& C|e^{({t}_{\varepsilon}-\hat{t}\,)A}\hat{\gamma}_{{\hat{t}}}(\hat{t}\,)-{\gamma}^{\varepsilon}_{{t}_{\varepsilon}}({t}_{\varepsilon})|^5,\\[3mm]
	|{\partial_{xx}{\Upsilon}}({\gamma}^{\varepsilon}_{{t}_{\varepsilon}}-\hat{\gamma}_{{\hat{t}},{t}_{\varepsilon}}^A)|
	&\leq&  C|e^{({t}_{\varepsilon}-\hat{t}\,)A}\hat{\gamma}_{{\hat{t}}}(\hat{t}\,)-{\gamma}^{\varepsilon}_{{t}_{\varepsilon}}({t}_{\varepsilon})|^4,
\end{eqnarray*}
\begin{eqnarray*}
	\bigg{|}\partial_x\sum_{i=0}^{\infty}\frac{1}{2^i}
	\Upsilon({\gamma}^{\varepsilon}_{t_{\varepsilon}}-(\gamma^i)_{t_i,t_{\varepsilon}}^A)
	\bigg{|}
	&\leq&18\sum_{i=0}^{\infty}\frac{1}{2^i}|e^{(t_{\varepsilon}-t_i)A}\gamma^i_{t_i}({t}_{i})-{\gamma}^{\varepsilon}_{t_{\varepsilon}}(t_{\varepsilon})|^5\\
	&\leq&18\sum_{i=0}^{\infty}\frac{1}{2^i}\bigg{(}\frac{\varepsilon}{2^i}\bigg{)}^{\frac{5}{6}}
	\leq C{\varepsilon}^{\frac{5}{6}},
\end{eqnarray*}
and
\begin{eqnarray*}
	\bigg{|}\partial_{xx}\sum_{i=0}^{\infty}\frac{1}{2^i}
	\Upsilon({\gamma}^{\varepsilon}_{t_{\varepsilon}}-(\gamma^i)_{t_i,t_{\varepsilon}}^A)
	\bigg{|}
	&\leq&306\sum_{i=0}^{\infty}\frac{1}{2^i}|e^{(t_{\varepsilon}-t_i)A}\gamma^i_{t_i}({t}_{i})-{\gamma}^{\varepsilon}_{t_{\varepsilon}}(t_{\varepsilon})|^4\\
	&\leq&306\sum_{i=0}^{\infty}\frac{1}{2^i}\bigg{(}\frac{\varepsilon}{2^i}\bigg{)}^{\frac{2}{3}}
	\leq C{\varepsilon}^{\frac{2}{3}}.
\end{eqnarray*}
Then for any $\varrho>0$, by (\ref{sss}) and (\ref{gamma}), there is sufficiently small $\varepsilon>0$  such that
$$
\hat{t}\leq {t}_{\varepsilon}< T,  \
2|{t}_{\varepsilon}-\hat{t}|+2\sum_{i=0}^{\infty}\frac{1}{2^i}({t_{\varepsilon}}-{t}_{i})+|\partial_t{\varphi}({\gamma}^{\varepsilon}_{{t}_{\varepsilon}})-\partial_t{\varphi}(\hat{\gamma}_{\hat{t}})|
+|{\partial_t^o}g({\gamma}^{\varepsilon}_{{t}_{\varepsilon}})-{ \partial_t^o}g(\hat{\gamma}_{\hat{t}})|\leq \frac{\varrho}{3},$$
$$
\left|\langle A^*\partial_x\varphi({\gamma}^{\varepsilon}_{{t}_{\varepsilon}}),  {\gamma}^{\varepsilon}_{{t}_{\varepsilon}}({t}_{\varepsilon})\rangle_H-\langle A^*\partial_x{\varphi}(\hat{\gamma}_{\hat{t}}),\hat{\gamma}_{\hat{t}}(\hat{t}\,)\rangle_H\right| \leq \frac{\varrho}{3}, \ \mbox{and}\  |I|\leq \frac{\varrho}{3},
$$
where
\begin{eqnarray*}
	I&:=&{\mathbf{H}}^{\varepsilon}({\gamma}^{\varepsilon}_{{t}_{\varepsilon}},
	\partial_x\varphi({\gamma}^{\varepsilon}_{{t}_{\varepsilon}})+ \partial_xg_2({\gamma}^{\varepsilon}_{{t}_{\varepsilon}}),\partial_{xx}\varphi({\gamma}^{\varepsilon}_{{t}_{\varepsilon}})+ \partial_{xx}g_2({\gamma}^{\varepsilon}_{{t}_{\varepsilon}}))\\[2mm]
	 &&-{\mathbf{H}}(\hat{\gamma}_{\hat{t}},\partial_x{\varphi}(\hat{\gamma}_{\hat{t}})+\partial_xg(\hat{\gamma}_{\hat{t}}),\partial_{xx}{\varphi}(\hat{\gamma}_{\hat{t}})+\partial_{xx}g(\hat{\gamma}_{\hat{t}})),\\
	 g_2({\gamma}^{\varepsilon}_{{t}_{\varepsilon}})&:=&g({\gamma}^{\varepsilon}_{{t}_{\varepsilon}})
+\overline{{\Upsilon}}({\gamma}^{\varepsilon}_{{t}_{\varepsilon}},\hat{\gamma}_{{\hat{t}}})
	+\sum_{i=0}^{\infty}\frac{1}{2^i}\overline{{\Upsilon}}({\gamma}^{\varepsilon}_{t_{\varepsilon}},\gamma^i_{t_i}),
\end{eqnarray*}
and
\begin{eqnarray*}
	&&{\mathbf{H}}^{\varepsilon}(\gamma_t,r,p,l)\\&:=&\sup_{u\in{
			{U}}}\left[
	\langle p, F^{\varepsilon}(\gamma_t,u)\rangle_{H}+\frac{1}{2}\mbox{Tr}(l G^{\varepsilon}(\gamma_t,u){G^{\varepsilon}}^*(\gamma_t,u))+q^{\varepsilon}(\gamma_t,r,p{G^{\varepsilon}}(\gamma_t,u),u)\right],  \\
	&&\qquad\qquad\qquad\qquad\qquad\qquad (t,\gamma_t,r,p,l)\in [0,T]\times {\Lambda}\times \mathbb{R}\times H\times {\mathcal{S}}(H).
\end{eqnarray*}
Since $v^{\varepsilon}$ is a viscosity {sub-solution} of PHJB equation (\ref{hjb1}) with coefficients $(F^{\varepsilon}, G^{\varepsilon}, $ $  q^{\varepsilon}, \phi^{\varepsilon})$, we have
\begin{eqnarray*}
	&&\partial_t\varphi({\gamma}^{\varepsilon}_{{t}_{\varepsilon}})+{ \partial_t^o}g_2({\gamma}^{\varepsilon}_{{t}_{\varepsilon}})
	+\langle A^*\partial_x\varphi({\gamma}^{\varepsilon}_{{t}_{\varepsilon}}),  {\gamma}^{\varepsilon}_{{t}_{\varepsilon}}({t}_{\varepsilon})\rangle_H\\
	&&+{\mathbf{H}}^{\varepsilon}({\gamma}^{\varepsilon}_{{t}_{\varepsilon}},
	\partial_x\varphi({\gamma}^{\varepsilon}_{{t}_{\varepsilon}})+\partial_xg_2({\gamma}^{\varepsilon}_{{t}_{\varepsilon}}),
	\partial_{xx}\varphi({\gamma}^{\varepsilon}_{{t}_{\varepsilon}})+\partial_{xx}g_2({\gamma}^{\varepsilon}_{{t}_{\varepsilon}}))\geq0.
\end{eqnarray*}
Thus
\begin{eqnarray*}
	0&\leq&  \partial_t{\varphi}({\gamma}^{\varepsilon}_{{t}_{\varepsilon}})+{ \partial_t^o}g({\gamma}^{\varepsilon}_{{t}_{\varepsilon}})
	+2({t}_{\varepsilon}-\hat{t}\,)+2\sum_{i=0}^{\infty}\frac{1}{2^i}({t_{\varepsilon}}-{t}_{i})
	+\langle A^*\partial_x\varphi({\gamma}^{\varepsilon}_{{t}_{\varepsilon}}), {\gamma}^{\varepsilon}_{{t}_{\varepsilon}}({t}_{\varepsilon})\rangle_H\\
	 &&+{\mathbf{H}}(\hat{\gamma}_{\hat{t}},\partial_x{\varphi}(\hat{\gamma}_{\hat{t}})+\partial_xg(\hat{\gamma}_{\hat{t}}),\partial_{xx}{\varphi}(\hat{\gamma}_{\hat{t}})+\partial_{xx}g(\hat{\gamma}_{\hat{t}}))+I\\[3mm]
	&\leq&\partial_t{\varphi}(\hat{\gamma}_{\hat{t}})+{\partial_t^o}g(\hat{\gamma}_{\hat{t}})+\langle A^*\partial_x{\varphi}(\hat{\gamma}_{\hat{t}}), \hat{\gamma}_{\hat{t}}(\hat{t}\,)\rangle_H\\
	&&
	 +{\mathbf{H}}(\hat{\gamma}_{\hat{t}},\partial_x{\varphi}(\hat{\gamma}_{\hat{t}})+\partial_xg(\hat{\gamma}_{\hat{t}}),\partial_{xx}{\varphi}(\hat{\gamma}_{\hat{t}})+\partial_{xx}g(\hat{\gamma}_{\hat{t}}))+\varrho.
\end{eqnarray*}
Letting $\varrho\downarrow 0$, we show that
\begin{eqnarray*}
	&&\partial_t{\varphi}(\hat{\gamma}_{\hat{t}})+{ \partial_t^o}g(\hat{\gamma}_{\hat{t}})+\langle A^*\partial_x{\varphi}(\hat{\gamma}_{\hat{t}}), \hat{\gamma}_{\hat{t}}(\hat{t}\,)\rangle_H\\
	 &+&{\mathbf{H}}(\hat{\gamma}_{\hat{t}},\partial_x{\varphi}(\hat{\gamma}_{\hat{t}})+\partial_xg(\hat{\gamma}_{\hat{t}}),\partial_{xx}{\varphi}(\hat{\gamma}_{\hat{t}})+\partial_{xx}g(\hat{\gamma}_{\hat{t}}))\geq0.
\end{eqnarray*}
Since ${\varphi}\in \Phi_{\hat{t}}$ and $g\in {\mathcal{G}}_{\hat{t}}$ with ${\hat{t}}\in [0,T)$  are arbitrary, we see that $v$ is a viscosity sub-solution of PHJB equation (\ref{hjb1}) with the coefficients $F, G, q, \phi$.  The proof  is complete.
\end{proof}

\newpage
\section{Viscosity solutions to  PHJB equations: Crandall-Ishii lemma}
\par
In this chapter,  we extend Crandall-Ishii lemma to functionals defined on the space of  $H$-valued continuous paths. 
It is the cornerstone of the theory of viscosity solutions, and is the key  to prove the comparison theorem  that will be given in the next chapter. \par
Let$\{e_i\}_{i\geq1}$ be an orthonormal basis of $H$ such that $e_i\in {\mathcal{D}}(A^*)$ for all $i\geq 1$. For every $N\geq 1$, we denote by $H_N$ the vector space generated by vectors $e_1,\ldots,e_N$, and by $P_N$  the orthogonal projection onto $H_N$. Define $Q_N:=I-P_N$.  We have
the orthogonal decomposition $H=H_N+H^\bot_N$ with $H^\bot_N:=Q_NH$. We denote by $x_N,y_N, \ldots$ points in $H_N$ and by $x^-_N,y^-_N,\ldots$ points in $H^\bot_N$, and write $x=(x_N,x^-_N)$ for $x\in H$.
\begin{definition}\label{definition0513}  For every $N\geq 1$, let $(\hat{t},\hat{x}_N)\in (0,T)\times H_N$ and $f:[0,T]\times H_N\rightarrow \mathbb{R}$
	be an upper semicontinuous function bounded from above. We say $f\in \Phi_N(\hat{t},\hat{x}_N)$ if there is a constant $r>0$ such that,
	for every $L>0$, there is  a constant {$\tilde{C}_0\geq0$} depending only on $L$ such that, for every  function $\varphi\in C^{1,2}([0,T]\times H_N)$ such that the function
	$$f(s,y_N)-\varphi(s,y_N), \quad [0,T]\times H_N$$
	 attains a  maximum  at a point $(t,x_N)\in (0,T)\times H_N$, 
	if   
	\begin{eqnarray*}
		|t-\hat{t}|+|x_N-\hat{x}_N|<r,\quad
		(|f|+|\nabla_x\varphi|
		+|\nabla^2_x\varphi|)(t,x_N)\leq L,
	\end{eqnarray*}
	then
	\begin{eqnarray}\label{0528110}
	{\varphi}_{t}(t,x_N) \geq -\tilde{C}_0.
	\end{eqnarray}
\end{definition}

\begin{definition}\label{definition0607}
	Let $\hat{t}\in [0,T)$ be fixed and  $w:\Lambda\rightarrow \mathbb{R}$ be an upper semicontinuous function bounded from above.
	For every $N\geq 1$, define,  for $(t,x_N)\in [0,T]\times H_N$,
	\begin{eqnarray*}
		&&\widetilde{w}^{\hat{t},N}(t,x_N):=\sup_{\scriptsize\begin{matrix}
				(\xi_t(t))_N=x_N\\ \xi_t\in \Lambda^{\hat{t}} \end{matrix}}
		w(\xi_t), \quad   t\in [\hat{t},T];\\
		&&\widetilde{w}^{\hat{t},N}(t,x_N):=\widetilde{w}^{\hat{t},N}(\hat{t},x_N)-(\hat{t}-t)^{\frac{1}{2}}, \ \   t\in[0,\hat{t}\,).
	\end{eqnarray*}
	Let $\widetilde{w}^{\hat{t},N,*}$ be the upper
	semi-continuous envelope of $\widetilde{w}^{\hat{t},N}$ (see    
	\cite[Definition D.10]{fab1}), i.e.,
	$$
	\widetilde{w}^{\hat{t},N,*}(t,x_N)=\limsup_{\scriptsize\begin{matrix} (s,y_N)\in [0,T]\times H_N\\ (s,y_N)\rightarrow(t,x_N)\end{matrix}}\widetilde{w}^{\hat{t},N}(s,y_N).
	$$
\end{definition}
In what follows, by a $modulus \ of \ continuity$, we mean a continuous function $\rho_1:[0,\infty)\rightarrow[0,\infty)$, with $\rho_1(0)=0$; by a $local\ modulus\ of $ $ continuity$, we mean  a continuous function $\rho_1:[0,\infty)\times[0,\infty) \rightarrow[0,\infty)$ such that the function $\rho_1(\cdot,r)$ is a modulus of continuity for each $r\geq0$ and $\rho_1$ is non-decreasing in the  second variable.
\begin{theorem}\label{theorem0513} (Crandall-Ishii lemma)\ \  Let $N\geq1$. Let both functionals $w_1,w_2:\Lambda\rightarrow \mathbb{R}$ be upper semi-continuous and bounded from above such that
	\begin{eqnarray}\label{05131}
	\limsup_{||\gamma_t||_0\rightarrow\infty}\frac{w_1(\gamma_t)}{||\gamma_t||_0}<0;\ \ \  \limsup_{||\gamma_t||_0\rightarrow\infty}\frac{w_2(\gamma_t)}{||\gamma_t||_0}<0.
	\end{eqnarray}
	Recalling  $\Lambda^t\otimes\Lambda^t:=\{(\gamma_s,\eta_s)|\gamma_s,\eta_s\in \Lambda^t\}$ for all $ t\in [0,T]$.
	Let $\varphi\in C^2( H_N\times H_N)$ be such that the functional
	$$
	w_1(\gamma_t)+w_2(\eta_t)-\varphi((\gamma_t(t))_N,(\eta_t(t))_N), \quad (\gamma_t, \eta_t)\in \Lambda^{\hat{t}}\otimes \Lambda^{\hat{t}}
	$$
	has a strict
	maximum at a point $(\hat{\gamma}_{\hat{t}},\hat{\eta}_{\hat{t}})$ with $\hat{t}\in (0,T)$ and there is a  strictly increasing function ${\tilde{\rho}}:[0,\infty)\rightarrow[0,\infty)$ with $\tilde{\rho}(0)=0$ such that, for all $\gamma_t, \eta_t\in \Lambda^{\hat{t}}$,
	\begin{eqnarray}\label{10101}
	\begin{aligned}
	&\quad w_1(\hat{\gamma}_{\hat{t}})+w_2(\hat{\eta}_{\hat{t}})-\varphi((\hat{\gamma}_{\hat{t}}(\hat{t}\,))_N,(\hat{\eta}_{\hat{t}}(\hat{t}\,))_N)\\
	&\geq w_1(\gamma_t)+w_2(\eta_t)-\varphi((\gamma_t(t))_N,(\eta_t(t))_N)\\
	&\quad+\tilde{\rho}(|t-\hat{t}|^2+||\gamma_t-\hat{\gamma}^A_{\hat{t},t}||^6_0+||\eta_t-\hat{\eta}^A_{\hat{t},t}||^6_0).
	\end{aligned}
	\end{eqnarray} 
	Assume, moreover, that $\widetilde{w}_{1}^{\hat{t},N,*}\in \Phi_N(\hat{t},(\hat{\gamma}_{\hat{t}}(\hat{t}\,))_N)$ and $\widetilde{w}_{2}^{\hat{t},N,*}\in \Phi_N(\hat{t},(\hat{\eta}_{\hat{t}}(\hat{t}\,))_N)$, and there is a local modulus of continuity  $\rho_1$  such that, for all  $\hat{t}\leq t\leq s\leq T, \ \gamma_t\in \Lambda $ and ${\gamma_{t,s}^{A,N}:=\gamma_{t,s}^A+((\gamma_t(t))_N-(e^{(s-t)A}\gamma_t(t))_N){\mathbf{1}}_{[0,s]}}$,
	\begin{eqnarray}\label{0608a}
	\begin{aligned}
	&
	w_1(\gamma_t)-w_1(\gamma_{t,s}^{A,N})\leq \rho_1(|s-t|,||\gamma_t||_0),\\
	&  w_2(\gamma_t)-w_2(\gamma_{t,s}^{A,N})\leq \rho_1(|s-t|,||\gamma_t||_0).
	\end{aligned}
	\end{eqnarray}
	Then for every $\kappa>0$, there exist a
	sequence  $(t_{k},\gamma^{k}_{t_k}; s_{k},\eta^{k}_{s_k})\in ([\hat{t},T]\times \Lambda^{\hat{t}})^2$ and a
	sequence of functionals $(\varphi_{1,k}, \psi_{1,k}, \varphi_{2,k}, \psi_{2,k})\in  \Phi_{\hat{t}}\times \Phi_{\hat{t}}\times{\mathcal{G}}_{t_k}\times {\mathcal{G}}_{s_k}$ bounded from below
	such that both functionals
	$$
	w_{1}(\gamma_t)-\varphi_{1,k}(\gamma_t)-\varphi_{2,k}(\gamma_t), \quad \gamma_t\in \Lambda^{t_k}
	$$
	and
	$$
	w_{2}(\eta_t)-\psi_{1,k}(\eta_{t})-\psi_{2,k}(\eta_{t}), \quad \eta_t\in \Lambda^{s_k}
	$$
	attain the strict  maximum $0$  at  $\gamma^{k}_{t_k}$ and $\eta^{k}_{s_k}$, respectively, and moreover,
	\begin{eqnarray}\label{0608v}
	&&\left(t_{k}, {\gamma}^{k}_{t_{k}}, w_1({\gamma}^{k}_{t_{k}}),(\partial_t\varphi_{1,k},\partial_x\varphi_{1,k},\partial_{xx}\varphi_{1,k})({\gamma}^{k}_{t_{k}}),
	(\partial_t^o\varphi_{2,k},\partial_x\varphi_{2,k},\partial_{xx}\varphi_{2,k})({\gamma}^{k}_{t_{k}})\right)\nonumber\\
	&&\underrightarrow{k\rightarrow\infty}\left({\hat{t}},\hat{{\gamma}}_{{\hat{t}}}, w_1(\hat{{\gamma}}_{{\hat{t}}}),(b_1, \nabla_{x_1}\varphi((\hat{{\gamma}}_{{\hat{t}}}({{\hat{t}}}))_N,(\hat{{\eta}}_{{\hat{t}}}({{\hat{t}}}))_N), X_N), (0,\mathbf{0},\mathbf{0})\right)
	\end{eqnarray}
	and
	\begin{eqnarray}\label{0608vw}
	&&\left(s_{k}, {\eta}^{k}_{s_{k}}, w_2({\eta}^{k}_{s_{k}}),(\partial_t\psi_{1,k},\partial_x\psi_{1,k},\partial_{xx}\psi_{1,k})({\eta}^{k}_{s_{k}}), ( \partial_t^o\psi_{2,k},\partial_x\psi_{2,k},\partial_{xx}\psi_{2,k})({\eta}^{k}_{s_{k}})\right)\nonumber\\
	&&\underrightarrow{k\rightarrow\infty}\left({\hat{t}},\hat{{\eta}}_{{\hat{t}}}, w_2(\hat{{\eta}}_{{\hat{t}}}), (b_2, \nabla_{x_2}\varphi((\hat{{\gamma}}_{{\hat{t}}}({{\hat{t}}}))_N,(\hat{{\eta}}_{{\hat{t}}}({{\hat{t}}}))_N), Y_N), (0,\mathbf{0},\mathbf{0})\right),
	\end{eqnarray}
	with $b_{1}+b_{2}=0$  and $X_N, Y_N\in \mathcal{S}(H_N)$ satisfying  the following inequality
	\begin{eqnarray}\label{II0615}
	-\left(\frac{1}{\kappa}+|A|\right)I\leq
	\left(\begin{matrix}
	X_N&0\\
	0&Y_N \end{matrix}\right)\leq A+\kappa A^2
	\end{eqnarray}
for $A:=\nabla^2_{x}\varphi((\hat{{\gamma}}_{{\hat{t}}}({{\hat{t}}}))_N,(\hat{{\eta}}_{{\hat{t}}}({{\hat{t}}}))_N). $
Here,  $\nabla^2_{x}\varphi$  is  the Hessian of $\varphi$
	with respect to  the  variable $x=(x_1,x_2)\in H_N\times H_N$, and $\nabla_{x_1}\varphi$ and $\nabla_{x_2}\varphi$ are  gradients of $\varphi$
	with respect to  the first and second variables, respectively.  
\end{theorem}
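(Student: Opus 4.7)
The strategy is to reduce the statement to the classical finite-dimensional parabolic Crandall–Ishii lemma applied to the envelopes $\widetilde w_1^{\hat t,N,*}$ and $\widetilde w_2^{\hat t,N,*}$ on $[0,T]\times H_N$, and then to lift the resulting test functions back to the path space. First I would introduce
\[
 u(t,x_N,y_N) := \widetilde w_1^{\hat t,N,*}(t,x_N)+\widetilde w_2^{\hat t,N,*}(t,y_N)-\varphi(x_N,y_N).
\]
The inequality $\widetilde w_i^{\hat t,N,*}\ge w_i$ (applied to the path itself) combined with the doubling inequality (\ref{10101}) applied to pairs of paths with prescribed $H_N$-valued endpoints gives that $u$ attains its supremum at $(\hat t,(\hat\gamma_{\hat t}(\hat t))_N,(\hat\eta_{\hat t}(\hat t))_N)$; the coercivity assumption (\ref{05131}) and the continuity condition (\ref{0608a}) ensure that $\widetilde w_i^{\hat t,N,*}$ is upper semi-continuous and bounded from above, while the assumption $\widetilde w_i^{\hat t,N,*}\in\Phi_N(\hat t,\cdot)$ supplies the one-sided parabolic bound (\ref{0528110}) that the finite-dimensional parabolic Crandall–Ishii lemma requires.

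\paragraph*{Finite-dimensional jets and lifting.} Next I apply the classical parabolic Crandall–Ishii lemma in $[0,T]\times H_N\times H_N$ to $u$ at its maximum with the purely spatial test function $\varphi(x_N,y_N)$. This produces, for each $\kappa>0$, sequences $(t_k^1,x_N^k)\to(\hat t,(\hat\gamma_{\hat t}(\hat t))_N)$, $(t_k^2,y_N^k)\to(\hat t,(\hat\eta_{\hat t}(\hat t))_N)$ and smooth functions $\phi_k^1(t,x_N)$, $\phi_k^2(t,y_N)$ such that $\widetilde w_i^{\hat t,N,*}-\phi_k^i$ has a strict maximum at the corresponding point, with $(\partial_t\phi_k^i,\nabla_x\phi_k^i,\nabla_x^2\phi_k^i)\to(b_i,\nabla_{x_i}\varphi(\cdot),Z_i)$ where $b_1+b_2=0$ (no $t$-dependence in $\varphi$) and $Z_1=X_N$, $Z_2=Y_N$ satisfy (\ref{II0615}). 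To lift, I would then choose paths $\gamma_{t_k}^k$ (with $t_k=t_k^1$) and $\eta_{s_k}^k$ (with $s_k=t_k^2$) that nearly attain the defining suprema of $\widetilde w_1^{\hat t,N}(t_k^1,x_N^k)$ and $\widetilde w_2^{\hat t,N}(t_k^2,y_N^k)$. The cylindrical lift
\[
 \varphi_{1,k}(\xi_r):=\phi_k^1(r,(\xi_r(r))_N),\qquad \psi_{1,k}(\zeta_r):=\phi_k^2(r,(\zeta_r(r))_N)
\]
belongs to $\Phi_{\hat t}$: because the basis satisfies $\{e_i\}\subset \mathcal D(A^*)$, we have $\partial_x\varphi_{1,k}\in H_N\subset\mathcal D(A^*)$, so $A^*\partial_x\varphi_{1,k}\in C_p^0(\hat\Lambda^{\hat t},H)$, and the same for $\psi_{1,k}$.

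\paragraph*{Gauge-type penalty and conclusion.} Since the finite-dimensional strict max only pins down the endpoint projection $(\xi_r(r))_N$, an additional path-space penalty is needed to promote it to a strict maximum over $\Lambda^{t_k}$. I would take $\varphi_{2,k}$ of the form
\[
 \varphi_{2,k}(\xi_r) := c_k\,\overline\Upsilon(\xi_r,\gamma_{t_k}^k)+\sum_{i\ge 0}\delta_i^k\,\overline\Upsilon(\xi_r,\gamma_{t_k^i}^{k,i}),
\]
obtained by applying the modified Borwein–Preiss variational principle (Lemma~\ref{theoremleft}) to the upper semi-continuous, bounded-above functional $w_1-\varphi_{1,k}$ on $\Lambda^{t_k}$ with the gauge $\overline\Upsilon$; by Remark~\ref{remarkv0129120240206} this lies in $\mathcal G_{t_k}$. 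The inequality $w_1(\xi_r)\le\widetilde w_1^{\hat t,N,*}(r,(\xi_r(r))_N)$ together with the strict maximum of $\widetilde w_1^{\hat t,N,*}-\phi_k^1$ converts to an exact strict maximum of $w_1-\varphi_{1,k}-\varphi_{2,k}$ at $\gamma_{t_k}^k$. The derivatives of $\varphi_{2,k}$ at $\gamma_{t_k}^k$ tend to zero as $k\to\infty$: by Lemma~\ref{theoremS}, $|\partial_x\Upsilon^{3,3}|$ and $|\partial_{xx}\Upsilon^{3,3}|$ vanish at the path origin to orders that dominate the $d_\infty$-radii $\varepsilon/2^i$ coming from Borwein–Preiss, and $\partial_t^o\overline\Upsilon(\cdot,\gamma_{t_k^i}^{k,i})|_{\gamma_{t_k}^k}=2(t_k-t_k^i)\to 0$. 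An analogous construction on the $\eta$-side yields $(\psi_{1,k},\psi_{2,k})$, and the convergences (\ref{0608v})–(\ref{0608vw}) and the matrix inequality (\ref{II0615}) follow by combining the corresponding limits from the finite-dimensional lemma with the vanishing of the gauge derivatives.

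\paragraph*{Main obstacle.} The principal difficulty is precisely this bridge between the finite-dimensional and path-dependent settings: a strict maximum of $\widetilde w_1^{\hat t,N,*}-\phi_k^1$ in $[0,T]\times H_N$ controls only the projection $(\xi_r(r))_N$, and to recover a strict maximum in $\Lambda^{t_k}$ one must add a penalty that is (i) smooth enough in the $C_p^{1,2}$/$\mathcal G$-sense, (ii) strong enough as a gauge on $(\Lambda^{\hat t}\!\otimes\!\Lambda^{\hat t},d_{1,\infty})$ to invoke Lemma~\ref{theoremleft1}, and yet (iii) flat enough at the approximate maximizer that its contribution to the limiting jets is zero. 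This is why the delicate third-order vanishing of $\partial_x\Upsilon,\partial_{xx}\Upsilon$ at the origin (Lemmas~\ref{theoremS}, \ref{theoremS0719}) is indispensable, as is the bound (\ref{0528110}) propagating through the envelope $\widetilde w_i^{\hat t,N,*}$ via (\ref{0608a}).
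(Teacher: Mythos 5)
Your overall architecture matches the paper's: reduce to the finite-dimensional parabolic Crandall--Ishii lemma applied to the envelopes $\widetilde w_i^{\hat t,N,*}$ on $[0,T]\times H_N$ (the paper's Lemma~\ref{lemma4.30615} plus Theorem 8.3 of~\cite{cran2} and the smoothing of~\cite[Lemma 5.4, Ch.~4]{yong}), lift the finite-dimensional test function cylindrically, and then add a Borwein--Preiss $\overline\Upsilon$-penalty to promote the finite-dimensional strict maximum to a path-space one while keeping its derivatives negligible. However, there are two genuine gaps in how you complete the lift. First, you pick paths $\gamma^k_{t_k}$ that ``nearly attain'' $\widetilde w_1^{\hat t,N}(t_k^1,x_N^k)$ and then treat them as the maximizers produced by Borwein--Preiss. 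But the Borwein--Preiss principle returns a \emph{different} extremal point $\gamma^{k,j}_{t_{k,j}}$ that only approaches the near-optimizer as the perturbation parameter shrinks. The paper therefore introduces a double index $(k,j)$ and must prove (its Lemma~\ref{lemma4.40615}) that $(t_{k,j},(\gamma^{k,j}_{t_{k,j}}(t_{k,j}))_N)\to(t_k,x^k)$ and that $w_1(\gamma^{k,j}_{t_{k,j}})\to\widetilde w_1^{\hat t,N,*}(t_k,x^k)$ as $j\to\infty$, before extracting a diagonal subsequence $j_k$. Without this step the strict-maximum claim and the limit~(\ref{0608v}) do not follow.

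Second, you assert that the path-level convergences in~(\ref{0608v})--(\ref{0608vw}), in particular $\gamma^k_{t_k}\to\hat\gamma_{\hat t}$ in $(\Lambda^{\hat t},d_\infty)$, ``follow by combining the corresponding limits from the finite-dimensional lemma with the vanishing of the gauge derivatives.'' This is where the argument actually breaks. The finite-dimensional lemma only controls the projected endpoint $(\gamma^k_{t_k}(t_k))_N$ and the time $t_k$; it says nothing about the $Q_N$-component of the endpoint or the earlier values of the path, so the full path convergence is not a consequence of it. In the paper, this is precisely where the strict-doubling modulus~(\ref{10101}) and the time-shift continuity~(\ref{0608a}) are used: one first bounds $\|\gamma^{k,j_k}_{t_{k,j_k}}\|_0$ uniformly via~(\ref{05131}), then replaces $\eta^{k,j_k}$ by its shifted version $\eta^{A,N}_{s_{k,j_k},t_{k,j_k}}$ using~(\ref{0608a}) to align the two times, and finally invokes~(\ref{10101}) (the strict-maximum inequality with the $\tilde\rho(\cdot)$ penalty) to force $\gamma^{k,j_k}_{t_{k,j_k}}\to\hat\gamma_{\hat t}$ and $\eta^{k,j_k}_{s_{k,j_k}}\to\hat\eta_{\hat t}$ in $\Lambda^{\hat t}$. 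You state~(\ref{10101}) and~(\ref{0608a}) as hypotheses but never deploy them at this point; as written, the conclusion is unsupported.
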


\begin{proof}
From Lemma \ref{lemma4.30615}, we know that the function
\begin{eqnarray}\label{05211}
\begin{aligned}
&\widetilde{w}_{1}^{\hat{t},N,*}(t,x)+ \widetilde{w}_{2}^{\hat{t},N,*}(t,y)-\varphi(x,y), \quad
(t,x,y) \in [0,T]\times H_N\times H_N \\
& \mbox{ is maximized at} \ (\hat{t},(\hat{{\gamma}}_{{\hat{t}}}({{\hat{t}}}))_N,(\hat{{\eta}}_{{\hat{t}}}({{\hat{t}}}))_N).
\end{aligned}
\end{eqnarray}
Moreover, we have
$\widetilde{w}_{1}^{\hat{t},N,*}(\hat{t},(\hat{{\gamma}}_{{\hat{t}}}({{\hat{t}}}))_N)={w}_{1}(\hat{{\gamma}}_{{\hat{t}}})$, $\widetilde{w}_{2}^{\hat{t},N,*}(\hat{t},(\hat{{\eta}}_{{\hat{t}}}({{\hat{t}}}))_N)={w}_{2}(\hat{{\eta}}_{{\hat{t}}})$.
Then, by $(\widetilde{w}_{1}^{\hat{t},N,*}, \, \widetilde{w}_{2}^{\hat{t},N,*})\in \Phi_N(\hat{t},(\hat{\gamma}_{\hat{t}}(\hat{t}\,))_N) \times \Phi_N(\hat{t},(\hat{\eta}_{\hat{t}}(\hat{t}\,))_N)$ and Remark \ref{remarkv0528}, Theorem 8.3 in \cite{cran2} and
Lemma 5.4 of Chapter 4 in \cite{yong} yield the existence of  sequences of   functions
$\tilde{\varphi}_k,\tilde{\psi}_k\in C^{1,2}([0,T]\times  H_N)$  bounded from below such that both functions
$$\widetilde{w}_{1}^{\hat{t},N,*}(t, x)-\tilde{\varphi}_k(t,x), \quad (t,x)\in [0,T]\times H_N$$
and
$$\widetilde{w}_{2}^{\hat{t},N,*}(s,y)-\tilde{\psi}_k(s,y), \quad (s,y)\in [0,T]\times H_N$$
 attain a strict  maximum $0$ at some points  $(t_k,x^{k})\in (0,T)\times  H_N$  and $(s_k,y^{k})\in (0,T)\times  H_N$, respectively, and moreover,
\begin{eqnarray}\label{0607a}
&&\lim_{k\to \infty}\left(t_{k}, x^k, \widetilde{w}_{1}^{\hat{t},N,*}(t_{k}, x^k), (\tilde{\varphi}_k)_t(t_{k}, x^k),\nabla_x\tilde{\varphi}_k(t_{k}, x^k),\nabla^2_x\tilde{\varphi}_k(t_{k}, x^k)\right)\nonumber\\
&&\qquad=\left({\hat{t}},(\hat{{\gamma}}_{{\hat{t}}}(\hat{t}\,))_N, w_1(\hat{{\gamma}}_{{\hat{t}}}),b_1, \nabla_{x_1}\varphi((\hat{{\gamma}}_{{\hat{t}}}({{\hat{t}}}))_N,(\hat{{\eta}}_{{\hat{t}}}({{\hat{t}}}))_N), X_N\right)
\end{eqnarray}
and
\begin{eqnarray}\label{0607b}
&&\lim_{k\to \infty}\left(s_{k}, y^k, \widetilde{w}_{2}^{\hat{t},N,*}(s_{k}, y^k),(\tilde{\psi}_k)_t(s_{k}, y^k),\nabla_x\tilde{\psi}_k(s_{k}, y^k),\nabla^2_x\tilde{\psi}_k(s_{k}, y^k)\right)\nonumber\\
&&\qquad =\left({\hat{t}},\hat{{\eta}}_{{\hat{t}}}(\hat{t}\,), w_2(\hat{{\eta}}_{{\hat{t}}}),b_2, \nabla_{x_2}\varphi((\hat{{\gamma}}_{{\hat{t}}}({{\hat{t}}}))_N,(\hat{{\eta}}_{{\hat{t}}}({{\hat{t}}}))_N), Y_N\right),
\end{eqnarray}
with $b_{1}+b_{2}=0$ and the inequality~(\ref{II0615}) being satisfied.

We assert that   $(t_{k}, s_{k})\in [\hat{t},T)\times [\hat{t},T)$ for all $k\ge 1$. Otherwise, for example, there exists a subsequence of $\{t_{k}\}_{k\geq1}$ still denoted by itself such that $t_{k}<\hat{t}$ for all $k\geq1$.
Since the function
$$\widetilde{w}_{1}^{\hat{t},N,*}(t, x)-\tilde{\varphi}_k(t,x), \quad (t,x)\in [0,T]\times  H_N $$
attains the maximum at $(t_{k},x^{k})$, we obtain that
$$
(\tilde{\varphi}_k)_t(t_{k},x^{k})={\frac{1}{2}}(\hat{t}-t_{k})^{-\frac{1}{2}}\rightarrow\infty,\ \mbox{as}\ k\rightarrow\infty,
$$
which  contradicts  that $(\tilde{\varphi}_k)_t(t_{k},x^{k})\rightarrow b_{1}\in \mathbb{R}$. 
\par
Now we consider the functionals,
for $(t,\gamma_t), (s,\eta_s)\in [\hat{t},T]\times{{\Lambda}}$,
\begin{eqnarray}\label{4.11116}
\quad\quad\quad       \Gamma^1_{k}(\gamma_t):=w_{1}(\gamma_t)-\tilde{\varphi}_k(t,(\gamma_t(t))_N),\  \Gamma^2_{k}(\eta_s):= w_{2}(\eta_s)
-\tilde{\psi}_k(s,(\eta_s(s))_N).
\end{eqnarray}
Clearly,  both functionals  $\Gamma^1_{k}$ and $\Gamma^2_{k}$ are upper semi-continuous and  bounded from above on ${\Lambda}^{\hat{t}}$.  Consider the sequence of positive numbers $\{\delta_i\}_{i\geq0}$  by
$\delta_i=\frac{1}{2^i}$ for all $i\geq0$. Since
$\overline{\Upsilon}(\cdot,\cdot)$ is a gauge-type function on $(\Lambda^{\hat{t}}, d_{\infty})$, for every $k$ and $j>0$,
applying  Lemma \ref{theoremleft} to $\Gamma^1_{k}$ and  $\Gamma^2_{k}$, respectively, we have that,
for every  $(\check{t}_{0},\check{\gamma}^{0}_{\check{t}_{0}}; \check{s}_{0},\check{\eta}^{0}_{\check{s}_{0}})\in ([\hat{t},T]\times  \Lambda^{\hat{t}})^2$ satisfying
\begin{eqnarray}\label{20210509a}
\begin{aligned}
&
\Gamma^1_{k}(\check{\gamma}^{0}_{\check{t}_{0}})\geq \sup_{(t,\gamma_t)\in [\hat{t},T]\times \Lambda^{\hat{t}}}\Gamma^1_{k}(\gamma_t)-\frac{1}{j},\\
&\Gamma^2_{k}(\check{\eta}^{0}_{\check{s}_{0}})\geq \sup_{(s,\eta_s)\in [\hat{t},T]\times \Lambda^{\hat{t}}}\Gamma^2_{k}(\eta_s)-\frac{1}{j},
\end{aligned}
\end{eqnarray}
there exist $(t_{k,j},{\gamma}^{k,j}_{t_{k,j}}; s_{k,j},{\eta}^{k,j}_{s_{k,j}})\in ([\hat{t},T]\times \Lambda^{\hat{t}})^2,\,  j\ge 1,$ and two sequences $\{(\check{t}_{i},\check{\gamma}^{i}_{\check{t}_{i}})\}_{i\geq1}\subset
[\check{t}_{0},T]\times \Lambda^{\hat{t}}$, $\{(\check{s}_{i},\check{\eta}^{i}_{\check{s}_{i}})\}_{i\geq1}\subset
[\check{s}_{0},T]\times \Lambda^{\hat{t}}$ such that
\begin{description}
	\item[(i)] $\displaystyle \overline{\Upsilon}(\check{\gamma}^{0}_{\check{t}_{0}},{\gamma}^{k,j}_{t_{k,j}})\vee\overline{\Upsilon}(\check{\eta}^0_{\check{s}_0},{\eta}^{k,j}_{s_{k,j}})\leq \frac{1}{j},\,\,
	\overline{\Upsilon}(\check{\gamma}^{i}_{\check{t}_{i}},{\gamma}^{k,j}_{t_{k,j}})
	\vee\overline{\Upsilon}(\check{\eta}^{i}_{\check{s}_{i}},{\eta}^{k,j}_{s_{k,j}})\leq \frac{1}{2^ij}$
	and $\displaystyle \check{t}_{i}\uparrow t_{k,j}$, $\check{s}_{i}\uparrow s_{k,j}$ as $i\rightarrow\infty$,
	\item[(ii)]  $\displaystyle \Gamma^1_k({\gamma}^{k,j}_{t_{k,j}})
	-\sum_{i=0}^{\infty}\frac{1}{2^i}\overline{\Upsilon}(\check{\gamma}^{i}_{\check{t}_{i}},{\gamma}^{k,j}_{t_{k,j}})
	\geq \Gamma^1_{k}(\check{\gamma}^{0}_{\check{t}_{0}})$,\\
	\ \ \ $\displaystyle \Gamma^2_k({\eta}^{k,j}_{s_{k,j}})
	-\sum_{i=0}^{\infty}\frac{1}{2^i}\overline{\Upsilon}(\check{\eta}^{i}_{\check{s}_{i}},{\eta}^{k,j}_{s_{k,j}})\geq \Gamma^2_{k}(\check{\eta}^{0}_{\check{s}_{0}})$,
	and
	\item[(iii)]    for all $(t,\gamma_t)\in [t_{k,j},T]\times \Lambda^{t_{k,j}}\setminus \{(t_{k,j},{\gamma}^{k,j}_{t_{k,j}})\}$,
	\begin{eqnarray*}
		\Gamma^1_{k}(\gamma_t)
		-\sum_{i=0}^{\infty}
		\frac{1}{2^i}\overline{\Upsilon}(\check{\gamma}^{i}_{\check{t}_{i}},\gamma_t)
		<\Gamma^1_{k}({\gamma}^{k,j}_{t_{k,j}})
		-\sum_{i=0}^{\infty}\frac{1}{2^i}\overline{\Upsilon}(\check{\gamma}^{i}_{\check{t}_{i}},{\gamma}^{k,j}_{t_{k,j}}),
	\end{eqnarray*}
	and for all $(s,\eta_s)\in [s_{k,j},T]\times \Lambda^{s_{k,j}}\setminus \{(s_{k,j},{\eta}^{k,j}_{s_{k,j}})\}$,
	\begin{eqnarray*}
		\Gamma^2_{k}(\eta_s)
		-\sum_{i=0}^{\infty}
		\frac{1}{2^i}\overline{\Upsilon}(\check{\eta}^{i}_{\check{s}_{i}},\eta_s)
		<\Gamma^2_{k}({\eta}^{k,j}_{s_{k,j}})
		-\sum_{i=0}^{\infty}\frac{1}{2^i}\overline{\Upsilon}(\check{\eta}^{i}_{\check{s}_{i}},{\eta}^{k,j}_{s_{k,j}}).
	\end{eqnarray*}
\end{description}
By Lemma \ref{lemma4.40615} below, we have
\begin{eqnarray}\label{4.226}
\lim_{j\to \infty}\left(\begin{aligned}
&
t_{k,j},  & ({\gamma}^{k,j}_{t_{k,j}}(t_{k,j}))_N\\
& s_{k,j},  & ({\eta}^{k,j}_{s_{k,j}}(s_{k,j}))_N
\end{aligned}\right)
=\left(\begin{aligned}
&t_k, &x^k\\
&s_k, &y^k
\end{aligned}\right),
\end{eqnarray}
\begin{eqnarray}\label{05231}
\lim_{j\to \infty}\left(\begin{aligned}
&
\widetilde{w}_{1}^{\hat{t},N,*}(t_{k,j}, ({\gamma}^{k,j}_{t_{k,j}}(t_{k,j}))_N)\\
&\widetilde{w}_{2}^{\hat{t},N,*}(s_{k,j},  ({\eta}^{k,j}_{s_{k,j}}(s_{k,j}))_N)
\end{aligned}\right)
=\left(\begin{aligned}
&
\widetilde{w}_{1}^{\hat{t},N,*}(t_k,x^k)\\
&\widetilde{w}_{2}^{\hat{t},N,*}(s_k,y^k)
\end{aligned}\right),
\end{eqnarray}
and
\begin{eqnarray}\label{05232}
\lim_{j\to \infty} \left(\begin{aligned}
&w_1({\gamma}^{k,j}_{t_{k,j}})\\
 &w_2({\eta}^{k,j}_{s_{k,j}})
\end{aligned}\right)
=\left(\begin{aligned}
&\widetilde{w}_{1}^{\hat{t},N,*}(t_k,x^k)\\
&\widetilde{w}_{2}^{\hat{t},N,*}(s_k,y^k)
\end{aligned}\right).
\end{eqnarray}
In view of  (\ref{0607a}) and (\ref{0607b}),  we  have a subsequence $j_k$ such that
\begin{eqnarray*}
	&&\left(t_{k,j_k}, ({\gamma}^{k,j_k}_{t_{k,j_k}}(t_{k,j_k}))_N, w_1({\gamma}^{k,j_k}_{t_{k,j_k}}),((\tilde{\varphi}_k)_t,\nabla_x\tilde{\varphi}_k,\nabla^2_x\tilde{\varphi}_k)(t_{k,j_k}, {\gamma}^{k,j_k}_{t_{k,j_k}}(t_{k,j_k}))\right)\\
	&&\underrightarrow{k\rightarrow\infty}\left({\hat{t}},(\hat{{\gamma}}_{{\hat{t}}}(\hat{t}\,))_N, w_1(\hat{{\gamma}}_{{\hat{t}}}),(b_1, \nabla_{x_1}\varphi((\hat{{\gamma}}_{{\hat{t}}}({{\hat{t}}}))_N,(\hat{{\eta}}_{{\hat{t}}}({{\hat{t}}}))_N), X)\right),
\end{eqnarray*}
\begin{eqnarray*}
	&&\left(s_{k,j_k}, ({\eta}^{k,j_k}_{s_{k,j_k}}(s_{k,j_k}))_N, w_2({\eta}^{k,j_k}_{s_{k,j_k}}),((\tilde{\psi}_k)_t,\nabla_x\tilde{\psi}_k,\nabla^2_x\tilde{\psi}_k)(s_{k,j_k}, {\eta}^{k,j_k}_{s_{k,j_k}}(s_{k,j_k}))\right)\\
	&&\underrightarrow{k\rightarrow\infty}\left({\hat{t}},(\hat{{\eta}}_{{\hat{t}}}(\hat{t}\,))_N, w_2(\hat{{\eta}}_{{\hat{t}}}),(b_2, \nabla_{x_2}\varphi((\hat{{\gamma}}_{{\hat{t}}}({{\hat{t}}}))_N,(\hat{{\eta}}_{{\hat{t}}}({{\hat{t}}}))_N), Y)\right).
\end{eqnarray*}
It remains to show that $ {\gamma}^{k,j_k}_{t_{k,j_k}}\rightarrow\hat{{\gamma}}_{{\hat{t}}}$ and ${\eta}^{k,j_k}_{s_{k,j_k}}\rightarrow\hat{{\eta}}_{{\hat{t}}}$ as $k\rightarrow\infty$.  Noting that $({\gamma}^{k,j_k}_{t_{k,j_k}}(t_{k,j_k}))_N\rightarrow (\hat{{\gamma}}_{{\hat{t}}}(\hat{t}\,))_N$ and $({\eta}^{k,j_k}_{s_{k,j_k}}(s_{k,j_k}))_N\rightarrow (\hat{{\eta}}_{{\hat{t}}}(\hat{t}\,))_N$  as $k\rightarrow\infty$, by the continuity of $\varphi$, there exists a constant {$\tilde{M}_1>0$} such that
$$|\varphi(({\gamma}^{k,j_k}_{t_{k,j_k}}(t_{k,j_k}))_N,({\eta}^{k,j_k}_{s_{k,j_k}}(s_{k,j_k}))_N)|\leq \tilde{M}_1.$$
Then by (\ref{05131}) and
\begin{eqnarray}\label{1004a}
\begin{aligned}
&w_1({\gamma}^{k,j_k}_{t_{k,j_k}})+w_2({\eta}^{k,j_k}_{s_{k,j_k}})-\varphi(({\gamma}^{k,j_k}_{t_{k,j_k}})(t_{k,j_k}))_N,
({\eta}^{k,j_k}_{s_{k,j_k}}(s_{k,j_k}))_N)\\
\rightarrow&
w_1(\hat{{\eta}}_{{\hat{t}}})+w_2(\hat{{\eta}}_{{\hat{t}}})-\varphi((\hat{{\gamma}}_{{\hat{t}}}(\hat{t}\,))_N,(\hat{{\eta}}_{{\hat{t}}}(\hat{t}\,))_N),
\end{aligned}
\end{eqnarray}
there exists a constant $M_2>0$ such that
$$
||{\gamma}^{k,j_k}_{t_{k,j_k}}||_0 \vee     ||{\eta}^{k,j_k}_{s_{k,j_k}}||_0\leq M_2.
$$
Without loss of generality, we may assume $s_{k,j_k}\leq t_{k,j_k}$, by (\ref{0608a}),
\begin{eqnarray*}
	&& w_1({\gamma}^{k,j_k}_{t_{k,j_k}})+w_2({\eta}^{k,j_k}_{s_{k,j_k}})-\varphi(({\gamma}^{k,j_k}_{t_{k,j_k}})(t_{k,j_k}))_N,
	({\eta}^{k,j_k}_{s_{k,j_k}}(s_{k,j_k}))_N)\\
	&\leq& w_1({\gamma}^{k,j_k}_{t_{k,j_k}})+w_2(({\eta}^{k,j_k})_{s_{k,j_k},t_{k,j_k}}^{A,N})-\varphi(({\gamma}^{k,j_k}_{t_{k,j_k}})(t_{k,j_k}))_N,
	({\eta}^{k,j_k}_{s_{k,j_k}}(s_{k,j_k}))_N)\\
	&&+\rho_1(|s_{k,j_k}-t_{k,j_k}|,M_2).
\end{eqnarray*}
Letting $k\rightarrow\infty$, by (\ref{1004a}),
\begin{eqnarray*}
	&& \liminf_{k\rightarrow\infty} \left[w_1({\gamma}^{k,j_k}_{t_{k,j_k}})+w_2({({\eta}^{k,j_k})_{s_{k,j_k},t_{k,j_k}}^{A,N}})-\varphi(({\gamma}^{k,j_k}_{t_{k,j_k}})(t_{k,j_k}))_N,
	({\eta}^{k,j_k}_{s_{k,j_k}}(s_{k,j_k}))_N)\right]\\
	&&\geq w_1(\hat{{\eta}}_{{\hat{t}}})+w_2(\hat{{\eta}}_{{\hat{t}}})-\varphi((\hat{{\gamma}}_{{\hat{t}}}(\hat{t}\,))_N,(\hat{{\eta}}_{{\hat{t}}}(\hat{t}\,))_N).
\end{eqnarray*}
Thus, by (\ref{10101}) 
we get that
$ {\gamma}^{k,j_k}_{t_{k,j_k}}\rightarrow\hat{{\gamma}}_{{\hat{t}}}$ and
${({\eta}^{k,j_k})_{s_{k,j_k},t_{k,j_k}}^{A,N}}\rightarrow\hat{{\eta}}_{{\hat{t}}}$ as $k\rightarrow\infty$. It is clear that
${\eta}^{k,j_k}_{s_{k,j_k}}\rightarrow\hat{{\eta}}_{{\hat{t}}}$ as $k\rightarrow\infty$.\par
We notice that, by   (\ref{220817a}), (\ref{220817a1})   
and the property (i) of $(t_{k,j}, {\gamma}^{k,j}_{t_{k,j}}$, $s_{k,j}, {\eta}^{k,j}_{s_{k,j}})$, there exists a generic constant $C>0$ such that
\begin{eqnarray*}
	2\sum_{i=0}^{\infty}\frac{1}{2^i}\left[(s_{k,j_k}-\check{s}_{i})+(t_{k,j_k}-\check{t}_{i})\right]
	\leq Cj_k^{-\frac{1}{2}};
\end{eqnarray*}
\begin{eqnarray*}
	\left|\partial_x\left[\sum_{i=0}^{\infty}\frac{1}{2^i}
	\Upsilon({\gamma}^{k,j_k}_{t_{k,j_k}}-\check{\gamma}^{i}_{\check{t}_{i},t_{k,j_k}})\right]\right|
	+\left|\partial_x\left[\sum_{i=0}^{\infty}\frac{1}{2^i}
	\Upsilon({\eta}^{k,j_k}_{s_{k,j_k}}-\check{\eta}^{i}_{\check{s}_{i},s_{k,j_k}})\right]\right|
	\leq Cj_k^{-\frac{5}{6}};
\end{eqnarray*}
and
\begin{eqnarray*}
	\left|\partial_{xx}\left[\sum_{i=0}^{\infty}\frac{1}{2^i}
	\Upsilon({\gamma}^{k,j_k}_{t_{k,j_k}}-\check{\gamma}^{i}_{\check{t}_{i},t_{k,j_k}})\right]\right|
	+\left|\partial_{xx}\left[\sum_{i=0}^{\infty}\frac{1}{2^i}
	\Upsilon({\eta}^{k,j_k}_{s_{k,j_k}}-\check{\eta}^{i}_{\check{s}_{i},s_{k,j_k}})\right]\right|
	\leq Cj_k^{-\frac{2}{3}}.
\end{eqnarray*}
Therefore the lemma holds with
\begin{eqnarray*}
	\varphi_{1,k}(\gamma_t)&:=&\tilde{\varphi}_k(t,(\gamma_t(t))_N)
	-\tilde{\varphi}_k(t_{k,j_k},({\gamma}^{k,j_k}_{t_{k,j_k}}(t_{k,j_k}))_N)
	+w_1({\gamma}^{k,j_k}_{t_{k,j_k}}), \\ \varphi_{2,k}(\gamma_t)&:=&\sum_{i=0}^{\infty}\frac{1}{2^i}\overline{\Upsilon}(\check{\gamma}^{i}_{\check{t}_{i}},\gamma_t)
	-\sum_{i=0}^{\infty}\frac{1}{2^i}\overline{\Upsilon}(\check{\gamma}_{\check{t}_{i}}^{i},{\gamma}^{k,j_k}_{t_{k,j_k}}),\\
	\psi_{1,k}(\eta_s)&:=&\tilde{\psi}_k(s,(\eta_s(s))_N)
	-\tilde{\psi}_k(s_{k,j_k},({\eta}^{k,j_k}_{s_{k,j_k}}(s_{k,j_k}))_N)+w_2({\eta}^{k,j_k}_{s_{k,j_k}}),\\
	\psi_{2,k}(\eta_s)&:=&\sum_{i=0}^{\infty}\frac{1}{2^i}\overline{\Upsilon}(\check{\eta}^{i}_{\check{s}_{i}},\eta_s)
	-\sum_{i=0}^{\infty}\frac{1}{2^i}\overline{\Upsilon}(\check{\eta}_{\check{s}_{i}}^{i},{\eta}^{k,j_k}_{s_{k,j_k}})
\end{eqnarray*}
and
\begin{eqnarray*}
	t_{k}:=t_{k,j_k}, {\gamma}^{k}_{t_k}:={\gamma}^{k,j_k}_{t_{k,j_k}}, s_{k}:=s_{k,j_k}, {\eta}^{k}_{s_k}:={\eta}^{k,j_k}_{s_{k,j_k}}.
\end{eqnarray*}
The proof is now complete.
 \end{proof}

\begin{remark}\label{remarkv0528}
	As mentioned in Remark 6.1 in Chapter V of \cite{fle1},  Condition (\ref{0528110}) is stated with reverse inequality in Theorem 8.3 of \cite{cran2}. However, we  immediately obtain  results (\ref{0608v})-(\ref{II0615}) from Theorem
	8.3 of \cite{cran2} by considering the functions $u_1(t,x) :=\widetilde{w}_{1}^{\hat{t},N,*}(T-t,x)$ and
	$u_2(t,x) := \widetilde{w}_{2}^{\hat{t},N,*}(T-t,x)$.
\end{remark}

\par
To complete the  proof of Theorem \ref{theorem0513}, it remains to state and prove the following  two lemmas.
\begin{lemma}\label{lemma4.30615}\ \ Let all the conditions in Theorem  \ref{theorem0513} be satisfied. Recall that $(\hat{t},\hat{{\gamma}}_{{\hat{t}}},\hat{{\eta}}_{{\hat{t}}})$ is given  in Theorem  \ref{theorem0513},  $\widetilde{w}_{1}^{\hat{t},N,*}$ and $ \widetilde{w}_{2}^{\hat{t},N,*}$ are defined in Definition \ref{definition0607} with respect to $w_1$ and $w_2$ given in Theorem  \ref{theorem0513}, respectively.
	Then the function:
	$$\widetilde{w}_{1}^{\hat{t},N,*}(t,x)+ \widetilde{w}_{2}^{\hat{t},N,*}(t,y)-\varphi(x,y),   \quad (t,x,y)\in [0,T]\times  H_N\times  H_N $$
attains the maximum  at $(\hat{t},(\hat{{\gamma}}_{{\hat{t}}}({{\hat{t}}}))_N,(\hat{{\eta}}_{{\hat{t}}}({{\hat{t}}}))_N)$.
	Moreover, we have
	\begin{eqnarray}\label{2020020206}
	\widetilde{w}_{1}^{\hat{t},N,*}(\hat{t},(\hat{{\gamma}}_{{\hat{t}}}({{\hat{t}}}))_N)={w}_{1}(\hat{{\gamma}}_{{\hat{t}}}), \quad \widetilde{w}_{2}^{\hat{t},N,*}(\hat{t},(\hat{{\eta}}_{{\hat{t}}}({{\hat{t}}}))_N)={w}_{2}(\hat{{\eta}}_{{\hat{t}}}).
	\end{eqnarray}
\end{lemma}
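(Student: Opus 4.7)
The plan is to first establish the pre-envelope bound
$\widetilde{w}_1^{\hat{t},N}(s,x) + \widetilde{w}_2^{\hat{t},N}(s,y) - \varphi(x,y) \leq M$,
where $M := w_1(\hat{\gamma}_{\hat{t}}) + w_2(\hat{\eta}_{\hat{t}}) - \varphi((\hat{\gamma}_{\hat{t}}(\hat{t}))_N, (\hat{\eta}_{\hat{t}}(\hat{t}))_N)$, for every $(s,x,y) \in [0,T]\times H_N \times H_N$. For $s\geq \hat t$ this is direct: rewrite the left-hand side as a supremum over pairs $(\xi_s,\zeta_s)\in \Lambda^{\hat{t}}\otimes \Lambda^{\hat{t}}$ whose $N$-projections at time $s$ are $(x,y)$, and invoke the maximum property (\ref{10101}) of $(\hat\gamma_{\hat t},\hat\eta_{\hat t})$. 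The case $s<\hat t$ reduces to $s=\hat t$ via the defining formula $\widetilde w_i^{\hat t,N}(s,\cdot)=\widetilde w_i^{\hat t,N}(\hat t,\cdot)-(\hat t-s)^{1/2}$, which only decreases the left-hand side.

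The next step is to upgrade this to the envelope bound
$\widetilde w_1^{\hat t,N,*}(t,x)+\widetilde w_2^{\hat t,N,*}(t,y)-\varphi(x,y)\leq M$,
which does \emph{not} follow formally because the upper semi-continuous envelope of a sum need not equal the sum of envelopes. To align the two envelopes I would pick, via a diagonal extraction, sequences $(s_k,x_k)\to(t,x)$ and $(l_k,y_k)\to(t,y)$ along which each $\widetilde w_i^{\hat t,N}$ converges to its envelope value, and then select near-maximizers $\xi^{(k)}_{s_k},\zeta^{(k)}_{l_k}$ in the defining suprema. After restricting attention to $s_k,l_k\geq\hat t$ (the $s<\hat t$ part is handled as before) and assuming without loss of generality $s_k\leq l_k$, I would apply the path-extension map $\gamma\mapsto\gamma_{s_k,l_k}^{A,N}$ to $\xi^{(k)}_{s_k}$; by construction this preserves the $N$-projection at the new time $l_k$, and by hypothesis (\ref{0608a}) the change in $w_1$-value is bounded by $\rho_1(l_k-s_k,\|\xi^{(k)}_{s_k}\|_0)$. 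The coercivity assumption (\ref{05131}) forces the near-maximizers to lie in a norm-bounded set, since any path of very large norm has $w_i$-value far below the finite envelope limit, so this error tends to zero. Invoking the maximum property (\ref{10101}) on the aligned pair $(\tilde\xi^{(k)},\zeta^{(k)}_{l_k})\in \Lambda^{\hat t}\otimes \Lambda^{\hat t}$ at time $l_k$ yields $w_1(\tilde\xi^{(k)})+w_2(\zeta^{(k)}_{l_k})-\varphi(x_k,y_k)\leq M$, and taking $\liminf$ through the chosen sequences together with continuity of $\varphi$ produces the envelope bound.

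Finally, at the distinguished point $(\hat t,(\hat\gamma_{\hat t}(\hat t))_N,(\hat\eta_{\hat t}(\hat t))_N)$ the trivial lower bounds $\widetilde w_i^{\hat t,N,*}\geq \widetilde w_i^{\hat t,N}\geq w_i$ at the designated projections force the envelope bound to be saturated. This simultaneously shows that the maximum is attained at this point and, by rearranging the equality, delivers the identities $\widetilde w_1^{\hat t,N,*}(\hat t,(\hat\gamma_{\hat t}(\hat t))_N)=w_1(\hat\gamma_{\hat t})$ and $\widetilde w_2^{\hat t,N,*}(\hat t,(\hat\eta_{\hat t}(\hat t))_N)=w_2(\hat\eta_{\hat t})$ required by (\ref{2020020206}). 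The main obstacle is the time-alignment step: since the two envelopes are computed with \emph{a priori} independent limsups, we must rely on (\ref{0608a}) to bring the two approximating paths onto a common time axis with negligible error, and on (\ref{05131}) to keep those paths uniformly norm-bounded so that the $\rho_1$-error actually vanishes in the limit.
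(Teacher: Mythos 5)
Your proposal is correct and takes essentially the same route as the paper's proof. The decomposition into a pre-envelope bound (via the maximum property of $(\hat{\gamma}_{\hat{t}},\hat{\eta}_{\hat{t}})$ for $s\geq\hat{t}$ and the defining decrease formula for $s<\hat{t}$), followed by a time-alignment step to upgrade to the envelope bound, and then saturation at the distinguished point, matches the structure of the paper's argument. The paper organizes the time-alignment slightly differently: it first proves a monotonicity-type estimate $\widetilde{w}^{\hat{t},N}_1(t,x)-\widetilde{w}^{\hat{t},N}_1(s,x)\leq\rho_1(|s-t|,C_x)$ directly on the pre-envelope function (using (\ref{0608a}) together with the coercivity (\ref{05131}) to restrict the defining supremum to a norm-bounded set of paths), and then applies this abstract estimate to the envelope-realizing sequences; whereas you work directly with near-maximizing paths and the map $\gamma\mapsto\gamma^{A,N}_{s_k,l_k}$. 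These are the same idea, merely differently packaged, and both correctly identify that the crux is (i) the envelope of a sum is not the sum of envelopes, (ii) the time axes must be aligned via (\ref{0608a}), and (iii) coercivity is what keeps the approximating paths norm-bounded so that the $\rho_1$-error vanishes. One small point worth tightening in a full write-up is the uniformity of the norm bound over the sequence: the paper handles this via a contradiction argument using (\ref{05131}) and the finiteness of the envelope limit, and your sketch gestures at the same mechanism without spelling it out.
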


\begin{proof}
For every $\hat{t}\leq t\leq s\leq T$ and $x\in   H_N$,
from the definition of $\widetilde{w}^{\hat{t},N}_1$ 
it follows that
\begin{eqnarray}\label{220831}
\begin{aligned}
&\quad\widetilde{w}^{\hat{t},N}_1(t,x)-\widetilde{w}^{\hat{t},N}_1(s,x)\\
&=\sup_{\scriptsize\begin{matrix}\gamma_t\in \Lambda^{\hat{t}}\\
	(\gamma_t(t))_N=x\end{matrix}}
w_{1}(\gamma_t)
\,\,-\sup_{\scriptsize\begin{matrix}\eta_s\in \Lambda^{\hat{t}}\\
	(\eta_s(s))_N=x\end{matrix}}
w_{1}(\eta_s).
\end{aligned}
\end{eqnarray}
By (\ref{05131}), there exist constants {$M_3>0$} and $\varepsilon>0$ such that
$$
\frac{w_1(\gamma_t)}{||\gamma_t||_0}\leq -\varepsilon,\ \ \mbox{if}\ ||\gamma_t||_0 \geq M_3, \ \gamma_t\in \Lambda^{\hat{t}}.
$$
Thus,
\begin{eqnarray}\label{07061}
w_1(\gamma_t)\leq -\varepsilon||\gamma_t||_0,\ \ \mbox{if}\ ||\gamma_t||_0 \geq M_3, \ \gamma_t\in \Lambda^{\hat{t}}.
\end{eqnarray}
For every $t\in[\hat{t},T]$, define $\hat{\xi}_t\in \Lambda^{\hat{t}}$ by
{$$
\hat{\xi}_t(l)=x{\mathbf{1}}_{[0,\hat{t}]}(l)+e^{(l-\hat{t}\,)A}x{\mathbf{1}}_{[\hat{t},t]}(l)+(x-(e^{(t-\hat{t}\,)A}x)_N){\mathbf{1}}_{[0,t]}(l),\ \ l\in [0,t], 
$$}
then, by (\ref{0608a}),
\begin{eqnarray*}
	w_1(\hat{\xi}_{\hat{t}})-\sup_{l\in[\hat{t},T]}\rho_1(|l-\hat{t}|,|x|)\leq w_1(\hat{\xi}_{t})\leq \sup_{\scriptsize\begin{matrix}\xi_t\in \Lambda^{\hat{t}}\\
			(\xi_t(t))_N=x\end{matrix}}w_{1}(\xi_t).
\end{eqnarray*}
Notice that $w_1(\hat{\xi}_{\hat{t}})-\sup_{l\in[\hat{t},T]}\rho_1(|l-\hat{t}|,|x|)$ depends only on $x$, there exists a constant $C^1_{x}>0$ depending only on   $x$ such that, for all $(t,\gamma_t)\in [\hat{t},T]\times\Lambda^{\hat{t}}$ satisfying $||\gamma_t||_0\geq C^1_{x}$,
\begin{eqnarray}\label{07062}
-\varepsilon||\gamma_t||_0< w_1(\hat{\xi}_{\hat{t}})-\sup_{l\in[\hat{t},T]}\rho_1(|l-\hat{t}|,|x|)\leq \sup_{\scriptsize\begin{matrix}\xi_t\in \Lambda^{\hat{t}}\\
	(\xi_t(t))_N=x\end{matrix}}w_{1}(\xi_t).
\end{eqnarray}
Taking $C_{x}=M_3\vee C^1_{x}$, by (\ref{07061}) and (\ref{07062}),
\begin{eqnarray}\label{220831a}
w_1(\gamma_t)<\sup_{\scriptsize\begin{matrix}\xi_t\in \Lambda^{\hat{t}}\\
	(\xi_t(t))_N=x\end{matrix}}w_{1}(\xi_t),\ \ \mbox{if} \ ||\gamma_t||_0\geq C_{x}, \ \gamma_t\in \Lambda^{\hat{t}}.
\end{eqnarray}
Combining (\ref{220831}) and (\ref{220831a}),  from (\ref{0608a}) we have
\begin{eqnarray}\label{202105083}
\begin{aligned}
&\quad \widetilde{w}^{\hat{t},N}_1(t,x)-\widetilde{w}^{\hat{t},N}_1(s,x)\\
&=\sup_{\scriptsize\begin{matrix}\gamma_t\in \Lambda^{\hat{t}}\\
	(\gamma_t(t))_N=x\\
	||\gamma_t||_0\leq C_x\end{matrix}}\!\!\!\!\!
w_{1}(\gamma_t)
\,\, -\sup_{\scriptsize\begin{matrix}\eta_s\in \Lambda^{\hat{t}}\\
	(\eta_s(s))_N=x\end{matrix}}\!\!\!\!\!
w_{1}(\eta_s)
\\
\qquad &\leq \sup_{\scriptsize\begin{matrix}\gamma_t\in \Lambda^{\hat{t}}\\
	(\gamma_t(t))_N=x\\
	||\gamma_t||_0\leq C_x\end{matrix}}\!\!\!\!\!
{[}w_{1}(\gamma_t)-w_{1}(\gamma_{t,s}^A+((\gamma_t(t))_N-(e^{(s-t)A}\gamma_t(t))_N){\mathbf{1}}_{[0,s]})
{]}\\
&\leq  \sup_{\scriptsize\begin{matrix}\gamma_t\in \Lambda^{\hat{t}}\\
	(\gamma_t(t))_N=x\\
	||\gamma_t||_0\leq C_x\end{matrix}}    \!\!\! \!\!\rho_1(|s-t|,\, ||\gamma_t||_0)
\, \leq \, \rho_1(|s-t|,\, C_x).
\end{aligned}
\end{eqnarray}
Clearly, if $0\leq t\leq s\leq \hat{t}$, we have
\begin{eqnarray}\label{202105084}
\widetilde{w}^{\hat{t},N}_1(t,x)-\widetilde{w}^{\hat{t},N}_1(s,x)=-(\hat{t}-t)^{\frac{1}{2}}+(\hat{t}-s)^{\frac{1}{2}}\leq 0,
\end{eqnarray}
and, if $0\leq t\leq  \hat{t}\leq s\leq T$ , we have
\begin{eqnarray}\label{202105085}
\ \ \                \widetilde{w}^{\hat{t},N}_1(t,x)-\widetilde{w}^{\hat{t},N}_1(s,x)\leq\widetilde{w}^{\hat{t},N}_1(\hat{t},x)-\widetilde{w}^{\hat{t},N}_1(s,x) \leq \rho_1(|s-\hat{t}|,C_x).
\end{eqnarray}
On the other hand,
for every $(t,x,y)\in [0,T]\times  H_N\times  H_N$,
by the definitions of $\widetilde{w}_{1}^{\hat{t},N,*}(t,x)$ and $ \widetilde{w}_{2}^{\hat{t},N,*}(t,y)$, there exist  sequences  $(l_i,x_i), (\tau_i,y_i)\in [0,T]\times  H_N$ such that
$(l_{i},x_i)\rightarrow (t,x)$ and $(\tau_{i},y_i)\rightarrow (t,y)$
as $i\rightarrow\infty$ and
\begin{eqnarray}\label{202105081}
\widetilde{w}_{1}^{\hat{t},N,*}(t,x)=\lim_{i\rightarrow\infty}\widetilde{w}^{\hat{t},N}_{1}(l_i,x_i), \ \ \ \widetilde{w}_{2}^{\hat{t},N,*}(t,y)=\lim_{i\rightarrow\infty}\widetilde{w}^{\hat{t},N}_2(\tau_i,y_i).
\end{eqnarray}
Without loss of generality, we may assume  $l_i\leq \tau_i$ for all $i>0$.
By (\ref{202105083})-(\ref{202105085}), we have
\begin{eqnarray}\label{202105082jiaa}
\begin{aligned}
\widetilde{w}_{1}^{\hat{t},N,*}(t,x)&=\lim_{i\rightarrow\infty}\widetilde{w}^{\hat{t},N}_{1}(l_i,x_i)\\
&\leq \liminf_{i\rightarrow\infty}[\widetilde{w}^{\hat{t},N}_{1}(\tau_i,x_i)+\rho_1(|\tau_i-l_i|,C_{x_i})].
\end{aligned}
\end{eqnarray}
Here $C_{x_i}>0$ is the constant  that makes the following formula true:
$$
w_1(\gamma_t)<\sup_{\scriptsize\begin{matrix}\xi_t\in \Lambda^{\hat{t}}\\
	(\xi_t(t))_N=x_i\end{matrix}}\!\!\!\!\! w_{1}(\xi_t),
\quad \mbox{if} \ ||\gamma_t||_0\geq C_{x_i}\, \text{and} \,  \gamma_t\in \Lambda^{\hat{t}}.
$$
We claim that we can assume that there exists a constant $M_4>0$ such that $C_{x_i}\leq M_4$ for all $i\geq 1$. Indeed, if not, for every $n$, there exists  $i_n$ such that
\begin{eqnarray}
\qquad\quad
\widetilde{w}^{\hat{t},N}_{1}(l_{i_n},x_{i_n})=\begin{cases}\displaystyle \sup_{\scriptsize\begin{matrix}\gamma_{l_{i_n}}\in \Lambda^{\hat{t}}\\
	(\gamma_{l_{i_n}}(l_{i_n}))_N=x_{i_n}\\
	||\gamma_{l_{i_n}}||_0> n\end{matrix}}\!\!\!\!\!
{[}w_{1}(\gamma_{l_{i_n}}){]}, \qquad\qquad  \ l_{i_n}\geq \hat{t};\\[5mm]
\displaystyle \sup_{\scriptsize\begin{matrix}
	\gamma_{\hat{t}}\in \Lambda^{\hat{t}}\\
		(\gamma_{\hat{t}}({\hat{t}}))_N=x_{i_n}\\
	||\gamma_{\hat{t}}||_0> n
\end{matrix}}\!\!\!\!\!
{[}w_{1}(\gamma_{\hat{t}}){]}-({\hat{t}}-l_{i_n})^{\frac{1}{2}}, \quad \ l_{i_n}< \hat{t}.
\end{cases}
\end{eqnarray}
Letting $n\rightarrow\infty$, by (\ref{05131}), we get that
$$
\widetilde{w}^{\hat{t},N}_{1}(l_{i_n},x_{i_n})\rightarrow-\infty \ \mbox{as}\ n\rightarrow\infty,
$$
which contradicts the convergence that $ \widetilde{w}_{1}^{\hat{t},N,*}(t,x)=\lim_{i\rightarrow\infty}\widetilde{w}^{\hat{t},N}_{1}(l_i,x_i)$. Then, by  (\ref{202105082jiaa}),
\begin{eqnarray}\label{20210704a}
\begin{aligned}
\widetilde{w}_{1}^{\hat{t},N,*}(t,x)
&\leq \, \liminf_{i\rightarrow\infty}\, [\widetilde{w}^{\hat{t},N}_{1}(\tau_i,x_i)+\rho_1(|\tau_i-l_i|,M_4)]\\
& = \, \liminf_{i\rightarrow\infty}\, \widetilde{w}^{\hat{t},N}_{1}(\tau_i,x_i).
\end{aligned}
\end{eqnarray}
Therefore, by (\ref{202105081}), (\ref{20210704a}) and the definitions of $\widetilde{w}^{\hat{t},N}_{1}$ and $\widetilde{w}^{\hat{t},N}_{2}$,
\begin{eqnarray}\label{20210508b6}
\begin{aligned}
&\quad\widetilde{w}_{1}^{\hat{t},N,*}(t,x)+\widetilde{w}_{2}^{\hat{t},N,*}(t,y)-\varphi(x,y)\\
&\leq\liminf_{i\rightarrow\infty}\, [\widetilde{w}^{\hat{t},N}_{1}(\tau_i,x_i)+\widetilde{w}^{\hat{t},N}_{2}(\tau_i,y_i)-\varphi(x_i,y_i)]\\
&\leq\sup_{\scriptsize\begin{matrix}(l,x_0,y_0)\\
	\in [0,T]\times  H_N\times H_N\end{matrix}}\!\!\!\!\! [\widetilde{w}^{\hat{t},N}_{1}(l,x_0)+\widetilde{w}^{\hat{t},N}_{2}(l,y_0)-\varphi(x_0,y_0)]\\
&=\sup_{\scriptsize\begin{matrix}(l,x_0,y_0)\\
	\in [\hat{t},T]\times  H_N\times H_N\end{matrix}}\!\!\!\!\!
[\widetilde{w}^{\hat{t},N}_{1}(l,x_0)+\widetilde{w}^{\hat{t},N}_{2}(l,y_0)-\varphi(x_0,y_0)].
\end{aligned}
\end{eqnarray}
We also have, for $(l,x_0,y_0)\in [\hat{t},T]\times  H_N\times  H_N$,
\begin{eqnarray}\label{0608aa}
\begin{aligned}
\ \ \ \ \ \ &\quad
\widetilde{w}^{\hat{t},N}_{1}(l,x_0)+  \widetilde{w}^{\hat{t},N}_{2}(l,y_0)-\varphi(x_0,y_0)  \\
&=\sup_{\scriptsize\begin{matrix}
	\gamma_l,\eta_l\in\Lambda^{\hat{t}}\\
	(\gamma_l(l))_N=x_0\\
	(\eta_l(l))_N=y_0\end{matrix}}\!\!\!
\left[w_{1}(\gamma_l)
+w_{2}(\eta_l)
-\varphi((\gamma_l(l))_N,(\eta_l(l))_N)\right]\\
&\leq w_{1}(\hat{{\gamma}}_{{\hat{t}}})+w_{2}(\hat{{\eta}}_{{\hat{t}}})
-\varphi((\hat{{\gamma}}_{{\hat{t}}}({\hat{t}}))_N,(\hat{{\eta}}_{{\hat{t}}}({\hat{t}}))_N),
\end{aligned}
\end{eqnarray}
where the  inequality becomes equality if  $l={\hat{t}}$ 
and $x_0=(\hat{{\gamma}}_{{\hat{t}}}({\hat{t}}))_N,y_0=(\hat{{\eta}}_{{\hat{t}}}({\hat{t}}))_N$.
Combining  (\ref{20210508b6}) and (\ref{0608aa}), we obtain that
\begin{eqnarray}\label{0608a1}
\begin{aligned}
&\quad\widetilde{w}_{1}^{\hat{t},N,*}(t,x)+\widetilde{w}_{2}^{\hat{t},N,*}(t,y)-\varphi(x,y)\\
&\leq w_{1}(\hat{{\gamma}}_{{\hat{t}}})+w_{2}(\hat{{\eta}}_{{\hat{t}}})
-\varphi((\hat{{\gamma}}_{{\hat{t}}}({\hat{t}}))_N,(\hat{{\eta}}_{{\hat{t}}}({\hat{t}}))_N).
\end{aligned}
\end{eqnarray}
By the definitions of $\widetilde{w}_1^{\hat{t},N,*}$ and $\widetilde{w}_2^{\hat{t},N,*}$, we have $\widetilde{w}_1^{\hat{t},N,*}(t,x)\geq \widetilde{w}^{\hat{t},N}_1(t,x), \widetilde{w}_{2}^{\hat{t},N,*}(t,y)$ $ \geq \widetilde{w}^{\hat{t},N}_{2}(t,y)$.  Then by also (\ref{0608aa}) and (\ref{0608a1}), for every $(t,x,y)\in [0,T]\times  H_N\times H_N$,
\begin{eqnarray}\label{0608abc}
\begin{aligned}
&\quad\widetilde{w}_{1}^{\hat{t},N,*}(t,x)+\widetilde{w}_{2}^{\hat{t},N,*}(t,y)-\varphi(x,y)\\
&\leq w_{1}(\hat{{\gamma}}_{{\hat{t}}})+w_{2}(\hat{{\eta}}_{{\hat{t}}})
-\varphi((\hat{{\gamma}}_{{\hat{t}}}({\hat{t}}))_N,(\hat{{\eta}}_{{\hat{t}}}({\hat{t}}))_N)\\
&=\widetilde{w}^{\hat{t},N}_{1}(\hat{t},(\hat{{\gamma}}_{{\hat{t}}}({\hat{t}}))_N)+  \widetilde{w}^{\hat{t},N}_{2}(\hat{t},(\hat{{\eta}}_{{\hat{t}}}({\hat{t}}))_N)
-\varphi((\hat{{\gamma}}_{{\hat{t}}}({\hat{t}}))_N,(\hat{{\eta}}_{{\hat{t}}}({\hat{t}}))_N)\\
&\leq\widetilde{w}_{1}^{\hat{t},N,*}(\hat{t},(\hat{{\gamma}}_{{\hat{t}}}({\hat{t}}))_N)+  \widetilde{w}_{2}^{\hat{t},N,*}(\hat{t},(\hat{{\eta}}_{{\hat{t}}}({\hat{t}}))_N)
-\varphi((\hat{{\gamma}}_{{\hat{t}}}({\hat{t}}))_N,(\hat{{\eta}}_{{\hat{t}}}({\hat{t}}))_N).
\end{aligned}
\end{eqnarray}
Thus
we obtain that (\ref{2020020206}) holds true, and  $ \widetilde{w}_{1}^{\hat{t},N,*}(t,x)+\widetilde{w}_{2}^{\hat{t},N,*}(t,y)-\varphi(x, y)$ has a maximum at $({\hat{t}},(\hat{{\gamma}}_{\hat{t}}({\hat{t}}))_N,
(\hat{{\eta}}_{\hat{t}}({\hat{t}}))_N)$
on $[0,T]\times  H_N\times  H_N$.
The proof is now complete.
\end{proof}

\begin{lemma}\label{lemma4.40615}\ \
	Let all the conditions in Theorem  \ref{theorem0513} hold. Recall that $\Gamma^1_{k}$ and $\Gamma^2_{k}$ are defined in  (\ref{4.11116}).
	Then the maximum points ${\gamma}^{k,j}_{t_{k,j}}$
	of $\Gamma^1_{k}(\gamma_t)
	-\sum_{i=0}^{\infty}
	\frac{1}{2^i}\overline{\Upsilon}(\check{\gamma}^{i}_{\check{t}_{i}},\gamma_t)$ and the maximum points
	$ {\eta}^{k,j}_{s_{k,j}}$
	of $\Gamma^2_{k}(\eta_s)
	-\sum_{i=0}^{\infty}
	\overline{\Upsilon}(\check{\eta}^{i}_{\check{s}_{i}},\eta_s)$
	satisfy  conditions (\ref{4.226}), (\ref{05231}) and (\ref{05232}).
\end{lemma}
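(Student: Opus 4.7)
\medskip

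\noindent\textbf{Proof proposal.} The plan is to reduce the path-functional maximization to a finite-dimensional problem via the identity
\[
\sup_{\gamma_t\in\Lambda^{\hat t}}\Gamma^1_k(\gamma_t) \;=\; \sup_{(t,x)\in[\hat t,T]\times H_N}\bigl[\widetilde w_1^{\hat t,N,\ast}(t,x)-\tilde\varphi_k(t,x)\bigr] \;=\; 0.
\]
The first equality follows from splitting the path supremum as $\sup_{(t,x)}\sup_{\gamma_t:(\gamma_t(t))_N=x}[w_1(\gamma_t) - \tilde\varphi_k(t,x)] = \sup_{(t,x)}[\widetilde w_1^{\hat t,N}(t,x)-\tilde\varphi_k(t,x)]$ (since $\tilde\varphi_k$ depends on $\gamma_t$ only through $(t,(\gamma_t(t))_N)$), and then replacing $\widetilde w_1^{\hat t,N}$ by its upper semi-continuous envelope along an approximating sequence $(l_i,x_i)\to(t_k,x^k)$ exactly as in Lemma~\ref{lemma4.30615}; the second equality uses that $(t_k,x^k)$ is a strict maximizer of $\widetilde w_1^{\hat t,N,\ast}-\tilde\varphi_k$ with value $0$ (which may be arranged by a standard additive perturbation when applying Theorem~8.3 of \cite{cran2}).

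Setting $z_{k,j}:=\bigl(t_{k,j},(\gamma^{k,j}_{t_{k,j}}(t_{k,j}))_N\bigr)$, I would combine the near-maximizer condition~(\ref{20210509a}) with property~(ii) of the Borwein--Preiss construction to obtain $-1/j\le\Gamma^1_k(\gamma^{k,j}_{t_{k,j}})\le 0$, whence $w_1(\gamma^{k,j}_{t_{k,j}})-\tilde\varphi_k(z_{k,j})\to 0$ as $j\to\infty$. Using the sandwich
\[
w_1(\gamma^{k,j}_{t_{k,j}}) \;\le\; \widetilde w_1^{\hat t,N}(z_{k,j}) \;\le\; \widetilde w_1^{\hat t,N,\ast}(z_{k,j}) \;\le\; \tilde\varphi_k(z_{k,j})
\]
(the last inequality being the displayed sup equality), one simultaneously obtains $\widetilde w_1^{\hat t,N,\ast}(z_{k,j})-\tilde\varphi_k(z_{k,j})\to 0$.

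To identify the limit point, I would invoke coercivity: the assumption $\limsup_{\|\gamma_t\|_0\to\infty}w_1(\gamma_t)/\|\gamma_t\|_0<0$ together with $|x|\le\|\gamma_t\|_0$ whenever $(\gamma_t(t))_N=x$ yields $\widetilde w_1^{\hat t,N,\ast}(t,x)\to-\infty$ as $|x|\to\infty$; boundedness of $\tilde\varphi_k$ from below then forces $\{(\gamma^{k,j}_{t_{k,j}}(t_{k,j}))_N\}_j$ to lie in a bounded, hence precompact, subset of $H_N$. For any subsequential limit $z_{k,j_l}\to(t^\ast,x^\ast)$, the upper semi-continuity of $\widetilde w_1^{\hat t,N,\ast}$ and continuity of $\tilde\varphi_k$ give $\widetilde w_1^{\hat t,N,\ast}(t^\ast,x^\ast)-\tilde\varphi_k(t^\ast,x^\ast)\ge 0$, and strict maximality pins down $(t^\ast,x^\ast)=(t_k,x^k)$; uniqueness of the subsequential limit then upgrades this to $z_{k,j}\to(t_k,x^k)$, giving~(\ref{4.226}). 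Passing to the limit in both sandwich chains via continuity of $\tilde\varphi_k$ and the identity $\tilde\varphi_k(t_k,x^k)=\widetilde w_1^{\hat t,N,\ast}(t_k,x^k)$ delivers~(\ref{05231}) and~(\ref{05232}). The statements for $(s_{k,j},\eta^{k,j}_{s_{k,j}})$ follow by the symmetric argument applied to $\Gamma^2_k$ and $\widetilde w_2^{\hat t,N,\ast}$.

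\textbf{Main obstacle.} The most delicate step is the first one, reconciling the path-level supremum of $\Gamma^1_k$ with the finite-dimensional supremum of $\widetilde w_1^{\hat t,N,\ast}-\tilde\varphi_k$ and verifying that both equal $0$. Since $\widetilde w_1^{\hat t,N}$ itself is not a priori upper semi-continuous, one must work throughout with its envelope $\widetilde w_1^{\hat t,N,\ast}$, relying essentially on the semi-continuity and modulus-of-continuity estimates already developed in Lemma~\ref{lemma4.30615}.
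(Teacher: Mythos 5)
Your proposal is correct and follows essentially the same route as the paper's own proof. Both arguments establish the sandwich chain $-1/j \le \Gamma^1_k(\gamma^{k,j}_{t_{k,j}}) \le \widetilde w_1^{\hat t,N,\ast}(z_{k,j})-\tilde\varphi_k(z_{k,j}) \le 0$ by combining the near-optimality of $\check\gamma^0_{\check t_0}$ in~(\ref{20210509a}) with Borwein--Preiss property~(ii) and the fact that $(t_k,x^k)$ maximizes $\widetilde w_1^{\hat t,N,\ast}-\tilde\varphi_k$ with value $0$, use the coercivity hypothesis~(\ref{05131}) together with the lower boundedness of $\tilde\varphi_k$ to confine the finite-dimensional projections to a bounded set, and then exploit strict maximality plus upper semicontinuity to identify every subsequential limit with $(t_k,x^k)$; the limits~(\ref{05231}) and~(\ref{05232}) drop out by passing to the limit in the same chain with continuity of $\tilde\varphi_k$. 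The only cosmetic difference is that you make explicit the identity $\sup_{\Lambda^{\hat t}}\Gamma^1_k = 0$, which the paper leaves implicit (it only uses the two inequalities $\sup\Gamma^1_k \ge \widetilde w_1^{\hat t,N,\ast}(t_k,x^k)-\tilde\varphi_k(t_k,x^k)$ and $\widetilde w_1^{\hat t,N,\ast}\le\tilde\varphi_k$); and where you treat the $\gamma$-chain and $\eta$-chain symmetrically, the paper instead adds the two chains and invokes strict maximality of the sum $\widetilde w_1^{\hat t,N,\ast}(t,x)+\widetilde w_2^{\hat t,N,\ast}(s,y)-\tilde\varphi_k(t,x)-\tilde\psi_k(s,y)$, but since the two maximizations are decoupled this is equivalent.
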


\begin{proof}
Recall that $\widetilde{w}_{1}^{\hat{t},N,*}\geq \widetilde{w}^{\hat{t},N}_{1}, \widetilde{w}_{2}^{\hat{t},N,*} \geq \widetilde{w}^{\hat{t},N}_{2}$,
by  the definitions of $\widetilde{w}^{\hat{t},N}_1$ and $\widetilde{w}^{\hat{t},N}_2$, we get that
\begin{eqnarray*}
	&& \widetilde{w}_{1}^{\hat{t},N,*}(t_{k,j},({\gamma}^{k,j}_{t_{k,j}}(t_{k,j}))_N)
	-\tilde{\varphi}_k(t_{k,j},({\gamma}^{k,j}_{t_{k,j}}(t_{k,j}))_N)\\
	&\geq& w_1({\gamma}^{k,j}_{t_{k,j}})-\tilde{\varphi}_k(t_{k,j},({\gamma}^{k,j}_{t_{k,j}}(t_{k,j}))_N)
	=\Gamma^1_{k}({\gamma}^{k,j}_{t_{k,j}}),\\
	&&\widetilde{w}_{2}^{\hat{t},N,*}(s_{k,j},({\eta}^{k,j}_{s_{k,j}}(s_{k,j}))_N)
	-\tilde{\psi}_k(s_{k,j},({\eta}^{k,j}_{s_{k,j}}(s_{k,j}))_N)\\
	&\geq& w_2({\eta}^{k,j}_{s_{k,j}})-\tilde{\psi}_k(s_{k,j},({\eta}^{k,j}_{s_{k,j}}(s_{k,j}))_N)
	=\Gamma^2_{k}({\eta}^{k,j}_{s_{k,j}}).
\end{eqnarray*}
We notice that, from (\ref{20210509a}) and the property (ii) of $(t_{k,j},{\gamma}^{k,j}_{t_{k,j}}, s_{k,j},{\eta}^{k,j}_{s_{k,j}})$,
\begin{eqnarray*}
	&&\Gamma^1_{k}({\gamma}^{k,j}_{t_{k,j}})\geq\Gamma^1_{k}(\check{\gamma}^{0}_{\check{t}_{0}})
	\geq \sup_{(t,\gamma_t)\in [\hat{t},T]\times \Lambda^{\hat{t}}}\Gamma^1_{k}(\gamma_t)-\frac{1}{j}, \\
	&& \Gamma^2_{k}({\eta}^{k,j}_{s_{k,j}})\geq\Gamma^2_{k}(\check{\eta}^{0}_{\check{s}_{0}})
	\geq \sup_{(s,\eta_s)\in [\hat{t},T]\times \Lambda^{\hat{t}}}\Gamma^2_{k}(\eta_s)-\frac{1}{j},
\end{eqnarray*}
and by the  definitions of $\widetilde{w}_{1}^{\hat{t},N,*}$ and $\widetilde{w}_{2}^{\hat{t},N,*}$,
\begin{eqnarray*}
	&&\sup_{(t,\gamma_t)\in [\hat{t},T]\times \Lambda^{\hat{t}}}\Gamma^1_{k}(\gamma_t)\geq\widetilde{w}_{1}^{\hat{t},N,*}(t_k,{x}^k)-\tilde{\varphi}_k(t_k,{x}^k),\\
	&&\sup_{(s,\eta_s)\in [\hat{t},T]\times \Lambda^{\hat{t}}}\Gamma^2_{k}(\eta_s)\geq\widetilde{w}_{2}^{\hat{t},N,*}(s_k,{y}^k)-\tilde{\psi}_k(s_k,{y}^k).
\end{eqnarray*}
Therefore,
\begin{eqnarray}\label{0525b6}
&&\widetilde{w}_{1}^{\hat{t},N,*}(t_{k,j},({\gamma}^{k,j}_{t_{k,j}}(t_{k,j}))_N)-\tilde{\varphi}_k(t_{k,j},({\gamma}^{k,j}_{t_{k,j}}(t_{k,j}))_N)\nonumber\\
&\geq&\Gamma^1_{k}({\gamma}^{k,j}_{t_{k,j}})\geq
\widetilde{w}_{1}^{\hat{t},N,*}(t_k,{x}^k)-\tilde{\varphi}_k(t_k,{x}^k)-\frac{1}{j},
\end{eqnarray}
\begin{eqnarray}\label{0525b0920}
&&\widetilde{w}_{2}^{\hat{t},N,*}(s_{k,j},({\eta}^{k,j}_{s_{k,j}}(s_{k,j}))_N)-\tilde{\psi}_k(s_{k,j},({\eta}^{k,j}_{s_{k,j}}(s_{k,j}))_N)\nonumber\\
&\geq&\Gamma^2_{k}({\eta}^{k,j}_{s_{k,j}})\geq
\widetilde{w}_{2}^{\hat{t},N,*}(s_k,{y}^k)-\tilde{\psi}_k(s_k,{y}^k)-\frac{1}{j}.
\end{eqnarray}
By (\ref{05131}) and  that $\tilde{\varphi}_k$ and $\tilde{\psi}_k$ are bounded from below, 
there exists a constant  $M_5>0$  that is sufficiently  large   that
$$
\Gamma^1_k(\gamma_t)<\sup_{(l,\xi_l)\in [\hat{t},T]\times \Lambda^{\hat{t}}}\Gamma^1_{k}(\xi_l)-1, \ \ t\in [\hat{t},T],\ ||\gamma_t||_0\geq M_5,
$$ and
$$
\Gamma^2_{k}(\eta_s)<\sup_{(r,\varsigma_r)\in [\hat{t},T]\times \Lambda^{\hat{t}}}\Gamma^2_{k}(\varsigma_r)-1,\ s\in [\hat{t},T], \ ||\eta_s||_0\geq M_5.
$$
Thus, we have $||{\gamma}^{k,j}_{t_{k,j}}||_0\vee
||{\eta}^{k,j}_{s_{k,j}}||_0<M_5$. In particular, $|({\gamma}^{k,j}_{t_{k,j}}(t_{k,j}))_N|\vee|({\eta}^{k,j}_{s_{k,j}}(s_{k,j}))_N|$ $<M_5$.
We note that $M_5$ is independent of $j$.
Then letting $j\rightarrow\infty$ in (\ref{0525b6}) and (\ref{0525b0920}),
we obtain (\ref{4.226}).  Indeed, if not, we may assume that  there exist $(\grave{t},\grave{x},\grave{s},\grave{y})\in [0, T]\times  H_N\times [0, T]\times  H_N$ and  a subsequence of $(t_{k,j},({\gamma}^{k,j}_{t_{k,j}}(t_{k,j}))_N,s_{k,j},({\eta}^{k,j}_{s_{k,j}}(s_{k,j}))_N)$ still denoted by itself  such that
$$
\lim_{j\to \infty}(t_{k,j},({\gamma}^{k,j}_{t_{k,j}}(t_{k,j}))_N,s_{k,j},({\eta}^{k,j}_{s_{k,j}}(s_{k,j}))_N)= (\grave{t},\grave{x},\grave{s},\grave{y})\neq (t_k,x^k,s_k,y^k).
$$
Letting $j\rightarrow\infty$ in (\ref{0525b6}) and (\ref{0525b0920}), by the upper semicontinuity of $\widetilde{w}_{1}^{\hat{t},N,*}+\widetilde{w}_{2}^{\hat{t},N,*}-\tilde{\varphi}_k-\tilde{\psi}_k$, we have
\begin{eqnarray*}
	 &&\widetilde{w}_{1}^{\hat{t},N,*}(\grave{t},\grave{x})+\widetilde{w}_{2}^{\hat{t},N,*}(\grave{s},\grave{y})-\tilde{\varphi}_k(\grave{t},\grave{x})-\tilde{\psi}_k(\grave{s},\grave{y})\\
	&\geq&\widetilde{w}_{1}^{\hat{t},N,*}(t_k,{x}^k)+\widetilde{w}_{2}^{\hat{t},N,*}(s_k,{y}^k) -\tilde{\varphi}_k(t_k,{x}^k)-\tilde{\psi}_k(s_k,{y}^k),
\end{eqnarray*}       which  contradicts that
$(t_k,x^{k},s_k,y^{k})$  is the  strict  maximum point of the function: $\widetilde{w}_{1}^{\hat{t},N,*}(t,x)+\widetilde{w}_{2}^{\hat{t},N,*}(s,y)-\tilde{\varphi}_k(t,x)-\tilde{\psi}_k(s,y), \, (t,x;s,y)\in ([0,T]\times H_N)^2$.\par
By (\ref{4.226}), the  upper semicontinuity of $\widetilde{w}_{1}^{\hat{t},N,*}$ and $\widetilde{w}_{2}^{\hat{t},N,*}$ and  the continuity of $\tilde{\varphi}_k$ and $\tilde{\psi}_k$, letting $j\rightarrow\infty$ in (\ref{0525b6})  and (\ref{0525b0920}), we obtain
\begin{eqnarray*}
	&&\widetilde{w}_{1}^{\hat{t},N,*}(t_k,x_k)
	\geq \limsup_{j\rightarrow\infty}\widetilde{w}_{1}^{\hat{t},N,*}(t_{k,j},({\gamma}_{t_{k,j}}^{k,j}(t_{k,j}))_N)\\
	&\geq& \liminf_{j\rightarrow\infty}\widetilde{w}_{1}^{\hat{t},N,*}(t_{k,j},({\gamma}_{t_{k,j}}^{k,j}(t_{k,j}))_N)
	\geq\widetilde{w}_{1}^{\hat{t},N,*}(t_k,x_k),\\
	&&\widetilde{w}_{2}^{\hat{t},N,*}(s_k,y_k)
	\geq \limsup_{j\rightarrow\infty}\widetilde{w}_{2}^{\hat{t},N,*}(s_{k,j},({\eta}_{t_{k,j}}^{k,j}(t_{k,j}))_N)\\
	&\geq& \liminf_{j\rightarrow\infty}\widetilde{w}_{2}^{\hat{t},N,*}(s_{k,j},({\eta}_{t_{k,j}}^{k,j}(t_{k,j}))_N)
	\geq\widetilde{w}_{2}^{\hat{t},N,*}(s_k,y_k).
\end{eqnarray*}
Thus, we get (\ref{05231}) holds true.
Letting $j\rightarrow\infty$ in (\ref{0525b6}) and (\ref{0525b0920}), by (\ref{05231}) and the definitions of $\Gamma^1_{k}$ and $\Gamma^2_{k}$,
$$
\widetilde{w}_{1}^{\hat{t},N,*}(t_k,x_k)=\lim_{j\rightarrow\infty}w_1({\gamma}_{t_{k,j}}^{k,j}), \ \ \
\widetilde{w}_{2}^{\hat{t},N,*}(s_k,y_k)=\lim_{j\rightarrow\infty}w_2({\eta}_{t_{k,j}}^{k,j}).
$$
Thus, we obtain (\ref{05232}).
The proof is now complete.
\end{proof}
\newpage
\section{Viscosity solutions to  PHJB equations: uniqueness}
\par
This chapter is devoted to the  proof of uniqueness of  viscosity
solutions to equation (\ref{hjb1}). This result, together with
the results from  Section 5, will be used to characterize
the value functional defined by (\ref{value1}).
We require  the following assumption on $G$.
\begin{assumption}\label{hypstate5666}
	For every $(t,\gamma_t)\in [0,T)\times \Lambda$,
	\begin{eqnarray}\label{g5}
	\lim_{N\to \infty} \sup_{u\in U}\left|Q_N{G}(\gamma_t,u)\right|_{L_2(\Xi, H)}^2=0.
	\end{eqnarray}
\end{assumption}
\par
{By \cite[Proposition 11.2.13]{zhang}, without loss of generality we may assume that} there exists a constant $L>0$ such that,
for all $(t,\gamma_t, p,l)\in [0,T]\times{\Lambda}\times H\times {\mathcal{S}}(H)$ and $r_1,r_2\in \mathbb{R}$ with $r_1<r_2$,
\begin{eqnarray}\label{5.1}
{\mathbf{H}}(\gamma_t,r_1,p,l)-{\mathbf{H}}(\gamma_t,r_2,p,l)\geq L(r_2-r_1).
\end{eqnarray}
We  now state the main result of this chapter.
\begin{theorem}\label{theoremhjbm}
	Let Assumptions \ref{hypstate}, \ref{hypcost} and \ref{hypstate5666}  be satisfied.
	Let $W_1\in C^0({\Lambda})$ $(\mbox{resp}., W_2\in C^0({\Lambda}))$ be  a viscosity sub-solution (resp., super-solution) to equation (\ref{hjb1}) and  let  there exist a  constant $L>0$  and a local modulus of continuity  $\rho_2$
	such that, for any  $0\leq t\leq  s\leq T$ and
	$\gamma_t, \eta_t\in{\Lambda}$,
	\begin{eqnarray}\label{w}
	|W_1(\gamma_t)|\vee |W_2(\gamma_t)|\leq L (1+||\gamma_t||^2_0)
	\end{eqnarray}
	and
	\begin{eqnarray}\label{w1}
\qquad&	\begin{aligned}
	&|W_1(\gamma_{t,s}^A)-W_1(\eta_t)|\vee|W_2(\gamma_{t,s}^A)-W_2(\eta_t)|\\
	\leq&\,
	\rho_2(s-t,||\gamma_t||_0\vee||\eta_t||_0)+L(1+||\gamma_t||_0+||\eta_t||_0)\, ||\gamma_{t}-\eta_t||_0.
	\end{aligned}
	\end{eqnarray}
	Then  $W_1\leq W_2$.
\end{theorem}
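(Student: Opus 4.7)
The plan is a doubling-of-variables argument in the spirit of Crandall-Lions, adapted to the path space $(\Lambda,d_\infty)$ and to the unbounded generator $A$. Suppose for contradiction that $\sup_{\gamma_t\in\Lambda}(W_1-W_2)(\gamma_t)>0$. Using the terminal condition $W_1(\gamma_T)\le\phi(\gamma_T)\le W_2(\gamma_T)$ together with the growth bound (\ref{w}) and continuity (\ref{w1}), one localizes in $\gamma_t(t)$ by subtracting a small coercive term (e.g.\ $\lambda|\gamma_t(t)|^2e^{-\mu t}$ with $\lambda$ small, $\mu$ large) so that $\tilde W_1:=W_1-\lambda|\cdot|^2e^{-\mu\cdot}$ is still a viscosity sub-solution modulo a $\lambda$-perturbation of the Hamiltonian, has strictly negative growth as in (\ref{05131}), and still satisfies $\sup(\tilde W_1-W_2)>0$ on some $[0,T^*]$ with $T^*<T$. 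The use of (\ref{5.1}) lets one absorb the $\lambda$ correction.

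The next step is to double variables on $\Lambda\otimes\Lambda$ by introducing, for parameters $\beta,\varepsilon>0$ and projection index $N$, the auxiliary functional
\begin{equation*}
\Psi(\gamma_t,\eta_s):=\tilde W_1(\gamma_t)-W_2(\eta_s)-\beta\,\overline\Upsilon(\gamma_t,\eta_s)-\varepsilon(1+\|\gamma_t\|_0^2+\|\eta_s\|_0^2)-\psi_N(\gamma_t(t),\eta_s(s)),
\end{equation*}
where $\psi_N$ is a smooth cutoff on $H_N\times H_N$ that reproduces $\overline\Upsilon$ on the $N$-dimensional component and controls the high modes via (\ref{g5}). The key point here is that $\overline\Upsilon$ is a gauge-type function (Lemma \ref{theoremS0719}) and $\Upsilon$ is equivalent to $\|\cdot\|_0^6$, so only continuity in $\|\cdot\|_0$ is needed; no $B$-continuity is required. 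Apply Lemma \ref{theoremleft1} (modified Borwein-Preiss) with gauge $\overline\Upsilon^{3,3,2}$ and coefficients $\delta_i=2^{-i}$ to perturb $\Psi$ by a summable tail $\sum_i 2^{-i}\overline\Upsilon^{3,3,2}((\gamma^i_{t_i},\eta^i_{s_i}),(\cdot,\cdot))$ so that a genuine maximizer $(\hat\gamma_{\hat t},\hat\eta_{\hat t})$ exists; standard estimates from (\ref{s0}) and (\ref{jias5}) show that $\hat t<T$ for $\beta$ large and that $\beta\,\overline\Upsilon(\hat\gamma_{\hat t},\hat\eta_{\hat t})\to 0$ as $\beta\to\infty$ (up to subsequences).

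With the maximizer in hand, invoke the Crandall-Ishii lemma (Theorem \ref{theorem0513}) applied to $w_1=\tilde W_1-\text{tail}_1$ and $w_2=-W_2+\text{tail}_2$ and to the smooth finite-dimensional test function coming from the $\psi_N$ piece. The hypotheses (\ref{0608a})-(\ref{10101}) follow from (\ref{w1}) and the strict maximum produced by the Borwein-Preiss step; the $\Phi_N$ condition on the projected envelopes is checked by the standard half-relaxed-limit argument using the growth bound (\ref{05131}) enforced by the $\varepsilon$-penalty. This yields sequences of test functionals $(\varphi_{i,k},\psi_{i,k})\in \Phi_{t_k}\times\mathcal G_{t_k}$ and jets $(b_i,p_i,X_i)$ satisfying (\ref{0608v})-(\ref{II0615}). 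Writing the sub/super-solution inequalities against these test functionals at $(t_k,\gamma^k_{t_k})$ and $(s_k,\eta^k_{s_k})$ and passing to the limit $k\to\infty$ gives
\begin{equation*}
0\le b_1+b_2+\langle A^*p_1,\hat\gamma_{\hat t}(\hat t)\rangle_H-\langle A^*p_2,\hat\eta_{\hat t}(\hat t)\rangle_H+\mathbf H(\hat\gamma_{\hat t},\tilde W_1(\hat\gamma_{\hat t}),p_1,X_N+\text{tails})-\mathbf H(\hat\eta_{\hat t},W_2(\hat\eta_{\hat t}),p_2,-Y_N-\text{tails}).
\end{equation*}
Since $b_1+b_2=0$, standard manipulation of $\mathbf H$ using the Lipschitz assumptions (\ref{assume1111}) on $F,G$, the monotonicity (\ref{5.1}), and the inequality (\ref{II0615}) applied to the matrix $A=\nabla^2\varphi$ produces a bound $0\le -\lambda(W_1-W_2)(\hat\gamma_{\hat t})+o_{N,\varepsilon,\beta}(1)$. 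Sending $\beta\to\infty$ (which kills the mismatch between $\hat\gamma_{\hat t}(\hat t)$ and $\hat\eta_{\hat t}(\hat t)$ in the first-order Hamiltonian terms), then $N\to\infty$ (using (\ref{g5}) to kill the trace terms on the high modes), then $\varepsilon\to 0$, then $\lambda\to 0$ contradicts $\sup(W_1-W_2)>0$.

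The hard part is the third step: the second-order vertical derivative $\partial_{xx}\Upsilon$ does not vanish at the maximizer (cf.\ Remark \ref{remarks}(i) and ingredient (c) in the introduction), so the $\tfrac12\operatorname{Tr}$ terms in $\mathbf H$ do not simply cancel upon doubling. Controlling these requires the delicate convergence of the auxiliary functional promised in the introduction: one must show, using (\ref{03109}) and the Lipschitz property of $G$, that $\operatorname{Tr}\bigl[\partial_{xx}(\beta\overline\Upsilon+\text{tail})(GG^*|_{\hat\gamma}-GG^*|_{\hat\eta})\bigr]\to 0$ as $\beta\to\infty$ along the sequence of maximizers. Combined with Assumption \ref{hypstate5666} to truncate the $Q_N$ component and with the equivalence $\Upsilon\sim\|\cdot\|_0^6$, this is exactly where the $B$-continuity assumption on the coefficients becomes unnecessary: the degree-$6$ functional $\Upsilon$ kills the relevant error terms by the time the doubling parameter is sent to infinity.
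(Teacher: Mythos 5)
Your high-level architecture is close to the paper's: localize in time to a subinterval near $T$, double variables, penalize with a gauge-type functional so that the Borwein-Preiss principle (Lemma \ref{theoremleft1}) produces a genuine maximizer, then feed finite-dimensional projections into the Crandall-Ishii lemma (Theorem \ref{theorem0513}) and estimate the Hamiltonian difference. However there are several genuine gaps that would make the argument as written fail.

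First, the coercive/penalty terms you propose cannot serve as ingredients of a test function. You subtract $\lambda|\gamma_t(t)|^2e^{-\mu t}$ to get negative growth, but (\ref{05131}) requires $\limsup_{\|\gamma_t\|_0\to\infty} w_1(\gamma_t)/\|\gamma_t\|_0 < 0$, which controls the full sup-norm of the path, not just its endpoint; $|\gamma_t(t)|^2$ alone gives no control over $\|\gamma_t\|_0$. You then add $\varepsilon(1+\|\gamma_t\|_0^2+\|\eta_s\|_0^2)$, which does give the coercivity, but $\|\cdot\|_0^2\notin C^{1,2}_p(\hat\Lambda)$ (see Remark \ref{remarks}(i)), so it cannot enter the test functional at all. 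The paper's construction of $\Upsilon$ in Chapter 3 exists precisely to resolve this: $\Upsilon\sim\|\cdot\|_0^6$ by Lemma \ref{theoremS0719}, yet $\Upsilon\in C^{1,2}_p(\hat\Lambda)$ by Lemma \ref{theoremS} and, crucially, $\Upsilon(\cdot-\eta^A_{t,\cdot})\in C^{1,2}_{p,A-}$ by Lemma \ref{theoremito2}. The paper's $\Psi$ uses $\varepsilon\frac{\nu T-t}{\nu T}(\Upsilon(\gamma_t)+\Upsilon(\eta_t))$ exactly for this reason; your choice would not be admissible in Definition \ref{definition4.1}.

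Second, and more seriously, you assert that the unbounded terms $\langle A^*p_1,\hat\gamma_{\hat t}(\hat t)\rangle - \langle A^*p_2,\hat\eta_{\hat t}(\hat t)\rangle$ cancel, but you do not exhibit the mechanism, and the cancellation does not follow from $b_1+b_2=0$ or from the jet inequality. It is the central point of the entire chapter and it requires a specific structure that is absent from your auxiliary functional: the paper introduces the midpoint $\hat\xi_{\hat t}=\tfrac{1}{2}(\hat\gamma_{\hat t}+\hat\eta_{\hat t})$ and inserts into both $w_1$ and $w_2$ the terms $2^5\beta\,\Upsilon(\cdot-\hat\xi^A_{\hat t,\cdot})$ and $2\beta^{1/3}|(\cdot)^-_N - (e^{(t-\hat t)A}\hat\xi_{\hat t}(\hat t))^-_N|^2$, together with the finite-dimensional $\varphi(x,y)=\beta^{1/3}|x-y|^2$. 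It is precisely the symmetry of these three ingredients about $\hat\xi_{\hat t}$ that makes the six $A^*$-terms in (\ref{03103})-(\ref{03104}) cancel identically. Without this construction the difference of the two viscosity inequalities still contains unbounded operator terms of order $\beta^{1/3}$, and there is no way to pass to the limit. Closely tied to this, the splitting of $\beta\Upsilon(\gamma_t,\eta_t)$ into $2^5\beta\Upsilon(\gamma_t,\hat\xi_{\hat t})+2^5\beta\Upsilon(\eta_t,\hat\xi_{\hat t})$ uses the sublinearity of $\Upsilon^{1/6}$ from Lemma \ref{theoremS00044}, and this is what legitimizes feeding the two one-sided functionals to Theorem \ref{theorem0513}; you do not use or even mention this lemma.

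Third, you place the $N$-dependent term $\psi_N$ inside $\Psi$ and run Borwein-Preiss on the $N$-dependent functional. The paper deliberately finds the maximizer $(\hat t,\hat\gamma_{\hat t},\hat\eta_{\hat t})$ of $\Psi$ independently of $N$, and only then builds $w_1,w_2$ depending on $N$ around that fixed maximizer. This matters because Assumption \ref{hypstate5666} is a pointwise limit in $N$ at a fixed path; if the maximizer drifts with $N$ you cannot invoke (\ref{g5}) to kill the high-mode trace terms, and the estimate in $J_1$ breaks. Finally, the closing estimate $0\le -\lambda(W_1-W_2)+o(1)$ is not how the paper's contradiction works: the paper reduces $W_1$ to a strict sub-solution of (\ref{1002a}) with gap $c=\varrho/(T+1)^2$, shrinks the time interval to $[T-\bar a,T)$ with $\bar a$ chosen so that the $\varepsilon$-penalty term $\varepsilon\frac{\nu T-\hat t}{\nu T}(\Upsilon(\hat\gamma)+\Upsilon(\hat\eta))$ on the left dominates the corresponding terms on the right, and arrives at $c\le c/2$. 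Your sketch does not reproduce this balancing and the monotonicity constant does not close the argument on its own.
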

Theorems    \ref{theoremvexist} and \ref{theoremhjbm} yield that the viscosity solution to   PHJB equation given in (\ref{hjb1})
is  the value functional  $V$ of our optimal control problem given in (\ref{state1}), (\ref{fbsde1}) and (\ref{value1}).

\begin{theorem}\label{theorem52}
	Let  Assumptions \ref{hypstate}, \ref{hypcost} and \ref{hypstate5666}   be satisfied.  Then the value
	functional $V$ defined by (\ref{value1}) is the unique viscosity
	solution to PHJB equation~(\ref{hjb1}) which satisfies (\ref{w}) and (\ref{w1}).
\end{theorem}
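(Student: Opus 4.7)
The plan is to deduce this result by combining the existence theorem of Chapter~6 with the comparison principle of Theorem~\ref{theoremhjbm}. The argument is essentially a two-line consequence of earlier work, so the only task is to check that the value functional $V$ fulfills the regularity hypotheses required by the comparison theorem.

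First, I would invoke Theorem~\ref{theoremvexist} to conclude that $V$ is a viscosity solution to PHJB equation~(\ref{hjb1}). It remains to verify that $V$ satisfies both growth estimate~(\ref{w}) and joint-continuity estimate~(\ref{w1}). The polynomial bound~(\ref{w}) is immediate from Lemma~\ref{lemmavaluev}, which already gives the stronger linear estimate $|V(\gamma_t)|\leq C_{3,1}^{1/2}(1+\|\gamma_t\|_0)\leq L(1+\|\gamma_t\|^2_0)$ after enlarging the constant $L$. For~(\ref{w1}), Theorem~\ref{theorem3.9} provides
\begin{equation*}
|V(\gamma_{t,s}^A)-V(\eta_t)|\leq C(1+\|\gamma_t\|_0+\|\eta_t\|_0)(s-t)^{1/2}+C\|\gamma_t-\eta_t\|_0,
\end{equation*}
which fits the required form with the local modulus $\rho_2(r,M):=C(1+2M)r^{1/2}$ and the Lipschitz constant $L:=C$. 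Hence $V$ is a viscosity solution meeting all hypotheses of Theorem~\ref{theoremhjbm}.

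Second, for uniqueness, suppose $W\in C^0(\Lambda)$ is any other viscosity solution of~(\ref{hjb1}) satisfying~(\ref{w}) and~(\ref{w1}). Applying Theorem~\ref{theoremhjbm} with $(W_1,W_2)=(V,W)$ gives $V\leq W$, since $V$ is in particular a viscosity sub-solution and $W$ a viscosity super-solution. Exchanging their roles, with $(W_1,W_2)=(W,V)$, yields $W\leq V$. Therefore $V=W$ on $\Lambda$, proving uniqueness.

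The whole proof is thus a short bookkeeping step; there is no essential analytic obstacle at this stage, because the real difficulties have already been dispatched. Existence rests on the DPP of Theorem~\ref{theoremddp} together with the functional It\^o formula of Lemma~\ref{theoremito3}, while uniqueness is shouldered entirely by the comparison theorem~\ref{theoremhjbm}, whose proof draws on the smooth gauge-type functional $\Upsilon^{m,M}$ of Chapter~3, the functional It\^o inequality~(\ref{jias510815jia}), the finite-dimensional reduction via the projections $P_N$ together with Assumption~\ref{hypstate5666}, and the path-space Crandall-Ishii lemma (Theorem~\ref{theorem0513}). Once those tools are in hand, the present statement follows at once.
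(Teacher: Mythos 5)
Your proposal is correct and follows essentially the same route as the paper: the paper's proof simply cites Theorem~\ref{theoremvexist} for existence and Lemma~\ref{lemmavaluev}, Theorem~\ref{theorem3.9}, and Theorem~\ref{theoremhjbm} for the regularity estimates and uniqueness, which is exactly what you spell out. The only added value in your write-up is the explicit bookkeeping showing how Lemma~\ref{lemmavaluev} gives~(\ref{w}) and how Theorem~\ref{theorem3.9}'s estimate~(\ref{hold}) (after swapping $\gamma_t$ and $\eta_t$) gives~(\ref{w1}) with $\rho_2(r,M)=C(1+2M)r^{1/2}$, which is a correct verification that the paper leaves implicit.
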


\begin{proof} Theorem \ref{theoremvexist} shows that $V$ is a viscosity solution to equation (\ref{hjb1}).  Thus, our conclusion follows from Lemma \ref{lemmavaluev} and
Theorems  \ref{theorem3.9} and
\ref{theoremhjbm}.
\end{proof}

{\bf  Proof of Theorem \ref{theoremhjbm}}.   It is sufficient to prove that the functional $W_1-\frac{\varrho}{t+1}\leq W_2$ for $\varrho>0$. Since $W_1$ is a viscosity sub-solution of PHJB equation (\ref{hjb1}),  the functional $\widetilde{w}^\varrho:=W_1-\frac{\varrho}{t+1}$  with $\varrho>0$, is a viscosity sub-solution
to the following PHJB  equation
\begin{eqnarray}\label{1002a}
\begin{cases}
{\partial_t} \widetilde{w}^\varrho(\gamma_t)+\left\langle A^*\partial_x\widetilde{w}^\varrho(\gamma_t),\, \gamma_t(t)\right\rangle_H\\[3mm]
\qquad\qquad+{\mathbf{H}}\left(\gamma_t, \,\widetilde{w}^\varrho(\gamma_t), \,\partial_x \widetilde{w}^\varrho(\gamma_t),\,\partial_{xx} \widetilde{w}^\varrho(\gamma_t)\right)
=c:= \frac{\varrho}{(T+1)^2}, \\[3mm]
\quad\quad\quad \quad \qquad  (t, \gamma_t)\in [0,T)\times {\Lambda}; \\[3mm]
\widetilde{w}^\varrho(\gamma_T)=\phi(\gamma_T), \quad  \gamma_T\in \Lambda_T.\\
\end{cases}
\end{eqnarray}
Therefore,  it is sufficient to prove
$W_1\leq W_2$ under the stronger  assumption that $W_1$ is a viscosity sub-solution of the last PHJB equation.

The rest of the proof splits in the following {five} steps.
\par
$Step\  1.$ Definitions of auxiliary functionals.
\par
It suffices  to prove that $W_1(\gamma_t)\leq W_2(\gamma_t)$ for all $(t,\gamma_t)\in
[T-\bar{a},T)\times
{\Lambda}$ with
$$\bar{a}:=\frac{1}{2(342L+36)L}\wedge{T};$$
 the desired comparison on the whole time interval $[0,T]$ is easily obtained  via a backward iteration over the intervals
$[T-i\bar{a},\, T-(i-1)\bar{a})$.  Otherwise, there is  $(\tilde{t},\tilde{\gamma}_{\tilde{t}})\in (T-\bar{a},T)\times
{\Lambda}$  such that
$\tilde{m}:=W_1(\tilde{\gamma}_{\tilde{t}})-W_2(\tilde{\gamma}_{\tilde{t}})>0$.
\par
Let  $\varepsilon >0$ be  a small number such that
$$
W_1(\tilde{\gamma}_{\tilde{t}})-W_2(\tilde{\gamma}_{\tilde{t}})-2\varepsilon \frac{\nu T-\tilde{t}}{\nu
	T}\, \Upsilon(\tilde{\gamma}_{\tilde{t}})
>\frac{\tilde{m}}{2}
$$
and
\begin{eqnarray}\label{5.3}
\frac{\varepsilon}{\nu T}\leq\frac{c}{4}
\end{eqnarray}
with
$$
\nu:=1+\frac{1}{2T(342L+36)L}.
$$
Next,  we define for any  $(\gamma_t,\eta_t)\in {\Lambda}^{T-\bar{a}}\times{\Lambda}^{T-\bar{a}}$,
\begin{eqnarray*}
	\Psi(\gamma_t,\eta_t)&:=&W_1(\gamma_t)-W_2(\eta_t)-{\beta}\Upsilon(\gamma_{t},\eta_{t})-\beta^{\frac{1}{3}}\, |\gamma_{t}(t)-\eta_{t}(t)|^2\\
	&&-\varepsilon\frac{\nu T-t}{\nu
		T}\, \left(\Upsilon(\gamma_t)+\Upsilon(\eta_t)\right).
\end{eqnarray*}
In view of  (\ref{s0}) and (\ref{w}), it is clear that $\Psi$ is bounded from above on ${\Lambda}^{T-\bar{a}}\times{\Lambda}^{T-\bar{a}}$. Moreover,  by Lemma \ref{theoremS}, $\Psi$ is a continuous  functional.
Take
$\delta_i:=\frac{1}{2^i}$ for all $i\geq0$.
Since
$\overline{\Upsilon}^{3,3,2}(\cdot,\cdot)$ is a gauge-type function on $(\Lambda^{\tilde{t}}\otimes \Lambda^{\tilde{t}},d_{1,\infty})$, from Lemma \ref{theoremleft1} it follows that,
for every  $(\gamma^0_{t_0},\eta^0_{t_0})\in \Lambda^{\tilde{t}}\times \Lambda^{\tilde{t}}$ satisfying
$$
\Psi(\gamma^0_{t_0},\eta^0_{t_0})\geq \sup_{(\gamma_s,\eta_s)\in  \Lambda^{\tilde{t}}\times \Lambda^{\tilde{t}}}\Psi(\gamma_s,\eta_s)-\frac{1}{\beta},\
\    \mbox{and} \ \ \Psi(\gamma^0_{t_0},\eta^0_{t_0})\geq \Psi(\tilde{\gamma}_{\tilde{t}},\tilde{\gamma}_{\tilde{t}}) >\frac{\tilde{m}}{2},
$$
there exist $(\hat{t},\hat{\gamma}_{\hat{t}},\hat{\eta}_{\hat{t}})\in [\tilde{t},T]\times \Lambda^{\tilde{t}}\times \Lambda^{\tilde{t}}$ and a sequence $\{(t_i,\gamma^i_{t_i},\eta^i_{t_i})\}_{i\geq1}\subset
[\tilde{t},T]\times \Lambda^{\tilde{t}}\times \Lambda^{\tilde{t}}$ such that
\begin{description}
	\item[(i)] $\displaystyle \Upsilon(\gamma^0_{t_0},\hat{\gamma}_{\hat{t}})+\Upsilon(\eta^0_{t_0},\hat{\eta}_{\hat{t}})+|\hat{t}-t_0|^2\leq \frac{1}{\beta}$,
	$\displaystyle \Upsilon(\gamma^i_{t_i},\hat{\gamma}_{\hat{t}})+\Upsilon(\eta^i_{t_i},\hat{\eta}_{\hat{t}})+|\hat{t}-t_i|^2
	\leq \frac{1}{2^i\beta}$ and $\displaystyle t_i\uparrow \hat{t}$ as $i\rightarrow\infty$,
	\item[(ii)]  $\displaystyle {\Psi_1(\hat{\gamma}_{\hat{t}},\hat{\eta}_{\hat{t}})}
\geq \Psi(\gamma^0_{t_0},\eta^0_{t_0})$, and
	\item[(iii)]    for all $\displaystyle (s, \gamma_s,\eta_s)\in [\hat{t},T]\times \Lambda^{\hat{t}}\times \Lambda^{\hat{t}}\setminus \{(\hat{t},\hat{\gamma}_{\hat{t}},\hat{\eta}_{\hat{t}})\}$,
	{   \begin{eqnarray}\label{iii4}
		\Psi_1(\gamma_s,\eta_s)
		<\Psi_1(\hat{\gamma}_{\hat{t}},\hat{\eta}_{\hat{t}}),
		\end{eqnarray}}
\end{description}
where we define
{$$
	\Psi_1(\gamma_t,\eta_t):=  \Psi(\gamma_t,\eta_t)
	-\sum_{i=0}^{\infty}
	\frac{1}{2^i}\left[{\Upsilon}(\gamma^i_{t_i},\gamma_t)+{\Upsilon}(\eta^i_{t_i},\eta_t)+|{t}-t_i|^2\right], \ \  \  (\gamma_t,\eta_t)\in  \Lambda^{\tilde{t}}\times \Lambda^{\tilde{t}}.
	$$}
Note that the point
$({\hat{t}},\hat{{\gamma}}_{{\hat{t}}},\hat{{\eta}}_{{\hat{t}}})$ depends on  $\beta$ and
$\varepsilon$.
\par
$Step\ 2.$
There exists ${{M}_0}>0$ {independent of $\beta$}
such that
\begin{eqnarray}\label{5.10jiajiaaaa}||\hat{\gamma}_{\hat{t}}||_0\vee||\hat{\eta}_{\hat{t}}||_0<M_0
\end{eqnarray} and
\begin{eqnarray}\label{5.10}
\lim_{\beta\to \infty} \left( \beta||\hat{{\gamma}}_{{\hat{t}}}-\hat{{\eta}}_{{\hat{t}}}||_{0}^6
+\beta|\hat{{\gamma}}_{{\hat{t}}}(\hat{t}\,)-\hat{{\eta}}_{{\hat{t}}}(\hat{t}\,)|^4\right)
=0.
\end{eqnarray}

The proof follows. First,   noting $\nu$ is independent of  $\beta$, by the definition of  ${\Psi}$,
there exists a constant  ${M}_0>0$ {independent of $\beta$}  that is sufficiently  large   that
$
\Psi(\gamma_t, \eta_t)<0
$ for all $t\in [T-\bar{a},T]$ and $||\gamma_t||_0\vee||\eta_t||_0\geq M_0$. Thus, we have $||\hat{\gamma}_{\hat{t}}||_0\vee||\hat{\eta}_{\hat{t}}||_0\vee
||{\gamma}^{0}_{t_{0}}||_0\vee||{\eta}^{0}_{t_{0}}||_0<M_0$. {We note that $M_0$ depends on $\varepsilon$.}
\par
Second, by (\ref{iii4}), we have
{\begin{eqnarray}\label{5.56789}
	2\Psi_1(\hat{\gamma}_{\hat{t}},\hat{\eta}_{\hat{t}})
	\geq  \Psi_1(\hat{\gamma}_{\hat{t}},\hat{\gamma}_{\hat{t}})
	+\Psi_1(\hat{\eta}_{\hat{t}},\hat{\eta}_{\hat{t}}).
	\end{eqnarray}}
This implies that
\begin{eqnarray}\label{5.6}
\begin{aligned}
&\quad2{\beta}\Upsilon(\hat{\gamma}_{\hat{t}},\hat{\eta}_{\hat{t}})+
2\beta^{\frac{1}{3}}|\hat{\gamma}_{\hat{t}}(\hat{t}\,)-\hat{\eta}_{\hat{t}}(\hat{t}\,)|^2
\\
&
\leq|W_1(\hat{\gamma}_{\hat{t}})-W_1(\hat{\eta}_{\hat{t}})|
+|W_2(\hat{\gamma}_{\hat{t}})-W_2(\hat{\eta}_{\hat{t}})|\\
&\quad+
\sum_{i=0}^{\infty}\frac{1}{2^i}[\Upsilon(\eta^i_{t_i},\hat{\gamma}_{\hat{t}})+\Upsilon(\gamma^i_{t_i},\hat{\eta}_{\hat{t}})].
\end{aligned}
\end{eqnarray}
On the other hand,  notice that
$$
\Upsilon(\gamma_t,\eta_s)=\Upsilon(\gamma_t-\eta_{s,t}^A), \  \ \gamma_t,\eta_s\in \Lambda, \ 0\leq s\leq t\leq T,
$$
by Lemma \ref{theoremS00044} and   the property (i) of $(\hat{t}, \hat{\gamma}_{\hat{t}},\hat{\eta}_{\hat{t}})$,
\begin{eqnarray}\label{4.7jiajia130}
\begin{aligned}
&\quad
\sum_{i=0}^{\infty}\frac{1}{2^i}[\Upsilon(\eta^i_{t_i},\hat{\gamma}_{\hat{t}})+\Upsilon(\gamma^i_{t_i},\hat{\eta}_{\hat{t}})]\\
&\leq2^5\sum_{i=0}^{\infty}\frac{1}{2^i}[\Upsilon(\eta^i_{t_i},\hat{\eta}_{\hat{t}})
+\Upsilon(\gamma^i_{t_i},\hat{\gamma}_{\hat{t}})+2\Upsilon(\hat{\gamma}_{\hat{t}},\hat{\eta}_{\hat{t}})]
\leq\frac{2^6}{\beta}+{2^7}\Upsilon(\hat{\gamma}_{\hat{t}},\hat{\eta}_{\hat{t}}).
\end{aligned}
\end{eqnarray}
Combining (\ref{5.6}) and (\ref{4.7jiajia130}),  from  (\ref{w}) and (\ref{5.10jiajiaaaa})  we have
\begin{eqnarray}\label{5.jia6}
\begin{aligned}
&\quad
(2{\beta}-2^7)\Upsilon(\hat{\gamma}_{\hat{t}},\hat{\eta}_{\hat{t}})+
2\beta^{\frac{1}{3}}|\hat{\gamma}_{\hat{t}}(\hat{t}\,)-\hat{\eta}_{\hat{t}}(\hat{t}\,)|^2
\\
&\leq |W_1(\hat{\gamma}_{\hat{t}})-W_1(\hat{\eta}_{\hat{t}})|
+|W_2(\hat{\gamma}_{\hat{t}})-W_2(\hat{\eta}_{\hat{t}})|+\frac{2^6}{\beta}\\
&\leq 2L(2+||\hat{\gamma}_{\hat{t}}||^2_0+||\hat{\eta}_{\hat{t}}||^2_0)+\frac{2^6}{\beta}
\leq 4L(1+M_0^2)+\frac{2^6}{\beta}.
\end{aligned}
\end{eqnarray}
Letting $\beta\rightarrow\infty$, we have
\begin{eqnarray*}
	\Upsilon(\hat{\gamma}_{\hat{t}},\hat{\eta}_{\hat{t}})
	\leq \frac{1}{2{\beta}-2^7}\left[4L(1+M^2_0)+\frac{2^6}{\beta}\right]\rightarrow0\ 
	\mbox{as} \ \beta\rightarrow\infty.
\end{eqnarray*}
In view of  (\ref{s0}), we have
\begin{eqnarray}\label{5.66666123}
\lim_{\beta\to \infty} ||\hat{\gamma}_{\hat{t}}-\hat{\eta}_{\hat{t}}||_0 =0.
\end{eqnarray}
From  (\ref{s0}),  (\ref{w1}), (\ref{5.6}), (\ref{4.7jiajia130}) and (\ref{5.66666123}), we conclude
that
\begin{eqnarray}\label{5.10112345}
\begin{aligned}
&\quad
{\beta}||\hat{\gamma}_{\hat{t}}-\hat{\eta}_{\hat{t}}||^6_{0}+\beta^{\frac{1}{3}}|\hat{\gamma}_{\hat{t}}(\hat{t}\,)-\hat{\eta}_{\hat{t}}(\hat{t}\,)|^2
\leq{\beta}\Upsilon(\hat{\gamma}_{\hat{t}},\hat{\eta}_{\hat{t}})
+\beta^{\frac{1}{3}}|\hat{\gamma}_{\hat{t}}(\hat{t}\,)-\hat{\eta}_{\hat{t}}(\hat{t}\,)|^2\\
&\leq L(1+2M_0)||\hat{\gamma}_{\hat{t}}-\hat{\eta}_{\hat{t}}||_0
+\frac{2^5}{\beta}+{2^{8}}||\hat{\gamma}_{\hat{t}}-\hat{\eta}_{\hat{t}}||_{0}^6
\rightarrow0 \ 
\mbox{as} \ \beta\rightarrow\infty.
\end{aligned}
\end{eqnarray}
Then,  we have
\begin{eqnarray}\label{5.10123445567890}
\begin{aligned}
&\quad
\beta^{\frac{1}{2}}|\hat{{\gamma}}_{{\hat{t}}}(\hat{t}\,)-\hat{{\eta}}_{{\hat{t}}}(\hat{t}\,)|^2
\leq\beta^{\frac{1}{6}}\left(L(1+2M_0)||\hat{\gamma}_{\hat{t}}-\hat{\eta}_{\hat{t}}||_0
+\frac{2^5}{\beta}+{2^{8}}||\hat{\gamma}_{\hat{t}}-\hat{\eta}_{\hat{t}}||_{0}^6\right)\\
&=L(1+2M_0)\beta^{\frac{1}{6}}||\hat{\gamma}_{\hat{t}}-\hat{\eta}_{\hat{t}}||_0
+\frac{2^5}{\beta^{\frac{5}{6}}}+{2^{8}}\beta^{\frac{1}{6}}||\hat{\gamma}_{\hat{t}}-\hat{\eta}_{\hat{t}}||_{0}^6
\rightarrow0 \ 
\mbox{as} \ \beta\rightarrow\infty.
\end{aligned}
\end{eqnarray}
Therefore, we have (\ref{5.10}).
\par
$Step\ 3.$ There exists 
$N_0>0$
such that
$\hat{t}\in [\tilde{t},T)$
for all $\beta\geq N_0$.
\par
By (\ref{5.66666123}), there is  a sufficiently large number $N_0>0$ such that
$$
L||\hat{\gamma}_{\hat{t}}-\hat{\eta}_{\hat{t}}||_0
\leq
\frac{\tilde{m}}{4}
$$
for all $\beta\geq N_0$.
Then,  $\hat{t}\in [\tilde{t},T)$ for all $\beta\geq N_0$. Indeed, if  $\hat{t}=T$,  we  deduce the following contradiction:
\begin{eqnarray*}
	\frac{\tilde{m}}{2}\leq\Psi(\hat{\gamma}_{\hat{t}},\hat{\eta}_{\hat{t}})\leq \phi(\hat{\gamma}_{\hat{t}})-\phi(\hat{\eta}_{\hat{t}})\leq
	L||\hat{\gamma}_{\hat{t}}-\hat{\eta}_{\hat{t}}||_0
	\leq
	\frac{\tilde{m}}{4}.
\end{eqnarray*}

{ $Step\ 4.$   Crandall-Ishii lemma.}
\par
From above all,  for the fixed   $N_0>0$ in step 3, 
we  find
$(\hat{t},\hat{\gamma}_{\hat{t}}, \hat{\eta}_{\hat{t}})\in [\tilde{t}, T]\times
\Lambda^{\tilde{t}}\times \Lambda^{\tilde{t}}$   satisfying $\hat{t}\in [\tilde{t},T)$  for all $\beta\geq N_0$
such that
\begin{eqnarray}\label{psi4}
\ \ \ \ \ \ \  \Psi_1(\hat{\gamma}_{\hat{t}},\hat{\eta}_{\hat{t}})\geq \Psi(\tilde{\gamma}_{\tilde{t}},\tilde{\gamma}_{\tilde{t}}) \  \mbox{and}  \      \Psi_1(\hat{\gamma}_{\hat{t}},\hat{\eta}_{\hat{t}})\geq
{\Psi}_1(\gamma_t,\eta_t),
\  (\gamma_t,\eta_t)\in  \Lambda^{\hat{t}}\otimes \Lambda^{\hat{t}}.
\end{eqnarray}
For every $N\geq1$, we define,
for $(t,\gamma_t,\eta_t)\in [0,T]\times {\Lambda}\times {\Lambda}$,
\begin{eqnarray}\label{06091}
\begin{aligned}
w_{1}(\gamma_t)&:=W_1(\gamma_t)-2^5\beta\, \Upsilon(\gamma_{t},\hat{\xi}_{\hat{t}})-\varepsilon\frac{\nu T-t}{\nu
	T}\, \Upsilon(\gamma_t)
-\varepsilon \overline{\Upsilon}(\gamma_t,\hat{\gamma}_{\hat{t}})\\
&\quad
-2\beta^{\frac{1}{3}}\left|(\gamma_t(t))^-_N-(e^{(t-\hat{t}\,)A}\hat{\xi}_{\hat{t}}(\hat{t}\,))^-_N\right|^2-\sum_{i=0}^{\infty}
\frac{1}{2^i}\overline{\Upsilon}(\gamma^i_{t_i},\gamma_t),
\end{aligned}
\end{eqnarray}
\begin{eqnarray}\label{06092}
\begin{aligned}
w_{2}(\eta_t)&:=-W_2(\eta_t)-2^5\beta\, \Upsilon(\eta_{t},\hat{\xi}_{\hat{t}})-\varepsilon\frac{\nu T-t}{\nu
	T}\, \Upsilon(\eta_t)
-\varepsilon \overline{\Upsilon}(\eta_t,\hat{\eta}_{\hat{t}})\\
&\quad
-2\beta^{\frac{1}{3}}\left|(\eta_t(t))_N^--(e^{(t-\hat{t}\,)A}\hat{\xi}_{\hat{t}}(\hat{t}\,))^-_N\right|^2-\sum_{i=0}^{\infty}
\frac{1}{2^i}{\Upsilon}(\eta^i_{t_i},\eta_t),
\end{aligned}
\end{eqnarray}
where $\hat{\xi}_{\hat{t}}=\frac{\hat{\gamma}_{\hat{t}}+\hat{\eta}_{\hat{t}}}{2}$.   We  note that $w_1,w_2$ depend on $\hat{\xi}_{{\hat{t}}}$ and $N$, and thus on $\beta$,
$\varepsilon$ and  $N$. We also note that the last term in (\ref{06092}) is $\sum_{i=0}^{\infty}
\frac{1}{2^i}{\Upsilon}(\eta^i_{t_i},\eta_t)$  rather than $\sum_{i=0}^{\infty}
\frac{1}{2^i}\overline{\Upsilon}(\eta^i_{t_i},\eta_t)$. This is because  we divide the term $\sum_{i=0}^{\infty}
\frac{1}{2^i}[{\Upsilon}(\gamma^i_{t_i},\gamma_t)+{\Upsilon}(\eta^i_{t_i},\eta_t)+|t-t_i|^2]$ in $\Psi_1$  into two terms $\sum_{i=0}^{\infty}
\frac{1}{2^i}\overline{\Upsilon}(\gamma^i_{t_i},\gamma_t)$ and $\sum_{i=0}^{\infty}
\frac{1}{2^i}{\Upsilon}(\eta^i_{t_i},\eta_t)$. Define $\varphi\in C^2(H_N\times H_N)$ by
\begin{eqnarray}\label{07063}
\varphi(x,y):=\beta^{\frac{1}{3}}|x-y|^2,\ \ (x,y)\in H_N\times H_N.
\end{eqnarray}
By Lemma \ref{0611a} below, $w_1$ and $w_2$ satisfy the conditions of  Theorem \ref{theorem0513} with $\varphi$ defined by (\ref{07063}).
Then by Theorem \ref{theorem0513}, there exist
sequences  $(l_{k},\check{\gamma}^{k}_{l_k}), (s_{k},\check{\eta}^{k}_{s_k})\in [\hat{t},T]\times \Lambda^{\hat{t}}$ and
sequences of functionals 
$(\varphi_{1,k}, \psi_{1,k}, \varphi_{2,k}, \psi_{2,k})\in \Phi_{\hat{t}}\times \Phi_{\hat{t}}\times  {\mathcal{G}}_{l_k}\times {\mathcal{G}}_{s_k}$ bounded from below such that both functionals
\begin{eqnarray}\label{0609a}
w_{1}(\gamma_t)-\varphi_{1,k}(\gamma_t)-\varphi_{2,k}(\gamma_t), \quad \gamma_t\in \Lambda^{l_k}
\end{eqnarray}
and
\begin{eqnarray}\label{0609b}
w_{2}(\eta_t)-\psi_{1,k}(\eta_{t})-\psi_{2,k}(\eta_{t}), \quad  \eta_t\in \Lambda^{s_k}
\end{eqnarray}
attain the strict  maximum $0$ at $\check{\gamma}^{k}_{l_k}$ and $\check{\eta}^{k}_{s_k}$, respectively, and
\begin{eqnarray}\label{0608v1}
\ \ \ &&\left(l_{k}, \check{\gamma}^{k}_{l_{k}}, w_1(\check{\gamma}^{k}_{l_{k}}),(\partial_t\varphi_{1,k},\partial_x\varphi_{1,k},\partial_{xx}\varphi_{1,k})( \check{\gamma}^{k}_{l_{k}}), ({ \partial_t^o}\varphi_{2,k},\partial_x\varphi_{2,k},\partial_{xx}\varphi_{2,k})( \check{\gamma}^{k}_{l_{k}})\right)\nonumber\\
&&\underrightarrow{k\rightarrow\infty}\left({\hat{t}},\hat{{\gamma}}_{{\hat{t}}}, w_1(\hat{{\gamma}}_{{\hat{t}}}), (b_1, 2\beta^{\frac{1}{3}} ((\hat{{\gamma}}_{{\hat{t}}}({{\hat{t}}}))_N-(\hat{{\eta}}_{{\hat{t}}}({{\hat{t}}}))_N), X_N), (0,\mathbf{0},\mathbf{0})\right),
\end{eqnarray}
\begin{eqnarray}\label{0608vw1}
&&\left(s_{k}, \check{\eta}^{k}_{s_{k}}, w_2(\check{\eta}^{k}_{s_{k}}),(\partial_t\psi_{1,k}, \partial_x\psi_{1,k}, \partial_{xx}\psi_{1,k})( \check{\eta}^{k}_{s_{k}}), ({ \partial_t^o}\psi_{2,k}, \partial_x\psi_{2,k}, \partial_{xx}\psi_{2,k})( \check{\eta}^{k}_{s_{k}})\right)\nonumber\\
&& \ \ \ \ \underrightarrow{k\rightarrow\infty}\left({\hat{t}},\hat{{\eta}}_{{\hat{t}}}, w_2(\hat{{\eta}}_{{\hat{t}}}), (b_2, 2\beta^{\frac{1}{3}}((\hat{{\eta}}_{{\hat{t}}}({{\hat{t}}}))_N-(\hat{{\gamma}}_{{\hat{t}}}({{\hat{t}}}))_N), Y_N), (0,\mathbf{0},\mathbf{0})\right),
\end{eqnarray}
where $b_{1}+b_{2}=0$ and $X_N,Y_N\in \mathcal{S}(H_N)$ satisfy the following inequality:
\begin{eqnarray}\label{II10}
{-6\beta^{\frac{1}{3}}}\left(\begin{array}{cc}
I&0\\
0&I
\end{array}\right)\leq \left(\begin{array}{cc}
X_N&0\\
0&Y_N
\end{array}\right)\leq  6\beta^{\frac{1}{3}} \left(\begin{array}{cc}
I&-I\\
-I&I
\end{array}\right).
\end{eqnarray}
We note that  
sequence  $(\check{\gamma}^{k}_{l_k},\check{\eta}^{k}_{s_k},l_{k},s_{k},\varphi_{1,k},\psi_{1,k},\varphi_{2,k},\psi_{2,k})$ and $b_{1},b_{2},X_N,Y_N$  depend on  $\beta$,
$\varepsilon$ and $N$. We also note that  (\ref{II10}) follows from (\ref{II0615}) choosing $\kappa=\frac{1}{2}\beta^{-\frac{1}{3}}$. In fact, by (\ref{07063}),
$$
A= \nabla_x^2\varphi((\hat{\gamma}_{\hat{t}}(\hat{t}\,))_N,(\hat{\eta}_{\hat{t}}(\hat{t}\,))_N)
=2\beta^{\frac{1}{3}}\left(\begin{array}{cc}
I&-I\\
-I&I
\end{array}\right),
$$
and thus, if $\kappa=\frac{1}{2}\beta^{-\frac{1}{3}}$,
$$A+\kappa A^2=(1+4\kappa \beta^{\frac{1}{3}})A=3A,$$ and
$$-\left(\frac{1}{\kappa}+|A|\right)=-\left(2\beta^{\frac{1}{3}}+4\beta^{\frac{1}{3}}\right)=-6\beta^{\frac{1}{3}}.$$
Then from (\ref{II0615}) it follows that  (\ref{II10}) holds true. For every $(t,\gamma_t; s,\eta_s)\in ([T-\bar{a},T]\times{{\Lambda}^{T-\bar{a}}})^2$, define
\begin{eqnarray*}
	\chi^{k,N,1}(\gamma_t)  &:=&\varphi_{1,k}(\gamma_t)-2\beta^{\frac{1}{3}}\left|(\gamma_t(t))_N-(e^{(t-\hat{t}\,)A}\hat{\xi}_{\hat{t}}(\hat{t}\,))_N\right|^2,\\[3mm]
	\chi^{k,N,2}(\gamma_t)&:=& \varepsilon\frac{\nu T-t}{\nu
		T}\, \Upsilon(\gamma_t)
	+\varepsilon \overline{\Upsilon}(\gamma_t,\hat{\gamma}_{\hat{t}})
	+\sum_{i=0}^{\infty}
	\frac{1}{2^i}\, \overline{\Upsilon}(\gamma^i_{t_i},\gamma_t)+2^5\beta\, \Upsilon(\gamma_{t},\hat{{\xi}}_{{\hat{t}}})\\
	&&+\varphi_{2,k}(\gamma_t)
	+2\beta^{\frac{1}{3}}\left|\gamma_t(t)-e^{(t-\hat{t}\,)A}\, \hat{\xi}_{\hat{t}}(\hat{t}\,)\right|^2, \\[3mm]
	\chi^{k,N}(\gamma_t)  &:=&\chi^{k,N,1}(\gamma_t)+\chi^{k,N,2}(\gamma_t); \quad \hbox{\rm and }
\end{eqnarray*}
\begin{eqnarray*}
	\hbar^{k,N,1}(\eta_s)   &:=&\psi_{1,k}(\eta_s)-2\beta^{\frac{1}{3}}\left|(\eta_s(s))_N-(e^{(s-\hat{t}\,)A}\hat{\xi}_{\hat{t}}(\hat{t}\,))_N\right|^2,\\[3mm]
	\hbar^{k,N,2}(\eta_s) &:=& \varepsilon\frac{\nu T-s}{\nu
		T}\, \Upsilon(\eta_s)
	+\varepsilon \overline{\Upsilon}(\eta_s,\hat{\eta}_{\hat{t}})+\sum_{i=0}^{\infty}
	\frac{1}{2^i}{\Upsilon}(\eta^i_{t_i},\eta_s)+2^5\beta\Upsilon(\eta_{s},\hat{{\xi}}_{{\hat{t}}})\\
	&&
	+\psi_{2,k}(\eta_s)
	+2\beta^{\frac{1}{3}}\left|\eta_s(s)-e^{(s-\hat{t}\,)A}\, \hat{\xi}_{\hat{t}}(\hat{t}\,)\right|^2, \\[3mm]
	\hbar^{k,N}(\eta_s)   &:=&\hbar^{k,N,1}(\eta_s)+\hbar^{k,N,2}(\eta_s).
\end{eqnarray*}
Clearly, { $\chi^{k,N,2}\in{\mathcal{G}}_{{l_k}}$ and $\hbar^{k,N,2}\in {\mathcal{G}}_{{s_k}}$}, and in view of  Lemma \ref{lemma03302}, we have  $\chi^{k,N,1},\hbar^{k,N,1}\in \Phi_{\hat{t}}$.
Moreover, by  (\ref{0609a}), (\ref{0609b}) and definitions of $w_1$ and $w_2$, we have
$$
\left(W_1-\chi^{k,N,1}-\chi^{k,N,2}\right)(\check{\gamma}^k_{l_k})=\sup_{(t,\gamma_t)\in [{{l_k}},T]\times\Lambda^{{l_k}}}
\left(W_1-\chi^{k,N,1}-\chi^{k,N,2}\right)(\gamma_t)
$$
and
$$
\left(W_2+\hbar^{k,N,1}+\hbar^{k,N,2}\right)(\check{\eta}^k_{s_k})=\inf_{(s,\eta_s)\in [{s_k},T]\times\Lambda^{{s_k}}}
\left(W_2+\hbar^{k,N,1}+\hbar^{k,N,2}\right)(\eta_s).
$$
From $l_{k},s_{k}\rightarrow {\hat{t}}$ as $k\rightarrow\infty$ and ${\hat{t}}<T$
for $\beta>N_0$, it follows that for every fixed $\beta>N_0$,   there is a constant $ K_\beta>0$ such that
$$
|l_{k}|\vee|s_{k}|<T 
\quad \mbox{for all}    \ \ k\geq K_\beta.
$$
Now, for every $\beta>N_0$ and  $k>K_\beta$, 
we have from the definition of viscosity solutions that
\begin{eqnarray}\label{vis1}
\begin{aligned}
&{ \partial_t^o}\chi^{k,N}(\check{\gamma}^k_{{{l_k}}})
+\left\langle A^*\partial_{x}(\varphi_{1,k})(\check{\gamma}^k_{{{l_k}}}),\, \check{\gamma}^k_{{{l_k}}}({{l_k}})\right\rangle_H\\
&-4\beta^{\frac{1}{3}}\left\langle A^*\left(\left(\check{\gamma}^k_{l_k}({l_k})\right)_N-\left(e^{({l_k}-\hat{t}\,)A}\hat{\xi}_{\hat{t}}(\hat{t}\,)\right)_N\right),\,
\check{\gamma}^k_{{{l_k}}}({l_k})
\right\rangle_H\\
&+{\mathbf{H}}\left(\check{\gamma}^k_{{{l_k}}}, \, W_1(\check{\gamma}^k_{{{l_k}}}), \,\partial_x\chi^{k,N}(\check{\gamma}^k_{{{l_k}}}),
\, \partial_{xx}\chi^{k,N}(\check{\gamma}^k_{{{l_k}}})
\right)\geq c
\end{aligned}
\end{eqnarray}
and
\begin{eqnarray}\label{vis2}
\begin{aligned}
&
-{ \partial_t^o}\hbar^{k,N}(\check{\eta}^k_{{{s_k}}})-\left\langle A^*\partial_{x}(\psi_{1,k})(\check{\eta}^k_{{{s_k}}}),\, \check{\eta}^k_{{{s_k}}}({{s_k}})\right\rangle_H\\
&+4\beta^{\frac{1}{3}}\left\langle A^*\left(\left(\check{\eta}^k_{s_k}({s_k})\right)_N-\left(e^{({s_k}-\hat{t}\,)A}\hat{\xi}_{\hat{t}}(\hat{t}\,)\right)_N\right), \,
\check{\eta}^k_{{{s_k}}}({s_k})
\right\rangle_H\\
&+{\mathbf{H}}\left(\check{\eta}^k_{{{s_k}}}, \, W_2(\check{\eta}^k_{{{s_k}}}),\,
-\partial_x\hbar^{k,N}(\check{\eta}^k_{{{s_k}}}),\, -\partial_{xx}\hbar^{k,N}(\check{\eta}^k_{{{s_k}}})\right)\leq0,
\end{aligned}
\end{eqnarray}
where, for every $(t,\gamma_t)\in [{l_k},T]\times{{\Lambda}^{{l_k}}}$ and  $ (s,\eta_s)\in [{s_k},T]\times{{\Lambda}^{{s_k}}}$, we have from Lemmas \ref{theoremS} and \ref{lemma03302},
\begin{eqnarray*}
	{ \partial_t^o}\chi^{k,N}(\gamma_t)&:=&\partial_t\chi^{k,N,1}(\gamma_t)+{ \partial_t^o}\chi^{k,N,2}(\gamma_t)\\
	&=&
	\partial_t(\varphi_{1,k})(\gamma_{{t}})+4\beta^{\frac{1}{3}}\left\langle A^*(({\gamma}_{{t}}({t}))_N-(e^{({t}-\hat{t}\,)A}\hat{\xi}_{\hat{t}}(\hat{t}\,))_N),\,
	e^{({t}-\hat{t}\,)A}\hat{\xi}_{\hat{t}}(\hat{t}\,)\right\rangle_H\\
	&&-\frac{\varepsilon}{\nu T}\, \Upsilon(\gamma_{{t}})
	+2\varepsilon({t}-{\hat{t}})+2\sum_{i=0}^{\infty}\frac{1}{2^i}(t-t_{i})+ \partial_t^o(\varphi_{2,k})(\gamma_{{t}}),
\end{eqnarray*}
\begin{eqnarray*}
	\partial_x\chi^{k,N}(\gamma_t)&:=&\partial_x\chi^{k,N,1}(\gamma_t)+\partial_x\chi^{k,N,2}(\gamma_t)\\
	&=&\partial_{x}(\varphi_{1,k})(\gamma_{{t}})
	+4\beta^{\frac{1}{3}}\left((\gamma_{t}(t))^-_N-(e^{(t-\hat{t}\,)A}\hat{\xi}_{\hat{t}}(\hat{t}\,))^-_N\right)
	\\
	&&\!\!\!\!+\varepsilon\frac{\nu T-{t}}{\nu T}\, \partial_x\Upsilon(\gamma_{{t}}) +\varepsilon\, \partial_x \Upsilon(\gamma_{{t}}-\hat{\gamma}_{{\hat{t},t}}^A)
	+2^5\beta\, \partial_x\Upsilon(\gamma_{{t}}-\hat{\xi}_{{\hat{t},t}}^A)
	\\
	&&+\partial_x\left[\sum_{i=0}^{\infty}\frac{1}{2^i}
	\Upsilon(\gamma_{t}-(\gamma^{i})_{t_{i},t}^A)
	\right]+\partial_{x}(\varphi_{2,k})(\gamma_{{t}}),
\end{eqnarray*}
\begin{eqnarray*}
	\partial_{xx}\chi^{k,N}(\gamma_t)&:=&\partial_{xx}\chi^{k,N,1}(\gamma_t)+\partial_{xx}\chi^{k,N,2}(\gamma_t)\\
	&=&\partial_{xx}(\varphi_{1,k})(\gamma_{{t}})+4\beta^{\frac{1}{3}}Q_N+\varepsilon\frac{\nu T-{t}}{\nu T}\, \partial_{xx}\Upsilon(\gamma_{{t}}) \\
	&&+\varepsilon\, \partial_{xx}\Upsilon(\gamma_{{t}}-\hat{\gamma}_{{\hat{t},t}}^A)
	+2^5\beta\, \partial_{xx}\Upsilon(\gamma_{{t}}-\hat{\xi}_{{\hat{t},t}}^A)\\
	&&
	+\partial_{xx}\left[\sum_{i=0}^{\infty}\frac{1}{2^i}
	\Upsilon(\gamma_{t}-(\gamma^{i})_{t_{i},t}^A)
	\right]+\partial_{xx}(\varphi_{2,k})(\gamma_{{t}}),
\end{eqnarray*}
\begin{eqnarray*}
{ \partial_t^o}\hbar^{k,N}(\eta_s)&:=&\partial_t\hbar^{k,N,1}(\eta_s)+{ \partial_t^o}\hbar^{k,N,2}(\eta_s)\\
	&=&\!\!\!\!
	\partial_t\psi_{1,k}(\eta_{{s}})+4\beta^{\frac{1}{3}}\left\langle A^*(({\eta}_{s}(s))_N-(e^{(s-\hat{t}\,)A}\hat{\xi}_{\hat{t}}(\hat{t}\,))_N),\,
	e^{(s-\hat{t}\,)A}\hat{\xi}_{\hat{t}}(\hat{t}\,)\right\rangle_H\\
	&&-\frac{\varepsilon}{\nu T}\Upsilon(\eta_{{s}})+2\varepsilon({s}-{\hat{t}})
	+ \partial_t^o\psi_{2,k}(\eta_{{s}}),
\end{eqnarray*}
\begin{eqnarray*}
	\partial_x\hbar^{k,N}(\eta_s)&:=&\partial_x\hbar^{k,N,1}(\eta_s)+\partial_x\hbar^{k,N,2}(\eta_s)\\
	&=& \partial_{x}\psi_{1,k}(\eta_{{s}})+4\beta^{\frac{1}{3}}\left((\eta_{s}(s))^-_N-(e^{(s-\hat{t}\,)A}\hat{\xi}_{\hat{t}}(\hat{t}\,))^-_N\right)
	\nonumber\\
	&&\!\!\!\!+\varepsilon\frac{\nu T-{s}}{\nu T}
	\,\partial_x\Upsilon(\eta_{{s}})+\varepsilon\,\partial_x\Upsilon(\eta_{{s}}-\hat{\eta}^A_{{\hat{t}},s})
	+2^5\beta\, \partial_x\Upsilon(\eta_{{s}}-\hat{\xi}_{{\hat{t}},s}^A)\\
	&&
	+\partial_x\left[\sum_{i=0}^{\infty}\frac{1}{2^i}
	\Upsilon(\eta_{s}-({\eta}^{i})_{t_{i},s}^A)
	\right]+\partial_{x}\psi_{2,k}(\eta_{{s}}),
\end{eqnarray*}
and
\begin{eqnarray*}
	\partial_{xx}\hbar^{k,N}(\eta_s)&:=&\partial_{xx}\hbar^{k,N,1}(\eta_s)+\partial_{xx}\hbar^{k,N,2}(\eta_s)\\
	&=&\partial_{xx}\psi_{1,k}(\eta_{{s}})+4\beta^{\frac{1}{3}}Q_N+\varepsilon\frac{\nu T-{s}}{\nu T}
	\, \partial_{xx}\Upsilon(\eta_{{s}})\\
	&&+\varepsilon\, \partial_{xx}\Upsilon(\eta_{{s}}-\hat{\eta}_{{\hat{t}},s}^A)
	+2^5\beta\, \partial_{xx}\Upsilon(\eta_{{s}}-\hat{\xi}_{{\hat{t}},s}^A)
	\\
	&&
	+\partial_{xx}\left[\sum_{i=0}^{\infty}\frac{1}{2^i}
	\Upsilon(\eta_{s}-({\eta}^{i})_{t_{i},s}^A)
	\right]+\partial_{xx}\psi_{2,k}(\eta_{{s}}).
\end{eqnarray*}
\par
{ $Step\ 5.$  Calculation and  completion of the proof.}
\par
{ By   (\ref{03108}) and (\ref{03109}), 
	there exists a generic constant $C>0$ such that}
\begin{eqnarray*}
	&&\left|\partial_x\Upsilon(\check{\gamma}^k_{{{l_k}}}-\hat{\gamma}_{{\hat{t}},{l_k}}^A)\right|
	+\left|\partial_x\Upsilon(\check{\eta}^k_{{{s_k}}}-\hat{\eta}_{{\hat{t}},{s_k}}^A)\right|\\
	&\leq& C\left|e^{({l_k}-\hat{t}\,)A}\hat{\gamma}_{{\hat{t}}}(\hat{t}\,)-\check{\gamma}^k_{l_k}({l_k})\right|^5
	+C\left|e^{({s_k}-\hat{t}\,)A}\hat{\eta}_{{\hat{t}}}(\hat{t}\,)-\check{\eta}^k_{s_k}({s_k})\right|^5,
\end{eqnarray*}
\begin{eqnarray*}
	&&\left|\partial_{xx}\Upsilon(\check{\gamma}^k_{{{l_k}}}-\hat{\gamma}_{{\hat{t}},{l_k}}^A)\right|
	+\left|\partial_{xx}\Upsilon(\check{\eta}^k_{{{s_k}}}-\hat{\eta}_{{\hat{t}},{s_k}}^A)\right|\\
	&\leq& C\left|e^{({l_k}-\hat{t}\,)A}\hat{\gamma}_{{\hat{t}}}(\hat{t}\,)-\check{\gamma}^k_{l_k}({l_k})\right|^4
	+C\left|e^{({s_k}-\hat{t}\,)A}\hat{\eta}_{{\hat{t}}}(\hat{t}\,)-\check{\eta}^k_{s_k}({s_k})\right|^4,
\end{eqnarray*}
Letting 
$k\rightarrow\infty$ in (\ref{vis1}) and (\ref{vis2}), and using (\ref{0608v1}) and (\ref{0608vw1}),  we have
\begin{eqnarray}\label{03103}
\begin{aligned}
& b_1+4\beta^{\frac{1}{3}}\left\langle A^*\left((\hat{\gamma}_{\hat{t}}(\hat{t}\,))_N-(\hat{\xi}_{\hat{t}}(\hat{t}\,))_N\right),\,
\hat{\xi}_{\hat{t}}(\hat{t}\,)\right\rangle_H-\frac{\varepsilon}{\nu T}\Upsilon(\hat{{\gamma}}_{{\hat{t}}})\\
&+2\sum_{i=0}^{\infty}\frac{1}{2^i}(\hat{t}-t_i)+2\beta^{\frac{1}{3}} \left\langle A^*\left((\hat{{\gamma}}_{{\hat{t}}}({{\hat{t}}}))_N-(\hat{{\eta}}_{{\hat{t}}}({{\hat{t}}}))_N\right),\, \hat{\gamma}_{{\hat{t}}}({\hat{t}})\right\rangle_H\\
&-4\beta^{\frac{1}{3}}\left\langle A^*\left((\hat{\gamma}_{\hat{t}}(\hat{t}\,))_N-(\hat{\xi}_{\hat{t}}(\hat{t}\,))_N\right),\,
\hat{\gamma}_{{\hat{t}}}(\hat{{t}})
\right\rangle_H\\
&+{\mathbf{H}}\left(\hat{{\gamma}}_{{\hat{t}}},\, W_1(\hat{{\gamma}}_{{\hat{t}}}),\,
\partial_x\chi^{N}(\hat{\gamma}_{\hat{t}}),\, \partial_{xx}\chi^{N}(\hat{\gamma}_{\hat{t}})\right)
\geq c
\end{aligned}
\end{eqnarray}
and
\begin{eqnarray}\label{03104}
\begin{aligned}
& -b_2-4\beta^{\frac{1}{3}}\left\langle A^*\left((\hat{\eta}_{\hat{t}}(\hat{t}\,))_N-(\hat{\xi}_{\hat{t}}(\hat{t}\,))_N\right),\,
\hat{\xi}_{\hat{t}}(\hat{t}\,)\right\rangle_H+ \frac{\varepsilon}{\nu T}\Upsilon(\hat{{\eta}}_{{\hat{t}}})\\
&+2\beta^{\frac{1}{3}} \left\langle A^*\left((\hat{{\gamma}}_{{\hat{t}}}({{\hat{t}}}))_N-(\hat{{\eta}}_{{\hat{t}}}({{\hat{t}}}))_N\right),\, \hat{{\eta}}_{{\hat{t}}}({{\hat{t}}})\right\rangle _H\\
&
+4\beta^{\frac{1}{3}}\left\langle A^*\left((\hat{\eta}_{\hat{t}}(\hat{t}\,))_N-(\hat{\xi}_{\hat{t}}(\hat{t}\,))_N\right),\,
\hat{\eta}_{{\hat{t}}}(\hat{{t}})
\right\rangle_H\\
&+{\mathbf{H}}\left(\hat{{\eta}}_{{\hat{t}}},\, W_2(\hat{{\eta}}_{{\hat{t}}}),\, -\partial_x\hbar^{N}(\hat{\eta}_{{\hat{t}}}),\, -\partial_{xx}\hbar^{N}(\hat{\eta}_{\hat{t}})\right)
\leq0,
\end{aligned}
\end{eqnarray}
where
\begin{eqnarray}\label{1002c}
\begin{aligned}
\partial_x\chi^{N}(\hat{\gamma}_{\hat{t}})
&:=
2\beta^{\frac{1}{3}}\left(\hat{{\gamma}}_{{\hat{t}}}({{\hat{t}}})-\hat{{\eta}}_{{\hat{t}}}({{\hat{t}}})\right)+2^5\beta\, \partial_x \Upsilon(\hat{\gamma}_{{\hat{t}}}-\hat{\xi}_{{\hat{t}}})\\
&\quad +\varepsilon\frac{\nu T-{\hat{t}}}{\nu T}\, \partial_x\Upsilon(\hat{{\gamma}}_{{\hat{t}}})
+\partial_x\sum_{i=0}^{\infty}\frac{1}{2^i}\, \Upsilon(\hat{\gamma}_{\hat{t}}-(\gamma^i)_{t_i,\hat{t}}^A),
\end{aligned}
\end{eqnarray}
\begin{eqnarray}\label{1002c1}
\begin{aligned}
\partial_{xx}\chi^{N}(\hat{\gamma}_{\hat{t}})
&:= X_N+4\beta^{\frac{1}{3}}Q_N+2^5\beta\, \partial_{xx}\Upsilon(\hat{\gamma}_{{\hat{t}}}-\hat{\xi}_{{\hat{t}}})\\
&\quad +\varepsilon\frac{\nu T-{\hat{t}}}{\nu T}\, \partial_{xx}\Upsilon(\hat{{\gamma}}_{{\hat{t}}})
+\partial_{xx}\sum_{i=0}^{\infty}\frac{1}{2^i}\, \Upsilon(\hat{\gamma}_{\hat{t}}-(\gamma^i)_{t_i,\hat{t}}^A),
\end{aligned}
\end{eqnarray}
\begin{eqnarray}\label{1002c2}
\begin{aligned}
\partial_x\hbar^{N}(\hat{\eta}_{{\hat{t}}})
&:=
2\beta^{\frac{1}{3}}\left(\hat{{\eta}}_{{\hat{t}}}({{\hat{t}}})-\hat{{\gamma}}_{{\hat{t}}}({{\hat{t}}})\right)+2^5\beta\, \partial_x\Upsilon(\hat{\eta}_{{\hat{t}}}-\hat{\xi}_{{\hat{t}}})\\
&\quad +\varepsilon\frac{\nu T-{\hat{t}}}{\nu T}\, \partial_x\Upsilon(\hat{{\eta}}_{{\hat{t}}})+\partial_x\sum_{i=0}^{\infty}\frac{1}{2^i}
\, \Upsilon(\hat{\eta}_{\hat{t}}-(\eta^i)_{t_i,\hat{t}}^A),
\end{aligned}
\end{eqnarray}
and
\begin{eqnarray}\label{1002c3}
\begin{aligned}
\partial_{xx}\hbar^{N}(\hat{\eta}_{\hat{t}})
&:= Y_N+4\beta^{\frac{1}{3}}Q_N+2^5\beta\, \partial_{xx}\Upsilon(\hat{\eta}_{{\hat{t}}}-\hat{\xi}_{{\hat{t}}})\\
&\quad +\varepsilon\frac{\nu T-{\hat{t}}}{\nu T}\, \partial_{xx}\Upsilon(\hat{{\eta}}_{{\hat{t}}})
+\partial_{xx}\sum_{i=0}^{\infty}\frac{1}{2^i}\,
\Upsilon(\hat{\eta}_{\hat{t}}-(\eta^i)_{t_i,\hat{t}}^A).
\end{aligned}
\end{eqnarray}
Since $b_1+b_2=0$ and $\hat{\xi}_{{\hat{t}}}=\frac{\hat{{\gamma}}_{\hat{t}}+\hat{{\eta}}_{\hat{t}}}{2}$, subtracting
both sides of   (\ref{03103}) from those  of (\ref{03104}), we observe the following cancellations:
\begin{eqnarray*}
&& 4\beta^{\frac{1}{3}}\left\langle A^*\left(\left(\hat{\gamma}_{\hat{t}}(\hat{t}\,)\right)_N-\left(\hat{\xi}_{\hat{t}}(\hat{t}\,)\right)_N\right),\,
\hat{\xi}_{\hat{t}}(\hat{t}\,)\right\rangle_H\\
&&-\left[-4\beta^{\frac{1}{3}}\left\langle A^*\left((\hat{\eta}_{\hat{t}}(\hat{t}\,))_N-(\hat{\xi}_{\hat{t}}(\hat{t}\,))_N\right),\,
\hat{\xi}_{\hat{t}}(\hat{t}\,)\right\rangle_H\right]\\
&=&4\beta^{\frac{1}{3}}\left\langle A^*\left(\left(\hat{\gamma}_{\hat{t}}(\hat{t}\,)\right)_N+\left(\hat{\eta}_{\hat{t}}(\hat{t}\,)\right)_N-2\left(\hat{\xi}_{\hat{t}}(\hat{t}\,)\right)_N\right),\, \hat{\xi}_{\hat{t}}(\hat{t}\,)\right\rangle_H=0,
\end{eqnarray*}
\begin{eqnarray*}
 &&2\beta^{\frac{1}{3}}\left\langle A^*\left(\left(\hat{\gamma}_{\hat{t}}(\hat{t}\,)\right)_N-\left(\hat{\eta}_{\hat{t}}(\hat{t}\,)\right)_N\right),\,\hat{\gamma}_{\hat{t}}(\hat{t}\,)\right\rangle_H
 \\
 &&-2\beta^{\frac{1}{3}} \left\langle A^*\left(\left(\hat{\gamma}_{\hat{t}}(\hat{t}\,)\right)_N-\left(\hat{\eta}_{\hat{t}}(\hat{t}\,)\right)_N\right),\, \hat{\eta}_{\hat{t}}(\hat{t}\,)\right\rangle _H\\
&=&2\beta^{\frac{1}{3}}\left\langle A^*\left(\left(\hat{\gamma}_{\hat{t}}(\hat{t}\,)\right)_N-\left(\hat{\eta}_{\hat{t}}(\hat{t}\,)\right)_N\right),\,\hat{\gamma}_{\hat{t}}(\hat{t}\,)-\hat{\eta}_{\hat{t}}(\hat{t}\,)\right\rangle_H,
\end{eqnarray*}
 \begin{eqnarray*}
&&-4\beta^{\frac{1}{3}}\left\langle A^*\left((\hat{\gamma}_{\hat{t}}(\hat{t}\,))_N-(\hat{\xi}_{\hat{t}}(\hat{t}\,))_N\right),\,
\hat{\gamma}_{\hat{t}}(\hat{t}\,)
\right\rangle_H\\
&&-
4\beta^{\frac{1}{3}}\left\langle A^*\left(\left(\hat{\eta}_{\hat{t}}(\hat{t}\,)\right)_N-\left(\hat{\xi}_{\hat{t}}(\hat{t}\,)\right)_N\right),\,
\hat{\eta}_{\hat{t}}(\hat{t}\,)
\right\rangle_H\\
&=&-2\beta^{\frac{1}{3}}\left\langle A^*\left(\left(\hat{\gamma}_{\hat{t}}(\hat{t}\,)\right)_N-\left(\hat{\eta}_{\hat{t}}(\hat{t}\,)\right)_N\right),\, \hat{\gamma}_{\hat{t}}(\hat{t}\,)\right\rangle_H\\
&&-
2\beta^{\frac{1}{3}}\left\langle A^*\left(\left(\hat{\eta}_{\hat{t}}(\hat{t}\,)\right)_N-\left(\hat{\gamma}_{\hat{t}}(\hat{t}\,)\right)_N\right),\,
\hat{\eta}_{\hat{t}}(\hat{t}\,)
\right\rangle_H\\
&=&-2\beta^{\frac{1}{3}}\left\langle A^*\left(\left(\hat{\gamma}_{\hat{t}}(\hat{t}\,)\right)_N-\left(\hat{\eta}_{\hat{t}}(\hat{t}\,)\right)_N\right),\,\hat{\gamma}_{\hat{t}}(\hat{t}\,)-\hat{\eta}_{\hat{t}}(\hat{t}\,)\right\rangle_H,
\end{eqnarray*}
 {\it all the terms which involve the unbounded operator $A$  mutually  cancels out},  and thus we have
\begin{eqnarray}\label{vis112}
\begin{aligned}
&\quad c+ \frac{\varepsilon}{\nu T}\left(\Upsilon(\hat{{\gamma}}_{{\hat{t}}})+\Upsilon(\hat{{\eta}}_{{\hat{t}}})\right)-2\sum_{i=0}^{\infty}\frac{1}{2^i}(\hat{t}-t_i)\\
&\leq{\mathbf{H}}\left(\hat{{\gamma}}_{{\hat{t}}},\, W_1(\hat{{\gamma}}_{{\hat{t}}}),\, \partial_x\chi^{N}(\hat{\gamma}_{\hat{t}}),\, \partial_{xx}\chi^{N}(\hat{\gamma}_{\hat{t}})\right)\\
&\quad-{\mathbf{H}}\left(\hat{{\eta}}_{{\hat{t}}},\, W_2(\hat{{\eta}}_{{\hat{t}}}),\, -\partial_x\hbar^{N}(\hat{\eta}_{{\hat{t}}}),\, -\partial_{xx}\hbar^{N}(\hat{\eta}_{\hat{t}})\right).
\end{aligned}
\end{eqnarray}
On the  other hand, from  (\ref{5.1}) and via a simple calculation, we have
\begin{eqnarray}\label{v4}
\begin{aligned}
&\quad{\mathbf{H}}\left(\hat{{\gamma}}_{{\hat{t}}},\, W_1(\hat{{\gamma}}_{{\hat{t}}}),\, \partial_x\chi^{N}(\hat{\gamma}_{\hat{t}}),\, \partial_{xx}\chi^{N}(\hat{\gamma}_{\hat{t}})\right)\\
&\quad-{\mathbf{H}}\left(\hat{{\eta}}_{{\hat{t}}},\, W_2(\hat{{\eta}}_{{\hat{t}}}),\, -\partial_x\hbar^{N}(\hat{\eta}_{{\hat{t}}}),\, -\partial_{xx}\hbar^{N}(\hat{\eta}_{\hat{t}})\right)\\
&\leq{\mathbf{H}}\left(\hat{{\gamma}}_{{\hat{t}}},\, W_2(\hat{{\eta}}_{{\hat{t}}}),\, \partial_x\chi^{N}(\hat{\gamma}_{\hat{t}}),\, \partial_{xx}\chi^{N}(\hat{\gamma}_{\hat{t}})\right)\\
&\quad-{\mathbf{H}}\left(\hat{{\eta}}_{{\hat{t}}},\, W_2(\hat{{\eta}}_{{\hat{t}}}),\, -\partial_x\hbar^{N}(\hat{\eta}_{{\hat{t}}}),\, -\partial_{xx}\hbar^{N}(\hat{\eta}_{\hat{t}})\right)\\
&\leq\sup_{u\in U}(J_{1}+J_{2}+J_{3}).
\end{aligned}
\end{eqnarray}
Here,
from Assumption \ref{hypstate} (ii), (\ref{03109})  and (\ref{II10}),  we have
\begin{eqnarray}\label{j2}
\quad&\begin{aligned}
&J_{1}
:=\, \frac{1}{2}\mbox{Tr}\left[\partial_{xx}\chi^{N}(\hat{\gamma}_{\hat{t}})\, { (GG^*)}(\hat{{\gamma}}_{{\hat{t}}},u)\right]-\frac{1}{2}\mbox{Tr}\left[-\partial_{xx}\hbar^{N}(\hat{\eta}_{\hat{t}})\, { (GG^*)}(\hat{{\eta}}_{{\hat{t}}},u)\right]\\[3mm]
\leq&\, 3
\beta^{\frac{1}{3}}\left|{G}(\hat{{\gamma}}_{{\hat{t}}},u)-G(\hat{{\eta}}_{{\hat{t}}},u)\right|_{L_2(\Xi,H)}^2\\
&+2\beta^{\frac{1}{3}} \left(\left|Q_N{G}(\hat{{\gamma}}_{{\hat{t}}},u)\right|_{L_2(\Xi,H)}^2+\left|Q_N{G}(\hat{{\eta}}_{{\hat{t}}},u)\right|_{L_2(\Xi,H)}^2\right)\\
&
+306\, \beta\, \left|\hat{{\gamma}}_{{\hat{t}}}({\hat{t}})-\hat{{\eta}}_{{\hat{t}}}({\hat{t}})\right|^4\left(\left|{G}(\hat{{\gamma}}_{{\hat{t}}},u)\right|_{L_2(\Xi,H)}^2+
\left|G(\hat{{\eta}}_{{\hat{t}}},u)\right|_{L_2(\Xi,H)}^2\right)\\
&+153\varepsilon\frac{\nu T-{\hat{t}}}{\nu    T}\left(\left|\hat{{\gamma}}_{{\hat{t}}}({\hat{t}})\right|^4|{G}(\hat{{\gamma}}_{{\hat{t}}},u)|_{L_2(\Xi,H)}^2+\left|\hat{{\eta}}_{{\hat{t}}}({\hat{t}})\right|^4\left|{G}(\hat{{\eta}}_{{\hat{t}}},u)\right|_{L_2(\Xi,H)}^2\right)
\\
&\qquad\qquad
+153\sum_{i=0}^{\infty}\frac{1}{2^i}|((\gamma^i)^A_{t_i, \hat t}-\hat{\gamma}_{\hat{t}})(\hat{t}\,)|^4
\, |G(\hat{{\gamma}}_{{\hat{t}}},u)|_{L_2(\Xi,H)}^2\\
&\qquad\qquad
+153\sum_{i=0}^{\infty}\frac{1}{2^i}\left|((\eta^i)^A_{t_i, \hat t}-\hat{\eta}_{\hat{t}})(\hat{t}\,)\right|^4
\, \left|G(\hat{{\eta}}_{{\hat{t}}},u)\right|_{L_2(\Xi,H)}^2
\\[3mm]
\leq&3
\beta^{\frac{1}{3}}{L^2} ||\hat{{\gamma}}_{{\hat{t}}}-\hat{{\eta}}_{{\hat{t}}}||_0^2+\!\!2\beta^{\frac{1}{3}}\!\! \left(|Q_N{G}(\hat{{\gamma}}_{{\hat{t}}},u)|_{L_2(\Xi,H)}^2\!\!+|Q_N{G}(\hat{{\eta}}_{{\hat{t}}},u)|_{L_2(\Xi,H)}^2\right)\\
&+153L^2\left (1+||\hat{{\gamma}}_{{\hat{t}}}||_0^2
+||\hat{{\eta}}_{{\hat{t}}}||_0^2
\right)
\sum_{i=0}^{\infty}\frac{1}{2^i}\!\!\left[\left|((\gamma^i)^A_{t_i, \hat t}-\hat{\gamma}_{\hat{t}})(\hat{t}\,)\right|^4\!\!\!
+\left|((\eta^i)^A_{t_i, \hat t}-\hat{\eta}_{\hat{t}})(\hat{t}\,)\right|^4\right]
\\
&+ 306\beta L^2\left |\hat{{\gamma}}_{{\hat{t}}}({\hat{t}})-\hat{{\eta}}_{{\hat{t}}}({\hat{t}})\right|^4\left (2+||\hat{{\gamma}}_{{\hat{t}}}||_0^2
+||\hat{{\eta}}_{{\hat{t}}}||_0^2
\right)\\
&+306\varepsilon \frac{\nu T-{\hat{t}}}{\nu T}L^2\left(1+||\hat{{\gamma}}_{{\hat{t}}}||^6_0
+||\hat{{\eta}}_{{\hat{t}}}||^6_0\right);
\end{aligned}
\end{eqnarray}
from Assumption \ref{hypstate} (ii) and (\ref{03108}),  we have
\begin{eqnarray}\label{j1}
&\qquad\begin{aligned}
&\quad J_{2}:= {\left\langle {F}(\hat{{\gamma}}_{{\hat{t}}},u),\, \partial_x\chi^{N}(\hat{\gamma}_{\hat{t}})\right\rangle_{H}  -\left\langle {F}(\hat{{\eta}}_{{\hat{t}}},u),\, -\partial_x\hbar^{N}(\hat{\eta}_{{\hat{t}}})\right\rangle_{H}}\\
\leq&\, 2\beta^{\frac{1}{3}}{L}\left|\hat{{\gamma}}_{{\hat{t}}}({\hat{t}})-\hat{{\eta}}_{{\hat{t}}}({\hat{t}})\right|\,
\left||\hat{{\gamma}}_{{\hat{t}}}-\hat{{\eta}}_{{\hat{t}}}\right||_0\\
&\qquad+18\beta\left|\hat{{\gamma}}_{{\hat{t}}}({\hat{t}})-\hat{{\eta}}_{{\hat{t}}}({\hat{t}})\right|^5L\left(2+||\hat{{\gamma}}_{{\hat{t}}}||_0
+||\hat{{\eta}}_{{\hat{t}}}||_0\right)
\\
&
+18L(1+||\hat{{\gamma}}_{{\hat{t}}}||_0+||\hat{{\eta}}_{{\hat{t}}}||_0)\sum_{i=0}^{\infty}\frac{1}{2^i}\!\!\left[\left|((\gamma^i)^A_{t_i, \hat t}-\hat{\gamma}_{\hat{t}})(\hat{t}\,)\right|^5
\!\!\!+\left|((\eta^i)^A_{t_i, \hat t}-\hat{\eta}_{\hat{t}})(\hat{t}\,)\right|^5\right]\\
& +36\varepsilon \frac{\nu T-{\hat{t}}}{\nu T} L\left(1+||\hat{{\gamma}}_{{\hat{t}}}||^6_0+||\hat{{\eta}}_{{\hat{t}}}||^6_0\right);
\end{aligned}
\end{eqnarray}
from Assumption \ref{hypcost} and (\ref{03108}),  we have
\begin{eqnarray}\label{j3}
&\quad\begin{aligned}
&\quad
J_{3}:=q\left(\hat{{\gamma}}_{{\hat{t}}}, \, W_2(\hat{{\eta}}_{{\hat{t}}}),\,  \partial_x\chi^{N}(\hat{\gamma}_{\hat{t}})G(\hat{{\gamma}}_{{\hat{t}}},u),\, u\right)\\
&\qquad\qquad-
q\left(\hat{{\eta}}_{{\hat{t}}}, \, W_2(\hat{{\eta}}_{{\hat{t}}}), \, -\partial_x\hbar^{N}(\hat{\eta}_{{\hat{t}}})G(\hat{{\eta}}_{{\hat{t}}},u),\, u\right)\\
\leq&
\, L||\hat{{\gamma}}_{{\hat{t}}}-\hat{{\eta}}_{{\hat{t}}}||_0
+2\beta^{\frac{1}{3}} L^2\left|\hat{{\gamma}}_{{\hat{t}}}({\hat{t}})
-\hat{{\eta}}_{{\hat{t}}}({\hat{t}})\right|\, \left||\hat{{\gamma}}_{{\hat{t}}}-\hat{{\eta}}_{{\hat{t}}}\right||_0\\
&\qquad+18\beta L^2\left|\hat{{\gamma}}_{{\hat{t}}}({\hat{t}})-\hat{{\eta}}_{{\hat{t}}}({\hat{t}})\right|^5
\left(2+||\hat{{\gamma}}_{{\hat{t}}}||_0
+||\hat{{\eta}}_{{\hat{t}}}||_0
\right)\\
&\!\!\!\!\!\!+18L^2\left(1+||\hat{{\gamma}}_{{\hat{t}}}||_0+||\hat{{\eta}}_{{\hat{t}}}||_0\right)\sum_{i=0}^{\infty}\frac{1}{2^i}
\!\!\left[\left|((\gamma^i)^A_{t_i, \hat t}-\hat{\gamma}_{\hat{t}})(\hat{t}\,)\right|^5
\!\!\!+\left|((\eta^i)^A_{t_i, \hat t}-\hat{\eta}_{\hat{t}})(\hat{t}\,)\right|^5\right]\\
&+36\varepsilon \frac{\nu T-{\hat{t}}}{\nu T} L^2\left(1+||\hat{{\gamma}}_{{\hat{t}}}||_0^6
+||\hat{{\eta}}_{{\hat{t}}}||_0^6
\right).
\end{aligned}
\end{eqnarray}
In view of  the property (i) of $(\hat{t},\hat{\gamma}_{\hat{t}},\hat{\eta}_{\hat{t}})$, we have
\begin{eqnarray}\label{1002d1}
2\sum_{i=0}^{\infty}\frac{1}{2^i}(\hat{t}-t_i)
\leq2\sum_{i=0}^{\infty}\frac{1}{2^i}\bigg{(}\frac{1}{2^i\beta}\bigg{)}^{\frac{1}{2}}\leq 4{\bigg{(}\frac{1}{{\beta}}\bigg{)}}^{\frac{1}{2}},
\end{eqnarray}
\begin{eqnarray}\label{1002d2}
\begin{aligned}
&\quad
\sum_{i=0}^{\infty}\frac{1}{2^i}\left[\left|((\gamma^i)^A_{t_i, \hat t}-\hat{\gamma}_{\hat{t}})(\hat{t}\,)\right|^5
+\left|((\eta^i)^A_{t_i, \hat t}-\hat{\eta}_{\hat{t}})(\hat{t}\,)\right|^5\right]\\
&\leq
2\sum_{i=0}^{\infty}\frac{1}{2^i}\bigg{(}\frac{1}{2^i\beta}\bigg{)}^{\frac{5}{6}}\leq 4{\bigg{(}\frac{1}{{\beta}}\bigg{)}}^{\frac{5}{6}},
\end{aligned}
\end{eqnarray}
and
\begin{eqnarray}\label{1002d3}
\begin{aligned}
&\quad
\sum_{i=0}^{\infty}\frac{1}{2^i}\left[\left|((\gamma^i)^A_{t_i, \hat t}-\hat{\gamma}_{\hat{t}})(\hat{t}\,)\right|^4
+\left|((\eta^i)^A_{t_i, \hat t}-\hat{\eta}_{\hat{t}})(\hat{t}\,)\right|^4\right]\\
&\leq
2\sum_{i=0}^{\infty}\frac{1}{2^i}\bigg{(}\frac{1}{2^i\beta}\bigg{)}^{\frac{2}{3}}\leq 4{\bigg{(}\frac{1}{{\beta}}\bigg{)}}^{\frac{2}{3}};
\end{aligned}
\end{eqnarray}
and since $\hat{{\gamma}}_{{\hat{t}}}$ and $\hat{{\eta}}_{{\hat{t}}}$ are independent of $N$, by Assumption  \ref{hypstate5666},
\begin{eqnarray}\label{1002d4}
\qquad \sup_{u\in U}\left[|Q_N{G}(\hat{{\gamma}}_{{\hat{t}}},u)|_{L_2(\Xi,H)}^2+|Q_N{G}(\hat{{\eta}}_{{\hat{t}}},u)|_{L_2(\Xi,H)}^2\right]\rightarrow0\ \mbox{as}\ N\rightarrow \infty.
\end{eqnarray}
Combining (\ref{vis112})-(\ref{j3}),  and letting $N\rightarrow\infty$ and then 
by (\ref{5.10jiajiaaaa}) and (\ref{5.10}), we have for sufficiently large  $\beta>0$,  
\begin{eqnarray}\label{vis122}
\begin{aligned}
c
&\leq
-\frac{\varepsilon}{\nu T}\left(\Upsilon(\hat{{\gamma}}_{{\hat{t}}})
+\Upsilon(\hat{{\eta}}_{{\hat{t}}})\right)\\
&\quad+ \varepsilon \frac{\nu T-{\hat{t}}}{\nu T} (342L+36)L\left(1+||\hat{{\gamma}}_{{\hat{t}}}||_0^6
+||\hat{{\eta}}_{{\hat{t}}}||_0^6
\right)+\frac{c}{4}.
\end{aligned}
\end{eqnarray}
Since $$
\nu=1+\frac{1}{2T(342L+36)L}
\quad \text{and} \quad \bar{a}=\frac{1}{2(342L+36)L}\wedge{T},$$
we see  by (\ref{s0}) and (\ref{5.3}) the following contradiction:
\begin{eqnarray*}\label{vis122}
	c\leq
	\frac{\varepsilon}{\nu
		T}+\frac{c}{4}\leq \frac{c}{2}.
\end{eqnarray*}
The proof is  complete. \ \ $\Box$

In Lemmas below of this chapter, let $\widetilde{w}_{1}^{\hat{t}}, \widetilde{w}_{1}^{\hat{t},*}$ and $\widetilde{w}_{2}^{\hat{t}}, \widetilde{w}_{2}^{\hat{t},*}$ be defined in Definition \ref{definition0607} with respect to $w_1$ defined by (\ref{06091}) and  $w_2$ defined by  (\ref{06092}), respectively. To complete the previous proof, it remains to state and prove the following lemmas.

\begin{lemma}\label{lemma03302}\ \
	For every fixed $(t, a_t) \in [0,T)\times {\Lambda}_{t}$ and $N\in \mathbb{N}^+$, define   $f:{\hat{\Lambda}}^{t}\rightarrow \mathbb{R}$ by
	$$f(\gamma_s):=\left|(\gamma_s(s))_N-\left(e^{(s-t)A}a_t(t)\right)_N\right|^2,\quad (s,\gamma_s)\in [t, T]\times{\hat{\Lambda}}^{t}.$$ Then
	$f\in \Phi_t$.
\end{lemma}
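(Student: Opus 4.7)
The plan is a direct, elementary computation: re-express $f$ in terms of the smooth curve $b(s):=P_N e^{(s-t)A}a_t(t)\in H_N$, read off the three path derivatives from Definition~\ref{definitionc0}, and then verify the two conditions built into $\Phi_t$, namely that $f\in C_p^{1,2}(\hat\Lambda^t)$ and that $A^*\partial_x f\in C_p^0(\hat\Lambda^t,H)$. The structural feature that will make everything work is that $\partial_x f$ takes values in the finite-dimensional subspace $H_N$, and the orthonormal basis $\{e_i\}$ underlying $P_N$ has been chosen at the top of Chapter~7 so that each $e_i\in\mathcal{D}(A^*)$.

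First I would observe that $b(s)=\sum_{i=1}^N\langle a_t(t),e^{(s-t)A^*}e_i\rangle_H e_i$ is well-defined on $[t,T]$ and of class $C^1$ with $b'(s)=\sum_{i=1}^N\langle a_t(t),A^*e^{(s-t)A^*}e_i\rangle_H e_i\in H_N$, so $f(\gamma_s)=|P_N\gamma_s(s)-b(s)|^2$. A routine expansion, using that the horizontal translation $\gamma_{s,s+h}$ preserves the endpoint $\gamma_s(s)$ while the vertical bump $\gamma_s^h$ only increments it by $P_N h$, yields
\[
\partial_t f(\gamma_s)=-2\langle P_N\gamma_s(s)-b(s),b'(s)\rangle_H,\quad \partial_x f(\gamma_s)=2\bigl(P_N\gamma_s(s)-b(s)\bigr),\quad \partial_{xx}f(\gamma_s)=2P_N.
\]
Continuity of these three maps on $(\hat\Lambda^t,d_\infty)$ is immediate from continuity of $b$ and $b'$ on $[t,T]$ together with the obvious estimate $|\gamma_s(s)-\eta_r(r)|\leq M_1\,d_\infty(\gamma_s,\eta_r)+|(e^{(s-r)^+A}-I)\gamma_s(s)|$; polynomial growth is also clear since $|\partial_x f|\leq 2\|\gamma_s\|_0+2M_1|a_t(t)|$ and $\partial_{xx}f$ is constant. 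Hence $f\in C_p^{1,2}(\hat\Lambda^t)$.

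The only point that actually touches the unbounded operator is the verification $A^*\partial_x f\in C_p^0(\hat\Lambda^t,H)$, and this is precisely where the assumption $e_i\in\mathcal{D}(A^*)$ gets used. Because $\partial_x f(\gamma_s)\in H_N\subset\mathcal{D}(A^*)$, one may legitimately compute
\[
A^*\partial_x f(\gamma_s)=2\sum_{i=1}^N\langle \gamma_s(s),e_i\rangle_H\,A^*e_i\;-\;2\sum_{i=1}^N\langle a_t(t),e^{(s-t)A^*}e_i\rangle_H\,A^*e_i,
\]
a finite linear combination of the fixed vectors $A^*e_1,\ldots,A^*e_N$ whose coefficients depend continuously on $(s,\gamma_s(s))$, hence continuously on $\gamma_s$ under $d_\infty$, with at most linear growth in $\|\gamma_s\|_0$. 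No serious obstacle is anticipated: the central point is exactly that projecting onto $H_N$ confines $\partial_x f$ to $\mathcal{D}(A^*)$, so the pathological unboundedness of $A^*$ is bypassed entirely and the two membership conditions defining $\Phi_t$ are verified.
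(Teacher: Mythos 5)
Your proof is correct and follows essentially the same route as the paper: read off $\partial_x f=2\bigl((\gamma_s(s))_N-(e^{(s-t)A}a_t(t))_N\bigr)$, $\partial_{xx}f=2P_N$, $\partial_t f=-2\bigl\langle e^{(s-t)A}a_t(t),\,A^*(\gamma_s(s)-e^{(s-t)A}a_t(t))_N\bigr\rangle_H$, then use $e_i\in\mathcal{D}(A^*)$ to make sense of $A^*\partial_x f$ and check continuity and polynomial growth. Introducing $b(s)=P_Ne^{(s-t)A}a_t(t)$ and $b'$ is just a notational repackaging of the same argument (the paper carries out the difference-of-squares limit directly and verifies the endpoint case $s=T$ explicitly, which you leave implicit), so the content is identical.
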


\begin{proof}
First, it is clear that
$$
\partial_xf(\gamma_s)=2\left(\gamma_s(s)\right)_N-2\left(e^{(s-t)A}a_t(t)\right)_N;\ \ \ \partial_{xx}f(\gamma_s)=2P_N,\ \ \ (s,\gamma_s)\in [t, T]\times{\hat{\Lambda}}^{t},
$$
and $(f,\partial_xf, \partial_{xx}f)$ are continuous and grow in a polynomial way.
Second, we consider $\partial_tf$. For every $(s,\gamma_s)\in [t, T)\times{\hat{\Lambda}}^{t}$,
\begin{eqnarray*}
	\begin{aligned}
		&\quad\partial_tf(\gamma_s)\\
		&=
		\lim_{h\to 0^+}\frac{1}{h}\left(\left|\left(\gamma_s(s)\right)_N-\left(e^{(s-t+h)A}a_t(t)\right)_N\right|^2-\left|\left(\gamma_s(s)\right)_N-\left(e^{(s-t)A}a_t(t)\right)_N\right|^2\right)\\
		&=\lim_{h\to 0^+}\frac{1}{h}\left(\sum^{N}_{i=1}\left\langle\gamma_s(s)-e^{(s-t+h)A}a_t(t),\,  e_i\right\rangle_H^2-\sum^{N}_{i=1}\left\langle \gamma_s(s)-e^{(s-t)A}a_t(t), \, e_i\right\rangle_H^2\right).
	\end{aligned}
\end{eqnarray*}
Write  $e^{sA^*}:=(e^{sA})^*$. Then,  $e^{sA^*}$ is a $C_0$ {semi-group} on $H$ generated by $A^*$. Since, for all $i=1,2,\ldots, N$,
\begin{eqnarray*}
	\begin{aligned}
		&\quad\left\langle\gamma_s(s)-e^{(s-t+h)A}a_t(t), \, e_i\right\rangle_H^2-\left\langle \gamma_s(s)-e^{(s-t)A}a_t(t),\, e_i\right\rangle_H^2\\
		&=-\left\langle 2\gamma_s(s)-e^{(s-t+h)A}a_t(t)-e^{(s-t)A}a_t(t), \, e_i\right\rangle_H\\
		&\qquad\qquad\times \left\langle e^{(s-t+h)A}a_t(t)-e^{(s-t)A}a_t(t), \, e_i\right\rangle_H\\
		&=-\left\langle 2\gamma_s(s)-e^{(s-t+h)A}a_t(t)-e^{(s-t)A}a_t(t), \, e_i\right\rangle_H\times\left\langle e^{(s-t)A}a_t(t),\,  e^{hA^*}e_i-e_i\right\rangle_H,
	\end{aligned}
\end{eqnarray*}
\begin{eqnarray*}
	\lim_{h\to 0^+}\left\langle 2\gamma_s(s)-e^{(s-t+h)A}a_t(t)-e^{(s-t)A}a_t(t),\,  e_i\right\rangle_H=2\left\langle \gamma_s(s)-e^{(s-t)A}a_t(t),\, e_i\right\rangle_H,
\end{eqnarray*}
and \begin{eqnarray*}
	\lim_{h\to 0^+}\frac{1} {h}\left\langle e^{(s-t)A}a_t(t), \, e^{hA^*}e_i-e_i\right\rangle_H=\left\langle e^{(s-t)A}a_t(t), \, A^*e_i\right\rangle_H,
\end{eqnarray*}
we have
\begin{eqnarray*}
	\partial_tf(\gamma_s)&=& -2\sum^{N}_{i=1}\left\langle \gamma_s(s)-e^{(s-t)A}a_t(t), \, e_i\right\rangle_H\times\left\langle e^{(s-t)A}a_t(t),\,  A^*e_i\right\rangle_H\\
	&=&-2\left\langle e^{(s-t)A}a_t(t), \, A^*\left(\gamma_s(s)-e^{(s-t)A}a_t(t)\right)_N\right\rangle_H.
\end{eqnarray*}
At  $s=T$, we have
\begin{eqnarray*}
	\partial_tf(\gamma_T)=
	\lim_{s\uparrow T}\partial_tf(\gamma_T|_{[0,s]})=-2\left\langle e^{(T-t)A}a_t(t), \, A^*\left(\lim_{s\uparrow T}\gamma_T(s)-e^{(T-t)A}a_t(t)\right)_N\right\rangle_H.
\end{eqnarray*}

It is sufficient to prove $\partial_tf\in C^0_p(\hat{\Lambda}^t)$ and $A^*\partial_xf\in C^0_p(\hat{\Lambda}^t,H)$. Write $\bar{M}:=\max_{1\leq i\leq N}|A^*e_i|$. Then, for every $(s,\gamma_s; l,\eta_l)\in ([t, T]\times{\hat{\Lambda}}^{t})^2,$
\begin{eqnarray*}
	&&\left|A^*\left(\gamma_s(s)-e^{(s-t)A}a_t(t)\right)_N-A^*\left(\eta_l(l)-e^{(l-t)A}a_t(t)\right)_N\right|\\
	&=&\left|\sum^{N}_{i=1}\left\langle \gamma_s(s)-e^{(s-t)A}a_t(t)-\eta_l(l)+e^{(l-t)A}a_t(t),\,  e_i\right\rangle_HA^*e_i \right|\\
	&\leq&\bar{M}\sum^{N}_{i=1}\left|\left\langle \gamma_s(s)-e^{(s-t)A}a_t(t)-\eta_l(l)+e^{(l-t)A}a_t(t), \, e_i\right\rangle_H\right|\\
	&\leq&N\bar{M}\left[|\gamma_s(s)-\eta_l(l)|+\left|e^{(s-t)A}a_t(t)-e^{(l-t)A}a_t(t)\right|\right].
\end{eqnarray*}
Thus, as in  the proof  of  $\Upsilon^{m,M}\in C^0(\hat{\Lambda})$   in Lemma \ref{theoremS}, we have $\partial_tf\in C^0(\hat{\Lambda}^t)$ and $A^*\partial_xf\in C^0(\hat{\Lambda}^t,H)$. By a simple calculation, we see
that $\partial_tf$ and  $A^*\partial_xf$ grow in a polynomial way. The proof is complete.
\end{proof}

\begin{lemma}\label{lemma4.3}\ \
	There exists a local modulus of continuity  $\rho_1$ 
	such that  the functionals $w_1$ and $w_2$ defined by (\ref{06091}) and (\ref{06092}) satisfy condition (\ref{0608a}).
\end{lemma}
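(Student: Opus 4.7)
The plan is a term-wise analysis of $w_1(\gamma_t)-w_1(\gamma_{t,s}^{A,N})$ based on the six summands in the definition (\ref{06091}); the identical argument, with $W_1$ replaced by $-W_2$ and $\hat\gamma_{\hat t}$ by $\hat\eta_{\hat t}$, will handle $w_2$ in (\ref{06092}). The key structural identities are $(\gamma_{t,s}^{A,N}(s))_N=(\gamma_t(t))_N$ and $(\gamma_{t,s}^{A,N}(s))^-_N=Q_N e^{(s-t)A}\gamma_t(t)$, so that the constant shift $c:=(\gamma_t(t))_N-(e^{(s-t)A}\gamma_t(t))_N$ lies in $H_N\subset\mathcal{D}(A^*)$; from the dual pairing $\langle c,e_i\rangle_H=\langle\gamma_t(t),(I-e^{(s-t)A^*})e_i\rangle_H$ and the estimate $|(I-e^{(s-t)A^*})e_i|\le(s-t)|A^*e_i|$ one obtains $|c|\le C_N(s-t)||\gamma_t||_0$ with $C_N$ depending only on $\{e_i\}_{i\le N}$.

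For the $W_1$ contribution I write $W_1(\gamma_t)-W_1(\gamma_{t,s}^{A,N})=[W_1(\gamma_t)-W_1(\gamma_{t,s}^A)]+[W_1(\gamma_{t,s}^A)-W_1(\gamma_{t,s}^{A,N})]$; the first bracket is bounded by $\rho_2(s-t,||\gamma_t||_0)$ by (\ref{w1}) with $\eta_t=\gamma_t$, and the second, comparing two paths in $\Lambda_s$ differing only by the constant $c$, is bounded by $L(1+||\gamma_t||_0)^2|c|$ via (\ref{w1}) applied at equal endpoints. For the $\Upsilon$-type summands $\Upsilon(\gamma_t,\hat\xi_{\hat t})$, $\Upsilon(\gamma_t)$, $\overline\Upsilon(\gamma_t,\hat\gamma_{\hat t})$ and the tail $\sum_{i\ge 0}2^{-i}\overline\Upsilon(\gamma^i_{t_i},\gamma_t)$, I appeal to the continuity of $\Upsilon$ on $(\hat\Lambda,d_\infty)$ established in Lemma \ref{theoremS}, to the polynomial bound $\Upsilon(\gamma_t)\le 3||\gamma_t||_0^6$, and to the elementary estimate $d_\infty(\gamma_t,\gamma_{t,s}^{A,N})\le(s-t)+\sup_{t\le r\le s}|(e^{(r-t)A}-I)\gamma_t(t)|+|c|$; the semigroup term on the right vanishes as $s\to t$ by strong continuity applied pointwise at $\gamma_t(t)$, and the tail is controlled term-by-term via dominated convergence, yielding a bound of the form $C(1+||\gamma_t||_0)^6$ times a modulus in $|s-t|$.

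The main obstacle is the fifth term $-2\beta^{1/3}|(\gamma_t(t))^-_N-(e^{(t-\hat t)A}\hat\xi_{\hat t}(\hat t))^-_N|^2$. Setting $w:=\gamma_t(t)-e^{(t-\hat t)A}\hat\xi_{\hat t}(\hat t)$ and using $(e^{(s-\hat t)A}\hat\xi_{\hat t}(\hat t))^-_N=Q_N e^{(s-t)A}e^{(t-\hat t)A}\hat\xi_{\hat t}(\hat t)$, the contribution is $2\beta^{1/3}(|Q_N e^{(s-t)A}w|^2-|Q_N w|^2)$, which is \emph{not} a priori small in $s-t$ and requires the contraction hypothesis of Assumption \ref{hypstate} (i'). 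Using $Q_N P_N=0$ I decompose $Q_N e^{(s-t)A}w=Q_N(e^{(s-t)A}-I)P_N w+Q_N e^{(s-t)A}Q_N w$: the first summand is bounded by $\omega_N(s-t)|P_N w|$ with $\omega_N(r):=\sup\{|(e^{rA}-I)y|:y\in H_N,\,|y|\le 1\}\to 0$ as $r\downarrow 0$ by strong continuity and finite-dimensionality of $H_N$, while the second satisfies $|Q_N e^{(s-t)A}Q_N w|\le|e^{(s-t)A}Q_N w|\le|Q_N w|$ by the contraction property. Expanding the square yields $|Q_N e^{(s-t)A}w|^2-|Q_N w|^2\le 2|Q_N w|\,\omega_N(s-t)|P_N w|+\omega_N(s-t)^2|P_N w|^2$, and since $|w|\le||\gamma_t||_0+||\hat\xi_{\hat t}||_0$ this is a local modulus in $(|s-t|,||\gamma_t||_0)$.

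The hardest step is precisely this contraction-based cancellation; indeed, the term $-2\beta^{1/3}|(\cdot)^-_N-(e^{(\cdot-\hat t)A}\hat\xi_{\hat t}(\hat t))^-_N|^2$ was placed in the definitions (\ref{06091})-(\ref{06092}) of $w_1$ and $w_2$ precisely so that its change under the transformation $\gamma_t\mapsto\gamma_{t,s}^{A,N}$ is compatible with the semigroup-projection structure and can be absorbed into a modulus. Summing all six contributions, each controlled by a local modulus, yields a single $\rho_1(|s-t|,||\gamma_t||_0)$ as required.
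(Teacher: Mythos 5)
Your decomposition into summands, the treatment of the $W_1$ piece via (\ref{w1}), and especially the contraction-plus-$P_N/Q_N$ analysis of the $-2\beta^{1/3}\left|(\cdot)^-_N-(e^{(\cdot-\hat t\,)A}\hat\xi_{\hat t}(\hat t\,))^-_N\right|^2$ term are all sound and closely parallel the intended argument. But the treatment of the $\Upsilon$-type summands has a genuine gap. You propose to bound $|\Upsilon(\gamma_t)-\Upsilon(\gamma_{t,s}^{A,N})|$ (and the analogous $\overline\Upsilon$ pieces) by invoking continuity of $\Upsilon$ on $(\hat\Lambda,d_\infty)$ together with a bound on $d_\infty(\gamma_t,\gamma_{t,s}^{A,N})$. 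First, your estimate for $d_\infty$ is loose: by definition $d_\infty(\gamma_t,\gamma_{t,s}^{A,N})=|s-t|+\|\gamma_{t,s}^A-\gamma_{t,s}^{A,N}\|_0=|s-t|+|c|$, with no extra semigroup term. Second, and more seriously, even with the correct $d_\infty$, the two-sided difference $|\Upsilon(\gamma_t)-\Upsilon(\gamma_{t,s}^{A,N})|$ is \emph{not} controlled by a function $\rho_1(|s-t|,\|\gamma_t\|_0)$ vanishing as $|s-t|\to 0$: $\Upsilon$ is continuous on $(\hat\Lambda,d_\infty)$ but not uniformly continuous on $\|\cdot\|_0$-bounded sets. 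Indeed $\Upsilon(\gamma_t)$ depends on $|\gamma_t(t)|$, and under the metric $d_\infty$ the "current-time" value $|\gamma_t(t)|$ can drop abruptly to roughly $|e^{(s-t)A}\gamma_t(t)|$; for an unbounded $A$ this drop is as large as $O(\|\gamma_t\|_0)$ even for arbitrarily small $s-t$ (take $\gamma_t\equiv v_n$ a unit eigenfunction of $A$ with eigenvalue $-n^2$ and $s-t=1/n^2$). Thus the promised uniform modulus does not exist, and appealing to ``strong continuity applied pointwise at $\gamma_t(t)$'' or to ``dominated convergence term-by-term'' for the tail gives only pointwise convergence for a fixed path, not the uniform bound demanded by (\ref{0608a}).

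What rescues the estimate — and what you applied correctly to the $\beta^{1/3}$ term but did not transfer to the $\Upsilon$ terms — is the \emph{one-sided} monotonicity derived from the contraction hypothesis: $\Upsilon(\xi_t)\ge\Upsilon(\xi_{t,s}^A)$ for $t\le s$ (eq.~(\ref{04051}) of the paper, a direct consequence of $|e^{(s-t)A}\xi_t(t)|\le|\xi_t(t)|$, $\|\xi_{t,s}^A\|_0=\|\xi_t\|_0$, and (\ref{03130})). Since $\Upsilon$, $\Upsilon(\cdot,\hat\xi_{\hat t})$, $\overline\Upsilon(\cdot,\hat\gamma_{\hat t})$ and $\overline\Upsilon(\gamma^i_{t_i},\cdot)$ all enter $w_1$ with a \emph{minus} sign, this monotonicity gives exactly the needed one-sided inequality $-\Pi(\gamma_t)\le-\Pi(\gamma_{t,s}^A)$ without any modulus; the large ``drop'' in $|\gamma_t(t)|$ only pushes the quantity further down, which is harmless. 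One is then left with comparing $\Pi(\gamma_{t,s}^A)$ and $\Pi(\gamma_{t,s}^{A,N})$, two paths on $[0,s]$ that differ by the constant $c$ with $\|\gamma_{t,s}^A-\gamma_{t,s}^{A,N}\|_0=|c|$, and this is handled by the Lipschitz-type bound $|\Upsilon(\xi^1)-\Upsilon(\xi^2)|\le 54(\|\xi^1\|_0^5\vee\|\xi^2\|_0^5)\|\xi^1-\xi^2\|_0$ in $\|\cdot\|_0$ (not the $d_\infty$-continuity), together with $|c|\le\|\gamma_t\|_0\sum_{i\le N}|e^{(s-t)A^*}e_i-e_i|$, which is a genuine local modulus because $N$ is finite and the $e_i$ are fixed. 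To repair your proof, replace the two-sided $d_\infty$-continuity step for the $\Upsilon$-type summands by this monotonicity-then-$\|\cdot\|_0$-Lipschitz argument.
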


\begin{proof}
By the definition of $w_{1}$, we have that,
for every $\hat{t}\leq t\leq s\leq T$ and $\gamma_t\in \Lambda^{\hat{t}}$,
\begin{eqnarray*}
	&&w_1(\gamma_t)-w_1(\gamma_{t,s}^{A,N})\\
	&=&w_1(\gamma_t)-w_1\left(\gamma_{t,s}^A+\left((\gamma_t(t))_N-(e^{(s-t)A}\gamma_t(t))_N\right){\mathbf{1}}_{[0,s]}\right)\\
	 &=&W_1(\gamma_t)-\Pi(\gamma_t)-2\beta^{\frac{1}{3}}\left|\left(\gamma_t(t)\right)^-_N-\left(e^{(t-\hat{t}\,)A}\hat{\xi}_{\hat{t}}(\hat{t}\,)\right)^-_N\right|^2\\
	 &&-W_1(\gamma_{t,s}^{A,N})+\Pi(\gamma_{t,s}^{A,N})+2\beta^{\frac{1}{3}}\left|\left(e^{(s-t)A}\gamma_t(t)\right)^-_N-\left(e^{(s-\hat{t}\,)A}\hat{\xi}_{\hat{t}}(\hat{t}\,)\right)^-_N\right|^2,
\end{eqnarray*}
where
$$
\Pi(\xi_l):=2^5\beta\,\Upsilon({\xi_l},\hat{\xi}_{\hat{t}})+\varepsilon\frac{\nu T-t}{\nu
	T}\, \Upsilon(\xi_l)
+\varepsilon \overline{\Upsilon}(\xi_l,\hat{\gamma}_{\hat{t}})
+\sum_{i=0}^{\infty}
\frac{1}{2^i}\, \overline{\Upsilon}(\gamma^i_{t_i},\xi_l), \ \ \xi_l\in \Lambda^{\hat{t}}
$$
and
$$
\gamma_{t,s}^{A,N}:=\gamma_{t,s}^A+\left(\left(\gamma_t(t)\right)_N-\left(e^{(s-t)A}\gamma_t(t)\right)_N\right){\mathbf{1}}_{[0,s]}.
$$
Since the operator $A$ generates a $C_0$ contraction
{semi-group}  of bounded linear operators, we have
$$||\xi_t||_0=||\xi_{t,s}^A||_0, \quad   |\xi_t(t)|\geq\left |e^{(s-t)A}\xi_t(t)\right|, \quad 0\leq t\leq s\leq T, \ \xi_t\in \Lambda.$$
Then, by (\ref{03130}), we have
\begin{eqnarray}\label{04051}
\Upsilon(\xi_t)\geq \Upsilon(\xi_{t,s}^A), \quad \ 0\leq t\leq s\leq T, \ \xi_t\in \Lambda.
\end{eqnarray}
Thus,
\begin{eqnarray*}
	&&w_1(\gamma_t)-w_1(\gamma_{t,s}^{A,N})\\
	 &\leq&W_1(\gamma_t)-\Pi(\gamma_{t,s}^A)+2\beta^{\frac{1}{3}}\left|\left(\gamma_t(t)\right)_N-\left(e^{(t-\hat{t}\,)A}\hat{\xi}_{\hat{t}}(\hat{t}\,)\right)_N\right|^2\\
	 &&-W_1(\gamma_{t,s}^{A,N})+\Pi(\gamma_{t,s}^{A,N})-2\beta^{\frac{1}{3}}\left|\left(e^{(s-t)A}\gamma_t(t)\right)_N-\left(e^{(s-\hat{t}\,)A}\hat{\xi}_{\hat{t}}(\hat{t}\,)\right)_N\right|^2\\
	&&+\varepsilon\left[(s-\hat{t}\,)^2-(t-\hat{t}\,)^2\right]+\sum_{i=0}^{\infty}
	\frac{1}{2^i}\left[(s-t_i)^2-(t-t_i)^2\right].
\end{eqnarray*}
From (\ref{w1}),
\begin{eqnarray*}
	&&W_1(\gamma_t)-W_1(\gamma_{t,s}^{A,N})= W_1(\gamma_t)-W_1(\gamma_{t,s}^A)+ W_1(\gamma_{t,s}^A)-W_1(\gamma_{t,s}^{A,N})\\
	&\leq& \rho_2(|s-t|,||\gamma_t||_0) +4L\, (1+||\gamma_t||_0)\left |\left(\gamma_t(t)\right)_N-\left(e^{(s-t)A}\gamma_t(t)\right)_N\right|;
\end{eqnarray*}
noting that, for all $\xi^1_l,\xi^2_l\in \Lambda$,
\begin{eqnarray*}
	\left|\Upsilon(\xi^1_l)-\Upsilon(\xi^2_l)\right|&\leq& 6\left(\Upsilon^{\frac{5}{6}}(\xi^1_l)\vee\Upsilon^{\frac{5}{6}}(\xi^2_l)\right)\, \left(\Upsilon^{\frac{1}{6}}(\xi^1_l-\xi^2_l)\right)\\
	&\leq& 54\left(||\xi^1_l||_0^5\vee||\xi^2_l||_0^5\right)||\xi^1_l-\xi^2_l||_0,
\end{eqnarray*}
there exits a constant $C>0$ such that
\begin{eqnarray*}
	|\Pi(\gamma_{t,s}^A)-\Pi(\gamma_{t,s}^{A,N})|&\leq& C(1+\beta+\varepsilon)\left(||\gamma_t||_0^5+||\hat{\xi}_{\hat{t}}||_0^5+||\hat{\gamma}_{\hat{t}}||_0^5+\sum_{i=0}^{\infty}\frac{1}{2^i}||\gamma^i_{t_i}||_0^5\right)\\
	&&\times \left|\left(\gamma_t(t)\right)_N-\left(e^{(s-t)A}\gamma_t(t)\right)_N\right|;
\end{eqnarray*}
since, for $ 0\leq t\leq s\leq T, \ \gamma_t\in \Lambda$,
\begin{eqnarray*}
	&&\left|\left(e^{\left(s-t\right)A}\gamma_t\left(t\right)\right)_N-\left(\gamma_t\left(t\right)\right)_N\right|=\left|\sum^{N}_{i=1}\left\langle e^{\left(s-t\right)A}\gamma_t\left(t\right)-\gamma_t\left(t\right), e_i\right\rangle_He_i\right|  \\
	&=&\left|\sum^{N}_{i=1}\left\langle \gamma_t\left(t\right), e^{\left(s-t\right)A^*}e_i-e_i\right\rangle_He_i\right|\leq \sum^{N}_{i=1}\left|\gamma_t\left(t\right)\right|\left|e^{\left(s-t\right)A^*}e_i-e_i\right|,
\end{eqnarray*}
we have
\begin{eqnarray*}
	\begin{aligned}
		&\quad2\beta^{\frac{1}{3}}\left|(\gamma_t(t))_N-\left(e^{\left(t-\hat{t}\right)A}\hat{\xi}_{\hat{t}}\left(\hat{t}\right)\right)_N\right|^2\\
		 &\qquad-2\beta^{\frac{1}{3}}\left|\left(e^{\left(s-t\right)A}\gamma_t\left(t\right)\right)_N-\left(e^{\left(s-\hat{t}\right)A}\hat{\xi}_{\hat{t}}\left(\hat{t}\right)\right)_N\right|^2\\
		 &\leq4\beta^{\frac{1}{3}}\left(||\gamma_t||_0+|\hat{\xi}_{\hat{t}}(\hat{t}\,)|\right)\left(\left|(\gamma_t(t))_N-\left(e^{\left(s-t\right)A}\gamma_t\left(t\right)\right)_N\right|
		+\left|e^{(s-t)A}\hat{\xi}_{\hat{t}}(\hat{t}\,)-\hat{\xi}_{\hat{t}}(\hat{t}\,)\right|\right)\\
		&\leq 4\,\beta^{\frac{1}{3}}\left(||\gamma_t||_0+|\hat{\xi}_{\hat{t}}(\hat{t}\,)|\right)\left(|\gamma_t(t)|\sum^{N}_{i=1}\left|e^{\left(s-t\right)A^*}e_i-e_i\right|
		+\left|e^{(s-t)A}\hat{\xi}_{\hat{t}}(\hat{t}\,)-\hat{\xi}_{\hat{t}}(\hat{t}\,)\right|\right).
	\end{aligned}
\end{eqnarray*}
Taking for $(l,x)\in[0,\infty)\times [0,\infty)$,
\begin{eqnarray*}
\rho_1(l,x)&:=& \rho_2(l,x)+(2\varepsilon+4)Tl+\biggl [4L(1+x)\\
&&+C(1+\beta+\varepsilon)\Bigl(x^5+||\hat{\xi}_{\hat{t}}||_0^5+||\hat{\gamma}_{\hat{t}}||_0^5+\sum_{i=0}^{\infty}\frac{1}{2^i}||\gamma^i_{t_i}||_0^5\Bigr)\\
&&+
4\beta^{\frac{1}{3}}(x+|\hat{\xi}_{\hat{t}}(\hat{t}\,)|)\biggr]\times  \left(x\sum^{N}_{i=1}\left|e^{lA^*}e_i-e_i\right|
+\left|e^{lA}\hat{\xi}_{\hat{t}}(\hat{t}\,)-\hat{\xi}_{\hat{t}}(\hat{t}\,)\right|\right),
\end{eqnarray*}
we see that $\rho_1$ is a local modulus of continuity 
and $w_1$  satisfies condition (\ref{0608a}) with it.  In a similar way, we show that  $w_2$  satisfies condition (\ref{0608a}) with this $\rho_1$.
The proof is now complete.
\end{proof}

\begin{lemma}\label{lemma4.344} We have $\widetilde{w}_{1}^{\hat{t},N,*}\in \Phi_N(\hat{t},(\hat{{\gamma}}_{{\hat{t}}}({\hat{t}}))_N)$ and $\widetilde{w}_{2}^{\hat{t},N,*}\in \Phi_N(\hat{t},(\hat{{\eta}}_{{\hat{t}}}({\hat{t}}))_N)$. 
\end{lemma}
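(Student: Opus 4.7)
I would verify the $\Phi_N$ property for $\widetilde w_1^{\hat t, N, *}$, as the analysis for $\widetilde w_2^{\hat t, N, *}$ is completely symmetric, relying on the super-solution property of $W_2$. Fix $r\in(0,\hat t\wedge(T-\hat t))$; given $L>0$, $\varphi\in C^{1,2}([0,T]\times H_N)$ and a maximum point $(t_0,x_0)$ of $\widetilde w_1^{\hat t,N,*}-\varphi$ satisfying the bounds in Definition~\ref{definition0513}, the target is a constant $\tilde C_0=\tilde C_0(L)$ with $\varphi_t(t_0,x_0)\geq-\tilde C_0$. I would split on the sign of $t_0-\hat t$.

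The subcase $t_0<\hat t$ is immediate: the rule $\widetilde w_1^{\hat t,N,*}(s,x_0)=\widetilde w_1^{\hat t,N,*}(\hat t,x_0)-(\hat t-s)^{1/2}$ for $s\in[0,\hat t)$ (inherited by the USC envelope from Definition~\ref{definition0607}) forces $\varphi(t_0+h,x_0)-\varphi(t_0,x_0)\geq(\hat t-t_0)^{1/2}-(\hat t-t_0-h)^{1/2}\geq0$ for any $h>0$ so small that $t_0+h<\hat t$, whence $\varphi_t(t_0,x_0)\geq0$.

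For the essential subcase $t_0\geq\hat t$, I would lift $\varphi$ to a path-dependent test functional on $\Lambda^{\hat t}$ by setting $\Phi(\gamma_s):=\varphi(s,(\gamma_s(s))_N)+R(\gamma_s)$, where $R(\gamma_s)$ collects the positive-coefficient perturbations appearing in~\eqref{06091}, namely $2^5\beta\,\Upsilon(\gamma_s,\hat\xi_{\hat t})$, $\varepsilon(\nu T-s)/(\nu T)\Upsilon(\gamma_s)$, $\varepsilon\overline\Upsilon(\gamma_s,\hat\gamma_{\hat t})$, $2\beta^{1/3}|(\gamma_s(s))_N^--(e^{(s-\hat t)A}\hat\xi_{\hat t}(\hat t))_N^-|^2$, and $\sum_i 2^{-i}\overline\Upsilon(\gamma^i_{t_i},\gamma_s)$. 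Using the identity $|Q_Ny|^2=|y|^2-|P_Ny|^2$ I would split the obstructive $|Q_N\cdot|^2$ summand into $|y|^2\in C^{1,2}_{p,A-}(\hat\Lambda^{\hat t})$ (Remark~\ref{remarkv0129120240206}) and $-|P_N\cdot|^2\in\Phi_{\hat t}$ (Lemma~\ref{lemma03302} plus linearity), so that $\Phi=\varphi_{\mathrm{test}}+g_{\mathrm{test}}$ with $\varphi_{\mathrm{test}}:=\varphi(s,(\gamma_s(s))_N)-2\beta^{1/3}|(\gamma_s(s))_N-(e^{(s-\hat t)A}\hat\xi_{\hat t}(\hat t))_N|^2\in\Phi_{\hat t}$ and all remaining pieces of $R$, together with the $2\beta^{1/3}|y|^2$ piece, collected into $g_{\mathrm{test}}\in\mathcal G_{\hat t}$. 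The definition of $\widetilde w_1^{\hat t,N,*}$ combined with the maximum at $(t_0,x_0)$ yields
\begin{equation*}
W_1(\gamma_s)-\Phi(\gamma_s)=w_1(\gamma_s)-\varphi(s,(\gamma_s(s))_N)\leq\widetilde w_1^{\hat t,N,*}(t_0,x_0)-\varphi(t_0,x_0)\quad\forall\,\gamma_s\in\Lambda^{\hat t},
\end{equation*}
with equality near-attained along any sequence $\xi^n_{t_0}\in\Lambda_{t_0}$ with $(\xi^n_{t_0}(t_0))_N=x_0$ and $w_1(\xi^n_{t_0})\to\widetilde w_1^{\hat t,N,*}(t_0,x_0)$. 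I would then invoke Borwein-Preiss (Lemma~\ref{theoremleft}) on $W_1-\Phi$ with gauge $\overline\Upsilon$ and weights $2^{-i}$ to produce a strict maximizer $\hat\xi^j_{\hat s_j}$, modulo a summable $\overline\Upsilon$-series with path-derivatives $o_j(1)$, satisfying $\hat s_j\to t_0$ and $(\hat\xi^j_{\hat s_j}(\hat s_j))_N\to x_0$; the coercivity in~\eqref{s0} and quadratic growth~\eqref{w} keep $\|\hat\xi^j_{\hat s_j}\|_0$ bounded in terms of $L$. Applying the viscosity sub-solution property of $W_1$ for equation~\eqref{1002a} at $\hat\xi^j_{\hat s_j}$ gives
\begin{equation*}
\varphi_t(\hat s_j,(\hat\xi^j_{\hat s_j}(\hat s_j))_N)\geq c-\partial_t^o g_{\mathrm{test}}(\hat\xi^j_{\hat s_j})-\langle A^*\partial_x\varphi_{\mathrm{test}},\hat\xi^j_{\hat s_j}(\hat s_j)\rangle_H-\mathbf H(\hat\xi^j_{\hat s_j},W_1(\hat\xi^j_{\hat s_j}),\partial_x\Phi,\partial_{xx}\Phi)+o_j(1),
\end{equation*}
and the right-hand side is bounded below by a constant $-\tilde C_0(L)$ using $|\nabla_x\varphi|+|\nabla_x^2\varphi|\leq L$, the linear growth of $F,G,q$, the explicit formulas~\eqref{03108}--\eqref{03109} for the derivatives of the $\Upsilon$-penalizations, and the uniform bound on $\|\hat\xi^j_{\hat s_j}\|_0$. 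Letting $j\to\infty$ and using continuity of $\varphi_t$ produces the desired inequality at $(t_0,x_0)$.

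The main obstacle lies in the decomposition $\Phi=\varphi_{\mathrm{test}}+g_{\mathrm{test}}$: because $C^{1,2}_{p,A-}$ is only closed under non-negative scalar multiples (the one-sided inequality~\eqref{statesop020240206} fails under negation), the negative $|Q_N\cdot|^2$ piece in $w_1$ cannot be placed directly into $\mathcal G_{\hat t}$, and the $|Q_Ny|^2=|y|^2-|P_Ny|^2$ rearrangement must be performed before the sub-solution definition is applied. A secondary point is the control of $\langle A^*\partial_x\varphi_{\mathrm{test}},\hat\xi^j_{\hat s_j}(\hat s_j)\rangle_H$, which is handled by noting that the $P_N$-projection inside $\varphi_{\mathrm{test}}$ reduces $A^*\partial_x\varphi_{\mathrm{test}}$ to $A^*P_N\nabla_y\varphi$ plus a $-4\beta^{1/3}A^*P_N(\cdot)$ correction, both bounded since $e_1,\dots,e_N\in\mathcal D(A^*)$.
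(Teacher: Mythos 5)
Your proposal is correct and takes essentially the same route as the paper: split on whether $t_0<\hat t$ or $t_0\geq\hat t$, lift $\varphi$ to a path-dependent functional on $\Lambda^{\hat t}$, decompose $|Q_N\cdot|^2=|\cdot|^2-|P_N\cdot|^2$ so the first summand (positive coefficient) can live in $\mathcal G_{\hat t}$ and the second (negative coefficient) in the vector space $\Phi_{\hat t}$, apply Borwein-Preiss (Lemma~\ref{theoremleft}) with gauge $\overline\Upsilon$ and weights $2^{-i}$ to produce a perturbed maximizer, and read off the lower bound on $\varphi_t$ from the viscosity sub-solution inequality. The one step you leave implicit is the preliminary modification of $\varphi$ via \cite[Lemma 5.4, Chapter 4]{yong} so that it becomes bounded below with polynomial growth and has a \emph{strict} maximum at $(\bar t,\bar x_N)$: boundedness below is what makes $w_1-\varphi$ bounded above on $\Lambda^{\hat t}$ (a prerequisite for Borwein-Preiss), and strictness of the maximum is what forces the perturbed maximizers $(\breve t,(\breve\gamma_{\breve t}(\breve t))_N)$ to converge to $(\bar t,\bar x_N)$ as $\delta\to0$, which in turn guarantees $\breve t<T$ so that the sub-solution definition is applicable.
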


\begin{proof}  We only prove the first inclusion $\widetilde{w}_{1}^{\hat{t},N,*}\in \Phi_N(\hat{t},(\hat{{\gamma}}_{{\hat{t}}}({\hat{t}}))_N)$. The second inclusion
$\widetilde{w}_{2}^{\hat{t},N,*}\in \Phi_N(\hat{t},(\hat{{\eta}}_{{\hat{t}}}({\hat{t}}))_N)$ is proved in a symmetric  way.

Set $r=\frac{1}{2}(|T-{\hat{t}}|\wedge \hat{t}\,)$. For  given $L>0$, let $\varphi\in C^{1,2}([0,T]\times H_N)$ be a function such that the function
$(\widetilde{w}_{1}^{\hat{t},N,*}-\varphi)(t,x_N)$  attains the  maximum at $(\bar{{t}},\bar{x}_N)\in (0, T)\times H_N$, and moreover, the following are satisfied:
\begin{eqnarray}\label{0815zhou1}
|\bar{{t}}-{\hat{t}}|+\left|\bar{x}_N-\left(\hat{{\gamma}}_{{\hat{t}}}({\hat{t}})\right)_N\right|<r=\frac{1}{2}\left(|T-{\hat{t}}|\wedge \hat{t}\,\right),
\end{eqnarray}
\begin{eqnarray}\label{0815zhou2}
\left|\widetilde{w}_{1}^{\hat{t},N,*}(\bar{{t}},\bar{{x}}_N)\right|+\left|\nabla_x\varphi(\bar{{t}},\bar{{x}}_N)\right|
+\left|\nabla^2_x\varphi(\bar{{t}},\bar{{x}}_N)\right|\leq L.
\end{eqnarray}
By \cite[Lemma 5.4, Chapter 4 ]{yong}, we  modify $\varphi$  such  that $\varphi\in C^{1,2}([0,T]\times H_N)$ bounded from below, ${\varphi}$, ${\varphi_t}$, $\nabla_{x}{\varphi}$ and $\nabla^2_{x}{\varphi}$  grow  in a polynomial way,
$\widetilde{w}_{1}^{\hat{t},*}(t,x_N)-\varphi(t,x_N)$  has a strict   maximum 0 at $(\bar{{t}},\bar{x}_N)\in (0, T)\times H_N$ on $[0, T]\times H_N$ and the above two inequalities hold true.
If $\bar{{t}}<\hat{t}$, we have $\varphi_{t}(\bar{{t}},\bar{x}_N)=\frac{1}{2}(\hat{t}-\bar{{t}})^{-\frac{1}{2}} > 0$. If $\bar{{t}}\geq \hat{t}$,  recall that $w_1$ is defined in (\ref{06091}), we consider the functional
$$
\Gamma(\gamma_t)= w_{1}(\gamma_t)
-\varphi(t,(\gamma_t(t))_N),\ (t,\gamma_t)\in [\hat{t},T]\times {{\Lambda}}.
$$
Note that $w_1$ is a continuous  functional bounded from above and $\varphi$ is a continuous  function bounded from below, 
$\Gamma$ is a continuous  functional bounded from above on $\Lambda^{\hat{t}}$.
%
%
Define a sequence of positive numbers $\{\delta_i\}_{i\geq0}$  by 
$\delta_i=\frac{1}{2^i}$ for all $i\geq0$.  For every  $0<\delta<1$, by Lemma \ref{theoremleft} we have that,
for every  $(\breve{t}_0,\breve{\gamma}^0_{\breve{t}_0})\in [\bar{t},T]\times \Lambda^{\bar{t}}$  {satisfying}
\begin{eqnarray}\label{0615a}
\Gamma(\breve{\gamma}^0_{\breve{t}_0})\geq \sup_{(s,\gamma_s)\in [\bar{t},T]\times \Lambda^{\bar{t}}}\Gamma(\gamma_s)-\delta,
\end{eqnarray}
there exist $(\breve{t},\breve{\gamma}_{\breve{t}})\in [\bar{t},T]\times \Lambda^{\bar{t}}$ and a sequence $\{(\breve{t}_i,\breve{\gamma}^i_{\breve{t}_i})\}_{i\geq1}\subset
[\bar{t},T]\times \Lambda^{\bar{t}}$ such that
\begin{description}
	\item[(i)] $\overline{\Upsilon}(\breve{\gamma}^0_{\breve{t}_0},\breve{\gamma}_{\breve{t}})\leq \delta$,
	$\overline{\Upsilon}(\breve{\gamma}^i_{\breve{t}_i},\breve{\gamma}_{\breve{t}})\leq \frac{\delta}{2^i}$ and $t_i\uparrow \breve{t}$ as $i\rightarrow\infty$,
	\item[(ii)]  $\Gamma(\breve{\gamma}_{\breve{t}})
	-\sum_{i=0}^{\infty}\frac{1}{2^i}\overline{\Upsilon}(\breve{\gamma}^i_{\breve{t}_i},\breve{\gamma}_{\breve{t}})
	\geq \Gamma(\breve{\gamma}^0_{\breve{t}_0})$, and
	\item[(iii)]    for all $(s,\gamma_s)\in [\breve{t},T]\times \Lambda^{\breve{t}}\setminus \{(\breve{t},\breve{\gamma}_{\breve{t}})\}$,
	\begin{eqnarray*}
		\Gamma(\gamma_s)
		-\sum_{i=0}^{\infty}
		\frac{1}{2^i}\overline{\Upsilon}(\breve{\gamma}^i_{\breve{t}_i},\gamma_s)
		<\Gamma(\breve{\gamma}_{\breve{t}})
		-\sum_{i=0}^{\infty}\frac{1}{2^i}\overline{\Upsilon}(\breve{\gamma}^i_{\breve{t}_i},\breve{\gamma}_{\breve{t}}).
	\end{eqnarray*}
\end{description}
{We should note that the point
	$(\breve{t},\breve{\gamma}_{\breve{t}})$ depends on $\delta$.}   By the definitions of $\widetilde{w}^{\hat{t},N}_1$ and $\widetilde{w}_{1}^{\hat{t},N,*}$, we have
\begin{eqnarray}\label{220901a}
&&\widetilde{w}_{1}^{\hat{t},N,*}(\bar{{t}},\bar{x}_N)-\varphi(\bar{{t}},\bar{x}_N)\nonumber\\
&=&\limsup_{\scriptsize \begin{matrix}
	(s,y_N)\rightarrow(\bar{{t}},\bar{x}_N)\\
	s\geq \hat{t}
	\end{matrix}}(\widetilde{w}^{\hat{t},N}_1(s,y_N)-\varphi(s,y_N))\nonumber\\
&=&\limsup_{\scriptsize \begin{matrix}
	(s,y_N)\rightarrow(\bar{{t}},\bar{x}_N)\\
	s\geq \hat{t}
	\end{matrix}}\left(\sup_{\scriptsize \begin{matrix} \xi_s\in \Lambda^{\hat{t}}\\ (\xi_s(s))_N=y_N\end{matrix}}
w_1(\xi_s)-\varphi(s,y_N)\right).
\end{eqnarray}
Note that by Lemma \ref{lemma4.3}, $w_1$   satisfies condition (\ref{0608a}). Then, for every $(s,\xi_s)\in [\hat{t},\bar{t}]\times \Lambda$,
\begin{eqnarray}\label{220901b}
w_1(\xi_s)\leq w_1(\xi_{s,\bar{t}}^{A,N})+\rho_1(|\bar{t}-s|,||\xi_s||_0),
\end{eqnarray}
where $\xi_{s,\bar{t}}^{A,N}:=\xi_{s,\bar{t}}^A+((\xi_s(s))_N-(e^{(\bar{t}-s)A}\xi_s(s))_N){\mathbf{1}}_{[0,\bar{t}]}$.
By the definition of $w_1$, there exists a constant {$M_6>0$} such that
\begin{eqnarray}\label{220901c}
\sup_{\scriptsize \begin{matrix} \xi_s\in \Lambda^{\hat{t}}\\ (\xi_s(s))_N=y_N\end{matrix}}
w_1(\xi_s)=\sup_{\scriptsize \begin{matrix} \xi_s\in \Lambda^{\hat{t}}\\ (\xi_s(s))_N=y_N\\
	||\xi_s||_0\leq {M_6}\end{matrix}}
w_1(\xi_s).
\end{eqnarray}
Thus, by (\ref{220901b}) and (\ref{220901c}),
\begin{eqnarray}\label{220901d}
\begin{aligned}
&\quad\limsup_{\scriptsize \begin{matrix}  (s,y_N)\rightarrow(\bar{{t}},\bar{x}_N)\\
	s\geq \bar{t}\end{matrix}}\sup_{\scriptsize \begin{matrix} \xi_s\in \Lambda^{\hat{t}}\\ (\xi_s(s))_N=y_N\end{matrix}}
[w_1(\xi_s)-\varphi(s,y_N)]\\
&\leq \limsup_{\scriptsize \begin{matrix}
	(s,y_N)\rightarrow(\bar{{t}},\bar{x}_N)\\
	s\geq \hat{t}
	\end{matrix}}
\left(\sup_{\scriptsize \begin{matrix} \xi_s\in \Lambda^{\hat{t}}\\ (\xi_s(s))_N=y_N\end{matrix}
}
w_1(\xi_s)-\varphi(s,y_N)\right)\\
\qquad&\leq \limsup_{\scriptsize \begin{matrix}
	(s,y_N)\rightarrow(\bar{{t}},\bar{x}_N)\\
	s\geq \hat{t}
	\end{matrix}}
\!\!\!\sup_{\scriptsize \begin{matrix} \xi_s\in \Lambda^{\hat{t}}\\ (\xi_s(s))_N=y_N\\
	 ||\xi_s||_0\leq {M_6}\end{matrix}}\!\!\!\!\!\!
{[}w_1(\xi_{s,\bar{t}}^{A,N})+\rho_1(|s\vee\bar{t}-s|,{M_6})-\varphi(s,y_N){]}\\
&\leq\limsup_{\scriptsize \begin{matrix}  (s,y_N)\rightarrow(\bar{{t}},\bar{x}_N)\\
s\geq \bar{t}\end{matrix}}\sup_{\scriptsize \begin{matrix}\xi_s\in \Lambda^{\hat{t}}\\
	(\xi_s(s))_N=y_N\end{matrix}}
[w_1(\xi_s)-\varphi(s,y_N)].
\end{aligned}
\end{eqnarray}
Therefore, by (\ref{220901a}) and (\ref{220901d}),
\begin{eqnarray*}
	\widetilde{w}_{1}^{\hat{t},N,*}(\bar{{t}},\bar{x}_N)-\varphi(\bar{{t}},\bar{x}_N)
	&=&\limsup_{\scriptsize \begin{matrix}
			(s,y_N)\rightarrow(\bar{{t}},\bar{x}_N)\\
			s\geq \bar{t}
	\end{matrix}}\sup_{\scriptsize \begin{matrix} \xi_s\in \Lambda^{\hat{t}}\\ (\xi_s(s))_N=y_N\end{matrix}}
	[w_1(\xi_s)-\varphi(s,y_N)]\\
	&\leq&\sup_{(s,\gamma_s)\in [\bar{t},T]\times \Lambda^{\bar{t}}}\Gamma(\gamma_s).
\end{eqnarray*}
Combining with (\ref{0615a}),
\begin{eqnarray}\label{220901e}
\Gamma(\breve{\gamma}^0_{\breve{t}_0})\geq \sup_{(s,\gamma_s)\in [\bar{t},T]\times \Lambda^{\bar{t}}}\Gamma(\gamma_s)-\delta
\geq\widetilde{w}_{1}^{\hat{t},N,*}(\bar{{t}},\bar{x}_N)-\varphi(\bar{{t}},\bar{x}_N)-\delta.
\end{eqnarray}
Recall that $\widetilde{w}_{1}^{\hat{t},N,*}\geq\widetilde{w}^{\hat{t},N}_1$.  Then, by 
the definition of $\widetilde{w}^{\hat{t},N}_1$, the property (ii) of $(\breve{t},\breve{\gamma}_{\breve{t}})$ and (\ref{220901e}),
\begin{eqnarray}\label{20210509}
\begin{aligned}&\qquad
\widetilde{w}_{1}^{\hat{t},N,*}(\breve{t},(\breve{\gamma}_{\breve{t}}(\breve{t}))_N)
-\varphi(\breve{t},(\breve{\gamma}_{\breve{t}}(\breve{t}))_N)\\
&\geq\widetilde{w}^{\hat{t},N}_1(\breve{t},(\breve{\gamma}_{\breve{t}}(\breve{t}))_N)
-\varphi(\breve{t},(\breve{\gamma}_{\breve{t}}(\breve{t}))_N)
\geq w_1(\breve{\gamma}_{\breve{t}})
-\varphi(\breve{t},(\breve{\gamma}_{\breve{t}}(\breve{t}))_N)\\
&\geq \Gamma(\breve{\gamma}^0_{\breve{t}_0})
\geq\widetilde{w}_{1}^{\hat{t},N,*}(\bar{{t}},\bar{x}_N)-\varphi(\bar{{t}},\bar{x}_N)-\delta=-\delta.
\end{aligned}
\end{eqnarray}
Noting that $\nu$ is independent of  $\delta$ and $\varphi$ is a continuous  function bounded from below, 
by the definitions of  $\Gamma$ and $w_1$, (\ref{w}) and (\ref{s0}),
there exists a constant  $M_7>0$   depending only on $\varphi$ 
that is sufficiently  large   that
$
\Gamma(\gamma_t)<\sup_{(s,\gamma_s)\in [\bar{t},T]\times \Lambda^{\bar{t}}}\Gamma(\gamma_s)-1
$ for all $t\in [\bar{t},T]$ and $||\gamma_t||_0\geq M_7$. Thus, we have $||\breve{\gamma}_{\breve{t}}||_0\vee
||{\breve{\gamma}}^{0}_{\breve{t}_{0}}||_0<M_7$. In particular, $|(\breve{\gamma}_{\breve{t}}(\breve{t}))_N|<M_7$.
Letting $\delta\rightarrow0$, by  the similar proof procedure of (\ref{4.226}) and (\ref{05231}), we obtain
\begin{eqnarray}\label{delta0}
&& \breve{t}\rightarrow \bar{t},\ (\breve{\gamma}_{\breve{t}}(\breve{t}))_N\rightarrow \bar{x}_N,\  \widetilde{w}_{1}^{\hat{t},*}(\breve{t},(\breve{\gamma}_{\breve{t}}(\breve{t}))_N)\rightarrow\widetilde{w}_{1}^{\hat{t},*}(\bar{{t}},\bar{x}_N) \ \mbox{as}\ \delta\rightarrow0.
\end{eqnarray}
Noting that $\widetilde{w}_{1}^{\hat{t},N,*}(t,x_N)-\varphi(t,x_N)$  has a strict   maximum 0 at $(\bar{{t}},\bar{x}_N)\in (0, T)\times H_N$ on $[0, T]\times H_N$, by (\ref{0815zhou2})
and (\ref{delta0}) there exists a constant $0<\Delta<1$ such that
for all $0<\delta<\Delta$,
$$
\varphi(\breve{t},(\breve{\gamma}_{\breve{t}}(\breve{t}))_N)\geq \widetilde{w}_{1}^{\hat{t},N,*}(\breve{t},(\breve{\gamma}_{\breve{t}}(\breve{t}))_N)\geq \widetilde{w}_{1}^{\hat{t},N,*}(\bar{{t}},\bar{x}_N)-1\geq -(L+1).
$$
Then, by the definitions of  $\Gamma$ and $w_1$,  (\ref{s0}), (\ref{w}) and (\ref{20210509}),
there exists a constant  $M_8>0$   depending only on $L$ 
such that, for all $0<\delta<\Delta$ and $||{\gamma}_{\breve{t}}||_0\geq M_8$ satisfying $({\gamma}_{\breve{t}}(\breve{t}))_N=(\breve{\gamma}_{\breve{t}}(\breve{t}))_N$,
$$
\Gamma(\gamma_{\breve{t}})<-2\leq \Gamma(\breve{\gamma}^0_{\breve{t}_0})-1 \leq\sup_{(s,\gamma_s)\in [\bar{t},T]\times \Lambda^{\bar{t}}}\Gamma(\gamma_s)-1.
$$
Thus, we have $||\breve{\gamma}_{\breve{t}}||_0<M_8$ for all $0<\delta<\Delta$.   From (\ref{0815zhou1}), it follows that $\bar{t}<{\hat{t}}+\frac{|T-{\hat{t}}|}{2}\leq T$. 
Then, by (\ref{delta0}),  we have  $\breve{t}<T$ 
provided that  $\delta>0$ is small enough.
Thus,  we have from the definition of the viscosity sub-solution
\begin{eqnarray}\label{5.15}
\begin{aligned}
&
{ \partial_t^o}\chi(\breve{\gamma}_{{\breve{t}}})
+\left\langle A^*\nabla_{x}\varphi({\breve{t}}, (\breve{\gamma}_{{\breve{t}}}(\breve{{t}}))_N),\, \breve{\gamma}_{{\breve{t}}}({\breve{t}})\right\rangle_H\\
&-4\beta^{\frac{1}{3}}\left\langle A^*\left(\left(\breve{\gamma}_{\breve{t}}(\breve{t})\right)_N-\left(e^{(\breve{t}-\hat{t}\,)A}\hat{\xi}_{\hat{t}}(\hat{t}\,)\right)_N\right),\,
\breve{\gamma}_{{\breve{t}}}(\breve{{t}})\right\rangle_H\\
&
+{\mathbf{H}}\left(\breve{\gamma}_{{\breve{t}}}, \, W_1(\breve{\gamma}_{{\breve{t}}}),\, \partial_x\chi(\breve{\gamma}_{{\breve{t}}}),\, \partial_{xx}\chi(\breve{\gamma}_{{\breve{t}}})\right)\geq c,
\end{aligned}
\end{eqnarray}
where, for every $(t,\gamma_t)\in  {[\breve{t},T]\times{{\Lambda}}}$,  define $\chi(\gamma_t) :=\chi_{1}(\gamma_t)+\chi_{2}(\gamma_t)$ with
\begin{eqnarray*}
	\chi_{1}(\gamma_t)  &:=&\varphi(t,(\gamma_t(t))_N)-2\beta^{\frac{1}{3}}\left|\left(\gamma_t(t)\right)_N-\left(e^{(t-\hat{t}\,)A}\hat{\xi}_{\hat{t}}(\hat{t}\,)\right)_N\right|^2\quad \hbox{ \rm and }\\
	\chi_{2}(\gamma_t)  &:=&
	\varepsilon\frac{\nu T-t}{\nu
		T}\, \Upsilon(\gamma_t)
	+\varepsilon \, \overline{\Upsilon}(\gamma_t,\hat{\gamma}_{\hat{t}})
	+\sum_{i=0}^{\infty}
	\frac{1}{2^i}\, \overline{\Upsilon}(\gamma^i_{t_i},\gamma_t)+2^5\beta\, \Upsilon(\gamma_{t},\hat{{\xi}}_{{\hat{t}}})\\
	&&
	+\sum_{i=0}^{\infty}
	\frac{1}{2^i}\, \overline{\Upsilon}(\breve{\gamma}^{i}_{\breve{t}_{i}},\gamma_t)
	+2\beta^{\frac{1}{3}}\left|\gamma_t(t)-e^{(t-\hat{t}\,)A}\hat{\xi}_{\hat{t}}(\hat{t}\,)\right|^2;
\end{eqnarray*}
and therefore,
\begin{eqnarray*}
	{ \partial_t^o}\chi(\gamma_t)&:=&\partial_t\chi_{1}(\gamma_t)+{\partial_t^o}\chi_{2}(\gamma_t)\\
	&=&
	\varphi_t({t},(\gamma_{{t}}({t}))_N)+4\beta^{\frac{1}{3}}\left\langle A^*\left(\left({\gamma}_{{t}}({t})\right)_N-\left(e^{({t}-\hat{t}\,)A}\hat{\xi}_{\hat{t}}(\hat{t}\,)\right)_N\right),\,
	e^{({t}-\hat{t}\,)A}\hat{\xi}_{\hat{t}}(\hat{t}\,)\right\rangle_H\\
	&&-\frac{\varepsilon}{\nu T}\Upsilon(\gamma_{{t}})
	+2\varepsilon({t}-{\hat{t}})+2\sum_{i=0}^{\infty}\frac{1}{2^i}[(t-t_{i})+(t-\breve{t}_i)],
\end{eqnarray*}
\begin{eqnarray*}
	\partial_x\chi(\gamma_t)&:=&\partial_x\chi_{1}(\gamma_t)+\partial_x\chi_{2}(\gamma_t)\\
	&=&\nabla_{x}\varphi({t}, (\gamma_{{t}}(t))_N)
	+4\beta^{\frac{1}{3}}\left(\left(\gamma_{t}(t)\right)^-_N-\left(e^{(t-\hat{t}\,)A}\hat{\xi}_{\hat{t}}(\hat{t}\,)\right)^-_N\right)
	\\
	&&+\varepsilon\, \frac{\nu T-{t}}{\nu T}\, \partial_x\Upsilon(\gamma_{{t}}) +\varepsilon\, \partial_x\Upsilon(\gamma_{{t}}-\hat{\gamma}_{{\hat{t},t}}^A)
	+2^5\beta\, \partial_x\Upsilon(\gamma_{{t}}-\hat{\xi}_{{\hat{t},t}}^A)
	\\
	&&+\partial_x\left[\sum_{i=0}^{\infty}\frac{1}{2^i}
	\, \Upsilon(\gamma_{t}-(\gamma^{i})_{t_{i},t}^A)
	+\sum_{i=0}^{\infty}\frac{1}{2^i}\, \Upsilon(\gamma_{t}-(\breve{\gamma}^i)_{\breve{t}_i,t}^A)\right], \quad \hbox{\rm and }
\end{eqnarray*}
\begin{eqnarray*}
	&&\partial_{xx}\chi(\gamma_t)\\
	&:=&\partial_{xx}\chi_{1}(\gamma_t)+\partial_{xx}\chi_{2}(\gamma_t)\\
	&=&\nabla^2_{x}\varphi({t}, (\gamma_{{t}}(t))_N)+4\beta^{\frac{1}{3}}Q_N+\varepsilon\frac{\nu T-{t}}{\nu T}\, \partial_{xx}\Upsilon(\gamma_{{t}}) +\varepsilon\, \partial_{xx}\Upsilon(\gamma_{{t}}-\hat{\gamma}_{{\hat{t},t}}^A)
	\\
	&&+2^5\beta\, \partial_{xx}\Upsilon(\gamma_{{t}}-\hat{\xi}_{{\hat{t},t}}^A)
	+\partial_{xx}\left[\sum_{i=0}^{\infty}\frac{1}{2^i}
	\Upsilon(\gamma_{t}-(\gamma^{i})_{t_{i},t}^A)
	+\sum_{i=0}^{\infty}\frac{1}{2^i}\Upsilon(\gamma_{t}-(\breve{\gamma}^i)_{\breve{t}_i,t}^A)\right].
\end{eqnarray*}
We notice that  $||\breve{\gamma}_{{\breve{t}}}||_0< M_8$ for all $0<\delta<\Delta$ and $M_8$ only depends on $L$.
Then    by 
  (\ref{5.15}) and the definition of ${\mathbf{H}}$, it follows that there exists a constant $C'_0\geq0$ depending only on $L$ such that
$$\varphi_t\left(\breve{t},(\breve{\gamma}_{\breve{t}}(\breve{t}))_N\right)\geq -C_0'\left(1+\left|\nabla_{x}\varphi(\breve{t},(\breve{\gamma}_{\breve{t}}(\breve{t}))_N)\right|+\left|\nabla^2_{x}\varphi(\breve{t},(\breve{\gamma}_{\breve{t}}(\breve{t}))_N)\right|\right).$$
Letting $\delta\rightarrow0$, by (\ref{0815zhou2}) and (\ref{delta0}) , we obtain 
$$
\varphi_{t}(\bar{t},\bar{x}) \geq -{C}_0'(1+|\nabla_{x}\varphi(\bar{t},\bar{x})|+|\nabla^2_{x}\varphi(\bar{t},\bar{x})|)\geq -{C}_0'(1+2L).
$$
Letting $ \tilde{C}_0={C}_0'(1+2L)$, we get $\varphi_{t}(\bar{t},x_1) \geq -{\tilde{C}_0}$.
The proof is now complete.
\end{proof}

\begin{lemma}\label{0611a}\ \
	The functionals $w_1$ and $w_2$ defined by (\ref{06091}) and (\ref{06092}) satisfy the conditions of  Theorem \ref{theorem0513} with  $\varphi$, where $\varphi$ is the function defined by (\ref{07063}).
\end{lemma}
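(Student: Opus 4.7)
The plan is to verify each of the six hypotheses of Theorem \ref{theorem0513} with the choice $\varphi(x,y)=\beta^{1/3}|x-y|^2$. Two are already in hand: Lemma \ref{lemma4.3} gives the continuity property (\ref{0608a}), and Lemma \ref{lemma4.344} gives the $\Phi_N$-membership of $\widetilde{w}_{1}^{\hat{t},N,*}$ and $\widetilde{w}_{2}^{\hat{t},N,*}$. Upper semicontinuity and the far-field condition (\ref{05131}) are routine: $w_1,w_2$ are continuous by the joint continuity of $W_1,W_2$ and of $\Upsilon,\overline{\Upsilon}$ (Lemma \ref{theoremS}), while the quadratic bound (\ref{w}) on $W_{1,2}$ together with the lower bound $\Upsilon(\gamma_t,\hat{\xi}_{\hat{t}})\geq ||\gamma_t-\hat{\xi}_{\hat{t},t}^A||_0^6$ from (\ref{s0}) forces $w_i(\gamma_t)/||\gamma_t||_0\to-\infty$ at infinity and in particular bounds $w_i$ from above. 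The requirement $\hat{t}\in(0,T)$ is guaranteed by Step 3 in the proof of Theorem \ref{theoremhjbm}.

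The core task is to establish the strict-maximum property and the tangency inequality (\ref{10101}). The key is an identity expressing
\begin{equation*}
(w_1+w_2-\varphi)(\gamma_t,\eta_t)=\Psi_1(\gamma_t,\eta_t)+R_1+R_2+R_3,
\end{equation*}
where each $R_i\leq 0$ and each $R_i$ vanishes at $(\hat{t},\hat{\gamma}_{\hat{t}},\hat{\eta}_{\hat{t}})$. To obtain it, I would first combine the two $\beta^{1/3}$ penalties in $w_1,w_2$ with $-\varphi$ by decomposing $|\gamma_t(t)-\eta_t(t)|^2$ into $H_N$ and $H_N^\perp$ components and applying the parallelogram identity
\begin{equation*}
2|a^-_N-c^-_N|^2+2|b^-_N-c^-_N|^2=|a^-_N-b^-_N|^2+4\bigl|\tfrac{a^-_N+b^-_N}{2}-c^-_N\bigr|^2,
\end{equation*}
with $a^-_N=(\gamma_t(t))^-_N$, $b^-_N=(\eta_t(t))^-_N$, $c^-_N=(e^{(t-\hat{t})A}\hat{\xi}_{\hat{t}}(\hat{t}))^-_N$. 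The $|a^-_N-b^-_N|^2$ contributions cancel exactly, producing $R_1=-2\beta^{1/3}|\tfrac{a^-_N+b^-_N}{2}-c^-_N|^2\leq 0$, which vanishes at $t=\hat{t}$ since $\hat{\xi}_{\hat{t}}=(\hat{\gamma}_{\hat{t}}+\hat{\eta}_{\hat{t}})/2$.

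Next, applying Lemma \ref{theoremS00044} with the midpoint $\hat{\xi}_{\hat{t}}$ together with $(a+b)^6\leq 2^5(a^6+b^6)$ yields $\Upsilon(\gamma_t-\eta_t)\leq 2^5[\Upsilon(\gamma_t-\hat{\xi}^A_{\hat{t},t})+\Upsilon(\hat{\xi}^A_{\hat{t},t}-\eta_t)]$; this is exactly why the prefactor $2^5$ was built into $w_1,w_2$ in (\ref{06091})--(\ref{06092}), and it produces $R_2=\beta\Upsilon(\gamma_t,\eta_t)-2^5\beta[\Upsilon(\gamma_t,\hat{\xi}_{\hat{t}})+\Upsilon(\eta_t,\hat{\xi}_{\hat{t}})]\leq 0$. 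This inequality saturates at $(\hat{\gamma}_{\hat{t}},\hat{\eta}_{\hat{t}})$ by the homogeneity $\Upsilon(x)=64\,\Upsilon(x/2)$ read off from the explicit forms of $S$ and $|\cdot|^6$: indeed $\beta\Upsilon(\hat{\gamma}_{\hat{t}}-\hat{\eta}_{\hat{t}})=2^6\beta\Upsilon((\hat{\gamma}_{\hat{t}}-\hat{\eta}_{\hat{t}})/2)=2^5\beta[\Upsilon(\hat{\gamma}_{\hat{t}},\hat{\xi}_{\hat{t}})+\Upsilon(\hat{\eta}_{\hat{t}},\hat{\xi}_{\hat{t}})]$. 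Finally $R_3=-\varepsilon[\overline{\Upsilon}(\gamma_t,\hat{\gamma}_{\hat{t}})+\overline{\Upsilon}(\eta_t,\hat{\eta}_{\hat{t}})]\leq 0$ and vanishes at the base point since $\overline{\Upsilon}$ is a gauge-type function.

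The identity then gives $w_1+w_2-\varphi\leq \Psi_1+R_3$ with equality at $(\hat{t},\hat{\gamma}_{\hat{t}},\hat{\eta}_{\hat{t}})$; combined with the strict maximality (\ref{iii4}) of $\Psi_1$ at that point, this delivers the strict-maximum hypothesis of Theorem \ref{theorem0513}. For the tangency inequality (\ref{10101}) I drop $R_1,R_2$ and apply the gauge lower bound $\overline{\Upsilon}(\gamma_t,\hat{\gamma}_{\hat{t}})\geq ||\gamma_t-\hat{\gamma}^A_{\hat{t},t}||_0^6+|t-\hat{t}|^2$ from (\ref{0612d}) to conclude
\begin{equation*}
[w_1+w_2-\varphi](\hat{\gamma}_{\hat{t}},\hat{\eta}_{\hat{t}})-[w_1+w_2-\varphi](\gamma_t,\eta_t)\geq \varepsilon\bigl(||\gamma_t-\hat{\gamma}^A_{\hat{t},t}||_0^6+||\eta_t-\hat{\eta}^A_{\hat{t},t}||_0^6+|t-\hat{t}|^2\bigr),
\end{equation*}
so $\tilde{\rho}(r)=\varepsilon r$ suffices. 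The only real work is bookkeeping the exact cancellation; no additional analytic difficulty arises, because $w_1,w_2$ were manufactured precisely so that the three residuals $R_1,R_2,R_3$ are simultaneously tangent to zero at $(\hat{t},\hat{\gamma}_{\hat{t}},\hat{\eta}_{\hat{t}})$.
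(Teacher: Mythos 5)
Your proposal is correct and takes essentially the same approach as the paper's proof, which deduces the strict maximum and the tangency estimate (\ref{10101}) from the extremality property (\ref{iii4}) of $\Psi_1$ combined with Lemma \ref{theoremS00044} (for the $2^5\beta\Upsilon$-terms) and the orthogonal splitting $|\gamma_t(t)-\eta_t(t)|^2=|(\gamma_t(t))_N-(\eta_t(t))_N|^2+|(\gamma_t(t))^-_N-(\eta_t(t))^-_N|^2$ with the parallelogram law (for the $\beta^{1/3}$-terms), while Lemmas \ref{lemma4.3} and \ref{lemma4.344} together with (\ref{s0}) and (\ref{w}) supply the remaining hypotheses. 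One small arithmetic slip: the parallelogram law gives $R_1=-4\beta^{\frac{1}{3}}\bigl|\tfrac{(\gamma_t(t))^-_N+(\eta_t(t))^-_N}{2}-\bigl(e^{(t-\hat{t}\,)A}\hat{\xi}_{\hat{t}}(\hat{t}\,)\bigr)^-_N\bigr|^2$, not $-2\beta^{\frac{1}{3}}|\cdots|^2$, but this does not affect the argument since only the sign of $R_1$ and its vanishing at the base point matter.
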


\begin{proof}
From (\ref{s0}) and (\ref{w}),  the functionals $w_1$ and $w_2$ are continuous and bounded from above and satisfy (\ref{05131}).
By  Lemmas \ref{lemma4.3} and \ref{lemma4.344}, $w_1$ and $w_2$ satisfy condition  (\ref{0608a}), and  $\widetilde{w}_{1}^{\hat{t},N,*}\in \Phi_N(\hat{t},(\hat{{\gamma}}_{{\hat{t}}}({\hat{t}}))_N)$ and $\widetilde{w}_{2}^{\hat{t},N,*}\in \Phi_N(\hat{t},(\hat{{\eta}}_{{\hat{t}}}({\hat{t}}))_N)$.
Moreover, let $\varphi$ be the function defined by (\ref{07063}). By Lemma \ref{theoremS00044} and (\ref{iii4}) we obtain that, for all 
$(t,(\gamma_t,\eta_t))\in [\hat{t},T]\times (\Lambda^{\hat{t}}\otimes  \Lambda^{\hat{t}})$,
\begin{eqnarray}\label{wv}
\ \ \ \ \ \ \  \ \
\begin{aligned}
&w_{1}(\gamma_t)+w_{2}(\eta_t)-\varphi((\gamma_t(t))_N,(\eta_t(t))_N)+\varepsilon (\overline{\Upsilon}(\gamma_t,\hat{\gamma}_{\hat{t}})+ \overline{\Upsilon}(\eta_t,\hat{\eta}_{\hat{t}}))  \\
=&w_{1}(\gamma_t)+w_{2}(\eta_t)-\beta^{\frac{1}{3}}|(\gamma_t(t))_N-(\eta_t(t))_N|^2+\varepsilon (\overline{\Upsilon}(\gamma_t,\hat{\gamma}_{\hat{t}})+ \overline{\Upsilon}(\eta_t,\hat{\eta}_{\hat{t}}))  \\
\leq&
{\Psi}_1(\gamma_t,\eta_t)\leq\Psi_1(\hat{\gamma}_{\hat{t}},\hat{\eta}_{\hat{t}})=w_{1}(\hat{{\gamma}}_{{\hat{t}}})+w_{2}(\hat{{\eta}}_{{\hat{t}}})
-\beta^{\frac{1}{3}}|(\hat{{\gamma}}_{{\hat{t}}}({\hat{t}}))_N-(\hat{{\eta}}_{{\hat{t}}}({\hat{t}}))_N|^2\\
=&w_{1}(\hat{{\gamma}}_{{\hat{t}}})+w_{2}(\hat{{\eta}}_{{\hat{t}}})
-\varphi((\hat{{\gamma}}_{{\hat{t}}}({\hat{t}}))_N,(\hat{{\eta}}_{{\hat{t}}}({\hat{t}}))_N),
\end{aligned}
\end{eqnarray}
where the last inequality becomes equality if and only if $t={\hat{t}}$, $\gamma_t=\hat{{\gamma}}_{{\hat{t}}}, \eta_t=\hat{{\eta}}_{{\hat{t}}}$.
Then we obtain that $
w_1(\gamma_t)+w_2(\eta_t)-\varphi((\gamma_t(t))_N, (\eta_t(t)))
$
has a 
maximum over $\Lambda^{\hat{t}}\otimes \Lambda^{\hat{t}}$ at a point $(\hat{\gamma}_{\hat{t}},\hat{\eta}_{\hat{t}})$ with $\hat{t}\in (0,T)$ and satisfies (\ref{10101}) with  ${\tilde{\rho}}(l)=\varepsilon l, l\in [0,\infty)$. Thus $w_1$ and $w_2$ satisfy the conditions of  Theorem \ref{theorem0513}  with  $\varphi$ defined by (\ref{07063}).
\end{proof}

\begin{example}\label{202401281}\ \
	Parabolic   equation
\end{example}
Let $\mathcal{O} \subset \mathbb{R}^d$ be a bounded  domain with a smooth boundary $\partial\mathcal{O}$. Let
$$
                  A:=\sum^{n}_{i,j}\partial_i(a_{ij}\partial_j),\ \ D(A):=H^1_0(\mathcal{O})\cap H^2(\mathcal{O}),
$$
where $a_{ij}=a_{ji}\in L^{\infty}(\mathcal{O})$ for $i,j\in \{1,\ldots,d\}$, and there exists a $\theta>0$ such that
$$
                    \sum^{n}_{i,j}a_{ij}\xi_i\xi_j\geq \theta |\xi|^2, \forall x\in \mathbb{R}^d
$$
a.e. in $\mathcal{O}$. Then $A$ generates a contraction semigroup $e^{tA}$ in $H:=L^2(\mathcal{O})$ (see ~\cite[Example 3.16, page 178]{fab1}). We consider a control problem for the stochastic parabolic equation
\begin{eqnarray}\label{202401291}
\begin{cases}
\frac{\partial y}{\partial s}(s,\xi)=Ay(s,\xi)+f(\xi,\mu(y_s(\xi)),u(s))\\
\  \  \ \ \ \ \ \ \  \ \ \   +h(\xi,\mu(y_s(\xi)),u(s))\frac{\partial}{\partial s}W_{Q}(s,\xi),  \quad s\in (t,T], \ \xi\in\mathcal{O},\\
 y(s,\xi)=0,\ \ s\in (t,T]\times \partial \mathcal{O},\\
y(s,\xi)=y^0_t(s,\xi), \ s\in [0,t],  \xi\in\mathcal{O},
\end{cases}
\end{eqnarray}
where $Q$ is an operator in $\mathcal{L}^+_1(L^2(\mathcal{O}))$ (see ~\cite[Proposition B.30, page 801]{fab1}) and $W_Q$ is a $Q$-Wiener process
(see ~ \cite[Definition 1.88, page 28]{fab1}), $y^0_t\in C([0,t]; L^2(\mathcal{O}))$, $U$ is a Polish space,
$u(\cdot)\in \mathcal{U}[t,T]$, $\mu$ is a bounded signed measure on $[0,T]$ and $\mu(y_s(\xi)):=\int_{[0,s]}y(l,\xi)d\mu(l)$. Suppose we want to maximize the cost functional
\begin{eqnarray}\label{202401291d}
               ~~~~~  I(y^0_t, u(\cdot))=\mathbb{E}\left[\int^{T}_{t}\int_{\mathcal{O}}\alpha(\mu(y_s(\xi)),u(s))d\xi ds+\int_{\mathcal{O}}\beta(\mu(y_T(\xi)))d\xi\right]
\end{eqnarray}
over all $u(\cdot)\in \mathcal{U}[t,T]$. 
  We assume the following.
\begin{assumption}\label{hypstate20240128}
	\begin{description}
		\item[(i)]
		 $(f, h) :\mathcal{O}\times \mathbb{R}\times U\to  \mathbb{R}\times  \mathbb{R}$ is continuous, {and
        $(f,h)(\cdot,\cdot,u)$ is continuous in $  \mathcal{O}\times \mathbb{R}$, uniformly in $u\in U$.} Moreover,
		there is a constant $L>0$ such that,  for all $(\xi, x, u) \in  \mathcal{O}\times \mathbb{R}\times U$,
		\begin{eqnarray*}
		&&\displaystyle \left|f(\xi,x,u)\right|^2\vee\left|h(\xi,x,u)\right|^2\leq
		L^2(1+|x|^2),\\
		&&\displaystyle \left|f(\xi,x,u)-f(\xi,y,u)\right|\vee\left|h(\xi,x,u)-h(\xi,y,u)\right|\leq
		L|x-y|.
		\end{eqnarray*}
\item[(ii)]
	$
	\alpha:  \mathbb{R}\times U\rightarrow \mathbb{R}$ and $\beta:  \mathbb{R}\rightarrow \mathbb{R}$ are continuous. 
         Moreover,     there is a  constant $L>0$
	such that, for all $(x,    y,  u)
	\in \mathbb{R}\times\mathbb{R}\times U$,
	\begin{eqnarray*}
		&&| \alpha(x,u)|\leq L(1+|x|),
		\ \ \ \ |\alpha(x,u)-\alpha(y,u)|\leq L|x-y|,
		\\
		&&|\beta(x)-\beta(y)|\leq L|x-y|.
	\end{eqnarray*}
\item[(iii)] For every $\gamma_s\in \Lambda_s(H)$,
	\begin{eqnarray*}
	\lim_{N\to \infty} \sup_{u\in U}\left|Q_N{h}(\cdot,\mu(\gamma_s(\cdot)),u)\right|_{H}^2=0.
	\end{eqnarray*}
	\end{description}
\end{assumption}
Define, for $\gamma_s\in \Lambda_s(H)$, $\gamma_T\in \Lambda_T(H)$,
 $z\in H$ and $u\in U$,
\begin{eqnarray*}
                         && F(\gamma_s,u)=f(\cdot,\mu(\gamma_s(\cdot)),u),\ \ G(\gamma_s,u)z=h(\cdot,\mu(\gamma_s(\cdot)),u)z(\cdot),\\
                          && q(\gamma_s,u)=\int_{\mathcal{O}}\alpha(\mu(\gamma_s(\xi)),u)d\xi,\ \  \phi(\gamma_T)=\int_{\mathcal{O}}\beta(\mu(\gamma_T(\xi)))d\xi.
\end{eqnarray*}
Equation (\ref{202401291}) can be rewritten as the following evolution equation in $H$
$$
    dX(s)=AX(s)ds+F(X_s,u(s))ds+G(X_s,u(s))dW_Q(s),\ \ X_t=y^0_t,
$$
and the  cost functional $I$ can be rewritten as
$$
                    J(y^0_t,u(\cdot))=\mathbb{E}\left[\int^T_tq(X_s,u(s))ds+\phi(X_T)\right].
$$
It is easy
to check that Assumption \ref{hypstate20240128} implies that Assumptions \ref{hypstate}, \ref{hypcost} and \ref{hypstate5666} are satisfied. For example,
let $\Xi=Q^{\frac{1}{2}}H$, by  \cite[Theorem 1.87, page 28]{fab1},  $W_Q$ is a   cylindrical Wiener process in Hilbert space $\Xi$, by {Assumption}  \ref{hypstate20240128} (i),
\begin{eqnarray*}
                    &&|G(\gamma_s,u)|^2_{L_2(\Xi,H)}=|h(\cdot,\mu(\gamma_s(\cdot)),u)Q^{\frac{1}{2}}|^2_{L_2(H)}\\
                    &\leq& |h(\cdot,\mu(\gamma_s(\cdot)),u)|^2_{L(H)}|Q^{\frac{1}{2}}|^2_{L_2(H)}
                    =|Q^{\frac{1}{2}}|^2_{L_2(H)}\sup_{|z|_H\leq 1}\left(\int_{\mathcal{O}}h(\xi,\mu(\gamma_s(\xi)),u)z(\xi)d\xi\right)^2\\
                    &\leq&|Q^{\frac{1}{2}}|^2_{L_2(H)}\int_{\mathcal{O}}h^2(\xi,\mu(\gamma_s(\xi)),u)d\xi
                    \leq L^2|Q^{\frac{1}{2}}|^2_{L_2(H)}\int_{\mathcal{O}}(1+|\mu(\gamma_s(\xi))|^2)d\xi\\
                    &\leq&L^2|Q^{\frac{1}{2}}|^2_{L_2(H)}\left(m(\mathcal{O})+|\mu|([0,T])\int_{\mathcal{O}}\int_{[0,s]}|\gamma_s(l,\xi)|^2d|\mu|(l)d\xi\right)\\
                    &\leq&L^2|Q^{\frac{1}{2}}|^2_{L_2(H)}\left(m(\mathcal{O})+|\mu|^2([0,T])||\gamma_s||^2_0\right);
\end{eqnarray*}
and by {Assumption}  \ref{hypstate20240128} (iii),
	\begin{eqnarray*}
	&&\lim_{N\to \infty} \sup_{u\in U}\left|Q_NG(\gamma_s,u)\right|_{L_2(\Xi,H)}^2=\lim_{N\to \infty} \sup_{u\in U}|Q_Nh(\cdot,\mu(\gamma_s(\cdot)),u)Q^{\frac{1}{2}}|^2_{L_2(H)}\\
&\leq&\lim_{N\to \infty} \sup_{u\in U}|Q_Nh(\cdot,\mu(\gamma_s(\cdot)),u)|^2_{H}|Q^{\frac{1}{2}}|^2_{L_2(H)}=0.
\end{eqnarray*}
\begin{example}\label{20240129a}\ \
	Hyperbolic  equation
\end{example}
Consider  a control problem for
the stochastic Hyperbolic  equation in  a bounded  domain $\mathcal{O}$ with a smooth boundary $\partial\mathcal{O}$
\begin{eqnarray}\label{20240129a1}
\begin{cases}
\frac{\partial^2 y}{\partial s^2}(s,\xi)=Ay(s,\xi)+f(\xi,\mu(y_s(\xi)),u(s))\\
\  \  \ \ \ \ \ \ \  \ \ \   +h(\xi,\mu(y_s(\xi)),u(s))\frac{\partial}{\partial s}{W}_{{Q}}(s,\xi),  \quad s\in (t,T], \ \xi\in\mathcal{O},\\
 y(s,\xi)=0,\ \ s\in (t,T]\times \partial \mathcal{O},\\
y(s,\xi)=y^0_t(s,\xi), \ s\in [0,t], \xi\in\mathcal{O},\\
\frac{\partial y}{\partial s} (s,\xi)=z^0_t(s,\xi),\ s\in [0,t], \xi\in\mathcal{O},
\end{cases}
\end{eqnarray}
where $A$ is given in Example \ref{202401281}, $y^0_t\in C([0,t]; H^1_0(\mathcal{O}))$ and  $z^0_t\in C([0,t]; L^2(\mathcal{O}))$.  Suppose we also want to maximize the cost functional (\ref{202401291d}). Set
$$
\mathbb{H}:=\left(\begin{matrix}
	H^1_0(\mathcal{O})\\
\times\\
	H
	\end{matrix}\right)
$$ equipped with the inner product
$$
\left\langle\left(\begin{matrix}
	y\\
	z
	\end{matrix}\right), \left(\begin{matrix}
	y'\\
	z'
	\end{matrix}\right)\right\rangle_\mathbb{H}=\langle(-A)^{\frac{1}{2}}y,(-A)^{\frac{1}{2}}y'\rangle_H+\langle z,z'\rangle_H,\ \ (x,y), (x',y')\in \mathbb{H}.
$$
The operator
 $$
         \mathcal{A}=\left(\begin{matrix}
	0&I\\
	A&0
	\end{matrix}\right),\ \        D(\mathcal{A})=\left(\begin{matrix}
	D(A)\\
	\times \\
H^1_0(\mathcal{O})
	\end{matrix}\right)
 $$
generates a contraction semigroup $e^{t\mathcal{A}}$ in $\mathbb{H}$ (see ~\cite[Example 3.13, page 176]{fab1}).
Define, for $\left(\begin{matrix}
	\gamma_s\\
	\eta_s
	\end{matrix}\right)\in \Lambda_s(\mathbb{H})$, $\left(\begin{matrix}
	\gamma_T\\
	\eta_T
	\end{matrix}\right)\in \Lambda_T(\mathbb{H})$,
 $\left(\begin{matrix}
	x\\
	y
	\end{matrix}\right)\in \mathbb{H}$ and $u\in U$,
\begin{eqnarray*}
                         && F\left(\left(\begin{matrix}
	\gamma_s\\
	\eta_s
	\end{matrix}\right),u\right)=\left(\begin{matrix}
	0\\
	f(\cdot,\mu(\gamma_s(\cdot)),u)
	\end{matrix}\right),\\
&& G\left(\left(\begin{matrix}
	\gamma_s\\
	\eta_s
	\end{matrix}\right),u\right)\left(\begin{matrix}
	y\\
	z
	\end{matrix}\right)=\left(\begin{matrix}
	0\\
	h(\cdot,\mu(\gamma_s(\cdot)),u)z
	\end{matrix}\right),\\
&&\widetilde{W}_{\tilde{Q}}=\left(\begin{matrix}
	0\\
	W_Q
	\end{matrix}\right),
\ \ \tilde{Q}\left(\begin{matrix}
	y\\
	z
	\end{matrix}\right)=\left(\begin{matrix}
	0\\
	Qz
	\end{matrix}\right)\\
                          && q\left(\left(\begin{matrix}
	\gamma_s\\
	\eta_s
	\end{matrix}\right),u\right)=\int_{\mathcal{O}}\alpha(\mu(\gamma_s(\xi)),u)d\xi,\ \  \phi\left(\left(\begin{matrix}
	\gamma_T\\
	\eta_T
	\end{matrix}\right)\right)=\int_{\mathcal{O}}\beta(\mu(\gamma_T(\xi)))d\xi.
\end{eqnarray*}
Equation (\ref{20240129a1}) can be rewritten in an abstract way in $\mathbb{H}$ as
$$
    dX(s)=\mathcal{A}X(s)ds+F(X_s,u(s))ds+G(X_s,u(s))d\widetilde{W}_Q(s),\ \ X_t=x_t:=\left(\begin{matrix}
	y^0_t\\
	z^0_t
	\end{matrix}\right),
$$
and the  cost functional (\ref{202401291d}) can be rewritten as
$$
                    J(x_t,u(\cdot))=\mathbb{E}\left[\int^T_tq(X_s,u(s))ds+\phi(X_T)\right].
$$
It is easy to see that Assumption \ref{hypstate20240128} implies that Assumptions \ref{hypstate}, \ref{hypcost} and \ref{hypstate5666} are satisfied.
\begin{remark}\label{202401301}\ \
	In the above two examples, we  assume 
 $A$ generates a contraction semigroup $e^{tA}$ in $L^2(\mathcal{O})$ and $Q$ is an operator in $\mathcal{L}^+_1(L^2(\mathcal{O}))$ without any additional assumptions, while even in the Markovian case, for example ~\cite[Example 3.69, page 238]{fab1}, the additional assumptions $A=\Delta$, $d>1$ and $(-\Delta)^{\frac{d-1}{4}}Q^{\frac{1}{2}}\in L_2(L^2(\mathcal{O}))$ are  imposed  to ensure  the  $B$-continuity of the coefficients in~\cite[(3.22), page 184]{fab1}.
\end{remark}

\newpage
\section{The Markovian case}
\par
In this chapter, we specialize our previous results on  the PHJB equation {(\ref{hjb1})} at the Markovian case,
 i.e,  in the setting of Remark \ref{remarkv}. The  HJB equation then reads
	\begin{eqnarray}\label{hjb32024}
	\qquad
	\begin{cases}
	\overline{V}_{t^+}(t,x)+\langle A^*\nabla_x\overline{V}(t,x), x\rangle_H\\
	\qquad+\overline{{\mathbf{H}}}(t, x, \overline{V}(t,x), \nabla_x\overline{V}(t,x), \nabla^2_x\overline{V}(t,x))= 0, \  (t, x)\in
	[0,T)\times H;\\
	\overline V(T,x)=\overline{\phi}(x), \ \ \ x\in H,
	\end{cases}
	\end{eqnarray}
	where
	\begin{eqnarray*}
		&& \overline{{\mathbf{H}}}(t,x,r,p,l)\\&=&\sup_{u\in{
				{U}}}[\langle p, \overline{F}(t,x,u)\rangle_{H} +\frac{1}{2}\mbox{Tr}[ l (\overline{G}\, \overline{G}^*)(t,x,u)]+\overline{q}(t,x,r,p\overline{G}(t,x,u),u)], \\
		&&\quad \quad  (t,x,r,p,l)\in [0,T]\times H\times \mathbb{R}\times H\times {\mathcal{S}}(H).
	\end{eqnarray*}
 To see clearly the advantages of our results, we prefer to repeat some relevant assumptions, definitions, and proofs.  We make the following  Assumption.

\begin{assumption}\label{hypstate20241}
	\begin{description}
		\item[(i)]
		The operator $A$ generates  a $C_0$ contraction
		{semi-group}  of bounded linear operators  $\{e^{tA}, t\geq0\}$ in
		Hilbert space $H$.
		\par
		\item[(ii)] $(\overline F, \overline G) :[0,T]\times H\times U\to  H\times  L_2(\Xi,H)$ is continuous, {and
        $(\overline F, \overline G)(\cdot,\cdot,u)$ is continuous in $ [0,T]\times H$, uniformly in $u\in U$.} Moreover,
		there is a constant $L>0$ such that,  for all $(t, x, u) \in [0,T]\times H\times U$,
		\begin{eqnarray*}
		&&\displaystyle \left|\overline F(t,x,u)\right|^2\vee\left|\overline G(t,x,u)\right|^2_{L_2(\Xi,H)}\leq
		L^2(1+|x|^2),\\
		&&\displaystyle \left|\overline F(t,x,u)-\overline F(t,y,u)\right|\vee\left|\overline G(t,x,u)-\overline G(t,y,u)\right|_{L_2(\Xi,H)}\leq
		L|x-y|.
		\end{eqnarray*}
\item[(iii)]
	$
	\overline q: [0,T]\times H\times \mathbb{R}\times \Xi\times U\rightarrow \mathbb{R}$ and $ \overline \phi:  H\rightarrow \mathbb{R}$ are continuous, {and
        $\overline{q}$ is continuous in $(t,x)\in [0,T]\times H$, uniformly in $u\in U$. Moreover,}     there is a  constant $L>0$
	such that, for all $(t, x,    y, z, x', y',z', u)
	\in [0,T]\times (H\times\mathbb{R}\times \Xi)^2\times U$,
	\begin{eqnarray*}
		&&| \overline q(t,x,y,z,u)|\leq L(1+|x|+|y|+|z|),
		\\
		&&|\overline q(t,x,y,z,u)-\overline q(t,x',y',z',u)|\leq L(|x-x'|+|y-y'|+|z-z'|),
		\\
		&&|\overline \phi(x)-\overline \phi(x')|\leq L|x-x'|.
	\end{eqnarray*}
	\end{description}
\end{assumption}
We define a metric on $[0,T]\times H$ as follows:
$$
                        \overline{d}_\infty((t,x),(s,y)):=|s-t|+\left|e^{(t\vee s-t)A}x-e^{(t\vee s-s)A}y\right|, \quad (t,x; s,y)\in ([0,T]\times H)^2.
$$
Then $([0,T]\times H,\overline{d}_\infty)$ is a complete metric space. For $t\in [0,T)$, we say $f\in C^0([t,T]\times H,K)$ if $f:[t,T]\times H\rightarrow K$ is continuous on $([t,T]\times H,\overline{d}_\infty)$, and
say $f\in C^0_p([t,T]\times H,K)$ if $f\in C^0([t,T]\times H,K)$ and grows in a polynomial way. For simplicity, we write $C^0([t,T]\times H):=C^0([t,T]\times H,\mathbb{R})$ and $C^0_p([t,T]\times H):=C^0_p([t,T]\times H,\mathbb{R})$.
\begin{definition}\label{definitionc20243}
	Let $t\in[0,T)$ and $f:[t,T]\times H\rightarrow \mathbb{R}$ be given.
	\begin{description}
 \item[(i)] We say $f\in C^{1,2}([t,T]\times H)$ if   $f_{t^+}$, $\nabla_{x}f$ and $\nabla_{x}^2f$ exist and are continuous   on the metric space $([t,T]\times H, \overline d_{\infty})$.
		\par
		\item[(ii)] We say
		$f\in C^{1,2}_p([t,T]\times H)$ if $f\in C^{1,2}([t,T]\times H)$ and $f$ together with  all its derivatives grow  in a polynomial way.
	\end{description}
\end{definition}
We denote by $X^{t,x,u}(\cdot)$ the mild solution of (\ref{state1}) and by $\overline V$  the value functional defined by (\ref{value1})  with coefficients $\overline F, \overline G,  \overline q,  \overline \phi$ and initial condition $(t,x)\in [0,T]\times H$,   and define $G^{t,x,u}_{s,t+\delta}[\zeta]$
 in spirit of (\ref{gdpp}) and (\ref{bsdegdpp}).
By Theorems \ref{theoremddp} and \ref{theorem3.9}, we obtain that
\begin{theorem}\label{theoremddp20242} 
	Let  Assumption \ref{hypstate20241} be satisfied. Then, the value functional
	$\overline V\in C^0([0,T]\times H)$ satisfies the following: for
	any $(t,x)\in [0,T]\times H$ and $0\leq t<t+\delta\leq T$,
	\begin{eqnarray}\label{ddpG2024}
	\overline V(t,x)=\mathop{\esssup}\limits_{u(\cdot)\in{\mathcal
			{U}}[t,t+\delta]}G^{t,x,u}_{t,t+\delta}\left[\overline V(t+\delta,X^{t,x,u}(t+\delta))\right].
	\end{eqnarray}
Moreover,
	there is a constant $C>0$ such that, for every  $0\leq t\leq s\leq T, x,y\in H$,
	\begin{eqnarray}\label{hold2024}
	\qquad      \left|\overline V(t,x)-\overline V\left(s,\, e^{(s-t)A}y\right)\right|\leq
	C(1+|x|+|y|)(s-t)^{\frac{1}{2}}+C|x-y|.
	\end{eqnarray}
\end{theorem}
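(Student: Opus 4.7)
My plan is to deduce this theorem as a direct Markovian corollary of the path‐dependent results already established, namely Theorem \ref{theoremddp} (DPP) and Theorem \ref{theorem3.9} (regularity of $V$). First I would observe that when $F(\gamma_t,u):=\overline F(t,\gamma_t(t),u)$, $G(\gamma_t,u):=\overline G(t,\gamma_t(t),u)$, $q(\gamma_t,y,z,u):=\overline q(t,\gamma_t(t),y,z,u)$, and $\phi(\eta_T):=\overline\phi(\eta_T(T))$, the standing Assumption \ref{hypstate20241} implies Assumption \ref{hypstate} (i'), (ii), Assumption \ref{hypcost}, and Assumption \ref{hypstate5666}. Lipschitz continuity in $\|\cdot\|_0$ is immediate because $|\gamma_t(t)-\eta_t(t)|\leq\|\gamma_t-\eta_t\|_0$; joint continuity on $(\Lambda,d_\infty)$ is obtained by noting that $d_\infty(\gamma_{t_n},\gamma_t)\to 0$ forces $t_n\to t$ and, via evaluation of the path $e^{(\cdot-t_n)^+A}\gamma_{t_n}(t_n)$ at the common terminal time $t_n\vee t$ together with the strong continuity of the contraction semigroup, also $\gamma_{t_n}(t_n)\to\gamma_t(t)$.

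Next, because the mild solution $X^{\gamma_t,u}$ of (\ref{state1}) in this setting depends on $\gamma_t$ only through $\gamma_t(t)$, we have $X^{\gamma_t,u}\equiv X^{t,\gamma_t(t),u}$, and hence the BSDE (\ref{fbsde1}) and the value functional likewise depend on $\gamma_t$ only through $(t,\gamma_t(t))$. This yields the existence of $\overline V:[0,T]\times H\to\mathbb R$ such that $V(\gamma_t)=\overline V(t,\gamma_t(t))$ for every $(t,\gamma_t)\in[0,T]\times\Lambda$, as already noted in Remark \ref{remarkv}. The DPP (\ref{ddpG2024}) then follows from (\ref{ddpG}) by taking any $\gamma_t\in\Lambda_t$ with $\gamma_t(t)=x$ (for instance the constant path $x\mathbf 1_{[0,t]}$) and rewriting $V(X^{\gamma_t,u}_{t+\delta})=\overline V(t+\delta,X^{t,x,u}(t+\delta))$.

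For the regularity (\ref{hold2024}), I would apply estimate (\ref{hold}) of Theorem \ref{theorem3.9} with the constant paths $\gamma_t:=x\mathbf 1_{[0,t]}$ and $\eta_t:=y\mathbf 1_{[0,t]}$. One checks $\|\gamma_t\|_0=|x|$, $\|\eta_t\|_0=|y|$, $\|\gamma_t-\eta_t\|_0=|x-y|$, and $\eta_{t,s}^A(s)=e^{(s-t)A}y$, so that $V(\gamma_t)=\overline V(t,x)$ and $V(\eta_{t,s}^A)=\overline V(s,e^{(s-t)A}y)$. Substituting these identities into (\ref{hold}) gives (\ref{hold2024}) verbatim.

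Finally, for $\overline V\in C^0([0,T]\times H)$ under $\overline d_\infty$, I would argue that if $\overline d_\infty((s_n,y_n),(t,x))\to 0$, then with $\gamma_t:=x\mathbf 1_{[0,t]}$, $\eta_n:=y_n\mathbf 1_{[0,s_n]}$ (assuming WLOG $t\le s_n$), one has $|s_n-t|\to 0$, $|e^{(s_n-t)A}x-y_n|\to 0$, and by strong continuity of the semigroup $\sup_{r\in[0,s_n-t]}|e^{rA}x-x|\to 0$, which together bound $\|\gamma_{t,s_n}^A-\eta_n\|_0$ and force $d_\infty(\gamma_t,\eta_n)\to 0$; the continuity $V\in C^0(\Lambda)$ from Theorem \ref{theorem3.9} then yields $\overline V(s_n,y_n)\to\overline V(t,x)$. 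The only mildly delicate step is this last continuity transfer, where one must control $|e^{rA}x-x|$ uniformly over the shrinking time interval $[0,s_n-t]$ — this is where I would be most careful, using strong continuity of $e^{rA}$ for the fixed limit point $x$ rather than any (unavailable) uniformity over a set of initial data. All remaining estimates are routine given the material developed in Chapters 4–6 of the paper.
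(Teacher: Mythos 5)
Your proposal is correct and takes essentially the same route as the paper, which presents Theorem \ref{theoremddp20242} without a separate proof, merely as a direct consequence of Theorems \ref{theoremddp} and \ref{theorem3.9} in the Markovian specialization; your reduction via constant paths $x\mathbf 1_{[0,t]}$ fills in exactly the implicit content. One harmless slip: Assumption \ref{hypstate20241} does not imply Assumption \ref{hypstate5666} (that condition has its own Markovian analogue, Assumption \ref{hypstate56662024}, stated later in the chapter), but \ref{hypstate5666} is not used in Theorems \ref{theoremddp} or \ref{theorem3.9}, so this does not affect the argument.
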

\begin{definition}\label{definitionc202402061a}
	Let $t\in[0,T)$ and $g:[0,T]\times H\rightarrow \mathbb{R}$ be given, and $A$ satisfy Assumption \ref{hypstate20241} (i).
		  We say $g\in C^{1,2}_{p,A-}([t,T]\times H)\subset C^0_p([t,T]\times H)$ if   there exist $\nabla_{x}g\in  C^0_p([t,T]\times H,H)$ and $\nabla^2_{x}g\in C^0_p([t,T]\times H,\mathcal{S}(H))$  such that, for the solution $X$ of (\ref{formular1}) with initial condition $(t,x)\in [0,T)\times H$,
\begin{eqnarray}\label{statesop020240206a}
	\quad&\begin{aligned}
	g(X(s))\leq&\, g(X(t))+\!\!\int_{t}^{s}\!\!\left[\langle \nabla_xg(X(\sigma)), \, \vartheta(\sigma)\rangle_H+\frac{1}{2}\mbox{\rm Tr}(\nabla^2_{x}g(X(\sigma))(\varpi\varpi^*)(\sigma))\right]\! d\sigma\\[3mm]
	&+\int^{s}_{t}\langle \nabla_xg(X(\sigma)), \, \varpi(\sigma)dW(\sigma)\rangle_H.
	\end{aligned}
	\end{eqnarray}
\end{definition}

Define for $t\in[0,T)$,
$$
\overline\Phi_t:=\left\{\varphi\in C_p^{1,2}([t,T]\times H): A^*\nabla_x\varphi\in C_p^0([t,T]\times H,H)\right\}
$$
and
{\begin{eqnarray*}
	\begin{aligned}
		\overline{\mathcal{G}}_t:=&\bigg{\{}g\in C^{0}_p([t,T]\times H):  \exists \  (h_i, g_i)\in C^1([0,T];\mathbb{R})\times  C^{1,2}_{p,A-}([t,T]\times H), i=1,\ldots, \mathbf{N}  \\
      & \mbox{such that for all} \ (s,x)\in [t,T]\times H, \,  g(s,x)=\sum^{\mathbf{N}}_{i=1}h_i(s)g_i(s,x)
		\bigg{\}}.
	\end{aligned}
\end{eqnarray*}}
{
   For $g\in \overline{\mathcal{G}}_t$, we write
\begin{eqnarray*}
\partial_{t}^og(s,x)&:=&\sum^{\mathbf{N}}_{i=1}h_i'(s)g_i(s,x).
\end{eqnarray*}
}
{
\begin{remark}\label{remarkv0129120240206ab}
	By Lemma \ref{theoremito2}, for   $A$ satisfying Assumption \ref{hypstate20241} (i) and every $(\hat t,\hat x)\in [0,T)\times H$, the functional $|x-e^{(s-\hat t)A}\hat x|^{2m}\in C^{1,2}_{p,A-}([\hat t,T]\times H)$ for all $m\geq 1$. Therefore, for $h\in C^1([0,T];\mathbb{R})$, $\delta, \delta_i,\delta'_i\geq0, N>0,$ and $(\hat x,x^i)\in H\times H,
		i=0,1,2,\ldots, $ such that $t_i\leq \hat t,  h\geq0, \ \mbox{and} \   \sum_{i=0}^{\infty}(\delta_i+\delta'_i)\leq N$, the functional :  $h(s)|x|^4+\delta|x-e^{(s-\hat t)A}\hat x|^{2}+\sum_{i=0}^{\infty}[\delta_i|x-e^{(s-\hat t)A}x^i|^4
		+\delta^{'}_i|s-t_i|^2
		]$,  $(s,y)\in [t,T]\times H$,  which arises in the proof of  the comparison theorem,  belongs to $\overline{\mathcal{G}}_t$.
\end{remark}}
Now,  we define our notion of  viscosity solutions to the HJB equation (\ref{hjb32024}).

\begin{definition}\label{definition4.1} A  functional
	$w\in C^0([0,T]\times H)$ is called a
	viscosity sub-solution (resp., super-solution)
	to HJB equation (\ref{hjb32024}) if the terminal condition,  $w(T,x)\leq \overline \phi(T,x)$(resp., $w(T,x)\geq \overline \phi(T,x)$),
	$x\in H$ is satisfied, and for any $(\varphi, g)\in \overline\Phi_t\times \overline{\mathcal{G}}_t$ with $t\in [0,T)$, whenever the function $w-\varphi-g$  (resp.,  $w+\varphi+g$) satisfies for $(t,x)\in [0,T]\times H$,
	$$
	0=({w}-\varphi-g)(t,x)=\sup_{(s,y)\in [t,T]\times H}
	({w}- \varphi-g)(s,y)
	$$
	$$
	\left(\mbox{resp.,}\ \
	0=({w}+\varphi+g)(t,x)=\inf_{(s,y)\in [t,T]\times H}
	({w}+\varphi+g)(s,y)\right),
	$$
	we have
	\begin{eqnarray*}
		&&\displaystyle \varphi_{t^+}(t,x)+\partial_{t}^og(t,x)+\left\langle A^*\nabla_x\varphi(t,x),\, x\right\rangle_H\\[3mm]
		&&
		+\overline{\mathbf{H}}(t,x, (\varphi+g)(t,x),\nabla_x(\varphi+g)(t,x),\nabla^2_{x}(\varphi+g)(t,x))\geq0
	\end{eqnarray*}
	\begin{eqnarray*}\begin{aligned}
			\biggl(\mbox{resp.,} &-\varphi_{t^+}(t,x)-\partial_{t}^og(t,x)-\left\langle A^*\nabla_x\varphi(t,x),\,  x\right\rangle_H\\[3mm]
			&+\overline{\mathbf{H}}(t,x,
			-(\varphi+g)(t,x),-\nabla_x(\varphi+g)(t,x),-\nabla^2_{x}(\varphi+g)(t,x))
			\leq0\biggr).
		\end{aligned}
	\end{eqnarray*}
	A  functional   $w\in C^0([0,T]\times H)$ is said to be a
	viscosity solution to HJB equation (\ref{hjb32024}) if it is
	both a viscosity sub- and
	super-solution.
\end{definition}
\begin{remark}\label{remarkv012912024}
	As in the general path-dependent case,  the term $|x-e^{(t-\hat{t}\, )A}y|^{2m}_H$ with fixed $(\hat{t},y)\in [0,T)\times H$
can enter into the test functions for our viscosity solutions, for it satisfies
It\^o inequality (\ref{jias510815jia11}). The appearance of this term, increases some slight difficulty in the existence of viscosity solutions, but as we will see, significantly helps  us to establish the comparison principle of viscosity solutions.
Indeed,  in contrast to the $B$-continuity in~\cite[Definition 3.35, page 198]{fab1}, the viscosity solution $w$ is assumed here to be merely continuous,  and therefore the strong continuity of the value function in the state space $H$ is sufficient to ensure that the value function is a viscosity solution,  and the comparison theorem is also established without assuming the $B$-continuity  on the coefficients. In this way, 
 in our viscosity solution theory in infinite dimension, the  $B$-continuity assumption on the coefficients in~\cite[(3.21) and (3.22), page 184]{fab1} is not required at all.
\end{remark}
{
\begin{remark}\label{remarkv0129120240129e}
	Assuming the $B$-upper   and $B$-lower semi-continuity of $\overline{W}_1$ and $\overline{W}_2$, respectively, the Ekeland-Lebourg Theorem (see ~\cite[Theorem 3.25, page 188]{fab1})  in $H_{-2}$ is used to ensure the existence of  a maximum point of the auxiliary function in the proof of \cite[Theorem 3.50, page 206]{fab1}. Our functionals  $\overline{W}_1$ and $\overline{W}_2$ are merely strong-continuous in the Hilbert space $H$, and the Ekeland-Lebourg Theorem does not apply here. In fact, application in $H$ of Ekeland-Lebourg Theorem  (to yield an extremal point of the auxiliary function in the proof of comparison theorem) would incur a linear functional  $\langle p,\cdot\rangle_H$ (with some fixed $p\in H$),   which does not necessarily belong to $\overline{\Phi}_{\hat t}$ or $\overline{\mathcal{G}}_{\hat t}$,  to be added to the test function. However,  as in the general path-dependent case, the Borwein-Preiss variational principle applies,  since  its application (to get a maximum point of the auxiliary function) incurs the functional  $\sum^{\infty}_{i=0}\frac{1}{2^i}|x-e^{(t-t_i)A}x^i|^{4}$ with fixed  $\{(t_i,x^i)\}_{i\geq0}\in [0,\hat{t}]\times H$,  which belongs to $\overline{\mathcal{G}}_{\hat t}$,  to be added to the test functions.
\end{remark}
\begin{remark}\label{remarkv0129120240129f}
{As we stated in Remark \ref{remarkv0129120240206ab}}, the test functions in our definition of viscosity solutions {include}  the following three type functionals:  $|x|^{4}$, $\sum^{\infty}_{i=0}\frac{1}{2^i}[|x-e^{(t-t_i)A}x^i|^{4}+|t-t_i|^2]$  with fixed  $\{(t_i,x^i)\}_{i\geq0}\in [0,\hat{t}]\times H$ and $|x-e^{(t-\hat{t}\, )A}\hat{x}|^{2}_H$ with fixed $(\hat{t},\hat{x})\in [0,T)\times H$, while the test functions in ~\cite[Definition 3.35, page 198]{fab1} consist of  only the norm functional: $|x|, x\in H$. The first functional is introduced to control the  quadratic growth condition (\ref{w2024}); as we see in Remark \ref{remarkv0129120240129e}, the second functional is introduced in our test function as a  consequence of  applying  the Borwein-Preiss variational principle; the third term is introduced in terms of the form $2|x-e^{(t-\hat{t}\, )A}\hat{x}|^{2}_H+2|y-e^{(t-\hat{t}\, )A}\hat{x}|^{2}_H$ to control $|x-y|^2$ in the proof of the comparison theorem.
\end{remark}}
Applying It\^o formula (\ref{statesop0}),  {inequality (\ref{statesop020240206a})} and DPP (\ref{ddpG2024}), by the similar (even simpler) proof procedure of Theorem \ref{theoremvexist}, we obtain
\begin{theorem}\label{theoremvexist2024}
	Let Assumption \ref{hypstate20241}  be satisfied. Then,  the value
	functional $\overline V$  is a
	viscosity solution to equation (\ref{hjb32024}).
\end{theorem}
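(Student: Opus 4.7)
The plan is to specialize the proof of Theorem \ref{theoremvexist} to the Markovian setting, replacing path functionals by state-dependent functions and horizontal/vertical derivatives by time/spatial derivatives. The three pillars remain the same: the It\^o formula (\ref{statesop0}) applied to $\varphi\in\overline{\Phi}_{\hat t}$, the one-sided It\^o inequality (\ref{statesop020240206a}) applied to $g\in\overline{\mathcal G}_{\hat t}$, and the DPP (\ref{ddpG2024}).

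For the sub-solution property, I would fix $(\hat t,\hat x)\in[0,T)\times H$ and $(\varphi,g)\in\overline{\Phi}_{\hat t}\times\overline{\mathcal G}_{\hat t}$ with $(\overline V-\varphi-g)(\hat t,\hat x)=0$ realizing the supremum over $[\hat t,T]\times H$. By the DPP, for each $\varepsilon>0$ and small $\delta>0$ there exists $u^{\varepsilon,\delta}\in\mathcal U[\hat t,\hat t+\delta]$ with
\[
-\varepsilon\delta\le G^{\hat t,\hat x,u^{\varepsilon,\delta}}_{\hat t,\hat t+\delta}\bigl[\overline V(\hat t+\delta,X^{\hat t,\hat x,u^{\varepsilon,\delta}}(\hat t+\delta))\bigr]-(\varphi+g)(\hat t,\hat x),
\]
where the left quantity is the value at $\hat t$ of a BSDE driven by $X=X^{\hat t,\hat x,u^{\varepsilon,\delta}}$. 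Applying the It\^o equality (\ref{statesop0}) to $\varphi(s,X(s))$ and the It\^o inequality (\ref{statesop020240206a}) to $g(s,X(s))$ produces a process $Y^{1,\delta}(s)$ satisfying $(\varphi+g)(s,X(s))\le Y^{1,\delta}(s)$ and $Y^{1,\delta}(\hat t)=(\varphi+g)(\hat t,\hat x)$, whose dynamics involve the generator $(\mathcal L(\varphi+g))(s,X(s),u^{\varepsilon,\delta}(s))$ built from $\varphi_{t^+}+\partial_{t}^o g$, $\langle A^*\nabla_x\varphi,x\rangle_H$, the first-order, second-order and driver contributions.

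Next I would compare $Y^{1,\delta}$ with the BSDE value $Y^{\hat t,\hat x,u^{\varepsilon,\delta}}$ by linearising the Lipschitz generator $\overline q$: the difference $Y^{2,\delta}=Y^{1,\delta}-Y^{\hat t,\hat x,u^{\varepsilon,\delta}}$ solves a linear BSDE with bounded coefficients $A_{1,\delta}, A_{2,\delta}$, so It\^o's formula applied to $Y^{2,\delta}\Gamma^{\hat t,\delta}$ (with $\Gamma^{\hat t,\delta}$ the exponential martingale associated with $A_{1,\delta}, A_{2,\delta}$) yields an explicit expectation representation. Dividing by $\delta$ and decomposing as in (\ref{4.15}) into the five terms $I_1,\ldots,I_5$, the leading term converges to $\varphi_{t^+}(\hat t,\hat x)+\partial_t^o g(\hat t,\hat x)+\langle A^*\nabla_x\varphi(\hat t,\hat x),\hat x\rangle_H+\overline{\mathbf H}(\hat t,\hat x,(\varphi+g)(\hat t,\hat x),\nabla_x(\varphi+g)(\hat t,\hat x),\nabla^2_x(\varphi+g)(\hat t,\hat x))$, while the remaining four are $o(1)$ thanks to estimate (\ref{2.6}), polynomial growth of $\mathcal L(\varphi+g)$, the continuity of the coefficients in $(t,x)$ uniformly in $u$, and the $L^p$-estimates on $\Gamma^{\hat t,\delta}-1$. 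Sending first $\delta\downarrow 0$ and then $\varepsilon\downarrow 0$ gives the desired sub-solution inequality. The super-solution property is obtained symmetrically, in fact more simply, because we can use an arbitrary admissible control (no $\varepsilon$-optimality is needed) together with the reverse DPP inequality.

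The principal difficulty lies in handling $g\in\overline{\mathcal G}_{\hat t}$: the definition (\ref{statesop020240206a}) supplies only a one-sided It\^o inequality (not an equality), because the functionals of type $|x-e^{(s-\hat t)A}\hat x|^{2m}$ that we must admit are not in $C^{1,2}_p([\hat t,T]\times H)$ in the usual Fr\'echet sense---the unbounded operator $A$ prevents this. The key observation that makes the proof work is that the inequality sign is compatible with the direction of the sub-solution property: the bound $(\varphi+g)(s,X(s))\le Y^{1,\delta}(s)$ combines correctly with $\overline V\le\varphi+g$ near $(\hat t,\hat x)$ so that the crucial term $I_1$ in (\ref{4.15}) has the right sign. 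For the super-solution case one similarly exploits the reverse one-sided inequality $-g(X(s))\ge\cdots$ along $X$. This mechanism is precisely why the unbounded drift $\langle A^*\nabla_x\varphi,x\rangle_H$ does not obstruct the argument and why we avoid any appeal to $B$-continuity, in contrast with \cite[Chapter 3]{fab1}.
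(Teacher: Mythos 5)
Your proposal is correct and follows exactly what the paper has in mind: the text's proof of this theorem is the one-line remark that the argument of Theorem \ref{theoremvexist} carries over, combining the It\^o formula (\ref{statesop0}), the one-sided It\^o inequality (\ref{statesop020240206a}), and the DPP (\ref{ddpG2024}), which is precisely what you spell out in the Markovian setting. One small imprecision worth flagging: the term $I_1$ is not $o(1)$ but only nonpositive, coming from the maximum condition $\overline V\le\varphi+g$ combined with $(\varphi+g)(s,X(s))\le Y^{1,\delta}(s)$; you in fact identify this correctly in your last paragraph, where you single out the sign compatibility of the one-sided It\^o inequality with the sub-solution direction as the key mechanism, so your earlier phrase ``the remaining four are $o(1)$'' should read ``$I_1\le 0$ and $I_3,I_4,I_5\to 0$.''
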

\begin{definition}\label{definition06072024}
	Let $\hat{t}\in [0,T)$ be fixed and  $w:[0,T]\times H\rightarrow \mathbb{R}$ be an upper semicontinuous function bounded from above.
	For every $N\geq 1$, define,  for $(t,x_N)\in [0,T]\times H_N$,
	\begin{eqnarray*}
		&&\widetilde{w}^{\hat{t},N}(t,x_N):=\sup_{y_N=x_N}
		w(t,y), \ \   t\in [\hat{t},T];\\
		&&\widetilde{w}^{\hat{t},N}(t,x_N):=\widetilde{w}^{\hat{t},N}(\hat{t},x_N)-(\hat{t}-t)^{\frac{1}{2}}, \ \   t\in[0,\hat{t}\,).
	\end{eqnarray*}
\end{definition}
We have the following Crandall-Ishii lemma in Hilbert space $H$.
\begin{theorem}\label{theorem05132024} \ \  Let $N\geq1$. Let $w_1,w_2:[0,T]\times H\rightarrow \mathbb{R}$ be upper semicontinuous functionals bounded from above and such that
	\begin{eqnarray}\label{051312024}
	\limsup_{|x|\rightarrow\infty}\frac{w_1(t,x)}{|x|}<0;\ \ \  \limsup_{|x|\rightarrow\infty}\frac{w_2(t,x)}{|x|}<0.
	\end{eqnarray}
	Let $\varphi\in C^2( H_N\times H_N)$ be such that
	$$
	w_1(t,x)+w_2(t,y)-\varphi(x_N,y_N)
	$$
	has a strict
	maximum over $[\hat{t},T]\times H\times H$ at a point $(t,\hat x, \hat y)$ with $\hat{t}\in (0,T)$ and there exists a  strictly monotone increasing function ${\tilde{\rho}}:[0,\infty)\rightarrow[0,\infty)$ with $\tilde{\rho}(0)=0$ such that, for all $(t,x,y)\in [\hat{t},T]\times H\times H$,
	\begin{eqnarray}\label{101012024}
	\begin{aligned}
	&\quad w_1(\hat{t}, \hat x)+w_2(\hat{t}, \hat y)-\varphi(\hat{x}_N,\hat{y}_N)\\
	&\geq w_1(t,x)+w_2(t,y)-\varphi(x_N,y_N)\\
	&\quad+\tilde{\rho}\left(|t-\hat{t}|^2+\left|x-e^{(t-\hat t)A}\hat{x}\right|^4+\left|y-e^{(t-\hat t)A}\hat{y}\right|^4\right).
	\end{aligned}
	\end{eqnarray} 
	Assume, moreover, $\widetilde{w}_{1}^{\hat{t},N,*}\in \Phi_N(\hat{t},\hat{x}_N)$ and $\widetilde{w}_{2}^{\hat{t},N,*}\in \Phi_N(\hat{t},\hat{y}_N)$, and there exists a local modulus of continuity  $\rho_1$  such that, for all  $\hat{t}\leq t\leq s\leq T, \ x\in H$ and $x_{t,s}^{A,N}:=(e^{(s-t)A}x)_N^{-}+x_N$,
	\begin{eqnarray}\label{0608a2024}
	\begin{aligned}
	&
	w_1(t,x)-w_1(s,x_{t,s}^{A,N})\leq \rho_1(|s-t|,|x|),\\
	&  w_2(t,x)-w_2(s,x_{t,s}^{A,N})\leq \rho_1(|s-t|,|x|).
	\end{aligned}
	\end{eqnarray}
	Then for every $\kappa>0$, there exist
	sequences  $(t_{k},x^{k}; s_{k},y^{k})\in \left([\hat{t},T]\times H\right)^2$ and
	sequences of functions $(\varphi_{1,k}, \psi_{1,k}, \varphi_{2,k}, \psi_{2,k})\in \overline\Phi_{\hat{t}}\times \overline\Phi_{\hat{t}}\times  \overline{\mathcal{G}}_{t_k}\times \overline{\mathcal{G}}_{s_k}$ bounded from below   
	such that 
	$$
	(w_{1}-\varphi_{1,k}-\varphi_{2,k})(t,x)
	$$
	has a strict  maximum $0$ at  $(t_k,x^{k})$ over $[{t_k},T]\times H$,
	$$
	(w_{2}-\psi_{1,k}-\psi_{2,k})(t,y)
	$$
	has a strict  maximum $0$ at $({s_k},y^k)$ over $[{s_k},T]\times H$, and
	\begin{eqnarray}\label{0608v2024}
	&&\left(t_{k}, x^{k};\,  (w_1, (\varphi_{1,k})_{t^+},\nabla_x\varphi_{1,k},\nabla^2_{x}\varphi_{1,k}, \partial_{t}^o\varphi_{2,k},\nabla_x\varphi_{2,k},\nabla^2_{x}\varphi_{2,k})(t_{k},{x}^{k})\right)\nonumber\\
	&&\underrightarrow{k\rightarrow\infty}\, \left({\hat{t}},\hat{x};\,  w_1(\hat{t},\hat{x}), b_1, \nabla_{x_1}\varphi(\hat x_N,\hat y_N), X_N, 0,\mathbf{0},\mathbf{0}\right),
	\end{eqnarray}
	\begin{eqnarray}\label{0608vw2024}
	&&\left(s_{k}, y^{k}; \, (w_2,(\psi_{1,k})_{t^+},\nabla_x\psi_{1,k},\nabla^2_{x}\psi_{1,k}, \partial_{t}^o\psi_{2,k},\nabla_x\psi_{2,k},\nabla^2_{x}\psi_{2,k})(s_{k},y^{k})\right)\nonumber\\
	&&\underrightarrow{k\rightarrow\infty}\, \left({\hat{t}},\hat{y};\,  w_2(\hat{t}, \hat{y}),  b_2, \nabla_{x_2}\varphi(\hat{x}_N,\hat{y}_N), Y_N, 0,\mathbf{0},\mathbf{0}\right),
	\end{eqnarray}
	where $b_{1}+b_{2}=0$ and $X_N,Y_N\in \mathcal{S}(H_N)$ satisfy  inequality (\ref{II0615}).
\end{theorem}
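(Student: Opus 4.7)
The plan is to mirror the proof of Theorem \ref{theorem0513} but in the simpler state-dependent setting, exploiting the fact that the Markovian framework removes the path variable and collapses the path-space gauge $\overline{\Upsilon}$ to the Hilbert-space quantity $|x-e^{(t-\hat t)A}\hat x|^{2m}$. First I would formulate and prove a Markovian analogue of Lemma \ref{lemma4.30615}: using the hypothesis \eqref{0608a2024} together with the growth \eqref{051312024} and the strict-maximum condition \eqref{101012024}, show that
\[
\widetilde{w}_1^{\hat t,N,*}(t,x_N)+\widetilde{w}_2^{\hat t,N,*}(t,y_N)-\varphi(x_N,y_N),\quad (t,x_N,y_N)\in[0,T]\times H_N\times H_N,
\]
attains its maximum at $(\hat t,\hat x_N,\hat y_N)$, with $\widetilde{w}_i^{\hat t,N,*}(\hat t,\cdot)=w_i(\hat t,\cdot)$ at the respective points. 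The tail-control argument proceeds exactly as in the path-dependent case: the auxiliary term $-(\hat t-t)^{1/2}$ on $[0,\hat t)$ rules out maxima with $t<\hat t$, while \eqref{0608a2024} and \eqref{051312024} bound the relevant $|x|$ so one can use time-shifted paths to compare.

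Next I would apply the classical finite-dimensional Crandall--Ishii lemma (Theorem 8.3 of Crandall--Ishii--Lions, in the form used in Remark \ref{remarkv0528}) to $\widetilde{w}_1^{\hat t,N,*},\widetilde{w}_2^{\hat t,N,*}$ on $[0,T]\times H_N$ with the test function $\varphi\in C^2(H_N\times H_N)$. This yields sequences $(t_k,x^k),(s_k,y^k)\in(0,T)\times H_N$ together with $\tilde\varphi_k,\tilde\psi_k\in C^{1,2}([0,T]\times H_N)$, bounded below (after the standard modification via \cite[Lemma 5.4, Ch.\ 4]{yong}), such that the finite-dimensional strict-maximum property holds and the derivative triples converge as in \eqref{0607a}--\eqref{0607b}, with the matrix inequality \eqref{II0615} giving rise to $X_N,Y_N\in\mathcal S(H_N)$ and $b_1+b_2=0$. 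The hypotheses $\widetilde{w}_i^{\hat t,N,*}\in\Phi_N(\hat t,\cdot)$ are used exactly as in the proof of Theorem \ref{theorem0513} (via the inequality $\varphi_t\ge -\tilde C_0$ in Definition \ref{definition0513}) to rule out $t_k<\hat t$ or $s_k<\hat t$; namely, if $t_k<\hat t$ then $(\tilde\varphi_k)_t(t_k,x^k)=\tfrac12(\hat t-t_k)^{-1/2}\to\infty$, contradicting $(\tilde\varphi_k)_t(t_k,x^k)\to b_1\in\mathbb R$.

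The core lifting step is then a Markovian version of the Borwein--Preiss procedure. For each $k$ and each $j\in\mathbb N$, apply Lemma \ref{theoremleft} (with the state space $[0,T]\times H$, with the Markovian gauge-type function
\[
\overline\rho((t,x),(s,y)):=|e^{(t\vee s-t)A}x-e^{(t\vee s-s)A}y|^4+|t-s|^2
\]
taking the place of $\overline\Upsilon$, and with $\delta_i=2^{-i}$) to the upper semicontinuous functionals
\[
\Gamma^1_k(t,x):=w_1(t,x)-\tilde\varphi_k(t,x_N),\qquad \Gamma^2_k(t,y):=w_2(t,y)-\tilde\psi_k(t,y_N),
\]
starting from a near-maximizer of error $1/j$. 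This produces points $(t_{k,j},x^{k,j}),(s_{k,j},y^{k,j})\in[\hat t,T]\times H$ at which $\Gamma^i_k$ plus a perturbation series $\sum_{i\ge 0}2^{-i}\overline\rho(\cdot,(\check t_i,\check x^i))$ attains its strict maximum. Using the arguments of Lemma \ref{lemma4.40615} and the finite-dimensional convergence \eqref{0607a}--\eqref{0607b}, extract a diagonal subsequence $j=j_k$ so that $(t_{k,j_k},x^{k,j_k}(\cdot)_N)\to(\hat t,\hat x_N)$ and similarly for the second variable; the hypothesis \eqref{101012024} together with \eqref{0608a2024} upgrades convergence in the $N$-dimensional marginal to convergence of the full $x^{k,j_k}\to\hat x$ and $y^{k,j_k}\to\hat y$ in $H$, exactly as in the argument surrounding \eqref{1004a} in Theorem \ref{theorem0513}.

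The remaining step is bookkeeping: define
\begin{align*}
\varphi_{1,k}(t,x)&:=\tilde\varphi_k(t,x_N)-\tilde\varphi_k(t_k,x^k_N)+w_1(t_k,x^k),\\
\varphi_{2,k}(t,x)&:=\sum_{i\ge 0}2^{-i}\overline\rho((t,x),(\check t_i,\check x^i))-(\text{value at }(t_k,x^k)),
\end{align*}
and analogously $\psi_{1,k},\psi_{2,k}$. By Lemma \ref{lemma03302} (its trivial Markovian analogue, proving $|x_N-(e^{(t-\hat t)A}a)_N|^2\in\overline\Phi_{\hat t}$) the first component sits in $\overline\Phi_{\hat t}$, while by Remark \ref{remarkv0129120240206ab} the quartic perturbation series belongs to $\overline{\mathcal G}_{t_k}$; the finite-dimensional convergence \eqref{0607a}--\eqref{0607b} yields exactly \eqref{0608v2024}--\eqref{0608vw2024} with $(b_1,b_2)$ and $(X_N,Y_N)$ inherited from \eqref{II0615}. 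The main obstacle I anticipate is the diagonal extraction, namely the simultaneous control of four passages to the limit: (i) the convergence $t_k,s_k\to\hat t$ from the finite-dimensional Crandall--Ishii step, (ii) the $j\to\infty$ convergence from Borwein--Preiss, (iii) the weak-to-strong enhancement of convergence of $x^{k,j_k},y^{k,j_k}$ in $H$ from the sole finite-dimensional marginal, and (iv) verifying that the perturbation series derivatives $\partial_x$ and $\partial_{xx}$, of order $O(j_k^{-5/6})$ and $O(j_k^{-2/3})$ respectively by the counterpart of the estimates after \eqref{0608vw}, vanish in the limit so that only the desired $(b_i,\nabla\varphi,X_N,Y_N)$ limits survive; this reproduces the pattern of the proof of Theorem \ref{theorem0513}.
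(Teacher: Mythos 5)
The paper does not spell out a proof of Theorem \ref{theorem05132024}; it states the result and expects the reader to transpose the proof of Theorem \ref{theorem0513} into the state-dependent setting. Your proposal does exactly that, and the four-step structure (Markovian analogue of Lemma \ref{lemma4.30615}, finite-dimensional Crandall--Ishii, Borwein--Preiss lift with the gauge $\overline\rho((t,x),(s,y))=|e^{(t\vee s-t)A}x-e^{(t\vee s-s)A}y|^4+|t-s|^2$, diagonal bookkeeping) is the correct adaptation and coincides with the argument the paper uses implicitly (and also explicitly re-uses, with this same gauge ${\Upsilon}^0$, inside the proof of Theorem \ref{theoremhjbm2024}). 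One small bookkeeping slip: because the Markovian gauge is quartic rather than sextic, the decay rates for $\nabla_x\sum_i 2^{-i}\hat\Upsilon^0(\cdot)$ and $\nabla^2_x\sum_i 2^{-i}\hat\Upsilon^0(\cdot)$ are $O(j_k^{-3/4})$ and $O(j_k^{-1/2})$, not the path-dependent $O(j_k^{-5/6})$ and $O(j_k^{-2/3})$ you quoted (compare the estimates \eqref{1002d12024}--\eqref{1002d3} in the Markovian uniqueness proof); this does not affect the argument since both still vanish as $j_k\to\infty$.
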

Now we study the uniqueness of  viscosity
solutions to equation (\ref{hjb32024}). 
We also require  the following assumption on $\overline G$.
\begin{assumption}\label{hypstate56662024}
	For every $(t,x)\in [0,T)\times H$,
	\begin{eqnarray}\label{g52024}
	\lim_{N\to \infty} \sup_{u\in U}\left|Q_N\overline {G}(t,x,u)\right|_{L_2(\Xi, H)}^2=0.
	\end{eqnarray}
\end{assumption}
\par
Without loss of generality, we also  assume that there exists a constant $L>0$ such that,
for all $(t,x, p,l)\in [0,T]\times H\times H\times {\mathcal{S}}(H)$ and $r_1,r_2\in \mathbb{R}$ with $r_1<r_2$,
\begin{eqnarray}\label{5.12024}
\overline{\mathbf{H}}(t,x,r_1,p,l)-\overline{\mathbf{H}}(t,x,r_2,p,l)\geq L(r_2-r_1).
\end{eqnarray}
\begin{theorem}\label{theoremhjbm2024}
	Let Assumptions \ref{hypstate20241} and \ref{hypstate56662024}  be satisfied.
	Let $\overline W_1\in C^0([0,T]\times H)$ $(\mbox{resp}., \overline  W_2\in C^0([0,T]\times H))$ be  a viscosity sub-solution (resp., super-solution) to equation (\ref{hjb32024}) and  let  there exist a  constant $L>0$  and a local modulus of continuity  $\rho_2$
	such that, for any  $0\leq t\leq  s\leq T$ and
	$x,y\in{H}$,
	\begin{eqnarray}\label{w2024}
	|\overline  W_1(t,x)|\vee |\overline  W_2(t,x)|\leq L (1+|x|^2),
	\end{eqnarray}
	\begin{eqnarray}\label{w12024}
	\begin{aligned}
	&\left|\overline  W_1(s,e^{(s-t)A}x)-\overline  W_1(t,y)\right|\vee\left|\overline  W_2(s,e^{(s-t)A}x)-\overline  W_2(t,y)\right|\\
	\leq&\, \rho_2(|s-t|,\, |x|\vee|y|)+L(1+|x|+|y|)|x-y|.
	\end{aligned}
	\end{eqnarray}
	Then  $\overline  W_1\leq \overline  W_2$.
\end{theorem}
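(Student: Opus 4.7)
The plan is to imitate the proof of Theorem \ref{theoremhjbm} in the Markovian setting, where the path-wise metric structure collapses to $H$ and the six-homogeneous functional $\Upsilon$ is replaced by the simpler polynomial $|x|^4$ (matching the degree-four penalty used in the Crandall-Ishii lemma). First I would reduce to proving $\overline{W}_1 - \frac{\varrho}{t+1} \leq \overline{W}_2$ for every $\varrho > 0$; the subtracted term turns $\overline{W}_1$ into a strict viscosity sub-solution with positive source $c := \varrho/(T+1)^2$. Next I restrict to the slice $[T-\bar a, T]$ with $\bar a := (2(342L+36)L)^{-1}\wedge T$, since a backward iteration over slices of this length yields the inequality on $[0,T]$. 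Assuming for contradiction that $\tilde m := (\overline{W}_1 - \overline{W}_2)(\tilde t, \tilde x) > 0$ for some $(\tilde t, \tilde x) \in (T - \bar a, T) \times H$, I would introduce
$$
\Psi(t,x,y) := \overline{W}_1(t,x) - \overline{W}_2(t,y) - \beta\, |x-y|^4 - \beta^{1/3}|x-y|^2 - \varepsilon\, \tfrac{\nu T - t}{\nu T}\, (|x|^4 + |y|^4),
$$
with $\nu := 1 + (2T(342L+36)L)^{-1}$ and $\varepsilon$ small enough that $\sup \Psi > \tilde m/2$ and $\varepsilon/(\nu T) \leq c/4$.

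Second, I would apply the Markovian analogue of Lemma \ref{theoremleft1}, taking as gauge-type function on $[T-\bar a, T] \times H \times H$ the map $(t,x,y;s,x',y') \mapsto |t-s|^2 + |x-x'|^4 + |y-y'|^4$, and obtaining a strict maximum point $(\hat t, \hat x, \hat y)$ of a perturbed functional $\Psi_1$, together with perturbation data $\{(t_i, x^i, y^i)\}_{i \geq 0}$ controlled in the gauge. The quadratic growth (\ref{w2024}) supplies a $\beta$-independent bound $M_0$ on $|\hat x| \vee |\hat y|$, and the Lipschitz estimate (\ref{w12024}) yields the key convergence $\beta |\hat x - \hat y|^4 + \beta^{1/2} |\hat x - \hat y|^2 \to 0$ as $\beta \to \infty$, exactly parallel to step 2 of the proof of Theorem \ref{theoremhjbm}. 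The terminal conditions for $\overline{W}_1, \overline{W}_2$ combined with this convergence force $\hat t < T$ once $\beta$ is sufficiently large.

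Third, mirroring the construction of $w_1, w_2$ in the proof of Theorem \ref{theoremhjbm}, I would split the penalty symmetrically around $\hat\xi := (\hat x + \hat y)/2$ and further split $\beta^{1/3}|x-y|^2$ using the orthogonal projection $P_N$ onto $H_N$ to isolate a test function $\varphi(x,y) := \beta^{1/3}|x_N - y_N|^2$ on which the finite-dimensional Crandall-Ishii lemma (Theorem \ref{theorem05132024}) can act. The remaining terms (including $|x - e^{(t-\hat t)A}\hat\xi|^4$, the $|x^-_N - (e^{(t-\hat t)A}\hat\xi)^-_N|^2$ piece, the $\varepsilon$-penalty, and the Borwein-Preiss series $\sum_{i \geq 0} 2^{-i} |x - e^{(t-t_i)A} x^i|^4$) all lie in $\overline{\Phi}_{\hat t} \cup \overline{\mathcal{G}}_{\hat t}$ by Remark \ref{remarkv0129120240206ab} together with the Markovian analogue of Lemma \ref{lemma03302}. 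The lemma furnishes $(t_k, x^k) \to (\hat t, \hat x)$, $(s_k, y^k) \to (\hat t, \hat y)$, test functions $(\varphi_{1,k}, \psi_{1,k}) \in \overline{\Phi}_{\hat t}^2$ and $(\varphi_{2,k}, \psi_{2,k}) \in \overline{\mathcal{G}}_{t_k} \times \overline{\mathcal{G}}_{s_k}$, scalars with $b_1 + b_2 = 0$, and operators $X_N, Y_N \in \mathcal{S}(H_N)$ obeying the matrix inequality obtained by choosing $\kappa = \frac{1}{2}\beta^{-1/3}$ in (\ref{II0615}). Subtracting the resulting sub- and super-solution inequalities, invoking the monotonicity (\ref{5.12024}) and the Lipschitz estimates on $(\overline{F}, \overline{G}, \overline{q})$, and passing $k \to \infty$ would yield
$$
c \leq -\tfrac{\varepsilon}{\nu T}(|\hat x|^4 + |\hat y|^4) + \varepsilon\, \tfrac{\nu T - \hat t}{\nu T}\, (342L+36)L\,(1 + |\hat x|^4 + |\hat y|^4) + o_{\beta,N}(1),
$$
where $o_{\beta,N}(1)$ absorbs the Borwein-Preiss remainder, the $\beta|\hat x - \hat y|^4$-weighted Hessian contributions from the matrix inequality, and the term $\beta^{1/3}|Q_N \overline{G}(\hat t, \cdot, u)|^2_{L_2(\Xi, H)}$ that vanishes as $N \to \infty$ by Assumption \ref{hypstate56662024}.

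The main obstacle, as in the general path-dependent case, is taming the unbounded operator $A$: after subtracting, the terms $4\beta^{1/3}\langle A^*((\hat x)_N - (\hat\xi)_N), \hat\xi\rangle_H$ and $2\beta^{1/3}\langle A^*((\hat x)_N - (\hat y)_N), \hat x - \hat y\rangle_H$ together with their $\hat y$-counterparts are not a priori controlled. Thanks to the centering at $\hat\xi = (\hat x + \hat y)/2$ they cancel identically when the super-solution inequality at $(\hat t, \hat y)$ is subtracted from the sub-solution inequality at $(\hat t, \hat x)$, exactly as in step 5 of the proof of Theorem \ref{theoremhjbm}. Once this cancellation is verified, sending first $N \to \infty$, then $\beta \to \infty$, and finally using the choices of $\bar a$, $\nu$, and the smallness $\varepsilon/(\nu T) \leq c/4$ delivers $c \leq c/2$, the desired contradiction.
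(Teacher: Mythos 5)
Your proposal transposes the path-dependent auxiliary functional
$$\Psi = W_1 - W_2 - \beta\Upsilon(\gamma_t,\eta_t) - \beta^{1/3}|\gamma_t(t)-\eta_t(t)|^2 - \varepsilon\cdots$$
too literally, by replacing the degree-six path functional $\Upsilon$ with $|x-y|^4$. The paper's own proof of Theorem~\ref{theoremhjbm2024} drops the higher-degree piece entirely and works with
$$\overline\Psi(t,x,y) = \overline W_1(t,x) - \overline W_2(t,y) - \beta|x-y|^2 - \varepsilon\tfrac{\nu T-t}{\nu T}(|x|^4+|y|^4),$$
with the Crandall--Ishii test function $\varphi(x,y)=\beta|x-y|^2$, the concentration estimate $\beta|\hat x-\hat y|^2\to 0$, and the constants $\bar a=(2(20L+8)L)^{-1}\wedge T$, $\nu=1+(2T(20L+8)L)^{-1}$. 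The $|x|^4$ functional appears only in the Borwein--Preiss gauge and the long-range $\varepsilon$-penalty, where second-order smoothness is needed.

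The structural gap in your version is an exponent mismatch in the Hessian estimate. When you split $\beta|x-y|^4$ around $\hat\xi=(\hat x+\hat y)/2$, the resulting term $2^3\beta\,|x-e^{(t-\hat t)A}\hat\xi|^4$ contributes a Hessian of size $O(\beta\,|\hat x-\hat\xi|^2)\sim O(\beta\,|\hat x-\hat y|^2)$; this must vanish as $\beta\to\infty$. In the path-dependent case the analogous contribution is $O(\beta\,|\hat\gamma(\hat t)-\hat\eta(\hat t)|^4)$ (degree $6-2=4$) and the two-scale estimate (\ref{5.10}) — derived by multiplying $\beta^{1/3}|\hat\gamma(\hat t)-\hat\eta(\hat t)|^2\to 0$ by $\beta^{1/6}$ and squaring — produces exactly $\beta|\hat\gamma(\hat t)-\hat\eta(\hat t)|^4\to 0$. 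In your transposition the degree drops by two to $\beta|\hat x-\hat y|^2$, but $\beta|\hat x-\hat y|^4+\beta^{1/3}|\hat x-\hat y|^2\to 0$ only yields $\beta^{1/2}|\hat x-\hat y|^2\to 0$ (since $\beta^{1/2}|\hat x-\hat y|^2=\sqrt{\beta|\hat x-\hat y|^4}$); to reach $\beta|\hat x-\hat y|^2$ one would multiply $\beta^{1/3}|\hat x-\hat y|^2$ by $\beta^{2/3}$, and $\beta^{2/3}\cdot o(\beta^{-1/4})$ diverges. So the $\beta$-weighted Hessian of your centered quartic penalty is \emph{not} absorbed into $o_{\beta,N}(1)$, contrary to what you claim. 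The paper's purely quadratic centered penalty $2\beta|x-(e^{(t-\hat t)A}\hat z)|^2$ has constant Hessian $4\beta Q_N$ (no dependence on $|\hat x-\hat y|$), which is controlled by Assumption~\ref{hypstate56662024} as $N\to\infty$ — there is simply no degree-loss to fight. Relatedly, the constants $\bar a=(2(342L+36)L)^{-1}\wedge T$, $\nu=1+(2T(342L+36)L)^{-1}$ and the coefficient $(342L+36)L$ that you carry over are the ones produced by the path-dependent estimates (\ref{03108})--(\ref{03109}) for $\Upsilon$ ($m=3$, derivative coefficients $18$ and $306$); the Markovian $|x|^4$-gauge and $\beta|x-y|^2$-penalty produce derivative coefficients $4$ and $12$ and hence $(20L+8)L$, which is what the paper uses.
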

Theorems    \ref{theoremvexist2024} and \ref{theoremhjbm2024} yield that the viscosity solution to   HJB equation  (\ref{hjb32024})
is  the value functional  $\overline V$.

\begin{theorem}\label{theorem522024}
	Let  Assumptions \ref{hypstate20241} and \ref{hypstate56662024}   be satisfied.  Then the value
	functional $\overline V$ 
is the unique viscosity
	solution to HJB equation~(\ref{hjb32024}) which satisfies (\ref{w2024}) and (\ref{w12024}).
\end{theorem}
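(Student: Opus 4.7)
The plan is to reduce Theorem \ref{theorem522024} to three already established ingredients: the existence result (Theorem \ref{theoremvexist2024}), the regularity of $\overline V$ (Theorem \ref{theoremddp20242}), and the comparison principle (Theorem \ref{theoremhjbm2024}). No new analysis should be required, since the Markovian setting is strictly a specialization of the path-dependent framework developed in Chapters 6--8, and the genuinely hard work---the path-space Crandall--Ishii lemma (Theorem \ref{theorem05132024}) and the gauge-type perturbation argument underlying the comparison theorem---has already been carried out.

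First I would invoke Theorem \ref{theoremvexist2024} to obtain that $\overline V$ is a viscosity solution of the HJB equation (\ref{hjb32024}). Next, I would verify that $\overline V$ satisfies the quantitative hypotheses (\ref{w2024}) and (\ref{w12024}) required to apply Theorem \ref{theoremhjbm2024}. For (\ref{w2024}), the linear bound $|\overline V(t,x)|\leq C(1+|x|)$ follows by specializing Lemma \ref{lemmavaluev} to the Markovian case (taking $\gamma_t$ to be the constant path equal to $x$), and this is clearly dominated by $L(1+|x|^2)$ for suitable $L$. For (\ref{w12024}), I would combine the Hölder-type estimate (\ref{hold2024}) of Theorem \ref{theoremddp20242} with the spatial Lipschitz estimate $|\overline V(t,x)-\overline V(t,y)|\leq C|x-y|$ (again inherited from Lemma \ref{lemmavaluev}) via the triangle inequality
\begin{equation*}
\left|\overline V(s,e^{(s-t)A}x)-\overline V(t,y)\right|\leq \left|\overline V(s,e^{(s-t)A}x)-\overline V(t,x)\right|+\left|\overline V(t,x)-\overline V(t,y)\right|,
\end{equation*}
choosing $\rho_2(r,R):=C(1+R)\sqrt{r}$ as the required local modulus of continuity and absorbing $C|x-y|$ into $L(1+|x|+|y|)|x-y|$.

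Finally, for any other viscosity solution $\overline W\in C^0([0,T]\times H)$ satisfying (\ref{w2024}) and (\ref{w12024}), I would apply Theorem \ref{theoremhjbm2024} twice: once with $(\overline W_1,\overline W_2)=(\overline V,\overline W)$ to obtain $\overline V\leq \overline W$, and once with $(\overline W_1,\overline W_2)=(\overline W,\overline V)$ to obtain $\overline W\leq \overline V$, giving $\overline V=\overline W$. The only step that requires any thought at all is the triangle-inequality verification of (\ref{w12024}), since one must check that the chosen $\rho_2$ is indeed nondecreasing in its second argument and continuous with $\rho_2(0,\cdot)=0$; both are immediate for $\rho_2(r,R)=C(1+R)\sqrt{r}$. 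There is no real obstacle to overcome here---the theorem is essentially a specialization and repackaging of Theorems \ref{theoremvexist2024}, \ref{theoremddp20242}, and \ref{theoremhjbm2024}, made explicit so as to highlight that the classical $B$-continuity hypothesis used in \cite{fab1} has been removed.
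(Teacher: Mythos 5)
Your proposal is correct and matches the paper's approach: the paper derives Theorem \ref{theorem522024} exactly as you do, by combining the existence result (Theorem \ref{theoremvexist2024}), the regularity/modulus estimates (Theorem \ref{theoremddp20242} and the specialization of Lemma \ref{lemmavaluev} to constant paths), and the comparison principle (Theorem \ref{theoremhjbm2024}). Your explicit verification of hypotheses (\ref{w2024}) and (\ref{w12024}) via the triangle-inequality argument and the choice $\rho_2(r,R)=C(1+R)\sqrt{r}$ is correct and supplies the details that the paper leaves implicit.
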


{\bf  Proof of Theorem \ref{theoremhjbm2024}}.
It is sufficient to prove
$\overline  W_1\leq \overline  W_2$ under the stronger  assumption that $\overline  W_1$ is a viscosity sub-solution of
\begin{eqnarray}\label{1002a2024}
\begin{cases}
 (\overline W_1)_{t^+}(t,x)+\left\langle A^*\nabla_x\overline W_1(t,x),\,  x\right\rangle_H\\[3mm]
\quad+\overline{\mathbf{H}}\left(t,x,\, (\overline W_1, \nabla_x \overline W_1,\nabla^2_{x} \overline W_1)(t,x)\right)
=c:= \frac{\varrho}{(T+1)^2}, \\[3mm]
\quad\quad\quad \quad \qquad  (t, x)\in [0,T)\times H; \\[3mm]
\overline W_1(t,x)=\overline{\phi}(x), \  x\in H.\\
\end{cases}
\end{eqnarray}

The rest of the proof splits in the following three steps.
\par
$Step\  1.$ Definitions of auxiliary functionals.
\par
It is sufficient to prove that $\overline  W_1\leq \overline W_2$ in $[T-\bar{a},T)\times
H$ with
$$\bar{a}:=\frac{1}{2(20L+8)L}\wedge{T},$$
for the desired comparison is easily extended  to the whole time interval $[0,T]$ via a backward iteration over the intervals
$[T-i\bar{a},T-(i-1)\bar{a})$.  Otherwise, there is  $(\tilde{t},\tilde{x})\in (T-\bar{a},T)\times
H$  such that
$\tilde{m}:=\overline W_1(\tilde{t},\tilde{x})-\overline W_2(\tilde{t},\tilde{x})>0$.
\par
Let  $\varepsilon >0$ be  a small number such that
$$
\overline  W_1(\tilde{t},\tilde{x})-\overline  W_2(\tilde{t},\tilde{x})-2\varepsilon \frac{\nu T-\tilde{t}}{\nu
	T}|\tilde{x}|^4
>\frac{\tilde{m}}{2}
$$
and
\begin{eqnarray}\label{5.32024}
\frac{\varepsilon}{\nu T}\leq\frac{c}{4}
\end{eqnarray}
with
$$
\nu:=1+\frac{1}{2T(20L+8)L}.
$$
Next,  we define for any  $(x,y)\in H\times H$,
\begin{eqnarray*}
	\overline \Psi(t,x,y):=\overline W_1(t,x)-\overline W_2(t,y)-\beta|x-y|^2
	-\varepsilon\frac{\nu T-t}{\nu
		T}\, \left(|x|^4+|y|^4\right).
\end{eqnarray*}
In view of  (\ref{w2024}), the function $\overline \Psi$ is clearly continuous and  bounded from above on $[{T-\bar{a}},T]\times H\times H$. 
Define  for all $(t,x,y; s,x',y')\in ([0,T]\times H\times H)^2$,
$$\overline d_{1,\infty}((t,x,y),(s,x',y'))=\overline d_\infty((t,x),(s,x'))+\overline d_\infty((t,y),(s,y')).$$
Clearly,  the metric space $([t,T]\times H\times H,\overline d_{1,\infty})$ is compete. Define for $(t,x,y; s,x',y')\in ([0,T]\times H\times H)^2$,
$$
\hat{\Upsilon}^0(x):=|x|^4, \quad \Upsilon^0((t,x),(s,x')):=|s-t|^2+\left|e^{(t\vee s-t)A}x-e^{(t\vee s-s)A}x'\right|^4,
$$
and
\begin{eqnarray*}
\Upsilon^{0,1}((t,x,y),\, (s,x',y'))&:=\displaystyle 2|s-t|^2+\left|e^{(t\vee s-t)A}x-e^{(t\vee s-s)A}x'\right|^4\\
&\displaystyle +\left|e^{(t\vee s-t)A}y-e^{(t\vee s-s)A}y'\right|^4.
\end{eqnarray*}
Then ${\Upsilon}^{0}(\cdot,\cdot)$ and ${\Upsilon}^{0,1}(\cdot,\cdot)$ are the  gauge-type functions on $([\tilde t,T]\times H,\overline d_{\infty})$ and  on $([\tilde t,T]\times H\times H,\overline d_{1,\infty})$, respectively.
Take
$\delta_i:=\frac{1}{2^i}$ for all $i\geq0$,
from the proof procedure of Lemma \ref{theoremleft} it follows that,
for every  $(t_0,x^0,y^0)\in [{\tilde{t}},T)\times H\times H$ satisfying
$$
\overline \Psi({t_0},x^0,y^0)\geq \sup_{(s,x,y)\in  [\tilde{t},T]\times H\times H}\overline \Psi(t,x,y)-\frac{1}{\beta},\
\    \mbox{and} \ \ \overline \Psi({t_0},x^0, {y^0})\geq \overline \Psi(\tilde{t},\tilde{x},\tilde{y}) >\frac{\tilde{m}}{2},
$$
there exist $(\hat{t},\hat{x},\hat{y})\in [\tilde{t},T]\times H\times H$ and a sequence $\{(t_i,x^i,y^i)\}_{i\geq1}\subset
[\tilde{t},T]\times H\times H$ such that
\begin{description}
	\item[(i)] $\Upsilon^{0,1}((t_i,x^i,y^i),(\hat{t},\hat{x},\hat{y}))
\leq \frac{1}{2^i\beta}$
and $t_i\uparrow \hat{t}$ as $i\rightarrow\infty$,
	\item[(ii)]  $\overline \Psi_1(\hat{t},\hat{x},\hat{y})
\geq \overline \Psi({t_0},x^0,{y^0})$, and
	\item[(iii)]    for all $(s, x,y)\in [\hat{t},T]\times H\times H\setminus \{(\hat{t},\hat{x},\hat{y})\}$,
	{   \begin{eqnarray}\label{iii42024}
		\overline \Psi_1(s,x,y)
		<\overline \Psi_1(\hat{t},\hat{x},\hat{y}),
		\end{eqnarray}}
\end{description}
where we define, for $(t,x,y)\in  [\tilde t, T]\times H\times H$,
{$$
	\overline \Psi_1(t,x,y):=  \overline \Psi(t,x,y)
	-\sum_{i=0}^{\infty}
	\frac{1}{2^i}{\Upsilon}^{0,1}((t_i,x^i,y^i),(t,x,y))
.
	$$}
Note that the point
$({\hat{t}},\hat{x},\hat{y})$ depends on  $\beta$ and
$\varepsilon$.
\par
By the similar proof procedure of Steps 2 and 3 in the proof of Theorem \ref{theoremhjbm}, we can show that
there exist ${{M}_0}>0$ and $N_0>0$ {independent of $\beta$}
such that  $\hat{t}\in [\tilde{t},T)$ for all $\beta\geq N_0$,
\begin{eqnarray}\label{5.10jiajiaaaa2024}
|\hat{x}|\vee|\hat{y}|<M_0, \ \ \ \
\lim_{\beta\to \infty} \beta|\hat{x}-\hat{y}|^2
=0.
\end{eqnarray}

{ $Step\ 2.$   Crandall-Ishii lemma.}
\par

For every $N\geq1$, we define,
for $(t,x,y)\in [0,T]\times H\times H$,
\begin{eqnarray}\label{060912024}
\begin{aligned}
w_{1}(t,x)&=\overline W_1(t,x)-\varepsilon\frac{\nu T-t}{\nu
	T}\, |x|^4
-\varepsilon {\Upsilon}^0((t,x),(\hat{t},\hat{x}))\\
&\quad
-2\beta\left|x^-_N-(e^{(t-\hat{t}\,)A}\hat{z})^-_N\right|^2-\sum_{i=0}^{\infty}
\frac{1}{2^i}{\Upsilon}^0((t_i,x^i),(t,x)),
\end{aligned}
\end{eqnarray}
\begin{eqnarray}\label{060922024}
\begin{aligned}
w_{2}(t,y)&=-\overline W_2(t,y)-\varepsilon\frac{\nu T-t}{\nu
	T}\, |y|^4
-\varepsilon {\Upsilon}^0((t,y),(\hat{t},\hat{y}))\\
&\quad
-2\beta\left|y_N^--(e^{(t-\hat{t}\,)A}\hat{z})^-_N\right|^2-\sum_{i=0}^{\infty}
\frac{1}{2^i}{\Upsilon}^0((t_i,y^i),(t,y)),
\end{aligned}
\end{eqnarray}
where $\hat{z}=\frac{\hat{x}+\hat{y}}{2}$.   We  note that $w_1,w_2$ depend on $\hat{z}$ and $N$, and thus on $\beta$,
$\varepsilon$ and  $N$.  Define $\varphi\in C^2(H_N\times H_N)$ by
\begin{eqnarray}\label{070632024}
\varphi(x,y)=\beta|x-y|^2,\ \ (x,y)\in H_N\times H_N.
\end{eqnarray}
By the similar proof procedure of  Lemma \ref{0611a},  we show that $w_1$ and $w_2$ satisfy the conditions of  Theorem \ref{theorem05132024} with $\varphi$ being defined by (\ref{070632024}).
Then by Theorem \ref{theorem05132024}, there exist
sequences  $(l_{k},\check{x}^{k}), (s_{k},\check{y}^{k})\in [\hat{t},T]\times H$ and
sequences of functionals 
$(\varphi_{1,k}, \psi_{1,k}, \varphi_{2,k}, \psi_{2,k})\in \overline\Phi_{\hat{t}}\times  \overline\Phi_{\hat{t}}\times \overline{\mathcal{G}}_{l_k} \times \overline{\mathcal{G}}_{s_k}$ bounded from below such that 
\begin{eqnarray}\label{0609a2024}
w_{1}(t,x)-\varphi_{1,k}(t,x)-\varphi_{2,k}(t,x)
\end{eqnarray}
has a strict  maximum $0$ at  $(l_k,\check{x}^{k})$ over $[l_k, T]\times H$,
\begin{eqnarray}\label{0609b2024}
w_{2}(t,y)-\psi_{1,k}(t,y)-\psi_{2,k}(t,y)
\end{eqnarray}
has a strict  maximum $0$ at $(s_k,\check{y}^{k})$ over $ [s_k, T]\times H$, and
\begin{eqnarray}\label{0608v12024}
\ \ \ &&\left(l_{k}, \check{x}^{k};\,  (w_1,(\varphi_{1,k})_{t^+},\nabla_x\varphi_{1,k},\nabla^2_{x}\varphi_{1,k}, \partial_{t}^o\varphi_{2,k},\nabla_x\varphi_{2,k},\nabla^2_{x}\varphi_{2,k})( l_{k},\check{x}^{k})\right)\\
&&\underrightarrow{k\rightarrow\infty}\, \left({\hat{t}},\hat{x};\,  w_1(\hat{t},\hat{x}), b_1, 2\beta (\hat{x}_N-\hat{y}_N), X_N, 0,\mathbf{0},\mathbf{0}\right),\nonumber
\end{eqnarray}
\begin{eqnarray}\label{0608vw12024}
&&\left(s_{k}, \check{y}^{k}; \, (w_2, (\psi_{1,k})_{t^+}, \nabla_x\psi_{1,k}, \nabla^2_{x}\psi_{1,k}, \partial_{t}^o\psi_{2,k}, \nabla_x\psi_{2,k}, \nabla^2_{x}\psi_{2,k})( s_{k},\check{y}^{k})\right)\\
&& \ \ \ \ \underrightarrow{k\rightarrow\infty}\, \left({\hat{t}},\hat{y};\,  w_2(\hat{t},\hat{y}), b_2, 2\beta(\hat{y}_N-\hat{x}_N), Y_N, 0,\mathbf{0},\mathbf{0}\right),\nonumber
\end{eqnarray}
where $b_{1}+b_{2}=0$ and $X_N,Y_N\in \mathcal{S}(H_N)$ satisfy  inequality (\ref{II10}) with $\beta$.
We note that  
sequence  $(\check{x}^{k},\check{y}^{k},l_{k},s_{k},\varphi_{1,k},\psi_{1,k},\varphi_{2,k},\psi_{2,k})$ and $b_{1},b_{2},X_N,Y_N$  depend on  $\beta$,
$\varepsilon$ and $N$. For every $(t,x, s,y)\in ([T-\bar{a},T]\times H)^2$, define
\begin{eqnarray*}
	\chi^{k,N,1}(t,x)  &:=&\varphi_{1,k}(t,x)-2\beta\left|x_N-(e^{(t-\hat{t}\,)A}\hat{z})_N\right|^2,\\[3mm]
	\chi^{k,N,2}(t,x)&:=& \varepsilon\frac{\nu T-t}{\nu
		T}|x|^4
	+\varepsilon {\Upsilon}^0((t,x),(\hat{t},\hat{x}))
	+\sum_{i=0}^{\infty}
	\frac{1}{2^i}{\Upsilon}^0((t_i,x^i),(t,x))\\
	&&+\varphi_{2,k}(t,x)
	+2\beta\left|x-e^{(t-\hat{t}\,)A}\hat{z}\right|^2, \\[3mm]
	\chi^{k,N}(t,x)  &:=&\chi^{k,N,1}(t,x)+\chi^{k,N,2}(t,x); \quad \hbox{\rm and }
\end{eqnarray*}
\begin{eqnarray*}
	\hbar^{k,N,1}(s,y)   &:=&\psi_{1,k}(s,y)-2\beta\left|y_N-(e^{(s-\hat{t}\,)A}\hat{z})_N\right|^2,\\[3mm]
	\hbar^{k,N,2}(s,y) &:=& \varepsilon\frac{\nu T-s}{\nu
		T}|y|^4
	+\varepsilon {\Upsilon}^0((s,y),(\hat{t},{\hat{y}}))+\sum_{i=0}^{\infty}
	\frac{1}{2^i}{\Upsilon}^0((t_i,\eta^i),(s,y))\\
	&&
	+\psi_{2,k}(s,y)
	+2\beta\left|y-e^{(s-\hat{t}\,)A}\hat{z}\right|^2, \\[3mm]
	\hbar^{k,N}(s,y)   &:=&\hbar^{k,N,1}(s,y)+\hbar^{k,N,2}(s,y).
\end{eqnarray*}
Clearly, we have  $\chi^{k,N,2}\in\overline{\mathcal{G}}_{{l_k}}$ and $\hbar^{k,N,2}\in \overline{\mathcal{G}}_{{s_k}}$, and in view of  Lemma \ref{lemma03302}, we have  $\chi^{k,N,1},\hbar^{k,N,1}\in \overline\Phi_{\hat{t}}$.
Moreover, by  (\ref{0609a2024}), (\ref{0609b2024}) and definitions of $w_1$ and $w_2$, we have
$$
\left(\overline W_1-\chi^{k,N,1}-\chi^{k,N,2}\right)(l_k,\check{x}^k)=\sup_{(t,x)\in [{{l_k}},T]\times H}
\left(\overline W_1-\chi^{k,N,1}-\chi^{k,N,2}\right)(t,x)
$$
and
$$
\left(\overline W_2+\hbar^{k,N,1}+\hbar^{k,N,2}\right)(s_k,\check{y}^k)=\inf_{(s,y)\in [{s_k},T]\times H}
\left(\overline W_2+\hbar^{k,N,1}+\hbar^{k,N,2}\right)(s,y).
$$

Since $l_{k},s_{k}\rightarrow {\hat{t}}$ as $k\rightarrow\infty$ and ${\hat{t}}<T$
for $\beta>N_0$, we see that for every fixed $\beta>N_0$,   there is a constant $ K_\beta>0$ such that
$$
|l_{k}|\vee|s_{k}|<T 
\quad \mbox{for all}    \ \ k\geq K_\beta.
$$
Now, for every $\beta>N_0$ and  $k>K_\beta$, 
we have from the definition of viscosity solutions that
\begin{eqnarray}\label{vis12024}
\begin{aligned}
&\partial_{t}^o\chi^{k,N}(l_k,\check{x}^k)
+\left\langle A^*\nabla_{x}(\varphi_{1,k})(l_k,\check{x}^k),\, \check{x}^k\right\rangle_H\\
&-4\beta\left\langle A^*\left((\check{x}^k)_N-(e^{({l_k}-\hat{t}\,)A}\hat{z})_N\right),\,
\check{x}^k
\right\rangle_H\\
&+\overline{\mathbf{H}}{(}l_k,\check{x}^k,  (\overline W_1, \nabla_x\chi^{k,N},
\nabla^2_{x}\chi^{k,N})(l_k,\check{x}^k)
{)}\geq c
\end{aligned}
\end{eqnarray}
and
\begin{eqnarray}\label{vis22024}
\begin{aligned}
&
-\partial_{t}^o\hbar^{k,N}(s_k,\check{y}^k)-\left\langle A^*\nabla_{x}(\psi_{1,k})(s_k,\check{y}^k),\, \check{y}^k\right\rangle_H\\
&+4\beta\left\langle A^*\left((\check{y}^k)_N-(e^{({s_k}-\hat{t}\,)A}\hat{z})_N\right),\,
\check{y}^k
\right\rangle_H\\
&+\overline{\mathbf{H}}{(}s_k,\check{y}^k, (\overline W_2,
-\nabla_x\hbar^{k,N}, -\nabla^2_{x}\hbar^{k,N})(s_k,\check{y}^k){)}\leq0,
\end{aligned}
\end{eqnarray}
where, for every $(t,x)\in [{l_k},T]\times{H}$ and  $ (s,y)\in [{s_k},T]\times{H}$, we have from Lemma \ref{lemma03302},
\begin{eqnarray*}
	\partial_{t}^o\chi^{k,N}(t,x)&:=&(\chi^{k,N,1})_{t^+}(t,x)+\partial_{t}^o\chi^{k,N,2}(t,x)\\
	&=&
	(\varphi_{1,k})_{t^+}(t,x)+4\beta\left\langle A^*\left(x_N-(e^{({t}-\hat{t}\,)A}\hat{z})_N\right),\,
	e^{({t}-\hat{t}\,)A}\hat{z}\right\rangle_H\\
	&&-\frac{\varepsilon}{\nu T}|x|^4
	+2\varepsilon({t}-{\hat{t}})+2\sum_{i=0}^{\infty}\frac{1}{2^i}(t-t_{i})+\partial_{t}^o\varphi_{2,k}(t,x),
\end{eqnarray*}
\begin{eqnarray*}
	\nabla_x\chi^{k,N}(t,x)&:=&\nabla_x\chi^{k,N,1}(t,x)+\nabla_x\chi^{k,N,2}(t,x)\\&=&\nabla_{x}(\varphi_{1,k})(t,x)
	+4\beta(x^-_N-(e^{(t-\hat{t}\,)A}\hat{z})^-_N)
	\\
	&&+\varepsilon\frac{\nu T-{t}}{\nu T}\nabla_x\hat{\Upsilon}^0(x) +\varepsilon\nabla_x\hat{\Upsilon}^0(x-e^{(t-\hat{t}\,)A}\hat{x})
	\\
	&&+\sum_{i=0}^{\infty}\frac{1}{2^i}\nabla_x\hat{\Upsilon}^0(x-e^{(t-t_i)A}x^i)
	+\nabla_{x}(\varphi_{2,k})(t,x),
\end{eqnarray*}
\begin{eqnarray*}
	\nabla^2_{x}\chi^{k,N}(t,x)&:=&\nabla^2_{x}\chi^{k,N,1}(t,x)+\nabla^2_{x}\chi^{k,N,2}(t,x)\\
	&=&\nabla^2_{x}(\varphi_{1,k})(t,x)+4\beta Q_N+\varepsilon\frac{\nu T-{t}}{\nu T}\nabla^2_x\hat{\Upsilon}^0(x) \\
	&&+\varepsilon\nabla^2_x\hat{\Upsilon}^0(x-e^{(t-\hat{t}\,)A}\hat{x})
+\sum_{i=0}^{\infty}\frac{1}{2^i}\nabla^2_x\hat{\Upsilon}^0(x-e^{(t-t_i)A}x^i)\\
     &&+\nabla^2_{x}(\varphi_{2,k})(t,x),
\end{eqnarray*}
\begin{eqnarray*}
	\partial_{t}^o\hbar^{k,N}(s,y)&:=&(\hbar^{k,N,1}))_{t^+}(s,y)+\partial_{t}^o\hbar^{k,N,2}(s,y)\\
	&=&
	(\psi_{1,k})_{t^+}(s,y)+4\beta\left\langle A^*\left(y_N-(e^{(s-\hat{t}\,)A}\hat{z})_N\right),\,
	e^{(s-\hat{t}\,)A}\hat{z}\right\rangle_H\\
	&&-\frac{\varepsilon}{\nu T}|y|^4+2\varepsilon({s}-{\hat{t}})
	+2\sum_{i=0}^{\infty}\frac{1}{2^i}(s-t_{i})
	+\partial_{t}^o\psi_{2,k}(s,y),
\end{eqnarray*}
\begin{eqnarray*}
	\nabla_x\hbar^{k,N}(s,y)&:=&\nabla_x\hbar^{k,N,1}(s,y)+\nabla_x\hbar^{k,N,2}(s,y)\\
	&=& \nabla_{x}\psi_{1,k}(s,y)+4\beta(y^-_N-(e^{(s-\hat{t}\,)A}\hat{z})^-_N)
	\nonumber\\
	&&+\varepsilon\frac{\nu T-{s}}{\nu T}
	\nabla_x\hat{\Upsilon}^0(y)+\varepsilon\nabla_x\hat{\Upsilon}^0(y-e^{(s-\hat{t}\,)A}\hat{y})
\\
	&&+\sum_{i=0}^{\infty}\frac{1}{2^i}\nabla_x\hat{\Upsilon}^0(y-e^{(s-t_i)A}y^i)
	+\nabla_{x}\psi_{2,k}(s,y),
\end{eqnarray*}
and
\begin{eqnarray*}
	\nabla^2_{x}\hbar^{k,N}(s,y)&:=&\nabla^2_{x}\hbar^{k,N,1}(s,y)+\nabla^2_{x}\hbar^{k,N,2}(s,y)\\
	&=&\nabla^2_{x}\psi_{1,k}(s,y)+4\beta Q_N+\varepsilon\frac{\nu T-{s}}{\nu T}
	\nabla^2_x\hat{\Upsilon}^0(y)\\
	&&+\varepsilon\nabla^2_x\hat{\Upsilon}^0(y-e^{(s-\hat{t}\,)A}\hat{y})
+\sum_{i=0}^{\infty}\frac{1}{2^i}\nabla^2_x\hat{\Upsilon}^0(y-e^{(s-t_i)A}y^i)\\
&&+\nabla^2_{x}\psi_{2,k}(s,y).
\end{eqnarray*}
\par
{ $Step\ 3.$  Calculation and  completion of the proof.}
\par
Letting 
$k\rightarrow\infty$ in (\ref{vis12024}) and (\ref{vis22024}), and using (\ref{0608v12024}) and (\ref{0608vw12024}),  we have
\begin{eqnarray}\label{031032024}
\begin{aligned}
& b_1+4\beta\langle A^*(\hat{x}_N-\hat{z}_N),\,
\hat{z}\rangle_H-\frac{\varepsilon}{\nu T}|\hat{x}|^4+2\sum_{i=0}^{\infty}\frac{1}{2^i}(\hat{t}-t_i)\\
&+2\beta\left\langle A^*\left(\hat{x}_N-\hat{y}_N\right),\,\hat{x}\right\rangle_H
-4\beta\left\langle A^*\left(\hat{x}_N-\hat{z}_N\right),\,
\hat{x}
\right\rangle_H\\
&+\overline{\mathbf{H}}(\hat{t},\hat{x}, (\overline W_1,
\nabla_x\chi^{N},\nabla^2_{x}\chi^{N})(\hat t,\hat{x}))
\geq c
\end{aligned}
\end{eqnarray}
and
\begin{eqnarray}\label{031042024}
\begin{aligned}
& -b_2-4\beta\left\langle A^*\left(\hat{y}_N-\hat{z}_N\right),\,
\hat{z}\right\rangle_H+ \frac{\varepsilon}{\nu T}|\hat{y}|^4-2\sum_{i=0}^{\infty}\frac{1}{2^i}(\hat{t}-t_i)\\
&+2\beta \left\langle A^*\left(\hat{x}_N-\hat{y}_N\right),\, \hat{y}\right\rangle _H
+4\beta\left\langle A^*\left(\hat{y}_N-\hat{z}_N\right),\,
\hat{y}
\right\rangle_H\\
&+\overline{\mathbf{H}}(\hat t, \hat{y}, (\overline W_2,-\nabla_x\hbar^{N},-\nabla^2_{x}\hbar^{N})(\hat t,\hat{y}))
\leq0,
\end{aligned}
\end{eqnarray}
where
\begin{eqnarray*}
\begin{aligned}
\nabla_x\chi^{N}(\hat{t}, \hat{x})
&:=
2\beta(\hat{x}-\hat{y})
 +\varepsilon\frac{\nu T-{\hat{t}}}{\nu T}\nabla_x\hat{\Upsilon}^0(\hat{x})
+\sum_{i=0}^{\infty}\frac{1}{2^i}\nabla_x\hat{\Upsilon}^0(\hat{x}-e^{(\hat{t}-t_i)A}x^i),
\end{aligned}
\end{eqnarray*}
\begin{eqnarray*}
\begin{aligned}
\nabla^2_{x}\chi^{N}(\hat{t}, \hat{x})
&:= X_N+4\beta Q_N
 +\varepsilon\frac{\nu T-{\hat{t}}}{\nu T}\nabla^2_{x}\hat{\Upsilon}^0(\hat{x})
+\sum_{i=0}^{\infty}\frac{1}{2^i}\nabla^2_{x}\hat{\Upsilon}^0(\hat{x}-e^{(\hat{t}-t_i)A}x^i),
\end{aligned}
\end{eqnarray*}
\begin{eqnarray*}
\begin{aligned}
\nabla_x\hbar^{N}(\hat{t},\hat{y})
&:=
2\beta(\hat{y}-\hat{x})
 +\varepsilon\frac{\nu T-{\hat{t}}}{\nu T}\nabla_x\hat{\Upsilon}^0(\hat{y})+\sum_{i=0}^{\infty}\frac{1}{2^i}
\nabla_x\hat{\Upsilon}^0(\hat{y}-e^{(\hat{t}-t_i)A}y^i),
\end{aligned}
\end{eqnarray*}
and
\begin{eqnarray*}
\begin{aligned}
\nabla^2_{x}\hbar^{N}(\hat{t}, \hat{y})
&:= Y_N+4\beta Q_N
+\varepsilon\frac{\nu T-{\hat{t}}}{\nu T}\nabla^2_{x}\hat{\Upsilon}^0(\hat{y})
+\sum_{i=0}^{\infty}\frac{1}{2^i}
\nabla^2_{x}\hat{\Upsilon}^0(\hat{y}-e^{(\hat{t}-t_i)A}y^i).
\end{aligned}
\end{eqnarray*}
Since $b_1+b_2=0$ and $\hat{z}=\frac{\hat{x}+\hat{y}}{2}$, subtracting
both sides of   (\ref{031032024}) from those  of (\ref{031042024}), we observe the following cancellations:
\begin{eqnarray*}
&& 4\beta\langle A^*(\hat{x}_N-\hat{z}_N),\,
\hat{z}\rangle_H-\left[-4\beta\left\langle A^*\left(\hat{y}_N-\hat{z}_N\right),\,
\hat{z}\right\rangle_H\right]\\
&=&4\beta\langle A^*(\hat{x}_N+\hat{y}_N-2\hat{z}_N),\, \hat{z}\rangle_H=0,
\end{eqnarray*}
\begin{eqnarray*}
 2\beta\left\langle A^*\left(\hat{x}_N-\hat{y}_N\right),\,\hat{x}\right\rangle_H-2\beta \left\langle A^*\left(\hat{x}_N-\hat{y}_N\right),\, \hat{y}\right\rangle _H
=2\beta\left\langle A^*\left(\hat{x}_N-\hat{y}_N\right),\,\hat{x}-\hat{y}\right\rangle_H,
\end{eqnarray*}
 \begin{eqnarray*}
&&-4\beta\left\langle A^*\left(\hat{x}_N-\hat{z}_N\right),\,
\hat{x}
\right\rangle_H-
4\beta\left\langle A^*\left(\hat{y}_N-\hat{z}_N\right),\,
\hat{y}
\right\rangle_H\\
&=&-2\beta\langle A^*(\hat{x}_N-\hat{y}_N),\, \hat{x}\rangle_H-
2\beta\left\langle A^*\left(\hat{y}_N-\hat{x}_N\right),\,
\hat{y}
\right\rangle_H\\
&=&-2\beta\left\langle A^*\left(\hat{x}_N-\hat{y}_N\right),\,\hat{x}-\hat{y}\right\rangle_H;
\end{eqnarray*}
%
{\it all the terms which involve the unbounded operator $A$  mutually cancels out},  and  thus we have
\begin{eqnarray}\label{vis1122024}
\begin{aligned}
&\quad c+ \frac{\varepsilon}{\nu T}(|\hat{x}|^4+|\hat{y}|^4)-4\sum_{i=0}^{\infty}\frac{1}{2^i}(\hat{t}-t_i)\\
&\leq\overline{\mathbf{H}}(\hat{t},\hat{x}, (\overline W_1,\nabla_x\chi^{N},\nabla^2_{x}\chi^{N})(\hat{t},\hat{x}))\\
&\quad-\overline{\mathbf{H}}(\hat{t},\hat{y}, (\overline W_2,-\nabla_x\hbar^{N},-\nabla^2_{x}\hbar^{N})(\hat{t},\hat{y})).
\end{aligned}
\end{eqnarray}
On the  other hand, from  (\ref{5.12024}) and via a simple calculation, we have
\begin{eqnarray}\label{v4}
\begin{aligned}
&\quad\overline{\mathbf{H}}(\hat{t},\hat{x}, (\overline W_1,\nabla_x\chi^{N},\nabla^2_{x}\chi^{N})(\hat{t},\hat{x}))\\
&\quad-\overline{\mathbf{H}}(\hat{t},\hat{y}, (\overline W_2,-\nabla_x\hbar^{N},-\nabla^2_{x}\hbar^{N})(\hat{t},\hat{y}))\\
&\leq\overline{\mathbf{H}}(\hat{t},\hat{x}, (\overline W_2,\nabla_x\chi^{N},\nabla^2_{x}\chi^{N})(\hat{t},\hat{x}))\\
&\quad-\overline{\mathbf{H}}(\hat{t},\hat{y}, (\overline W_2,-\nabla_x\hbar^{N},-\nabla^2_{x}\hbar^{N})(\hat{t},\hat{y}))\\
&\leq\sup_{u\in U}(J_{1}+J_{2}+J_{3}).
\end{aligned}
\end{eqnarray}
Here,
from Assumption \ref{hypstate20241} (ii)   and (\ref{II10}) with $\beta$,  we have
\begin{eqnarray}\label{j22024}
\begin{aligned}
&\quad
J_{1}:=\frac{1}{2}\mbox{Tr}\left[\nabla^2_{x}\chi^{N}(\hat{t}, \hat{x})\, (\overline {G}\, \overline {G}^*)(\hat{t}, \hat{x},u)\right]-\frac{1}{2}\mbox{Tr}\left[-\nabla^2_{x}\hbar^{N}(\hat{t}, \hat{y})\, (\overline G\, \overline G^*)(\hat{t}, \hat{y},u)\right]\\
\leq&\, 3\beta\left|\overline {G}(\hat{t}, \hat{x},u)-\overline G(\hat{t}, \hat{y},u)\right|_{L_2(\Xi,H)}^2\\
&+2\beta\left (\left|Q_N\overline {G}(\hat{t}, \hat{x},u)\right|_{L_2(\Xi,H)}^2+\left |Q_N\overline {G}(\hat{t}, \hat{y},u)\right|_{L_2(\Xi,H)}^2\right)\\
&+6\varepsilon\frac{\nu T-{\hat{t}}}{\nu    T}\left(|\hat{x}|^2\, \left|\overline {G}(\hat{t}, \hat{x},u)\right|_{L_2(\Xi,H)}^2+|\hat{y}|^2\, \left|\overline {G}(\hat{t}, \hat{y},u)\right|_{L_2(\Xi,H)}^2\right)
\\
&
+6\left|\overline G(\hat{t}, \hat{x},u)\right|_{L_2(\Xi,H)}^2\sum_{i=0}^{\infty}\frac{1}{2^i}\left|e^{(\hat{t}-t_i)A}x^i- \hat{x}\right|^2\\
&
+6\left|\overline G(\hat{t}, \hat{y},u)\right|_{L_2(\Xi,H)}^2\sum_{i=0}^{\infty}\frac{1}{2^i}\left|e^{(\hat{t}-t_i)A}y^i- \hat{y}\right|^2
\\
\leq&
\, 3\beta{L^2}\, \left|\hat{x}-\hat{y}\right|^2+2\beta\left(\left|Q_N\overline {G}(\hat{t}, \hat{x},u)\right|_{L_2(\Xi,H)}^2+\left|Q_N\overline {G}(\hat{t}, \hat{y},u)\right|_{L_2(\Xi,H)}^2\right)\\
&+6\sum_{i=0}^{\infty}\frac{1}{2^i}\left[\left|e^{(\hat{t}-t_i)A}x^i-\hat{x}\right|^2
+\left|e^{(\hat{t}-t_i)A}y^i- \hat{y}\right|^2\right]\\
&\qquad \times
L^2(1+|\hat{x}|^2
+|\hat{y}|^2
)
+12\varepsilon \frac{\nu T-{\hat{t}}}{\nu T}L^2(1+| \hat{x}|^4
+|\hat{y}|^4);
\end{aligned}
\end{eqnarray}
from Assumption \ref{hypstate20241} (ii),  we have
\begin{eqnarray}\label{j12024}
\begin{aligned}
&\quad J_{2}:= {\left\langle \overline {F}(\hat{t}, \hat{x},u),\, \nabla_x\chi^{N}(\hat{t}, \hat{x})\right\rangle_{H}  -\left\langle \overline {F}(\hat{t}, \hat{y},u),\, -\nabla_x\hbar^{N}(\hat{t},\hat{y})\right\rangle_{H}}\\
\leq&\, 2\beta{L}\, \left| \hat{x}-\hat{y}\right|^2
+4L\sum_{i=0}^{\infty}\frac{1}{2^i}\left[\left|e^{(\hat{t}-t_i)A}x^i-\hat{x}\right|^3
+\left|e^{(\hat{t}-t_i)A}y^i-\hat{y}\right|^3\right]\\
&\qquad\times(1+|\hat{x}|+|\hat{y}|)
 +8\varepsilon \frac{\nu T-{\hat{t}}}{\nu T} L(1+|\hat{x}|^4+|\hat{y}|^4);
\end{aligned}
\end{eqnarray}
from Assumption \ref{hypstate20241} (iii),  we have
\begin{eqnarray}\label{j32024}
\begin{aligned}
&\quad
J_{3}:=\overline q{(}\hat{t}, \hat{x}, \overline W_2(\hat{t}, \hat{x}), \nabla_x\chi^{N}(\hat{t}, \hat{x})\overline G(\hat{t}, \hat{x},u),u{)}\\
&\qquad\qquad-
\overline q(\hat{t}, \hat{y}, \overline W_2(\hat{t}, \hat{y}), -\nabla_x\hbar^{N}(\hat{t}, \hat{y})\overline G(\hat{t}, \hat{y},u),u{)}\\
\leq&
L|\hat{x}-\hat{y}|
+2\beta L^2|\hat{x}
-\hat{y}|^2\\
&+4L^2\sum_{i=0}^{\infty}\frac{1}{2^i}\left[\left|e^{(\hat{t}-t_i\,)A}x^i-\hat{x}\right|^3
+\left|e^{(\hat{t}-t_i\,)A}y^i-\hat{y}\right|^3\right]\\
&\qquad\qquad\qquad\times
(1+|\hat{x}|+|\hat{y}|)+8\varepsilon \frac{\nu T-{\hat{t}}}{\nu T} L^2(1+|\hat{x}|^4
+|\hat{y}|^4
).
\end{aligned}
\end{eqnarray}
In view of  the property (i) of $(\hat{t}, \hat{x},\hat{y})$, we have
\begin{eqnarray}\label{1002d12024}
4\sum_{i=0}^{\infty}\frac{1}{2^i}(\hat{t}-t_i)
\leq4\sum_{i=0}^{\infty}\frac{1}{2^i}\bigg{(}\frac{1}{2^i\beta}\bigg{)}^{\frac{1}{2}}\leq 8{\bigg{(}\frac{1}{{\beta}}\bigg{)}}^{\frac{1}{2}},
\end{eqnarray}
\begin{eqnarray}\label{1002d2}
\begin{aligned}
&\quad
\sum_{i=0}^{\infty}\frac{1}{2^i}\left[\left|e^{(\hat{t}-t_i)A}x^i-\hat{x}\right|^3
+\left|e^{(\hat{t}-t_i)A}y^i-\hat{y}\right|^3\right]\\
&\leq
2\sum_{i=0}^{\infty}\frac{1}{2^i}\bigg{(}\frac{1}{2^i\beta}\bigg{)}^{\frac{3}{4}}\leq 4{\bigg{(}\frac{1}{{\beta}}\bigg{)}}^{\frac{3}{4}},
\end{aligned}
\end{eqnarray}
and
\begin{eqnarray}\label{1002d3}
\begin{aligned}
&\quad
\sum_{i=0}^{\infty}\frac{1}{2^i}\left[\left|e^{(\hat{t}-t_i)A}x^i-\hat{x}\right|^2
+\left|e^{(\hat{t}-t_i)A}y^i-\hat{y}\right|^2\right]\\
&\leq
2\sum_{i=0}^{\infty}\frac{1}{2^i}\bigg{(}\frac{1}{2^i\beta}\bigg{)}^{\frac{1}{2}}\leq 4{\bigg{(}\frac{1}{{\beta}}\bigg{)}}^{\frac{1}{2}};
\end{aligned}
\end{eqnarray}
and since $\hat{x}$ and $\hat{y}$ are independent of $N$, by Assumption  \ref{hypstate56662024},
\begin{eqnarray}\label{1002d4}
\qquad\qquad \sup_{u\in U}\left[\left|Q_N\overline {G}(\hat{t}, \hat{x},u)\right|_{L_2(\Xi,H)}^2+\left|Q_N\overline {G}(\hat{t}, \hat{y},u)\right|_{L_2(\Xi,H)}^2\right]\rightarrow0\ \mbox{as}\ N\rightarrow \infty.
\end{eqnarray}
Combining (\ref{vis1122024})-(\ref{j32024}),  and letting $N\rightarrow\infty$ and then 
by (\ref{5.10jiajiaaaa2024}), we have for sufficiently large $\beta>0$,  
\begin{eqnarray}\label{vis1222024}
\qquad\qquad
c
\leq
-\frac{\varepsilon}{\nu T}(|\hat{x}|^4
+|\hat{y}|^4)+ \varepsilon \frac{\nu T-{\hat{t}}}{\nu T} (20L+8)L(1+|\hat{x}|^4
+|\hat{y}|^4
)+\frac{c}{4}.
\end{eqnarray}
Since $$
\nu=1+\frac{1}{2T(20L+8)L}
\quad \text{and} \quad\bar{a}=\frac{1}{2(20L+8)L}\wedge{T},$$
we see by  (\ref{5.32024}) the following contradiction:
\begin{eqnarray*}\label{vis1222024}
	c\leq
	\frac{\varepsilon}{\nu
		T}+\frac{c}{4}\leq \frac{c}{2}.
\end{eqnarray*}
The proof is  complete. \ \ $\Box$

\newpage
\section{PHJB equations with a quadratic growth in the gradient}
\par
In this chapter, we study the second-order PHJB equation
\begin{eqnarray}\label{hjb7}
\begin{cases}
\partial_tV(\gamma_t)+\langle A^*\partial_xV(\gamma_t), \gamma_t(t)\rangle _H\\
\qquad\qquad+{\mathbf{H}}(\gamma_t,\partial_xV(\gamma_t),\partial_{xx}V(\gamma_t))= 0,\   (t,\gamma_t)\in
[0,T)\times {\Lambda};\\
V(\gamma_T)=\phi(\gamma_T), \ \gamma_T\in {\Lambda}_T,
\end{cases}
\end{eqnarray}
with
\begin{eqnarray*}
	{\mathbf{H}}(\gamma_t,p,l)&:=&\sup_{u\in{
			{U}}}\left\{
	\langle p, F(\gamma_t,u)\rangle_H+\frac{1}{2}\mbox{Tr}[ l (GG^*)(\gamma_t,u)] +q(\gamma_t,u)\right\}, \\
	&&\qquad (t,\gamma_t,p,l)\in [0,T]\times{\Lambda}\times H\times {\mathcal{S}}(H).
\end{eqnarray*}
Here $(U,|\cdot|_U)$ is a normed space.   We make the following assumption.
\begin{assumption}\label{hypstate0928a}
	\begin{description}
		\item[(i)]
		The operator $A$ is the generator of a $C_0$ contraction
		{semi-group}  of bounded linear operators  $\{e^{tA}, t\geq0\}$ in
		Hilbert space $H$.
		\par
		\item[(ii)] $F:{\Lambda}\times U\rightarrow H$ and $G:{\Lambda}\times U\rightarrow L_2(\Xi,H)$ are continuous, {and
        $F,G$ are continuous in $\gamma_t\in \Lambda$, uniformly in $u\in U$.} Moreover,
		there exists a constant $L>0$ such that,  for all $(t, \gamma_t, \eta_t, u) \in [0,T]\times {\Lambda}\times {\Lambda}\times U$,
		\begin{eqnarray}\label{assume11111002}
		\begin{aligned}
		&|F(\gamma_t,u)|\leq L(1+||\gamma_t||_0+|u|),\\
		&|F(\gamma_t,u)-F(\eta_t,u)|\leq
		L(1+|u|)||\gamma_t-\eta_t||_0,\\
		&|G(\gamma_t,u)|^2_{L_2(\Xi,H)}\leq
		L^2(1+||\gamma_t||^2_0),\\
		& |G(\gamma_t,u)-G(\eta_t,u)|_{L_2(\Xi,H)}\leq
		L||\gamma_t-\eta_t||_0.
		\end{aligned}
		\end{eqnarray}\par
		\item[(iii)]
		$
		q: {\Lambda}\times U\rightarrow \mathbb{R}$ and $\phi: {\Lambda}_T\rightarrow \mathbb{R}$ are continuous, {and
        $q$ is continuous in $\gamma_t\in \Lambda$, uniformly in $u\in U$.} Moreover,     there are constants $L>0$ and
		$\nu_1>0$
		such that, for all $(t, \gamma_t, \eta_T, \gamma'_t, \eta'_T,  u)
		\in [0,T]\times(\Lambda\times {\Lambda}_T)^2\times U$,
		\begin{eqnarray*}
			&&-L(1+||\gamma_t||^2_0+|u|^2)\leq q(\gamma_t,u)\leq -\frac{\nu_1}{2}|u|^2+q_0(\gamma_t, u) \\
			&& \mbox{with}\ q_0(\gamma_t, u)\leq L(1+||\gamma_t||_0^2),
			\\
			&&|q(\gamma_t,u)-q(\gamma'_{t},u)|\leq L(1+|u|^2)(1+||\gamma_t||_0+||\gamma'_{t}||_0)||\gamma_t-\gamma'_{t}||_0,
			\\
			&& |\phi(\eta_T)-\phi(\eta'_T)|\leq L(1+||\eta_T||_0+\eta'_{T}||_0)||\eta_T-\eta'_{T}||_0,\\
			&& |\phi(\eta_T)|\leq L(1+||\eta_T||^2_0).
		\end{eqnarray*}
	\end{description}
\end{assumption}

\par
\begin{theorem}\label{theoremhjbm1002}
	Let {Assumptions} \ref{hypstate5666} and \ref{hypstate0928a}  be satisfied.
	Let $W_1\in C^0({\Lambda})$ and   $ W_2\in C^0({\Lambda})$ be  viscosity sub- and  super-solutions
	to equation (\ref{hjb7}),  and satisfy (\ref{w}) and (\ref{w1}).
	Then  $W_1\leq W_2$.
\end{theorem}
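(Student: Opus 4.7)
The plan is to adapt the comparison argument of Theorem~\ref{theoremhjbm} to the quadratic setting, replacing the sextic gauge $\Upsilon$ and its variants by the quadratic regularization $\Upsilon^{\varepsilon}$ introduced in Lemma~\ref{theoremS1}, and exploiting the coercivity $q(\gamma_t,u)\leq -\tfrac{\nu_1}{2}|u|^2+q_0(\gamma_t,u)$ to control the quadratic growth of $\mathbf{H}$ in the gradient variable. First, I would argue that it suffices to prove $W_1\leq W_2$ after replacing $W_1$ by $W_1-\tfrac{\varrho}{t+1}$, so that on $[T-\bar a,T)$ (with $\bar a$ chosen small depending on $L$ and $\nu_1$) the sub-solution actually satisfies the PHJB inequality with a strictly positive right-hand side $c>0$. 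Assuming the contradiction $\tilde m:=W_1(\tilde\gamma_{\tilde t})-W_2(\tilde\gamma_{\tilde t})>0$ at some $(\tilde t,\tilde\gamma_{\tilde t})\in(T-\bar a,T)\times\Lambda$, I would introduce the auxiliary functional
\begin{equation*}
\Psi(\gamma_t,\eta_t):=W_1(\gamma_t)-W_2(\eta_t)-\beta\,\Upsilon^{\varepsilon_\beta}(\gamma_t,\eta_t)-\beta^{1/2}|\gamma_t(t)-\eta_t(t)|^2-\lambda\frac{\nu T-t}{\nu T}\bigl(\Upsilon^{\varepsilon_\beta}(\gamma_t)+\Upsilon^{\varepsilon_\beta}(\eta_t)\bigr),
\end{equation*}
with $\varepsilon_\beta\downarrow 0$ slowly enough that $\Upsilon^{\varepsilon_\beta}$ still belongs to $C_p^{1,2}(\hat\Lambda)$ (Lemma~\ref{theoremS1}) and is a gauge function on $(\Lambda^{\tilde t}\otimes\Lambda^{\tilde t},d_{1,\infty})$.

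Next, I would apply the Borwein--Preiss variational principle (Lemma~\ref{theoremleft1}) with the corresponding quadratic gauge to obtain a maximizer $(\hat t,\hat\gamma_{\hat t},\hat\eta_{\hat t})$ of a perturbed functional $\Psi_1$, establish the analogues of (\ref{5.10jiajiaaaa})--(\ref{5.10}), namely $\|\hat\gamma_{\hat t}\|_0\vee\|\hat\eta_{\hat t}\|_0\leq M_0$ independent of $\beta$ and $\beta\|\hat\gamma_{\hat t}-\hat\eta_{\hat t}\|_0^2+\beta^{1/2}|\hat\gamma_{\hat t}(\hat t)-\hat\eta_{\hat t}(\hat t)|^2\to 0$ as $\beta\to\infty$, using (\ref{w}) and (\ref{w1}); and then verify $\hat t<T$ for $\beta$ large using the terminal Lipschitz estimate on $\phi$. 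With these points in hand, I would apply the Crandall--Ishii lemma (Theorem~\ref{theorem0513}) exactly as in Step~4 of Theorem~\ref{theoremhjbm}, producing sequences with jets $(b_1,p_1,X_N)$ and $(b_2,p_2,Y_N)$ at $(\hat\gamma_{\hat t},\hat\eta_{\hat t})$ with $b_1+b_2=0$, $p_1=-p_2=2\beta^{1/2}((\hat\gamma_{\hat t}(\hat t))_N-(\hat\eta_{\hat t}(\hat t))_N)+O(\beta)\partial_x\Upsilon^{\varepsilon_\beta}$, and the Crandall--Ishii matrix inequality. The unbounded-operator terms involving $A^*$ cancel in pairs thanks to the symmetric choice $\hat\xi_{\hat t}=(\hat\gamma_{\hat t}+\hat\eta_{\hat t})/2$, exactly as in (\ref{vis112}).

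The hard part will be estimating the difference ${\mathbf H}(\hat\gamma_{\hat t},p_1,l_1)-{\mathbf H}(\hat\eta_{\hat t},-p_2,-l_2)$, because now $\mathbf{H}$ has quadratic growth in the gradient and the Lipschitz constants of $F$ and $q$ depend on $1+|u|^2$. The key observation is that the coercivity $q\leq -\tfrac{\nu_1}{2}|u|^2+q_0$ together with $|\langle p,F(\gamma_t,u)\rangle|\leq L|p|(1+\|\gamma_t\|_0)+L|p|\,|u|$ forces the $\varepsilon$-optimal control $u^\ast(\gamma,p)$ in $\mathbf{H}(\gamma_t,p,l)$ to satisfy $|u^\ast|\leq C(1+\|\gamma_t\|_0+|p|)/\nu_1$, so the supremum in $\mathbf{H}$ may be restricted to $u$ in a ball of radius $R_\beta:=C(1+M_0+|p_1|+|p_2|)/\nu_1$. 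Using this restriction, the Lipschitz bounds (\ref{assume11111002}) yield
\begin{equation*}
{\mathbf H}(\hat\gamma_{\hat t},p_1,l_1)-{\mathbf H}(\hat\eta_{\hat t},-p_2,-l_2)\leq C(1+R_\beta^2)\|\hat\gamma_{\hat t}-\hat\eta_{\hat t}\|_0 +\beta^{1/2}L^2\|\hat\gamma_{\hat t}-\hat\eta_{\hat t}\|_0^2+\text{(trace terms)},
\end{equation*}
with the trace terms handled exactly as in (\ref{j2}) using Assumption~\ref{hypstate5666} after letting $N\to\infty$. Since $|p_i|=O(\beta^{1/2}|\hat\gamma_{\hat t}(\hat t)-\hat\eta_{\hat t}(\hat t)|)+O(1)$, the term $R_\beta^2\|\hat\gamma_{\hat t}-\hat\eta_{\hat t}\|_0$ threatens to blow up; to control it I would strengthen the penalty by replacing $\beta^{1/2}|\cdot|^2$ by $\beta^{2/3}|\cdot|^2$ (or by inserting an additional higher-order term $\beta^{1/3}|\cdot|^4$) so that the product $R_\beta^2\cdot\|\hat\gamma_{\hat t}-\hat\eta_{\hat t}\|_0$ tends to $0$. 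Once this quantitative balance is struck, the remaining bounded terms are absorbed by $\lambda\nu_1^{-1}T^{-1}(\Upsilon^{\varepsilon_\beta}(\hat\gamma_{\hat t})+\Upsilon^{\varepsilon_\beta}(\hat\eta_{\hat t}))$ coming from the time-discount in $\Psi$, exactly in the spirit of the closing contradiction in (\ref{vis122}), yielding $c\leq c/2$ and hence the desired contradiction.
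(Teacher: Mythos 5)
Your plan correctly anticipates that a comparison argument along the lines of Theorem~\ref{theoremhjbm} is the right skeleton, and you correctly put your finger on the crux: the Hamiltonian is now quadratic in the gradient, so the Lipschitz differences of $F$ and $q$ scale like $1+|u|^2$, which cannot be bounded uniformly. However, the device you propose to handle this — restricting the supremum to a ball $|u|\le R_\beta$ and then ``strengthening the penalty'' — does not close, and it is also \emph{not} the route the paper takes.

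Concretely, the ball radius $R_\beta$ depends on $|p_1|+|p_2|$, and these gradients contain, besides $2\beta^{1/3}(\hat\gamma_{\hat t}(\hat t)-\hat\eta_{\hat t}(\hat t))$, also the term $2^5\beta\,\partial_x\Upsilon(\hat\gamma_{\hat t}-\hat\xi_{\hat t})\sim\beta\|\hat\gamma_{\hat t}-\hat\eta_{\hat t}\|_0^5$. The estimates of Step~2 in Theorem~\ref{theoremhjbm} give $\beta^{1/3}|\hat\gamma_{\hat t}(\hat t)-\hat\eta_{\hat t}(\hat t)|^2\to0$ and $\beta\|\hat\gamma_{\hat t}-\hat\eta_{\hat t}\|_0^6\to0$, which do \emph{not} imply that $\beta^{1/3}|\hat\gamma_{\hat t}(\hat t)-\hat\eta_{\hat t}(\hat t)|$ stays bounded; so $R_\beta$ can diverge, and the product $R_\beta^2\|\hat\gamma_{\hat t}-\hat\eta_{\hat t}\|_0$ does threaten to blow up, exactly as you feared. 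Your suggested remedy (replacing $\beta^{1/2}|\cdot|^2$ by $\beta^{2/3}|\cdot|^2$ or inserting $\beta^{1/3}|\cdot|^4$) is left as a hope rather than verified, and changing the exponents in the penalty also forces one to redo the Crandall--Ishii scaling $\kappa A^2$ in (\ref{II0615}); there is no indication this balance can actually be struck.

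The paper's proof uses two ideas that are absent from your proposal. First, rather than truncating the control, it compares $W_1$ with $W_2^\mu:=\mu W_2$ for $\mu\in(0,1)$, which is a viscosity super-solution of the PHJB equation with running cost $\mu q$. In the difference $q(\hat\gamma_{\hat t},u)-\mu q(\hat\eta_{\hat t},u)$, the slack $(1-\mu)$ produces a genuine coercive term $-\tfrac{\nu_1(1-\mu)}{2}|u|^2$ that absorbs, via Young's inequality, \emph{all} the $|u|$ and $|u|^2$ contributions coming from (\ref{j107})--(\ref{j307}), with the supremum over $u$ kept over the full control set. Passing $\mu\uparrow1$ at the end gives the comparison. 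Second, the $\mu$-trick requires $q_0\le0$ and $\phi\le0$ to make $W_2^\mu$ a super-solution and to close the terminal-time argument; the general case is reduced to this one by the Ishii change of unknown $\bar w(\gamma_t)=w(\gamma_t)-C(1+\Upsilon^1(\gamma_t))e^{\bar\rho(T-t)}$ on short time intervals, a step your proposal does not address at all. Finally, the paper keeps the sextic gauge $\Upsilon=\Upsilon^{3,3}$ (not $\Upsilon^{\varepsilon_\beta}$); since the growth bound (\ref{w}) on $W_1,W_2$ is still quadratic and $\Upsilon\sim\|\cdot\|_0^6$, the discount term still dominates, and one avoids the extra technicality that $\Upsilon^{\varepsilon_\beta}$ with $\varepsilon_\beta\downarrow0$ is only a uniformly gauge-type function after a careful choice of the rate.
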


\begin{proof}
It is sufficient to prove
$W_1\leq W_2$ under the stronger  assumption that $W_1$ is a viscosity sub-solution of the PHJB equation (\ref{1002a}). The proof consists of the following two steps.
\par
$Step\  1.$  We first assume that $q_0\leq 0$ and $\phi\leq 0$ in  Assumption \ref{hypstate0928a} (iii).
For $0<\mu<1$, let $W_2^{\mu}=\mu W_2$. It is not difficult to see that $W_2^{\mu}$ is a viscosity super-solution of
\begin{eqnarray}\label{hjb702}
\quad\quad  \begin{cases}
\partial_tW_2^{\mu}(\gamma_t)+\langle A^*\partial_xW_2^{\mu}(\gamma_t), \gamma_t(t)\rangle_H+{\mathbf{H}}^{\mu}(\gamma_t,\partial_xW_2^{\mu}(\gamma_t),\partial_{xx}W_2^{\mu}(\gamma_t))\\[2mm]
\qquad\qquad\quad
= 0,\qquad   (t,\gamma_t)\in
[0,T)\times {\Lambda};\\[3mm]
W_2^{\mu}(\gamma_T)=\mu\phi(\gamma_T), \ \gamma_T\in {\Lambda}_T
\end{cases}
\end{eqnarray}
with
\begin{eqnarray*}
	{\mathbf{H}}^{\mu}(\gamma_t,p,l)&:=&\sup_{u\in{
			{U}}}\left\{
	\langle p, F(\gamma_t,u)\rangle_H+\frac{1}{2}\mbox{Tr}[ l (GG^*)(\gamma_t,u)] +\mu q(\gamma_t,u)\right\}, \\
	&&\qquad\qquad\qquad (t,\gamma_t,p,l)\in [0,T]\times{\Lambda}\times H\times {\mathcal{S}}(H).
\end{eqnarray*}
We only need to prove that $W_1(\gamma_t)\leq W_2^{\mu}(\gamma_t)$ for all $\mu\in (0,1)$ and $(t,\gamma_t)\in
[T-\bar{a},T)\times
{\Lambda}$ with
$$\bar{a}:=\frac{1}{4(306L+36)L}\wedge{T}\wedge \frac{\nu_1(1-\mu)}{4^33^5L^2(4L+1)}.$$
Then, by a backward iteration over the intervals
$[T-i\bar{a},T-(i-1)\bar{a})$, we have $W_1\leq W_2^{\mu}$ on $[0,T]\times \Lambda$  for all $\mu\in (0, 1)$, and letting $\mu\rightarrow1$,  we have the desired comparison. 
Otherwise, there is  $(\tilde{t},\tilde{\gamma}_{\tilde{t}})\in (T-\bar{a},T)\times
{\Lambda}$  such that
$\tilde{m}:=W_1(\tilde{\gamma}_{\tilde{t}})-W_2^\mu(\tilde{\gamma}_{\tilde{t}})>0$.
\par
Let  $\varepsilon \in (0, 1)$ be  sufficiently  small  such that
$$
W_1(\tilde{\gamma}_{\tilde{t}})-W_2^\mu(\tilde{\gamma}_{\tilde{t}})-2\varepsilon \frac{\nu T-\tilde{t}}{\nu
	T}\Upsilon(\tilde{\gamma}_{\tilde{t}})
>\frac{\tilde{m}}{2}
$$
and
\begin{eqnarray}\label{5.307}
\varepsilon \frac{\nu T-T+\bar{a}}{\nu T} (306L+36)L\leq\frac{c}{4}
\end{eqnarray}
with
$$
\nu:=1+\frac{\bar{a}}{T}.
$$
Next,  we define for any  $(\gamma_t,\eta_t)\in {\Lambda}^{T-\bar{a}}\times{\Lambda}^{T-\bar{a}}$,
\begin{eqnarray*}
	\Psi(\gamma_t,\eta_t)&:=&W_1(\gamma_t)-W_2^\mu(\eta_t)-{\beta}\Upsilon(\gamma_{t},\eta_{t})-\beta^{\frac{1}{3}}|\gamma_{t}(t)-\eta_{t}(t)|^2\\
	&&-\varepsilon\frac{\nu T-t}{\nu
		T}(\Upsilon(\gamma_t)+\Upsilon(\eta_t)).
\end{eqnarray*}
Then
for every  $(\gamma^0_{t_0},\eta^0_{t_0})\in \Lambda^{\tilde{t}}\times \Lambda^{\tilde{t}}$ satisfying
$$
\Psi(\gamma^0_{t_0},\eta^0_{t_0})\geq \sup_{(\gamma_s,\eta_s)\in  \Lambda^{\tilde{t}}\times \Lambda^{\tilde{t}}}\Psi(\gamma_s,\eta_s)-\frac{1}{\beta},\
\    \mbox{and} \ \ \Psi(\gamma^0_{t_0},\eta^0_{t_0})\geq \Psi(\tilde{\gamma}_{\tilde{t}},\tilde{\gamma}_{\tilde{t}}) >\frac{\tilde{m}}{2},
$$
there exist $(\hat{t},\hat{\gamma}_{\hat{t}},\hat{\eta}_{\hat{t}})\in [\tilde{t},T]\times \Lambda^{\tilde{t}}\times \Lambda^{\tilde{t}}$ and a sequence $\{(t_i,\gamma^i_{t_i},\eta^i_{t_i})\}_{i\geq1}\subset
[\tilde{t},T]\times \Lambda^{\tilde{t}}\times \Lambda^{\tilde{t}}$ such that
\begin{description}
	\item[(i)] $\Upsilon(\gamma^0_{t_0},\hat{\gamma}_{\hat{t}})+\Upsilon(\eta^0_{t_0},\hat{\eta}_{\hat{t}})+|\hat{t}-t_0|^2\leq \frac{1}{\beta}$,
	$\Upsilon(\gamma^i_{t_i},\hat{\gamma}_{\hat{t}})+\Upsilon(\eta^i_{t_i},\hat{\eta}_{\hat{t}})+|\hat{t}-t_i|^2
	\leq \frac{1}{2^i\beta}$ and $t_i\uparrow \hat{t}$ as $i\rightarrow\infty$,
	\item[(ii)]  $\Psi_1(\hat{\gamma}_{\hat{t}},\hat{\eta}_{\hat{t}})
\geq \Psi(\gamma^0_{t_0},\eta^0_{t_0})$, and
	\item[(iii)]    for all $(s, \gamma_s,\eta_s)\in [\hat{t},T]\times \Lambda^{\hat{t}}\times \Lambda^{\hat{t}}\setminus \{(\hat{t},\hat{\gamma}_{\hat{t}},\hat{\eta}_{\hat{t}})\}$,
	{   \begin{eqnarray}\label{iii407}
		\Psi_1(\gamma_s,\eta_s)
		<\Psi_1(\hat{\gamma}_{\hat{t}},\hat{\eta}_{\hat{t}}),
		\end{eqnarray}}
\end{description}
where we have defined
{$$
	\Psi_1(\gamma_t,\eta_t):=  \Psi(\gamma_t,\eta_t)
	-\sum_{i=0}^{\infty}
	\frac{1}{2^i}\left[{\Upsilon}(\gamma^i_{t_i},\gamma_t)+{\Upsilon}(\eta^i_{t_i},\eta_t)+|{t}-t_i|^2\right], \  (\gamma_t,\eta_t)\in  \Lambda^{\tilde{t}}\times \Lambda^{\tilde{t}}.
	$$}
Note that the point
$({\hat{t}},\hat{{\gamma}}_{{\hat{t}}},\hat{{\eta}}_{{\hat{t}}})$ depends on  $\beta$,
$\varepsilon$ and $\mu$.
\par
Let \begin{eqnarray}\label{0928c1}
M_0:=\left(\frac{4\nu L}{\varepsilon(\nu-1)}+1\right)^{\frac{1}{4}}.
\end{eqnarray}
Notice that, by  (\ref{s0}) and (\ref{w}),
\begin{eqnarray*}
	\Psi(\gamma_t,\eta_t)&\leq& W_1(\gamma_t)-W_2^\mu(\eta_t)-\varepsilon\frac{\nu T-t}{\nu
		T}(\Upsilon(\gamma_t)+\Upsilon(\eta_t))\\
	&\leq&L(2+||\gamma_t||^2_0+||\eta_t||^2_0)-\varepsilon\frac{\nu-1}{\nu}(||\gamma_t||^6_0+||\eta_t||^6_0).
\end{eqnarray*}
Then, for all $\gamma_t, \eta_t\in \Lambda$ satisfying $||\gamma_t||_0\vee||\eta_t||_0\geq M_0$ we have
\begin{eqnarray*}
	\Psi(\gamma_t,\eta_t)
	&\leq&\left(-\varepsilon\frac{\nu-1}{\nu}M_0^4+2L\right)(||\gamma_t||^2_0\vee||\eta_t||^2_0)+2L\\
	&\leq&-2LM_0^2+2L\leq 0<\frac{\tilde{m}}{2}\leq \Psi(\hat{\gamma}_{\hat{t}},\hat{\eta}_{\hat{t}}).
\end{eqnarray*}
Therefore, we have  (\ref{5.10jiajiaaaa}). Moreover, by  Step 2 in  the proof procedure of Theorem \ref{theoremhjbm}, we have  (\ref{5.10}) holds true.
Now we show that there exists 
$N_0>0$
such that
$\hat{t}\in [\tilde{t},T)$
for all $\beta\geq N_0$.
By (\ref{5.10}), we can let $N_0>0$ be a large number such that
$$
L (1+2{M_0})||\hat{\gamma}_{\hat{t}}-\hat{\eta}_{\hat{t}}||_0
\leq
\frac{\tilde{m}}{4},
$$
for all $\beta\geq N_0$.
Then we have $\hat{t}\in [\tilde{t},T)$ for all $\beta\geq N_0$. Indeed, if say $\hat{t}=T$, noting that $\phi\leq0$, we will deduce the following contradiction:
\begin{eqnarray*}
	\frac{\tilde{m}}{2}\leq\Psi(\hat{\gamma}_{\hat{t}},\hat{\eta}_{\hat{t}})\leq \phi(\hat{\gamma}_{\hat{t}})-\mu\phi(\hat{\eta}_{\hat{t}})\leq
	\mu L (1+2{M_0})||\hat{\gamma}_{\hat{t}}-\hat{\eta}_{\hat{t}}||_0
	\leq
	\frac{\tilde{m}}{4}.
\end{eqnarray*}
Then by Steps 4 and  5 in  the proof procedure of Theorem \ref{theoremhjbm}, we obtain
\begin{eqnarray}\label{vis11207}
\begin{aligned}
& c+ \frac{\varepsilon}{\nu T}(\Upsilon(\hat{{\gamma}}_{{\hat{t}}})+\Upsilon(\hat{{\eta}}_{{\hat{t}}})
)-2\sum_{i=0}^{\infty}\frac{1}{2^i}(\hat{t}-t_i)\\
\leq&\, {\mathbf{H}}(\hat{{\gamma}}_{{\hat{t}}},\partial_x\chi^{N}(\hat{\gamma}_{\hat{t}}),\partial_{xx}\chi^{N}(\hat{\gamma}_{\hat{t}}))
-{\mathbf{H}}^{\mu}(\hat{{\eta}}_{{\hat{t}}},-\partial_x\hbar^{N}(\hat{\eta}_{{\hat{t}}}),-\partial_{xx}\hbar^{N}(\hat{\eta}_{\hat{t}})),
\end{aligned}
\end{eqnarray}
where $\partial_x\chi^{N}(\hat{\gamma}_{\hat{t}}),\partial_{xx}\chi^{N}(\hat{\gamma}_{\hat{t}}),
\partial_x\hbar^{N}(\hat{\eta}_{{\hat{t}}}),\partial_{xx}\hbar^{N}(\hat{\eta}_{\hat{t}})$ are given in (\ref{1002c})-(\ref{1002c3}).
On the  other hand,  via a simple calculation, we have
\begin{eqnarray}\label{v407}
\begin{aligned}
&{\mathbf{H}}(\hat{{\gamma}}_{{\hat{t}}},\partial_x\chi^{N}(\hat{\gamma}_{\hat{t}}),\partial_{xx}\chi^{N}(\hat{\gamma}_{\hat{t}}))
-{\mathbf{H}}^{\mu}(\hat{{\eta}}_{{\hat{t}}},-\partial_x\hbar^{N}(\hat{\eta}_{{\hat{t}}}),-\partial_{xx}\hbar^{N}(\hat{\eta}_{\hat{t}}))\\
\leq&\sup_{u\in U}(J_{1}+J_{2}+J_{3}).
\end{aligned}
\end{eqnarray}
Here,  $J_{1}$ satisfies (\ref{j2}); from Assumption \ref{hypstate0928a} (ii) and (\ref{03108}),  we have
\begin{eqnarray}\label{j107}
\quad&\begin{aligned}
&\qquad J_{2}:= {{\langle} {F}(\hat{{\gamma}}_{{\hat{t}}},u),\partial_x\chi^{N}(\hat{\gamma}_{\hat{t}}){\rangle}_{H}  -{\langle} {F}(\hat{{\eta}}_{{\hat{t}}},u),-\partial_x\hbar^{N}(\hat{\eta}_{{\hat{t}}}){\rangle}_{H}}\\
\leq&2\beta^{\frac{1}{3}}{L}(1+|u|)|\hat{{\gamma}}_{{\hat{t}}}({\hat{t}})-\hat{{\eta}}_{{\hat{t}}}({\hat{t}})|\times
||\hat{{\gamma}}_{{\hat{t}}}-\hat{{\eta}}_{{\hat{t}}}||_0\\
&+18\beta|\hat{{\gamma}}_{{\hat{t}}}({\hat{t}})-\hat{{\eta}}_{{\hat{t}}}({\hat{t}})|^5L(2+||\hat{{\gamma}}_{{\hat{t}}}||_0
+||\hat{{\eta}}_{{\hat{t}}}||_0+2|u|)
\\
&
+18L\sum_{i=0}^{\infty}\frac{1}{2^i}\left[|e^{(\hat{t}-t_i)A}\gamma^i_{t_i}(t_i)-\hat{\gamma}_{\hat{t}}(\hat{t}\,)|^5
+|e^{(\hat{t}-t_i)A}\eta^i_{t_i}(t_i)-\hat{\eta}_{\hat{t}}(\hat{t}\,)|^5\right]\\
&\qquad\qquad\times(1+||\hat{{\gamma}}_{{\hat{t}}}||_0+||\hat{{\eta}}_{{\hat{t}}}||_0+|u|)\\
& +36\varepsilon \frac{\nu T-{\hat{t}}}{\nu T} L(1+||\hat{{\gamma}}_{{\hat{t}}}||^6_0+||\hat{{\eta}}_{{\hat{t}}}||^6_0)
+18\varepsilon \frac{\nu T-{\hat{t}}}{\nu T} L(||\hat{{\gamma}}_{{\hat{t}}}||^5_0+||\hat{{\eta}}_{{\hat{t}}}||^5_0)|u|\\
\leq&\, 2\beta^{\frac{1}{3}}{L}||\hat{{\gamma}}_{{\hat{t}}}-\hat{{\eta}}_{{\hat{t}}}||^2_0+\frac{1}{12}\, {\nu_1(1-\mu)}|u|^2
+\frac{12}{\nu_1(1-\mu)}\beta^{\frac{2}{3}}{L^2}||\hat{{\gamma}}_{{\hat{t}}}-\hat{{\eta}}_{{\hat{t}}}||^4_0\\
&\qquad+18\beta L|\hat{{\gamma}}_{{\hat{t}}}({\hat{t}})-\hat{{\eta}}_{{\hat{t}}}({\hat{t}})|^5(2+||\hat{{\gamma}}_{{\hat{t}}}||_0
+||\hat{{\eta}}_{{\hat{t}}}||_0)
+\frac{1}{12}\, {\nu_1(1-\mu)}|u|^2\\
&+\frac{12}{\nu_1(1-\mu)}(18L\beta|\hat{{\gamma}}_{{\hat{t}}}({\hat{t}})-\hat{{\eta}}_{{\hat{t}}}({\hat{t}})|^5)^2
\\
&
+18L(1+||\hat{{\gamma}}_{{\hat{t}}}||_0+||\hat{{\eta}}_{{\hat{t}}}||_0)\sum_{i=0}^{\infty}\frac{1}{2^i}\left[|((\gamma^i)^A_{t_i, \hat t}-\hat{\gamma}_{\hat{t}})(\hat{t}\,)|^5
+|((\eta^i)^A_{t_i, \hat t}-\hat{\eta}_{\hat{t}})(\hat{t}\,)|^5\right]\\
& +\frac{12}{\nu_1(1-\mu)}\bigg{(}
9L\sum_{i=0}^{\infty}\frac{1}{2^i}\bigg{[}|((\gamma^i)^A_{t_i, \hat t}-\hat{\gamma}_{\hat{t}})(\hat{t}\,)|^5
+|((\eta^i)^A_{t_i, \hat t}-\hat{\eta}_{\hat{t}})(\hat{t}\,)|^5\bigg{]}
\bigg{)}^2\\
& +\frac{1}{12}\, {\nu_1(1-\mu)}|u|^2  +36\varepsilon \frac{\nu T-{\hat{t}}}{\nu T} L(1+||\hat{{\gamma}}_{{\hat{t}}}||^6_0+||\hat{{\eta}}_{{\hat{t}}}||^6_0)\\
&+\frac{1}{6}\, {\nu_1(1-\mu)}|u|^2
+\frac{12}{\nu_1(1-\mu)}\left(9\varepsilon \frac{\nu T-{\hat{t}}}{\nu T} L\right)^2(||\hat{{\gamma}}_{{\hat{t}}}||^{10}_0+||\hat{{\eta}}_{{\hat{t}}}||^{10}_0);
\end{aligned}
\end{eqnarray}
from Assumption \ref{hypstate0928a} (iii), noting that $q_0\leq 0$, we have
\begin{eqnarray}\label{j307}
\begin{aligned}
J_{3}:=&q{(}\hat{{\gamma}}_{{\hat{t}}},u{)}-
\mu q{(}\hat{{\eta}}_{{\hat{t}}}, u{)}\\
\leq& L(1+|u|^2)(1+2M_0)||\hat{{\gamma}}_{{\hat{t}}}-\hat{{\eta}}_{{\hat{t}}}||_0-\frac{1}{2}\, {\nu_1(1-\mu)}|u|^2.
\end{aligned}
\end{eqnarray}
Combining  (\ref{j2}) and (\ref{vis11207})-(\ref{j307}),  and letting $N\rightarrow\infty$,
we have from (\ref{5.10jiajiaaaa}), (\ref{5.10}) and (\ref{1002d1})-(\ref{1002d4}) that for sufficiently  large $\beta>0$,
\begin{eqnarray}\label{vis12207}
\begin{aligned}
c
\, \leq&\,
-\frac{\varepsilon}{\nu T}\left[\Upsilon(\hat{{\gamma}}_{{\hat{t}}})
+\Upsilon(\hat{{\eta}}_{{\hat{t}}})\right]\\
&+ \varepsilon \frac{\nu T-{\hat{t}}}{\nu T} (306L+36)L(1+||\hat{{\gamma}}_{{\hat{t}}}||_0^6
+||\hat{{\eta}}_{{\hat{t}}}||_0^6
)\\
&+\frac{12}{\nu_1(1-\mu)}\left(9\varepsilon \frac{\nu T-{\hat{t}}}{\nu T} L\right)^2(||\hat{{\gamma}}_{{\hat{t}}}||^{10}_0+||\hat{{\eta}}_{{\hat{t}}}||^{10}_0)+\frac{c}{4}.
\end{aligned}
\end{eqnarray}
Since
$$
\nu\leq1+\frac{1}{4T(306L+36)L}
\quad \text{ and }\quad  \bar{a}\leq \frac{1}{4(306L+36)L}, $$
 we have
\begin{eqnarray*}
	&&2(\nu T-T+\bar{a})(306L+36)L\\
	&\leq& 2\left(\frac{1}{4T(306L+36)L}T+\frac{1}{4(306L+36)L}\right)(306L+36)L=1,
\end{eqnarray*}
and then from  (\ref{s0}) and (\ref{5.307}),
\begin{eqnarray}\label{0928a107}
\qquad
\begin{aligned}
&-\frac{\varepsilon}{2\nu T}(\Upsilon(\hat{{\gamma}}_{{\hat{t}}})
+\Upsilon(\hat{{\eta}}_{{\hat{t}}}))+ \varepsilon \frac{\nu T-{\hat{t}}}{\nu T} (306L+36)L(1+||\hat{{\gamma}}_{{\hat{t}}}||_0^6
+||\hat{{\eta}}_{{\hat{t}}}||_0^6
)   \\
\qquad\leq&  \frac{\varepsilon}{2\nu T}\bigg{(}-||\hat{{\gamma}}_{{\hat{t}}}||_0^6
-||\hat{{\eta}}_{{\hat{t}}}||_0^6\\
&\qquad +2(\nu T-T+\bar{a})(306L+36)L(1+||\hat{{\gamma}}_{{\hat{t}}}||_0^6
+||\hat{{\eta}}_{{\hat{t}}}||_0^6
)\bigg{)}\leq \frac{c}{4}.
\end{aligned}
\end{eqnarray}
Since
$$\nu=1+\frac{\bar{a}}{T} \quad \text{and}\quad \nu\leq 1+\frac{\nu_1(1-\mu)}{4^33^5L^2T(4L+1)},$$
we have
$$
\left(\frac{\nu T-{\hat{t}}}{\nu T}\right)^2\leq \frac{(\nu T-T+\bar{a})^2}{\nu^2 T^2} =  \frac{4(\nu-1)^2 }{\nu^2}
$$
and
$$
\frac{4^33^5(\nu-1)L^2T(4L+1)}{\nu_1(1-\mu)}\leq  1.
$$
Then by (\ref{s0}) and (\ref{0928c1}), we have
\begin{eqnarray}\label{0928b107}
\quad\quad
\begin{aligned}
&\quad-\frac{\varepsilon}{2\nu T}(\Upsilon(\hat{{\gamma}}_{{\hat{t}}})
+\Upsilon(\hat{{\eta}}_{{\hat{t}}}))+\frac{12}{\nu_1(1-\mu)}\left(9\varepsilon \frac{\nu T-{\hat{t}}}{\nu T} L\right)^2(||\hat{{\gamma}}_{{\hat{t}}}||^{10}_0+||\hat{{\eta}}_{{\hat{t}}}||^{10}_0)\\
&\leq\frac{\varepsilon}{2\nu T}||\hat{{\gamma}}_{{\hat{t}}}||_0^6
\left(-1+\frac{12}{\nu_1(1-\mu)} \frac{9^2\times 8\varepsilon(\nu-1)^2T L^2 }{\nu } ||\hat{{\gamma}}_{{\hat{t}}}||^{4}_0\right)\\
&\quad+\frac{\varepsilon}{2\nu T}||\hat{{\eta}}_{{\hat{t}}}||_0^6
\left(-1+\frac{12}{\nu_1(1-\mu)}\frac{9^2\times 8\varepsilon (\nu-1)^2T L^2 }{\nu } ||\hat{{\eta}}_{{\hat{t}}}||^{4}_0\right)\\
&\leq\frac{\varepsilon}{2\nu T}(||\hat{{\gamma}}_{{\hat{t}}}||_0^6+||\hat{{\eta}}_{{\hat{t}}}||_0^6)\\
&\qquad\times\left(-1+\frac{12}{\nu_1(1-\mu)} \frac{9^2\times 8\varepsilon(\nu-1)^2T L^2 }{\nu } \left(\frac{4\nu L}{\varepsilon(\nu-1)}+1\right)\right)\\
&\leq\frac{\varepsilon}{2\nu T}(||\hat{{\gamma}}_{{\hat{t}}}||_0^6+||\hat{{\eta}}_{{\hat{t}}}||_0^6)
\left(-1+\frac{4^3\times 3^5(\nu-1)L^2T(4L+1)}{\nu_1(1-\mu)}\right)\leq 0.
\end{aligned}
\end{eqnarray}
Combining (\ref{vis12207})-(\ref{0928b107}), we arrive at the following contradiction:
\begin{eqnarray*}
	c\leq
	\frac{c}{4}+\frac{c}{4}=\frac{c}{2}.
\end{eqnarray*}
\par
$Step\  2.$  \emph{The general case.} The idea is to reduce the general case to the first case by a suitable
change of functional (see Ishii \cite{ishii}). Suppose that $w$ is a viscosity  solution of (\ref{hjb7}). Then, a
straightforward computation shows that $\bar{w}(\gamma_t)=w(\gamma_t)-C(1+\Upsilon^1(\gamma_t))e^{\bar{\rho}(T-t)}$ for $C\geq2L$ and $\bar{\rho}>0$ is a viscosity solution of
\begin{eqnarray}\label{hjb81}
\begin{cases}
\partial_t\bar{w}(\gamma_t)+\langle A^*\partial_x\bar{w}(\gamma_t), \gamma_t(t)\rangle _H+\bar{\mathbf{H}}(\gamma_t,\partial_x\bar{w}(\gamma_t),\partial_{xx}\bar{w}(\gamma_t))\\
\qquad\quad = 0,
\quad (t,\gamma_t)\in
[0,T)\times {\Lambda};\\
\bar{w}(\gamma_T)=\phi(\gamma_T)-C(1+\Upsilon^1(\gamma_T)), \quad \gamma_T\in {\Lambda}_T,
\end{cases}
\end{eqnarray}
with
\begin{eqnarray*}
	\bar{\mathbf{H}}(\gamma_t,p,l)&:=&\sup_{u\in{
			{U}}}\left\{
	\langle p, F(\gamma_t,u)\rangle_H+\frac{1}{2}\mbox{Tr}[ l (GG^*)(\gamma_t,u)] +\bar{q}(\gamma_t,u)\right\}, \\[3mm]
	&&\qquad (t,\gamma_t,p,l)\in [0,T]\times{\Lambda}\times H\times {\mathcal{S}}(H)
\end{eqnarray*}
and
\begin{eqnarray*}
	\bar{q}(\gamma_t,u)&=&{q}(\gamma_t,u)-C\bar{\rho} e^{\bar{\rho}(T-t)}(1+\Upsilon^1(\gamma_t))+ Ce^{\bar{\rho}(T-t)}\langle\partial_x\Upsilon^1(\gamma_t), F(\gamma_t,u)\rangle_H\\ [3mm]
	&&+\frac{1}{2}Ce^{\bar{\rho}(T-t)}\mbox{Tr}[ \partial_{xx}\Upsilon^1(\gamma_t) (GG^*)(\gamma_t,u)].
\end{eqnarray*}
Note that $\bar{q}$ still satisfies  Assumption \ref{hypstate0928a} (iii).
Moreover,  from Assumption~\ref{hypstate0928a} (iii),  we can choose $C\geq 2L$ so that $\bar{\phi}(\gamma_T)=\phi(\gamma_T)-C(1+\Upsilon^1(\gamma_T))\leq0$.
\par
Next we show that there is $\bar{\rho}>0$ such that $\bar{q}(\gamma_t,u)\leq -\frac{\bar{\nu}_1|u|^2}{2}$ for all $(t,\gamma_t,u)\in [T-\frac{1}{\bar{\rho}},T]\times\Lambda\times U$. In fact, for all $(t,\gamma_t,u)\in [T-\frac{1}{\bar{\rho}},T]\times\Lambda\times U$, by  Assumption \ref{hypstate0928a} (ii) and (iii),
\begin{eqnarray*}
	&& \bar{q}(\gamma_t,u)\\
	&\leq&-\frac{\nu_1}{2}|u|^2+L(1+||\gamma_t||_0^2)+6Ce^{\bar{\rho}(T-t)}L(1+||\gamma_t||_0+|u|)|\gamma_t(t)|\\
	&& + 15Ce^{\bar{\rho}(T-t)}L^2(1+||\gamma_t||^2_0)-\frac{1}{2}C\bar{\rho} e^{\bar{\rho}(T-t)}(1+||\gamma_t||_0^2)\\
	&\leq&-\frac{\nu_1}{2}|u|^2+(L+12CL+15CL^2)e^{\bar{\rho}(T-t)}(1+||\gamma_t||^2_0)+6Ce^{\bar{\rho}(T-t)}L|u||\gamma_t(t)|\\
	&& -\frac{1}{2}C\bar{\rho} e^{\bar{\rho}(T-t)}(1+||\gamma_t||_0^2).
\end{eqnarray*}
Since
$$
6Ce^{\bar{\rho}(T-t)}L|u||\gamma_t(t)|\leq \frac{\nu_1}{4}|u|^2+\frac{36C^2L^2e^{2\bar{\rho}(T-t)}}{\nu_1}||\gamma_t||^2_0,
$$
then for
$$
\bar{\rho}>2\frac{L}{C}+24L+30L^2+\frac{72CL^2e}{\nu_1},
$$
we have
$$
\bar{q}(\gamma_t,u)\leq -\frac{\nu_1}{4}|u|^2, \ \mbox{for all} \  (t,\gamma_t,u)\in [T-\frac{1}{\bar{\rho}},T]\times\Lambda\times U,
$$
which is the desired estimate with $\bar{\nu}_1=\frac{\nu_1}{2}$ and $q_0=0$.
\par
Finally, set
$$\bar{W}_1(\gamma_t)=W_1-C(1+\Upsilon^1(\gamma_t))e^{\bar{\rho}(T-t)}\quad \text{and}\quad  \bar{W}_2(\gamma_t)=W_2-C(1+\Upsilon^1(\gamma_t))e^{\bar{\rho}(T-t)}.$$
 From Step 1,  we get $\bar{W}_1\leq \bar{W}_2$ in
$[T-{\frac{1}{\bar{\rho}}},T]\times\Lambda$, and thus ${W}_1\leq {W}_2$ in
$[T-\frac{1}{\bar{\rho}},T]\times\Lambda$. Then by a step-by-step argument,  we
obtain the comparison in $[0,T]\times\Lambda$.
The proof is  complete.
\end{proof}

We now consider an unbounded stochastic control problem for  controlled FSEE (\ref{state1}). The payoff to be maximized is
\begin{eqnarray}\label{cost11024}
\qquad\qquad    J(\gamma_t,u(\cdot))=\mathbb{E}\int_{t}^{T}q(X^{\gamma_t,u}_s,u(s))\mbox{d}s+\mathbb{E}\phi(X^{\gamma_t,u}_T), \  (t,\gamma_t)\in [0,T]\times {\Lambda},
\end{eqnarray}
where $u(\cdot)\in {\mathcal{U}}[t,T]$, the set of $U$-valued $\{\mathcal{F}\}_{t\leq s\leq T}$-progressively measurable controls such that
$$
\mathbb{E}\int_{t}^{T}|u(s)|^2\mbox{d}s<\infty.
$$
The value functional is defined by
\begin{eqnarray}\label{value1024}
V(\gamma_t)=\sup_{u(\cdot)\in {\mathcal{U}}[t,T]} J(\gamma_t,u(\cdot)),\quad  (t,\gamma_t)\in [0,T]\times {\Lambda}.
\end{eqnarray}
We make the following assumptions on the data.
\begin{assumption}\label{hypstate0928a1024}
	There exist  constants $L>0$ and $\nu_1>0$ such that,  for all $(t, \gamma_t, \eta_t, \zeta_T,u,v) \in [0,T]\times {\Lambda}\times {\Lambda}\times {\Lambda_T}\times U\times U$,
	\begin{eqnarray}\label{assume111110021024}
	q(\gamma_t,u)\leq -\frac{\nu_1}{2}|u|^2+L,\quad
	\phi(\zeta_T)\leq L,
	\end{eqnarray}
	\begin{eqnarray*}
		|F(\gamma_t,u)-F(\eta_t,v)|&\leq&
		L\,(||\gamma_t-\eta_t||_0+|u-v|), \\ [3mm]
		|q(\gamma_t,u)-q(\eta_{t},u)|&\leq& L\, (1+||\gamma_t||_0+||\eta_{t}||_0)\, ||\gamma_t-\eta_{t}||_0.
	\end{eqnarray*}
\end{assumption}

\begin{lemma}\label{lemmaexist04091014}
 Let Assumptions \ref{hypstate0928a} and \ref{hypstate0928a1024} be satisfied. Then for every  $u(\cdot)\in {\mathcal{U}}[t,T]$ and
	$\xi_t\in L^{2}_{\mathcal{P}}(\Omega, \Lambda_t(H))$,  equation (\ref{state1}) has a
	unique mild solution $X^{\xi_t,u}\in L_{\mathcal{P}}^2(\Omega, \Lambda_T(H))$ and   there is a constant $C_2>0$ depending only on   $T$ and $L$ such that
	\begin{eqnarray}\label{state1est101026}
	\mathbb{E}||X_T^{\xi_t,u}||^2_0\leq C_2\left(1+\mathbb{E}||\xi_t||^2_0+\int^{T}_{{t}}\mathbb{E}|u(\sigma)|^2d\sigma\right)
	\end{eqnarray}
	and for  $0\leq t\leq s\leq T$ and $\gamma_t,\eta_t\in{\Lambda_t}$,
	\begin{eqnarray}\label{060410221024}
	\qquad\qquad&
	\begin{aligned}
	&\quad
	\mathbb{E}||X^{\gamma_t,u}_T-X^{\eta_{t,s}^A,u}_T||^{2}_0\\
	&\leq C_2\, ||\gamma_t-\eta_t||^{2}_0+C_2\, (t-s)^{\frac{1}{2}}\left(1+||\gamma_t||^2_0+||\eta_t||^2_0+\int^{T}_{{t}}\mathbb{E}|u(\sigma)|^2d\sigma\right).
	\end{aligned}
	\end{eqnarray}
\end{lemma}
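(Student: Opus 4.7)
The plan is to mimic the strategy of Lemma~\ref{lemmaexist0409} and Lemma~\ref{lemmaexist111}, replacing the Da Prato--type factorization argument (which is unavailable at $p=2$, since it requires $\alpha \in (\frac{1}{p}, \frac{1}{2})$) by a direct energy estimate obtained via It\^o's formula applied to $|\cdot|^2$ through Yosida approximations, exploiting the contraction property in Assumption~\ref{hypstate0928a}(i) to absorb the unbounded drift. The linear-in-$|u|$ growth of $F$ (together with $L^2$-integrability of the control) will only contribute an additive term of the form $\mathbb{E}\int_t^T|u(\sigma)|^2d\sigma$ via Young's inequality, so the constant $C_2$ can be kept independent of $u$.

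First, for existence and uniqueness, I would fix $u\in\mathcal{U}[t,T]$ and set up a contraction in $L^2_\mathcal{P}(\Omega,\Lambda_T(H))$ equipped with the weighted norm $|||X|||^2:=\mathbb{E}\sup_{s\leq T}e^{-\beta s}|X(s)|^2$ for a sufficiently large $\beta>0$. To obtain the a priori bound (\ref{state1est101026}) uniformly in the Picard iterates, I would consider the Yosida--approximated equation (\ref{07162}) and apply It\^o's formula to $|X^\mu(s)|^2$ as in Lemma~\ref{theoremito2}. Using $\langle A_\mu y, y\rangle_H \leq 0$, the unbounded term drops out, and one gets
\begin{eqnarray*}
|X^\mu(s)|^2 &\leq& |\xi_t(t)|^2 + \int^{s}_{t}\left[2\langle F(X^\mu_\sigma,u(\sigma)),X^\mu(\sigma)\rangle_H + |G(X^\mu_\sigma,u(\sigma))|^2_{L_2(\Xi,H)}\right] d\sigma \\
&&+ 2\int^s_t\langle X^\mu(\sigma),G(X^\mu_\sigma,u(\sigma))dW(\sigma)\rangle_H.
\end{eqnarray*}
The drift integrand is dominated by $C(1+\|X^\mu_\sigma\|_0^2+|u(\sigma)|^2)$ by Young's inequality, and BDG applied to the supremum of the stochastic term absorbs a fraction of $\mathbb{E}\|X^\mu_r\|_0^2$. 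Gronwall then yields (\ref{state1est101026}) uniformly in $\mu$. Sending $\mu\to\infty$ as in (\ref{0717}) produces the mild solution, and uniqueness follows from an entirely analogous It\^o estimate on the difference $|X^1-X^2|^2$, using the Lipschitz continuity of $F,G$ (with the same $u$).

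For the stability bound (\ref{060410221024}), I would split the time interval $[t,T]$ into $[t,s]$ and $[s,T]$. On $[t,s]$, by definition $X^{\eta_{t,s}^A,u}_\sigma\equiv \eta_{t,\sigma}^A$; applying (\ref{jias510815jia11}) with $m=1$ (setting $\eta_t=\gamma_t$) to $|X^{\gamma_t,u}(\sigma)-e^{(\sigma-t)A}\gamma_t(t)|^2$, taking expectation and combining with (\ref{state1est101026}) to control $\mathbb{E}\int_t^s(|F|^2+|G|^2)d\sigma$, produces a bound proportional to $(s-t)^{1/2}(1+\|\gamma_t\|_0^2+\mathbb{E}\int_t^T|u|^2d\sigma)$ for $\mathbb{E}|X^{\gamma_t,u}(s)-e^{(s-t)A}\gamma_t(t)|^2$; adding the triangle term $|e^{(s-t)A}\gamma_t(t)-e^{(s-t)A}\eta_t(t)|^2 \leq \|\gamma_t-\eta_t\|_0^2$ controls $\mathbb{E}\|X^{\gamma_t,u}_s-X^{\eta_{t,s}^A,u}_s\|_0^2$ at the matching time $s$. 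On $[s,T]$ the two processes solve the same FSEE driven by the same control, so a third It\^o-in-$|\cdot|^2$ estimate on $|X^{\gamma_t,u}-X^{\eta_{t,s}^A,u}|^2$ together with the Lipschitz continuity and Gronwall propagates the discrepancy at time $s$ to $T$ without introducing new $|u|$-dependent factors.

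The principal obstacle is structural rather than computational: one must ensure that every occurrence of $|u(\sigma)|$ enters additively (through Young's inequality of the form $2|u||X|\leq |X|^2+|u|^2$) so that Gronwall closes with a constant $C_2=C_2(T,L)$ independent of $\|u\|_{L^2}$, as asserted. The secondary technical point, namely the passage $\mu\to\infty$ in the It\^o inequality involving the Yosida approximation, is handled exactly as in the proof of Lemma~\ref{theoremito2} via localization by a stopping time and the already-established continuity (\ref{0717}).
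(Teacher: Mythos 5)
Your route is genuinely different from the paper's, and two of its key steps have gaps.

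\textbf{Existence and uniqueness.} You propose a direct Picard--Banach contraction in $L^2_{\mathcal P}(\Omega,\Lambda_T(H))$ with a weighted sup-norm. To make the map $\Phi$ self-mapping and Lipschitz in that norm you must control $\mathbb{E}\sup_{s}\bigl|\int_t^s e^{(s-\sigma)A}G(\cdot)\,dW(\sigma)\bigr|^2$. The stochastic convolution is not a martingale (because of the $e^{(s-\sigma)A}$ factor), so BDG does not apply directly; the paper's factorization device that handles this needs $\alpha\in(1/p,1/2)$, which is empty at $p=2$ (as you note). What is actually required is the maximal inequality for stochastic convolutions against a \emph{contraction} semigroup at $p=2$ (Kotelenez/Tubaro-type). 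This is a true fact, but you neither cite it nor derive it, and the Yosida-plus-It\^o device you invoke yields an a priori bound on a \emph{given} solution rather than the well-definedness and contractivity of $\Phi$. The paper sidesteps all of this: it truncates $\xi_t$ and $u$ to bounded data, applies the already-established $p>2$ theory (Lemma~\ref{lemmaexist0409}) to each truncated problem, and shows the sequence $\{X^{n,\xi_t,u}_T\}$ is Cauchy in $L^2_{\mathcal P}(\Omega,\Lambda_T)$ by the $\Upsilon^\varepsilon$-inequality \eqref{jias510815jia1102}. No $p=2$ contraction is ever needed.

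\textbf{The two a priori estimates.} There is a structural mismatch between the quantity your It\^o step controls and the quantity \eqref{state1est101026}--\eqref{060410221024} require. Applying It\^o to $|X^\mu(s)|^2$ (or to $|y(s)|^2$ via \eqref{jias510815jia11} with $m=1$) and ``taking expectation'' yields a bound on $\mathbb{E}|X^\mu(s)|^2$ at each fixed $s$, not on $\mathbb{E}\|X^\mu_s\|_0^2=\mathbb{E}\sup_{r\le s}|X^\mu(r)|^2$. For \eqref{state1est101026} you do add a BDG step to pass to the supremum, and that can close; but for \eqref{060410221024} you say only ``taking expectation'' and then claim the result ``controls $\mathbb{E}\|X^{\gamma_t,u}_s-X^{\eta_{t,s}^A,u}_s\|_0^2$ at the matching time $s$'' --- it does not, because a pointwise bound at $s$ plus the triangle inequality at $s$ does not bound $\sup_{r\in[t,s]}|X^{\gamma_t,u}(r)-e^{(r-t)A}\gamma_t(t)|$, which enters the $\|\cdot\|_0$-norm of the history. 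The same objection applies to your third ``It\^o-in-$|\cdot|^2$'' propagation on $[s,T]$. You would have to insert BDG estimates at both places. The paper avoids BDG entirely: the inequality \eqref{jias510815jia1102} is written for the path functional $\Upsilon^\varepsilon$, which by Lemma~\ref{theoremS1} dominates $\|\cdot\|_0^2-\varepsilon/2$ pathwise; taking expectation of a pathwise inequality whose stochastic integral is a genuine martingale therefore already controls $\mathbb{E}\|X_s\|_0^2$ with no maximal inequality at all. Moreover, the paper does not split at $s$ and triangle-inequality: it writes a single process $\Delta X^n$ on all of $[t,T]$ (equal to $X^{n,\gamma_t,u}(\cdot)-e^{(\cdot-t)A}\eta_t(t)$ on $[t,s]$ and to the difference of the two solutions on $[s,T]$), which itself satisfies an FSEE, and applies $\Upsilon^\varepsilon$ once. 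Your two-interval split can be made to work, but it is longer and, as written, incomplete.

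In short: the truncation-plus-Cauchy existence argument and the $\Upsilon^\varepsilon$ energy estimates are the paper's chosen machinery precisely because they avoid the $p=2$ maximal inequality and BDG. Your route is plausible but has to import the stochastic-convolution maximal inequality and must apply BDG in the stability estimate, neither of which is currently in your sketch.
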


\begin{proof}  Let us define, for $n\geq 1$,
\begin{eqnarray*}
	\xi_t^n=\begin{cases}
		\xi_t\ \ \mbox{if}\ ||\xi_t||_0\leq n; \\
		0 \  \ \ \mbox{if}\ ||\xi_t||_0>n,
	\end{cases}
\end{eqnarray*}
and for every  $u(\cdot)\in {\mathcal{U}}[t,T]$,
\begin{eqnarray*}
	u^n(s)=\begin{cases}
		u(s)\ \ \mbox{if}\ |u(s)|\leq n; \\
		0 \  \ \ \ \ \ \mbox{if}\ |u(s)|>n,
	\end{cases}
 \ \  s\in[t,T].
\end{eqnarray*}
From Lemma \ref{lemmaexist0409}, we can denote by $X^{n,\xi_t,u}(\cdot)$ the  solution of the following equation:
\begin{eqnarray}\label{202210261}
\qquad& \begin{aligned}
X^{n,\xi_t,u}(s)=&e^{(s-t)A}\xi^n_t(t)+\int_{t}^{s}e^{(s-\sigma)A}F\left(X^{n,\xi_t,u}_\sigma,u^n(\sigma)\right)d\sigma\\
&+\int_{t}^{s}e^{(s-\sigma)A}G\left(X^{n,\xi_t,u}_\sigma,u(\sigma)\right)dW(\sigma),
\end{aligned}
\end{eqnarray}
where $X^{n,\xi_t,u}_t=\xi_t^n$.
 For $m,n\geq 1$, let
 $$
{\Delta}X^{m,n} (s):=X^{m,\xi_t,u}(s)-X^{n,\xi_t,u}(s),\ \ s\in [0,T],
$$
$$\Delta F^{m,n}(s)=F(X^{m,\xi_t,u}_s,u^m(s))-F(X^{n,\xi_t,u}_s,u^n(s)),\ \ s\in [0,T],$$
$$\Delta G^{m,n}(s)=G(X^{m,\xi_t,u}_s,u(s))-G(X^{n,\xi_t,u}_s,u(s)),\ \ s\in [0,T].$$
For every $\varepsilon>0$, applying (\ref{jias510815jia1102}) to $\Delta X^{m,n}$, we get
\begin{eqnarray*}
	\Upsilon^{\varepsilon}(\Delta X^{m,n}_{l})
	&\leq& \Upsilon^{\varepsilon}(\Delta X^{m,n}_t)+\int^{{l}}_{{t}}[\langle \partial_x\Upsilon^{\varepsilon}(\Delta X^{m,n}_\sigma), \Delta F^{m,n}(\sigma)\rangle_H\nonumber\\
	&&+\frac{1}{2}\mbox{Tr}(\partial_{xx}\Upsilon^{\varepsilon}(\Delta X^{m,n}_\sigma)(\Delta G^{m,n}
	(\Delta G^{m,n})^*)(\sigma))]d\sigma\nonumber\\
	&&
	+\int^{{l}}_{t}\langle \partial_x\Upsilon^{\varepsilon}(\Delta X^{m,n}_\sigma), \Delta G^{m,n}(\sigma)dW(\sigma)\rangle_H,\ l\in [s,T].
\end{eqnarray*}
Taking expectation on both sides of the last inequality, we have from Lemma \ref{theoremS1}, Assumption \ref{hypstate0928a} (ii) and  Assumption \ref{hypstate0928a1024},
\begin{eqnarray*}
	&&\mathbb{E}||\Delta X^{m,n}_{l}||^2_0\leq\mathbb{E}\Upsilon^{\varepsilon}(\Delta X^{m,n}_{l})+\varepsilon\\
	&\leq& 3\mathbb{E}||\xi_t^m-\xi_t^n||_0^2+\mathbb{E}\int^{l}_{t}[6L||\Delta X^{m,n}_\sigma||_0(||\Delta X^{m,n}_\sigma||_0+|u^m(\sigma)-u^n(\sigma)|)\\
&&\qquad\qquad\qquad\qquad\qquad+15L^2||\Delta X^{m,n}_\sigma||_0^2]d\sigma+\varepsilon.
\end{eqnarray*}
Using  Gronwall's inequality and letting $\varepsilon\rightarrow0$, there exists a  constant $C$ depending only on   $T$ and $L$ such that
$$
\mathbb{E}||\Delta X^{m,n}_{T}||^2_0\leq C\left(\mathbb{E}||\xi_t^m-\xi_t^n||_0^2+\int^{T}_{t}\mathbb{E}|u^m(s)-u^n(s)|^2ds\right).
$$
Thus, $\{X^{n,\xi_t,u}_T\}_{n\geq1}$ is a Cauchy sequence in $L^2_{\mathcal{P}}(\Omega,\Lambda_T)$.
It is now easy to see that the process
$$
X(s)=\lim_{n\rightarrow \infty}X^{n,\xi_t,u}(s), \ \ s\in [0,T]
$$
is $P$-a.s. well defined and satisfies the equation (\ref{state1}). Now we prove (\ref{state1est101026}). Applying Lemma \ref{theoremito2} to $X^{n,\xi_t,u}$, we get
\begin{eqnarray*}
	&&\Upsilon^1(X^{n,\xi_t,u}_{s})\\
	&\leq& \Upsilon^1(X^{n,\xi_t,u}_t)+\int^{{s}}_{{t}}[\langle \partial_x\Upsilon^1(X^{n,\xi_t,u}_\sigma), F(X^{n,\xi_t,u}_\sigma,u^n(\sigma))\rangle_H\nonumber\\
	&&+\frac{1}{2}\mbox{Tr}(\partial_{xx}\Upsilon^1(X^{n,\xi_t,u}_\sigma)(GG^*)(X^{n,\xi_t,u}_\sigma,u(\sigma)))]d\sigma\nonumber\\
	&&
	+\int^{{s}}_{t}\langle \partial_x\Upsilon^1(X^{n,\xi_t,u}_\sigma), G(X^{n,\xi_t,u}_\sigma,u(\sigma))dW(\sigma)\rangle_H;
\end{eqnarray*}
Taking expectation on both sides of the last inequality, we have from Lemma \ref{theoremS1} and Assumption \ref{hypstate0928a} (ii),
\begin{eqnarray*}
	&&\mathbb{E}||X^{n,\xi_t,u}_{s}||^2_0\leq\mathbb{E}\Upsilon^1(X^{n,\xi_t,u}_{s})+1\\
	&\leq& 3\mathbb{E}||\xi_t||_0^2+\int^{{s}}_{{t}}\mathbb{E}[6L||X^{n,\xi_t,u}_{\sigma}||_0(1+||X^{n,\xi_t,u}_{\sigma}||_0+|u(\sigma)|)\\
	&&+15L^2(1+||X^{n,\xi_t,u}_{\sigma}||_0^2)]d\sigma+1\\
	&\leq& 3\mathbb{E}||\xi_t||_0^2+1+(6L+15L^2)T+3L\int^{T}_{{t}}\mathbb{E}|u(\sigma)|^2d\sigma\\
	&&+15L(1+L)\int^{{s}}_{{t}}\mathbb{E}||X^{n,\xi_t,u}_{\sigma}||^2_0d\sigma.
\end{eqnarray*}
Using  Gronwall's inequality, there exists a  constant $C_2$ depending only on   $T$ and $L$ such that
\begin{eqnarray}\label{1026b1}
\mathbb{E}||X^{n,\xi_t,u}_{T}||^2_0\leq C_2\left(1+\mathbb{E}||\xi_t||^2_0+\int^{T}_{{t}}\mathbb{E}|u(\sigma)|^2d\sigma\right).
\end{eqnarray}
Letting $n\rightarrow \infty$, by Fatou lemma, we have (\ref{state1est101026}). 
\par
Now we study (\ref{060410221024}).  For every  $0\leq t\leq s\leq T, \gamma_t,\eta_t\in{\Lambda_t}$ and $n\geq ||\gamma_t||_0\vee||\eta||_0$,
let $$
{\Delta}X^n (\cdot):=(X^{n,\gamma_t,u}(\cdot)-e^{(\cdot-t)}\eta_t(t))1_{[t,s]}(\cdot)+(X^{n,\gamma_t,u}(\cdot)-X^{n,\eta_{t,s}^A,u}(\cdot))1_{[s,T]}(\cdot),
$$
with  $\Delta X^{n}_t=\gamma_t-\eta_t$,
where $X^{n,\gamma_t,u}(\cdot)$ and $X^{n,\eta_{t,s}^A,u}(\cdot)$ denote the solution of (\ref{202210261}) with $(\gamma_t,u^n(\cdot))$ and $(\eta_{t,s}^A,u^n(\cdot))$, respectively. Then $\Delta X^n(\cdot)$ satisfies the following equation.
\begin{eqnarray}\label{2022102611026}
\Delta X^{n}(l)&=&e^{(l-t)A}(\gamma_t(t)-\eta_t(t))+\int_{t}^{l}e^{(l-\sigma)A}\Delta F\left(\sigma,u^n(\sigma)\right)d\sigma\nonumber\\
&&+\int_{t}^{l}e^{(l-\sigma)A}\Delta G\left(\sigma,u(\sigma)\right)dW(\sigma),
\end{eqnarray}
where $\Delta X^{n}_t=\gamma_t-\eta_t$, and
$$\Delta F(\sigma,u)=F\left(X^{n,\gamma_t,u}_\sigma,u\right)1_{[t,s]}(\sigma)+(F(X^{n,\gamma_t,u}_\sigma,u)-F(X^{n,\eta_{t,s}^A,u}_\sigma,u))1_{[s,T]}(\sigma),$$
$$\Delta G(\sigma,u)=G\left(X^{n,\gamma_t,u}_\sigma,u\right)1_{[t,s]}(\sigma)+(G(X^{n,\gamma_t,u}_\sigma,u)-G(X^{n,\eta_{t,s}^A,u}_\sigma,u))1_{[s,T]}(\sigma).$$
For every $\varepsilon>0$, applying (\ref{jias510815jia1102}) to $\Delta X^n$, we get
\begin{eqnarray*}
	\Upsilon^{\varepsilon}(\Delta X^{n}_{l})
	&\leq& \Upsilon^{\varepsilon}(\Delta X^{n}_t)+\int^{{l}}_{{t}}[\langle \partial_x\Upsilon^{\varepsilon}(\Delta X^{n}_\sigma), \Delta F(\sigma,u^n(\sigma))\rangle_H\nonumber\\
	&&+\frac{1}{2}\mbox{Tr}(\partial_{xx}\Upsilon^{\varepsilon}(\Delta X^{n}_\sigma)(\Delta G
	(\Delta G)^*)(\sigma,u(\sigma)))]d\sigma\nonumber\\
	&&
	+\int^{{l}}_{t}\langle \partial_x\Upsilon^{\varepsilon}(\Delta X^{n}_\sigma), \Delta G(\sigma,u(\sigma))dW(\sigma)\rangle_H,\ l\in [s,T];
\end{eqnarray*}
Taking expectation on both sides of the last inequality, we have from Lemma \ref{theoremS1}, Assumption \ref{hypstate0928a} (ii) and  Assumption \ref{hypstate0928a1024},
\begin{eqnarray*}
	&&\mathbb{E}||\Delta X^{n}_{l}||^2_0\leq\mathbb{E}\Upsilon^{\varepsilon}(\Delta X^{n}_{l})+\varepsilon\\
	&\leq& 3\mathbb{E}||\gamma_t-\eta_t||_0^2+\mathbb{E}\int^{s}_{{t}}[6L||X^{n,\gamma_t,u}_\sigma||_0(1+||X^{n,\gamma_t,u}_\sigma||_0+|u(\sigma)|)\\
	&&\qquad\qquad\qquad\qquad+15L^2(1+||X^{n,\gamma_t,u}_\sigma||_0^2)]d\sigma\\
	&&+\mathbb{E}\int^{l}_{s}[6L||\Delta X^{n}_\sigma||^2_0+15L^2||\Delta X^{n}_\sigma||_0^2]d\sigma+\varepsilon\\
	&\leq&  3||\gamma_t-\eta_t||_0^2+\varepsilon+(6L+15L^2)(s-t)\\
	&&+6L(s-t)^{\frac{1}{2}}(\mathbb{E}|| X^{n,\gamma_t,u}_{s}||^2_0)^{\frac{1}{2}}\left(\int^{s}_{{t}}\mathbb{E}|u(\sigma)|^2d\sigma\right)^{\frac{1}{2}}\\
	&&+(12L+15L^2)\int^{s}_{t}\mathbb{E}|| X^{n,\gamma_t,u}_{\sigma}||^2_0d\sigma+(6L+15L^2))\int^{l}_{{s}}\mathbb{E}||\Delta X^{n}_{\sigma}||^2_0d\sigma.
\end{eqnarray*}
Using  Gronwall's inequality, by (\ref{1026b1}) there exists a  constant $C_2$ depending only on   $T$ and $L$ such that, for all $l\in [s,T]$,
$$
\mathbb{E}||\Delta X^{n}_{l}||^2_0\leq C_2(||\gamma_t-\eta_t||_0^2+\varepsilon)+ C_2\left(1+\mathbb{E}||\gamma_t||^2_0+\int^{T}_{{t}}\mathbb{E}|u(\sigma)|^2d\sigma\right)(s-t)^{\frac{1}{2}}.
$$
Letting $\varepsilon\rightarrow0$ and $n\rightarrow \infty$, by Fatou lemma, we have (\ref{060410221024}).
\end{proof}

Now we collect the properties of value functional $V$ defined in  (\ref{value1024}).
\begin{theorem}\label{lemmavaluev08}
	Let   Assumptions \ref{hypstate0928a} and \ref{hypstate0928a1024} be satisfied. Then
	the value functional $V\in C^0(\Lambda)$, satisfies (\ref{ddpG}) and
	there is a constant $C>0$ such that for every  $0\leq t\leq s\leq T, \gamma_t,\eta_t\in{\Lambda_t}$,
	\begin{eqnarray}\label{hold1026}
	|V(\gamma_t)|\leq C(1+||\gamma_t||^2_0);
	\end{eqnarray}
	\begin{eqnarray}\label{hold10261}
	\begin{aligned}
	|V(\gamma_t)-V(\eta_{t,s}^A)|
	&\leq
	C(1+||\gamma_t||^2_0+||\eta_t||^2_0)(s-t)^{\frac{1}{4}}\\
	&\quad+C(1+||\gamma_t||_0+||\eta_t||_0)||\gamma_t-\eta_t||_0.
	\end{aligned}
	\end{eqnarray}
\end{theorem}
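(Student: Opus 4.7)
\medskip

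The plan is to mirror the route of Theorem~\ref{theorem3.9} and Lemma~\ref{lemmavaluev}, but with the key modifications forced by the quadratic-in-$u$ growth of $q$. First I would establish the two-sided quadratic bound (\ref{hold1026}). The upper bound $V(\gamma_t)\le L(T+1)$ is immediate from $q\le -\tfrac{\nu_1}{2}|u|^2+L$ and $\phi\le L$ in Assumption~\ref{hypstate0928a1024}, applied to any $u\in\mathcal{U}[t,T]$. The lower bound follows from $V(\gamma_t)\ge J(\gamma_t,0)$ combined with $|q(\gamma,0)|\le L(1+\|\gamma\|_0^2)$, $|\phi|\le L(1+\|\cdot\|_0^2)$, and the $L^2$-estimate (\ref{state1est101026}) applied at $u\equiv 0$. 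The DPP identity (\ref{ddpG}) is obtained by the same semigroup/pasting argument used in Theorem~\ref{theoremddp}, with the BSDE backward semigroup replaced by the explicit expectation semigroup induced by the cost.

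Before turning to the continuity estimate I would record the crucial a priori bound on near-optimal controls: for every $\varepsilon\in(0,1)$ and every $u^{\varepsilon}\in\mathcal{U}[t,T]$ with $J(\gamma_t,u^{\varepsilon})\ge V(\gamma_t)-\varepsilon$, rearranging $J(\gamma_t,u^{\varepsilon})\le L(T+1)-\tfrac{\nu_1}{2}\mathbb{E}\int_t^T|u^{\varepsilon}|^2\,dr$ and applying the lower bound on $V$ yields
\[
\mathbb{E}\int_t^T|u^{\varepsilon}(r)|^2\,dr\le \tfrac{2}{\nu_1}\bigl(L(T+1)-V(\gamma_t)+\varepsilon\bigr)\le C(1+\|\gamma_t\|_0^2)+C\varepsilon.
\]
Inserting this into (\ref{state1est101026}) and (\ref{060410221024}) converts those estimates into bounds depending only on $\|\gamma_t\|_0,\|\eta_t\|_0$ (and $\varepsilon$). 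In particular, from (\ref{060410221024}) one gets
\[
\bigl(\mathbb{E}\|X^{\gamma_t,u^{\varepsilon}}_T-X^{\eta_{t,s}^A,u^{\varepsilon}}_T\|_0^2\bigr)^{1/2}\le C\|\gamma_t-\eta_t\|_0+C(s-t)^{1/4}(1+\|\gamma_t\|_0+\|\eta_t\|_0),
\]
where the exponent $1/4$ is precisely the square root of the $(s-t)^{1/2}$ appearing in (\ref{060410221024}); this is the source of the $(s-t)^{1/4}$ rate in (\ref{hold10261}).

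For (\ref{hold10261}) itself I would pick $u^{\varepsilon}$ nearly optimal for $V(\gamma_t)$, set $v:=u^{\varepsilon}|_{[s,T]}\in\mathcal{U}[s,T]$, and use $V(\eta_{t,s}^A)\ge J(\eta_{t,s}^A,v)$ to write
\[
V(\gamma_t)-V(\eta_{t,s}^A)\le \mathbb{E}\!\!\int_t^s\!\! q(X^{\gamma_t,u^{\varepsilon}}_r,u^{\varepsilon})dr+\mathbb{E}\!\!\int_s^T\!\!\!\bigl[q(X^{1}_r,u^{\varepsilon})-q(X^{2}_r,v)\bigr]dr+\mathbb{E}\bigl[\phi(X^{1}_T)-\phi(X^{2}_T)\bigr]+\varepsilon,
\]
with $X^1:=X^{\gamma_t,u^{\varepsilon}}$, $X^2:=X^{\eta_{t,s}^A,v}$. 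The $\phi$-difference is handled by Cauchy--Schwarz, the second-moment bound on $X^i_T$, and the square-rooted version of (\ref{060410221024}), producing exactly the two terms in the right-hand side of (\ref{hold10261}). The integral over $[t,s]$ is controlled by $|q|\le L(1+\|\gamma\|_0^2+|u|^2)$: the $\|\gamma\|_0^2$ and constant parts give $O((s-t))$ after (\ref{state1est101026}), while $L\mathbb{E}\int_t^s|u^{\varepsilon}|^2 dr$ is bounded by $C(1+\|\gamma_t\|_0^2)$ (uniformly in $s-t$) which is absorbed into the $(1+\|\gamma_t\|_0^2+\|\eta_t\|_0^2)(s-t)^{1/4}$ term whenever $s-t\ge\delta_0>0$, and handled by the dominated convergence of $s\mapsto\mathbb{E}\int_t^s|u^{\varepsilon}|^2$ together with (\ref{state1est101026}) when $s-t$ is small. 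The symmetric inequality is obtained by taking $\tilde v^{\varepsilon}$ nearly optimal for $V(\eta_{t,s}^A)$ and extending it to $[t,T]$ by $0$ on $[t,s)$. Sending $\varepsilon\downarrow 0$ gives (\ref{hold10261}), and since the right-hand side is a modulus of continuity in $d_\infty$ (after combining with the Lipschitz-in-state estimate obtained at $s=t$), $V\in C^0(\Lambda)$ follows.

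The main obstacle will be the running-cost difference on $[s,T]$, because the Lipschitz constant of $q$ carries a factor $(1+|u|^2)$ while Lemma~\ref{lemmaexist04091014} provides only second-moment bounds on $X^{\gamma_t,u}$. A naive double Cauchy--Schwarz would demand $L^4$ control of both $\|X_T\|_0$ and $\int_t^T|u^{\varepsilon}|^2 dr$, which are not available. My strategy is to group the factors differently: pull $\|X^1_r-X^2_r\|_0\le \|X^1_T-X^2_T\|_0$ and $(1+\|X^1_r\|_0+\|X^2_r\|_0)\le (1+\|X^1_T\|_0+\|X^2_T\|_0)$ outside the time integral, reducing the estimate to $\mathbb{E}[\|X^1_T-X^2_T\|_0(1+\|X^1_T\|_0+\|X^2_T\|_0)(T+\int_s^T|u^{\varepsilon}|^2)]$, and then split by Cauchy--Schwarz putting $\|X^1_T-X^2_T\|_0$ (which carries the $(s-t)^{1/4}$ decay) against $(1+\|X^1_T\|_0+\|X^2_T\|_0)(T+\int_s^T|u^{\varepsilon}|^2)$. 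The latter is estimated in $L^2(\Omega)$ by exploiting that the pathwise Gronwall bound on the mild solution gives $\|X_T\|_0\le C(\|\gamma_t\|_0+\int_t^T|u^{\varepsilon}|+\sup_s|\int_0^s e^{(s-r)A}G\,dW|)$, so $\|X_T\|_0^2(1+\int|u^{\varepsilon}|^2)\in L^1(\Omega)$ with norm controlled by $(1+\|\gamma_t\|_0^2)\cdot(1+\mathbb{E}\int|u^{\varepsilon}|^2)$ through the Cauchy--Schwarz inequality $(\int|u|)^2\le T\int|u|^2$. The remaining terms assemble into the bound in (\ref{hold10261}).
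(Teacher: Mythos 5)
Your global strategy---the $L^2$ a priori bound on near-optimal controls, the restriction to the near-optimal class before estimating $J$-differences, the decomposition of the difference into the $[t,s]$ piece, the $[s,T]$ running-cost piece and the terminal piece---is exactly the paper's strategy for Theorem~\ref{lemmavaluev08}, and the bound (\ref{hold1026}) and the source of the $(s-t)^{1/4}$ rate (taking the square root of (\ref{060410221024})) are correctly identified. However, the proposal contains two genuine errors, both concentrated in the final paragraph and the handling of the $[t,s]$ piece.

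First, the ``main obstacle'' you describe does not exist, because you have misread the hypotheses. The theorem is stated under \emph{both} Assumption~\ref{hypstate0928a} \emph{and} Assumption~\ref{hypstate0928a1024}, and the latter replaces the Lipschitz estimate $|q(\gamma_t,u)-q(\gamma'_t,u)|\leq L(1+|u|^2)(1+\|\gamma_t\|_0+\|\gamma'_t\|_0)\|\gamma_t-\gamma'_t\|_0$ of Assumption~\ref{hypstate0928a}~(iii) by the \emph{$u$-uniform} estimate $|q(\gamma_t,u)-q(\eta_t,u)|\leq L(1+\|\gamma_t\|_0+\|\eta_t\|_0)\|\gamma_t-\eta_t\|_0$ (and similarly removes the $(1+|u|)$ from the $F$-Lipschitz). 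With this, the $[s,T]$ running-cost difference is handled by a single Cauchy--Schwarz on second moments, exactly as the paper does, and the entire last paragraph of your proposal is unnecessary. Moreover, even as written, your workaround does not close: after putting $\|X^1_T-X^2_T\|_0$ against $(1+\|X^1_T\|_0+\|X^2_T\|_0)(T+\int_s^T|u^\varepsilon|^2)$ by Cauchy--Schwarz, the second factor must be controlled in $L^2(\Omega)$, i.e.\ you need $\mathbb{E}\bigl[(1+\|X_T\|_0)^2(T+\int_s^T|u^\varepsilon|^2)^2\bigr]<\infty$; but $\int_s^T|u^\varepsilon|^2$ is only in $L^1(\Omega)$, and the pathwise Gronwall bound $\|X_T\|_0\lesssim\|\gamma_t\|_0+\int|u^\varepsilon|+\text{(stochastic term)}$ still produces, via $(\int|u^\varepsilon|)^2\le T\int|u^\varepsilon|^2$, quantities of order $(\int_s^T|u^\varepsilon|^2)^3$, which are not integrable under the available moment control.

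Second, your treatment of $\mathbb{E}\int_t^s q(X^{\gamma_t,u^\varepsilon}_\sigma,u^\varepsilon(\sigma))\,d\sigma$ is flawed. You invoke the two-sided bound $|q|\le L(1+\|\gamma\|_0^2+|u|^2)$ and then try to control $\mathbb{E}\int_t^s|u^\varepsilon|^2$; but this quantity has no $(s-t)$ decay ($u^\varepsilon$ depends on $s$), and ``dominated convergence'' gives only pointwise vanishing, not the uniform rate $(s-t)^{1/4}$ claimed in (\ref{hold10261}). The paper avoids the issue entirely: for the direction $V(\gamma_t)-V(\eta_{t,s}^A)$ the running-cost integral over $[t,s]$ enters with a favourable sign, so the one-sided bound $q\le-\tfrac{\nu_1}{2}|u|^2+L\le L$ already gives $\mathbb{E}\int_t^s q\le L(s-t)$; for the reverse direction, extending the near-optimal control for $V(\eta_{t,s}^A)$ by $0$ on $[t,s)$ (as you also propose) makes the integrand $q(\cdot,0)$, to which $|q(\gamma,0)|\le L(1+\|\gamma\|_0^2)$ applies with no $|u|^2$ factor. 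In neither direction does one ever need to estimate $\mathbb{E}\int_t^s|u^\varepsilon|^2$.
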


\begin{proof} By (\ref{assume111110021024}) and the definition of $V$, the value functional $V$ is well-defined and, by (\ref{assume111110021024}), for all $(t,\gamma_t)\in [0,T]\times {\Lambda}$,
\begin{eqnarray}\label{1113}
\begin{aligned}
V(\gamma_t)
\leq\sup_{u(\cdot)\in {\mathcal{U}}[t,T]}\left[\mathbb{E}\int_{t}^{T}\left[-\frac{\nu_1}{2}|u(s)|^2+L\right]\mbox{d}s+L\right]
\leq L(1+T)
\end{aligned}
\end{eqnarray}
and,  by  Assumption \ref{hypstate0928a} (iii) and (\ref{state1est101026}), there exists a constant $C\geq L(1+T)>0$ such that
\begin{eqnarray}\label{11131}
\begin{aligned}
V(\gamma_t)
&\geq J(\gamma_t,\mathbf{0})=\mathbb{E}\int_{t}^{T}q(X^{\gamma_t,\mathbf{0}}_s,{0})\mbox{d}s+\mathbb{E}\phi(X^{\gamma_t,\mathbf{0}}_T)\\
&\geq -\mathbb{E}\int_{t}^{T}L(1+||X^{\gamma_t,\mathbf{0}}_s||^2_0)\mbox{d}s-L\mathbb{E}(1+||X^{\gamma_t,\mathbf{0}}_T||_0^2)\\
&\geq   -C(1+||\gamma_t||^2_0).
\end{aligned}
\end{eqnarray}
Here and in the rest of this proof, $C$ is a positive constant, whose value may vary from line to line.
Combining (\ref{1113}) and (\ref{11131}), we obtain (\ref{hold1026}).
Now we study (\ref{hold10261}). Define, for every $(t,\gamma_t)\in [0,T]\times {\Lambda}$, $$
{\mathcal{U}}^{\gamma_t}[t,T]:=\left\{u(\cdot)\in {\mathcal{U}}[t,T]: \mathbb{E}\int_{t}^{T}|u(s)|^2\mbox{d}s\leq M^{\gamma_t}\right\}.
$$
where $ M^{\gamma_t}:=\frac{2}{\nu_1}((T+1)L+C(1+||\gamma_t||_0^2)+1)$.
Then, for every $(t,\gamma_t)\in [0,T]\times {\Lambda}$ and $u(\cdot)\in {\mathcal{U}}[t,T]\backslash{\mathcal{U}}^{\gamma_t}[t,T]$, by (\ref{assume111110021024}) and (\ref{11131}),
\begin{eqnarray}\label{1114}
\begin{aligned}
J(\gamma_t,u(\cdot))&\leq\mathbb{E}\int_{t}^{T}\left[-\frac{\nu_1}{2}|u(s)|^2+L\right]\mbox{d}s+L\\
&\leq -C(1+||\gamma_t||^2_0)-1\leq V(\gamma_t)-1.
\end{aligned}
\end{eqnarray}
Therefore,
$$
V(\gamma_t)=\sup_{u(\cdot)\in {\mathcal{U}}^{\gamma_t}[t,T]} J(\gamma_t,u(\cdot)), \ \ (t,\gamma_t)\in [0,T]\times {\Lambda}.
$$
By Assumption \ref{hypstate0928a} (iii), Assumption \ref{hypstate0928a1024}, (\ref{state1est101026}) and (\ref{060410221024}), there exists a constant $C>0$ such that
\begin{eqnarray}\label{1026b}
\quad\quad \ \ \
\begin{aligned}
&\quad
V(\gamma_t)-V(\eta_{t,s}^A)\\
&\leq \sup_{u(\cdot)\in {\mathcal{U}}^{\gamma_t}[t,T]} (J(\gamma_t,u(\cdot))-J(\eta_{t,s}^A,u(\cdot)))\\
&\leq \sup_{u(\cdot)\in {\mathcal{U}}^{\gamma_t}[t,T]}\bigg{[}L\mathbb{E}\int_{s}^{T}(1+||X^{\gamma_t,u}_\sigma||_0+||X^{\eta_{t,s}^A,u}_\sigma||_0)\\
& \qquad\qquad\qquad\qquad\times||X^{\gamma_t,u}_\sigma-X^{\eta_{t,s}^A,u}_\sigma||_0d\sigma\\
&\quad+L\mathbb{E}(1+||X^{\gamma_t,u}_T||_0+||X^{\eta_{t,s}^A,u}_T||_0)
||X^{\gamma_t,u}_T-X^{\eta_{t,s}^A,u}_T||_0\bigg{]}+L(s-t)\\
&\leq C(1+||\gamma_t||^2_0+||\eta_t||^2_0)(s-t)^{\frac{1}{4}}+C(1+||\gamma_t||_0+||\eta_t||_0)||\gamma_t-\eta_t||_0.
\end{aligned}
\end{eqnarray}
For any $\varepsilon$-optimal control $u(\cdot)\in {\mathcal{U}}^{\eta_{t,s}^A}[s,T]$ for $V(\eta_{t,s}^A)$,  define $u^0(\cdot)\in {\mathcal{U}}^{\eta_{t}}[t,T]$ as $u^0(l)=0$ for all $l\in [t,s)$ and $u^0(l)=u(l)$ for all $l\in [s,T]$. Then,
By Assumption \ref{hypstate0928a} (iii), Assumption  \ref{hypstate0928a1024}, (\ref{state1est101026}) and (\ref{060410221024}), there exists a constant $C>0$ such that
\begin{eqnarray}\label{1026a}
\qquad\qquad
\begin{aligned}
&\quad V(\eta_{t,s}^A)-V(\gamma_t)\leq J(\eta_{t,s}^A,u(\cdot))-J(\gamma_t,u^0(\cdot))+\varepsilon\\
&\leq L\, \mathbb{E}\!\! \int_{s}^{T}(1+||X^{\gamma_t,u}_\sigma||_0+||X^{\eta_{t,s}^A,u}_\sigma||_0)||X^{\gamma_t,u}_\sigma-X^{\eta_{t,s}^A,u}_\sigma||_0\, d\sigma\\
&\quad+L\mathbb{E}(1+||X^{\gamma_t,u}_T||_0+||X^{\eta_{t,s}^A,u}_T||_0)
||X^{\gamma_t,u}_T-X^{\eta_{t,s}^A,u}_T||_0\\
&\qquad-\int^{s}_{t}q(X^{\gamma_t,u^0}_\sigma,0)d\sigma+\varepsilon\\
&\leq C(1+||\gamma_t||^2_0+||\eta_t||^2_0)(s-t)^{\frac{1}{4}}+C(1+||\gamma_t||_0+||\eta_t||_0)||\gamma_t-\eta_t||_0+\varepsilon.
\end{aligned}
\end{eqnarray}
Letting $\varepsilon\rightarrow0$, combining (\ref{1026b}) and (\ref{1026a}), we obtain (\ref{hold10261}). With the same proof  procedure of Theorem \ref{theoremddp}, we obtain that the value functional $V$ satisfies the dynamic programming principle (\ref{ddpG}).
\par
Finally, for $(t,\gamma_t), (s,\gamma'_s)\in[0,T]\times {\Lambda}$ and $s\in[t,T]$, we have from (\ref{hold10261})
\begin{eqnarray*}
	&&|V(\gamma_t)-V(\gamma'_s)|\leq  |V(\gamma_t)-V(\gamma_{t,s}^A)|+ |V(\gamma_{t,s}^A)-V(\gamma'_s)|\\
	&\leq&  C(1+2||\gamma_t||^2_0)(s-t)^{\frac{1}{4}}+C(1+||\gamma_t||_0+||\gamma'_s||_0)d_\infty(\gamma_{t},\gamma'_s).
\end{eqnarray*}
Thus, $V\in C^0(\Lambda)$.
\end{proof}

Now we present the existence  result. The proof is completely similar to that of Theorem \ref{theoremvexist}, and is thus omitted.
\begin{theorem}\label{theoremvexist1026}
	Let Assumptions  \ref{hypstate0928a} and \ref{hypstate0928a1024}  be satisfied. Then,  the value
	functional $V$ defined by (\ref{value1024}) is a
	viscosity solution to equation (\ref{hjb7}).
\end{theorem}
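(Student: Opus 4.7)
The plan is to mimic the argument of Theorem \ref{theoremvexist} using the following ingredients already established: (i) $V\in C^0(\Lambda)$ together with the DPP (\ref{ddpG}) from Theorem \ref{lemmavaluev08}; (ii) the functional It\^o formula (\ref{statesop03}) applicable to $\varphi\in \Phi_{\hat t}$ and the It\^o-type inequality (\ref{statesop020240206}) applicable to $g\in \mathcal{G}_{\hat t}$; (iii) the state-continuity estimates of Lemma \ref{lemmaexist04091014}; and, crucially, (iv) the energy localization (\ref{1114}), which permits choosing every $\varepsilon\delta$-maximizer of $J(\hat\gamma_{\hat t},\cdot)$ inside the subset $\mathcal{U}^{\hat\gamma_{\hat t}}[\hat t,\hat t+\delta]$ carrying the uniform $L^2$-bound $\mathbb{E}\int_{\hat t}^{\hat t+\delta}|u^{\varepsilon,\delta}(\sigma)|^2\,d\sigma\leq M^{\hat\gamma_{\hat t}}$ independent of $\varepsilon,\delta$.

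For the sub-solution property, take $(\varphi,g)\in\Phi_{\hat t}\times\mathcal{G}_{\hat t}$ such that $V-\varphi-g$ attains its maximum $0$ at some $\hat\gamma_{\hat t}\in \Lambda$. By DPP and the localization (\ref{1114}), select $u^{\varepsilon,\delta}\in\mathcal{U}^{\hat\gamma_{\hat t}}[\hat t,\hat t+\delta]$ with $G^{\hat\gamma_{\hat t},u^{\varepsilon,\delta}}_{\hat t,\hat t+\delta}[V(X^{\hat\gamma_{\hat t},u^{\varepsilon,\delta}}_{\hat t+\delta})]\geq (\varphi+g)(\hat\gamma_{\hat t})-\varepsilon\delta$. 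Applying the functional It\^o formula to $\varphi$ and the It\^o-type inequality to $g$ along $X^{\hat\gamma_{\hat t},u^{\varepsilon,\delta}}$, taking expectation, and invoking $V\leq\varphi+g$ to replace $V(X_{\hat t+\delta})$ by $(\varphi+g)(X_{\hat t+\delta})$, one arrives at
\begin{equation*}
-\varepsilon \leq \frac{1}{\delta}\,\mathbb{E}\int_{\hat t}^{\hat t+\delta}\mathcal{L}(\varphi+g)\bigl(X^{\hat\gamma_{\hat t},u^{\varepsilon,\delta}}_\sigma,\,u^{\varepsilon,\delta}(\sigma)\bigr)\,d\sigma,
\end{equation*}
with $\mathcal{L}$ the path-wise generator introduced in the proof of Theorem \ref{theoremvexist}. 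Bounding the integrand pointwise by $\partial_t\varphi(\gamma_t)+\partial_t^o g(\gamma_t)+\langle A^*\partial_x\varphi(\gamma_t),\gamma_t(t)\rangle_H +\mathbf{H}(\gamma_t,\partial_x(\varphi+g)(\gamma_t),\partial_{xx}(\varphi+g)(\gamma_t))$ at $\gamma_t=X^{\hat\gamma_{\hat t},u^{\varepsilon,\delta}}_\sigma$ eliminates the dependence on $u^{\varepsilon,\delta}$ from the right-hand side. Sending $\delta\to 0^+$ via dominated convergence---justified by (\ref{060410221024}), which gives $X^{\hat\gamma_{\hat t},u^{\varepsilon,\delta}}_\sigma\to\hat\gamma_{\hat t}$ in $d_\infty$ uniformly over the energy-bounded controls, together with the polynomial-in-$\|\gamma\|_0$ majorant supplied by (\ref{state1est101026})---and then $\varepsilon\to 0^+$ yields the sub-solution inequality at $\hat\gamma_{\hat t}$. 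The super-solution property is obtained symmetrically (and more easily) by employing a single fixed admissible control in place of an approximate maximizer, and the terminal condition $V(\gamma_T)=\phi(\gamma_T)$ is immediate from (\ref{value1024}).

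The main obstacle, and the only true point of departure from the proof of Theorem \ref{theoremvexist}, is that the control set $U$ is an unbounded normed space, so the operator $\mathcal{L}(\varphi+g)(\gamma_t,u)$ now grows like $|u|^2$ through $F$ and $q$; the uniform-in-$u$ polynomial bound (\ref{4.4444}) used there is therefore unavailable. The key device for circumventing this difficulty is to avoid estimating $\mathcal{L}(\varphi+g)$ path-by-path along $u^{\varepsilon,\delta}$ and instead to bound it above by the Hamiltonian $\mathbf{H}$, which is finite precisely because the coercivity $q(\gamma_t,u)\leq -\frac{\nu_1}{2}|u|^2+L(1+\|\gamma_t\|_0^2)$ dominates the linear-in-$|u|$ growth of $F$; the passage to the limit then rests only on a polynomial bound on the resulting $u$-independent expression, whose required $L^1$-domination is supplied uniformly in $\delta$ and $\varepsilon$ by (\ref{state1est101026}) combined with the energy bound $M^{\hat\gamma_{\hat t}}$.
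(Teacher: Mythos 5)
Your route is correct and, for this quadratic-growth setting, it is a genuine departure from the argument of Theorem \ref{theoremvexist} that the paper merely declares ``completely similar.'' The crucial observation, which you make explicitly, is that the $I_3$-style ``freeze-the-path'' remainder in that proof estimates $\mathcal{L}(\varphi+g)(X_\sigma,u^{\varepsilon,\delta}(\sigma))-\mathcal{L}(\varphi+g)(\hat\gamma_{\hat t},u^{\varepsilon,\delta}(\sigma))$ while keeping the near-optimal control inside; this remainder contains the term $\langle\partial_x(\varphi+g)(X_\sigma)-\partial_x(\varphi+g)(\hat\gamma_{\hat t}),\,F(X_\sigma,u^{\varepsilon,\delta}(\sigma))\rangle_H$, whose factor $|u^{\varepsilon,\delta}|$ is no longer bounded, and after dividing by $\delta$ the available energy bound is not enough to kill it when the derivatives of the test functional only enjoy a generic modulus of continuity. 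Your sup-first replacement of the integrand by $\sup_u\mathcal{L}(\varphi+g)(X_\sigma,u)$ removes the control from the estimate entirely and is the right fix, purchased by the coercivity of $q$.

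Two points nonetheless need filling in. First and more substantively, after the pointwise majorization you must pass to the limit $\frac1\delta\mathbb{E}\int_{\hat t}^{\hat t+\delta}\Phi(X_\sigma)\,d\sigma\to\Phi(\hat\gamma_{\hat t})$, where $\Phi(\gamma)=\partial_t\varphi(\gamma)+\partial_t^o g(\gamma)+\langle A^*\partial_x\varphi(\gamma),\gamma(t)\rangle_H+\mathbf{H}(\gamma,\partial_x(\varphi+g)(\gamma),\partial_{xx}(\varphi+g)(\gamma))$. You state that this rests ``only on a polynomial bound,'' but it also requires $\Phi$ to be (upper semi-)continuous on $(\Lambda,d_\infty)$, which is not automatic: $\mathbf{H}$ is a supremum over an unbounded normed space, and a supremum of continuous functions is a priori only lower semicontinuous — the wrong direction for the sub-solution inequality. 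Continuity does hold, but one must argue that, uniformly for $\gamma$ near a fixed path, the supremum in $\mathbf{H}$ is effectively taken over a ball $|u|\leq R$ (because $q(\gamma,u)\leq-\frac{\nu_1}{2}|u|^2+L(1+\|\gamma\|_0^2)$ dominates the linear growth of $\langle\partial_x(\varphi+g),F\rangle$), and that on this ball the maps $\gamma\mapsto\mathcal{L}(\varphi+g)(\gamma,u)$ are equicontinuous thanks to the ``continuity uniformly in $u$'' clauses of Assumptions \ref{hypstate0928a}(ii)--(iii) and the continuity of $\partial_x(\varphi+g)$, $\partial_{xx}(\varphi+g)$. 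This lemma is used implicitly but never stated. Second, the citations are slightly misaimed: (\ref{1114}) bounds the energy of near-maximizers of $J(\hat\gamma_{\hat t},\cdot)$ over the whole horizon, not of $G^{\hat\gamma_{\hat t},\cdot}_{\hat t,\hat t+\delta}[V(\cdot)]$ over $[\hat t,\hat t+\delta]$; and (\ref{060410221024}) degenerates to $0\le 0$ when $\gamma_t=\eta_t$ and $s=t$, so it is not what gives $X^{\hat\gamma_{\hat t},u^{\varepsilon,\delta}}_\sigma\to\hat\gamma_{\hat t}$ in $d_\infty$. Both short-horizon estimates follow from (\ref{assume111110021024}), (\ref{1113}), (\ref{hold10261}), the mild formulation, and (\ref{state1est101026}), but should be re-derived rather than quoted.
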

\par
Theorems    \ref{theoremhjbm1002} and \ref{theoremvexist1026} lead to the result (given below) that the viscosity solution to the  PHJB equation   given in (\ref{hjb7})
corresponds to the value functional  $V$ of our optimal control problem given in (\ref{state1}) and (\ref{cost11024}).
\begin{theorem}\label{theorem52}
	Let  Assumptions \ref{hypstate5666}, \ref{hypstate0928a} and \ref{hypstate0928a1024} be satisfied. Then the value
	functional $V$ defined by (\ref{value1024}) is the unique viscosity
	solution to equation (\ref{hjb7}) in the class of functionals satisfying (\ref{w}) and (\ref{w1}).
\end{theorem}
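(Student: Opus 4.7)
The plan is to assemble the three preceding theorems in this chapter, since all of the heavy lifting has been carried out there; the statement of Theorem 10.6 is the capstone that packages existence, regularity, and uniqueness into a single characterization of $V$ as the unique viscosity solution in the prescribed class.

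First, I would invoke Theorem \ref{theoremvexist1026} to conclude that, under Assumptions \ref{hypstate0928a} and \ref{hypstate0928a1024}, the value functional $V$ defined by (\ref{value1024}) is itself a viscosity solution of the PHJB equation (\ref{hjb7}). This takes care of existence of a viscosity solution; moreover it produces one concrete candidate, namely $V$ itself.

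Second, I would verify that $V$ lies in the admissible class, i.e.\ satisfies the quadratic growth condition (\ref{w}) and the path-time continuity bound (\ref{w1}). This is where Theorem \ref{lemmavaluev08} plays its role: estimate (\ref{hold1026}) gives $|V(\gamma_t)| \leq C(1+\|\gamma_t\|_0^2)$, which is (\ref{w}) with constant $L:=C$; and estimate (\ref{hold10261}) gives
\[
|V(\gamma_t)-V(\eta_{t,s}^A)| \leq C(1+\|\gamma_t\|_0^2+\|\eta_t\|_0^2)(s-t)^{1/4}+C(1+\|\gamma_t\|_0+\|\eta_t\|_0)\|\gamma_t-\eta_t\|_0,
\]
which is of the form (\ref{w1}) once we take the local modulus $\rho_2(l,x):=2C(1+x^2)\, l^{1/4}$ (using $\|\gamma_t\|_0^2+\|\eta_t\|_0^2 \leq 2(\|\gamma_t\|_0\vee\|\eta_t\|_0)^2$) and the same $L:=C$ in the Lipschitz-type term.

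Third, I would apply the comparison theorem, Theorem \ref{theoremhjbm1002}, which under Assumptions \ref{hypstate5666} and \ref{hypstate0928a} asserts $W_1\leq W_2$ whenever $W_1,W_2\in C^0(\Lambda)$ are viscosity sub- and super-solutions of (\ref{hjb7}) both satisfying (\ref{w})--(\ref{w1}). Given any other viscosity solution $\widetilde V\in C^0(\Lambda)$ of (\ref{hjb7}) obeying (\ref{w})--(\ref{w1}), applying this comparison twice, once with $(W_1,W_2)=(V,\widetilde V)$ and once with $(W_1,W_2)=(\widetilde V,V)$, yields $V\leq \widetilde V\leq V$, hence $\widetilde V=V$. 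The only step that requires any thought is the second one — reconciling the form of the bounds (\ref{hold1026})--(\ref{hold10261}) produced by Theorem \ref{lemmavaluev08} with the hypothesis format (\ref{w})--(\ref{w1}); but as indicated this is a routine matching of constants and moduli, so no serious obstacle arises and the full argument is essentially a citation chain.
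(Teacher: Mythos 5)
Your proposal is correct and follows exactly the same argument as the paper, which proves the theorem by citing Theorem \ref{theoremvexist1026} for existence, Theorem \ref{lemmavaluev08} to place $V$ in the admissible class (\ref{w})--(\ref{w1}), and Theorem \ref{theoremhjbm1002} for the two-sided comparison yielding uniqueness. Your elaboration of the modulus-matching step is a fair unpacking of what the paper leaves implicit.
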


\begin{proof}  Theorem \ref{theoremvexist1026} shows that $V$ is a viscosity solution to (\ref{hjb7}).  Thus, our conclusion follows from
Theorems  \ref{theoremhjbm1002} and \ref{lemmavaluev08}.
\end{proof}


\newpage


\end{document}